\theoremstyle{plain} %見出し太字, 本文斜体, 上下スペース
\newtheorem{theorem}{Theorem}[section]
\newtheorem{proposition}[theorem]{Proposition}
\newtheorem{lemma}[theorem]{Lemma}
\newtheorem{corollary}[theorem]{Corollary}
\theoremstyle{definition} %見出し太字, 本文普通, 上下スペース
\newtheorem{definition}[theorem]{Definintion}
\theoremstyle{remark} %見出し斜体, 本文普通
\newtheorem{remark}[theorem]{Remark}
\newtheorem{example}[theorem]{Example}
\newcommand{\Q}{\mathbb{Q}}
\newcommand{\Z}{\mathbb{Z}}
\newcommand{\g}{\mathfrak{g}}
\newcommand{\Blow}{\mathbf{B}^{\text{low}}}
\newcommand{\Bup}{\mathbf{B}^{\text{up}}}
\newcommand{\Glow}{G^{\textup{low}}}
\newcommand{\Gup}{G^{\textup{up}}}
\newcommand{\e}{\tilde{e}}
\newcommand{\f}{\tilde{f}}
\newcommand{\bfk}{\mathbf{k}}
\newcommand{\Sym}{\mathfrak{S}}
\newcommand{\gMod}[1]{{#1}\textup{-gMod}}
\newcommand{\gmod}[1]{{#1}\textup{-gmod}}
\newcommand{\gproj}[1]{{#1}\textup{-gproj}}
\newcommand{\quantum}[1]{U_q^{#1}(\mathfrak{g})}
\newcommand{\universalR}[2]{{\operatorname{R}}_{{#1},{#2}}^{\mathrm{uni}}}
\newcommand{\renormalizedR}[2]{{\operatorname{R}}_{{#1}, {#2}}^{\mathrm{ren}}}
\newcommand{\braider}[2]{\operatorname{R}_{#1}({#2})}
\newcommand{\fpdgMod}[1]{{#1}\textup{-gMod}_{\mathrm{f.p.d.}}}
\newcommand{\Extform}[2]{\langle {#1}, {#2} \rangle_{\mathrm{Ext}}}
\newcommand{\Lusform}[2]{({#1}, {#2} )_{\mathrm{Lus}}}
\newcommand{\prroot}{\Phi_+^{\mathrm{re}}}
\newcommand{\minroot}{\Phi_+^{\mathrm{min}}}
\newcommand{\Mod}[1]{{#1}\textup{-Mod}}
\newcommand{\qcoordinate}{A_q(\mathfrak{n})}
\newcommand{\Rmatrix}[2]{\operatorname{R}_{{#1},{#2}}}
\DeclareMathOperator{\Hom}{Hom}
\DeclareMathOperator{\wt}{wt}
\DeclareMathOperator{\Res}{Res}
\DeclareMathOperator{\height}{ht}
\DeclareMathOperator{\Ind}{Ind}
\DeclareMathOperator{\Coind}{Coind}
\DeclareMathOperator{\hd}{hd}
\DeclareMathOperator{\soc}{soc}
\DeclareMathOperator{\rad}{rad}
\DeclareMathOperator{\qdim}{qdim}
\DeclareMathOperator{\id}{id}
\DeclareMathOperator{\Ext}{Ext}
\DeclareMathOperator{\End}{End}
\DeclareMathOperator{\Cok}{Cok}
\DeclareMathOperator{\Ker}{Ker}
\DeclareMathOperator{\Image}{Im}
\DeclareMathOperator{\gldim}{gl.dim}
\DeclareMathOperator{\prdim}{pr.dim}
\DeclareMathOperator{\flatdim}{fl.dim}
\DeclareMathOperator{\ext}{ext}
\DeclareMathOperator{\grend}{end}
\DeclareMathOperator{\W}{W}
\DeclareMathOperator{\spn}{span}
\DeclareMathOperator{\Pro}{Pro}
\DeclareMathOperator{\Tor}{Tor}
\DeclareMathOperator{\chara}{char}
\title[Affine highest weight structures on module categories over quiver Hecke]{Affine highest weight structures on module categories over quiver Hecke algebras}
\author{HARUTO MURATA}
\address{Graduate School of Mathematical Sciences, the University of Tokyo, 3-8-1 Komaba, Meguro-ku, Tokyo 153-8914, Japan.}
\email{muraharu@ms.u-tokyo.ac.jp}
\begin{document}
\begin{abstract}
We prove that the category of finitely generated graded modules over the quiver Hecke algebra of arbitrary type admits numerous stratifications in the sense of Kleshchev. 
A direct consequence is that the full subcategory corresponding to the quantum unipotent subgroup associated with any Weyl group element is an affine highest weight category. 
Our results significantly generalize earlier works by Kato, Brundan, Kleshchev, McNamara and Muth.  
The key ingredient is a realization of standard modules via determinantial modules. 
We utilize the technique of R-matrices to study these standard modules. 
\end{abstract}

\maketitle

\tableofcontents

\section{Introduction}
Quiver Hecke algebra, discovered independently by Khovanov-Lauda \cite{MR2525917,MR2763732} and Rouquier \cite{rouquier20082kacmoodyalgebras}, 
is an infinite dimensional graded algebra associated with a root datum.
It is important because it controls 2-morphisms of the 2-Kac-Moody algebra. 
Furthermore, representation theory of the quiver Hecke algebra is related to various other algebras including symmetric groups \cite{MR2551762}, affine Hecke algebras \cite{rouquier20082kacmoodyalgebras}, and quantum affine algebras \cite{MR3748315}. 
It is also interesting in its own right. 
One reason is the monoidal functor known as the convolution product. 
To analyze structures of convolution products, a powerful methodology utilizing R-matrices - distinguished homomorphisms between convolution products - has been developed \cite{MR3748315, MR3758148, MR3790066, MR4717658}. 

Another significant aspect of the representation theory of the quiver Hecke algebra is a categorification of the quantum group, 
which has been a major driving force in this area of research. 
The starting point is an isomorphism between the Grothendieck ring of a category of graded modules and the quantum group $\quantum{-}$ \cite{MR2525917, MR2763732}. 
On one hand, the quantum group provides numerical information about modules over the quiver Hecke algebra, such as Jordan-H\"older multiplicities and branching rules. 
This observation traces back to Lascoux-Leclerc-Thibon conjecture \cite{MR1410572} on modular representations of symmetric groups, which was proved by Ariki \cite{MR1443748}. 
On the other hand, the categorical viewpoint offers profound insights into the features of the quantum group including positivity. 

In recent years, many attempts have been made to strengthen this connection by finding categorical counterparts of intricate structures of the quantum group, such as braid group symmetry and cluster structures. 
For instance, see \cite{MR3165425, MR4717658, MR3758148}. 
However, progress in this direction remains incomplete. 
A major difficulty is caused by the fact that modules over the quiver Hecke algebra are not necessarily completely reducible: 
we need a deeper understanding of nontrivial extensions between modules. 

One classical way for addressing such non-semisimple representation theory is the framework of highest weight categories, developed by Cline-Parshall-Scott \cite{MR961165}. 
A prototypical example is the BGG category $\mathcal{O}$ for a semisimple Lie algebra.
Kleshchev extended this framework to infinite dimensional graded analogues, affine highest weight categories and, more generally, stratified categories \cite{MR3335289}.
In this formulation, the role of Verma modules and dual Verma modules in the category $\mathcal{O}$ are played by standard modules and proper costandard modules respectively. 

Regarding stratifications of the module category over the quiver Hecke algebra, several results have been obtained in some special cases. 
When $\mathfrak{g}$ is of finite ADE type and the base field is of characteristic zero, Kato \cite{MR3165425} proved that it is an affine highest weight category using geometric methods.  
In the case of arbitrary finite types, Brundan-Kleshchev-McNamara \cite{MR3205728} algebraically demonstrated that it is an affine highest weight category without assuming characteristic zero. 
Their proof is based on \cite{MR3403455}, which established the finiteness of the global dimension relying on case-by-case computations. 
%Another point of dissatisfaction is that the standard modules lacked explicit presentations, making detailed study difficult. 
When $\mathfrak{g}$ is of symmetric affine type and the characteristic is zero, McNamara \cite{MR3694676} provided stratifications, and   
later Kleshchev-Muth \cite{MR3612467} confirmed that the result still holds when the characteristic is sufficiently large. 

However, beyond these cases, such results were not known. 
Moreover, even in finite or symmetric affine cases, there was no useful algebraic description of standard modules. 

The aim of this paper is to extend these results to arbitrary symmetrizable Lie types over any field. 
We prove that, for any element $w$ of the Weyl group and its reduced expression, the module category over the quiver Hecke algebra has a stratification, 
and that a certain full subcategory corresponding to the quantum unipotent subgroup $U_q(\mathfrak{n}^- \cap w \mathfrak{n}^+)$ is an affine highest weight category. 
Our approach is both distinct from and independent of earlier works.  
We concretely realize standard modules and proper costandard modules using a special family of modules called determinantial modules, and establish our results by analyzing them using R-matrices. 
These modules arise naturally as categorical counterparts of the PBW monomials and dual PBW monomials of $U_q(\mathfrak{n}^- \cap w\mathfrak{n}^+)$. 
In our proof, no case-by-case computation is required. 
We anticipate that these explicit descriptions of standard modules will pave the way for further in-depth study of the homological properties of modules over the quiver Hecke algebra. 

Additionally, our study reveals a new perspective on R-matrices. 
While R-matrices have been primarily used to investigate problems related to simple modules, 
we use them to explore homological properties of modules. 

We explain our results more precisely. 
Let $(C = (c_{i,j})_{i,j \in I}, P, \{ \alpha_i \}_{i \in I}, \{h_i\}_{i \in I}, (\cdot, \cdot))$ be a root datum. 
Associated with this datum, we have the symmetrizable Kac-Moody Lie algebra $\mathfrak{g}$, and its quantum group $\quantum{}$.
Let $W$ be the Weyl group, $\Phi_+$ the set of positive roots and $Q_+ = \sum_{i \in I }\mathbb{Z}_{\geq 0} \alpha_i$. 

By selecting some additional data, the quiver Hecke algebra $R(\beta)$ is defined for each $\beta \in Q_+$ over a field $\mathbf{k}$ of arbitrary characteristic. 
It is a Noetherian $\mathbb{Z}$-graded algebra. 
We just say an $R$-module to refer to an $R(\beta)$-module for some $\beta \in Q_+$. 
We want to understand the category of finitely generated graded $R(\beta)$-modules $\gMod{R(\beta)}$. 
Let $q$ be the degree shift functor acting on $\gMod{R(\beta)}$. 
One remarkable feature is that the category $\gMod{R} = \bigoplus_{\beta \in Q_+} \gMod{R(\beta)}$ has a monoidal structure: 
for $M \in \gMod{R(\beta)}$ and $N \in \gMod{R(\gamma)}$, we have a convolution product $M \circ N \in \gMod{R(\beta + \gamma)}$.  
The categorification of the quantum group is the following assertion: 
there is a $\mathbb{Q}(q)$-algebra isomorphism
\[
\bigoplus_{\beta \in Q_+} K(\gproj{R(\beta)}) \otimes_{\mathbb{Z}[q,q^{-1}]} \mathbb{Q}(q) \simeq \quantum{-},
\]
where $\gproj{R(\beta)}$ is the category of finitely generated graded projective $R(\beta)$-modules. 

For $w \in W$, there are two quotient rings of $R(\beta)$, $R_{w,*}(\beta)$ and $R_{*,w}(\beta)$, introduced by Kashiwara-Kim-Oh-Park \cite{MR3771147}. 
In their notation, the categories of finite dimensional modules $\gmod{R_{w,*}} = \mathscr{C}_{w,*}$ and $\gmod{R_{*,w}} = \mathscr{C}_{*,w}$. 
Here, we give a conceptual explanation of these quotient rings, though it may lack full precision. 

For $w \in W$, there are two subalgebras of $\quantum{-}$, $U_q(\mathfrak{n}^- \cap w \mathfrak{n}^+)$ and $U_q(\mathfrak{n}^- \cap w\mathfrak{n}^-)$,
where $\mathfrak{n}^- \cap w \mathfrak{n}^+ = \bigoplus_{\alpha \in \Phi_- \cap w \Phi_+} \mathfrak{g}_{\alpha}$ and $\mathfrak{n}^- \cap w\mathfrak{n}^- = \bigoplus_{\alpha \in \Phi_- \cap w \Phi_-} \mathfrak{g}_{\alpha}$. 
Fix a reduced expression $w = s_{i_1} \cdots s_{i_l}$, and write $w_{\leq k} = s_{i_1} \cdots s_{i_k} \ (1 \leq k\leq l)$. 
Letting $\beta_k = w_{\leq k-1}\alpha_{i_k}$, the set $\Phi_+ \cap w\Phi_-$ is $\{ \beta_1, \ldots, \beta_l\}$. 
We have the root vector $f_{\beta_k}$ and the dual root vector $f_{\beta_k}^{\mathrm{up}}$ for each $1 \leq k \leq l$. 
By multiplying these vectors, we obtain two bases of $U_q(\mathfrak{n}^- \cap w\mathfrak{n}^+)$: the PBW basis $\{ f_{\lambda} \mid \lambda \in \mathbb{Z}_{\geq 0}^l\}$ and the dual PBW basis $\{ f_{\lambda}^{\mathrm{up}} \mid \lambda \in \mathbb{Z}_{\geq 0}^l\}$. 

Under the categorification of the quantum group, module categories over $R_{w,*}$ and $R_{*,w}$ correspond to $U_q(\mathfrak{n}^-\cap w\mathfrak{n}^+)$ and $U_q(\mathfrak{n}^- \cap w \mathfrak{n}^-)$ respectively: 
see Section \ref{sub:quantumunipotent} for rigorous treatment. 
We can lift the PBW basis and the dual PBW basis of $U_q(\mathfrak{n}^- \cap w\mathfrak{n}^+)$ to certain $R_{w,*}$-modules. 
More precisely, we use determinantial modules and their affinizations, introduced by Kang, Kashiwara, Kim, Oh and Park \cite{MR3758148, MR3771147, MR4359265}. 
The determinantial module $M(w_{\leq k}\Lambda_{i_k}, w_{\leq k-1} \Lambda_{i_k})$ corresponds to $f_{\beta_k}^{\mathrm{up}}$ and its affinization $\widehat{M}(w_{\leq k}\Lambda_{i_k}, w_{\leq k-1} \Lambda_{i_k})$ to $f_{\beta_k}$. 
Taking convolution products of these modules, we obtain modules that categorify the dual PBW basis and the PBW basis, denoted by $\overline{\Delta}(\lambda)$ and $\Delta(\lambda)$ respectively. 

An $R$-module is said to be self-dual if it is isomorphic to its dual representation. 
It is known that, for any simple $R$-module $L$, there exists a unique integer $d$ such that $q^dL$ is self-dual. 
Let $\{ L(s) \}_{s \in S}$ be the complete set of self-dual simple $R_{*,w}$-modules up to isomorphism, and take $\beta_s \in Q_+$ so that $L(s)$ is an $R(\beta_s)$-module. 
For each $s \in S$, let $\Delta(s)$ be a projective cover of $L(s)$ in the category of finitely generated graded $R_{*,w}(\beta_s)$-modules. 
For each $(\lambda,s) \in \mathbb{Z}_{\geq 0}^l \times S$, we define
\[
\Delta(\lambda,s) = \Delta(s) \circ \Delta(\lambda),  \overline{\Delta}(\lambda,s) = L(s) \circ \overline{\Delta}(\lambda). 
\]
The head of $\overline{\Delta}(\lambda,s)$, denoted by $L(\lambda,s)$, is a self-dual simple module and they classify all the self-dual simple $R$-modules. 
Let $P(\lambda,s)$ be the projective cover of $L(\lambda,s)$ in $\gMod{R}$. 

In this setup, the main theorem of this paper is as follows: 

\begin{theorem}[{Theorem \ref{thm:standard}, Theorem \ref{thm:stratification}}]  \label{thm:intro1}
 Let $\beta \in Q_+$. 
 The category $\gMod{R(\beta)}$ of finitely generated graded $R(\beta)$-modules is stratified with standard modules $\Delta(\lambda,s) \ (\sum_{1 \leq k \leq l} \lambda_k\beta_k + \beta_s = \beta)$. 
 Precisely speaking, there exists a certain preorder $\leq$ on the labelling set $\Sigma(\beta) = \{ (\lambda,s) \mid \sum_k \lambda_k\beta_k + \beta_s = \beta\}$ and satisfies the following conditions: 
 \begin{enumerate}
 \item For any $\sigma \in \Sigma(\beta)$, we have $\Delta(\sigma) \simeq P(\sigma) / K(\sigma)$, where 
 \[
 K(\sigma) = \sum_{\sigma' \in \Sigma (\beta), \sigma' \not \leq \sigma, f \colon P(\sigma') \to P(\sigma)} \Image f. 
 \]
 \item For any $\sigma \in \Sigma(\beta)$, the module $K(\sigma)$ has a finite filtration whose successive quotients are of the form $q^d \Delta(\sigma') \ (d \in \mathbb{Z}, \sigma' < \sigma)$. 
 \end{enumerate}
\end{theorem}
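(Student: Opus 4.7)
The plan is to first fix a preorder $\leq$ on $\Sigma(\beta)$ adapted to the reduced expression, taking the bilexicographic order on tuples $\lambda \in \mathbb{Z}_{\geq 0}^l$ refined by some fixed total order on $S$, analogous to the order under which the PBW basis of $U_q(\mathfrak{n}^- \cap w\mathfrak{n}^+)$ is triangular against the dual PBW basis. At the Grothendieck-group level the categorification already produces the correct triangular relation between the classes of $\Delta(\lambda, s) = \Delta(s) \circ \Delta(\lambda)$ and the classes of the simples $L(\mu, t)$; the task is to upgrade this numerical shadow to a genuine statement about modules and their extensions in $\gMod{R(\beta)}$.

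The first concrete step is a structural analysis of the standard modules $\Delta(\lambda, s)$ themselves via R-matrices among their determinantial building blocks. Relying on the well-understood R-matrix behaviour of affinized determinantial modules $\widehat{M}(w_{\leq k}\Lambda_{i_k}, w_{\leq k-1}\Lambda_{i_k})$, I would show that $\Delta(\lambda, s)$ has $L(\lambda, s)$ as its unique simple head and that every other composition factor $L(\mu, t)$ satisfies $(\mu, t) < (\lambda, s)$. Since $P(\sigma)$ is the projective cover of $L(\sigma)$, this yields a canonical surjection $P(\sigma) \twoheadrightarrow \Delta(\sigma)$, whose kernel has to be identified with $K(\sigma)$.

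For condition (1), the inclusion $K(\sigma) \subseteq \Ker(P(\sigma) \to \Delta(\sigma))$ is automatic: any composite $P(\sigma') \to P(\sigma) \to \Delta(\sigma)$ with $\sigma' \not\leq \sigma$ must vanish, because $L(\sigma')$ is not a composition factor of $\Delta(\sigma)$ by the head/composition-factor analysis above. For the reverse inclusion, one examines the simple subquotients of $\Ker(P(\sigma) \to \Delta(\sigma))$ and lifts them, via the projective-cover property in our Noetherian graded setting, to morphisms $P(\sigma') \to P(\sigma)$ with $\sigma' \not\leq \sigma$, so that the kernel is generated by their images.

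The main work, and the main obstacle, is condition (2): building a finite filtration of $K(\sigma)$ whose successive quotients are shifts of $\Delta(\sigma')$ with $\sigma' < \sigma$. My strategy is to exploit R-matrix short exact sequences to transpose adjacent affinized determinantial factors appearing in the convolution realization of $\Delta(\sigma)$; swapping two such factors in the "wrong" order produces a short exact sequence whose remaining term is a convolution product with strictly smaller PBW multidegree. Iterating this, together with an induction on $\leq$, should furnish the desired filtration. The delicate point is to control all R-matrix kernels and cokernels coherently and to verify that the iterative procedure terminates and assembles into a genuine filtration of modules, rather than merely matching composition factors in the Grothendieck group.
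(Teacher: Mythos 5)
Your treatment of the standard modules themselves and of condition (1) is essentially the paper's: the composition-factor triangularity of $\Delta(\lambda,s)$ comes from the cuspidal decomposition (Theorem \ref{thm:cuspidalCw}), and the simple head plus the $\Ext^1$-vanishing against smaller-or-incomparable simples comes from the R-matrix analysis of the affinized determinantial modules (Theorem \ref{thm:SES}, Theorem \ref{thm:ext1}, Proposition \ref{prop:headM2}) together with the induction--restriction adjunction. One caveat on your choice of order: the paper uses a genuine \emph{preorder}, collapsing all $s\in S$ with a given weight into one stratum, and imposes the bilexicographic (two-sided) condition on the real-root part only when $\beta_s=0$ (Definition \ref{def:modifiedlexico}); refining by a total order on $S$ would change $K(\sigma)$ and hence the standard modules, so $\Delta(s)\circ\Delta(\lambda)$ would no longer be standard, and a full two-sided lexicographic comparison fails for weakly convex preorders (Remark \ref{rem:failure}).

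The genuine gap is in condition (2). Your plan is to transpose adjacent affinized determinantial factors via R-matrix short exact sequences and iterate. That strategy operates on convolution products such as $\widehat{M}_1\circ\widehat{M}_2$ versus $\widehat{M}_2\circ\widehat{M}_1$, but $K(\sigma)$ is a submodule of the projective cover $P(\sigma)$ of $L(\sigma)$ in $\gMod{R(\beta)}$, which has no convolution-product presentation; there are no ``adjacent factors'' in $P(\sigma)$ to swap, so the iteration has nothing to act on. Moreover, even for products in the wrong order, the R-matrix exact sequences control composition factors and certain subquotients, but do not by themselves assemble into a $\Delta$-filtration of a kernel inside a projective --- this is exactly the ``delicate point'' you flag, and it is where the argument breaks rather than merely needing care.

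The paper avoids this entirely by an abstract criterion (Proposition \ref{prop:criterion}): once proper costandard modules $\overline{\nabla}(\mu,t)=D(\overline{\Delta}(\mu,t))$ exist, every $P(\sigma)$ automatically carries a finite filtration whose layers are \emph{quotients} of direct sums of standard modules with multiplicities $\overline{\qdim\Hom(P(\sigma),\overline{\nabla}(\mu,t))}$ (Lemma \ref{lem:filtration}); the Grothendieck-group identity
\[
[P(\lambda,s)]_q=\sum_{(\mu,t)}\overline{[\overline{\nabla}(\mu,t):L(\lambda,s)]_q}\,[\Delta(\mu,t)]_q
\]
then forces those surjections to be isomorphisms by a length count. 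The identity itself (Proposition \ref{prop:BGGformula}) is proved not by module-theoretic manipulation but through the Euler-form orthogonality $\Extform{\Delta(\lambda,s)}{\overline{\nabla}(\mu,t)}=\delta_{(\lambda,s),(\mu,t)}$ and the coincidence of the Ext-pairing with Kashiwara's bilinear form under categorification (Lemma \ref{lem:bilinearforms}, Lemma \ref{lem:dualbasis}). This requires vanishing of \emph{all} higher Ext groups, not just $\Ext^1$, which rests on the finiteness of projective dimensions and the approximate projective resolution of $R_{*,w}(\beta)$ (Theorem \ref{thm:projresol}, Proposition \ref{prop:prdimR*w}) --- inputs your proposal does not supply. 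To repair your argument you would need either to import this criterion or to find an independent construction of a $\Delta$-filtration on $P(\sigma)$, and the latter is precisely what the paper's method is designed to circumvent.
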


By a routine truncation argument, we obtain the former part of the following theorem. 

\begin{theorem}[{Theorem \ref{thm:affinehighest}}] \label{thm:intro2}
  Let $\beta \in Q_+$. 
  The category $\gMod{R_{w,*}(\beta)}$ of finitely generated graded $R_{w,*}(\beta)$-modules is stratified with standard modules $\Delta(\lambda) \ (\sum_{1 \leq k \leq l}\lambda_k\beta_k = \beta)$. 
  Moreover, for any $\lambda$, the ring $\End_{R(\beta)}(\Delta(\lambda))$ is a polynomial algebra, that is, the category $\gMod{R_{w,*}(\beta)}$ is an affine (polynomial) highest weight category. 
\end{theorem}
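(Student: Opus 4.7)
The plan is to deduce the theorem from Theorem \ref{thm:intro1} in two stages. The stratification statement follows by the routine truncation alluded to in the text: the category $\gMod{R_{w,*}(\beta)}$ embeds as a Serre subcategory of $\gMod{R(\beta)}$ via the surjection $R(\beta) \twoheadrightarrow R_{w,*}(\beta)$, and coincides with the full subcategory generated by those simples $L(\lambda,s)$ whose $R_{*,w}$-component $L(s)$ is the trivial module (equivalently $\beta_s = 0$). Since each determinantial module is already an $R_{w,*}$-module, the standard modules $\Delta(\lambda)$ with $\sum_k \lambda_k\beta_k = \beta$ lie in $\gMod{R_{w,*}(\beta)}$, and restricting the filtration of Theorem \ref{thm:intro1}(ii) to these labels yields the desired stratification.

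For the affine (polynomial) highest weight property, the task is to identify $\End_{R(\beta)}(\Delta(\lambda))$ for each $\lambda$. Writing $\widehat{M}_k := \widehat{M}(w_{\leq k}\Lambda_{i_k}, w_{\leq k-1}\Lambda_{i_k})$, by construction
\[
\Delta(\lambda) = \widehat{M}_1^{\circ \lambda_1} \circ \widehat{M}_2^{\circ \lambda_2} \circ \cdots \circ \widehat{M}_l^{\circ \lambda_l}.
\]
Each $\widehat{M}_k$ carries an affinization parameter $z_k$ generating $\End_R(\widehat{M}_k) = \bfk[z_k]$. Placing an independent copy on each factor produces a canonical graded algebra homomorphism $\bfk[z_{k,j} : 1 \leq k \leq l,\ 1 \leq j \leq \lambda_k] \to \End_R(\Delta(\lambda))$. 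I would prove this map is an isomorphism using the R-matrix formalism developed in the paper.

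For surjectivity, one decomposes any endomorphism of $\Delta(\lambda)$ into interactions with each factor via renormalized R-matrices. For distinct $k \neq k'$, the R-matrix $\Rmatrix{\widehat{M}_k}{\widehat{M}_{k'}}$ is nonzero and its composition with the reverse one is multiplication by a nonzero scalar, which rules out extra endomorphisms mixing different $k$-blocks. Inside a single block $\widehat{M}_k^{\circ \lambda_k}$, the self-R-matrix at equal parameters vanishes to a controlled order that is absorbed into the image of the polynomial algebra, so the would-be symmetric-group action on identical factors is already implemented by multiplication by appropriate polynomials in the $z_{k,j}$'s. For injectivity, one argues via the Grothendieck group: the image of $[\Delta(\lambda)]$ matches the corresponding PBW monomial, which together with the stratification of Theorem \ref{thm:intro1} determines the graded rank of $\End_R(\Delta(\lambda))$; the $z_{k,j}$'s are algebraically independent by specialization to generic parameters where the $\widehat{M}_k$-factors separate.

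The hardest step, I expect, is exactly the analysis of the self-R-matrix $\Rmatrix{\widehat{M}_k}{\widehat{M}_k}$ at equal affinization parameters: one must pin down that its vanishing order along the diagonal is precisely what is required so that no genuinely new endomorphism survives beyond the polynomial algebra. Once this local computation is nailed down, the global identification follows by patching blocks together using the between-block R-matrix arguments above, yielding $\End_R(\Delta(\lambda)) \cong \bfk[z_{k,j}]$ and hence the polynomial highest weight property.
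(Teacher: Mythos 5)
Your first stage (truncation) matches the paper, which invokes the general truncation result of Kleshchev together with Theorems \ref{thm:standard} and \ref{thm:stratification}. The second stage, however, contains a genuine error. You take the standard module to be the plain convolution power $\widehat{M}_1^{\circ \lambda_1}\circ\cdots\circ\widehat{M}_l^{\circ\lambda_l}$ and claim that the polynomial algebra $\mathbf{k}[z_{k,j}]$ surjects onto its endomorphism ring, with the self-R-matrices "absorbed" into polynomial multiplication. This is false: by Proposition \ref{prop:endoring}, $\End_{R(n\beta_k)}(\widehat{L}(\beta_k)^{\circ n})\simeq R(n\alpha_{i_k})$ is the nil-Hecke algebra, which strictly contains $\mathbf{k}[z_1,\ldots,z_n]$ --- the renormalized self-R-matrix $\renormalizedR{\widehat{M}_k}{\widehat{M}_k}$ is genuinely \emph{not} a scalar (Remark \ref{rem:rennotid}) and produces the extra divided-difference generators $\tau$. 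The "hardest step" you flag would, if carried out, show that your surjectivity claim fails. The paper's standard module is instead built from the divided powers $\widehat{L}(\beta_k)^{\circ(\lambda_k)}=\Hom_{R(\lambda_k\alpha_{i_k})}(P(i_k^{\lambda_k}),\widehat{L}(\beta_k)^{\circ\lambda_k})$, i.e.\ the image of a primitive idempotent of the nil-Hecke algebra; this is exactly what cuts the endomorphism ring down to $e R(n\alpha_{i_k}) e\simeq\mathbf{k}[z_1,\ldots,z_n]^{\Sym_n}$, which is a polynomial algebra on elementary symmetric functions but is not the free polynomial ring on the individual $z_{k,j}$ that you assert.

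Two further points. First, the paper obtains the separation of distinct $k$-blocks not by an R-matrix argument but more cleanly: the sequence $(\widehat{L}(\beta_l)^{\circ(\lambda_l)},\ldots,\widehat{L}(\beta_1)^{\circ(\lambda_1)})$ is unmixing (Lemma \ref{lem:cuspidalseq}), so $\Res_{\lambda_l\beta_l,\ldots,\lambda_1\beta_1}\Delta(\lambda)$ is the outer tensor product of the factors and the induction-restriction adjunction gives $\End(\Delta(\lambda))\simeq\bigotimes_k\End(\widehat{L}(\beta_k)^{\circ(\lambda_k)})$ directly; note also that the order of the factors matters for unmixing and yours is reversed relative to Definition \ref{def:standardw}. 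Second, you do not address condition (3) of Definition \ref{def:stratified}: one must check that $\Delta(\lambda)$ is free of finite rank over its endomorphism ring, which the paper deduces from the fact that $\widehat{L}(\beta_k)^{\circ\lambda_k}$ is free of finite rank over $\mathbf{k}[z_1,\ldots,z_{\lambda_k}]$ and hence over the symmetric invariants.
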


The latter part is not difficult to prove. 
We immediately have the following corollary: 

\begin{corollary}[{Corollary \ref{cor:gldim}}] 
For any $\beta \in Q_+$, the global dimension of $\gMod{R_{w,*}(\beta)}$ is finite.   
\end{corollary}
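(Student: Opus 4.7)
The plan is to deduce this corollary directly from the affine polynomial highest weight structure established in Theorem \ref{thm:intro2}. For a fixed $\beta \in Q_+$, the labelling set $\{ \lambda \in \Z_{\geq 0}^l : \sum_k \lambda_k \beta_k = \beta \}$ is finite (since each $\beta_k$ lies in $Q_+ \setminus \{0\}$), so the stratification of $\gMod{R_{w,*}(\beta)}$ has only finitely many strata, each controlled by a polynomial algebra $E_\lambda := \End_{R(\beta)}(\Delta(\lambda))$ of finite Krull dimension $n_\lambda$, which has global dimension $n_\lambda$.

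First I would prove by induction on the preorder (which is well-founded on the finite labelling set) that each standard module $\Delta(\lambda)$ has finite projective dimension. For a minimal $\lambda$, the kernel $K(\lambda)$ in the short exact sequence $0 \to K(\lambda) \to P(\lambda) \to \Delta(\lambda) \to 0$ is zero, so $\Delta(\lambda) = P(\lambda)$ is projective. For general $\lambda$, the finite $\Delta$-filtration of $K(\lambda)$ by standards $\Delta(\mu)$ with $\mu < \lambda$ combined with the inductive hypothesis yields $\prdim \Delta(\lambda) \leq 1 + \max_{\mu < \lambda} \prdim \Delta(\mu) < \infty$.

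Next I would pass from standard modules to simple modules. The simple head $L(\lambda)$ of $\Delta(\lambda)$ is obtained by quotienting out the augmentation ideal of the polynomial ring $E_\lambda$ acting on $\Delta(\lambda)$ from the right. The Koszul resolution of the residue field over $E_\lambda$ has length $n_\lambda$; lifting it along this $E_\lambda^{op}$-action produces a length-$n_\lambda$ resolution of $L(\lambda)$ whose terms are finite direct sums of degree shifts of $\Delta(\lambda)$. Hence $\prdim L(\lambda) \leq n_\lambda + \prdim \Delta(\lambda) < \infty$. Since $\gMod{R_{w,*}(\beta)}$ has only finitely many simple modules up to grading shift, and grading shifts do not affect projective dimensions, we conclude $\gldim \gMod{R_{w,*}(\beta)} < \infty$.

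The main subtlety lies in the Koszul construction of the resolution of $L(\lambda)$, which requires sufficient freeness of $\Delta(\lambda)$ as a right $E_\lambda$-module. This is a standard feature of affine polynomial highest weight categories as developed in \cite{MR3335289}; once the polynomial endomorphism condition of Theorem \ref{thm:intro2} is available, the finiteness of global dimension is essentially built into the axioms, so the corollary should follow with minimal additional input.
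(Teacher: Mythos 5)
Your overall strategy is the same as the paper's: the paper deduces the corollary in one line from Theorem \ref{thm:affinehighest} together with \cite[Corollary 5.25]{MR3335289}, and your proposal is essentially an unpacking of that cited result. The first half of your argument is fine (modulo a convention slip: with Definition \ref{def:stratified} as stated, $K(\lambda)=0$ when $\rho(\lambda)$ is \emph{maximal}, and the filtration of $K(\lambda)$ involves $\Delta(\mu)$ with $\rho(\mu)>\rho(\lambda)$, so the induction runs downward from maximal elements; this does not affect the substance).

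There is, however, a genuine gap in the passage from standard modules to simple modules. You assert that $L(\lambda)$ ``is obtained by quotienting out the augmentation ideal of the polynomial ring $E_\lambda$'' from $\Delta(\lambda)$. That quotient is not $L(\lambda)$: it is (up to multiplicity and degree shift) the \emph{proper standard module} $\overline{\Delta}(\lambda)=L(\lambda_l\beta_l)\circ\cdots\circ L(\lambda_1\beta_1)$, which has simple head $L(\lambda)$ but is in general far from simple --- by Theorem \ref{thm:cuspidalCw} it has additional composition factors $L(\mu)$ with $\rho(\mu)<\rho(\lambda)$. So your Koszul argument only yields $\prdim\overline{\Delta}(\lambda)\leq n_\lambda+\prdim\Delta(\lambda)<\infty$, not finiteness of $\prdim L(\lambda)$. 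To close the gap you need one further induction, this time upward along the partial order: since $[\overline{\Delta}(\lambda):L(\lambda)]_q=1$ and every other composition factor of $\overline{\Delta}(\lambda)$ is some $L(\mu)$ with $\rho(\mu)$ strictly smaller, the short exact sequence $0\to\rad\overline{\Delta}(\lambda)\to\overline{\Delta}(\lambda)\to L(\lambda)\to 0$ together with the inductive hypothesis (and Corollary \ref{cor:projlim2} to handle $\rad\overline{\Delta}(\lambda)$ via its composition factors) gives $\prdim L(\lambda)<\infty$. With this extra step inserted, the argument is complete and agrees with the proof of \cite[Corollary 5.25]{MR3335289}.
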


We give a brief overview on the proof of Theorem \ref{thm:intro1}. 
There are two key Theorems. 
The first one is 

\begin{theorem}[{Theorem \ref{thm:ext1}}] \label{thm:introext1}
Let $v \in W$ and $i \in I$. 
If $vs_i > v$, we have
\[
\Ext_{R(\beta)}^1 (\widehat{M}(vs_i\Lambda_i, v\Lambda_i), M(vs_i\Lambda_i, v\Lambda_i)) = 0.   
\]
\end{theorem}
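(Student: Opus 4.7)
The strategy I would follow hinges on the canonical short exact sequence arising from the affinization structure. By construction, the graded endomorphism ring of $\widehat{M} := \widehat{M}(vs_i\Lambda_i, v\Lambda_i)$ is a polynomial ring $\bfk[z]$ on a single homogeneous generator $z$ of positive degree, and the quotient $\widehat{M}/z\widehat{M}$ identifies with $M := M(vs_i\Lambda_i, v\Lambda_i)$. This yields the short exact sequence
\begin{equation*}
  0 \longrightarrow q^{\deg z}\, \widehat{M} \xrightarrow{\ z\ } \widehat{M} \longrightarrow M \longrightarrow 0 ,
\end{equation*}
which I would take as the entry point to the computation.

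Applying $\Hom_R(\widehat{M}, -)$ and reading off the long exact sequence in $\Ext$, the target vanishing $\Ext^1_R(\widehat{M}, M) = 0$ becomes equivalent to two assertions about the $\bfk[z]$-module structure on $\Ext^{\bullet}_R(\widehat{M}, \widehat{M})$: multiplication by $z$ is surjective on $\Ext^1_R(\widehat{M}, \widehat{M})$, and multiplication by $z$ is injective on $\Ext^2_R(\widehat{M}, \widehat{M})$. So the task is to establish both statements simultaneously.

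I would prove them by induction on the length $\ell(v)$. The base case $v = e$ is immediate: here $\widehat{M} \cong R(\alpha_i) \cong \bfk[x_1]$ is a free module over a polynomial ring, so all higher $\Ext_R(\widehat{M}, -)$ vanish. For the inductive step, writing $v = s_j v'$ with $\ell(v) = \ell(v')+1$, I would invoke the R-matrix technology of \cite{MR3758148, MR3771147} to compare $\widehat{M}(vs_i\Lambda_i, v\Lambda_i)$ with $\widehat{M}(v's_i\Lambda_i, v'\Lambda_i)$, for which the analogous assertions hold by hypothesis, by realizing each as a subquotient of a convolution product assembled from the other and an auxiliary simple-root determinantial module. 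Tracking the $z$-action through these identifications should transport the inductive hypothesis from $v'$ to $v$.

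The main obstacle lies precisely in this inductive step, since R-matrices natively produce statements about $\Hom$ spaces between convolution products, whereas the target is a statement about $\Ext^1$ in the ambient category $\gMod{R(v\alpha_i)}$. Bridging the gap will require either an explicit two-term resolution of $\widehat{M}(vs_i\Lambda_i, v\Lambda_i)$ built from convolution products of lower-length affinizations, so that Hom-level R-matrix identities upgrade to Ext-level statements, or a spectral-sequence argument grounded in the T-system-type short exact sequences of determinantial modules. Controlling the full $\bfk[z]$-module structure on the higher Ext groups, rather than only the degree-one one, is the crux of the argument and the place where the R-matrix techniques highlighted in the introduction play their key role.
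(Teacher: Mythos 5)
Your opening reduction is sound: applying $\Hom_{R}(\widehat{M},-)$ to $0 \to q^{\deg z}\widehat{M} \xrightarrow{z} \widehat{M} \to M \to 0$ does convert the claim into statements about the $z$-action on $\Ext^{\bullet}_{R}(\widehat{M},\widehat{M})$ (and since these Ext groups are Laurentian and $\deg z>0$, surjectivity of $z$ on $\Ext^{1}$ would already force $\Ext^{1}_{R}(\widehat{M},\widehat{M})=0$). The base case $v=e$ is also fine. But the inductive step is where the proof actually lives, and what you propose there does not work as stated. Peeling off a simple reflection $v=s_jv'$ changes the ambient weight from $v\alpha_i$ to $v'\alpha_i$; the passage from $M(v's_i\Lambda_i,v'\Lambda_i)$ to $M(vs_i\Lambda_i,v\Lambda_i)$ is governed by crystal operators $\tilde{e}_j$, $\tilde{e}_j^*$ (Lemma \ref{lem:determinantialepsilon}), not by convolution with a simple-root determinantial module, and your sketch supplies no mechanism for transporting an $\Ext^1$-vanishing statement across that passage. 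You correctly sense that a T-system-type exact sequence is the missing ingredient, but you neither construct it nor explain how it closes the induction; the crux is deferred rather than proved.

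The paper's argument is structured differently and avoids induction on $\ell(v)$ altogether. First, for the modules $\widehat{M}(x\Lambda_i,\Lambda_i)$ the vanishing comes essentially for free: the cyclotomic quotient $R_z^{a_{\Lambda_i}}(x\Lambda_i)$ is identified with the quotient of $R(\Lambda_i-x\Lambda_i)$ by the two-sided ideal generated by the idempotents $e(*,j)\ (j\neq i)$ and $e(*,i,i)$ — hence its module category is extension-closed in $\gMod{R(\Lambda_i-x\Lambda_i)}$ — and this quotient is graded Morita equivalent to $\bfk[z]$ with $\widehat{M}(x\Lambda_i,\Lambda_i)$ as projective generator (Proposition \ref{prop:cyclotomic}, Corollary \ref{cor:M1ext}). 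Second, the foundational short exact sequence
\[
0 \to q^{(\alpha_i,\alpha_i)+(\Lambda_i-v\Lambda_i,\,v\alpha_i)}\,\widehat{M_1}\circ\widehat{M_2} \to \widehat{M_2}\circ\widehat{M_1} \to \widehat{M_3} \to 0,
\]
with $\widehat{M_1}=\widehat{M}(v\Lambda_i,\Lambda_i)$, $\widehat{M_2}=\widehat{M}(vs_i\Lambda_i,v\Lambda_i)$, $\widehat{M_3}=\widehat{M}(vs_i\Lambda_i,\Lambda_i)$ (Theorem \ref{thm:SES}, itself proved via a Schur--Weyl-type functor from the $A_2$ quiver Hecke algebra), sandwiches the unknown module $\widehat{M_2}$ between two modules already controlled by the cyclotomic step. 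Feeding this into the exact functor $\Gamma(X)=\Cok(\braider{\widehat{M_1}}{X})$ and comparing $\Gamma$ of the projective cover of $M_2$ in $\gMod{R_{vs_i,v}(v\alpha_i)}$ with $\widehat{M_3}$ shows that $\widehat{M_2}$ \emph{is} that projective cover, which yields the vanishing. These two ingredients are the substance of the proof, and neither appears in your proposal.
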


In particular, we apply this proposition to $v = w_{\leq k-1}$ and $i = i_k$.
In proving Theorem \ref{thm:introext1}, the following two propositions are crucial. 

\begin{proposition}[{Corollary \ref{cor:M1ext}}] \label{prop:introext1}
For $x \in W$ and $i \in I$, we have 
\[
\Ext_{R(\beta)}^1 (\widehat{M}(x\Lambda_i, \Lambda_i), M(x\Lambda_i, \Lambda_i)) = 0.   
\]
\end{proposition}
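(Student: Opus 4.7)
The plan is to first convert the desired $\Ext^1$ vanishing into a self-extension vanishing statement for the affinization $\widehat{M} := \widehat{M}(x\Lambda_i, \Lambda_i)$, and then to establish that self-extension vanishing using an induction on the length of $x$ driven by the R-matrix technology developed earlier.

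The affinization is equipped with a distinguished homogeneous endomorphism $z$ of positive degree $d_x$ (coming from the affinization structure) which acts injectively and whose cokernel is $M(x\Lambda_i, \Lambda_i)$. Thus one has a short exact sequence
\[
0 \longrightarrow q^{d_x}\widehat{M} \xrightarrow{\ z\ } \widehat{M} \longrightarrow M(x\Lambda_i, \Lambda_i) \longrightarrow 0
\]
in $\gMod{R(\beta)}$. Applying $\Hom_{R(\beta)}(\widehat{M}, -)$ and looking at the relevant portion of the long exact sequence, the vanishing
\[
\Ext^1_{R(\beta)}(\widehat{M}, M(x\Lambda_i, \Lambda_i)) = 0
\]
is equivalent to the map $q^{d_x}\Ext^1_{R(\beta)}(\widehat{M}, \widehat{M}) \xrightarrow{\,z\,} \Ext^1_{R(\beta)}(\widehat{M}, \widehat{M})$ being surjective. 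Since $\widehat{M}$ is finitely generated and $R(\beta)$ is a Noetherian graded algebra with finite-dimensional graded pieces bounded below, the $\Zq$-module $\Ext^1_{R(\beta)}(\widehat{M}, \widehat{M})$ has graded components that are finite dimensional and bounded below. A graded Nakayama argument (elements in the lowest nonzero degree cannot be in the image of the positive-degree $z$) then upgrades surjectivity to the vanishing
\[
\Ext^1_{R(\beta)}(\widehat{M}(x\Lambda_i, \Lambda_i), \widehat{M}(x\Lambda_i, \Lambda_i)) = 0.
\]

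To prove this self-extension vanishing, I would induct on $\ell(x)$. The base case $x = e$ is trivial because $\widehat{M}(\Lambda_i, \Lambda_i)$ is a module over $R(0) = \bfk$. For the inductive step, fix a reduced expression and write $x = s_j y$ with $\ell(x) = \ell(y) + 1$. The standard determinantial identity (and its affinization) realizes $\widehat{M}(x\Lambda_i, \Lambda_i)$ as the image of the renormalized R-matrix
$\renormalizedR{\widehat{M}(x\Lambda_i, y\Lambda_i)}{\widehat{M}(y\Lambda_i, \Lambda_i)}$
from the convolution product $\widehat{M}(x\Lambda_i, y\Lambda_i)\circ \widehat{M}(y\Lambda_i, \Lambda_i)$. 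Using exactness of convolution together with Frobenius reciprocity (adjunctions between $\Ind$ and $\Res$), one expresses $\Ext^1(\widehat{M}(x\Lambda_i, \Lambda_i), \widehat{M}(x\Lambda_i, \Lambda_i))$ in terms of Ext groups involving the smaller factors, to which the inductive hypothesis applies; the key input is that the relevant R-matrix is nonzero and its head/socle structure is compatible with the factorization above, so that the corresponding complex actually computes $\widehat{M}(x\Lambda_i, \Lambda_i)$ in the derived sense.

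The main obstacle is this inductive step. Convolution does not satisfy a clean Künneth formula, and the exact relationship between $\Ext^1$ of a convolution and $\Ext^1$ of its factors must be controlled by precisely analyzing the relevant R-matrix, in particular verifying that it is injective (or that its kernel is handled by an Ext-vanishing in lower length). This is where the explicit understanding of R-matrices between affinized determinantial modules from the earlier sections of the paper should be essential.
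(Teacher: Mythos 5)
Your overall strategy does not match the paper's, and it contains two genuine gaps. First, the reduction of $\Ext^1_{R(\beta)}(\widehat{M},M)=0$ to $\Ext^1_{R(\beta)}(\widehat{M},\widehat{M})=0$ is not valid. Applying $\Hom_{R(\beta)}(\widehat{M},-)$ to $0\to q^{d}\widehat{M}\xrightarrow{z}\widehat{M}\to M\to 0$ yields the exact sequence
\[
\Ext^1(\widehat{M},q^{d}\widehat{M})\xrightarrow{z_*}\Ext^1(\widehat{M},\widehat{M})\to\Ext^1(\widehat{M},M)\to\Ext^2(\widehat{M},q^{d}\widehat{M})\xrightarrow{z_*}\Ext^2(\widehat{M},\widehat{M}),
\]
so $\Ext^1(\widehat{M},M)$ is an extension of $\ker\bigl(z_*\mid_{\Ext^2(\widehat{M},\widehat{M})}\bigr)$ by $\operatorname{coker}\bigl(z_*\mid_{\Ext^1(\widehat{M},\widehat{M})}\bigr)$. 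Even granting $\Ext^1(\widehat{M},\widehat{M})=0$, you must still kill the $z$-torsion in $\Ext^2(\widehat{M},\widehat{M})$, and no graded Nakayama argument controls \emph{injectivity} of a positive-degree operator on a bounded-below module (think of $\mathbf{k}[z]/(z^2)$). The implication that does hold cheaply is the opposite one: $\Ext^1(\widehat{M},M)=0$ implies $\Ext^1(\widehat{M},\widehat{M})=0$, since $\widehat{M}\simeq\varprojlim_p\widehat{M}/z^p\widehat{M}$ and each $\widehat{M}/z^p\widehat{M}$ is an iterated extension of copies of $M$ (Lemma \ref{lem:limext}, Corollary \ref{cor:projlim1}). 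Second, the induction on $\ell(x)$ does not close. Writing $x=vs_j$, the relevant factorization (Theorem \ref{thm:SES}) presents $\widehat{M}(x\Lambda_i,\Lambda_i)$ as the \emph{cokernel} of a renormalized R-matrix between convolutions of $\widehat{M}(v\Lambda_i,\Lambda_i)$ and $\widehat{M}(x\Lambda_i,v\Lambda_i)$; the latter factor is not of the form $\widehat{M}(?\Lambda_i,\Lambda_i)$ for shorter $?$, so the inductive hypothesis says nothing about it. Its $\Ext^1$-vanishing is exactly Theorem \ref{thm:ext1}, which the paper \emph{deduces from} Corollary \ref{cor:M1ext} together with Theorem \ref{thm:SES} --- and the proof of Theorem \ref{thm:SES} itself already uses the cyclotomic presentation underlying Corollary \ref{cor:M1ext}. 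So your induction is either circular or rests on unproven inputs.

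The paper's actual argument avoids R-matrices entirely at this stage. Proposition \ref{prop:cyclotomic} identifies the cyclotomic quotient $R_z^{a_{\Lambda_i}}(x\Lambda_i)$ with the quotient $R'(x\Lambda_i)$ of $R(\Lambda_i-x\Lambda_i)$ by the two-sided ideal generated by the idempotents $e(*,j)$ $(j\neq i)$ and $e(*,i,i)$; by Theorem \ref{thm:sl2categorification} (Kang--Kashiwara categorification plus categorical $\mathfrak{sl}_2$) this quotient is graded Morita equivalent to $\mathbf{k}[z]$, with $\widehat{M}(x\Lambda_i,\Lambda_i)$ corresponding to the projective generator $\mathbf{k}[z]$. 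Since a module category over a quotient by an ideal generated by idempotents is closed under extensions inside $\gMod{R(\Lambda_i-x\Lambda_i)}$, projectivity of $\widehat{M}(x\Lambda_i,\Lambda_i)$ in $\gMod{R'(x\Lambda_i)}$ immediately gives $\Ext^1_{R(\beta)}(\widehat{M}(x\Lambda_i,\Lambda_i),M(x\Lambda_i,\Lambda_i))=0$. If you want to salvage your plan, you would need to replace both of your steps by an argument of this type; as written, neither step can be completed.
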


\begin{proposition}[{Theorem \ref{thm:SES}}] \label{prop:introSES}
Let $v \in W$ and $i \in I$. 
If $vs_i > v$, we have the following short exact sequence
\begin{align*}
  0 &\to q^{(\alpha_i,\alpha_i)+ (\Lambda_i-v\Lambda_i, v\alpha_i)} \widehat{M}(v\Lambda_i,\Lambda_i) \circ \widehat{M}(vs_i\Lambda_i,v\Lambda_i) \\ 
  & \to \widehat{M}(vs_i\Lambda_i,v\Lambda_i) \circ \widehat{M}(v\Lambda_i,\Lambda_i) \to \widehat{M}(vs_i\Lambda_i,\Lambda_i) \to 0,   
\end{align*}
where the injection is given by a special homomorphism called renormalized R-matrix. 
\end{proposition}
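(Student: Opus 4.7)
The plan is to produce the injection as the renormalized R-matrix between the affinizations, to produce the surjection via the universal property of the affinization $\widehat{M}(vs_i\Lambda_i,\Lambda_i)$, and to verify exactness by deforming the argument over the polynomial ring of affinization parameters, or equivalently by matching graded characters against a quantum T-system identity.

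First, I would produce the injection. The renormalized R-matrix
\[
\renormalizedR{\widehat{M}(v\Lambda_i,\Lambda_i)}{\widehat{M}(vs_i\Lambda_i,v\Lambda_i)} \colon \widehat{M}(v\Lambda_i,\Lambda_i) \circ \widehat{M}(vs_i\Lambda_i,v\Lambda_i) \to \widehat{M}(vs_i\Lambda_i,v\Lambda_i) \circ \widehat{M}(v\Lambda_i,\Lambda_i)
\]
is a canonical non-zero homomorphism of graded $R$-modules whose intrinsic degree shift equals the $\Lambda$-invariant of the pair. Using the weights $\wt M(v\Lambda_i,\Lambda_i) = v\Lambda_i - \Lambda_i$ and $\wt M(vs_i\Lambda_i,v\Lambda_i) = -v\alpha_i$, together with the standard formula for $\Lambda$-invariants of pairs of affinized determinantial modules, a direct bilinear form computation should recover the stated shift $(\alpha_i,\alpha_i) + (\Lambda_i - v\Lambda_i, v\alpha_i)$. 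Injectivity then follows because both factors are torsion-free over the polynomial rings generated by their affinization parameters and the renormalized R-matrix does not annihilate the cyclic vector; this is the standard injectivity criterion for R-matrices between affinizations of real simple modules.

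Next, I would construct the surjection and identify the cokernel. In the non-affine setting, the convolution $M(vs_i\Lambda_i,v\Lambda_i) \circ M(v\Lambda_i,\Lambda_i)$ of this compatible pair of determinantial modules has simple head $M(vs_i\Lambda_i,\Lambda_i)$, by the head computations available in the Kang--Kashiwara--Kim--Oh--Park theory of determinantial modules. Lifting to the affine level via the universal property of $\widehat{M}(vs_i\Lambda_i,\Lambda_i)$ produces a surjective $R$-homomorphism
\[
\widehat{M}(vs_i\Lambda_i,v\Lambda_i) \circ \widehat{M}(v\Lambda_i,\Lambda_i) \twoheadrightarrow \widehat{M}(vs_i\Lambda_i,\Lambda_i).
\]
The composition of this surjection with the R-matrix injection vanishes after specializing the affinization parameters, because at that level one recovers the non-affine T-system short exact sequence for simple determinantial modules; torsion-freeness then upgrades the vanishing to the affinized level, producing a complex of the form claimed in the statement.

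The main obstacle is verifying exactness in the middle. My preferred route is to specialize all affinization parameters to generic values simultaneously: the resulting sequence of finite-dimensional modules agrees with the non-affine T-system, which is exact, and exactness propagates back to the affinized sequence by a Nakayama-type argument using flatness of the three modules over the polynomial ring in the affinization parameters. An alternative is to decategorify: using the identification of classes with unipotent quantum minors, one verifies the relation
\[
D(vs_i\Lambda_i,v\Lambda_i)\, D(v\Lambda_i,\Lambda_i) = q^{(\alpha_i,\alpha_i)+(\Lambda_i-v\Lambda_i,v\alpha_i)}\, D(v\Lambda_i,\Lambda_i)\, D(vs_i\Lambda_i,v\Lambda_i) + D(vs_i\Lambda_i,\Lambda_i)
\]
in $\qcoordinate$ and then compares graded multiplicities to rule out additional composition factors in the middle term. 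The specialization route looks cleaner and reduces the problem to facts already in the literature on determinantial modules.
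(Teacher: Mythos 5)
Your outline identifies the right three ingredients (the R-matrix injection, a surjection onto $\widehat{M}(vs_i\Lambda_i,\Lambda_i)$, exactness in the middle), but the step that carries all the weight --- exactness, equivalently the identification of the cokernel --- is not established by either of your proposed routes. The specialization argument fails at the outset: setting the affinization parameters to $0$ turns $\renormalizedR{\widehat{M_1}}{\widehat{M_2}}$ into $\mathbf{r}_{M_1,M_2}$, which is \emph{not} injective (its kernel is $M_3$ up to a shift), and there is no exact ``non-affine T-system'' of the displayed shape; at the finite-dimensional level one only has that $M_1\circ M_2$ and $M_2\circ M_1$ are both of length two with the same two composition factors $M_3$ and $M_1\nabla M_2$ arranged in opposite order (Lemma \ref{lem:lambda}(2)). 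Specializing to \emph{generic nonzero} values instead makes the R-matrix invertible, so the cokernel disappears and nothing is tested. The Nakayama/flatness step is then circular: freeness of the cokernel over the affinization parameter is precisely what has to be proved, since a cokernel of an injection of free $\mathbf{k}[z]$-modules can perfectly well contain $\mathbf{k}[z]/(z^m)$ summands. The decategorified identity only pins down $[\Cok]_q$ (and as stated it is off by a factor of $1-q_i^2$ on the right-hand side), which again cannot distinguish $\widehat{M_3}$ from a torsion-contaminated module with the same class. Finally, two inputs you treat as routine are not: computing $\Lambda(M_1,M_2)$ requires ruling out that $M_1\circ M_2$ is simple (done in Lemma \ref{lem:lambda} via an $\varepsilon_j^*$ argument, not a bilinear-form computation), and the surjection onto $\widehat{M_3}$ does not come from a ``universal property'' --- it only exists once one knows the cokernel lies in $\gMod{R'(vs_i\Lambda_i)}$.

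For comparison, the paper's proof of Theorem \ref{thm:SES} sidesteps all of this by first writing down the short exact sequence explicitly for the type $A_2$ quiver Hecke algebra (where the cokernel of the rank-one R-matrix is literally $\mathbf{k}[x_1,x_2]/(ax_2+bx_1)$), then transporting it through an exact generalized Schur--Weyl duality functor $F$ built from the duality datum $(\widehat{M_1},\widehat{M_2},\renormalizedR{\widehat{M_1}}{\widehat{M_2}})$, and finally identifying $F(\widehat{L}^{A_2}(\alpha_3))$ with $\widehat{M_3}$ by a character computation in $\gMod{R'(vs_i\Lambda_i)}\simeq\gMod{\mathbf{k}[z]}$ together with a torsion-freeness argument using the induced injective endomorphism of positive degree. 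If you want to avoid the $A_2$ detour, the viable direct route is: injectivity from Proposition \ref{prop:reninj}; $[\Cok]_q=(1-q_i^2)^{-1}[M_3]$ from Lemma \ref{lem:lambda}, so that $\Cok$ lies in the Serre subcategory $\gMod{R'(vs_i\Lambda_i)}$; and then a snake-lemma argument with the $z_2$-action to kill the torsion. That is essentially Step 3 of the paper's proof, and it is the part your proposal is missing.
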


Note that the short exact sequence in Proposition \ref{prop:introSES} consists of only two kinds of modules: 
the one includes $\widehat{M}(v\Lambda_i,\Lambda_i)$ and $\widehat{M}(vs_i\Lambda_i,\Lambda_i)$, which are studied in Proposition \ref{prop:introext1}, 
and the other includes $\widehat{M}(vs_i\Lambda_i,v\Lambda_i)$, which we have to study in Theorem \ref{thm:introext1}. 
Based on this observation, Theorem \ref{thm:introext1} is deduced from these two propositions. 

In the proof of Proposition \ref{prop:introext1}, 
we use a category equivalence deduced from Kang-Kashiwara's categorification of highest weight integrable modules \cite{MR3134909}, 
and 2-representation theory of the 2-categorical $U_q(\mathfrak{sl}_2)$ \cite{rouquier20082kacmoodyalgebras}. 

The second key theorem is the following. 

\begin{theorem}[{Proposition \ref{prop:BGGformula}}] \label{thm:BGG}
We have the following equality in an appropriate Grothendieck group. 
\[
[P(\lambda,s)]_q = \sum_{(\mu, t)} \overline{[\overline{\nabla}(\mu, t): L(\lambda,s)]_q} [\Delta(\mu,t)]_q, 
\]
where $\overline{(\cdot)}$ is a ring involution on $\mathbb{Z}[q,q^{-1}]$ defined by $\overline{q} = q^{-1}$. 
\end{theorem}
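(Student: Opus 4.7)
The plan is to prove this as a graded version of BGG reciprocity in the stratified setting established by Theorem \ref{thm:intro1}. The central abstract tool is an Ext-orthogonality pairing between standard and proper costandard modules; once this is in hand, BGG reciprocity is essentially formal, with the bar involution emerging from the duality used to define $\overline{\nabla}$.

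First I would establish the graded Hom/Ext-orthogonality
\[
\dim_q \Hom_{R(\beta)}(\Delta(\sigma), \overline{\nabla}(\sigma')) = \delta_{\sigma,\sigma'}, \qquad \Ext^{i}_{R(\beta)}(\Delta(\sigma), \overline{\nabla}(\sigma')) = 0 \text{ for all } i \geq 1.
\]
Both statements are standard features of any stratified category in Kleshchev's sense and can be obtained by induction on the preorder $\leq$ on $\Sigma(\beta)$. The induction uses that $\overline{\nabla}(\sigma')$ has head $L(\sigma')$ of graded multiplicity one and its remaining composition factors $L(\tau)$ satisfy $\tau < \sigma'$, while $\Delta(\sigma)$ is a quotient of $P(\sigma)$ whose composition factors $L(\tau)$ satisfy $\tau \leq \sigma$, combined with the explicit filtration of the kernel $K(\sigma)$ supplied by Theorem \ref{thm:intro1}(2).

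Next, Theorem \ref{thm:intro1} implies that $P(\lambda,s)$ itself admits a finite filtration by graded shifts of standard modules: start from the surjection $P(\lambda,s) \twoheadrightarrow \Delta(\lambda,s)$ in condition (1), and iterate condition (2) on the kernel. Denoting by $(P(\lambda,s) : \Delta(\mu,t))_q \in \Z[q,q^{-1}]$ the resulting graded standard multiplicity, I apply $\Hom_{R(\beta)}(-, \overline{\nabla}(\mu,t))$ to this filtration; the Ext-vanishing collapses every long exact sequence, and the Hom-orthogonality reduces the computation to a single term, yielding
\[
(P(\lambda,s) : \Delta(\mu,t))_q = \overline{\dim_q \Hom_{R(\beta)}(P(\lambda,s), \overline{\nabla}(\mu,t))},
\]
where the bar reflects that $\dim_q$ of a graded morphism space inverts the degree shifts appearing in the standard filtration. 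Finally, since $P(\lambda,s)$ is the graded projective cover of $L(\lambda,s)$, standard graded-module theory gives
\[
\dim_q \Hom_{R(\beta)}(P(\lambda,s), \overline{\nabla}(\mu,t)) = [\overline{\nabla}(\mu,t) : L(\lambda,s)]_q,
\]
and substituting this into the standard-filtration expansion of $[P(\lambda,s)]_q$ produces the claimed identity.

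The main obstacle I anticipate is the careful bookkeeping of gradings, in particular locating where the bar involution enters. The modules $\overline{\nabla}(\mu,t)$ are generally infinite-dimensional proper costandards, so I must work with suitable completed or topological graded Hom spaces and confirm that $\dim_q$ is well defined and additive on the relevant filtrations. Fixing the duality convention that defines $\overline{\nabla}$ from $\Delta$ so that graded shifts pair contragrediently is precisely what produces the bar on the right-hand side rather than a trivially symmetric formula; once the conventions are aligned, the rest of the argument is formal homological algebra internal to the stratified category.
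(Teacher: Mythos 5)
Your argument is circular with respect to the paper's logical structure. You derive the identity from the stratification of Theorem \ref{thm:intro1}: you use the $\Delta$-filtration of $K(\sigma)$ from Theorem \ref{thm:intro1}(2) to prove the Hom/$\Ext$-orthogonality between $\Delta(\sigma)$ and $\overline{\nabla}(\sigma')$, and you use the resulting $\Delta$-filtration of $P(\lambda,s)$ to read off the multiplicities. But in this paper the implication runs the other way: Proposition \ref{prop:BGGformula} is one of the two inputs to the criterion of Proposition \ref{prop:criterion}, which is what \emph{produces} Theorem \ref{thm:intro1}. Before the stratification is established there is no $\Delta$-filtration of $P(\lambda,s)$ or of $K(\sigma)$ to induct on, and no a priori vanishing of $\Ext^i(\Delta(\sigma),\overline{\nabla}(\sigma'))$ for $i\geq 1$. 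What you have written is essentially a re-derivation of Kleshchev's generalized BGG reciprocity inside an already-stratified category (Proposition \ref{prop:costandard}(3)), which cannot serve as a proof of the statement in its intended role.

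The paper's actual proof avoids any filtration of projectives. It works with the Euler form $\Extform{M}{N}=\sum_k(-1)^k\qdim\Ext^k_{R(\beta)}(M,N)$, which is defined for $M$ of finite projective dimension regardless of any stratification, and identifies it with Kashiwara's bilinear form on $\quantum{-}$ under the categorification (Lemma \ref{lem:bilinearforms}). The key computation is Lemma \ref{lem:dualbasis}, $\Extform{\Delta(\lambda,s)}{\overline{\nabla}(\mu,t)}=\delta_{(\lambda,s),(\mu,t)}$, proved directly from the induction--restriction and restriction--coinduction adjunctions, the unmixing/cuspidal restriction calculus of Theorem \ref{thm:cuspidalCw}, the approximate projective resolutions of $R_{*,w}(\beta)$ (Theorem \ref{thm:projresol}), and the pairing $(\overline{f_{\beta_k}},f_{\beta_k}^{\mathrm{up}})=1$ of the PBW and dual PBW root vectors. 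Given this, the proposition follows by pure linear algebra in $K(\gmod{R})_{\mathbb{Q}((q))}$: the classes $[\Delta(\mu,t)]_q$ form a basis (by nondegeneracy of the pairing and unitriangularity of the $\overline{\nabla}$-decomposition matrix), one expands $[P(\lambda,s)]_q=\sum a_{(\mu,t)}[\Delta(\mu,t)]_q$, and the coefficients are extracted by pairing against $\overline{\nabla}(\lambda',s')$; the bar involution enters because $\Extform{\cdot}{\cdot}$ is antilinear in its first argument. If you want to repair your proof, you must replace your step 1 by a direct, stratification-free proof of the orthogonality at the level of Euler characteristics, which is exactly what Lemma \ref{lem:dualbasis} supplies.
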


This equation should be thought of as a shadow of the BGG reciprocity. 
To prove it, we use a coincidence of $\Ext$-bilinear form with the Kashiwara's bilinear form on $\quantum{-}$ under the categorification. 
In the computation, we also establish short exact sequences that approximate a projective resolution of $R_{*,w}(\beta)$ as an $R(\beta)$-module (Section \ref{sub:projresol}). 

By Theorem \ref{thm:introext1} and Theorem \ref{thm:BGG}, we may apply a general criterion (Proposition \ref{prop:criterion}) to prove Theorem \ref{thm:intro1}.

As an application, we investigate the case where $\mathfrak{g}$ is of arbitrary affine type.
For any convex order, we establish a stratification on $\gMod{R(\beta)}$ (Theorem \ref{thm:finerstratification}), which is finer than that of Theorem \ref{thm:intro1}. 
For any Dynkin node $i$ of the underlying finite type Lie algebra, we study a specific standard module and prove that it categorifies the minimal imaginary root vector of \cite{MR3874704} (Corollary \ref{cor:imaginarymodule}). 
The basic ideas in the proof are similar to \cite{MR3694676}. 
We highlight several crucial differences: 
\begin{enumerate}
  \item From the outset, we have a kind of projectivity about $\widehat{M}(ws_i \Lambda_i, w\Lambda_i)$ by Proposition \ref{prop:introext1}. 
  \item We do not need the growth estimate \cite[Theorem 15.5]{MR3694676}, which was used to prove injectivity of some homomorphisms in \cite[Lemma 16.1]{MR3694676};  
  instead, we utilize the injectivity of renormalized R-matrices (Proposition \ref{prop:reninj}).
  \item We use R-matrix methods to give shortcuts for several arguments.  
  \item We need to address $A_{2l}^{(2)}$ case separately.  
\end{enumerate}

The organization of this paper is as follows.  
In Section \ref{sec:preliminaries}, we review background knowledge on Laurentian algebras and quantum groups. 
In Section \ref{sec:quiverHecke}, we provide a summary of fundamental topics in the representation theory of quiver Hecke algebras. 
It includes a number of refinements and generalizations. 
We particularly highlight the injectivity of renormalized R-matrices (Proposition \ref{prop:reninj}), which plays a crucial role in this paper. 
In Section \ref{sub:cuspidal}, we introduce weakly convex orders and develop cuspidal decompositions in this generalized setting. 

Section \ref{sec:properties} proves several building blocks to establish the stratification. 
In particular, Proposition \ref{prop:introext1} is proved. 
In Section \ref{sub:interlude}, we briefly digress to demonstrate that generalized Schur-Weyl duality functors are exact on modules with finite projective dimension, in particular, on $\gMod{R_{w,*}}$. 

In Section \ref{sec:stratifications}, after a quick review of stratified categories, we present a criterion for determining whether a given category is stratified. 
We apply it to our setup, and establish Theorem \ref{thm:intro1} and Theorem \ref{thm:intro2}. 
In Section \ref{sub:quantumunipotent}, we prove that several subalgebras of $\quantum{-}$ and $K_{\oplus} (\gproj{R_{w,*}})$ all coincide under the categorification. 

Section \ref{sec:affinePBW}  provides a study of the affine type case. 

In the Appendix, we summarize in a table how to translate different conventions for quantum groups into our setup. 

\subsection*{Acknowledgement}
I am deeply grateful to my supervisor, Noriyuki Abe, for his continuous support and invaluable feedback throughout this research. 
I would also like thank to Katsuyuki Naoi for his fruitful suggestions,
from whom I learned the criterion presented in Proposition \ref{prop:criterion}.  
I would also like to express my gratitude to Yoshiyuki Kimura and Hironori Oya for valuable discussions. 
Additionally, I am grateful to Peter McNamara for his insightful comments, 
which helped identify and address several issues in the initial version of this paper. 
I would also like to thank Masaki Kashiwara for helpful comments on the preprint. 
I am supported by the FMSP program at Graduate School of Mathematical Sciences, the University of Tokyo. 

\section{Preliminaries} \label{sec:preliminaries}

\subsection{Notation and convention} 

Let $\Sym_n$ be the symmetric group of degree $n$. \index{$\Sym_n$}
When $n = n' + n''$ with $n', n'' \geq 1$, let $\Sym_n^{n', n''}$ \index{$\Sym_n^{n', n''}$} be the set of minimal length coset representatives for $\Sym_n / (\Sym_{n'} \times \Sym_{n''})$. 
We define $w[n', n''] \in \Sym_n^{n', n''}$ as \index{$w[n',n'']$}
\[
w[n', n''](k) = \begin{cases}
  k + n'' & 1 \leq k \leq n', \\
  k - n' & n' + 1 \leq k \leq n.
\end{cases}
\]

Unless otherwise specified, all modules are left modules. 

\subsection{Laurentian algebras} \label{sub:Laurentian}

Let $\mathbf{k}$ be a field. 
Let $H = \bigoplus_{d \in \mathbb{Z}} H_d$ be a $\mathbb{Z}$-graded algebra whose zeroth homogeneous component $H_0$ is a $\mathbf{k}$-algebra. 
We assume that $H$ is left Noetherian. 
It is known that $H$ is left Noetherian as a graded algebra if and only if it is left Noetherian as an ungraded algebra \cite[Theorem 5.4.7]{MR2046303}.

Unless otherwise specified, $H$-modules are assumed to be graded. 
For an $H$-module $M$ and an integer $n \in \mathbb{Z}$, 
let $q^n M$ be the $H$-module whose $d$-th homogeneous component is $M_{d-n}$. \index{$q^n M$}
%Then, we have $\qdim (q^n M) = q^n \qdim (M)$. 
For $H$-modules $M$ and $N$, 
let $\hom_H(M, N)$ denote the set of homomorphisms of degree zero. \index{$\hom_H(M,N)$}
An element of $\hom_H(q^d M, N)$ is called a homomorphism of degree $d$. 
We define a $\mathbb{Z}$-graded $\mathbf{k}$-vector space $\Hom_H (M, N)$ by $\Hom_H (M, N)_d = \hom_H (q^d M, N)$. \index{$\Hom_H(M,N)$}
When $M = N$, we write $\End_H(M) = \Hom_H(M,M)$ and $\grend_H(M) = \hom_H(M,M)$. \index{$\End_H(M)$} \index{$\grend_H(M)$}

We mainly focus on finitely generated graded $H$-modules in this paper. 
Note that a graded $H$-module is finitely generated as a graded $H$-module if and only if it is finitely generated as an ungraded $H$-module.
For finitely generated $H$-modules $M$ and $N$, $\Hom_H(M,N)$ is just the set of homomorphisms as ungraded modules. 
Let $\gMod{H}$ be the category of finitely generated graded $H$-modules whose morphisms are $H$-module homomorphisms of degree zero. \index{$\gMod{H}$}
It is an abelian category with enough projective objects and an autoequivalence $q$. 
For $N \in \gMod{H}$ and $k \in \mathbb{Z}_{\geq 0}$, 
let $\ext_H^k(\cdot, N)$ (resp. $\Ext_H^k(\cdot, N)$) be the $k$-th right derived functor of $\hom_H(\cdot, N)$ (resp. $\Hom_H(\cdot, N)$).  \index{$\ext_H^k(M,N)$} \index{$\Ext_H^k(M)$}
We have $\Ext_H^k (M, N) = \bigoplus_{d \in \mathbb{Z}} \ext_H^k (q^d M, N)$. 

A $\mathbb{Z}$-graded $\mathbf{k}$-vector space $V = \bigoplus_{d} V_d$ is said to be Laurentian if $V_d = 0$ for $d \ll 0$ and $\dim V_d < \infty $ for all $d \in \mathbb{Z}$.
If $V$ is Laurentian, the formal series in an indeterminate $q$
\[
\qdim V = \sum_{d\in \mathbb{Z}} (\dim V_d) q^d \index{$\qdim V$}
\] 
is an element of $\mathbb{Z}((q))$. 
In what follows, we assume that $H$ is Laurentian. 
Then, every finitely generated $H$-module is Laurentian. 
Besides, $\Ext_H^k (M, N)$ is a Laurentian $\mathbf{k}$-vector space for any $M, N\in \gMod{H}$ and $k \geq 0$. 

\begin{lemma}[{\cite[Lemma 2.2]{MR3495746}}]
In $\gMod{H}$,  
\begin{enumerate}
 \item all simple objects are finite dimensional, 
 \item there are only finitely many simple objects up to isomorphim and degree shift, and 
 \item every object has a projective cover. 
\end{enumerate}
\end{lemma}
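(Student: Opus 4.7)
The plan is to exploit two structural consequences of the Laurentian hypothesis: $H_0$ is a finite-dimensional $\mathbf{k}$-algebra (since $\dim H_0 < \infty$), and the positive part $H_{>0} = \bigoplus_{d > 0} H_d$ behaves like an augmentation ideal, so that the graded ideal $J = \rad(H_0) \oplus H_{>0}$ satisfies $H/J \simeq H_0/\rad(H_0)$ semisimple and a graded Nakayama lemma is available. Once these are in place, everything reduces to familiar representation theory of finite-dimensional algebras.

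For (1), let $M$ be a simple object of $\gMod{H}$. Since $M$ is Laurentian there is a smallest $d_0$ with $M_{d_0} \neq 0$; pick a nonzero $m \in M_{d_0}$. By simplicity $Hm = M$, but $H_{<0} m = 0$ by the minimality of $d_0$, and $H_{>0} m$ is a graded submodule contained in $\bigoplus_{d > d_0} M_d$, so it misses $m$ and is proper, hence zero. Therefore $M = H_0 m$ is a finite-dimensional quotient of $H_0$. The same argument shows that $M$ is concentrated in a single degree and is a simple $H_0$-module there; conversely, any simple $H_0$-module extends to a simple graded $H$-module by letting $H_{>0}$ act by zero. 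Part (2) follows: isomorphism classes of simple objects in $\gMod{H}$ modulo degree shift correspond bijectively to isomorphism classes of simple $H_0$-modules, and $H_0$ being a finite-dimensional algebra has only finitely many simple modules.

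For (3), take $M \in \gMod{H}$ and consider $M/JM$. As a finitely generated $H/J$-module it is a finite direct sum $\bigoplus_i q^{d_i} L_i$ of shifted simples, using semisimplicity of $H/J$ together with the Laurentian bound. Lifting primitive idempotents from $H_0/\rad(H_0)$ to $H_0 \subset H$ produces, for each simple $L$, an indecomposable projective $P(L) = eH$ (suitably shifted) in $\gMod{H}$ satisfying $P(L)/JP(L) \simeq L$. Setting $P = \bigoplus_i q^{d_i} P(L_i)$, the identification $P/JP \simeq M/JM$ lifts by projectivity to a homomorphism $\varphi \colon P \to M$. Graded Nakayama applied to $\Cok \varphi$ (which satisfies $J \cdot \Cok \varphi = \Cok \varphi$) shows $\varphi$ is surjective; since $\varphi$ induces an isomorphism on $/J$, its kernel lies in $JP = \rad(P)$, so $\varphi$ is essential. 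This exhibits $P$ as a projective cover of $M$.

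The only substantive point is the graded Nakayama lemma itself: if $N \in \gMod{H}$ satisfies $JN = N$, then $N = 0$. Because $N$ is Laurentian it has a smallest nonzero homogeneous component $N_{d_0}$, and the positive part of $J$ contributes only to degrees strictly greater than $d_0$, so $N_{d_0} = (JN)_{d_0} = \rad(H_0) \cdot N_{d_0}$; ordinary Nakayama applied to the finite-dimensional $H_0$-module $N_{d_0}$ then forces $N_{d_0} = 0$, a contradiction. I expect this verification, together with confirming that idempotent lifts from $H_0$ yield genuine projective covers in $\gMod{H}$, to be the main bookkeeping; the remaining steps are direct transcriptions of the standard finite-dimensional theory to the graded Laurentian setting.
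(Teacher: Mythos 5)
Your argument silently assumes that $H$ is non‑negatively graded, i.e.\ $H_{<0}=0$, and that is where it breaks down. ``Laurentian'' only requires the grading to be bounded below: $H_d$ may be nonzero for finitely many negative $d$, and this actually happens in the intended application, since in the quiver Hecke algebra $R(\beta)$ the element $\tau_k e(\nu)$ has degree $-(\alpha_{\nu_k},\alpha_{\nu_{k+1}})$, which is negative when $\nu_k=\nu_{k+1}$. Once $H_{<0}\neq 0$ is allowed, the subspace $J=\rad(H_0)\oplus H_{>0}$ is not an ideal (for instance $H_{-1}H_{1}\subseteq H_0$ need not land in $\rad(H_0)$), $H/J$ is not $H_0/\rad(H_0)$, and in part (1) the subspace $H_{>0}m$ is not a submodule: the submodule it generates contains $H_{-e}H_{e}m\subseteq H_0m$ in degree $d_0$ and can be all of $M$. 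Concretely, your conclusions that every simple object is concentrated in a single degree and that simples biject with simple $H_0$-modules are false already for the nilHecke algebra $R(n\alpha_i)$, whose unique self-dual simple $L(i^n)$ has dimension $n!$ spread over several degrees. So your proof of (1)--(3) is only valid for non‑negatively graded $H$, which excludes exactly the algebras this lemma is applied to.

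The lemma is still true, but the argument must use boundedness below rather than non‑negativity. For (1): let $-a$ be the minimal degree with $H_{-a}\neq 0$ and let $d_0$ be the lowest nonzero degree of a simple $M$. For any $N$, the subspace $HM_{\geq N}$ \emph{is} a submodule and is contained in $M_{\geq N-a}$; if $M_{\geq N}\neq 0$ for every $N$, simplicity forces $M=HM_{\geq N}\subseteq M_{\geq N-a}$, which for $N>d_0+a$ contradicts $M_{d_0}\neq 0$. Hence $M$ is bounded above as well as below, and being Laurentian it is finite dimensional. For (2) and (3) one must replace your $J$ by a genuine graded ideal with finite-dimensional semisimple quotient (e.g.\ the graded Jacobson radical, or the ideal generated by $H_{\geq N}$ for $N\gg 0$ together with the radical of the resulting finite-dimensional quotient) before lifting idempotents and running the graded Nakayama argument. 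The paper itself does not reprove any of this; it simply cites \cite[Lemma 2.2]{MR3495746}.
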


Let $\{L(\lambda) \mid \lambda \in \Lambda \}$ be a set of representatives of simple $H$-modules up to isomorphism and degree shift, where $\Lambda$ is a finite set. 
Let $P(\lambda)$ be the projective cover of $L(\lambda)$ for each $\lambda \in \Lambda$. 
We consider the graded category of finite dimensional $H$-modules $\gmod{H}$, and the graded category of finitely generated projective $H$-modules $\gproj{H}$. \index{$\gmod{H}$} \index{$\gproj{H}$}
The category $\gmod{H}$ is a finite-length abelian category, while $\gproj{H}$ is a Krull-Schmidt category. 
The Grothendieck group $K(\gmod{H})$ is a free $\mathbb{Z}[q, q^{-1}]$-module with a basis $\{[L(\lambda)] \mid \lambda \in \Lambda\}$. \index{$K(\gmod{H})$}
Similarly, the split Grothendieck group $K_{\oplus}(\gproj{H})$ is a free $\mathbb{Z}[q,q^{-1}]$-module with a basis $\{ [P(\lambda)] \mid \lambda \in \Lambda \}$. \index{$K_{\oplus}(\gproj{H})$}

Although $\gMod{H}$ is not of finite-length unless $H$ itself is finite-dimensional, 
we can define ``graded Jordan-H\"{o}lder multiplicity'' as follows:

\begin{definition} \label{def:composition}
  Let $K(\gmod{H})_{\mathbb{Z}((q))} = K(\gmod{H}) \otimes_{\mathbb{Z}[q, q^{-1}]} \mathbb{Z}((q))$. \index{$K(\gmod{H})_{\mathbb{Z((q))}}$}
  For $M \in \gMod{H}$, we define 
  \begin{align*}
  &[M:L(\lambda)]_q = \qdim \Hom_H (P(\lambda), M) \in \mathbb{Z}((q)), \\ \index{$[M:L(\lambda)]_q$}
  &[M]_q = \sum_{\lambda \in \Lambda} [M:L(\lambda)]_q [L(\lambda)] \in K(\gmod{H})_{\mathbb{Z}((q))}.  \index{$[M]_q$}
  \end{align*}
  We say that $L(\lambda)$ is a composition factor of $M$ if $[M:L(\lambda)]_q$ is nonzero. 
\end{definition}

It defines a $\mathbb{Z}[q, q^{-1}]$-module homomorphism $K(\gMod{H}) \to K(\gmod{H})_{\mathbb{Z}((q))}$, which maps $[M]$ to $[M]_q$. 
Note that, if $M$ is finite dimensional, then $[M]_q = [M]$ in $K(\gmod{H})$. 

We give another description of $[M]_q$ as a limit of a sequence in $K(\gmod{H})$. 

\begin{definition}
  Let $M \in \gMod{H}$.
  A descending filtration of graded $H$-submodules $M = M^0 \supset M^1 \supset M^2 \supset \cdots $ is said to be locally-finite if 
  $M^p/M^{p+1}$ is finite dimensional for all $p \geq 0$. 
  It is said to be Hausdorff if $\bigcap_{p \geq 0} M^p = 0$. 
\end{definition}

Every $M \in \gMod{H}$ has a locally-finite Hausdorff filtration: 
for example, $(H M_{\geq p})_{p \geq 0}$ is locally-finite Hausdorff, where $M_{\geq p} = \bigoplus_{d \geq p} M_d$. 
A descending filtration $(M^p)_{p \geq 0}$ of $M$ is locally-finite Hausdorff if and only if $(M^p)_{p \geq 0}$ and $(H M_{\geq p})_{p \geq 0}$ define the same topology on $M$. 

\begin{lemma}
 Let $M \in \gMod{H}$ and $(M^p)_{p \geq 0}$ be a locally-finite Hausdorff filtration of $M$. 
 Then, we have an equality in $K(\gmod{H})_{\mathbb{Z}((q))}$: 
 \[
 [M]_q = \sum_{p=0}^{\infty} [M^p/M^{p+1}] = \lim_{p \to \infty} [M/M^p]. 
 \]
\end{lemma}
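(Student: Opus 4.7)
The plan is to handle the two equalities separately. The second equality, $\sum_{p=0}^{\infty} [M^p/M^{p+1}] = \lim_{p \to \infty} [M/M^p]$, is essentially formal: for each fixed $p$ the finite filtration $M/M^p = M^0/M^p \supset M^1/M^p \supset \cdots \supset M^p/M^p = 0$ has successive quotients $M^{p'}/M^{p'+1}$ for $0 \leq p' < p$, so $[M/M^p] = \sum_{p'=0}^{p-1} [M^{p'}/M^{p'+1}]$ in $K(\gmod{H})$. The two expressions in the statement are then literally the series and its sequence of partial sums, and nothing remains to be checked but convergence in $K(\gmod{H})_{\mathbb{Z}((q))} = \bigoplus_{\lambda} \mathbb{Z}((q))[L(\lambda)]$, which will drop out of the argument below.

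For the substantive first equality, $[M]_q = \lim_{p \to \infty} [M/M^p]$, I would fix $\lambda \in \Lambda$ and $d \in \mathbb{Z}$ and show that the coefficient of $q^d$ in $[M/M^p : L(\lambda)]$ stabilizes at the coefficient of $q^d$ in $[M:L(\lambda)]_q = \qdim \Hom_H(P(\lambda), M)$ once $p$ is large enough. The key auxiliary fact to establish first is: \emph{for every $n \in \mathbb{Z}$ there exists $p_0(n) \geq 0$ such that $M^p \cap M_k = 0$ for all $k \leq n$ and all $p \geq p_0(n)$.} Since $M$ is Laurentian, $M_{\leq n} := \bigoplus_{k \leq n} M_k$ is finite-dimensional; the subspaces $(M^p \cap M_{\leq n})_p$ form a descending chain whose total intersection, by the Hausdorff hypothesis, is contained in $\bigcap_p M^p = 0$, so the chain stabilizes at $0$.

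With this in hand, choose a finite set of homogeneous generators of $P(\lambda)$ of maximal degree $e$, so that $q^d P(\lambda)$ is generated in degrees at most $d+e$. Since $P(\lambda)$ is projective, applying $\hom_H(q^d P(\lambda), -)$ to $0 \to M^p \to M \to M/M^p \to 0$ produces a short exact sequence; but any $H$-map $q^d P(\lambda) \to M^p$ must send the chosen generators into $M^p \cap M_{\leq d+e}$, which vanishes for $p \geq p_0(d+e)$. Hence $\hom_H(q^d P(\lambda), M^p) = 0$ and the connecting map $\hom_H(q^d P(\lambda), M) \to \hom_H(q^d P(\lambda), M/M^p)$ is an isomorphism. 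Using that $[N:L(\lambda)]_q = [N:L(\lambda)]$ agrees with the ordinary graded Jordan-Hölder multiplicity when $N$ is finite-dimensional, the dimension of each side is precisely the coefficient of $q^d$ in $[M:L(\lambda)]_q$ and in $[M/M^p:L(\lambda)]$, respectively. Summing over $\lambda \in \Lambda$ and interpreting coefficient-wise convergence in $\mathbb{Z}((q))$ yields the desired equality in $K(\gmod{H})_{\mathbb{Z}((q))}$.

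I do not expect any serious obstacle here: the argument rests entirely on the stabilization fact in the second paragraph, which is just the observation that a descending sequence of subspaces of a finite-dimensional space is eventually constant. The only care required is coordinating the two uniformities, one over $\lambda$ (handled by the finiteness of $\Lambda$) and one over $d$ (handled by the coefficient-wise, i.e.\ $q$-adic-like, topology on $\mathbb{Z}((q))$ combined with the Laurentian lower bound on the support of $M$).
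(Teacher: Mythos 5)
Your proof is correct and follows essentially the same route as the paper, which deduces the lemma from the isomorphism $\hom_H(q^d P(\lambda), M) \xrightarrow{\sim} \hom_H(q^d P(\lambda), M/M^p)$ for $p$ large (Lemma \ref{lem:limext}(1)), established there by reducing to $N = q^d H$ and using that $M_d^p = 0$ for large $p$ — the same stabilization fact you prove via the descending chain in the finite-dimensional space $M_{\leq n}$. Your version merely makes explicit the generator-degree bookkeeping and the coefficient-wise convergence in $\mathbb{Z}((q))$ that the paper leaves implicit.
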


\begin{proof}
  It follows from Lemma \ref{lem:limext} (1) below.  
\end{proof}

\begin{lemma} \label{lem:limext}
  Let $M \in \gMod{H}$ and $(M^p)_{p \geq 0}$ be a locally-finite Hausdorff filtration of $M$. Let $k \in \mathbb{Z}_{\geq 0}$. 
  \begin{enumerate}
    \item $M \xrightarrow{\sim} \varprojlim_p M/M^p$ in $\gMod{H}$. 
    \item For $N \in \gMod{H}$, we have $\ext_H^k (N, M) \xrightarrow{\sim} \varprojlim_p \ext_H^k (N, M/M^p)$. 
    \item For finite dimensional $N \in \gmod{H}$, there exists $n \in \mathbb{Z}_{\geq 0}$ such that 
    \[
    \ext_H^k (M/M^n, N) \xrightarrow{\sim} \ext_H^k (M/M^{n+1}, N) \xrightarrow{\sim} \cdots \xrightarrow{\sim} \ext_H^k (M, N). 
    \]
  \end{enumerate}
\end{lemma}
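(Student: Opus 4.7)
The plan is to reduce each of the three parts to a degree-by-degree stabilization argument, exploiting the Laurentian structure of $H$ and of every finitely generated graded $H$-module. Fix once and for all an integer $d_0$ with $H_d = 0$ for $d < d_0$; then $(HM_{\geq p})_e = 0$ whenever $e < p + d_0$. For part (1), the remark immediately preceding the lemma identifies the topologies defined by $(M^p)$ and $(HM_{\geq p})$ on $M$, so the two inverse systems are cofinal in each other and have the same inverse limit. The vanishing of low-degree components of $HM_{\geq p}$ shows that $(M/HM_{\geq p})_e$ stabilizes at $M_e$ for $p > e - d_0$, so $M \to \varprojlim_p M/HM_{\geq p}$ is an isomorphism, hence so is $M \to \varprojlim_p M/M^p$.

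For part (2), fix a resolution $P_\bullet \to N$ by finitely generated graded projectives (available by Noetherianity of $H$). Each $P_i$ is a finite direct sum of degree shifts of $H$, so for any fixed integer $d$ the space $\hom_H(q^d P_i, M^p)$ is a finite direct sum of prescribed graded components of $M^p$. Since $M^p$ sits in $HM_{\geq p'}$ for some $p'$ tending to $\infty$ with $p$, those components vanish for $p$ large, and therefore $\ext^k_H(q^d N, M^p) = 0$ for $p \gg 0$ (each fixed $k, d$). The long exact sequence associated with $0 \to M^p \to M \to M/M^p \to 0$ then forces the inverse system $\ext^k_H(q^d N, M/M^p)$ to stabilize at $\ext^k_H(q^d N, M)$; specializing to $d = 0$ yields the isomorphism with $\varprojlim_p \ext^k_H(N, M/M^p)$.

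For part (3), by Noetherianity each $M^p$ is finitely generated and admits a projective resolution $Q_\bullet(M^p)$ by finitely generated graded projectives, chosen so that $Q_0(M^p)$ has generators matching a minimal generating set of $M^p$. Since $M^p$ is concentrated in degrees $\geq \min M^p$, the generators of $Q_0(M^p)$ lie in degrees $\geq \min M^p$, and hence $Q_0(M^p)$ itself sits in degrees $\geq \min M^p + d_0$. The syzygy $K_0 = \ker(Q_0(M^p) \to M^p) \subseteq Q_0(M^p)$ inherits the same lower bound, and iterating shows that $Q_k(M^p)$ is concentrated in degrees $\geq \min M^p + (k+1) d_0$. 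Because $N$ has bounded support, $\hom_H(q^d Q_k(M^p), N) = 0$ for any fixed $k, d$ once $\min M^p$ exceeds a threshold depending on $k, d, d_0$, and $N$. Hence $\ext^k_H(q^d M^p, N) = 0$ for $p \gg 0$, and the long exact sequence of $0 \to M^p \to M \to M/M^p \to 0$ produces an integer $n$ such that the surjections $M/M^{p+1} \twoheadrightarrow M/M^p$ induce isomorphisms $\ext^k_H(q^d M/M^n, N) \xrightarrow{\sim} \ext^k_H(q^d M/M^{n+1}, N) \xrightarrow{\sim} \cdots \xrightarrow{\sim} \ext^k_H(q^d M, N)$; taking $d = 0$ gives the statement.

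The main technical point will be the control over generator degrees of the $Q_k(M^p)$ in part (3): it amounts to a graded Nakayama-style argument using the Laurentianity of $H$ to push the lower bound on generator degrees of the successive syzygies up with $k$. Once that is in hand, everything else is long-exact-sequence bookkeeping together with the general Laurentian finiteness of Ext groups noted before the lemma.
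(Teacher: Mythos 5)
Your proposal is correct and takes essentially the same route as the paper: parts (1) and (2) come down to the vanishing of the low-degree components of $M^p$ for $p\gg 0$, and part (3) to projective resolutions of $M^p$ whose terms are concentrated in degrees bounded below by $\min M^p$ plus a multiple of the Laurentian bound on $H$, followed by long-exact-sequence bookkeeping. The only cosmetic differences are that the paper proves (1) by testing against projectives and (2) by comparing the complexes $\hom_H(P^\bullet,M)$ and $\hom_H(P^\bullet,M/M^p)$ directly, where you instead use the long exact sequence of $0\to M^p\to M\to M/M^p\to 0$ together with the vanishing of $\ext_H^{k}(N,M^p)$ and $\ext_H^{k\pm 1}(\,\cdot\,)$; these arguments are equivalent.
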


\begin{proof}
  (1) Let $N \in \gMod{H}$.
  It is enough to show that the natural morphism $\hom_H (N, M) \to \hom_H (N, M/M^p)$ is an isomorphism for sufficiently large $p$. 
  By taking a projective presentation of $N$, it is reduced to the case where $N = q^dH$ for some $d \in \mathbb{Z}$. 
  In this case, $\hom_H (q^dH, M) = M_d$ and $\hom_H (H, M/M^p) = M_d/M_d^p$. 
  For sufficiently large $p$, we have $M_d^p = 0$ and the assertion follows. 

  (2) This is a refinement of \cite[Lemma 1.1]{MR3205728}. 
  We take a projective resolution of $N$ in $\gMod{H}$: 
  \[
  \cdots \to P^2 \to P^1 \to P^0 \to N \to 0.  
  \]
  Let $k \in \mathbb{Z}_{\geq 0}$. 
  By the proof of (1), for sufficiently large $p$, we have 
  \[
  \text{$\hom_H (P^j, M) \simeq \hom_H (P^j, M/M^p)$ for $j \in \{k-1, k, k+1\}$.}
  \]
  It follows that $\ext_H^k (N, M) \simeq \ext_H^k (N, M/M^p)$ for sufficiently large $p$. 

  (3) Let $a$ be the largest integer such that $H_{-a} \neq 0$.
  Let $X \in \gMod{H}$, and let $b$ be an integer such that $X_{< b} = 0$. 
  We claim that $X$ has a projective resolution $P^{\bullet}$ that satisfies $P_{< b - la}^l = 0$ for all $l \in \mathbb{Z}_{\geq 0}$.
  Let $\{x_1, \ldots, x_m \}$ be a generating set of $X$ consisting of nonzero homogeneous elements. 
  Then, we have a surjective homomorphism $\bigoplus_{1 \leq s \leq m} q^{\deg x_s} H \to X$. 
  Since $\deg x_s \geq b$ for all $s$, we see that $(\bigoplus_{1 \leq s \leq m} q^{\deg x_s} H)_{< b-a} = 0$.
  By repeating this procedure, we obtain the desired projective resolution.
  
  Now, let $c$ be an integer which satisfies $N_{\geq c} = 0$. 
  Such an integer $c$ exists since $N$ is finite dimensional. 
  There exists $n \in \mathbb{Z}_{\geq 0}$ such that $M_{< c + ka}^n = 0$. 
  For any $n' \geq n$, we also have $M_{<c + ka}^{n'} = 0$. 
  By the claim above, we may take a projective resolution $P^{\bullet}$ of $M^{n'}$ so that it satisfies $P_{<c + (k-l)a}^l = 0$ for all $l \in \mathbb{Z}_{\geq 0}$. 
  In particular, we have $P_{<c}^k = P_{<c+a}^{k-1} = 0$, hence $\hom_H (P^k, N) = \hom_H (P^{k-1}, N) = 0$. 
  It follows that $\ext_H^k (M^{n'}, N) = \ext_H^{k-1} (M^{n'}, N) = 0$. 
  Therefore, the exact sequence
  \[
  \ext_H^{k-1} (M^{n'}, N) \to \ext_H^k (M/M^{n'}, N) \to \ext_H^k (M, N) \to \ext_H^k (M^{n'}, N)
  \]
 proves the assertion. 
\end{proof}

%%%%%original proof of (2)
\begin{comment} 
For each $p \in \mathbb{Z}_{\geq 0}$, we consider a cochain complex $\hom_H (P^*, M/M^p)$. 
It defines a projective system of cochain complexes. 
For each $k \in \mathbb{Z}$, the projective system $(\hom_H (P^k, M/M^p))_{p \geq 0}$ satisfies the Mittag-Leffler condition since $\hom_H (P^k, M/M^0)$ is finite-dimensional. 
We also have $\varprojlim_p \hom_H(P^k, M/M^p) \simeq \hom_H(P^k, M)$ by (1). 
Hence, \cite[Theorem 3.5.8]{MR1269324} gives a short exact sequence
\[
0 \to {\varprojlim_p}^1 \ext_H^{k-1} (N, M/M^p) \to \ext_H^k (N, M) \to \varprojlim_p \ext_H^k (N, M/M^p) \to 0. 
\]
Since, $\ext_H^{k-1} (N, M/M^0)$ is finite-dimensional, the projective system $(\ext_H^{k-1} (N, M/M^p))_{p \geq 0}$ satisfies the Mittag-Leffler condition. 
It implies that the left term of the short exact sequence is $0$, which proves (2). 
\end{comment}

The following corollaries are immediate. 

\begin{corollary} \label{cor:projlim1}
 Let $M, N \in \gMod{H}$ and $k \in \mathbb{Z}_{\geq 0}$. 
 Assume that for any composition factor $X$ of $M$ and $Y$ of $N$, we have $\Ext_H^k (X, Y) = 0$. 
 Then, $\Ext_H^k (M, N) = 0$. 
\end{corollary}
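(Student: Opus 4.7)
The goal $\Ext_H^k(M,N) = 0$ unpacks as $\ext_H^k(q^d M, N) = 0$ for every $d \in \mathbb{Z}$. Since $q^d M$ has the same composition factors as $M$ in the sense of Definition \ref{def:composition}, and the hypothesis is stated in terms of the full graded $\Ext_H^k$ (so it is stable under replacing $M$ by any shift), it suffices to prove $\ext_H^k(M, N) = 0$. The strategy is a double dévissage: first cut $N$ down to a single composition factor, then cut $M$ down to a single composition factor, and apply the hypothesis.

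First I would fix a locally-finite Hausdorff filtration $N = N^0 \supset N^1 \supset \cdots$ (e.g.\ $N^p = H N_{\geq p}$). By Lemma \ref{lem:limext}(2),
\[
\ext_H^k(M, N) \xrightarrow{\sim} \varprojlim_{p} \ext_H^k(M, N/N^p),
\]
so it is enough to show $\ext_H^k(M, N/N^p) = 0$ for every $p$. Each $N/N^p$ is finite dimensional, and the simple subquotients in any Jordan--H\"older series have the form $q^e L(\mu)$ for composition factors $L(\mu)$ of $N$ in the sense of Definition \ref{def:composition}. Induction on the length of $N/N^p$ using the long exact sequence of $\ext$ in the second argument reduces the task to showing $\ext_H^k(M, q^e L(\mu)) = 0$; after shifting $M$ by $q^{-e}$, this is $\ext_H^k(M, L) = 0$ for an arbitrary composition factor $L$ of $N$.

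Now fix such an $L$ together with a locally-finite Hausdorff filtration $(M^n)_n$ of $M$. By Lemma \ref{lem:limext}(3) applied to the finite-dimensional module $L$, there exists $n$ with
\[
\ext_H^k(M, L) \simeq \ext_H^k(M/M^n, L).
\]
The module $M/M^n$ is finite dimensional with Jordan--H\"older factors of the form $q^f L(\lambda)$ for composition factors $L(\lambda)$ of $M$. A second dévissage, this time using the long exact sequence of $\ext$ in the first argument, reduces the problem to $\ext_H^k(q^f L(\lambda), L) = 0$ for all $f \in \mathbb{Z}$ and all composition factors $L(\lambda)$ of $M$. This is exactly the degree-$f$ component of $\Ext_H^k(L(\lambda), L)$, which vanishes by hypothesis.

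There is no genuine obstacle; the argument is purely formal bookkeeping on top of Lemma \ref{lem:limext}. The one point worth being careful about is that degree shifts accumulate at every step of both dévissages, and this is precisely why the hypothesis is stated in terms of $\Ext_H^k$ (all degree shifts) rather than $\ext_H^k$: that stronger hypothesis is exactly strong enough to absorb all the shifts produced by the reductions.
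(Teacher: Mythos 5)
Your proof is correct and follows exactly the route the paper intends: the paper simply declares this corollary ``immediate'' from Lemma \ref{lem:limext}, and your double d\'evissage --- first in $N$ via part (2), then in $M$ via part (3) applied to the now finite-dimensional target --- is precisely that argument, carried out in the only order that works (part (3) requires a finite-dimensional second argument). The one point you leave implicit, that a Jordan--H\"older factor of $N/N^p$ is necessarily a shift of a composition factor of $N$ in the sense of Definition \ref{def:composition}, follows from the positivity of the coefficients in $[N]_q = \sum_p [N^p/N^{p+1}]$, so there is no gap.
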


For $M \in \gMod{H}$, we define its projective dimension as 
\[
\prdim_H M = \sup \{k \geq 0 \mid \text{there exists $N \in \gMod{H}$ such that $\Ext_H^k (M,N) \neq 0$} \} \in \mathbb{Z}_{\geq 0} \cup \{ \infty\}. 
\]
\index{$\prdim_H M$}

\begin{corollary} \label{cor:projlim2}
  For $M \in \gMod{H}$, we have 
  \[
  \prdim_H M \leq \sup \{ \prdim_H L \mid \text{$L$ is a composition factor of $M$} \}. 
  \]
\end{corollary}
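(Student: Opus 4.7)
The statement is an immediate consequence of Corollary \ref{cor:projlim1}, so the plan is to unwind definitions and invoke it. Let $d = \sup\{\prdim_H L \mid L \text{ is a composition factor of } M\}$. If $d = \infty$ the inequality is vacuous, so I may assume $d < \infty$. The goal reduces to showing $\Ext_H^k(M,N) = 0$ for every $N \in \gMod{H}$ and every integer $k > d$.

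Fix such $k$ and $N$. For any composition factor $X$ of $M$, the definition of $d$ gives $\prdim_H X \leq d < k$, hence $\Ext_H^k(X,Y) = 0$ for every $Y \in \gMod{H}$; in particular this holds for every composition factor $Y$ of $N$. The hypothesis of Corollary \ref{cor:projlim1} is therefore satisfied, and I conclude $\Ext_H^k(M,N) = 0$. Since $k > d$ and $N$ were arbitrary, this gives $\prdim_H M \leq d$, as required.

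There is essentially no obstacle here: the corollary is a one-line deduction from Corollary \ref{cor:projlim1} together with the $\Ext$-based definition of projective dimension given just before the statement. The only care needed is to write the argument in the correct direction — fixing a cohomological degree $k$ above $d$ first and then applying the vanishing criterion to all of $N$ — rather than trying to bound $\prdim_H L$ directly from data of $M$.
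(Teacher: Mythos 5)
Your proof is correct and is exactly the intended deduction: the paper states this corollary (together with Corollary \ref{cor:projlim1}) as an immediate consequence of Lemma \ref{lem:limext} without writing out details, and your argument — fixing $k > d$ and $N$, noting that $\prdim_H X \leq d$ forces $\Ext_H^k(X,Y)=0$ for all $Y$, then invoking Corollary \ref{cor:projlim1} — is the standard way to fill them in. No issues.
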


\subsection{Quantum groups} \label{sub:quantumgroups}

We mainly follow the conventions in \cite{MR3758148}. 
Throughout this paper, let $(C, P, \Pi, \Pi^{\lor}, (\cdot, \cdot))$ be a fixed root datum, 
where $C = (c_{i, j})_{i, j \in I}$ is a symmetrizable generalized Cartan matrix, \index{$C$} \index{$I$}
$P$ a free abelian group called the weight lattice,  \index{$P$}
$\Pi = \{ \alpha_i \}_{i\in I}$ a subset of $P$,  \index{$\Pi$} \index{$\alpha_i$}
$\Pi^{\lor} = \{ h_i \}_{i \in I}$ a subset of $P^{\lor} = \Hom_{\Z}(P, \Z)$, \index{$\Pi^{\lor}$} \index{$h_i$} 
and $(\cdot, \cdot)$ a $\mathbb{Q}$-valued symmetric bilinear form on $P$, satisfying the following conditions:  \index{$(\cdot,\cdot)$ (on $P$)}
\begin{enumerate}
  \item $c_{i, j} = \langle h_i, \alpha_j \rangle$ for $i, j \in I$, 
  \item $(\alpha_i, \alpha_i) \in 2 \mathbb{Z}_{> 0}$ for $i \in I$, 
  \item $\langle h_i, \lambda \rangle = 2 (\alpha_i, \lambda)/(\alpha_i, \alpha_i)$ for $i\in I$ and $\lambda \in P$, 
  \item $\Pi$ is linearly independent and 
  \item for any $i \in I$, there exists $\Lambda_i \in P$ such that $\langle h_j, \Lambda_i \rangle = \delta_{i, j}$ for all $j \in I$. \index{$\Lambda_i$}
\end{enumerate}

For each $i \in I$, we call $\alpha_i$ the simple root, $h_i$ the simple coroot, and $\Lambda_i$ the fundamental weight. 
Let $\quantum{}$ be the quantum group associated with the root datum, \index{$\quantum{-}$}
which is a $\mathbb{Q}(q)$-algebra with generators $\{ e_i, f_i \mid i \in I\} \sqcup \{ q^h \mid h \in P^{\lor} \}$. \index{$e_i$} \index{$f_i$} \index{$q^h$}
We set $q_i = q^{(\alpha_i, \alpha_i)/2}, [n] = (q^n-q^{-n})/(q-q^{-1}), [n]! = [n] [n-1] \cdots [1], [n]_i = (q_i^n - q_i^{-n})/(q_i-q_i^{-1})$, and $[n]_i! = [n]_i [n-1]_i \cdots [1]_i$. \index{$q_i$} \index{{$[n]$}} \index{$[n]"!$} \index{{$[n]_i$}} \index{{$[n]_i "!$}}
We define $e_i^{(n)} = e_i^n/([n]_i!)$ and $f_i^{(n)} = f_i^n/([n]_i!)$. \index{$e_i^{(n)}$} \index{$f_i^{(n)}$}
Let $W$ denote the Weyl group, which is generated by the simple reflections $s_i \ (i \in I)$. \index{$W$} \index{$s_i$} 
Let $Q = \bigoplus_{i \in I} \mathbb{Z} \alpha_i$ be the root lattice. \index{$Q$}
The positive root lattice is defined as $Q_+ =\sum_{i \in I} \mathbb{Z}_{\geq 0} \alpha_i \subset P$, and the negative root lattice is $Q_- = - Q_+$. \index{$Q_+$} \index{$Q_-$}

Let $\varphi$ and $(\cdot)^*$ be $\Q(q)$-algebra antiautomorphisms of $U_q(\g)$ defined by: \index{$\varphi$} \index{$(\cdot)^*$}
\begin{gather*}
 \varphi(e_i) = f_i, \ \varphi(f_i) = e_i, \ \varphi(q^h) = q^h, \\
 e_i^* = e_i, \ f_i^* = f_i, \ (q^h)^* = q^{-h}. 
\end{gather*}
We define a $\mathbb{Q}$-algebra automorphism $\overline{(\cdot)}$ by: \index{$\overline{(\cdot)}$ (on $\quantum{}$)}
\[
\overline{e_i} = e_i, \ \overline{f_i} = f_i, \ \overline{q^h} = q^{-h}, \ \overline{q} = q^{-1}. 
\]
Note that $\varphi, (\cdot)^*$ and $\overline{(\cdot)}$ commute with each other. 

For a weight $\quantum{}$-module $M = \bigoplus_{\mu \in P} M_{\mu}$, we define the restricted dual $M^{\lor} = \bigoplus_{\mu \in P} M_{\mu}^{\lor}$, where $M_{\mu}^{\lor} = \Hom_{\mathbb{Q}}(M_{\mu}, \mathbb{Q})$. \index{$M^{\lor}$}
The antiautomorphism $\varphi$ allows us to introduce a left $\quantum{}$ action on $M^{\lor}$ as follows: 
for $x \in \quantum{}, f \in M^{\lor}$ and $m \in M$, the action is given by $(x f)(m) = f(\varphi(x)m)$. 

Let $\quantum{-}$ be the $\mathbb{Q}(q)$-subalgebra of $\quantum{}$ generated by $\{ f_i \mid i \in I \}$. 
For any $i \in I$, there exists a unique $\Q(q)$-linear endomorphism $e'_i$ of $\quantum{-}$ such that \index{$e'_i$}
\[
e'_i(1) = 0, \ e_i'(f_j x) = q_i^{(-\alpha_i, \alpha_j)}f_j e'_i (x) + \delta_{i, j} x, 
\]
for all $ x\in \quantum{-}$ and $j \in I$.
%(To put it another way, $\quantum{-}$ is a module over the $q$-Boson algebra $B_q(\mathfrak{g})$). 
There exists a unique nondegenerate symmetric bilinear form $(\cdot,\cdot)$ on $\quantum{-}$ such that \index{$(\cdot,\cdot)$ (on $\quantum{-}$)}
\[
\text{$(1, 1) = 1, \ (f_i x, y) = (x, e'_i y)$ for $x, y \in \quantum{-},i \in I$}. 
\]
%Note that it differs from Lusztig's bilinear form only by certain scalar multiples: see Section \ref{sub:Extform} for more detail. 
We define $\qcoordinate = \bigoplus_{\beta \in Q_+} \qcoordinate_{\beta}$, where $\qcoordinate_{\beta} = \Hom_{\mathbb{Q}(q)} (\quantum{-}_{-\beta}, \mathbb{Q}(q))$. \index{$\qcoordinate$}
The form $(\cdot, \cdot)$ corresponds to an isomorphism between $\quantum{-}$ and $\qcoordinate$. 
The vector space $\qcoordinate$ is endowed with an algebra structure through this isomorphism, and we call it the quantum unipotent coordinate ring. 
%When identified with the dual space, we often write $\quantum{-}$ as $\qcoordinate$, and call it the quantum unipotent coordinate ring. 
%Using a coproduct on $\quantum{-}$, we may endow $\quantum{-}^{\lor}$ with an algebra structure 
%so that the isomorphism above is actually an isomorphism of $Q(q)$-algebras. 
%When viewed as an algebra, $\quantum{-}^{\lor}$ is called the unipotent quantum coordinate ring and denoted by $A_q(\n)$. 
%In this paper, we usually identify $A_q(\n)$ with $\quantum{-}$ via the isomorphism. 

Let $\quantum{}_{\mathbb{Z}[q,q^{-1}]}$ be the $\mathbb{Z}[q,q^{-1}]$-subalgebra of $\quantum{}$ generated by \index{$\quantum{}_{\mathbb{Z}[q,q^{-1}]}$}
\[
e_i^{(n)}, f_i^{(n)}, q^h, \prod_{k=1}^n \frac{q^{1-k}q^h - q^{k-1}q^{-h}}{q^k - q^{-k}}\:\: (i \in I, n \in \Z_{\geq 0}, h \in P). 
\]
We also define $\quantum{-}_{\mathbb{Z}[q,q^{-1}]}$ as the $\mathbb{Z}[q,q^{-1}]$-subalgebra of $\quantum{-}$ generated by $\{ f_i^{(n)} \mid i \in I, n \in \mathbb{Z}_{\geq 1}\}$. \index{$\quantum{-}_{\mathbb{Z}[q,q^{-1}]}$}
Let
\[
\quantum{-}_{\mathbb{Z}[q,q^{-1}]}^{\mathrm{up}} = \{ x \in \quantum{-} \mid (x, \quantum{-}_{\mathbb{Z}[q,q^{-1}]}) \subset \mathbb{Z}[q,q^{-1}] \}.  \index{$\quantum{-}_{\mathbb{Z}[q,q^{-1}]}^{\mathrm{up}}$}
\]
It is a $\mathbb{Z}[q,q^{-1}]$-subalgebra of $\quantum{-}$. 
Let $\qcoordinate_{\mathbb{Z}[q,q^{-1}]}$ be the subalgebra of $\qcoordinate$ corresponding to $\quantum{-}_{\mathbb{Z}[q,q^{-1}]}^{\mathrm{up}}$ under the isomorphism $\quantum{-} \simeq \qcoordinate$. \index{$\qcoordinate_{\mathbb{Z}[q,q^{-1}]}$}
%When regarded as a subalgebra of $\qcoordinate$, we write $\quantum{-}_{\mathbb{Z}[q,q^{-1}]}^{\lor}$ as $\qcoordinate_{\mathbb{Z}[q,q^{-1}]}$. 

Let $P_+$ be the set of dominant weights $\{ \Lambda \in P \mid \text{$\langle h_i, \Lambda \rangle \geq 0$ for all $i \in I$} \}$.   \index{$P_+$}
For each $\Lambda \in P_+$, let $V(\Lambda)$ be the highest weight integrable $U_q(\g)$-module of highest weight $\Lambda$. \index{$V(\Lambda)$}
We take a highest weight vector $u_{\Lambda} \in V(\Lambda)_{\Lambda} \setminus \{0\}$. \index{$u_{\Lambda}$}
Since $V(\Lambda)$ is simple, we have an isomorphism $V(\Lambda) \simeq V(\Lambda)^{\lor}$ that corresponds to a nondegenerate symmetric bilinear form $(\cdot, \cdot)$ on $V(\Lambda)$ such that \index{$(\cdot, \cdot)$ (on $V(\Lambda)$)}
\[
\text{$(u_{\Lambda}, u_{\Lambda}) = 1, \ (xu, v) = (u, \varphi(x)v)$ for $u, v \in V(\Lambda), x \in U_q(\g)$}.
\]
Let $V(\Lambda)_{\mathbb{Z}[q,q^{-1}]} = \quantum{-}_{\mathbb{Z}[q,q^{-1}]} u_{\Lambda}$ and $V(\Lambda)_{\mathbb{Z}[q,q^{-1}]}^{\text{up}} = \{ v\in V(\Lambda) \mid (v, V(\Lambda)_{\mathbb{Z}[q,q^{-1}]}) \subset \mathbb{Z}[q,q^{-1}]\}$. \index{$V(\Lambda)_{\mathbb{Z}[q,q^{-1}]}$} \index{$V(\Lambda)_{\mathbb{Z}[q,q^{-1}]}^{\mathrm{up}}$}
They are both $\quantum{}_{\mathbb{Z}[q,q^{-1}]}$-submodules of $V(\Lambda)$.
Through the surjective homomorphism $\quantum{-} \to V(\Lambda), 1 \mapsto u_{\Lambda}$, the involution $\overline{(\cdot)}$ on $\quantum{-}$ induces an involution $\overline{(\cdot)}$ on $V(\Lambda)$. \index{$\overline{(\cdot)}$ (on $V(\Lambda)$)}

\subsection{Global bases and quantum minors}

We keep following \cite{MR3758148}. 
%In this paper, we shall use standard notations for combinatorial crystals: $(B, \wt, \varepsilon_i, \varphi_i, \e_i, \f_i)$. 
Let $(B(\infty), \wt, \varepsilon_i, \varphi_i, \e_i, \f_i)$ be the combinatorial crystal associated with $\quantum{-}$. \index{$(B(\infty), \wt, \varepsilon_i, \varphi_i, \e_i, \f_i)$}
The involution $(\cdot)^*$ descends to an involution on $B(\infty)$, also denoted by $(\cdot)^*$. \index{$(\cdot)^*$ (on $B(\infty)$)}
It produces another crystal structure on $B(\infty)$: $(B(\infty), \wt^*, \varepsilon_i^*, \varphi_i^*, \e_i^*, \f_i^*)$. \index{$B(\infty)$: $(B(\infty), \wt^*, \varepsilon_i^*, \varphi_i^*, \e_i^*, \f_i^*)$}

Let $\Blow(\quantum{-})$ be the lower global basis of $\quantum{-}$, also known as the canonical basis. \index{$\Blow(\quantum{-})$}
It is a bar-invariant $\mathbb{Z}[q,q^{-1}]$-basis of $\quantum{-}_{\mathbb{Z}[q,q^{-1}]}$. 
There is a canonical bijection from $\Blow(\quantum{-})$ to $B(\infty)$ and its inverse map is denoted by $\Glow$. \index{$\Glow$}

Let $\Bup(\quantum{-}) \subset \quantum{-}$ be the upper global basis, also known as the dual canonical basis. \index{$\Bup(\quantum{-})$}
It is by definition the dual basis of $\Blow(\quantum{-})$ with respect to $(\cdot,\cdot)$. 
It is a $\mathbb{Z}[q,q^{-1}]$-basis of $\quantum{-}_{\mathbb{Z}[q,q^{-1}]}^{\text{up}}$. 
We have a bijection $\Gup \colon B(\infty) \to \Bup(\quantum{-})$ satisfying $(\Glow(b), \Gup(b')) = \delta_{b, b'}$ for any $b, b' \in B(\infty)$. \index{$\Gup$}
For $x \in \quantum{-}\setminus \{0\}$, we define $\varepsilon_i(x) = \max \{ m \geq 0 \mid {e'_i}^m x \neq 0 \}$ and $\varepsilon^*_i(x) = \max \{ m \geq 0 \mid {{e'_i}^*}^m x \neq 0 \}$, where ${e'_i}^* (x) = (e'_i (x^*))^*$. \index{$\varepsilon_i$} \index{$\varepsilon_i^*$} \index{${e'_i}^*$}
Then for any $b \in B(\infty)$, we have 
\[
\varepsilon_i(\Gup(b)) = \varepsilon_i(b),\ \varepsilon_i^*(\Gup(b)) = \varepsilon_i^*(b). 
\]
This follows from \cite[Lemma 5.1.1.]{MR1203234} and Lemma~\ref{lem:globalbasis} (3) below. 

For $\Lambda \in P_+$, we also have the crystal $B(\Lambda)$, \index{$B(\Lambda)$}
the lower global basis $\Blow(\Lambda) = \{ \Glow_{\Lambda}(b) \mid b \in B(\Lambda) \} \subset V(\Lambda)$ \index{$\Blow(\Lambda)$} \index{$\Glow_{\Lambda}$}
and the upper global basis $\Bup(\Lambda) = \{ \Gup_{\Lambda}(b) \mid b \in B(\Lambda)\} \subset V(\Lambda)$. \index{$\Bup(\Lambda)$} \index{$\Gup_{\Lambda}$}
The lower and upper global basis are mutually dual with respect to $(\cdot, \cdot)$: 
we have $(\Glow_{\Lambda}(b), \Gup_{\Lambda}(b')) = \delta_{b, b'}$ for all $b, b' \in B(\Lambda)$. 
Moreover, they are bar-invariant $\mathbb{Z}[q,q^{-1}]$-bases of $V(\Lambda)_{\mathbb{Z}[q,q^{-1}]}$ and $V(\Lambda)^{\text{up}}_{\mathbb{Z}[q,q^{-1}]}$ respectively. 

The global bases of $\quantum{-}$ and of $V(\Lambda)$ are related, as summarized in the following lemma. 

\begin{lemma}[{\cite{MR1115118}}] \label{lem:globalbasis}
Let $\Lambda \in P_+$. Before stating the assertions, we introduce several maps. 
Let $\pi_{\Lambda} \colon \quantum{-} \to V(\Lambda)$ be a surjective homomorphism defined by $\pi_{\Lambda}(x) = xu_{\Lambda}$. \index{$\pi_{\Lambda}$}
It induces a map $\overline{\pi}_{\Lambda} \colon B(\infty) \to B(\Lambda) \sqcup \{0\}$. \index{$\overline{\pi_{\Lambda}}$}
We also have an injective homomorphism $\iota_{\Lambda} \colon V(\Lambda) \simeq V(\Lambda)^{\lor} \to \qcoordinate \simeq \quantum{-}$, \index{$\iota_{\Lambda}$}
where the first and the third isomorphisms are described in the previous section and the second homomorphism is the dual map of $\pi_{\Lambda}$. 
\begin{enumerate}
  \item We have $f_i \circ \pi_{\Lambda} = \pi_{\Lambda} \circ f_i$ and $e'_i \circ \iota_{\Lambda} = \iota_{\Lambda} \circ e_i$ for all $i \in I$. 
  \item The map $\overline{\pi}_{\Lambda}$ restricts to a bijection from $\{ b \in B(\infty) \mid \overline{\pi}_{\Lambda}(b) \neq 0\}$ to $B(\Lambda)$. 
  \item Let $b \in B(\infty)$. If $\overline{\pi}_{\Lambda}(b) \neq 0$, we have $\Glow(b)u_{\Lambda} = \Glow_{\Lambda}(\overline{\pi}_{\Lambda}(b))$ and $\iota_{\Lambda}(\Gup_{\Lambda}(\overline{\pi}_{\Lambda}(b))) = \Gup(b)$. Otherwise, we have $\Glow(b)u_{\Lambda} = 0$. 
\end{enumerate}

\end{lemma}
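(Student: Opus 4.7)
The plan is to handle the three assertions in turn, with (2) providing the crystal-theoretic core and (1), (3) following by short manipulations with the bilinear forms and the characterization of global bases by bar-invariance plus integrality.

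For (1), the first identity is immediate since $\pi_{\Lambda}$ is a $\quantum{-}$-module map by construction. For the adjoint identity, I would unwind $\iota_{\Lambda}$: by its defining factorization through the forms, $\iota_{\Lambda}(v) \in \quantum{-}$ is characterized by $(x, \iota_{\Lambda}(v)) = (x u_{\Lambda}, v)$ for all $x \in \quantum{-}$. Combining the adjointness $(u, Xv) = (\varphi(X)u, v)$ on $V(\Lambda)$, the identity $\varphi(e_i) = f_i$, and the adjointness $(f_i x, y) = (x, e'_i y)$ on $\quantum{-}$, I compute
\[
(x, \iota_{\Lambda}(e_i v)) = (xu_{\Lambda}, e_i v) = (f_i x \cdot u_{\Lambda}, v) = (f_i x, \iota_{\Lambda}(v)) = (x, e'_i \iota_{\Lambda}(v)),
\]
and nondegeneracy of $(\cdot,\cdot)$ on $\quantum{-}$ yields $\iota_{\Lambda}(e_i v) = e'_i \iota_{\Lambda}(v)$.

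For (2), I would invoke Kashiwara's theorem that $\pi_{\Lambda}$ is compatible with crystal lattices: it sends the crystal lattice $\mathscr{L}(\infty) \subset \quantum{-}$ onto $\mathscr{L}(\Lambda) \subset V(\Lambda)$, and the induced map on $\mathscr{L}(\infty)/q\mathscr{L}(\infty)$ is (after collapsing to zero the elements mapped into $q\mathscr{L}(\Lambda)$) exactly $\overline{\pi}_{\Lambda}$. Since $B(\Lambda)$ is generated from the class of $u_{\Lambda}$ by the operators $\tilde{f}_i$, and $\overline{\pi}_{\Lambda}$ intertwines these with the $\tilde{f}_i$ on $B(\infty)$, the restriction of $\overline{\pi}_{\Lambda}$ to its support is surjective; injectivity on the support follows from the strict crystal morphism property, using that the $\tilde{f}_i$-string through any nonzero element of $B(\Lambda)$ uniquely determines its preimage in $B(\infty)$ via $\tilde{e}_i$.

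For (3), I would use the characterization of $\Glow_{\Lambda}(b')$ as the unique bar-invariant element of $V(\Lambda)_{\mathbb{Z}[q,q^{-1}]}$ whose class in $\mathscr{L}(\Lambda)/q\mathscr{L}(\Lambda)$ is $b'$. When $\overline{\pi}_{\Lambda}(b) \neq 0$, the element $\Glow(b) u_{\Lambda}$ is bar-invariant (as the image under the bar-equivariant $\pi_{\Lambda}$ of the bar-invariant $\Glow(b)$), lies in $V(\Lambda)_{\mathbb{Z}[q,q^{-1}]}$, and reduces to $\overline{\pi}_{\Lambda}(b)$ modulo $q\mathscr{L}(\Lambda)$ by (2); uniqueness forces equality with $\Glow_{\Lambda}(\overline{\pi}_{\Lambda}(b))$. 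When $\overline{\pi}_{\Lambda}(b) = 0$, compatibility of lattices places $\Glow(b) u_{\Lambda}$ in $q\mathscr{L}(\Lambda)$, and a bar-invariant element of $V(\Lambda)_{\mathbb{Z}[q,q^{-1}]} \cap q\mathscr{L}(\Lambda)$ vanishes. The identity for the upper global basis then follows by duality: for any $b' \in B(\infty)$,
\[
(\Glow(b'), \iota_{\Lambda}(\Gup_{\Lambda}(\overline{\pi}_{\Lambda}(b)))) = (\Glow(b') u_{\Lambda}, \Gup_{\Lambda}(\overline{\pi}_{\Lambda}(b))) = \delta_{b, b'},
\]
using the just-proved lower identity together with the orthonormality of the lower and upper global bases of $V(\Lambda)$; this characterizes $\Gup(b)$.

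The main obstacle is the crystal-theoretic input to (2), which ultimately rests on Kashiwara's simultaneous construction of crystal bases on $\quantum{-}$ and $V(\Lambda)$ via the grand-loop argument. I would simply cite \cite{MR1115118} for that deep fact; once it is in hand, the remaining work in (1) and (3) reduces to routine bookkeeping with bilinear-form adjointness and bar-invariance.
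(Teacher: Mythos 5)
Your proposal is correct and matches the paper's treatment: the paper gives no proof of this lemma, citing \cite{MR1115118} for the crystal-theoretic content exactly as you do, and its only remark --- that the upper global basis assertion follows from the lower one --- is precisely your duality computation $(\Glow(b'),\iota_{\Lambda}(\Gup_{\Lambda}(\overline{\pi}_{\Lambda}(b)))) = (\Glow(b')u_{\Lambda},\Gup_{\Lambda}(\overline{\pi}_{\Lambda}(b))) = \delta_{b,b'}$. The routine verifications in (1) and (3) are sound; the only tiny omission is that in the final pairing argument you should note separately that $\Glow(b')u_{\Lambda}=0$ when $\overline{\pi}_{\Lambda}(b')=0$ and invoke the injectivity from (2) when it is nonzero, but both inputs are already established in your sketch.
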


Note that the assertion for the upper global basis is a consequence of that for the lower global basis. 

There are special elements $D(x\Lambda,y\Lambda) \ (x,y \in W, x \geq y, \Lambda \in P_+)$ in $\Bup(\quantum{-})$, called unipotent quantum minors. \index{$D(x\Lambda,y\Lambda)$}
They have the following properties. 

\begin{lemma}[{\cite[Lemma 9.1.4, 9.1.5]{MR3758148}}]\label{lem:quantumminor}
Let $\Lambda \in P_+$. 
\begin{enumerate}
  %\item For $\lambda, \mu \in W\Lambda$, $D(\lambda, \mu) \neq 0$ if and only if there exist $w, v \in W$ such that $\lambda = w \Lambda, \mu = v\Lambda$ and $w \geq v$ in the Bruhat order. (When this holds, we write $\lambda \preceq \mu$.) In this case, we have $\wt(D(\lambda, \mu)) = \lambda - \mu$. 
  \item For $x \in W$, we have $D(x\Lambda, x\Lambda) = 1$. 
  \item Let $x,y \in W$ and $i \in I$. Assume $s_i x > x \geq y$. Then\[
  \varepsilon_i(D(x\Lambda, y\Lambda)) = 0, \varepsilon_i(D(s_ix \Lambda,y\Lambda)) = \langle h_i, x\Lambda \rangle, {e'_i}^{(\langle h_i, x\Lambda \rangle)}D(s_ix \Lambda, y\Lambda) = D(x\Lambda,y\Lambda). 
  \]
  \item Let $x,y \in W$ and $i \in I$. Assume $x \geq s_i y > y$. Then\[
  \varepsilon_i^* (D(x\Lambda, s_i y\Lambda)) = 0, \varepsilon_i^* (D(x\Lambda,y\Lambda)) = \langle h_i, y\Lambda \rangle, {e'_i}^{*(\langle h_i, y\Lambda \rangle)}D(x\Lambda, y\Lambda) = D(x\Lambda, s_iy\Lambda). 
  \]
\end{enumerate}
\end{lemma}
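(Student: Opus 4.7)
The plan is to identify each quantum minor with an upper global basis element of $V(\Lambda)$ via the embedding $\iota_\Lambda$, and then translate the three claims into Demazure-type statements about extremal weight vectors. Concretely, one shows that $D(x\Lambda, y\Lambda) = \iota_\Lambda(\Gup_{\Lambda}(b_{x,y}))$ for a uniquely determined extremal-type crystal element $b_{x,y} \in B(\Lambda)$, whose upper global basis vector $\Gup_{\Lambda}(b_{x,y}) \in V(\Lambda)$ has weight $y\Lambda$ and sits at the appropriate position inside the Demazure-type submodule determined by the pair $(x,y)$ with $x \geq y$. Lemma~\ref{lem:globalbasis}(3) then identifies this image with $\Gup(b') \in \Bup(\quantum{-})$ for the lifted crystal element $b' \in B(\infty)$. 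In particular, $D(x\Lambda, y\Lambda)$ lives in the weight-$(y\Lambda - x\Lambda)$ part of $\quantum{-}$.

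For assertion~(1), take $x = y$. The corresponding extremal element is the unique crystal element of $B(\Lambda)$ whose upper global basis vector is $u_{x\Lambda}$. Its image under $\iota_\Lambda$ has weight $0$ in $\quantum{-}$, i.e., lies in $\quantum{-}_0 = \mathbb{Q}(q)$, and matching normalizations through the pairing $(\cdot,\cdot)$ with $\pi_\Lambda(1) = u_\Lambda$ pins down $D(x\Lambda, x\Lambda) = 1$.

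For assertion~(2), the hypothesis $s_i x > x$ forces $\langle h_i, x\Lambda\rangle \geq 0$, so $x\Lambda$ sits at the top and $s_i x\Lambda = x\Lambda - \langle h_i, x\Lambda\rangle\alpha_i$ at the bottom of a single $i$-string inside $V(\Lambda)$; the two extremal vectors are linked by the divided powers $f_i^{(\langle h_i, x\Lambda\rangle)}$ and $e_i^{(\langle h_i, x\Lambda\rangle)}$, while $e_i$ annihilates the upper global basis vector of weight $x\Lambda$. The compatibility $e'_i \circ \iota_\Lambda = \iota_\Lambda \circ e_i$ from Lemma~\ref{lem:globalbasis}(1) then transports these $\mathfrak{sl}_2$-string identities from $V(\Lambda)$ to $\quantum{-}$, yielding $\varepsilon_i(D(x\Lambda, y\Lambda)) = 0$, $\varepsilon_i(D(s_ix\Lambda, y\Lambda)) = \langle h_i, x\Lambda\rangle$, and the divided-power identity ${e'_i}^{(\langle h_i, x\Lambda\rangle)}D(s_ix\Lambda, y\Lambda) = D(x\Lambda, y\Lambda)$ simultaneously. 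The condition $x \geq y$ ensures that the Demazure submodule carrying both extremal vectors is $i$-stable, so the string really lives inside the range of $\iota_\Lambda$.

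Assertion~(3) is the $*$-twisted counterpart of (2). The antiautomorphism $(\cdot)^*$ intertwines $e'_i$ with ${e'_i}^*$ and exchanges the two arguments of the quantum minor in a controlled way; under this swap, the Bruhat condition $s_ix > x$ on the first argument becomes the condition $s_iy > y$ on the second argument, and the $i$-string is now the one linking $s_iy\Lambda$ and $y\Lambda$ inside the $*$-dual Demazure submodule, with $s_iy\Lambda$ at the bottom. Repeating the argument of (2) in this $*$-twisted setting produces the three asserted identities. The main obstacle is pinning down the precise normalization in the identification $D(x\Lambda, y\Lambda) = \iota_\Lambda(\Gup_{\Lambda}(b_{x,y}))$ and checking that the divided-power transports carry no spurious scalar factors; once that bookkeeping is done, both (2) and (3) reduce to standard $\mathfrak{sl}_2$-string properties of the upper global basis, and (1) is a weight-grading tautology.
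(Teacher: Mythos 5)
First, note that the paper does not prove this lemma at all: it is imported verbatim from \cite[Lemma 9.1.4, 9.1.5]{MR3758148}, so there is no internal proof to compare against. Judged on its own merits, your proposal has a genuine gap at its foundation. You base everything on the identification $D(x\Lambda,y\Lambda)=\iota_\Lambda(\Gup_\Lambda(b_{x,y}))$ with $\Gup_\Lambda(b_{x,y})$ of weight $y\Lambda$, but this is both internally inconsistent and false for $y\neq e$. Inconsistent: $\iota_\Lambda$ sends $V(\Lambda)_\mu$ into $\qcoordinate_{\Lambda-\mu}\simeq \quantum{-}_{\mu-\Lambda}$, so a vector of weight $y\Lambda$ would land in weight $y\Lambda-\Lambda$, not in weight $y\Lambda-x\Lambda$ where $D(x\Lambda,y\Lambda)$ lives. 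False: $D(x\Lambda,y\Lambda)$ is the matrix coefficient $u\mapsto (u_{x\Lambda},u\,u_{y\Lambda})$, whereas the image of $\iota_\Lambda$ consists of coefficients with second entry $u_\Lambda$; concretely, by Lemma~\ref{lem:globalbasis} an upper global basis element $\Gup(b)$ lies in $\iota_\Lambda(V(\Lambda))$ only when $\varepsilon_i^*(b)\le\langle h_i,\Lambda\rangle$ for all $i$, while part (3) of the very lemma you are proving gives $\varepsilon_i^*(D(x\Lambda,y\Lambda))=\langle h_i,y\Lambda\rangle$, which can exceed $\langle h_i,\Lambda\rangle$ (e.g.\ type $A_2$, $\Lambda=\Lambda_1$, $y=s_1$, $i=2$). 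So for general $y$ the minor is not in the range of $\iota_\Lambda$, and the mechanism you rely on to transport the $\mathfrak{sl}_2$-string identities --- $e_i'\circ\iota_\Lambda=\iota_\Lambda\circ e_i$ --- is not available.

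A second problem is your treatment of (3): you assert that $(\cdot)^*$ ``exchanges the two arguments of the quantum minor in a controlled way.'' No such statement is available in general Kac--Moody type (in finite type it involves $w_0$ and the dual weight, and there is no $w_0$ here), and you give no argument for it. The actual mechanism is that $e_i'$ and ${e_i'}^*$ are adjoint to left and right multiplication by $f_i$ respectively, so they act on the first and second entries $u_{x\Lambda}$, $u_{y\Lambda}$ of the matrix coefficient separately; a correct proof works directly with $D(x\Lambda,y\Lambda)$ as a bimodule matrix coefficient rather than pushing it through $\iota_\Lambda$. Your sketch does go through for the special case $y=e$ (where $D(x\Lambda,\Lambda)=\iota_\Lambda(\Gup_\Lambda(u_{x\Lambda}))$ is correct and (2) reduces to an extremal $i$-string computation), but the general case needs a different setup, and the ``normalization bookkeeping'' you defer is precisely where the content lies.
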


In particular, we can repeatedly use this lemma to obtain a description of $D(x\Lambda, y\Lambda)$ as follows. 
Let $\Lambda \in P_+$ and $w \in W$. 
We write $w = s_{i_1} \cdots s_{i_m}$ in a reduced expression and put $a_k = \langle h_{i_k}, s_{i_{k+1}} \cdots s_{i_m} \Lambda \rangle (1 \leq k \leq m)$. 
Then, we have
\[
D(w\Lambda, \Lambda) = \iota_{\Lambda} (f_{i_1}^{(a_1)} \cdots f_{i_m}^{(a_m)} u_{\Lambda})
\]
Next, let $v \in W$ with $v \leq w$. 
We write $v = s_{j_1} \cdots s_{j_n}$ in a reduced expression and put $b_k = \langle h_{j_k}, s_{j_{k+1}} \cdots s_{j_n} \Lambda \rangle \ (1 \leq k \leq n)$. 
Then, we have 
\[
D(w \Lambda, v \Lambda) = {{e'}_{j_1}^*}^{(b_1)} \cdots {{e'}_{j_n}^*}^{(b_n)} D(w \Lambda, \Lambda)
\]

\section{Quiver Hecke algebras} \label{sec:quiverHecke}

\subsection{Quiver Hecke algebras} \label{sub:quiverHecke}

We review some basic results about quiver Hecke algebras, which is also known as Khovanov-Lauda-Rouquier algebras. 
In the following sections, let $\mathbf{k}$ be a base field of arbitrary characteristic. 

In addition to the root datum $(C, P, \Pi, \Pi^{\lor}, (\cdot, \cdot))$, 
we choose a family of polynomials $(Q_{i,j}(u,v))_{i,j \in I}$ in $\bfk[u,v]$ subject to the following conditions: \index{$Q_{i,j}(u,v)$}
\begin{enumerate}
  \item For any $i \in I$, we have $Q_{i,i} = 0$.  
  \item For any $i, j \in I$, we have $Q_{i,j}(u,v) = Q_{j,i}(v,u)$. 
  \item For any two distinct elements $i, j \in I$, let $\deg(u) = (\alpha_i, \alpha_i)$ and $\deg(v) = (\alpha_j, \alpha_j)$. Then $Q_{i,j}(u,v)$ is homogeneous of degree $-2(\alpha_i, \alpha_j)$. 
  \item For any two distinct elements $i, j \in I$, we have $Q_{i,j}(u,0) \neq 0$ and $Q_{i,j}(0,v) \neq 0$.
\end{enumerate}

For $\beta \in Q_+$ of height $n$, let $I^{\beta} = \{ \nu = (\nu_k)_{1 \leq k \leq n} \in I^n \mid \sum_{k=1}^n \alpha_{\nu_k} = \beta\}$. \index{$I^{\beta}$}

\begin{definition}[{\cite{rouquier20082kacmoodyalgebras, MR2525917, MR2763732}}] \label{def:KLR}
Associated with the root datum $(C, P, \Pi, \Pi^{\lor}, (\cdot, \cdot))$ and the polynomials $(Q_{i,j})_{i,j \in I}$, 
the quiver Hecke algebra $R(\beta)$ is defined, for each $\beta \in Q_+$ with $\height (\beta) = n$, as a unital associative $\mathbf{k}$-algebra by the following generators and relations: \index{$R(\beta)$}
its generators are 
\[
e(\nu) \, (\nu \in I^\beta), \ x_k \, (1 \leq k \leq n), \ \tau_k \, (1 \leq k \leq n-1), \index{$e(\nu)$} \index{$x_k$} \index{$\tau_k$}
\]
and its relations are 
\begin{align*}
& e(\nu)e(\nu') = \delta_{\nu, \nu'} e_{\nu}, \  \sum_{\nu \in I^{\beta}} e(\nu) = 1, \\
& x_k e(\nu) = e(\nu) x_k, \ x_k x_{l} = x_{l} x_k, \\
& \tau_k e(\nu) = e(s_k(\nu)) \tau_k \ (1 \leq k \leq n-1), \ \tau_k \tau_{l} = \tau_{l} \tau_k \ (1 \leq k, l \leq n-1, \lvert k-l \rvert \geq 2), \\
& (\tau_k x_{k+1} - x_k \tau_{k+1})e(\nu) = (x_{k+1} \tau_k - \tau_k x_k)e(\nu) = \delta_{\nu_k, \nu_{k+1}} e(\nu) \; (1 \leq k \leq n-1), \\
& \tau_k^2 e(\nu) = Q_{\nu_k, \nu_{k+1}} (x_k, x_{k+1}) e(\nu) \; (1 \leq k \leq n-1), \\
& (\tau_{k+1} \tau_k \tau_{k+1} - \tau_k \tau_{k+1} \tau_k) e(\nu) = \overline{Q}_{\nu_k, \nu_{k+1}, \nu_{k+2}}(x_k, x_{k+1}, x_{k+2})e(\nu) \; (1 \leq k \leq n-2), 
\end{align*}
where $\overline{Q}_{i, i', i''}(u,v,w)$ is given by \index{$\overline{Q}_{i,i',i''}$}
\[
\begin{cases}
 \dfrac{Q_{i, i'}(u,v) - Q_{i, i'}(w,v)}{u-w}  & i=i'' \neq i', \\
 0 & \text{otherwise}. 
\end{cases}
\]
\end{definition}

The quiver Hecke algebra $R(\beta)$ is $\mathbb{Z}$-graded with 
\[
\deg e(\nu) = 0,\ \deg x_k e(\nu) = (\alpha_{\nu_k}, \alpha_{\nu_k}),\ \deg \tau_k e(\nu) = -(\alpha_{\nu_k}, \alpha_{\nu_{k+1}}).
\]
Let $P(\beta) = \prod_{\nu \in I^{\beta}} \mathbf{k}[x_1, \ldots, x_n] e(\nu)$. \index{$P(\beta)$}
For each $w \in \Sym_n$, we choose a reduced expression $w = s_{k_1} \cdots s_{k_l}$ and put $\tau_w = \tau_{k_1} \cdots \tau_{k_l}$. 
Then $R(\beta)$ is a free left (or right) $P(\beta)$ module with a basis $\{\tau_w \mid w \in \Sym_n\}$. 
Hence, it is a Laurentian Noetherian algebra.
We mainly work in the category of finitely generated graded $R(\beta)$-modules, which is denoted by $\gMod{R(\beta)}$. 
Unless otherwise stated, $R(\beta)$-modules are assumed to be graded. 
Let $\gmod{R(\beta)}$ be the full subcategory of finite-dimensional modules, and $\gproj{R(\beta)}$ the full subcategory of projective modules. 

There is a $\mathbf{k}$-algebra anti-involution $\varphi$ of $R(\beta)$ that fixes all the generators $e(\nu), x_k$ and $\tau_k$. \index{$\varphi$ (on $R(\beta)$)}
Using it, we get a duality functor $D$ on $\gmod{R(\beta)}$ given by $D(M) = \Hom_{\mathbf{k}}(M, k)$, on which $R(\beta)$ acts by \index{$D$}
\[
\text{$(a f)(m) = f(\varphi(a)m)$ for $a \in R(\beta), f \in D(M), m \in M$. } 
\]
The $d$-th homogeneous component of $D(M)$ is $D(M)_d = \Hom_{\mathbf{k}}(M_{-d}, \mathbf{k})$. 
Similarly, there is a duality functor $D'$ on $\gproj{R(\beta)}$ defined by $D'(P) = \Hom_{R(\beta)} (P, R(\beta))$. \index{$D'$}
A finite-dimensional module $M \in \gmod{R(\beta)}$ is said to be self-dual if $DM \simeq M$. 

There is a $\mathbf{k}$-algebra involution $\psi$ on $R(\beta)$ defined by \index{$\psi$}
\[
\psi(e(\nu_1, \ldots, \nu_n)) = e(\nu_n, \ldots, \nu_1), \ \psi(x_k) = x_{n+1-k}, \ \psi(\tau_k) = -\tau_{n-k}. 
\]
For $M \in \gMod{R(\beta)}$, the involution $\psi$ allows us to twist the $R(\beta)$-module structure on $M$. 
It induces an autofunctor $\psi_*$ on $\gMod{R(\beta)}$. \index{$\psi_*$}

Let $\beta, \gamma \in Q_+$ and put $m = \height (\beta), n = \height (\gamma)$. 
We define an idempotent $e(\beta, \gamma)$ of $R(\beta + \gamma)$ by \index{$e(\beta,\gamma)$}
\[
e(\beta, \gamma) = \sum_{\nu \in I^{\beta}, \nu' \in I^{\gamma}} e(\nu, \nu'). 
\]
Then, $R(\beta+ \gamma)e(\beta,\gamma)$ is a right $(R(\beta) \otimes R(\gamma))$-module as follows:
\begin{align*}
 &ue(\beta,\gamma) (e(\nu)\otimes 1) = ue(\nu,\gamma) \ (\nu \in I^{\beta}), ue(\beta, \gamma) (1 \otimes e(\nu)) = ue(\beta, \nu) \ (\nu \in I^{\gamma}), \\
 &ue(\beta,\gamma) (x_k \otimes 1) = ue(\beta,\gamma)x_k \ (1 \leq k \leq m), ue(\beta,\gamma) (1 \otimes x_k) = ue(\beta,\gamma) x_{k+m} \ (1 \leq k \leq n), \\
 &ue(\beta,\gamma) (\tau_k \otimes 1) = ue(\beta,\gamma)\tau_k \ (1 \leq k \leq m-1), ue(\beta, \gamma) (1 \otimes \tau_k) = ue(\beta,\gamma) \tau_{k+m} \ (1\leq k \leq n-1). 
\end{align*}
It is both left $R(\beta + \gamma)$-projective and right $(R(\beta) \otimes R(\gamma))$-projective. 
Similar property holds for $e(\beta, \gamma)R(\beta + \gamma)$. 
They produce three exact functors
\begin{align*}
  \Ind_{\beta, \gamma} &= R(\beta + \gamma)e(\beta, \gamma) \otimes_{R(\beta) \otimes R(\gamma)} (\cdot)\colon \gMod{(R(\beta) \otimes R(\gamma))} \to \gMod{R(\beta + \gamma)}, \\ \index{$\Ind_{\beta,\gamma}$}
  \Res_{\beta, \gamma} &= \Hom_{R(\beta + \gamma)}(R(\beta + \gamma)e(\beta, \gamma), \cdot ) \simeq e(\beta, \gamma) (\cdot) \colon \gMod{R(\beta + \gamma)} \to \gMod{(R(\beta) \otimes R(\gamma))},  \\ \index{$\Res_{\beta,\gamma}$}
  \Coind_{\beta, \gamma} &= \Hom_{R(\beta) \otimes R(\gamma)} (e(\beta, \gamma)R(\beta + \gamma), \cdot) \colon \gMod{(R(\beta) \otimes R(\gamma))} \to \gMod{R(\beta + \gamma)}. \index{$\Coind_{\beta,\gamma}$}
\end{align*}
We have two adjoint pairs, $(\Ind_{\beta, \gamma}, \Res_{\beta, \gamma})$ and $(\Res_{\beta, \gamma}, \Coind_{\beta, \gamma})$. 
For multiple $(\beta_1, \ldots, \beta_m) \in Q_+^m$, we define $\Ind_{\beta_1, \ldots, \beta_m}, \Res_{\beta_1, \ldots, \beta_m}, \Coind_{\beta_1, \ldots, \beta_m}$ in the same manner. 
We usually write $M \circ N$ instead of $\Ind_{\beta, \gamma} (M \otimes N)$ and call it the convolution product of $M$ and $N$. \index{$M\circ N$}
It gives a monoidal structure on $\gMod{R} = \bigoplus_{\beta \in Q_+} \gMod{R(\beta)}$ with the unit object $\mathbf{k} \in \gMod{R(0)}$. \index{$\gMod{R}$}
Additionally, $\gmod{R} = \bigoplus_{\beta \in Q_+} \gmod{R(\beta)}$ and $\gproj{R} = \bigoplus_{\beta \in Q_+} \gproj{R(\beta)}$ are closed under the convolution products. \index{$\gmod{R}$} \index{$\gproj{R}$}
We have directly from the definitions that 
\[  
D'(P\circ Q) \simeq D'(P) \circ D'(Q)
\] 
for $P, Q \in \gproj{R}$. We also have 
\[  
\Hom_{R(\beta)}(D'(P), D(M)) \simeq \Hom_{\mathbf{k}}(\Hom_{R(\beta)}(P,M),\mathbf{k}) 
\]   
for $P \in \gproj{R(\beta)}, M \in \gmod{R(\beta)}$. 

The following Mackey filtration \cite[Proposition 2.18]{MR2525917},\cite[Proposition 2.7]{MR3403455}  is fundamental in studying restrictions of induced modules.
Let $\beta_1, \ldots, \beta_m, \gamma_1, \ldots, \gamma_n \in Q_+$ with $\sum_k \beta_k = \sum_l \gamma_l$, and $M_k \in \gMod{R(\beta_k)}$. 
Then, 
\[
\Res_{\gamma_1,\ldots,\gamma_n} \Ind_{\beta_1, \ldots, \beta_m} (M_1 \otimes \cdots \otimes M_m)  
\]
has a filtration whose successive quotients are
\begin{align*}
&q^{m(\alpha_{k,l})}(\Ind_{\alpha_{1,1}, \ldots, \alpha_{m,1}} \otimes \cdots \otimes \Ind_{\alpha_{1,n}, \ldots, \alpha_{m,n}}) (\Res_{\alpha_{1,1},\ldots,\alpha_{1,n}}M_1 \otimes \cdots \otimes \Res_{\alpha_{m,1},\ldots, \alpha_{m,n}}M_m), \\
&m(\alpha_{k,l}) = -\sum_{1 \leq k < k' \leq m, n \geq l > l' \geq 1} (\alpha_{k,l}, \alpha_{k',l'}), 
\end{align*}
where $(\alpha_{k,l}) \in Q_+^{mn}$ runs under the condition 
\[
\sum_k \alpha_{k,l} = \gamma_l \ (1 \leq l \leq n), \  \sum_l \alpha_{k,l} = \beta_k \ (1 \leq k \leq m). 
\] 

There is an isomorphism of graded $(R(\beta + \gamma), R(\beta) \otimes R(\gamma))$-modules
\[
q^{(\beta, \gamma)} R(\beta + \gamma)e(\gamma, \beta) \xrightarrow{\sim} \Hom_{R(\beta) \otimes R(\gamma)} (e(\beta, \gamma)R(\beta + \gamma), R(\beta) \otimes R(\gamma)). 
\] 
Be mindful of the order of $\beta$ and $\gamma$ on the right-hand side. 
This isomorphism is induced, using $\otimes$-$\Hom$ adjunction, from 
\begin{align*}
e(\beta, \gamma) R(\beta + \gamma) \otimes R(\beta + \gamma) e(\gamma, \beta) \to e(\beta, \gamma)R(\beta + \gamma)e(\gamma, \beta) = & \bigoplus_{w \in \Sym_{m+n}^{n,m}} e(\beta, \gamma) \tau_w (R(\gamma) \otimes R(\beta)) \\
\to & q^{-(\beta, \gamma)} R(\beta) \otimes R(\gamma), 
\end{align*}
where the first map is the multiplication and the last map is the projection to the direct summand corresponding to $w = w[n,m]$. 
The degree shift comes from the fact that $\deg e(\beta, \gamma) \tau_{w[n,m]} = -(\beta, \gamma)$. 
It yields the following natural isomorphisms \cite[Theorem2.2]{MR2822211}: 
\begin{align*}
 &\text{$\Coind_{\beta, \gamma} (M \otimes N) \simeq q^{(\beta, \gamma)} N \circ M$ for $M \in \gMod{R(\beta)}, N \in \gMod{R(\gamma)}$}, \\
 &\text{$D(M \circ N) \simeq q^{(\beta, \gamma)} DN \circ DM$ for $M \in \gmod{R(\beta)}, N \in \gmod{R(\gamma)}$.} 
\end{align*}

Next, we present a brief overview of the categorification of $\quantum{-}$. 
For each $i \in I$, let $L(i)$ be the self-dual simple $R(\alpha_i)$-module, that is, $L(i) \simeq R(\alpha_i)/(x_1) \simeq \mathbf{k}$. \index{$L(i)$}
We consider the following functors
\begin{align*}
F_i &\colon \gmod{R(\beta)} \to \gmod{R(\beta + \alpha_i)}, F_i(M) = L(i) \circ M, \\ \index{$F_i$}
F_i^* &\colon \gmod{R(\beta)} \to \gmod{R(\beta + \alpha_i)}, F_i(M) = M \circ L(i), \\ \index{$F_i^*$}
E_i &\colon \gmod{R(\beta)} \to \gmod{R(\beta - \alpha_i)}, E_i(M) = e(i, *) M, \\ \index{$E_i$}
E_i^* &\colon \gmod{R(\beta)} \to \gmod{R(\beta - \alpha_i)}, E_i(M) = e(*, i) M,  \index{$E_i^*$}
\end{align*}
where $e(i,*) = e(\alpha_i, \beta-\alpha_i), e(*,i) = e(\beta-\alpha_i, \alpha_i)$. \index{$e(i,*)$} \index{$e(*,i)$}
In this paper, we use similar abbreviations for other idempotents. 

Let $K(\gmod{R})$ be the Grothendieck group of the abelian category $\gmod{R}$, and $K_{\oplus}(\gproj{R})$ the spilt Grothendieck group of the additive category $\gproj{R}$.  
They are endowed with $\mathbb{Z}[q, q^{-1}]$-algebra structures by the degree shift functor $q$ and the convolution product. 

Let $L(i^n) = q_i^{n(n-1)/2} L(i)^{\circ n}$ be the self-dual simple $R(n\alpha_i)$-module and $P(i^n)$ the projective cover of $L(i^n)$. \index{$L(i^n)$} \index{$P(i^n)$}

\begin{theorem}[{\cite{MR2525917,MR2763732}}] \label{thm:categorification}
There exist $\mathbb{Z}[q,q^{-1}]$-algebra isomorphisms 
\[
\Psi_1 \colon K_{\oplus}(\gproj{R}) \overset{\sim}{\to} \quantum{-}_{\mathbb{Z}[q,q^{-1}]}, \ \Psi_2 \colon K(\gmod{R}) \overset{\sim}{\to} \quantum{-}_{\mathbb{Z}[q,q^{-1}]}^{\mathrm{up}}, \index{$\Psi_1$} \index{$\Psi_2$}
\]
such that $\Psi_1 ([P(i^n)]) = f_i^{(n)}$ and $\Psi_2 ([L(i)]) = f_i$. 
Moreover, we have
\begin{enumerate}
  \item $\Psi_1 \circ \psi_* = (\cdot)^* \circ \Psi_1, \ \Psi_2 \circ \psi_* = (\cdot)^* \circ \Psi_2$, 
  \item $\Psi_1 \circ D' = \overline{(\cdot)} \circ \Psi_1$, 
  \item $\Psi_2 \circ E_i = e'_i \circ \Psi_2, \ \Psi_2 \circ E_i^* = {e'_i}^* \circ \Psi_2$,
  \item $\{ [P] \mid P \in \gproj{R} : D'P \simeq P \}$ is a $\mathbb{Z}[q,q^{-1}]$-basis of $K_{\oplus}(\gproj{R})$, 
  \item $\{ [L] \mid L \in \gmod{R} : \text{self-dual simple}\}$ is a $\mathbb{Z}[q, q^{-1}]$-basis of $K(\gmod{R})$,    
  \item every simple module is absolutely irreducible, 
  \item for $P \in \gproj{R}$ and $M \in \gmod{R}$, we have $\qdim \Hom_R (P, M) = (\overline{\Psi_1([P])}, \Psi_2 ([M]))$. 
\end{enumerate}
\end{theorem}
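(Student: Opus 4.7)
The plan is to construct $\Psi_1$ and $\Psi_2$ simultaneously, with the nondegenerate Kashiwara bilinear form on $\quantum{-}$ serving as the connecting glue. First I would observe that convolution descends to associative $\mathbb{Z}[q,q^{-1}]$-algebra products on both Grothendieck groups: on $K_{\oplus}(\gproj{R})$ because $\Ind_{\beta,\gamma}$ preserves projectivity, and on $K(\gmod{R})$ by exactness. I would then define $\Psi_1$ by sending $[P(i^n)]$ to $f_i^{(n)}$ and verify that these classes satisfy the quantum divided-power and Serre relations of $\quantum{-}_{\mathbb{Z}[q,q^{-1}]}$. The divided-power identity $P(i^{(a)}) \circ P(i^{(b)}) \simeq$ a quantum-multinomial sum of copies of $P(i^{(a+b)})$ follows from a direct decomposition inside the nil-Hecke algebra $R(n\alpha_i)$. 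The Serre relations are proved by exhibiting an explicit exact sequence of projective $R((1-c_{i,j})\alpha_i + \alpha_j)$-modules whose alternating sum realizes the Serre expression; this is where the polynomials $Q_{i,j}(u,v)$ enter, and is the main combinatorial obstacle of the argument.

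For $\Psi_2$, I would send $[L(i)]$ to $f_i$ and check well-definedness analogously, using that the Serre relations pass from $K_{\oplus}(\gproj{R})$ to $K(\gmod{R})$ once the Hom-pairing below is known to be nondegenerate. Injectivity and surjectivity of both maps then follow in tandem from the pairing identity (7), which I would establish first on the free generators of $\gproj{R}$ using the tautology $\hom_R(R(\beta)e(\nu), M) \simeq e(\nu) M$. This categorifies precisely the inductive definition $(f_i x, y) = (x, e'_i y)$ of the Kashiwara form: iterating it identifies the matrix of the $\Hom$-pairing between $\{[R(\beta)e(\nu)]\}_{\nu \in I^\beta}$ and the self-dual simple basis with the matrix of Kashiwara's form between PBW-like monomials and upper global basis elements. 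Nondegeneracy of Kashiwara's form then forces both $\Psi_1$ and $\Psi_2$ to be isomorphisms onto their claimed targets; alternatively, surjectivity of $\Psi_2$ can be proved by induction on $\beta$ using (3), since every nonzero homogeneous element of $\quantum{-}$ is hit by some $e'_i$.

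Once the isomorphisms are in place, properties (1)--(3) follow essentially from construction: $\psi_*$ categorifies $(\cdot)^*$ because $\psi$ reverses the order of generators and idempotents; $D'$ categorifies the bar involution because it swaps $q \leftrightarrow q^{-1}$ on Hilbert series and fixes each $P(i^n)$, the nil-Hecke algebra being self-dual; and $E_i$ categorifies $e'_i$ directly from the definition of $e(i,*)$. Property (6) on absolute irreducibility follows from the fact that self-dual simples correspond bijectively to upper global basis elements, whose defining equations are integral and stable under field extension; equivalently, one checks directly that $\grend_R(L) = \mathbf{k}$ for each self-dual simple $L$ using the $e(\nu)$-decomposition. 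Finally, (4) and (5) come from combining bar-invariance of the lower and upper global bases with the characterization of self-dual projectives via self-dual simple tops. The main obstacle throughout is the Serre-relation computation in $K_{\oplus}(\gproj{R})$, which is classical but combinatorially intricate; conceptually, everything else is a routine translation across the Kashiwara pairing.
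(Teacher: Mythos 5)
The paper does not prove this theorem at all: it is imported wholesale from Khovanov--Lauda and Rouquier (with the dual and crystal-theoretic statements coming from the follow-up literature), so there is no in-paper argument to compare yours against. Judged on its own terms, your outline follows the standard proof: nil-Hecke decomposition for the divided powers, categorified Serre relations, the $\Hom$-pairing categorifying Kashiwara's form via $\hom_R(R(\beta)e(\nu),M)\simeq e(\nu)M$ and the adjunction $(R(\alpha_i)\circ(\cdot),E_i)$, and nondegeneracy of the form to get injectivity. Two points need repair. First, $K_{\oplus}(\gproj{R})$ is a \emph{split} Grothendieck group, so an exact sequence of projectives does not by itself impose a relation there; the categorified Serre relation must be realized as an isomorphism of direct sums $\bigoplus_{a+b=1-c_{i,j},\,a\ \mathrm{even}} P_{i^{(a)}ji^{(b)}}\simeq\bigoplus_{a\ \mathrm{odd}}P_{i^{(a)}ji^{(b)}}$ (equivalently, a contractible bounded complex of projectives). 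This is repairable but is exactly the combinatorial heart you identify, so it should be stated correctly.

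Second, and more seriously, your derivation of (6) from ``self-dual simples correspond bijectively to upper global basis elements'' is both circular and false in the generality of this paper: the identification of self-dual simples with the dual canonical basis is a deep theorem of Varagnolo--Vasserot/Rouquier that requires a symmetric Cartan matrix and characteristic zero, and it fails in general (indeed, part of the point of the present paper is to work over arbitrary fields and arbitrary symmetrizable types). Absolute irreducibility is much more elementary and is logically \emph{prior} to everything global-basis related: one shows $\grend_R(L)=\mathbf{k}$ for every simple $L$ by the crystal-operator induction of Khovanov--Lauda (via $E_i$ and the socle/head analysis of $\tilde e_i$, $\tilde f_i$), and this is also what makes (5), and hence (4), work --- each simple admits a unique self-dual degree shift precisely because its graded endomorphism ring is $\mathbf{k}$ in degree zero. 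Your parenthetical ``alternatively, one checks directly that $\grend_R(L)=\mathbf{k}$'' is the correct route and should be promoted from an aside to the actual argument.
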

We give a more detailed account of (7) in Section \ref{sub:Extform}. 
For $M \in \gMod{R(\beta)}$, we set $\wt M = -\beta$. \index{$\wt M$}

By considering composition factors as in Definition \ref{def:composition}, we extend $\Psi_2$ to
\[
\Psi \colon K(\gMod{R}) \to \quantum{-}_{\mathbb{Z}((q))}^{\mathrm{up}}. \index{$\Psi$}
\]
It is given by the formula
\[
\Psi ([M]) = \sum_{[L]:\text{$L$ is a self-dual simple $R(\beta)$-module}} [M:L]_q \Psi_2([L])
\]
for $M \in \gMod{R(\beta)}$. 

For a simple self-dual graded $R(\beta)$-module $L$, we define
\begin{align*}
 \varepsilon_i (L) &= \max \{ m \in \mathbb{Z}_{\geq 0} \mid E_i^m L \neq 0 \}, \\
 \varphi_i (L) &= \langle h_i, \wt L \rangle + \varepsilon_i (L) = -\langle h_i, \beta \rangle + \varepsilon_i(L), \\ 
 \tilde{e}_i L &= q_i^{-\varepsilon_i (L)+1} \soc (E_i L), \\ 
 \tilde{f}_i L &= q_i^{\varepsilon_i (L)} \hd (F_i L). 
\end{align*}
It yields a crystal structure on $\mathcal{B} = \{ [L] \mid L \in \gmod{R} : \text{self-dual simple} \}$.  \index{$(\mathcal{B}, \wt, \varepsilon_i,\varphi_i, \tilde{e}_i, \tilde{f}_i)$}
The degree shift can be determined by Lemma \ref{lem:degreeshift} and Lemma \ref{lem:LambdaforLi} below. 

Using $E_i^*$ and $F_i^*$ instead of $E_i$ and $F_i$, we obtain another crystal $(\mathcal{B}, \wt, \varepsilon_i^*, \varphi_i^*, \tilde{e}_i^*, \tilde{f}_i^*)$. \index{$\varepsilon_i^*$} \index{$\varphi_i^*$} \index{$\tilde{e}_i^*$} \index{$\tilde{f}_i^*$}
It is isomorphic to $(\mathcal{B}, \wt, \varepsilon_i, \varphi_i, \tilde{e}_i, \tilde{f}_i)$ via the involution induced by $\psi_*$. 

\begin{theorem}[{\cite[Theorem 7.4]{MR2822211}}]\label{thm:categoricalcrystal}
 There exists an isomorphism of bicrystals $\mathcal{B} \simeq B(\infty)$. 
\end{theorem}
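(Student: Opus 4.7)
The plan is to apply the Kashiwara--Saito characterization of $B(\infty)$: a bicrystal $B$ with a distinguished element $b_0$ of weight $0$ satisfying $\varepsilon_i(b_0) = \varepsilon_i^*(b_0) = 0$ for all $i$, together with connectedness and the standard rank-two compatibility axioms between the two crystal structures, is isomorphic to $B(\infty)$. The trivial $R(0)$-module $\mathbf{k} \in \mathcal{B}$ will play the role of $b_0$, and the two crystal structures on $\mathcal{B}$ are provided by the pairs $(E_i, F_i)$ and $(E_i^*, F_i^*)$.

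First I would establish that $(\mathcal{B}, \wt, \varepsilon_i, \varphi_i, \tilde{e}_i, \tilde{f}_i)$ is a well-defined crystal. The crucial input is that for any self-dual simple $R(\beta)$-module $L$, the convolution $F_i L = L(i) \circ L$ has a simple head and $E_i L = e(i,*) L$ has a simple socle, with the two corresponding under the duality $D$ up to degree shift. I would analyze the relationship between $E_i F_i L$ and $F_i E_i L$ via the Mackey filtration applied to $\Res_{\alpha_i,\beta}(L(i) \circ L)$; together with the $(\Ind, \Res)$-adjunction $\Hom_{R}(F_i L, F_i L) \simeq \Hom_{R(\alpha_i) \otimes R(\beta)}(L(i) \otimes L, E_i F_i L)$, this reduces the simplicity of $\hd(F_i L)$ to an analysis of $L$ as a constituent of $E_i F_i L$. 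The nilHecke action on $F_i^n L$ coming from the $R(n\alpha_i)$-factor then supplies the multiplicity formula $[E_i L : \tilde{e}_i L]_q = [\varepsilon_i(L)]_i$ and the remaining crystal axioms by induction on $\varepsilon_i(L)$.

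The $*$-crystal structure $(\varepsilon_i^*, \varphi_i^*, \tilde{e}_i^*, \tilde{f}_i^*)$ is then automatic, since the involution $\psi$ intertwines $(E_i, F_i) \leftrightarrow (E_i^*, F_i^*)$ and $\psi_*$ preserves self-duality and simplicity. The remaining Kashiwara--Saito compatibility axioms reduce to rank-two computations: for $i \neq j$, whenever $\tilde{e}_i L = 0$ and $\tilde{e}_j^* L = 0$ one must verify $\tilde{f}_i \tilde{f}_j^* L \simeq \tilde{f}_j^* \tilde{f}_i L$ with matching $\varepsilon$- and $\varphi$-values. This again follows from a Mackey-plus-adjunction analysis of $L(j) \circ L \circ L(i)$ together with the simplicity statements already established for each factor.

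The main obstacle is the initial simplicity of $\hd(F_i L)$ for every self-dual simple $L$. Without this, neither the crystal operators $\tilde{f}_i, \tilde{e}_i$ nor the multiplicity formulas are even well-defined, while all subsequent steps become essentially formal consequences once it is in place. The delicate interaction between convolution, parabolic restriction, and the nilHecke algebra structure inside $R(n\alpha_i)$ that underlies this step is the core technical content of the cited theorem of Lauda--Vazirani.
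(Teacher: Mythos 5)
The paper gives no proof of this statement: it is imported verbatim from Lauda--Vazirani \cite[Theorem 7.4]{MR2822211}, so there is nothing internal to compare against. Your sketch faithfully reproduces the strategy of the cited proof --- verify the Kashiwara--Saito bicrystal characterization of $B(\infty)$, with the simplicity of $\hd(F_iL)$ and $\soc(E_iL)$ (via Mackey, adjunction, and the nilHecke action on $F_i^nL$) as the essential technical input and $\psi_*$ supplying the $*$-structure --- and you correctly identify where the real work lies. The only caveat is that the Kashiwara--Saito axioms also impose conditions in the case $i=j$ (the dichotomy governed by $\varepsilon_i(b)+\varepsilon_i^*(b)+\langle h_i,\wt b\rangle$), not just the rank-two commutation for $i\neq j$ that you mention; a complete writeup would need to treat that case as well.
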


Moreover, we have $\tilde{e}_i^{\varepsilon_i(L)} L \simeq E_i^{(\varepsilon_i (L))} L$ (\cite[Lemma 3.7]{MR2525917}), where the divided power is defined by
\[
E_i^{(n)} M = \Hom_{R(n\alpha_i)} (P(i^n), e(n\alpha_i, *) M). \index{$E_i^{(n)}$}
\]
$E_i^{*(n)} M$ is defined in the same manner using $E_i^*$. \index{$E_i^{*(n)}$} 

\subsection{Cyclotomic quiver Hecke algebras} \label{sec:cyclotomic}

In this section, we review cyclotomic quotients and the categorification of highest weight integrable modules, following \cite{MR2995184}. 
Let $A = \bigoplus_{d\in \mathbb{Z}_{\geq 0}} A_d$ be a $\mathbb{Z}_{\geq 0}$-graded Noetherian Laurentian commutative algebra with $A_0 = \mathbf{k}$. 
For $\Lambda \in P_+$, we fix a family of homogeneous polynomials $a_{\Lambda} = \{ a_{\Lambda, i} (t_i) \}_{i \in I}$ such that \index{$a_{\Lambda} = \{ a_{\Lambda,_i}(t_i) \}_{i \in I}$}
\begin{enumerate}
  \item for each $i \in I$, if we set $\deg t_i = (\alpha_i, \alpha_i)$, the polynomial $a_{\Lambda, i}(t_i)$ is homogeneous of degree $2(\alpha_i, \Lambda)$, 
  \item for each $i \in I$, the coefficient of $t_i^{\langle h_i, \Lambda \rangle}$ in $a_{\Lambda, i}(t_i)$ is nonzero. 
\end{enumerate}

\begin{definition} \label{def:cyclotomic}
 For $\lambda = \Lambda - \beta$ with $\beta \in Q_+$, we define the cyclotomic quiver Hecke algebra
 \[
 R_A^{a_{\Lambda}} (\lambda) = (R(\beta) \otimes_{\mathbf{k}} A) / \langle a_{\Lambda, i}(x_n)e(*,i) \ (i \in I) \rangle, \index{$R_A^{a_{\Lambda}}(\lambda)$}
 \]
 where $n = \height \beta$. 
\end{definition}

In this paper, we consider two cases $A = \mathbf{k}$ and $A = \mathbf{k}[z]$. 
When $A= \mathbf{k}$, $a_{\Lambda, i} (t_i) \in \mathbf{k}^{\times} t_i^{\langle h_i, \Lambda \rangle}$ and we just write $R^{\Lambda} (\lambda) = R_{\mathbf{k}}^{a_{\Lambda}} (\lambda)$. \index{$R^{\Lambda}(\lambda)$}

It is known that $R_A^{a_{\Lambda}}(\lambda)$ is a finitely generated projective $A$-module \cite[Corollary 4.4, Theorem 4.5]{MR2995184}. 
Let $\gMod{R_A^{a_{\Lambda}}(\lambda)}$ be the category of finitely generated graded $R_A^{a_{\Lambda}}(\lambda)$-modules, 
$\gproj{R_A^{a_{\Lambda}}(\lambda)}$ the category of finitely generated graded projective $R_A^{a_{\Lambda}}(\lambda)$-modules, and
$\gmod{R_A^{a_{\Lambda}}(\lambda)}$ the category of finite dimensional graded $R_A^{a_{\Lambda}}(\lambda)$-modules. 
We set 
\[
\gMod{R_A^{a_{\Lambda}}} = \bigoplus_{\lambda \in \Lambda - Q_+} \gMod{R_A^{a_{\Lambda}}(\lambda)}, \gmod{R_A^{a_{\Lambda}}} = \bigoplus_{\lambda \in \Lambda - Q_+} \gmod{R_A^{a_{\Lambda}}(\lambda)}, \gproj{R_A^{a_{\Lambda}}} = \bigoplus_{\lambda \in \Lambda - Q_+} \gproj{R_A^{a_{\Lambda}}(\lambda)}.
\]

For $i \in I$ and $n \in \mathbb{Z}_{\geq 1}$, we define the following functors
\begin{align*}
 (F_i^{a_{\Lambda}})^{(n)} & = R_A^{a_\Lambda} (\lambda - n\alpha_i) e(n\alpha_i, *) \otimes_{R(n\alpha_i) \otimes R_A^{a_{\Lambda}}(\lambda)} (P(i^n) \otimes (\cdot)) \colon \gMod{R_A^{a_{\Lambda}}(\lambda)} \to \gMod{R_A^{a_{\Lambda}}(\lambda - n\alpha_i)}, \\ \index{$(F_i^{a_{\Lambda}})^{(n)}$}
 (E_i^{a_{\Lambda}})^{(n)} & = \Hom_{R(n\alpha_i)} (P(i^n), e(n\alpha_i, *) (\cdot)) \colon \gMod{R_A^{a_{\Lambda}}(\lambda)} \to \gMod{R_A^{a_{\Lambda}}(\lambda + n\alpha_i)}. \index{$(E_i^{a_{\Lambda}})^{(n)}$}
\end{align*}
They are both exact and preserve finite dimensionality. 
Hence they induce endomorphisms on the Grothendieck group $K(\gmod{R_A^{a_{\Lambda}}})$ and on the split Grothendieck group $K_{\oplus}(\gproj{R_A^{a_{\Lambda}}})$, 
denoted by the same letter. 
We write $F_i^{a_{\Lambda}} = (F_i^{a_{\Lambda}})^{(1)}, E_i^{a_{\Lambda}} = (E_i^{a_{\Lambda}})^{(1)}$. 

\begin{theorem}[{\cite[Theorem 6.2]{MR2995184}}] \label{thm:cyclotomiccategorification}
  There exist $\mathbb{Z}[q, q^{-1}]$-module isomorphisms
  \[
  \Psi_1^{a_{\Lambda}} \colon K(\gproj{R_A^{a_{\Lambda}}}) \overset{\sim}{\to} V(\Lambda)_{\mathbb{Z}[q,q^{-1}]}, \ \Psi_2^{a_{\Lambda}} \colon K(\gmod{R_A^{a_{\Lambda}}}) \overset{\sim}{\to} V(\Lambda)_{\mathbb{Z}[q,q^{-1}]}^{\mathrm{up}}, \index{$\Psi_1^{a_{\Lambda}}$} \index{$\Psi_2^{a_{\Lambda}}$}
  \] 
such that 
\begin{enumerate}
\item $\Psi_1^{a_{\Lambda}} ([A]) = u_{\Lambda}, \Psi_2^{a_{\Lambda}} ([\mathbf{k}]) = u_{\Lambda}$, 
\item For $P \in \gproj{R_A^{a_{\Lambda}}(\lambda)}$, $\Psi_1^{a_{\Lambda}}([F_i^{a_{\Lambda}}P]) = f_i \Psi_1^{a_{\Lambda}}([P]), \Psi_1^{a_{\Lambda}}([E_i^{a_{\Lambda}}P]) = q_i^{1+\langle h_i, \lambda \rangle} e_i \Psi_1^{a_{\Lambda}}([P])$, 
\item For $M \in \gmod{R_A^{a_{\Lambda}}(\lambda)}$, $\Psi_2^{a_{\Lambda}}([F_i^{a_{\Lambda}}M]) = q_i^{-1+\langle h_i, \lambda \rangle} f_i \Psi_2^{a_{\Lambda}}([M]), \Psi_2^{a_{\Lambda}}([E_i^{a_{\Lambda}}M]) = e_i \Psi_2^{a_{\Lambda}}([M])$,
\item For $P \in \gproj{R_A^{a_{\Lambda}}}$ and $M \in \gmod{R_A^{a_{\Lambda}}}$, $\qdim \Hom_R(P, M) = (\overline{\Psi_1^{a_{\Lambda}}([P])}, \Psi_2^{a_{\Lambda}}([M]))$. 
\end{enumerate}
\end{theorem}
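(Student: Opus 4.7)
The plan is to use the standard strategy for categorifying an integrable highest weight module: exhibit an action of the Chevalley generators of $\quantum{}$ on the Grothendieck groups via the functors $(E_i^{a_{\Lambda}})^{(n)}, (F_i^{a_{\Lambda}})^{(n)}$, verify integrability, and then pin down the module by its highest weight together with a duality argument matching the upper and lower lattices.

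First, I would verify basic structural properties: the functors $E_i^{a_{\Lambda}}$ and $F_i^{a_{\Lambda}}$ are exact (from projectivity of the relevant bimodules), preserve finitely generated modules and, in the case of $E_i^{a_{\Lambda}}$, also finite-dimensionality (since $R_A^{a_{\Lambda}}(\lambda)$ is finitely generated projective over $A$). One also checks that $(F_i^{a_{\Lambda}})^{(n)}$ sends projectives to projectives, so it descends to $K_{\oplus}(\gproj{R_A^{a_{\Lambda}}})$. Adjunctions between $F_i^{a_{\Lambda}}$ and $E_i^{a_{\Lambda}}$ (with explicit degree shifts depending on the weight $\lambda$) follow from the tensor-Hom adjunction applied to the defining bimodules.

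The main obstacle is the categorified $\mathfrak{sl}_2$-commutation relation: for any $M \in \gMod{R_A^{a_{\Lambda}}(\lambda)}$, a natural isomorphism of the shape
\[
q_i^{2}F_i^{a_{\Lambda}} E_i^{a_{\Lambda}} M \oplus \bigoplus_{k=0}^{\langle h_i, \lambda \rangle - 1} q_i^{2k+1-\langle h_i, \lambda\rangle} M \simeq E_i^{a_{\Lambda}} F_i^{a_{\Lambda}} M \quad \text{if } \langle h_i, \lambda \rangle \geq 0,
\]
and its mirror when $\langle h_i, \lambda \rangle \leq 0$. One obtains the non-cyclotomic version from the Mackey filtration recalled earlier in the text, which produces a natural short exact sequence between $E_i F_i$ and $F_i E_i$ on $\gMod{R(\beta)}$. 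The subtle point is to show that after imposing the cyclotomic relation $a_{\Lambda, i}(x_n)e(*,i) = 0$, the sequence splits and the ``correction term'' has precisely the $[\langle h_i, \lambda\rangle]_i$-many copies of the identity predicted by $[e_i, f_i] = [h_i]_i$. This requires a detailed analysis of how the ideal generated by $a_{\Lambda, i}(x_n)$ interacts with the idempotents $e(*, i)$ and their induction to $R(\beta + \alpha_i)$; keeping track of degree shifts is the most delicate part. Once this isomorphism is in hand, the divided-power versions of the quantum Serre relations are formal consequences, together with the integrability $(F_i^{a_{\Lambda}})^{(N)} = 0 = (E_i^{a_{\Lambda}})^{(N)}$ on each weight space for $N \gg 0$, which follows from the finiteness of $\Lambda - Q_+ \cap \mathrm{wt}(V(\Lambda))$ in each direction combined with the cyclotomic truncation.

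With these relations established, $K_{\oplus}(\gproj{R_A^{a_{\Lambda}}})$ becomes a $\mathbb{Z}[q,q^{-1}]$-form of an integrable $\quantum{}$-module generated by the class $[A] \in K_{\oplus}(\gproj{R_A^{a_{\Lambda}}(\Lambda)})$, which has weight $\Lambda$ and is killed by every $E_i^{a_{\Lambda}}$ (the latter since $R_A^{a_{\Lambda}}(\Lambda + \alpha_i) = 0$). By the universal property of $V(\Lambda)_{\mathbb{Z}[q,q^{-1}]}$, one gets a surjective $\quantum{}_{\mathbb{Z}[q,q^{-1}]}$-linear map $\Psi_1^{a_{\Lambda}\,-1}\colon V(\Lambda)_{\mathbb{Z}[q,q^{-1}]} \twoheadrightarrow K_{\oplus}(\gproj{R_A^{a_{\Lambda}}})$ sending $u_{\Lambda}\mapsto [A]$. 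Dually, sending $[\mathbf{k}] \mapsto u_{\Lambda}$ produces $\Psi_2^{a_{\Lambda}}\colon K(\gmod{R_A^{a_{\Lambda}}}) \to V(\Lambda)_{\mathbb{Z}[q,q^{-1}]}^{\mathrm{up}}$ compatible with $E_i^{a_{\Lambda}}, F_i^{a_{\Lambda}}$ as in (2), (3). To upgrade surjectivity to isomorphism on both sides, one uses the pairing $\qdim \Hom_R(\cdot, \cdot)$ between $\gproj{R_A^{a_{\Lambda}}}$ and $\gmod{R_A^{a_{\Lambda}}}$ and matches it, via the biadjunction of $F_i^{a_{\Lambda}}$ and $E_i^{a_{\Lambda}}$, with the Shapovalov-type pairing between $V(\Lambda)_{\mathbb{Z}[q,q^{-1}]}$ and $V(\Lambda)_{\mathbb{Z}[q,q^{-1}]}^{\mathrm{up}}$; non-degeneracy of both pairings then forces injectivity and yields property (4) as a byproduct. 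The expected obstacle is bookkeeping of the degree shifts throughout, especially in combining the Mackey filtration with the cyclotomic relation; everything else is formal from the highest weight/integrability structure.
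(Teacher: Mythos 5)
This statement is quoted in the paper directly from Kang--Kashiwara \cite[Theorem 6.2]{MR2995184}; the paper supplies no proof of its own, so there is nothing internal to compare against. Your sketch is a faithful outline of the actual Kang--Kashiwara argument: exactness and biadjunction of $E_i^{a_\Lambda}, F_i^{a_\Lambda}$, the categorified $\mathfrak{sl}_2$ commutation relation as the technical core, and then the highest-weight-plus-pairing argument to identify the Grothendieck groups with $V(\Lambda)_{\mathbb{Z}[q,q^{-1}]}$ and its dual lattice. One caveat on the logical structure: the ``basic structural properties'' you dispose of first --- that $R_A^{a_\Lambda}(\lambda)$ is finitely generated projective over $A$, and especially that $E_i^{a_\Lambda}$ preserves projectives (equivalently, that $R_A^{a_\Lambda}(\lambda)e(*,i)$ is projective as a right module over the smaller cyclotomic algebra) --- are not independent preliminaries in the original proof; they are established by an induction on $\height(\Lambda-\lambda)$ that is interleaved with the very short exact sequence relating $F_iE_i$ and $E_iF_i$ that you cite as the main obstacle (this is the sequence the present paper reuses in Lemma \ref{lem:prdim1}). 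So the modular organization of your sketch slightly understates how entangled these steps are, but the overall route is the correct one.
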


Theorem \ref{thm:categorification} and Theorem \ref{thm:cyclotomiccategorification} are related as follows. 
Recall from Lemma \ref{lem:globalbasis} the two morphisms $\pi_{\Lambda} \colon \quantum{-}_{\mathbb{Z}[q, q^{-1}]} \to V(\Lambda)_{\mathbb{Z}[q,q^{-1}]}$ and $\iota_{\Lambda} \colon V(\Lambda)_{\mathbb{Z}[q,q^{-1}]}^{\text{up}} \to \quantum{-}_{\mathbb{Z}[q,q^{-1}]}^{\text{up}}$. 
%Note that Theorem \ref{thm:categorification} can be generalized to $R_A(\beta) = R(\beta) \otimes A$ without changing the proof.  
We define two functors $\Pi_{\Lambda} \colon \gMod{R(\beta)} \to \gMod{R^{\Lambda}(\beta)}$ and $I_{\Lambda} \colon \gMod{R^{\Lambda}(\beta)} \to \gMod{R(\beta)}$ as \index{$\Pi_{\Lambda}$} \index{$I_{\Lambda}$}
\[
\Pi_{\Lambda}(M) = R^{\Lambda}(\beta) \otimes_{R(\beta)} M, I_{\Lambda}(M) = \operatorname{Infl}_{R^{\Lambda}(\beta)}^{R(\beta)} M.
\]
Then, we have
$\pi_{\Lambda} \circ \Psi_1 = \Psi_1^{a_{\Lambda}} \circ \Pi_{\Lambda}, \iota_{\Lambda} \circ \Psi_2^{a_{\Lambda}} = \Psi_2 \circ I_{\Lambda}$. 
%This consistency can be generalized to quiver Hecke algebras over the algebra $A$, but we do not need it. 

\begin{theorem} \label{thm:sl2categorification}
  Let $\lambda \in W \Lambda$ and assume that $\langle h_i, \lambda \rangle \geq 0$. 
  Then, we have a graded equivalence  
  \[
  \gMod{R_A^{a_{\Lambda}}(\lambda)}  \xtofrom[(E_i^{a_{\Lambda}})^{(\langle h_i, \lambda \rangle)}]{(F_i^{a_{\Lambda}})^{(\langle h_i, \lambda \rangle)}} \gMod{R_A^{a_{\Lambda}}(s_i \lambda)}. 
  \]
\end{theorem}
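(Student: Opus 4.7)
This is the reflection equivalence attached to the Weyl group action on weight spaces in a categorified integrable module, and is a graded, parameter-dependent analogue of the Chuang--Rouquier equivalence for categorical $\mathfrak{sl}_2$-actions. I would follow the 2-representation theoretic route, exhibiting a strong categorical $\mathfrak{sl}_2$-action on $\bigoplus_{\mu} \gMod{R_A^{a_\Lambda}(\mu)}$ and then appealing to the abstract reflection functor. Concretely, fixing $i \in I$, the pair $(F_i^{a_\Lambda}, E_i^{a_\Lambda})$ together with the natural transformations induced by the action of $x_n$ and $\tau_{n-1}$ inside $R(n\alpha_i)$ on $(F_i^{a_\Lambda})^n$ should equip the ambient category with a strong $\mathfrak{sl}_2$-categorification in the sense of \cite{rouquier20082kacmoodyalgebras}. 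The required biadjunction (with the explicit grading shift depending on $\langle h_i,\mu\rangle$ dictated by Theorem \ref{thm:cyclotomiccategorification}) is the Frobenius property of the inclusion $R_A^{a_\Lambda}(\mu+\alpha_i) \hookrightarrow R_A^{a_\Lambda}(\mu)$, while the nilHecke relations are immediate from Definition \ref{def:KLR}; the cyclotomic relation $a_{\Lambda,i}(x_n)e(*,i)=0$ then makes the action integrable, in the sense that every object in a weight $\mu \in W\Lambda$ is annihilated by a sufficiently high power of $F_i^{a_\Lambda}$ or $E_i^{a_\Lambda}$.

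Given this structure, I would invoke Rouquier's reflection equivalence. The abstract reflection functor is a Rickard-type complex of divided powers $(E_i^{a_\Lambda})^{(k)}$ and $(F_i^{a_\Lambda})^{(k)}$; in our situation $\lambda \in W\Lambda$ sits at an extremal end of the $\mathfrak{sl}_2$-string generated by $\alpha_i$, in that $(E_i^{a_\Lambda})^{(n+1)}$ annihilates $\gMod{R_A^{a_\Lambda}(\lambda)}$ with $n=\langle h_i,\lambda\rangle\geq 0$ (and symmetrically $(F_i^{a_\Lambda})^{(n+1)}$ annihilates $\gMod{R_A^{a_\Lambda}(s_i\lambda)}$). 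Consequently the Rickard complex collapses to the single term $(F_i^{a_\Lambda})^{(n)}$ going from weight $\lambda$ to weight $s_i\lambda$, with inverse $(E_i^{a_\Lambda})^{(n)}$. The compositions $(E_i^{a_\Lambda})^{(n)} (F_i^{a_\Lambda})^{(n)}$ and $(F_i^{a_\Lambda})^{(n)}(E_i^{a_\Lambda})^{(n)}$ can then be evaluated using the categorical Serre/commutation relations in the strong $\mathfrak{sl}_2$-action, and they reduce to the identity once the higher divided powers vanish.

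The main obstacle is verifying the \emph{strong} rather than merely weak $\mathfrak{sl}_2$-categorification axioms in the graded, finitely generated (but possibly infinite-dimensional) setting with a general parameter ring $A$. For $A=\mathbf{k}$ and ungraded finite-dimensional modules this is essentially done by Kang--Kashiwara in \cite{MR2995184}; once the axioms are transferred to the present graded, $A$-linear setup, mostly by flatness of $R_A^{a_\Lambda}(\mu)$ over $A$, the abelian-categorical equivalence is a purely formal consequence of the 2-representation theory of $U_q(\mathfrak{sl}_2)$, with no case-by-case analysis required.
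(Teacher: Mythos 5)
Your proposal is correct and follows essentially the same route as the paper: the paper's proof is a one-line citation of Theorem \ref{thm:cyclotomiccategorification} (the Kang--Kashiwara cyclotomic categorification, which supplies exactly the strong categorical $\mathfrak{sl}_2$-action you describe) together with the 2-representation theory of categorical $U_q(\mathfrak{sl}_2)$ (\cite[Lemma 4.12]{rouquier20082kacmoodyalgebras}, \cite[Theorem 5.2.8]{MR2963085}), i.e.\ the reflection equivalence at an extremal weight given by the single divided power $(F_i^{a_\Lambda})^{(\langle h_i,\lambda\rangle)}$. The only cosmetic remark is that since $\lambda\in W\Lambda$ and $\langle h_i,\lambda\rangle\geq 0$ force $\lambda+\alpha_i$ to not be a weight of $V(\Lambda)$, the functor $E_i^{a_\Lambda}$ itself (not merely $(E_i^{a_\Lambda})^{(n+1)}$) vanishes on weight $\lambda$, which is precisely the highest-weight situation in which the Rickard complex collapses.
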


\begin{proof}
  It is an immediate consequence of Theorem \ref{thm:cyclotomiccategorification} and the 2-representation theory of the 2-categorical $U_q(\mathfrak{sl}_2)$ \cite[Lemma 4.12]{rouquier20082kacmoodyalgebras}, \cite[Theorem 5.2.8]{MR2963085}.
\end{proof}

For a simple $R^{\Lambda}(\lambda)$-module $L$, we define 
\begin{align*}
  \wt^{\Lambda} (L) &= \lambda, \\ 
  \varepsilon_i^{\Lambda} (L) &= \max \{m \in \mathbb{Z}_{\geq 0} \mid (E_i^{a_{\Lambda}})^m L \neq 0\}, \\
  \varphi_i^{\Lambda} (L) &= \langle h_i, \wt (L) \rangle + \varepsilon_i (L), \\
  \tilde{e}_i^{\Lambda} L &= q_i^{-\varepsilon_i (L) +1} \soc E_i^{a_{\Lambda}} L, \\
  \tilde{f}_i^{\Lambda} L &= q_i^{\varepsilon_i (L)} \hd F_i^{a_{\Lambda}} L. \\
\end{align*}
It defines a crystal structure on $\mathcal{B}^{\Lambda} = \{ [L] \in K(\gmod{R^{\Lambda}}) \mid \text{$L$ is a self-dual simple $R^{\Lambda}$-module} \}$, \index{$(\mathcal{B}^{\Lambda}, \wt^{\Lambda}, \varepsilon_i^{\Lambda}, \varphi_i^{\Lambda}, \tilde{e}_i^{\Lambda}, \tilde{f}_i^{\Lambda})$}
which is isomorphic to $B(\Lambda)$ \cite[Theorem 7.5]{MR2822211}. 
Moreover, the (non-strict) embedding of crystals $\overline{\iota_{\Lambda}} \colon B(\Lambda) \to B(\infty)$ coincides with $I_{\Lambda} \colon \mathcal{B}^{\Lambda} \to \mathcal{B}$ under the isomorphisms $B(\Lambda) \simeq \mathcal{B}^{\Lambda}$ and $B(\infty) \simeq \mathcal{B}$. 

\subsection{R-matrices}

We provide a brief overview on the technique of R-matrices developed by Kang, Kashiwara, Kim, Oh and Park \cite{MR3748315, MR3758148, MR3790066, MR4717658}, 
primarily following \cite{MR3790066}. 
For a comprehensive treatment in a more general context, see \cite{MR4717658}. 

Let $Z(R(\beta))$ be the center of $R(\beta)$. \index{$Z(R(\beta))$}
We have $Z(R(\beta)) = (\bigoplus_{\nu \in I^{\beta}} \mathbf{k}[x_1, \ldots, x_{\height \beta}]e(\nu))^{\Sym_{\height \beta}}$. 
In particular, $Z(R(\beta))_0 = \mathbf{k}$. 

\begin{lemma}[{\cite[Lemma 1.5]{MR3748315}, \cite[Lemma 1.9]{MR3790066}}]  \label{lem:universalR}
Let $\beta, \gamma \in Q_+$. 
There is a natural homogeneous homomorphism called the universal R-matrix
\[
\universalR{M}{N} \colon M \circ N \to N \circ M \index{$\universalR{M}{N}$}
\]
for $M \in \gMod{R(\beta)}$ and $N \in \gMod{R(\gamma)}$. 
It is of degree $-(\beta,\gamma) + (\beta,\gamma)_n$, \index{$(\beta,\gamma)_n$}
where $(\cdot,\cdot)_n \colon Q \times Q \to \mathbb{Z}$ is a bilinear form defined by $(\alpha_i, \alpha_j)_n = \delta_{i,j}(\alpha_i,\alpha_i) \ (i, j \in I)$. 
It satisfies
\[
\universalR{N}{M} \universalR{M}{N} (u \otimes v) = \mathfrak{p}_{\beta,\gamma} (u\otimes v)
\]
for $u \in M, v \in N$, where $\mathfrak{p}_{\beta,\gamma} \in Z(R(\beta)) \otimes Z(R(\gamma))$ is a homogeneous element defined by 
\[
\mathfrak{p}_{\beta, \gamma} = \sum_{\nu \in I^{\beta} \times I^{\gamma}} \prod_{1 \leq k \leq \height (\beta) < l \leq \height (\beta + \gamma), \nu_k \neq \nu_l} Q_{\nu_k, \nu_l}(x_k, x_l) e(\nu). 
\] \index{$\mathfrak{p}_{\beta,\gamma}$}
Moreover, we have 
\[
\universalR{L}{M\circ N} = (\id_M \otimes \universalR{L}{N})(\universalR{L}{M} \otimes \id_N),\ \universalR{L \circ M}{N} = (\universalR{L}{N} \otimes \id_M)(\id_L \otimes \universalR{M}{N})
\]
for $L, M, N \in \gMod{R}$. 
\end{lemma}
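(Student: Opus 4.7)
\bigskip

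The plan is to realize $\universalR{M}{N}$ as the left $R(\beta+\gamma)$-module map induced by an intertwining element $\varphi_{w[m,n]}\in R(\beta+\gamma)$, where $m=\height\beta$ and $n=\height\gamma$, and then read off all the stated properties from the intrinsic properties of this element. Concretely, I would first introduce intertwiners $\varphi_k\in R(\beta+\gamma)$ attached to the simple transpositions $s_k$, defined by
\[
\varphi_k e(\nu) = \begin{cases} (\tau_k(x_k-x_{k+1})+1)e(\nu) & \text{if } \nu_k = \nu_{k+1},\\ \tau_k e(\nu) & \text{if } \nu_k \neq \nu_{k+1}. \end{cases}
\]
A direct verification from the defining relations of $R(\beta+\gamma)$ shows that $\varphi_k a e(\nu) = s_k(a)\varphi_k e(\nu)$ for $a\in P(\beta+\gamma)$ and that the $\varphi_k$ satisfy the braid relations, so that $\varphi_w := \varphi_{k_1}\cdots\varphi_{k_l}$ depends only on $w$ for any reduced expression $w = s_{k_1}\cdots s_{k_l}$.

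Next I would apply this to the permutation $w[m,n]\in \Sym_{m+n}^{n,m}$: since $\varphi_k e(\nu) \in e(s_k\nu) R(\beta+\gamma)$, we have $\varphi_{w[m,n]}e(\beta,\gamma) \in e(\gamma,\beta)R(\beta+\gamma)e(\beta,\gamma)$. Define $\universalR{M}{N}\colon M\circ N \to N\circ M$ by
\[
e(\beta,\gamma) \otimes (u \otimes v) \longmapsto \varphi_{w[m,n]}e(\beta,\gamma)\otimes (v \otimes u),
\]
which is well-defined and $R(\beta+\gamma)$-linear thanks to the intertwiner relations in Step 1. Naturality in $M$ and $N$ is immediate.

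For the degree computation, each $\varphi_k e(\nu)$ has degree $-(\alpha_{\nu_k},\alpha_{\nu_{k+1}})$ when $\nu_k\ne\nu_{k+1}$ and degree $0$ when $\nu_k=\nu_{k+1}$ (the factor $(x_k-x_{k+1})$ exactly compensates for the degree of $\tau_k$). Summing the contributions from the $mn$ crossings of $w[m,n]$ between the first $m$ and last $n$ strands, on the idempotent $e(\beta,\gamma)$, gives $-(\beta,\gamma)$ plus a correction $(\beta,\gamma)_n$ counting the diagonal (same-colour) crossings. For the composition identity one uses the fundamental relations $\varphi_k^2 e(\nu) = Q_{\nu_k,\nu_{k+1}}(x_k,x_{k+1})e(\nu)$ when $\nu_k\ne\nu_{k+1}$ and $\varphi_k^2 e(\nu)=e(\nu)$ when $\nu_k=\nu_{k+1}$; iterating along a reduced expression for $w[n,m]w[m,n]$ yields $\varphi_{w[n,m]}\varphi_{w[m,n]}e(\beta,\gamma) = \mathfrak{p}_{\beta,\gamma}e(\beta,\gamma)$, since the product runs exactly over the pairs $1\le k\le m<l\le m+n$ with $\nu_k\ne\nu_l$. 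Finally, the compatibility with convolution products follows at once from the factorisations $w[m_L, m_M+m_N] = (1\times w[m_L, m_N])\,(w[m_L,m_M]\times 1)$ and its dual, combined with the braid-invariance of $\varphi_w$.

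I expect the main technical obstacle to be the careful verification that $\varphi_w$ is independent of the reduced expression (that is, the braid relations for the $\varphi_k$), and the bookkeeping of degrees and $Q$-factors in the $\varphi_{w[n,m]}\varphi_{w[m,n]}$ computation; both are routine but delicate applications of the KLR relations, and the diagonal cases $\nu_k=\nu_l$ must be tracked consistently to produce exactly the correction $(\beta,\gamma)_n$ in the degree and the product $\mathfrak{p}_{\beta,\gamma}$.
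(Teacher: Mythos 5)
Your construction via the intertwiners $\varphi_k$ (with $\varphi_k e(\nu)=(\tau_k(x_k-x_{k+1})+1)e(\nu)$ on the diagonal components, braid relations, the square formula $\varphi_k^2e(\nu)=(Q_{\nu_k,\nu_{k+1}}(x_k,x_{k+1})+\delta_{\nu_k,\nu_{k+1}})e(\nu)$, and the factorisation of $w[m,n]$) is exactly the argument in the sources the paper cites for this lemma, which the paper itself does not reprove. The only point to tidy is the well-definedness step: besides the polynomial intertwining relation you state, one also needs the commutation $\varphi_{w[n,m]}(b\otimes a)=(a\otimes b)\varphi_{w[n,m]}$ for the $\tau$-generators of $R(\beta)\otimes R(\gamma)$ (a routine consequence of length-additivity of $w[n,m]s_k=s_{w[n,m](k)}w[n,m]$), but this is standard and does not affect the correctness of your approach.
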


In order to effectively utilize universal R-matrices, we need to introduce the following concept. 

\begin{definition}[{\cite[Definition 2.1]{MR3790066}}] \label{def:affinization}
 Let $\beta \in Q_+$ and $M \in \gmod{R(\beta)}$ be a simple module. 
 An affine object of $M$ is a pair $(\widehat{M}, z = z_{\widehat{M}})$ of an $R(\beta)$-module $\widehat{M}$ and an injective endomorphism $z \in \End_{R(\beta)}(\widehat{M})$ of positive degree satisfying
 \begin{enumerate}
  \item as a $\mathbf{k}[z]$-module, $\widehat{M}$ is free of finite rank, and
  \item we have an isomorphism of $R(\beta)$-modules $\widehat{M}/z\widehat{M} \simeq M$. 
 \end{enumerate}

 An affine object $(\widehat{M}, z)$ of $M$ is said to be an affinization if it additionally satisfies 
 \begin{enumerate}
 \setcounter{enumi}{2}
  \item $\mathfrak{p}_{i, \beta} \widehat{M} \neq 0$ for all $i \in I$, 
 \end{enumerate}
 where $\mathfrak{p}_{i,\beta}$ is an element of $Z(R(\beta))$ defined by 
 \[
 \mathfrak{p}_{i, \beta} = \sum_{\nu \in I^{\beta}} \left( \prod_{1 \leq k \leq \height (\beta), \nu_k = i} x_k \right) e(\nu).  
 \] \index{$\mathfrak{p}_{i,\beta}$}
\end{definition}

\begin{remark}
  Every simple $R(\beta)$-module $M$ has an affine object: for example, set $\widehat{M} = M \otimes \mathbf{k}[z]$ with
  \[
  x (m \otimes f) = xm \otimes f, z (m \otimes f) = m \otimes zf \ \text{for $x \in R(\beta), m \in M, f \in \mathbf{k}[z]$}. 
  \]

  However, unless $\beta = 0$, it is not an affinization. 
  In fact, write $\beta = \sum_{i \in I} k_i \alpha_i$, and take $i \in I$ such that $k_i \neq 0$. 
  Then, $\mathfrak{p}_{i,\beta}$ is of positive degree and act on $M$ by $0$. 
  Hence, $\mathfrak{p}_{i,\beta} \widehat{M} = 0$. 

  When $C$ is symmetric and every polynomials $Q_{i,j}$ is a power of $(u-v)$ up to constant multiple, there is a functorial way to construct affinization for every simple module {\cite[Section 1.3.2]{MR3748315}}. 
  On the other hand, the author does not know whether every simple module admits an affinization in other cases. 
\end{remark}

\begin{example} \label{ex:affinization}
 For $i \in I$, $R(\alpha_i) \simeq \mathbf{k}[x_1]$ and $L(i) \simeq R(\alpha_i)/x_1 R(\alpha_i)$. 
 Let $\widehat{L}(i) = R(\alpha_i)$ and $z \in \End_{R(\alpha_i)} (R(\alpha_i))$ be the multiplication by $x_1$. \index{$\widehat{L}(i)$}
 Then, $(\widehat{L}(i),z)$ is an affinization of $L(i)$. 
\end{example}

\begin{lemma} \label{lem:endaffineobj}
  Let $\beta \in Q_+$. 
  If $(\widehat{M},z)$ is an affine object of a simple $R(\beta)$-module $M$, 
  we have $\End_{R(\beta)[z]}(\widehat{M}) \simeq \mathbf{k}[z]$. 
  \end{lemma}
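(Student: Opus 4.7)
The plan is to set $A = \End_{R(\beta)[z]}(\widehat{M})$, view it as a graded $\mathbf{k}[z]$-subalgebra of $\End_{R(\beta)}(\widehat{M})$ (the inclusion $\mathbf{k}[z] \hookrightarrow A$ being injective because $z$ itself acts injectively on $\widehat{M}$), and then prove the reverse inclusion $A \subseteq \mathbf{k}[z]$ by reducing modulo $z$ and carrying out a graded Nakayama-style bookkeeping.

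First I would verify that reduction modulo $z$ induces an injection $A/zA \hookrightarrow \End_{R(\beta)}(M)$ where $M = \widehat{M}/z\widehat{M}$. The reduction map is obviously well defined. Conversely, if $f \in A$ satisfies $f(\widehat{M}) \subseteq z\widehat{M}$, then by injectivity of $z$ there is a unique $R(\beta)$-linear $g \colon \widehat{M} \to \widehat{M}$ with $f = zg$; since $f$ and $z$ commute and $z$ is injective, $g$ commutes with $z$ as well, so $g \in A$ and $f \in zA$. By Theorem~\ref{thm:categorification}~(6), the simple module $M$ is absolutely irreducible, and since $M$ is finite-dimensional no endomorphism of nonzero degree can exist, so $\End_{R(\beta)}(M) = \mathbf{k}$ concentrated in degree $0$. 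The class of $1 \in A$ modulo $z$ is nonzero, since $1 = zg$ would imply $\widehat{M} = z\widehat{M}$ and hence $M = 0$, contradicting simplicity. Therefore $A/zA = \mathbf{k}$, sitting in degree $0$.

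Next I would promote this to the global statement $A = \mathbf{k}[z]$ by a graded analysis. Let $D = \deg z > 0$. Since $\widehat{M}$ is free of finite rank as a graded $\mathbf{k}[z]$-module, its graded pieces vanish below some fixed degree, so $\End_{R(\beta)}(\widehat{M})$ and in particular $A$ are bounded below. The identity $A/zA = \mathbf{k}$ in degree $0$ gives $A_d = zA_{d-D}$ for every $d \neq 0$. Descending iteration $A_d = zA_{d-D} = z^2 A_{d-2D} = \cdots$ combined with the lower bound forces $A_d = 0$ for $d < 0$, which in turn yields $A_0 = \mathbf{k}$, and ascending iteration then gives $A_{nD} = \mathbf{k}\,z^n$ for $n \geq 0$ with $A_d = 0$ otherwise. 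Collecting these gives $A = \bigoplus_{n \geq 0} \mathbf{k}\,z^n = \mathbf{k}[z]$.

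The essential content is the first step, namely turning simplicity of $M$ into the one-dimensionality of $A/zA$; the only nontrivial input is absolute irreducibility from Theorem~\ref{thm:categorification}~(6), without which $\End_{R(\beta)}(M)$ would only be known to be a graded division ring. The concluding graded bookkeeping is routine given the Laurentian structure baked into the setup.
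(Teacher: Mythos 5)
Your proof is correct, and it takes a mildly but genuinely different route from the paper's. The paper computes $\Hom_{R(\beta)[z]}(\widehat{M},\widehat{M}/z^p\widehat{M})\simeq \mathbf{k}[z]/(z^p)$ for each $p$ (by induction on $p$, exhibiting the $p$ maps $z^k$ and bounding the dimension from above) and then recovers $\End_{R(\beta)[z]}(\widehat{M})$ as the inverse limit $\varprojlim_p \mathbf{k}[z]/(z^p)\simeq \mathbf{k}[z]$. You instead work directly with $A=\End_{R(\beta)[z]}(\widehat{M})$, identify $A/zA$ with a subalgebra of $\End_{R(\beta)}(M)=\mathbf{k}$, and conclude by graded bookkeeping using that $A$ is bounded below. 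Both arguments rest on the same essential input, namely $\End_{R(\beta)}(M)=\mathbf{k}$ via absolute irreducibility (Theorem~\ref{thm:categorification}~(6)); the difference is only in how the answer is propagated from the special fibre to all of $\widehat{M}$. Your version has the advantage of avoiding the inverse limit altogether, and in particular sidesteps the slightly delicate point that $\varprojlim_p \mathbf{k}[z]/(z^p)$ equals $\mathbf{k}[z]$ rather than $\mathbf{k}[[z]]$ only because one is working with graded, degreewise finite, bounded-below objects — a point your explicit use of the Laurentian structure makes transparent. One small remark: the assertion that $A$ is bounded below needs not just that $\widehat{M}$ is bounded below but also that it is finitely generated (so that a homogeneous endomorphism of sufficiently negative degree kills all generators); since you invoke freeness of finite rank over $\mathbf{k}[z]$, this is available, but it is worth saying explicitly.
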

  
  \begin{proof}
  Note that, since $\widehat{M}$ is a finite-rank free $\mathbf{k}[z]$-module, it is a finitely generated $R(\beta)[z]$-module and 
  \[
  \widehat{M}/z\widehat{M} \xrightarrow{z^p, \simeq} z^p \widehat{M}/z^{p+1}\widehat{M} \ (p \in \mathbb{Z}_{\geq 0}),\  \widehat{M} \simeq \varprojlim_p \widehat{M}/z^p \widehat{M}. 
  \]
  
  First, we compute 
  \begin{align*}
  \Hom_{R(\beta)[z]} (\widehat{M}, \widehat{M}/z\widehat{M}) &\simeq \Hom_{R(\beta)} (\widehat{M}/z\widehat{M}, \widehat{M}/z\widehat{M}) \\
  &\simeq \End_{R(\beta)} (M) \quad \text{by condition (2) of Definition \ref{def:affinization}} \\
  & \simeq \mathbf{k}. 
  \end{align*}
  
  Next, let $p \in \mathbb{Z}_{\geq 0}$. 
  By an induction, we deduce that $\dim \Hom_{R(\beta)[z]} (\widehat{M}, \widehat{M}/z^p\widehat{M}) \leq p$. 
  On the other hand, for $0 \leq k \leq p-1$, we have a nonzero homomorphism of degree $k\deg z$
  \[
  \widehat{M} \twoheadrightarrow \widehat{M}/z^{p-k}\widehat{M} \xrightarrow{z^k} \widehat{M}/z^p\widehat{M}. 
  \]
  Hence, $\Hom_{R(\beta)[z]} (\widehat{M}, \widehat{M}/z^p\widehat{M}) \simeq \mathbf{k}[z]/(z^p)$. 
  
  Now, we obtain
  \[
  \End_{R(\beta)[z]} (\widehat{M}) \simeq \varprojlim_p \Hom_{R(\beta)[z]} (\widehat{M}, \widehat{M}/z^p\widehat{M}) \simeq \varprojlim_p \mathbf{k}[z]/(z^p) \simeq \mathbf{k}[z]. 
  \]
  \end{proof}

\begin{lemma} \label{lem:affinization}
  Let $\beta, \gamma \in Q_+$. 
  Let $M$ be a simple $R(\beta)$-module that admits an affinization $(\widehat{M}, z)$, 
  and $N$ an $R(\gamma)$-module. 
  Then, $\universalR{\widehat{M}}{N}, \universalR{N}{\widehat{M}}$ are injective. 
  Additionally, if $N$ is finite-dimensional, the two homomorphisms obtained by inverting $z$
\begin{align*}
  \universalR{\widehat{M}}{N} &\colon \mathbf{k}[z, z^{-1}] \otimes_{\mathbf{k}[z]} (\widehat{M} \circ N) \to \mathbf{k}[z,z^{-1}] \otimes_{\mathbf{k}[z]} (N \circ \widehat{M}), \\ 
  \universalR{N}{\widehat{M}} &\colon \mathbf{k}[z, z^{-1}] \otimes_{\mathbf{k}[z]} (N \circ \widehat{M}) \to \mathbf{k}[z,z^{-1}] \otimes_{\mathbf{k}[z]} (\widehat{M} \circ N), 
\end{align*}
are isomorphisms. 
\end{lemma}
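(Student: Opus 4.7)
The approach is to reduce every assertion to the action of the central element $\mathfrak{p}_{\beta,\gamma}$ via the composition identity $\universalR{N}{\widehat{M}} \circ \universalR{\widehat{M}}{N} = \mathfrak{p}_{\beta,\gamma}$ from Lemma \ref{lem:universalR} (and its symmetric variant on $N \circ \widehat{M}$). Consequently, injectivity of $\universalR{\widehat{M}}{N}$ and $\universalR{N}{\widehat{M}}$ follows from injectivity of $\mathfrak{p}_{\beta,\gamma}$ on $\widehat{M} \circ N$ and $N \circ \widehat{M}$, while bijectivity after inverting $z$ follows from $\mathfrak{p}_{\beta,\gamma}$ becoming a unit there. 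Since $\Ind_{\beta,\gamma}$ is exact, I may verify these properties on the external tensor product $\widehat{M} \otimes N$ as an $R(\beta)\otimes R(\gamma)$-module.

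A preliminary step analyzes how $Z(R(\beta))$ acts on $\widehat{M}$. Because $z$ is an $R(\beta)$-endomorphism and central elements of $R(\beta)$ necessarily commute with it, the action of $Z(R(\beta))$ on $\widehat{M}$ factors through $\End_{R(\beta)[z]}(\widehat{M}) = \mathbf{k}[z]$ by Lemma \ref{lem:endaffineobj}, giving a graded ring map $\chi \colon Z(R(\beta)) \to \mathbf{k}[z]$. Applied to the homogeneous element $\mathfrak{p}_{i,\beta}$, Definition \ref{def:affinization}(3) rules out a zero image, so $\chi(\mathfrak{p}_{i,\beta}) = c_i z^{n_i}$ for some $c_i \in \mathbf{k}^\times$ and $n_i \in \mathbb{Z}_{\geq 0}$; in particular $\mathfrak{p}_{i,\beta}$ acts as a unit on $\widehat{M}[z^{-1}]$.

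For the isomorphism statement when $N$ is finite-dimensional, each $x_l$ with $l > \height(\beta)$ acts nilpotently on $N$, so $\widehat{M} \otimes N$ admits a finite filtration on which these $x_l$ act as $0$ on the associated graded. On the associated graded, homogeneity together with the condition $Q_{i,j}(u,0) \neq 0$ force $Q_{i,j}(u,0) = \lambda_{i,j} u^{-c_{i,j}}$ with $\lambda_{i,j} \in \mathbf{k}^\times$, so the restriction of $\mathfrak{p}_{\beta,\gamma}$ to the component indexed by $(\nu',\nu'') \in I^{\beta}\times I^{\gamma}$ becomes a nonzero scalar multiple of $\prod_k x_k^{d_k(\nu'')}$, where $d_k(\nu'') = \sum_{l > \height(\beta),\, \nu_l \neq \nu'_k}(-c_{\nu'_k, \nu_l})$. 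The key combinatorial observation is that $d_k(\nu'')$ depends on $k$ only through $\nu'_k$, so by commutativity of the $x_k$'s this reorganizes as $\prod_{i \in I} \mathfrak{p}_{i,\beta}^{d_i(\nu'')}$ on $e(\nu',\nu'')\widehat{M}$, which by the preceding paragraph acts as a unit on $\widehat{M}[z^{-1}]$. Since the filtration on $\widehat{M}\otimes N$ is finite, an inductive lifting argument upgrades associated-graded invertibility to invertibility of $\mathfrak{p}_{\beta,\gamma}$ on $\widehat{M}[z^{-1}] \otimes N$.

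For injectivity of $\mathfrak{p}_{\beta,\gamma}$ on $\widehat{M} \otimes N$ for arbitrary $N$: since $\widehat{M}$ is $\mathbf{k}[z]$-free and both $\widehat{M}$ and $N$ are Laurentian, $\widehat{M} \otimes N$ is $\mathbf{k}[z]$-flat with finite-dimensional graded pieces. For a homogeneous kernel element $k \in (\widehat{M} \otimes N)_d$, choose $p$ large enough that the quotient map $(\widehat{M} \otimes N)_d \to (\widehat{M} \otimes (N/N^{\geq p+1}))_d$ is an isomorphism; then $N/N^{\geq p+1}$ is finite-dimensional, and the previous paragraph combined with $\mathbf{k}[z]$-flatness yields injectivity of $\mathfrak{p}_{\beta,\gamma}$ on $\widehat{M} \otimes (N/N^{\geq p+1})$, forcing $k=0$. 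The main obstacle is the combinatorial rearrangement in the third paragraph: a priori one controls only the central elements $\mathfrak{p}_{i,\beta}$ on $\widehat{M}$, not the individual generators $x_k$, so the identification of the exponent $d_k$ as depending only on $\nu'_k$ is what enables the regrouping.
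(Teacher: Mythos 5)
Your proof is correct, and its engine coincides with the paper's: both arguments reduce everything to the central element $\mathfrak{p}_{\beta,\gamma}$ via the composition identity, use $\End_{R(\beta)[z]}(\widehat{M})\simeq \mathbf{k}[z]$ to see that each $\mathfrak{p}_{i,\beta}$ acts on $\widehat{M}$ as a nonzero scalar times a power of $z$, and identify the component of $\mathfrak{p}_{\beta,\gamma}$ that is constant in the $R(\gamma)$-variables as $\bigl(\prod_{i\neq j}\mathfrak{p}_{i,\beta}^{-k_j\langle h_i,\alpha_j\rangle}\bigr)\otimes 1$ --- your ``key combinatorial observation'' that the exponent $d_k(\nu'')$ depends only on the colour $\nu'_k$ is exactly this regrouping. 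The organization, however, differs. The paper obtains injectivity for arbitrary finitely generated $N$ in one stroke by a leading-term argument: the top $\widehat{M}$-degree component of a putative kernel element survives because every other bihomogeneous term of $\mathfrak{p}_{\beta,\gamma}$ strictly raises the $N$-degree; the statement after inverting $z$ is then outsourced to \cite[Lemma 2.9]{MR3790066}. You run the argument in the opposite direction: filter a finite-dimensional $N$ by degree so that the $N$-variables vanish on the associated graded, upgrade unit-ness of $\prod_i\mathfrak{p}_{i,\beta}^{d_i}$ on $\widehat{M}[z^{-1}]\otimes \operatorname{gr}(N)$ to invertibility of $\mathfrak{p}_{\beta,\gamma}$ by induction on the filtration, deduce injectivity on $\widehat{M}\otimes N$ from $z$-torsion-freeness, and reach general $N$ by truncation. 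This makes the localized-isomorphism claim self-contained rather than cited, at the cost of an extra limiting step; the only point to tighten is that your truncation $N^{\geq p+1}$ should be the $R(\gamma)$-submodule generated by $N_{\geq p+1}$ (or one notes directly that $\mathfrak{p}_{\beta,\gamma}$ preserves $\widehat{M}\otimes N_{\geq p+1}$, as it only raises degree), which costs nothing since $R(\gamma)$ is Laurentian. Both routes are sound, and the symmetric statements for $\universalR{N}{\widehat{M}}$ follow in either case from $Q_{i,j}(0,v)\neq 0$ exactly as you indicate.
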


\begin{proof}
For any $i \in I$, since $\mathfrak{p}_{i,\beta}\widehat{M} \neq 0$ by Definition \ref{def:affinization} (3), it gives a nonzero element of $\End_{R(\beta)[z]}(\widehat{M})$. 
By Lemma \ref{lem:endaffineobj}, it coincide with a multiplication by a power of $z$ up to a nonzero scalar multiple. 

We claim that $\universalR{N}{\widehat{M}}\universalR{\widehat{M}}{N}$ is injective. 
Suppose that it is not the case. 
By Lemma \ref{lem:universalR}, there is a nonzero homogeneous element of degree zero $x = \sum_{d \in \mathbb{Z}} u_d \otimes v_d \ (u_d \in \widehat{M}_d, v_d \in N_{-d})$ such that $\mathfrak{p}_{\beta,\gamma}(x) = 0$.  
Take the largest $d_0$ such that $u_{d_0} \otimes v_{d_0} \neq 0$. 
By the definition, $\mathfrak{p}_{\beta,\gamma} \in Z(R(\beta)) \otimes Z(R(\gamma))$ is homogeneous of degree $-2(\beta,\gamma) + 2(\beta,\gamma)_n$. 
Moreover, the term in $Z(R(\beta))_{-2(\beta,\gamma)+2(\beta,\gamma)_n} \otimes Z(R(\gamma))_0$ is
\[
\left( \prod_{i,j \in I, i\neq j} \mathfrak{p}_{i,\beta}^{-k_j \langle h_i, \alpha_j \rangle} \right) \otimes 1, 
\]  
where $k_j$ is the integer given by $\gamma = \sum_{j \in I} k_j \alpha_j$. 
Hence, 
\[
\mathfrak{p}_{\beta,\gamma} (x) = \left( \prod_{i,j \in I, i\neq j} \mathfrak{p}_{i,\beta}^{-k_j \langle h_i, \alpha_j \rangle}u_{d_0} \right)  \otimes v_{d_0} \mod \bigoplus_{d<d_0} \widehat{M}_{-2(\beta,\gamma)+2(\beta,\gamma)_n + d} \otimes N_{-d}
\]
Since $\mathfrak{p}_{i,\beta}$ acts on $\widehat{M}$ as a multiplication by a power of $z$ up to a nonzero scalar multiple, $\mathfrak{p}_{\beta, \gamma} x \neq 0$. 
It yields a contradiction.
Therefore, $\universalR{N}{\widehat{M}}\universalR{\widehat{M}}{N}$ is injective, which proves that $\universalR{\widehat{M}}{N}$ is injective.  

The injectivity of $\universalR{N}{\widehat{M}}$ is proved in the same manner. 
The remaining assertions are \cite[Lemma 2.9]{MR3790066}. 
\end{proof}

\begin{remark}
 The property stated in Lemma \ref{lem:affinization} plays a crucial role in the theory of R-matrices.  
 In fact, Kashiwara-Kim-Oh-Park \cite{MR4717658} generalized the concept of affinization based on this property instead of the third condition in Definition \ref{def:affinization}.  
\end{remark}

\begin{lemma} \label{lem:affinefg}
 Let $\beta \in Q_+ \setminus \{ 0 \}$. 
 Let $(\widehat{M},z)$ be an affinization of simple $R(\beta)$-module $M$. 
 Then, $\widehat{M}$ is finitely generated as an $R(\beta)$-module. 
\end{lemma}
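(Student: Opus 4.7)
The plan is to leverage Lemma~\ref{lem:endaffineobj} together with condition (3) of Definition~\ref{def:affinization} to express a suitable power of $z$ as the action of an element of $R(\beta)$ on $\widehat{M}$, which immediately yields the finite generation.

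First, since $\beta \neq 0$, I would pick an $i \in I$ that appears with positive multiplicity $k_i \geq 1$ in $\beta$. For this $i$, the element $\mathfrak{p}_{i,\beta} \in Z(R(\beta))$ is homogeneous of positive degree $k_i(\alpha_i,\alpha_i) > 0$, and it commutes with $z$ (because $z$ is an $R(\beta)$-linear endomorphism). Hence multiplication by $\mathfrak{p}_{i,\beta}$ defines a homogeneous element of $\End_{R(\beta)[z]}(\widehat{M})$, which is nonzero by Definition~\ref{def:affinization}(3). By Lemma~\ref{lem:endaffineobj}, $\End_{R(\beta)[z]}(\widehat{M}) \simeq \mathbf{k}[z]$; since the action of $\mathfrak{p}_{i,\beta}$ is homogeneous, it must be a scalar multiple of a single power of $z$. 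Matching degrees, there exists $c \in \mathbf{k}^\times$ and $n \geq 1$ with $n\deg(z) = k_i(\alpha_i,\alpha_i)$ such that $\mathfrak{p}_{i,\beta}$ acts on $\widehat{M}$ as $cz^n$.

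Next I would exploit the freeness of $\widehat{M}$ over $\mathbf{k}[z]$. Choose a finite $\mathbf{k}[z]$-basis $\{v_1,\ldots,v_r\}$ of $\widehat{M}$, and let $N \subseteq \widehat{M}$ be the $R(\beta)$-submodule generated by the finite set $\{z^k v_j : 0 \leq k \leq n-1,\ 1 \leq j \leq r\}$. Any nonnegative integer $m$ can be written uniquely as $m = an + k$ with $a \geq 0$ and $0 \leq k \leq n-1$, and the identity
\[
z^{an+k} v_j \;=\; (cz^n)^a\!\cdot(z^k v_j)/c^a \;=\; c^{-a}\,\mathfrak{p}_{i,\beta}^{\,a}\,(z^k v_j)
\]
shows that $z^m v_j \in N$ since $\mathfrak{p}_{i,\beta}^{\,a} \in R(\beta)$ and $z^k v_j \in N$. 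Because $\widehat{M} = \bigoplus_{j,m\geq 0} \mathbf{k}\cdot z^m v_j$, this gives $N = \widehat{M}$, so $\widehat{M}$ is finitely generated as an $R(\beta)$-module.

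The only step requiring any care is the identification of the action of $\mathfrak{p}_{i,\beta}$ as a monomial in $z$ — and this is already essentially carried out in the proof of Lemma~\ref{lem:affinization}, so there is no serious obstacle. The hypothesis $\beta \neq 0$ is used exactly once, to guarantee the existence of an index $i$ for which $\mathfrak{p}_{i,\beta}$ has positive degree, which in turn forces $n \geq 1$ and makes the recursion above genuinely reduce every power of $z$ to one of the chosen generators.
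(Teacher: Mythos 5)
Your proposal is correct and follows essentially the same route as the paper: both use Lemma~\ref{lem:endaffineobj} and Definition~\ref{def:affinization}(3) to identify the action of $\mathfrak{p}_{i,\beta}$ on $\widehat{M}$ with $cz^n$ for some $c\in\mathbf{k}^{\times}$ and $n\geq 1$, and then conclude finite generation over $R(\beta)$ from finite generation over $\mathbf{k}[z]$ (the paper leaves this last reduction implicit, whereas you spell it out with an explicit basis). No gaps.
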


\begin{proof}
  By Definition \ref{def:affinization} (1), $\widehat{M}$ is finitely generated over $R(\beta)[z]$.  
  By Lemma \ref{lem:endaffineobj}, $\End_{R(\beta)[z]} (\widehat{M}) \simeq \mathbf{k}[z]$. 
  We write $\beta =  \sum_{i \in I} k_i \alpha_i \ (k_i \in \mathbf{Z}_{\geq 0})$.
  Since $\beta \neq 0$, we have $k_i \neq 0$ for some $i \in I$. 
  Then, $\mathfrak{p}_{i, \beta}$ is of positive degree. 
  By Definition \ref{def:affinization} (3), $\mathfrak{p}_{i, \beta}$ yields a nonzero element of $\End_{R(\beta)[z]} (\widehat{M})$. 
  Thus, there exist $c \in \mathbf{k}^{\times}$ and $d \in \mathbf{Z}_{\geq 0}$ 
  such that $\mathfrak{p}_{i, \beta}\mid_{\widehat{M}} = cz^d \mid_{\widehat{M}}$, which proves the lemma. 
\end{proof}

A simple module $L$ is said to be real if $L \circ L$ is simple. 

\begin{definition} [{\cite[p.1173]{MR3790066}}] \label{def:renormalizedR} 
  Let $\beta, \gamma \in Q_+$. 
  Let $M$ be a real simple $R(\beta)$-module that admits an affinization $(\widehat{M}, z)$,  
  and $N$ a simple $R(\gamma)$-module. 

  (1) We define the renormalized R-matrix by
  \[
  \renormalizedR{\widehat{M}}{N} = z^{-s}\universalR{\widehat{M}}{N} \colon \widehat{M} \circ N \to N \circ \widehat{M}, \index{$\renormalizedR{\widehat{M}}{N}$}
  \]
  where $s$ is the largest integer such that $\universalR{\widehat{M}}{N}(\widehat{M} \circ N) \subset z^s N \circ \widehat{M}$. 
  $\renormalizedR{\widehat{M}}{N}$ is homogeneous, but not necessarily of degree zero. 
  In a similar manner, we define the renormalized R-matrix $\renormalizedR{N}{\widehat{M}} \colon N \circ \widehat{M} \to \widehat{M} \circ N$. \index{$\renormalizedR{N}{\widehat{M}}$}

  (2) We define 
  \begin{align*}
  \mathbf{r}_{M,N} &= \renormalizedR{\widehat{M}}{N} \rvert_{z = 0} \colon M \circ N \to N \circ M, \\ \index{$\mathbf{r}_{M,N}$}
  \mathbf{r}_{N,M} &= \renormalizedR{N}{\widehat{M}} \rvert_{z = 0} \colon N \circ M \to M \circ N. 
   \end{align*}

   (3) We define
   \[
   \Lambda(M,N) = \deg \mathbf{r}_{M,N}, \ \Lambda(N,M) = \deg \mathbf{r}_{N,M}.  \index{$\Lambda(M,N)$}
   \]
\end{definition}

Note that $\mathbf{r}_{M,N}$ and $\mathbf{r}_{N,M}$ are nonzero. 
When $N$ is also a real simple module that admits an affinization $(\widehat{N}, w)$,  
there are two ways to define $\mathbf{r}_{M,N}$.
They coincide up to a nonzero scalar multiple by Theorem \ref{thm:rmatrix} (1) below. 

Note that, using the notation in the definition, we have 
\[
\Lambda(M,N) = \deg \universalR{\widehat{M}}{N} - s \deg z \in (\beta,\gamma) - (\beta,\gamma)_n - \mathbb{Z}_{\geq 0} \cdot \deg z. 
\]
Similarly $\Lambda(N,M) \in (\beta,\gamma) - (\beta,\gamma)_n - \mathbb{Z}_{\geq 0} \cdot \deg z$. 

We now present several important results that have been established using R-matrices. 
For a simple $R(\beta)$-module $M$ and a simple $R(\gamma)$-module $N$, we write $M \nabla N = \hd (M \circ N)$ and $M \Delta N = \soc (M \circ N)$. \index{$M\nabla N$} \index{$M\Delta N$}

\begin{theorem} [\cite{MR3790066}] \label{thm:rmatrix}
Let $\beta, \gamma \in Q_+$. 
 Let $M$ be a real simple $R(\beta)$-module that admits an affinization $(\widehat{M},z)$. 
 Let $N$ be a simple $R(\gamma)$-module. 
 \begin{enumerate}
 \item $\Hom_{R(\beta+\gamma)} (M\circ N, N\circ M) = \mathbf{k} \mathbf{r}_{M,N}, \Hom_{R(\beta+\gamma)} (N\circ M, M\circ N) = \mathbf{k} \mathbf{r}_{N,M}$. 
 \item $\End_{R(\beta+\gamma)} (M\circ N) = \mathbf{k} \id_{M\circ N}, \End_{R(\beta+\gamma)} (N\circ M) = \mathbf{k} \id_{N\circ M}$. 
 \item $\Hom_{R(\beta+\gamma)[z]} (\widehat{M}\circ N, N \circ \widehat{M}) = \mathbf{k}[z]\renormalizedR{\widehat{M}}{N}, \Hom_{R(\beta+\gamma)[z]} (N \circ \widehat{M}, \widehat{M}\circ N) = \mathbf{k}[z]\renormalizedR{N}{\widehat{M}}$. 
 \item $\End_{R(\beta+\gamma)[z]} (\widehat{M}\circ N) = \mathbf{k}[z] \id_{\widehat{M} \circ N}, \End_{R(\beta+\gamma)[z]} (N \circ \widehat{M}) = \mathbf{k}[z]\id_{N \circ \widehat{M}}$. 
 \item $q^{\Lambda(M,N)} M \nabla N \simeq \Image (\mathbf{r}_{M,N}\colon q^{\Lambda(M,N)} M\circ N \to N\circ M ) \simeq  N \Delta M,\ q^{\Lambda(N,M)} N \nabla M = \Image (\mathbf{r}_{N,M} \colon q^{\Lambda(N,M)} N\circ M \to M\circ N) = M\Delta N$, and both of them are simple.  
 \item $\Lambda (M, M \nabla N) = \Lambda (M, N) (= \Lambda (M, N\Delta M))$, and $\Lambda(M, L) \in \Lambda(M, N) - \mathbb{Z}_{> 0} \cdot \deg z$ for any composition factor $L$ of $\Ker \mathbf{r}_{M,N}$ (or of $N \circ M / N \Delta M$). In particular, $[M\circ N: M \nabla N]_q = [N\circ M: N \Delta M]_q = 1$. 
 \item $\Lambda (N \nabla M, M) = \Lambda (N, M) (= \Lambda (M \Delta N, M))$, and $\Lambda(L, M) \in \Lambda(N,M) - \mathbb{Z}_{> 0} \cdot \deg z$ for any composition factor $L$ of $\Ker \mathbf{r}_{N,M}$ (or of $M\circ N / M \Delta N$). In particular, $[N \circ M: N \nabla M]_q = [M \circ N: M \Delta N]_q = 1$. 
 \end{enumerate}
\end{theorem}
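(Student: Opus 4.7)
The plan is to first establish the $\mathbf{k}[z]$-linear statements (4) and (3), then specialize modulo $z$ to obtain (1) and (2), and finally derive the structural statements (5)--(7) about images, kernels, and composition multiplicities. The essential inputs are the injectivity of $\universalR{\widehat{M}}{N}$ and its invertibility after inverting $z$ (Lemma~\ref{lem:affinization}), the Mackey filtration of Section~\ref{sub:quiverHecke}, and the endomorphism computation $\End_{R(\beta)[z]}(\widehat{M})\simeq \mathbf{k}[z]$ of Lemma~\ref{lem:endaffineobj}.

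For (4), I would compute $\End_{R(\beta+\gamma)[z]}(\widehat{M}\circ N)$ via the adjunction $(\Ind_{\beta,\gamma}, \Res_{\beta,\gamma})$, which rewrites it as $\Hom_{R(\beta)[z]\otimes R(\gamma)}(\widehat{M}\otimes N, \Res_{\beta,\gamma}(\widehat{M}\circ N))$. The Mackey filtration exhibits a top subquotient $\widehat{M}\otimes N$, whose contribution is $\End_{R(\beta)[z]}(\widehat{M})\otimes \End_{R(\gamma)}(N)= \mathbf{k}[z]$ by Lemma~\ref{lem:endaffineobj} and the absolute irreducibility of $N$ from Theorem~\ref{thm:categorification}(6). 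The other Mackey strata involve proper restrictions of $\widehat{M}$ and $N$; using reality of $M$ (so that $M\circ M$ is simple, controlling overlap phenomena) together with weight considerations, one argues that these strata do not receive any nonzero map from $\widehat{M}\otimes N$ extending to an endomorphism of $\widehat{M}\circ N$. Part (3) then follows from (4): by Lemma~\ref{lem:affinization}, localizing at $z$ turns $\universalR{\widehat{M}}{N}$ into an isomorphism, so post-composing any $\varphi \in \Hom_{R(\beta+\gamma)[z]}(\widehat{M}\circ N, N\circ \widehat{M})$ with the localized $\universalR{N}{\widehat{M}}$ lands in $\End_{R(\beta+\gamma)[z,z^{-1}]}(\widehat{M}\circ N) = \mathbf{k}[z,z^{-1}]$ by (4). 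Clearing denominators and invoking the definition of $\renormalizedR{\widehat{M}}{N}$ as the maximal $z$-desingularization of $\universalR{\widehat{M}}{N}$ yields $\Hom = \mathbf{k}[z]\cdot \renormalizedR{\widehat{M}}{N}$.

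Parts (1) and (2) then follow by reduction modulo $z$. Since $\widehat{M}$ is $\mathbf{k}[z]$-free, the short exact sequence $0\to \widehat{M}\circ N\xrightarrow{z}\widehat{M}\circ N\to M\circ N\to 0$ together with (3) and (4) produce one-dimensionality over $\mathbf{k}$, and the nonvanishing of $\mathbf{r}_{M,N}=\renormalizedR{\widehat{M}}{N}\rvert_{z=0}$ is guaranteed by the very definition of the renormalization. For (5), the nonzero map $\mathbf{r}_{M,N}\colon q^{\Lambda(M,N)}M\circ N\to N\circ M$ has an image that is simultaneously a nonzero quotient of $M\circ N$ and a nonzero submodule of $N\circ M$. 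By (2), $M\circ N$ has a unique simple quotient $M\nabla N$ and $N\circ M$ has a unique simple submodule $N\Delta M$, and both must coincide with $\Image \mathbf{r}_{M,N}$, which is therefore simple and identified with both.

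For (6) and (7), let $L$ be a composition factor of $\Ker \mathbf{r}_{M,N}$. A putative nonzero map $\mathbf{r}_{M,L}$ lifts over the affinization to a $\mathbf{k}[z]$-linear homomorphism $\widehat{M}\circ L\to L\circ \widehat{M}$ which, when combined with the short exact sequences that exhibit $L$ inside $\Ker \mathbf{r}_{M,N}$ and then with $\renormalizedR{\widehat{M}}{N}$, must differ from the standard normalization by a nontrivial power of $z$ by Lemma~\ref{lem:endaffineobj}; strict positivity of that power forces $\Lambda(M,L)\in \Lambda(M,N)-\mathbb{Z}_{>0}\cdot\deg z$. The multiplicity statement $[M\circ N:M\nabla N]_q = 1$ is then immediate since $M\nabla N$ has $\Lambda$-shift exactly $\Lambda(M,N)$ and all other constituents are strictly smaller. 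Part (7) is dual. The main obstacle I anticipate is the rigidity step in (4): ruling out nontrivial Mackey strata, which requires using the reality of $M$ nontrivially and correctly tracking weights and $\mathbf{k}[z]$-module structures through the filtration. Once (4) is in hand, the remaining statements are formal consequences of the injectivity and $z$-invertibility of $\universalR{\widehat{M}}{N}$.
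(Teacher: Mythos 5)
You should first note that the paper does not prove Theorem \ref{thm:rmatrix} at all: it is quoted verbatim from \cite{MR3790066} (together with the earlier Kang--Kashiwara--Kim--Oh papers), so there is no in-paper argument to compare against, and any proof you give has to stand on its own against the original literature.

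Measured against that, your architecture (establish the $\mathbf{k}[z]$-linear statements (3)--(4), specialize at $z=0$ to get (1)--(2), then deduce (5)--(7)) is reasonable and close in spirit to the original proofs, but there is a genuine gap at the one step that carries all the content: the upper bound $\End_{R(\beta+\gamma)[z]}(\widehat{M}\circ N)\subseteq \mathbf{k}[z]$. The Mackey filtration of $\Res_{\beta,\gamma}(\widehat{M}\circ N)$ places $\widehat{M}\otimes N$ as the bottom stratum and gives you the \emph{lower} bound $\mathbf{k}[z]\subseteq\End$, but maps from $\widehat{M}\otimes N$ into the higher strata are not excluded by ``weight considerations'': for a non-real simple $M$ the Mackey strata look exactly the same and yet $\End(M\circ M)$ can fail to be one-dimensional, so no argument that only sees the strata and their weights can close this step. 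This is precisely where reality must enter, and in \cite{MR3790066} it enters through a diagram chase on $M\circ M\circ N$ using the hexagon identities of Lemma \ref{lem:universalR} and the fact that $\mathbf{r}_{M,M}$ is a scalar --- not through the restriction functor. Since you explicitly defer this point (``one argues that these strata do not receive any nonzero map''), the proposal does not contain a proof of (2)/(4), and everything downstream depends on it.

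Two smaller issues. First, your deduction of (5) from (2) is a non sequitur: $\End(M\circ N)=\mathbf{k}$ does not imply that $M\circ N$ has a simple head. What does work is (1) combined with the duality $N\circ M\simeq q^{(\beta,\gamma)}D(M\circ N)$ for simple $M,N$: since $\soc(N\circ M)\simeq D(\hd(M\circ N))$, one has $\dim\Hom(M\circ N,N\circ M)\geq\sum_i m_i^2$ where $\hd(M\circ N)=\bigoplus_i L_i^{\oplus m_i}$, so one-dimensionality of the Hom space in (1) forces the head (and dually the socle) to be simple with multiplicity one. Second, the argument for (6)--(7) (``must differ from the standard normalization by a nontrivial power of $z$'') gestures at the right mechanism --- restricting $\renormalizedR{\widehat{M}}{N}$ to $\widehat{M}\circ K$ for $K=\Ker\mathbf{r}_{M,N}$ and comparing normalizations via Lemma \ref{lem:endaffineobj} and Theorem \ref{thm:rmatrix} (3) --- but as written it does not explain why the induced map on $\widehat{M}\circ L$ for a composition factor $L$ of $K$ is divisible by a \emph{strictly} positive power of $z$; that divisibility is the content of the inequality $\Lambda(M,L)\in\Lambda(M,N)-\mathbb{Z}_{>0}\deg z$ and needs the simple-socle statement already in hand.
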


\begin{lemma}[{\cite[Lemma 3.1.4]{MR3758148}}] \label{lem:degreeshift}
  Let $M, N$ be as above. 
  Assume that $M$ and $N$ are self-dual. 
  Then, $(\Lambda(M,N) + (\beta,\gamma))/2$ is an integer and $q^{(\Lambda(M,N)+(\beta,\gamma))/2} M\nabla N$ is self-dual. 
\end{lemma}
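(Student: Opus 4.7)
The plan is to apply the duality functor $D$ to the canonical surjection $M \circ N \twoheadrightarrow M\nabla N$ and combine the result with the isomorphism $q^{\Lambda(M,N)} M \nabla N \simeq N\Delta M$ furnished by Theorem~\ref{thm:rmatrix} (5). Since $D$ is an exact contravariant functor preserving simplicity, the surjection dualises to an injection $D(M\nabla N) \hookrightarrow D(M\circ N)$. Using the natural isomorphism $D(M\circ N) \simeq q^{(\beta,\gamma)}\, DN \circ DM$ recorded just before Theorem~\ref{thm:categorification} together with the self-duality hypotheses $DM \simeq M$ and $DN \simeq N$, this becomes an embedding
\[
D(M\nabla N) \hookrightarrow q^{(\beta,\gamma)}\, N \circ M.
\]

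Since $D(M\nabla N)$ is simple, its image must land in the socle $\soc(q^{(\beta,\gamma)} N\circ M) = q^{(\beta,\gamma)}(N\Delta M)$, which is itself simple by Theorem~\ref{thm:rmatrix} (5). Hence the inclusion is an isomorphism, and invoking Theorem~\ref{thm:rmatrix} (5) once more to rewrite $N\Delta M \simeq q^{\Lambda(M,N)}(M\nabla N)$, I would obtain
\[
D(M\nabla N) \simeq q^{(\beta,\gamma)+\Lambda(M,N)}(M\nabla N).
\]

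To conclude, I would appeal to the general fact (recalled in the paragraph preceding Theorem~\ref{thm:intro2}) that for every simple $R$-module $L$ there is a unique integer $c$ for which $q^c L$ is self-dual; unwinding the definitions, this $c$ is characterised by $DL \simeq q^{2c} L$, since $D(q^c L) \simeq q^{-c} DL$. Comparing with the displayed isomorphism for $L = M\nabla N$ forces $2c = \Lambda(M,N) + (\beta,\gamma)$, which simultaneously establishes that $(\Lambda(M,N)+(\beta,\gamma))/2$ is an integer and identifies it as the shift making $M\nabla N$ self-dual. There is no real obstacle: the argument is a bookkeeping exercise in graded duality, with the only point requiring care being the sign convention $D(q^c L) \simeq q^{-c} DL$ coming from the definition of the grading on $DL$.
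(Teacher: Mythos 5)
Your argument is correct and is essentially the standard proof of this statement: the paper itself only cites \cite[Lemma 3.1.4]{MR3758148} without reproducing a proof, and the cited proof runs exactly as you describe, dualising the surjection $M\circ N\twoheadrightarrow M\nabla N$ via $D(M\circ N)\simeq q^{(\beta,\gamma)}DN\circ DM$, identifying the image with the simple socle $N\Delta M\simeq q^{\Lambda(M,N)}M\nabla N$, and reading off the shift from $D(q^cL)\simeq q^{-c}DL$. All the ingredients you invoke (exactness and contravariance of $D$, simplicity of $N\Delta M$ from Theorem \ref{thm:rmatrix}(5), and uniqueness of the self-dualising shift) are available in the paper, so there is nothing to repair.
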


\begin{proposition} [{\cite{MR3790066}}] \label{prop:affinermatrix}
 Let $M, (\widehat{M},z), N$ be as above, and $(\widehat{N},w)$ an affine object of $N$. 
 \begin{enumerate}
  \item Both $\Hom_{R(\beta+\gamma)[z,w]} (\widehat{M}\circ \widehat{N}, \widehat{N}\circ \widehat{M})$ and $\Hom_{R(\beta+\gamma)[z,w]} (\widehat{N} \circ \widehat{M}, \widehat{M} \circ \widehat{N})$ are free $\mathbf{k}[z,w]$-module of rank $1$. 
  Let $\renormalizedR{\widehat{M}}{\widehat{N}}$ and $\renormalizedR{\widehat{N}}{\widehat{M}}$ denote their respective generators. \index{$\renormalizedR{\widehat{M}}{\widehat{N}}$}
  Then we have $\renormalizedR{\widehat{M}}{\widehat{N}} \rvert_{w=0} = \renormalizedR{\widehat{M}}{N}, \renormalizedR{\widehat{N}}{\widehat{M}} \rvert_{w=0} = \renormalizedR{N}{\widehat{M}}$ up to nonzero scalar multiples. 
  Hence, $\deg (\renormalizedR{\widehat{M}}{\widehat{N}}) = \Lambda(M,N), \deg (\renormalizedR{\widehat{N}}{\widehat{M}}) = \Lambda(N,M)$. 
  \item $\End_{R(\beta+\gamma)[z,w]} (\widehat{M}\circ \widehat{N}) = \mathbf{k}[z,w]\id_{\widehat{M}\circ \widehat{N}}, \End_{R(\beta+\gamma)[z,w]} (\widehat{N}\circ \widehat{M}) = \mathbf{k}[z,w]\id_{\widehat{N}\circ \widehat{M}}$. 
 \end{enumerate}
\end{proposition}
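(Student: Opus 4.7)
The plan is to bootstrap from the one-affinization setup (Theorem \ref{thm:rmatrix} and Definition \ref{def:renormalizedR}) by treating $\widehat{N}$ as a $w$-deformation of $N$, reducing modulo $w$, and invoking graded Nakayama. Set $H = \Hom_{R(\beta+\gamma)[z,w]}(\widehat{M}\circ\widehat{N}, \widehat{N}\circ \widehat{M})$; once the statement for $H$ is established, $\renormalizedR{\widehat{N}}{\widehat{M}}$ follows by symmetry, and (2) by the same template with target $\widehat{M}\circ \widehat{N}$ in place of $\widehat{N}\circ \widehat{M}$.

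First, $\universalR{\widehat{M}}{\widehat{N}}$ is automatically $R(\beta+\gamma)[z,w]$-linear, since $z, w$ are $R(\beta)$-, $R(\gamma)$-endomorphisms of $\widehat{M}, \widehat{N}$. The proof of Lemma \ref{lem:affinization} generalizes verbatim to show it is injective: $\widehat{M}\circ \widehat{N}$ is a $\mathbf{k}[z,w]$-free module (as $\widehat{M}\otimes_\mathbf{k}\widehat{N}$ is free and $R(\beta+\gamma)e(\beta,\gamma)$ is right $R(\beta)\otimes R(\gamma)$-projective), and the leading $z$-term of $\universalR{\widehat{N}}{\widehat{M}}\circ \universalR{\widehat{M}}{\widehat{N}} = \mathfrak{p}_{\beta,\gamma}$ acts as a nonzero monomial in $z$ via the $\mathfrak{p}_{i,\beta}$-action on $\widehat{M}$, which by Lemma \ref{lem:endaffineobj} together with Definition \ref{def:affinization}~(3) lies in $\mathbf{k}^\times \cdot z^{\mathbb{Z}_{\geq 0}}$.

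The crucial step is to establish $\universalR{\widehat{M}}{\widehat{N}}(\widehat{M}\circ \widehat{N}) \subset z^s(\widehat{N}\circ\widehat{M})$, where $s$ is the integer satisfying $\universalR{\widehat{M}}{N} = z^s\renormalizedR{\widehat{M}}{N}$. For each $k \geq 1$, the finite-dimensional quotient $\widehat{N}^{(k)} = \widehat{N}/w^k\widehat{N}$ admits a filtration $0 \subset w^{k-1}\widehat{N}^{(k)} \subset \cdots \subset \widehat{N}^{(k)}$ whose successive subquotients are grading shifts of $N$. By naturality, $\universalR{\widehat{M}}{\widehat{N}^{(k)}}$ respects the induced filtration on $\widehat{M}\circ \widehat{N}^{(k)}$, and on each subquotient reduces to a grading shift of $\universalR{\widehat{M}}{N}$, which factors through $z^s$. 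An induction on filtration length yields $\universalR{\widehat{M}}{\widehat{N}^{(k)}}(\widehat{M}\circ \widehat{N}^{(k)}) \subset z^s(\widehat{N}^{(k)}\circ \widehat{M})$, and lifting back gives $\universalR{\widehat{M}}{\widehat{N}}(\widehat{M}\circ\widehat{N}) \subset z^s(\widehat{N}\circ\widehat{M}) + w^k(\widehat{N}\circ \widehat{M})$ for every $k$. Since $(\widehat{N}\circ \widehat{M})/z^s(\widehat{N}\circ \widehat{M})$ is $\mathbf{k}[w]$-free, hence $w$-adically separated, intersecting over $k$ gives the divisibility. The $z$-injectivity of $\widehat{N}\circ\widehat{M}$ then produces a unique $\mathbf{k}[z,w]$-linear lift $\renormalizedR{\widehat{M}}{\widehat{N}} := z^{-s}\universalR{\widehat{M}}{\widehat{N}}$, satisfying $\renormalizedR{\widehat{M}}{\widehat{N}}\rvert_{w=0} = \renormalizedR{\widehat{M}}{N}$ by construction.

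To complete (1), apply $\Hom_{R(\beta+\gamma)[z,w]}(\widehat{M}\circ \widehat{N}, -)$ to $0 \to \widehat{N}\circ \widehat{M}\xrightarrow{w}\widehat{N}\circ \widehat{M} \to N\circ \widehat{M}\to 0$ (exact by $\mathbf{k}[w]$-flatness of $\widehat{N}$ and exactness of convolution) and use triviality of the $w$-action on $N\circ \widehat{M}$ to identify $\Hom_{R(\beta+\gamma)[z,w]}(\widehat{M}\circ \widehat{N}, N\circ \widehat{M}) = \Hom_{R(\beta+\gamma)[z]}(\widehat{M}\circ N, N\circ \widehat{M}) = \mathbf{k}[z]\renormalizedR{\widehat{M}}{N}$ via Theorem \ref{thm:rmatrix}~(3). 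The resulting exact sequence
\begin{equation*}
0 \to H \xrightarrow{w} H \to \mathbf{k}[z]\renormalizedR{\widehat{M}}{N}
\end{equation*}
has a surjective right map (sending $\renormalizedR{\widehat{M}}{\widehat{N}}$ to $\renormalizedR{\widehat{M}}{N}$), so $H$ is $w$-torsion-free, $H/wH = \mathbf{k}[z]\renormalizedR{\widehat{M}}{\widehat{N}}$, and $H/(z,w)H \simeq \mathbf{k}$. Each graded component of $H$ is finite-dimensional, and any $\phi \in H$ of degree below $\Lambda(M,N)$ satisfies $\bar{\phi} = 0$, hence $\phi \in wH$; iterating and using $\bigcap_k w^k(\widehat{N}\circ\widehat{M}) = 0$ (from $\mathbf{k}[w]$-freeness) forces $\phi = 0$, so $H$ is bounded below. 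Graded Nakayama over the graded local ring $(\mathbf{k}[z,w],(z,w))$ then gives $H = \mathbf{k}[z,w]\renormalizedR{\widehat{M}}{\widehat{N}}$, with freeness a routine $w$-adic descent using $w$-torsion-freeness. For (2) the same scheme applies with Theorem \ref{thm:rmatrix}~(4) replacing (3) and with $\id_{\widehat{M}\circ \widehat{N}}$ tautologically lifting $\id_{\widehat{M}\circ N}$, bypassing the divisibility issue. The principal obstacle throughout is the divisibility of $\universalR{\widehat{M}}{\widehat{N}}$ by $z^s$, handled by the $\widehat{N}^{(k)}$-filtration argument together with $w$-adic separatedness of $(\widehat{N}\circ\widehat{M})/z^s$.
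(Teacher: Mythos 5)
The paper offers no internal proof of this proposition (it is imported from \cite{MR3790066}), so I can only assess your argument on its own terms. Part (2) and the endgame of part (1) — the exact sequence obtained from $0 \to \widehat{N}\circ\widehat{M} \xrightarrow{w} \widehat{N}\circ\widehat{M} \to N\circ\widehat{M}\to 0$, the identification of the third Hom space via Theorem \ref{thm:rmatrix}, boundedness below, graded Nakayama, and the descent to freeness — are sound. The fatal problem is the step you yourself call crucial: the containment $\universalR{\widehat{M}}{\widehat{N}}(\widehat{M}\circ \widehat{N}) \subset z^s(\widehat{N}\circ\widehat{M})$ is false in general. The correct relation is $\universalR{\widehat{M}}{\widehat{N}} = f(z,w)\,\renormalizedR{\widehat{M}}{\widehat{N}}$ for a homogeneous polynomial $f$ with $f(z,0)\in\mathbf{k}^{\times}z^{s}$, and $f$ is in general \emph{not} the monomial $z^{s}$ — this is precisely why the paper records only ``$f(z,0)\neq 0$'' after the proposition, and why the proof of Lemma \ref{lem:braider} asserts only that the leading coefficient of $f_j$ with respect to $z$ is invertible. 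Concretely, take $N$ itself real with an affinization $(\widehat{N},w)$, the case used throughout the paper: specializing $z=0$ gives $\universalR{M}{\widehat{N}} = f(0,w)\cdot\bigl(\renormalizedR{\widehat{M}}{\widehat{N}}\rvert_{z=0}\bigr)$, and $\universalR{M}{\widehat{N}}$ is injective by Lemma \ref{lem:affinization} applied to $\widehat{N}$, so $f(0,w)\neq 0$ and hence $z\nmid f$. Thus whenever $s\geq 1$ your containment fails; for instance $M=N=M(s_2s_1\Lambda_1,\Lambda_1)$ in type $A_2$ has $\Lambda(M,N)=0$, $\deg\universalR{\widehat{M}}{\widehat{N}} = -(\gamma,\gamma)+(\gamma,\gamma)_n = 2 = \deg z$, so $s=1$ and $f(z,w)=cz+c'w$ with $c,c'\in\mathbf{k}^{\times}$. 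Since everything downstream — in particular the surjectivity of $H\to\mathbf{k}[z]\renormalizedR{\widehat{M}}{N}$, which requires exhibiting an element of $H$ lifting $\renormalizedR{\widehat{M}}{N}$ — rests on having produced $z^{-s}\universalR{\widehat{M}}{\widehat{N}}$, the proof of part (1) collapses.

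The filtration argument is also where the error enters logically, independently of the counterexample. From a short exact sequence $0\to A\to B\to C\to 0$ with the composites $\Phi_A\colon \widehat{M}\circ A\to A\circ(\widehat{M}/z^s\widehat{M})$ and $\Phi_C$ both zero, you may only conclude that $\Phi_B$ kills $\widehat{M}\circ A$ and lands in $A\circ(\widehat{M}/z^s\widehat{M})$; it therefore induces a map $\widehat{M}\circ C\to A\circ(\widehat{M}/z^s\widehat{M})$ which need not vanish. For $B=\widehat{N}/w^2\widehat{N}$ this obstruction is an $R(\beta+\gamma)[z]$-linear map $\widehat{M}\circ N\to N\circ(\widehat{M}/z^s\widehat{M})$ of degree $\Lambda(M,N)+s\deg z-\deg w$, i.e.\ exactly the reduction of the $w$-linear coefficient of $f$, which is nonzero in the example above. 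So the induction on filtration length does not close, and this is not a repairable presentation issue: the statement being induced upon is simply not true of the top of the filtration. Any correct proof must renormalize by the full polynomial $f(z,w)$ (equivalently, work directly with the Hom space and establish its rank and the nonvanishing of the specialization at $w=0$), rather than by a power of $z$.
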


By Proposition \ref{prop:affinermatrix} (1), there exists a unique homogeneous polynomial $f(z,w) \in \mathbf{k}[z,w]$ such that $\universalR{\widehat{M}}{\widehat{N}} = f(z,w) \renormalizedR{\widehat{M}}{\widehat{N}}$. 
Moreover, $f(z,0) \neq 0$. 

\begin{remark} \label{rem:rennotid}
 Let $M$ be a real simple $R(\beta)$-module that admits an affinization $(\widehat{M},z)$. 
 By Theorem \ref{thm:rmatrix} (2), $\mathbf{r}_{M,M}$ is a nonzero scalar multiplication. 
 On the other hand, $\renormalizedR{\widehat{M}}{\widehat{M}}$ is not a scalar multiplication for the following reason. 
 Let $N$ be a copy of $M$, and $(\widehat{N}, w)$ a copy of $(\widehat{M},z)$. 
 By the definition, $\renormalizedR{\widehat{M}}{\widehat{M}} = \renormalizedR{\widehat{M}}{\widehat{N}}$ is an element of $\Hom_{R(2\beta)[z,w]} (\widehat{M}\circ \widehat{N}, \widehat{N}\circ \widehat{M})$. 
 If $\renormalizedR{\widehat{M}}{\widehat{M}}$ is a scalar multiplication, it satisfies
 \[
 \renormalizedR{\widehat{M}}{\widehat{N}}(zv) = wv, \renormalizedR{\widehat{M}}{\widehat{N}}(wv) = zv \ \text{for $v \in \widehat{M} \circ \widehat{N}$}. 
 \]
 Since $\widehat{N}\circ \widehat{M}$ is a free $\mathbf{k}[z,w]$-module, $zv \neq wv$ unless $v = 0$. 
 It contradicts the definition that $\renormalizedR{\widehat{M}}{\widehat{N}}$ is $\mathbf{k}[z,w]$-linear. 
\end{remark}

The following proposition is easy but crucial in this paper. 

\begin{proposition} \label{prop:reninj}
  Let $M$ be a real simple $R(\beta)$-module that admits an affinization $(\widehat{M},z)$. 
  Let $N$ be a simple $R(\gamma)$-module, and $(\widehat{N},w)$ its affine object. 
  Then $\renormalizedR{\widehat{M}}{\widehat{N}}, \renormalizedR{\widehat{N}}{\widehat{M}}, \renormalizedR{\widehat{M}}{N}$ and $\renormalizedR{N}{\widehat{M}}$ are injective. 
\end{proposition}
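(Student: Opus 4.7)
The plan is to deduce all four injectivity statements from Lemma \ref{lem:affinization}, which already delivers injectivity for $\universalR{\widehat{M}}{X}$ and $\universalR{X}{\widehat{M}}$ whenever $X$ is an arbitrary $R(\gamma)$-module. The strategy is to exhibit each renormalized R-matrix as a ``left factor'' of the corresponding universal R-matrix, so that the kernel of the former is contained in the kernel of the latter.

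First I handle $\renormalizedR{\widehat{M}}{N}$ and $\renormalizedR{N}{\widehat{M}}$. By Definition \ref{def:renormalizedR} there exist integers $s, s' \in \mathbb{Z}_{\geq 0}$ such that $\universalR{\widehat{M}}{N} = z^{s}\cdot \renormalizedR{\widehat{M}}{N}$ and $\universalR{N}{\widehat{M}} = z^{s'}\cdot \renormalizedR{N}{\widehat{M}}$. Lemma \ref{lem:affinization} (applied with the $R(\gamma)$-module $N$) ensures that the left-hand sides are injective, so if $\renormalizedR{\widehat{M}}{N}(x) = 0$ then $\universalR{\widehat{M}}{N}(x) = z^{s}\cdot 0 = 0$, which forces $x = 0$. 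The same reasoning applies to $\renormalizedR{N}{\widehat{M}}$.

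Next I treat $\renormalizedR{\widehat{M}}{\widehat{N}}$ and $\renormalizedR{\widehat{N}}{\widehat{M}}$. By Proposition \ref{prop:affinermatrix}(1) and the discussion immediately after it, there exist polynomials $f(z,w), g(z,w) \in \mathbf{k}[z,w]$ with
\[
\universalR{\widehat{M}}{\widehat{N}} = f(z,w)\,\renormalizedR{\widehat{M}}{\widehat{N}}, \qquad \universalR{\widehat{N}}{\widehat{M}} = g(z,w)\,\renormalizedR{\widehat{N}}{\widehat{M}}.
\]
Applying Lemma \ref{lem:affinization} to the affinization $\widehat{M}$ together with the $R(\gamma)$-module $\widehat{N}$ (the second module is only required to be an $R(\gamma)$-module, so the fact that $\widehat{N}$ is merely an affine object and not necessarily an affinization is harmless), we conclude that $\universalR{\widehat{M}}{\widehat{N}}$ and $\universalR{\widehat{N}}{\widehat{M}}$ are injective, and the same kernel-chasing step gives the injectivity of $\renormalizedR{\widehat{M}}{\widehat{N}}$ and $\renormalizedR{\widehat{N}}{\widehat{M}}$.

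The main obstacle is essentially absent: the genuine content was handled in Lemma \ref{lem:affinization} via the top-degree analysis of $\mathfrak{p}_{\beta,\gamma}$, and here I just have to chase kernels through a factorization. The one subtle point worth flagging is that $z^{-s}\universalR{\widehat{M}}{N}$ appearing in Definition \ref{def:renormalizedR} is well-defined only because $z$ acts injectively on $N\circ \widehat{M}$; this in turn holds because $\widehat{M}$ is a finite-rank free $\mathbf{k}[z]$-module by Definition \ref{def:affinization}(1), and convolution with $N$ is realized by tensoring with the right-projective bimodule $R(\beta+\gamma)e(\gamma,\beta)$, which preserves $z$-injectivity.
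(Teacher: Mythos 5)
Your proof is correct and follows essentially the same route as the paper: factor the universal R-matrix as a polynomial in $z$ (or $z,w$) times the renormalized one via Definition \ref{def:renormalizedR} and Proposition \ref{prop:affinermatrix}(1), then transfer the injectivity of the universal R-matrix from Lemma \ref{lem:affinization} by kernel-chasing. The paper's proof is just a more compressed version of the same argument.
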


\begin{proof}
  By Proposition \ref{prop:affinermatrix} (1), there is a polynomial $f(z,w)$ such that $\universalR{\widehat{M}}{\widehat{N}} = f(z,w) \renormalizedR{\widehat{M}}{\widehat{N}}$. 
  Lemma \ref{lem:affinization} shows that $\universalR{\widehat{M}}{\widehat{N}}$ is injective, which implies the assertion for $\renormalizedR{\widehat{M}}{\widehat{N}}$. 
  A similar proof applies to other R-matrices as well.
\end{proof}

\begin{corollary}\label{cor:lambdadiscrete}
  Let $M, N$ be as above. 
  Then, 
  \[ 
  \Lambda(M,N) + \Lambda(N,M) \in \mathbb{Z}_{\geq 0} \deg z. 
  \]
\end{corollary}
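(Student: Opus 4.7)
The plan is to analyze the composition of the two renormalized R-matrices $\renormalizedR{\widehat{M}}{N}$ and $\renormalizedR{N}{\widehat{M}}$ and read off both its degree and its shape from the results collected in Theorem~\ref{thm:rmatrix} and Proposition~\ref{prop:reninj}.

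First I would form the composition
\[
\renormalizedR{N}{\widehat{M}} \circ \renormalizedR{\widehat{M}}{N} \colon \widehat{M} \circ N \to \widehat{M} \circ N.
\]
By construction each factor is $R(\beta+\gamma)[z]$-linear (the $z$ acting through $\widehat{M}$), so the composition lies in $\End_{R(\beta+\gamma)[z]}(\widehat{M} \circ N)$. By Theorem~\ref{thm:rmatrix}(4) this endomorphism ring equals $\mathbf{k}[z]\,\id_{\widehat{M}\circ N}$, so the composition is $p(z)\,\id$ for some polynomial $p(z) \in \mathbf{k}[z]$.

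Next I would observe that the composition is homogeneous of degree $\Lambda(M,N) + \Lambda(N,M)$, so $p(z)$ must be a monomial, say $p(z) = c\, z^a$ with $c \in \mathbf{k}$ and $a \in \mathbb{Z}_{\geq 0}$, and $c z^a \deg z$ wait — its degree equals $a \deg z$. To conclude I need $c \neq 0$, which is exactly where Proposition~\ref{prop:reninj} enters: both $\renormalizedR{\widehat{M}}{N}$ and $\renormalizedR{N}{\widehat{M}}$ are injective, hence their composition is injective and in particular nonzero. This forces $c \in \mathbf{k}^\times$ and $a \in \mathbb{Z}_{\geq 0}$, giving
\[
\Lambda(M,N) + \Lambda(N,M) = \deg(c\, z^a) = a \deg z \in \mathbb{Z}_{\geq 0} \deg z,
\]
as required.

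There is essentially no obstacle here once the right pieces are in place: the conceptual point is that normalizing the universal R-matrices removes the ``discrete'' part of the obstruction to invertibility, and what remains of the self-composition must be a pure power of $z$ because the relevant $\mathbf{k}[z]$-endomorphism ring has rank one. The only subtle input is the nonvanishing of the composition, which would have been trivial to overlook without Proposition~\ref{prop:reninj}; indeed, a priori one cannot exclude $p(z) = 0$ without knowing that renormalized R-matrices are injective.
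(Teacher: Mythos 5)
Your argument is correct and is essentially identical to the paper's proof: both use Proposition~\ref{prop:reninj} to see that the composition $\renormalizedR{N}{\widehat{M}} \renormalizedR{\widehat{M}}{N}$ is a nonzero element of $\End_{R(\beta+\gamma)[z]}(\widehat{M}\circ N) = \mathbf{k}[z]\,\id$, and then read off the degree. Your extra remark that homogeneity forces the polynomial to be a monomial $c z^a$ is a correct elaboration of the same point.
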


\begin{proof} 
 By Proposition \ref{prop:reninj}, the composition $\renormalizedR{N}{\widehat{M}} \renormalizedR{\widehat{M}}{N}$ is a nonzero element of $\End_{R(\beta+\gamma)[z]}(\widehat{M}\circ N)$. 
 On the other hand, Theorem \ref{thm:rmatrix} shows that $\End_{R(\beta+\gamma)[z]} (\widehat{M}\circ N) = \mathbf{k}[z]$.
 Therefore, $\Lambda(M,N) + \Lambda(N,M) = \deg (\renormalizedR{N}{\widehat{M}} \renormalizedR{\widehat{M}}{N}) \in \mathbb{Z}_{\geq 0}\deg z$. 
\end{proof}

\begin{proposition} [{\cite[Lemma 3.2.3, Proposition 3.2.17.]{MR3758148}}]\label{prop:commuting}
  Let $M$ be a real simple $R(\beta)$-module that admits an affinization $(\widehat{M},z)$, and $N$ a simple $R(\gamma)$-module. 
  \begin{enumerate}
    \item $M \circ N$ is simple if and only if $\Lambda(M,N) + \Lambda(N,M) = 0$.
    \item If $\Lambda(M,N) + \Lambda(N,M) = \deg z$, then $M\circ N$ is of length two with head $M \nabla N$ and socle $M \Delta N$.  
  \end{enumerate}
\end{proposition}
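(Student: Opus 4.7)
The plan is to analyze the composition $f := \renormalizedR{N}{\widehat{M}} \circ \renormalizedR{\widehat{M}}{N}$. By Theorem \ref{thm:rmatrix} (4) it lies in $\End_{R(\beta+\gamma)[z]}(\widehat{M}\circ N) = \mathbf{k}[z]\cdot\id$, and by Proposition \ref{prop:reninj} both factors are injective, so $f$ is nonzero. Comparing degrees gives $f = cz^m$ with $c \in \mathbf{k}^\times$ and $m\deg z = \Lambda(M,N) + \Lambda(N,M)$, so the hypotheses in (1) and (2) translate exactly to $m = 0$ and $m = 1$ respectively.

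For (1), when $m = 0$ the reduction of $f$ modulo $z$ makes $\mathbf{r}_{N,M}\circ\mathbf{r}_{M,N}$ a nonzero scalar; hence $\mathbf{r}_{M,N}$ is injective, and its image is the simple module $q^{\Lambda(M,N)}M\nabla N$ by Theorem \ref{thm:rmatrix} (5), forcing $M\circ N$ to be simple. Conversely, if $M\circ N$ is simple, then Theorem \ref{thm:rmatrix} (5) together with $\dim M\nabla N = \dim M\circ N = \dim N\circ M$ forces the socle $N\Delta M \simeq M\nabla N$ of $N\circ M$ to fill all of $N\circ M$, so $N\circ M$ is also simple. Then $\mathbf{r}_{M,N}$ and $\mathbf{r}_{N,M}$, being nonzero maps between simples, are both isomorphisms of graded modules (up to shift). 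But the reduction of $f$ modulo $z$ equals $\mathbf{r}_{N,M}\circ \mathbf{r}_{M,N}$, which vanishes whenever $m \geq 1$; this contradiction forces $m = 0$.

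For (2), where $m = 1$, the first step is to verify that $\widehat{M}\circ N$ and $N\circ\widehat{M}$ are $\mathbf{k}[z]$-free of the same rank $r := \dim_{\mathbf{k}}(M\circ N)$; this comes from the right $(R(\beta)\otimes R(\gamma))$-free decomposition of $R(\beta+\gamma)e(\beta,\gamma)$ together with the $\mathbf{k}[z]$-freeness of $\widehat{M}$. In $\mathbf{k}[z]$-bases the two renormalized R-matrices become $r\times r$ matrices whose composition has determinant $(cz)^r$, so the $z$-valuations of their individual determinants sum to $r$. The inclusion $z(\widehat{M}\circ N) \subset \Image\renormalizedR{N}{\widehat{M}} =: I$ (which follows from $m = 1$) makes $C := (\widehat{M}\circ N)/I$ annihilated by $z$; symmetrically $C' := (N\circ\widehat{M})/\Image\renormalizedR{\widehat{M}}{N}$ is annihilated by $z$. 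Hence $\dim_{\mathbf{k}} C + \dim_{\mathbf{k}} C' = r$. Reducing the defining inclusions modulo $z$ identifies $C \simeq M\circ N/M\Delta N$ and $C' \simeq N\circ M/N\Delta M$. Substituting $\dim(M\Delta N) = \dim(N\nabla M)$ and $\dim(N\Delta M) = \dim(M\nabla N)$ from Theorem \ref{thm:rmatrix} (5), the rank equation becomes $\dim M\nabla N + \dim N\nabla M = r = \dim(M\circ N)$. Since the head $M\nabla N$ and the socle $M\Delta N \simeq N\nabla M$ each occur with multiplicity one in $M\circ N$ and together exhaust its dimension, $M\circ N$ has length exactly two, and by part (1) the head and socle are distinct simples because $\Lambda(M,N) + \Lambda(N,M) = \deg z > 0$.

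The main obstacle is the bookkeeping in (2): establishing $\mathbf{k}[z]$-freeness of $\widehat{M}\circ N$ with the correct rank, matching the specialization of $\Image\renormalizedR{N}{\widehat{M}}$ modulo $z$ precisely with the graded submodule $M\Delta N$ of $M\circ N$ including the degree shifts implicit in Definition \ref{def:renormalizedR} and Theorem \ref{thm:rmatrix} (5), and justifying the determinant multiplicativity across the two convolution orders. Once $\dim C + \dim C' = r$ is secured, the length-two conclusion is a formal consequence of the multiplicity-one statements in Theorem \ref{thm:rmatrix}.
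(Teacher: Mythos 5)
Your proof is correct, but it takes a genuinely different route from the paper's, especially for part (2). The paper derives both parts purely from the invariant $\Lambda$: it uses Corollary \ref{cor:lambdadiscrete} ($\Lambda(M,N)+\Lambda(N,M)\geq 0$) together with the strict drops recorded in Theorem \ref{thm:rmatrix} (6)(7) — any composition factor $L$ of $\Ker \mathbf{r}_{M,N}$ (resp.\ of $M\circ N/M\Delta N$) satisfies $\Lambda(M,L)\leq\Lambda(M,N)-\deg z$ (resp.\ $\Lambda(L,M)\leq\Lambda(N,M)-\deg z$) — so an extra factor in $\rad(M\circ N)/M\Delta N$ would give $\Lambda(M,L)+\Lambda(L,M)<0$, a contradiction. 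You instead work directly with the $\mathbf{k}[z]$-structure: the identification $\renormalizedR{N}{\widehat{M}}\circ\renormalizedR{\widehat{M}}{N}=cz^m$ with $m\deg z=\Lambda(M,N)+\Lambda(N,M)$ is exactly the content of Corollary \ref{cor:lambdadiscrete}, and your part (2) is a rank/cokernel count over the PID $\mathbf{k}[z]$ yielding $\dim(M\nabla N)+\dim(N\nabla M)=\dim(M\circ N)$, from which length two follows since the socle sits inside the unique maximal submodule. Both routes rest on the same inputs (Proposition \ref{prop:reninj} and Theorem \ref{thm:rmatrix}); the paper's is shorter and formally yields the non-isomorphy of head and socle as a byproduct, while yours gives concrete control of the cokernels of the renormalized R-matrices in the spirit of the original Kang--Kashiwara--Kim--Oh arguments. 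Two minor points: the determinant detour is unnecessary once you have the filtration $z(\widehat{M}\circ N)\subset\Image\renormalizedR{N}{\widehat{M}}\subset\widehat{M}\circ N$, whose graded pieces are $C'$ and $C$; and the distinctness of head and socle does not follow from part (1) alone (the paper gets it from Theorem \ref{thm:rmatrix} (6)(7)), but your length-two conclusion does not actually need it, since non-simplicity plus the dimension identity already forces $\rad(M\circ N)=\soc(M\circ N)$.
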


The statements are slightly refined from those in \cite{MR3758148}. 
We provide different proofs using the invariant $\Lambda$, following \cite[Corollary 3.18]{MR4359265}. 

\begin{proof}
(1) If $M \circ N$ is simple, then $N \circ M$ is also simple since it is isomorphic to $D(M\circ N)$ up to degree shift. 
Hence, both $\mathbf{r}_{M,N}$ and $\mathbf{r}_{N,M}$ must be isomorphic, which implies $\mathbf{r}_{N,M} \circ \mathbf{r}_{M,N} = \id_{M\circ N}$ up to a nonzero scalar multiple. 
It follows that $\Lambda(M,N) + \Lambda(N,M) = 0$. 

Next, assume $\Lambda (M,N) + \Lambda(N,M) = 0$. 
Suppose $M \circ N$ is not simple. 
By Lemma \ref{thm:rmatrix} (6) (7), $M \nabla N$ and $M \Delta N$ are not isomorphic and  
we have $\Lambda (M, M \Delta N) \leq \Lambda(M, N) - \deg z, \Lambda (M \Delta N, M) = \Lambda (N, M)$. 
Hence, $\Lambda (M, M \Delta N) + \Lambda (M \Delta N, M) \leq - \deg z < 0$, which is impossible by Corollary \ref{cor:lambdadiscrete}. 

(2) If $\Lambda(M,N) + \Lambda(N,M) = \deg z$, $M\circ N$ is not simple by (1). 
It is enough to show that $\rad (M\circ N) / M \Delta N = 0$. 
Suppose to the contrary $\rad (M \circ N)/M \Delta N \neq 0$ and take its composition factor $L$. 
By Lemma \ref{thm:rmatrix} (6) (7), we have $\Lambda(M,L) \leq \Lambda (M,N) - \deg z, \Lambda(L,M) \leq \Lambda(N,M) - \deg z$. 
Hence $\Lambda(M,L) + \Lambda(L,M) \leq - \deg z < 0$, which is impossible by Corollary \ref{cor:lambdadiscrete}. 
\end{proof}

\subsection{Unmixing sequences}

\begin{definition} [{\cite[Definition 2.5]{MR3542489}}]
  Let $m$ be a positive integer. 
  Let $\beta_k \in Q_+$ and $M_k \in \gMod{R(\beta_k)}$ for each $1 \leq k \leq m$. 
  We say $(M_1, M_2, \ldots, M_m)$ is unmixing if 
  \[
  \Res_{\beta_1, \beta_2, \ldots, \beta_m} (M_1 \circ M_2 \circ \cdots \circ M_m) = M_1 \otimes M_2 \otimes \cdots \otimes M_m. 
  \]
\end{definition}

Note that $(M_1, \ldots, M_m)$ is unmixing if and only if for any composition factor $L_k$ of $M_k \ (1 \leq k \leq m)$, $(L_1, \ldots, L_m)$ is unmixing. 
The following lemma is proved using the Mackey filtration. 

\begin{lemma} [{\cite[Lemma 2.6]{MR3542489}, \cite[Proposition 2.10]{MR3771147}}] \label{lem:unmixing}
  Assume $(M_1, \ldots, M_m)$ is unmixing. 
\begin{enumerate}
  \item For any $w \in \Sym_m$, we have 
  \[
  \Res_{\beta_1, \ldots, \beta_m} (M_{w(1)} \circ \cdots \circ M_{w(m)}) \simeq q^{t_w} M_1 \otimes \cdots \otimes M_m, 
  \]
  where $t_w = -\sum_{1 \leq a<b \leq m, w^{-1}(a)>w^{-1}(b)} (\beta_a, \beta_b)$. 
 \item 
 Every proper submodule of $M_1 \circ \cdots \circ M_m$ is annihilated by $e(\beta_1, \ldots, \beta_m)$. 
 Hence, the head $L$ of $M_1 \circ \cdots \circ M_m$ is simple and $\Res_{\beta_1, \ldots, \beta_m}L = M_1 \otimes \cdots \otimes M_m$. 
 Moreover, $[M_1 \circ \cdots \circ M_m : L]_q = 1$. 
 \item In (2), if $M_k$ are all self-dual $(1 \leq k \leq m)$, then $L$ is also self-dual. 
\end{enumerate}
\end{lemma}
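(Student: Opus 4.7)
For (1), the plan is to apply the Mackey filtration (recalled above) directly to
\[
\Res_{\beta_1,\ldots,\beta_m}\Ind_{\beta_{w(1)},\ldots,\beta_{w(m)}}(M_{w(1)}\otimes\cdots\otimes M_{w(m)}),
\]
whose successive subquotients are indexed by $m\times m$ matrices $(\alpha_{k,l})$ over $Q_+$ with row sums $\beta_{w(k)}$ and column sums $\beta_l$. Relabeling the rows by $w^{-1}$, the unmixing hypothesis on $(M_1,\ldots,M_m)$ applied to the corresponding Mackey filtration for $\Res_{\beta_1,\ldots,\beta_m}(M_1\circ\cdots\circ M_m)$ forces $\bigotimes_k \Res_{\gamma_{k,1},\ldots,\gamma_{k,m}}M_k$ to vanish except in the diagonal case $\gamma_{k,l}=\delta_{k,l}\beta_k$. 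Translating back, only $\alpha_{k,l}=\delta_{w(k),l}\beta_{w(k)}$ survives, producing the single summand $q^{m(\alpha)}(M_1\otimes\cdots\otimes M_m)$; a direct rewriting of $m(\alpha)=-\sum_{k<k',\,w(k)>w(k')}(\beta_{w(k)},\beta_{w(k')})$ under the substitution $a=w(k'),b=w(k)$ gives precisely $t_w$.

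For (2), I take a proper submodule $N\subsetneq M_1\circ\cdots\circ M_m$. Assuming the $M_k$ are simple, Theorem \ref{thm:categorification} (6) makes $M_1\otimes\cdots\otimes M_m$ simple as an $R(\beta_1)\otimes\cdots\otimes R(\beta_m)$-module. The key observation is that $e(\beta_1,\ldots,\beta_m)(M_1\circ\cdots\circ M_m)=M_1\otimes\cdots\otimes M_m$ is the image of the $\Ind\dashv\Res$ adjunction unit, and hence generates the whole convolution as an $R(\beta)$-module. If $e(\beta_1,\ldots,\beta_m)N$ were nonzero, simplicity would force it to equal $M_1\otimes\cdots\otimes M_m$, contradicting properness of $N$. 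Consequently every proper submodule is annihilated by $e(\beta_1,\ldots,\beta_m)$, so their sum is itself proper; the quotient is the simple head $L$, with $\Res L = M_1\otimes\cdots\otimes M_m$. The multiplicity $[M_1\circ\cdots\circ M_m:L]_q=1$ then follows from the graded dimension identity
\[
\qdim(M_1\otimes\cdots\otimes M_m)=\sum_{L'}[M_1\circ\cdots\circ M_m:L']_q\cdot\qdim e(\beta_1,\ldots,\beta_m)L',
\]
since $L$ is the only composition factor with nonzero restriction.

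For (3), assume each $M_k$ is additionally self-dual. Iterating $D(X\circ Y)\simeq q^{(\wt X,\wt Y)}DY\circ DX$ gives $D(M_1\circ\cdots\circ M_m)\simeq q^{c}(M_m\circ\cdots\circ M_1)$ with $c=\sum_{k<l}(\beta_k,\beta_l)$. Part (1) with $w=w_0$ the longest permutation yields $t_{w_0}=-c$, so $\Res D(M_1\circ\cdots\circ M_m)= M_1\otimes\cdots\otimes M_m$ with the two shifts cancelling exactly. Since $DL$ is the socle of $D(M_1\circ\cdots\circ M_m)$, $\Res DL$ embeds into this simple module and is either $0$ or all of it. Because $\varphi$ fixes $e(\beta_1,\ldots,\beta_m)$, dualization preserves nontriviality under this idempotent, so $e(\beta_1,\ldots,\beta_m)DL\neq 0$ by (2), forcing $\Res DL=M_1\otimes\cdots\otimes M_m=\Res L$. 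A short Hom-adjunction argument, using absolute irreducibility of $M_1\otimes\cdots\otimes M_m$, shows that $L$ is the unique simple module with this exact restriction (no shift), giving $DL\simeq L$. The main obstacle throughout is the degree-shift bookkeeping: verifying that the intricate Mackey shift $m(\alpha)$ rewrites as $t_w$ in (1), and that $c+t_{w_0}=0$ in (3), since without this exact cancellation the self-duality conclusion would carry a spurious shift.
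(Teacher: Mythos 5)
Your proposal is correct, and it actually does more work than the paper, which imports parts (1) and (2) from the cited sources and only supplies the three-line computation $\Res_{\beta_1,\ldots,\beta_m} DL \simeq D(\Res_{\beta_1,\ldots,\beta_m} L) \simeq D(M_1\otimes\cdots\otimes M_m)\simeq M_1\otimes\cdots\otimes M_m$ for (3). Your Mackey-filtration argument for (1), including the relabelling $\gamma_{j,l}=\alpha_{w^{-1}(j),l}$ and the verification that $m(\alpha)$ rewrites as $t_w$, is exactly the standard proof and the bookkeeping checks out; likewise the idempotent-generation argument for (2). For (3) you take a slightly longer route: you dualize the whole convolution via $D(M\circ N)\simeq q^{(\beta,\gamma)}DN\circ DM$ and then invoke part (1) with $w=w_0$ to see the shifts cancel, whereas the paper simply uses that $\varphi$ fixes $e(\beta_1,\ldots,\beta_m)$ so that $D$ commutes with the restriction functor directly on $L$; both give $\Res_{\beta_1,\ldots,\beta_m}DL=M_1\otimes\cdots\otimes M_m$ and then the adjunction/uniqueness step you describe. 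One point worth making explicit: the definition of unmixing allows arbitrary $M_k\in\gMod{R(\beta_k)}$, and your proof of (1) correctly avoids any simplicity hypothesis, but parts (2) and (3) genuinely need each $M_k$ simple (so that $M_1\otimes\cdots\otimes M_m$ is simple, via absolute irreducibility); you state this assumption only parenthetically, and it should be flagged as part of the hypotheses of those parts, as in the cited sources.
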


Here, (3) follows from 
\[   
\Res_{\beta_1, \ldots, \beta_m} DL \simeq D (\Res_{\beta_1, \ldots,\beta_m}L) \simeq D (M_1 \otimes \cdots \otimes M_m) \simeq M_1 \otimes \cdots \otimes M_m.
\]

Now, we consider the case $m = 2$. 
For an $R(\beta)$-module $M$, we define
\begin{align*}
  \W(M) = \{ \gamma \in Q_+ \cap (\beta - Q_+) \mid e(\gamma, \beta-\gamma)M \neq 0 \}, \\ \index{$\W(M)$}
  \W^* (M) = \{ \gamma \in Q_+ \cap (\beta - Q_+) \mid e(\beta - \gamma, \gamma)M \neq 0 \}. \index{$\W^*(M)$}
\end{align*} 

Let $\beta, \gamma \in Q_+$ and $M \in \gMod{R(\beta)}, N \in \gMod{R(\gamma)}$. 
By the Mackey filtration, we have 
\[
\W(M\circ N) = \W(M) + \W(N), \W^*(M \circ N) = \W^*(M) + \W^*(N) \ (\text{\cite[Lemma 2.2]{MR3771147}}). 
\]
The Mackey filtration also shows that $(M,N)$ is unmixing if and only if $\W^*(M) \cap \W(N) = \{0\}$. 

Assume that $(M,N)$ is unmixing. 
Then, Lemma \ref{lem:unmixing} (1) implies that $\Res_{\beta,\gamma} N\circ M \simeq q^{-(\beta, \gamma)} M \otimes N$. 
By the induction-restriction adjunction, we obtain a nonzero homomorphism $\Rmatrix{M}{N} \colon q^{-(\beta,\gamma)}M \circ N \to  N \circ M$ such that 
\[
\Rmatrix{M}{N} (u\otimes v) = \tau_{w[\height \gamma,\height \beta]} (v \otimes u) \ (u \in M, v \in N). \index{$\Rmatrix{M}{N}$}
\]

%The following lemmas are useful when computing $\Lambda$. 

\begin{lemma} \label{lem:unmixingr}
Let $M$ be a simple $R(\beta)$-module and $N$ a simple $R(\gamma)$-module. 
Assume that one of them is real and admits an affinization. 
If $(M,N)$ is unmixing, we have 
\[
 \Rmatrix{M}{N} \in \mathbf{k}^{\times}\mathbf{r}_{M,N} , \Lambda(M,N) = -(\beta,\gamma). 
\] 
\end{lemma}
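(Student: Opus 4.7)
The plan is to verify that $\Rmatrix{M}{N}$ is a nonzero homogeneous element of the Hom-space $\Hom_{R(\beta+\gamma)}(M\circ N, N\circ M)$, to compute its degree, and then to appeal to the one-dimensionality of this Hom-space provided by Theorem~\ref{thm:rmatrix}~(1).

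First, I would check that $\Rmatrix{M}{N}$ is nonzero. Unwinding the $(\Ind_{\beta,\gamma}, \Res_{\beta,\gamma})$-adjunction, the map $\Rmatrix{M}{N}\colon q^{-(\beta,\gamma)} M \circ N \to N \circ M$ corresponds to a degree zero homomorphism of $R(\beta) \otimes R(\gamma)$-modules $q^{-(\beta,\gamma)} M \otimes N \to \Res_{\beta,\gamma}(N \circ M)$, determined on pure tensors by $u \otimes v \mapsto \tau_{w[\height\gamma, \height\beta]}(v \otimes u)$. The unmixing hypothesis together with Lemma~\ref{lem:unmixing}~(1), applied to the transposition in $\Sym_2$, identifies $\Res_{\beta, \gamma}(N \circ M)$ with $q^{-(\beta,\gamma)} M \otimes N$, and under this identification the explicit formula above is precisely the identity map. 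In particular $\Rmatrix{M}{N} \neq 0$.

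Next, viewed as a homomorphism $M \circ N \to N \circ M$, the map $\Rmatrix{M}{N}$ is homogeneous of degree $-(\beta, \gamma)$ (a degree zero map out of $q^{-(\beta,\gamma)} X$ is a degree $-(\beta,\gamma)$ map out of $X$, by the convention on $q^n$). Because one of $M$ and $N$ is real and admits an affinization, Theorem~\ref{thm:rmatrix}~(1) gives $\Hom_{R(\beta+\gamma)}(M \circ N, N \circ M) = \mathbf{k}\,\mathbf{r}_{M,N}$, so this graded Hom-space is one-dimensional and concentrated in degree $\Lambda(M, N)$. Combining this with the nonvanishing and the computed degree of $\Rmatrix{M}{N}$ forces both $\Lambda(M, N) = -(\beta, \gamma)$ and $\Rmatrix{M}{N} \in \mathbf{k}^{\times} \mathbf{r}_{M,N}$ at once.

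The only mild subtlety, which I expect to be the main obstacle, is the apparent asymmetry of Theorem~\ref{thm:rmatrix}~(1): as stated it assumes $M$ is the real module carrying the affinization. When instead $N$ plays that role, I would either invoke symmetry (apply Theorem~\ref{thm:rmatrix}~(1) to the reversed pair to control $\Hom(N \circ M, M \circ N)$ and transport the conclusion back via the duality $D(M \circ N) \simeq q^{(\beta,\gamma)} DN \circ DM$, noting that duality preserves the unmixing property) or observe that the underlying uniqueness argument, which reduces to computing $\End_{R[z]}$ of the affine object via Lemma~\ref{lem:endaffineobj}, works with the affinization on either factor. Either route secures the one-dimensionality needed to complete the argument above.
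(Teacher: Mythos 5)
Your proposal is correct and follows essentially the same route as the paper, whose proof is simply the observation that $\Rmatrix{M}{N}$ is a nonzero homomorphism of degree $-(\beta,\gamma)$ combined with Theorem \ref{thm:rmatrix} (1); your extra care about which factor carries the affinization is already absorbed by the fact that Theorem \ref{thm:rmatrix} (1) controls both Hom-spaces $\Hom(M\circ N, N\circ M)$ and $\Hom(N\circ M, M\circ N)$ simultaneously.
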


\begin{proof}
Since $\Rmatrix{M}{N}$ is a nonzero homomorphism of degree $-(\beta,\gamma)$, the assertion follows from Theorem \ref{thm:rmatrix} (1). 
\end{proof}

\begin{lemma}\cite[Corollary 10.1.4]{MR3758148} \label{lem:LambdaforLi}
 Let $i \in I$, $\beta \in Q_+$ and $M$ a simple $R(\beta)$-module. 
 Then,
 \[
 \Lambda(L(i),M) = \varepsilon_i(M) (\alpha_i,\alpha_i) - (\alpha_i,\beta), \Lambda(M, L(i)) = \varepsilon_i^* (M(\alpha_i, \alpha_i)) - (\alpha_i,\beta). 
 \]
\end{lemma}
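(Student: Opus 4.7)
My approach is a simultaneous induction on $\varepsilon_i(M)$ and $\varepsilon_i^*(M)$, respectively, exploiting the fact that $L(i)$ is a real simple module admitting the affinization $(\widehat{L}(i), z = x_1)$ of Example \ref{ex:affinization}, where $\deg z = (\alpha_i,\alpha_i)$. I will also use that $\Lambda(\cdot,\cdot)$ is invariant under degree shifts of either argument, which follows directly from Definition \ref{def:renormalizedR} since shifting a factor by $q^d$ shifts both sides of the universal R-matrix uniformly and leaves the minimal power of $z$ dividing it unchanged.

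For the first formula, the base case is $\varepsilon_i(M)=0$, i.e.\ $e(\alpha_i,\beta-\alpha_i)M=0$. Combined with $\W^*(L(i)) = \{0,\alpha_i\}$, this makes the pair $(L(i),M)$ unmixing, so Lemma \ref{lem:unmixingr} yields $\Lambda(L(i),M) = -(\alpha_i,\beta)$, matching the formula. For the inductive step, set $n = \varepsilon_i(M) \geq 1$ and $N = \tilde{e}_iM$; then $N$ is simple of weight $-(\beta-\alpha_i)$ with $\varepsilon_i(N) = n-1$, and by the definition of $\tilde{f}_i$ we have $M \simeq q_i^{n-1}(L(i) \nabla N)$. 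Applying Theorem \ref{thm:rmatrix}(6) to the real simple $L(i)$ and using degree-shift invariance of $\Lambda$ gives
\[
\Lambda(L(i),M) \;=\; \Lambda(L(i), L(i)\nabla N) \;=\; \Lambda(L(i),N),
\]
and the inductive hypothesis $\Lambda(L(i),N) = (n-1)(\alpha_i,\alpha_i) - (\alpha_i,\beta-\alpha_i) = n(\alpha_i,\alpha_i)-(\alpha_i,\beta)$ closes the induction.

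The symmetric formula $\Lambda(M,L(i))$ is handled by the mirror argument: the base case $\varepsilon_i^*(M)=0$ uses $\W(L(i))=\{0,\alpha_i\}$ to make $(M,L(i))$ unmixing, and the inductive step uses $N = \tilde{e}_i^* M$, the identity $M \simeq q_i^{n-1}(N \nabla L(i))$ with $n = \varepsilon_i^*(M)$, and Theorem \ref{thm:rmatrix}(7) to reduce $\Lambda(M,L(i))$ to $\Lambda(N,L(i))$. I do not anticipate a serious obstacle: every needed tool is already in place. The one technical point to state carefully is the invariance of $\Lambda$ under degree shifts, which is a one-line verification from the construction of $\mathbf{r}_{M,N}$ but is easy to overlook when tracking bookkeeping of degrees.
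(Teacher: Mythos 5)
Your proof is correct and follows essentially the same route as the paper's: induction on $\varepsilon_i(M)$, with the base case handled by unmixing via Lemma \ref{lem:unmixingr} and the inductive step via $M \simeq L(i)\nabla(\tilde{e}_iM)$ together with Theorem \ref{thm:rmatrix}(6), the second formula being symmetric. Your explicit note on degree-shift invariance of $\Lambda$ is a point the paper passes over with the phrase ``up to degree shift,'' but the argument is the same.
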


\begin{proof}
 Note that $L(i)$ admits an affinization $\widehat{L(i)} = R(\alpha_i)$ as in Example \ref{ex:affinization} and is real. 
 Hence, Definition \ref{def:renormalizedR} and Theorem \ref{thm:rmatrix} are applicable. 
 We prove the first assertion. 
 The second one is proved in the same manner. 
 We proceed by an induction on $n = \varepsilon_i(M)$. 
 If $n = 0$, then $(L(i),M)$ is unmixing, so Lemma \ref{lem:unmixingr} implies the assertion. 
 If $n \geq 1$, we have  
 \[
 M \simeq \tilde{f}_i \tilde{e}_i M \simeq L(i) \nabla (\tilde{e}_i M) \ \text{up to degree shift}.  
 \] 
 Hence, we obtain
 \begin{align*}
 \Lambda(L(i),M) &= \Lambda(L(i), \tilde{e}_i M) \quad \text{by Theorem \ref{thm:rmatrix} (6)} \\
 &= (n-1) (\alpha_i,\alpha_i) - (\alpha_i, \beta-\alpha_i) \quad \text{by the induction hypothesis} \\
 &= n(\alpha_i, \alpha_i) - (\alpha_i, \beta). 
 \end{align*}
\end{proof}

\begin{comment}
\begin{proposition} \label{prop:unmixingr}
 Assume that $(M,N)$ is unmixing pair of simple modules. 
 Then, $\Image \mathbf{r}_{M,N}$ coincide with the simple head of $M \circ N$. 
\end{proposition}

\begin{proof}
By Lemma \ref{lem:unmixing}, it suffices to show that for every proper submodule $X$ of $M\circ N$, $\mathbf{r}_{M,N} (X) = 0$. 
By the restriction-coinduction adjunction, $\Hom_{R(\beta+\gamma)} (X, N \circ M) = \Hom_{R(\beta) \otimes R(\gamma)} (\Res_{\beta,\gamma}X, M \otimes N)$ up to degree shift. 
Since $\Res_{\beta,\gamma}X = 0$, the $\Hom$ space is zero.
It follows that $\mathbf{r}_{M,N} \rvert_X = 0$, hence the assertion. 
\end{proof}
\end{comment}

\begin{proposition} \label{prop:unmixingren}
  Let $(M,N)$ be an unmixing pair of simple modules. 
  Let $(\widehat{M},z)$ be an affine object of $M$, $(\widehat{N},w)$ an affine object of $N$. 
\begin{enumerate}
  \item The pairs $(\widehat{M},N), (M, \widehat{N})$ and $(\widehat{M},\widehat{N})$ are unmixing. 
  \item If $M$ is real and $(\widehat{M},z)$ is an affinization, we have
  \[
 \Rmatrix{\widehat{M}}{N} = \renormalizedR{\widehat{M}}{N} , \  \Rmatrix{\widehat{M}}{\widehat{N}} = \renormalizedR{\widehat{M}}{\widehat{N}}
 \]
 up to nonzero scalar multiples. 
 \item If $N$ is real and $(\widehat{N},w)$ is an affinization, we have
\[
\Rmatrix{M}{\widehat{N}} = \renormalizedR{M}{\widehat{N}} , \  \Rmatrix{\widehat{M}}{\widehat{N}} = \renormalizedR{\widehat{M}}{\widehat{N}}
\]
up to nonzero scalar multiples. 
 \end{enumerate}
\end{proposition}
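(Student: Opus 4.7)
The plan is to handle (1), (2), (3) in that order, with (2) and (3) being parallel arguments.

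For (1), I would apply the criterion recalled earlier in the text that $(X,Y)$ is unmixing if and only if $\W^*(X)\cap \W(Y)=\{0\}$. Since $\widehat{M}$ is a free $\mathbf{k}[z]$-module of finite rank with $\widehat{M}/z\widehat{M}\simeq M$, for each $\gamma \in Q_+\cap(\beta-Q_+)$ the idempotent truncation $e(\beta-\gamma,\gamma)\widehat{M}$ is a $\mathbf{k}[z]$-direct summand of $\widehat{M}$; being projective over the PID $\mathbf{k}[z]$, it is free, with rank equal to $\dim_{\mathbf{k}} e(\beta-\gamma,\gamma)M$. Therefore $\W^*(\widehat{M})=\W^*(M)$, and analogously $\W(\widehat{N})=\W(N)$. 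The three unmixing claims then follow at once from the assumed unmixing of $(M,N)$.

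For (2), part (1) guarantees that the induction–restriction adjunction produces nonzero homomorphisms
\[
\Rmatrix{\widehat{M}}{N}\colon q^{-(\beta,\gamma)}\widehat{M}\circ N\to N\circ \widehat{M},\qquad \Rmatrix{\widehat{M}}{\widehat{N}}\colon q^{-(\beta,\gamma)}\widehat{M}\circ \widehat{N}\to \widehat{N}\circ \widehat{M},
\]
each defined by $u\otimes v\mapsto \tau_{w[\height\gamma,\height\beta]}(v\otimes u)$. The first is $\mathbf{k}[z]$-linear because the $z$-action is inherited from $\widehat{M}$ and commutes with the action of the $\tau$'s. By Theorem \ref{thm:rmatrix} (3) we have $\Hom_{R(\beta+\gamma)[z]}(\widehat{M}\circ N, N\circ \widehat{M}) = \mathbf{k}[z]\renormalizedR{\widehat{M}}{N}$, so $\Rmatrix{\widehat{M}}{N}=f(z)\renormalizedR{\widehat{M}}{N}$ for some $f(z)\in \mathbf{k}[z]$. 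Now Lemma \ref{lem:unmixingr}, applied to the unmixing simple pair $(M,N)$ with $M$ real and admitting an affinization, gives $\Lambda(M,N)=-(\beta,\gamma)$; hence $\renormalizedR{\widehat{M}}{N}$ has the same homogeneous degree $-(\beta,\gamma)$ as $\Rmatrix{\widehat{M}}{N}$. Since $\deg z>0$, this forces $f(z)$ to be a nonzero constant. The identity for $\Rmatrix{\widehat{M}}{\widehat{N}}$ is obtained by the same reasoning with $\mathbf{k}[z,w]$-linearity in place of $\mathbf{k}[z]$-linearity, invoking Proposition \ref{prop:affinermatrix} (1) to identify the ambient $\Hom$ space with $\mathbf{k}[z,w]\renormalizedR{\widehat{M}}{\widehat{N}}$, whose generator again has degree $\Lambda(M,N)=-(\beta,\gamma)$. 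Part (3) is entirely symmetric, with $N$ now playing the role of the real module admitting an affinization.

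The only real subtlety lies in (1), where I must accommodate the possible infinite-dimensionality of $\widehat{M}$ and $\widehat{N}$ via their $\mathbf{k}[z]$- and $\mathbf{k}[w]$-freeness to compare $\W^*$ and $\W$ with those of $M$ and $N$; once this is settled, parts (2) and (3) become a clean application of the cyclicity statements already provided by Theorem \ref{thm:rmatrix} and Proposition \ref{prop:affinermatrix}, together with the degree computation $\Lambda(M,N)=-(\beta,\gamma)$ from Lemma \ref{lem:unmixingr}.
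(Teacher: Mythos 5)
Your proposal is correct and follows essentially the same route as the paper: reduce (1) to the equalities $\W^*(\widehat{M})=\W^*(M)$ and $\W(\widehat{N})=\W(N)$, then for (2) and (3) identify $\Rmatrix{\widehat{M}}{N}$ (resp. $\Rmatrix{\widehat{M}}{\widehat{N}}$) with the renormalized R-matrix by comparing degrees inside the rank-one $\mathbf{k}[z]$- (resp. $\mathbf{k}[z,w]$-) module of homomorphisms, using $\Lambda(M,N)=-(\beta,\gamma)$ from Lemma \ref{lem:unmixingr}. The only cosmetic difference is that you justify $\W^*(\widehat{M})=\W^*(M)$ via $\mathbf{k}[z]$-freeness of idempotent truncations rather than the paper's projective-limit argument over $z^p\widehat{M}/z^{p+1}\widehat{M}\simeq M$; both are valid.
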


\begin{proof}
(1) 
By the same argument as in Lemma \ref{lem:limext}, we have $\widehat{N} \simeq \varprojlim_{p} \widehat{N}/z^p \widehat{N}$ in $\gMod{R}$. 
Since $z^p\widehat{N}/z^{p+1}\widehat{N} \simeq N$, we deduce that $\W(\widehat{N}) = \W(N)$. 
Similarly, $\W^*(\widehat{M}) = \W^*(M)$. 
Hence, $(\widehat{M},N)$ and $(\widehat{M},\widehat{N})$ are unmixing. 

(2)
By the construction, $\Rmatrix{\widehat{M}}{N}$ is a nonzero element of $\End_{R(\beta+\gamma)[z]} (\widehat{M}\circ N, N \circ \widehat{M})$ with degree $-(\beta,\gamma)$. 
On the other hand, Theorem \ref{thm:rmatrix} shows that $\End_{R(\beta+\gamma)[z]} (\widehat{M}\circ N, N \circ \widehat{M}) = \mathbf{k}[z] \renormalizedR{\widehat{M}}{N}$. 
Besides, $\deg (\renormalizedR{\widehat{M}}{N}) = \Lambda(M,N)$, which is $-(\beta,\gamma)$ by Lemma \ref{lem:unmixingr}. 
It proves $\Rmatrix{\widehat{M}}{N} = \renormalizedR{\widehat{M}}{N}$ up to a nonzero scalar multiple. 
The assertion for $\Rmatrix{\widehat{M}}{\widehat{N}}$ and (3) are proved in the same manner. 
\end{proof}
 
\begin{proposition} \label{prop:unmixingreninj}
  Let $M \in \gMod{R(\beta)}$ be a real simple module that admits an affinization $(\widehat{M},z)$. 
  Let $N \in \gMod{R(\gamma)}$ and assume that $(M,N)$ is unmixing. 
  Then, $\Rmatrix{\widehat{M}}{N}$ is injective.
\end{proposition}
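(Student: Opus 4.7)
The plan is to reduce the injectivity of $\Rmatrix{\widehat{M}}{N}$ to the case when $N$ is simple, first by induction on composition length when $N$ is finite-dimensional, and then by a limit argument along a locally-finite Hausdorff filtration for general $N$. The key functoriality I will use is this: for every $N' \in \gMod{R(\gamma)}$ such that $(M,N')$ is unmixing, Proposition \ref{prop:unmixingren}(1) shows that $(\widehat{M}, N')$ is also unmixing, so $\Rmatrix{\widehat{M}}{N'}$ is defined, and by construction it is the Ind--Res adjoint of the Mackey-filtration identification $\Res_{\beta,\gamma}(N'\circ \widehat{M}) \simeq q^{-(\beta,\gamma)}\widehat{M}\otimes N'$; both ingredients are functorial in $N'$, so $\Rmatrix{\widehat{M}}{-}$ is natural. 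Since $\W$ and $\W^*$ only grow under passage to subobjects and quotients, every subquotient $N''$ of $N$ still satisfies $\W^*(M)\cap \W(N'') = \{0\}$, so the unmixing hypothesis is preserved.

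For finite-dimensional $N$ I induct on composition length. When $N$ is simple, Proposition \ref{prop:unmixingren}(2) identifies $\Rmatrix{\widehat{M}}{N}$ with $\renormalizedR{\widehat{M}}{N}$ up to a nonzero scalar, and Proposition \ref{prop:reninj} then provides injectivity. For the inductive step I choose a simple submodule $N_1 \subset N$ with quotient $N_2 = N/N_1$; exactness of the convolution functors $\widehat{M}\circ(-)$ and $(-)\circ \widehat{M}$, together with naturality, produces a commutative diagram with exact rows
\[
\begin{aligned}
0 \to \widehat{M}\circ N_1 \to \widehat{M}\circ N \to \widehat{M}\circ N_2 \to 0, \\
0 \to N_1 \circ \widehat{M} \to N \circ \widehat{M} \to N_2 \circ \widehat{M} \to 0,
\end{aligned}
\]
in which the outer vertical maps $\Rmatrix{\widehat{M}}{N_1}$ and $\Rmatrix{\widehat{M}}{N_2}$ are injective by the inductive hypothesis, and the snake lemma forces injectivity of the middle vertical $\Rmatrix{\widehat{M}}{N}$.

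For general finitely generated $N$ I take $N^p = R(\gamma) N_{\geq p}$, so that $(N^p)_{p\geq 0}$ is a locally-finite Hausdorff filtration with every quotient $N/N^p$ finite-dimensional. The previous step gives injectivity of each $\Rmatrix{\widehat{M}}{N/N^p}$, and naturality produces a commutative square in which the two vertical quotient maps $\widehat{M}\circ N \twoheadrightarrow \widehat{M}\circ (N/N^p)$ and $N\circ \widehat{M}\twoheadrightarrow (N/N^p)\circ \widehat{M}$ have kernels $\widehat{M}\circ N^p$ and $N^p \circ \widehat{M}$ respectively, by exactness of induction. Consequently any element of $\ker \Rmatrix{\widehat{M}}{N}$ lies in $\bigcap_p (\widehat{M}\circ N^p)$. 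Since $\widehat{M}$ is finitely generated over $R(\beta)$ by Lemma \ref{lem:affinefg}, both $\widehat{M}$ and $N$ are bounded below and Laurentian, so a direct inspection of graded pieces shows $(\widehat{M}\circ N^p)_d = 0$ once $p$ is large enough relative to $d$. Hence $\bigcap_p (\widehat{M}\circ N^p) = 0$, completing the proof.

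The hard part will be the final bookkeeping step. Because $\widehat{M}$ is infinite-dimensional, the filtration $(\widehat{M}\circ N^p)_p$ on $\widehat{M}\circ N$ is not obviously locally-finite; the degree-by-degree vanishing must be extracted from explicit lower bounds on the gradings of $\widehat{M}$, $R(\gamma)$, and $N$, using that the $\tau_w$'s contributing to $\widehat{M}\circ N^p$ come from a finite set and that each of $\widehat{M}_a$ and $N^p_b$ is zero below an explicit threshold.
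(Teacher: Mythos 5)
Your argument is correct and follows essentially the same route as the paper's proof, which reduces to finite-dimensional $N$ by naturality and a projective-limit argument, then to simple $N$ by induction on length, and concludes via Proposition \ref{prop:unmixingren} and Proposition \ref{prop:reninj}; your write-up simply fills in the details the paper leaves implicit. (One small wording slip: $\W$ and $\W^*$ can only \emph{shrink} under passage to subquotients, which is what your argument actually uses to preserve the unmixing hypothesis.)
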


\begin{proof}
  Note that $\Rmatrix{\widehat{M}}{N}$ is natural in $N$. 
  Since $N$ is a projective limit of finite dimensional $R(\gamma)$-modules, we may assume that $N$ is finite dimensional. 
  By an induction on the length of $N$, we may reduce to the case where $N$ is simple. 
  In this case, $\Rmatrix{\widehat{M}}{N} \in \mathbf{k}^{\times} \renormalizedR{\widehat{M}}{N}$ by Proposition \ref{prop:unmixingren}, and the assertion follows from Proposition \ref{prop:reninj}
\end{proof}

\subsection{Cuspidal decomposition} \label{sub:cuspidal}

We review cuspidal decompositions of simple modules developed in \cite{MR3542489}.
We also consult \cite{MR3771147}. 
Let $\prroot$ be the set of positive real roots, and $\minroot = \{ \alpha \in \Phi_+ \mid \text{for any $0 < a < 1$, $a\alpha \not\in \Phi_+$}\}$.  \index{$\prroot$} \index{$\minroot$}
It is known that, if $\alpha \in \minroot$, then $\mathbb{Q}\alpha \cap \Phi \subset \mathbb{Z} \alpha$ \cite[Proposition 5.5]{MR1104219}.

Let $\preceq$ be a total preorder on a set $X$. 
We define an equivalence relation $\sim$ on $X$ by $x \sim y \Leftrightarrow x \preceq y$ and $y \preceq x$. 
We call it the $\preceq$-equivalence relation. 
Then, $\preceq$ induces a total order on the set of $\preceq$-equivalence classes $X/{\sim}$, which is denoted by the  same symbol $\preceq$.

\begin{definition}[{\cite[Definition 1.8]{MR3542489}, \cite[Definition 1.16]{MR3771147}}] \label{def:convexpreorder}
  Let $V$ be an $\mathbb{R}$-vector space, and $X$ be a subset of $V$. 
  A total preorder $\preceq$ on $X$ is said to be convex if, for any $\preceq$-equivalence class $C$, the following two conditions are satisfied: 
 \begin{enumerate}
  \item $(\spn_{\mathbb{R}_{\geq 0}} \{ x \in X \mid x \prec C\} + \spn_{\mathbb{R}} C) \cap \spn_{\mathbb{R}_{\geq 0}} \{ x \in X \mid x \succ C \} = \{0\}$,
  \item $(\spn_{\mathbb{R}_{\geq 0}} \{x \in X \mid x \succ C \} + \spn_{\mathbb{R}} C) \cap \spn_{\mathbb{R}_{\geq 0}} \{ x \in X \mid x \prec C \} = \{0\}$. 
 \end{enumerate}
\end{definition}

\begin{comment}
\begin{definition} \label{def:convexpreorder}
  A convex preorder is a total preorder $\preceq$ on $\minroot$ (or more generally on a subset of $\mathbb{R}\otimes_{\mathbb{Z}} P$) such that, 
  given $A, A' \subset \minroot$ with $\alpha \prec \alpha'$ for all $\alpha \in A, \alpha' \in A'$,
  we have $\spn_{\mathbb{R}_{\geq 0}} A \cap \spn_{\mathbb{R}_{\geq 0}} A' = \{ 0 \}$. 
 \end{definition}
\end{comment}

Let $\preceq$ be a convex preorder on $\minroot$. 
It uniquely extends to a convex preorder on $\mathbb{Z}_{>0} \minroot = \{ a\alpha \mid a \in \mathbb{Z}_{>0}, \alpha \in \minroot\}$: \index{$\mathbb{Z}_{> 0}\minroot$}
$a\alpha \preceq a'\alpha'$ if $\alpha \preceq \alpha'$. 
Note that $\Phi_+$ is a subset of $\mathbb{Z}_{> 0} \minroot$ by \cite[Proposition 5.5]{MR1104219}. 

 \begin{definition} \label{def:cuspidal}
  Let $C \in \minroot/{\sim}$ and $\beta \in \spn_{\mathbb{Z}_{\geq 0}} C$. (In this case, we write $C = C_{\beta}$. ) \index{$C_{\beta}$}
  Set $\Phi_{+,\preceq C}^{\text{min}} = \{ \gamma \in \minroot \mid C_{\gamma} \preceq C \}, \Phi_{+,\succeq C}^{\text{min}} = \{ \gamma \in \minroot \mid C_{\gamma} \succeq C \}$. 
  A simple $R(\beta)$-module $L$ is said to be $\preceq$-cuspidal if 
  \begin{align*}
  \W(L) &\subset \spn_{\mathbb{Z}_{\geq 0}} \Phi_{+,\preceq C}^{\text{min}}, \\
  \W^*(L) &\subset \spn_{\mathbb{Z}_{\geq 0}} \Phi_{+, \succeq C}^{\text{min}}.
  \end{align*}
  If the convex preorder is clear from the context, we just say cuspidal. 
 \end{definition}

 Cuspidal modules are called semi-cuspidal modules in \cite{MR3542489}. 

 Let $C \in \Phi_+/{\sim}$ and $\beta \in \spn_{\mathbb{Z}_{\geq 0}} C$. 
 Let $I^{\preceq}(\beta)$ be the two-sided ideal of $R(\beta)$ generated by \index{$I^{\preceq}(\beta)$}
 \[
 e(\gamma,*) \ (\gamma \not \in \spn_{\mathbb{Z}_{\geq 0}} \Phi_{+,\preceq C}^{\text{min}}), e(*, \gamma') \ (\gamma' \not \in \spn_{\mathbb{Z}_{\geq 0}} \Phi_{+, \succeq C}^{\text{min}}). 
 \]
 We define $R^{\preceq}(\beta) = R(\beta)/I^{\preceq}(\beta)$. \index{$R^{\preceq}(\beta)$}
 Then, 
 \[
 \gMod{R^{\preceq}(\beta)} \simeq \{ M \in \gMod{R(\beta)} \mid \text{every composition factor of $M$ is $\preceq$-cuspidal} \}.
 \] 

\begin{lemma}[{\cite[Lemma 2.18]{MR3542489}}] \label{lem:cuspidalseq}
  Let $C_1 \prec C_2 \prec \cdots \prec C_m \in \minroot/{\sim}$, 
  and $\beta_k \in \spn_{\mathbb{Z}_{\geq 0}} C_k$. 
  For each $1 \leq k \leq m$, let $L_k$ be a cuspidal $R(\beta_k)$-module.  
  Then, $(L_m, \ldots, L_1)$ is unmixing. 
\end{lemma}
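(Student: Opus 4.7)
The plan is to prove this by induction on $m$, using the formula $\W(M\circ N) = \W(M) + \W(N)$ together with the convexity axiom for $\preceq$. The main point throughout is that the positive-cone-level separation guaranteed by convexity is exactly what converts the cuspidality hypothesis into a restriction-theoretic statement.

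For the base case $m = 2$, it suffices to check $\W^{*}(L_2) \cap \W(L_1) = \{0\}$. By cuspidality,
\[
\W^{*}(L_2) \subset \spn_{\mathbb{Z}_{\geq 0}} \Phi_{+, \succeq C_2}^{\mathrm{min}} \subset \spn_{\mathbb{R}_{\geq 0}} \{x \succ C_1\},
\]
while
\[
\W(L_1) \subset \spn_{\mathbb{Z}_{\geq 0}} \Phi_{+, \preceq C_1}^{\mathrm{min}} \subset \spn_{\mathbb{R}_{\geq 0}}\{x \prec C_1\} + \spn_{\mathbb{R}} C_1.
\]
Condition (1) of Definition~\ref{def:convexpreorder}, applied to the equivalence class $C_1$, then forces the intersection to be $\{0\}$.

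For the inductive step, I would reduce the statement about $m$-tuples to two ingredients via the factorization
\[
\Res_{\beta_m, \beta_{m-1}, \ldots, \beta_1} \;\simeq\; \bigl(\id \otimes \Res_{\beta_{m-1}, \ldots, \beta_1}\bigr) \circ \Res_{\beta_m, \,\beta_{m-1}+\cdots+\beta_1}.
\]
The inductive hypothesis applied to $(L_{m-1}, \ldots, L_1)$ handles the inner restriction, so it remains to show that the pair $(L_m, L_{m-1} \circ \cdots \circ L_1)$ is unmixing, i.e.\ $\W^{*}(L_m) \cap \W(L_{m-1}\circ \cdots \circ L_1) = \{0\}$. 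Using $\W(M\circ N) = \W(M) + \W(N)$ iteratively, the second set is contained in $\spn_{\mathbb{Z}_{\geq 0}} \Phi_{+, \preceq C_{m-1}}^{\mathrm{min}} \subset \spn_{\mathbb{R}_{\geq 0}}\{x \prec C_m\}$, while cuspidality of $L_m$ gives $\W^{*}(L_m) \subset \spn_{\mathbb{Z}_{\geq 0}} C_m \subset \spn_{\mathbb{R}} C_m$. Condition (2) of Definition~\ref{def:convexpreorder}, applied to $C_m$, now gives $\spn_{\mathbb{R}} C_m \cap \spn_{\mathbb{R}_{\geq 0}}\{x \prec C_m\} = \{0\}$, concluding the proof.

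The only subtle point, and the one I would expect to require the most care, is the bookkeeping that turns the non-strict inclusions $\preceq, \succeq$ coming from cuspidality into the strict ones $\prec, \succ$ required to invoke convexity; this uses that $C_1 \prec \cdots \prec C_m$ are distinct equivalence classes, so every $C_k$ with $k < m$ satisfies $C_k \prec C_m$ and every $\gamma \in \Phi_{+, \preceq C_{m-1}}^{\mathrm{min}}$ satisfies $C_\gamma \prec C_m$. Everything else reduces to routine use of the Mackey filtration and the additivity of $\W(\cdot)$.
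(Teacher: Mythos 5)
The paper itself gives no proof of this lemma (it is quoted from \cite[Lemma 2.18]{MR3542489}), so I am judging your argument on its own terms. Your overall strategy --- reduce to pairs via transitivity of restriction, use $\W(M\circ N)=\W(M)+\W(N)$ and the criterion ``$(M,N)$ unmixing $\Leftrightarrow \W^*(M)\cap\W(N)=\{0\}$'', and then invoke the two convexity conditions --- is the standard argument and is essentially correct; the base case and the reduction of the inductive step to the pair $(L_m,\,L_{m-1}\circ\cdots\circ L_1)$ are both fine.

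There is one misstatement in the inductive step that you should repair, although it does not sink the proof. You assert that ``cuspidality of $L_m$ gives $\W^{*}(L_m) \subset \spn_{\mathbb{Z}_{\geq 0}} C_m$.'' That is not what cuspidality says, and the stronger containment is false in general: cuspidality only gives $\W^{*}(L_m)\subset \spn_{\mathbb{Z}_{\geq 0}}\Phi^{\mathrm{min}}_{+,\succeq C_m}$, which contains roots strictly larger than $C_m$. (For instance, for an imaginary cuspidal module in affine type one has $E_i^*L\neq 0$ for some $i$ with $\alpha_i\succ\delta$, so $\alpha_i\in\W^*(L)$ but $\alpha_i\notin\mathbb{Z}_{\geq 0}\delta$.) Consequently the weakened form of convexity you quote, $\spn_{\mathbb{R}}C_m\cap\spn_{\mathbb{R}_{\geq 0}}\{x\prec C_m\}=\{0\}$, is not enough as stated. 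The fix is immediate: from the correct containment, $\W^{*}(L_m)\subset \spn_{\mathbb{R}_{\geq 0}}\{x\succ C_m\}+\spn_{\mathbb{R}}C_m$, and the \emph{full} condition (2) of Definition~\ref{def:convexpreorder} applied to $C_m$ gives that this set meets $\spn_{\mathbb{R}_{\geq 0}}\{x\prec C_m\}\supset \W(L_{m-1}\circ\cdots\circ L_1)$ only in $\{0\}$. (Alternatively, since every element of $\Phi^{\mathrm{min}}_{+,\succeq C_m}$ is $\succ C_{m-1}$, you can run the inductive step verbatim as in your base case, applying condition (1) at the class $C_{m-1}$.) With that correction the argument is complete.
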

 
 \begin{theorem}[{\cite[Corollary 2.17, Theorem 2.19]{MR3542489}}] \label{thm:cuspidaldecomp}
  \begin{enumerate}
   \item Let $L$ be a simple $R(\beta)$-module. 
   Then, there exist $m \in \mathbb{Z}_{\geq 0}, C_1 \prec C_2 \prec \cdots \prec C_m \in \minroot/{\sim}, 0 \neq \beta_k \in \spn_{\mathbb{Z}_{\geq 0}} C_k \ (1 \leq k \leq m)$, 
   and a cuspidal $R(\beta_k)$-module $L_k$ $(1 \leq k \leq m)$ such that $L \simeq \hd (L_m \circ \cdots \circ L_1)$. 
   $L_k \ (1 \leq k \leq m)$ are unique up to isomorphism and degree shift. 
   Moreover, if $L$ is self-dual, we may choose each $L_k$ to be self-dual. 
   %\item Let $\alpha \in \prroot$ and $n \in \mathbb{Z}_{>0}$. 
   %Then, there exists a unique (up to isomorphism) self-dual $\preceq$-cuspidal $R(n\alpha)$ module $L^{\preceq} (n\alpha) = L(n\alpha)$. 
   %Moreover, we have $L(n\alpha) \simeq q^{(\alpha,\alpha)n(n-1)/4} L(\alpha)^{\circ n}$. 
   \item Let $C \in \minroot/{\sim}$. 
   Then we have  
   \[
   \sum_{\beta \in \spn_{\mathbb{Z}_{\geq 0}}C}  \lvert \{ \text{self-dual cuspidal $R(\beta)$-module} \}/{\simeq} \rvert e^{\beta} = \prod_{\beta \in \Phi_+ \cap \spn_{\mathbb{Z}_{\geq 0}}C} \frac{1}{(1-e^{\beta})^{\dim \mathfrak{g}_{\beta}^+}}. 
   \]
 \end{enumerate}
 \end{theorem}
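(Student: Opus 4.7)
The plan is to follow the strategy of Tingley--Webster~\cite{MR3542489}. For part (1), I would proceed by induction on $\height(\beta)$. Let $L$ be a simple $R(\beta)$-module. The first step is to isolate the ``topmost'' cuspidal factor $L_m$. Choose the $\preceq$-maximal equivalence class $C_m$ such that $\W(L) \cap \spn_{\mathbb{Z}_{\geq 0}}\Phi^{\text{min}}_{+,\succeq C_m} \neq \{0\}$, and then pick a maximal element $\gamma$ in this intersection with respect to a refinement of $\preceq$ on $Q_+$ that breaks ties by maximising the contribution to $C_m$. By Definition~\ref{def:convexpreorder}(1), the maximality of $C_m$ forces $\gamma \in \spn_{\mathbb{Z}_{\geq 0}}C_m$.

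Take a simple submodule $L_m \otimes N$ of $\Res_{\gamma,\beta-\gamma}L$. By the $(\Ind,\Res)$-adjunction there is a nonzero, hence surjective, map $L_m \circ N \to L$, so $L = \hd(L_m \circ N)$. The cuspidality of $L_m$ is verified as follows: any $\gamma' \in \W(L_m)$ lifts to $\gamma' \in \W(L)$, and the maximality of $\gamma$ combined with Definition~\ref{def:convexpreorder}(1) forces $\gamma' \in \spn_{\mathbb{Z}_{\geq 0}}\Phi^{\text{min}}_{+,\preceq C_m}$; symmetrically, for $\delta \in \W^*(L_m)$ the identity $e(\gamma-\delta,\delta,\beta-\gamma)L \neq 0$ puts $\gamma-\delta$ in $\W(L)$, and Definition~\ref{def:convexpreorder}(2) shows that $\delta \not\in \spn_{\mathbb{Z}_{\geq 0}}\Phi^{\text{min}}_{+,\succeq C_m}$ would give $\gamma-\delta$ a strictly larger contribution beyond $C_m$ than $\gamma$, contradicting the choice. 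Applying the induction hypothesis to $N$, and using Lemma~\ref{lem:cuspidalseq} together with Lemma~\ref{lem:unmixing}(2) to see that $L_m \circ L_{m-1} \circ \cdots \circ L_1$ has simple head coinciding with $L$, completes the existence.

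Uniqueness follows again from Lemma~\ref{lem:unmixing}(2): any decomposition $L = \hd(L_m' \circ \cdots \circ L_1')$ gives $\Res_{\beta_m',\ldots,\beta_1'}L \simeq L_m' \otimes \cdots \otimes L_1'$, which recovers each $L_k'$ once the partition $(\beta_m',\ldots,\beta_1')$ is known, and the partition itself is forced by the same maximality argument used in the existence proof. For the self-duality refinement, uniqueness of the $L_k$ up to degree shift lets us replace each by a self-dual shift; Lemma~\ref{lem:unmixing}(3) then ensures $\hd(L_m \circ \cdots \circ L_1)$ is self-dual, hence matches $L$ after a global shift which vanishes by self-duality of $L$. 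The main obstacle is the convexity-based cuspidality check for $L_m$: the delicate point is designing the refinement of $\preceq$ on $Q_+$ so that both clauses of Definition~\ref{def:convexpreorder} can be invoked to simultaneously constrain $\W(L_m)$ and $\W^*(L_m)$.

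For part (2), the bijection established in (1) between self-dual simple $R$-modules and strictly increasing chains of self-dual cuspidal modules factorises the generating series (in the variables $e^\beta$) of self-dual simples as a product over $C \in \minroot/{\sim}$ of the generating series of self-dual cuspidals in classes equal to $C$. By Theorem~\ref{thm:categorification}, this total generating series equals the character of $U(\mathfrak{n}^-)$, which by the Weyl--Kac denominator identity equals $\prod_{\beta \in \Phi_+}(1-e^\beta)^{-\dim \mathfrak{g}_\beta^+}$. Since each positive root belongs to a unique $\preceq$-equivalence class, both product decompositions are indexed by $\minroot/{\sim}$, and comparing factors corresponding to a single class $C$ yields the asserted identity.
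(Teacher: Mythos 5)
First, a structural point: the paper does not prove Theorem \ref{thm:cuspidaldecomp} at all --- it is imported verbatim from \cite{MR3542489} (Tingley--Webster), and the only original content here is the remark immediately following it, which corrects the right-hand side of part (2) and indicates that the correct count must be extracted from the generating function $\sum_{\beta}\dim U(\mathfrak{g})_{\beta}e^{\beta}=\prod_{\beta\in\Phi_+}(1-e^{\beta})^{-\dim\mathfrak{g}_{\beta}}$. So there is no in-paper proof to compare against; your proposal has to stand on its own as a reconstruction of the cited result.

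As such, your skeleton is the standard one, but the step that carries essentially all of the difficulty is asserted rather than proved. Concretely: after extracting $\gamma$ maximal in $\W(L)\cap\spn_{\mathbb{Z}_{\geq 0}}\Phi^{\mathrm{min}}_{+,\succeq C_m}$ and a simple submodule $L_m\otimes N$ of $\Res_{\gamma,\beta-\gamma}L$, you claim that convexity plus maximality of $C_m$ forces $\gamma\in\spn_{\mathbb{Z}_{\geq 0}}C_m$ and that $\W(L_m)\subset\spn_{\mathbb{Z}_{\geq 0}}\Phi^{\mathrm{min}}_{+,\preceq C_m}$. Definition \ref{def:convexpreorder}(1) only pins an element of $\spn_{\mathbb{Z}_{\geq 0}}\Phi^{\mathrm{min}}_{+,\succeq C_m}$ into $\spn_{\mathbb{Z}_{\geq 0}}C_m$ once you already know it also lies in $\spn_{\mathbb{Z}_{\geq 0}}\Phi^{\mathrm{min}}_{+,\preceq C_m}$ --- which is a consequence of the decomposition you are in the middle of constructing, so the argument as written is circular. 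Likewise, for $\gamma'\in\W(L_m)\subset\W(L)$ there is no a priori reason that $\gamma'$ admits any expression in $\spn_{\mathbb{Z}_{\geq 0}}\Phi^{\mathrm{min}}_{+,\preceq C_m}$, and the maximality of $\gamma$ was taken inside a different set, so it does not directly constrain $\gamma'$. Closing this is exactly the content of the Tingley--Webster/McNamara argument (a double induction with minimal pairs, resp.\ MV-polytope combinatorics), and a correct write-up cannot avoid it. In part (2), the final step ``comparing factors corresponding to a single class'' does not work as stated: the monoids $\spn_{\mathbb{Z}_{\geq 0}}C$ for distinct classes are not independent (e.g.\ in affine type $\delta=\beta_++\beta_-$ with $\beta_\pm$ in different classes), so an identity $\prod_C A_C=\prod_C B_C$ of power series does not split factorwise. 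One must instead fix $\beta\in\spn_{\mathbb{Z}_{\geq 0}}C$, expand both sides of $\#\{\text{simples of }R(\beta)\}=\dim U(\mathfrak{n}^-)_{\beta}$ over $\mathcal{P}^{\preceq}(\beta)$, and cancel the multi-part terms by induction on $\height\beta$, leaving the single-class terms; this is precisely the computation the paper's remark says was done incorrectly in the sources. (Also, the identity $\operatorname{ch}U(\mathfrak{n}^-)=\prod_{\beta\in\Phi_+}(1-e^{\beta})^{-\dim\mathfrak{g}_{\beta}}$ is the PBW theorem, not the Weyl--Kac denominator identity.)
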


 We call the expression $L \simeq \hd (L_m \circ \cdots \circ L_1)$ in (1) the cuspidal decomposition of $L$. 
 Note that $[L_m \circ \cdots \circ L_1 : L]_q = 1$ by Lemma \ref{lem:unmixing} (2). 

 \begin{remark}
 The second assertion is a correction of \cite[Corollary 2.17]{MR3542489}, \cite[Proposition 2.6]{MR3771147}. 
 In their proof, the computation of $\dim U(\mathfrak{g})_{n\alpha}$ is incorrect.
 It should be derived from the generating function
 \[
 \sum_{\beta \in Q_+} \dim U(\mathfrak{g})_{\beta} e^{\beta} = \prod_{\beta \in \Phi_+} \frac{1}{(1-e^{\beta})^{\dim \mathfrak{g}_{\beta}}}. 
 \]
 \end{remark}

\begin{corollary} \label{cor:cuspidaldecomp}
  Let $L$ be a simple $R(\beta)$-module and take the cuspidal decomposition $L \simeq \hd (L_m \circ \cdots \circ L_1)$ as in Theorem \ref{thm:cuspidaldecomp} (1). 
  Let $C$ be a $\preceq$-equivalence class.
  \begin{enumerate}
    \item $\W(L) \subset \spn_{\mathbb{Z}_{\geq 0}} \Phi_{+, \preceq C}^{\mathrm{min}}$ if and only if $C_m \preceq C$. 
    \item $\W^*(L) \subset \spn_{\mathbb{Z}_{\geq 0}} \Phi_{+, \succeq C}^{\mathrm{min}}$ if and only if $C_1 \succeq C$.  
  \end{enumerate}
\end{corollary}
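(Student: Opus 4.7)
The plan is to reduce everything to two inputs that are already available: the surjection from the cuspidal decomposition together with Mackey (for the easy direction), and the restriction computation of Lemma \ref{lem:unmixing}(2) combined with the convexity of $\preceq$ (for the hard direction). Parts (1) and (2) are symmetric, so I would write out (1) in detail and obtain (2) by interchanging the roles of $\W$ and $\W^*$, of $\beta_m$ and $\beta_1$, and of the two halves of Definition \ref{def:convexpreorder}.

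For the ``if'' direction of (1), assume $C_m \preceq C$. The cuspidal decomposition gives a surjection $L_m \circ \cdots \circ L_1 \twoheadrightarrow L$, so $\W(L) \subset \W(L_m \circ \cdots \circ L_1)$. Applying the identity $\W(M \circ N) = \W(M) + \W(N)$ (recalled in Section \ref{sub:cuspidal} from the Mackey filtration) inductively, this sum is $\sum_k \W(L_k)$. Since each $L_k$ is cuspidal, $\W(L_k) \subset \spn_{\mathbb{Z}_{\geq 0}} \Phi_{+,\preceq C_k}^{\mathrm{min}}$, and $C_k \preceq C_m \preceq C$ gives the desired containment $\W(L) \subset \spn_{\mathbb{Z}_{\geq 0}} \Phi_{+,\preceq C}^{\mathrm{min}}$.

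For the ``only if'' direction, I argue by contradiction: suppose $\W(L) \subset \spn_{\mathbb{Z}_{\geq 0}} \Phi_{+,\preceq C}^{\mathrm{min}}$ while $C \prec C_m$. By Lemma \ref{lem:cuspidalseq} the tuple $(L_m,\ldots,L_1)$ is unmixing, and Lemma \ref{lem:unmixing}(2) then yields $\Res_{\beta_m,\ldots,\beta_1} L \simeq L_m \otimes \cdots \otimes L_1$ up to degree shift; in particular $e(\beta_m,\ast) L \neq 0$, i.e.\ $\beta_m \in \W(L)$. The hypothesis forces $\beta_m \in \spn_{\mathbb{R}_{\geq 0}}\{\alpha \in \minroot : C_\alpha \prec C_m\}$, because $C \prec C_m$ turns ``$\preceq C$'' into ``$\prec C_m$''. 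On the other hand, $\beta_m$ is a nonzero element of $\spn_{\mathbb{Z}_{\geq 0}} C_m$ and hence of $\spn_{\mathbb{R}} C_m$. Condition (2) of Definition \ref{def:convexpreorder} applied to the $\preceq$-equivalence class $C_m$ gives $\spn_{\mathbb{R}} C_m \cap \spn_{\mathbb{R}_{\geq 0}}\{\alpha : C_\alpha \prec C_m\} = \{0\}$, contradicting $\beta_m \neq 0$.

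Part (2) follows by the mirror argument: from the same restriction identity $e(\ast,\beta_1) L \neq 0$, so $\beta_1 \in \W^*(L)$; if $C_1 \prec C$, then $\beta_1 \in \spn_{\mathbb{R}_{\geq 0}}\{\alpha : C_\alpha \succ C_1\}$, which is ruled out by condition (1) of Definition \ref{def:convexpreorder} applied to $C_1$. I do not anticipate a real obstacle here; the only point that needs mild care is choosing the correct half of the convexity condition for each direction, which is why I kept the indexing explicit. Everything else is a direct application of lemmas proved earlier in Section \ref{sub:cuspidal}.
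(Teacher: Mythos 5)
Your proposal is correct and follows essentially the same route as the paper's proof: the "if" direction via the surjection, the Mackey identity $\W(M\circ N)=\W(M)+\W(N)$, and cuspidality; the "only if" direction by extracting $\beta_m\in\W(L)$ (resp. $\beta_1\in\W^*(L)$) from the unmixing restriction and invoking convexity, which you merely spell out more explicitly than the paper's one-line "by convexity, $C_m\preceq C$".
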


\begin{proof}
(1) First, assume $\W(L) \subset \spn_{\mathbb{Z}_{\geq 0}} \Phi_{+, \preceq C}^{\text{min}}$. 
By Lemma \ref{lem:cuspidalseq}, $\Res_{\beta_m, \ldots, \beta_1} L \simeq L_m \otimes \cdots \otimes L_1$. 
In particular, $\beta_m \in \W(L)$, hence $\beta_m \in \spn_{\mathbb{Z}_{\geq 0}} \Phi_{+, \preceq C}^{\mathrm{min}}$. 
By convexity, $C_m \preceq C$. 

Next, assume $C_m \preceq C$. 
Then, we have 
\[
\W(L) \subset \W(L_m \circ \cdots \circ L_1) = \W(L_m) + \cdots + \W(L_1)
\]
by the Mackey filtration. 
Since $L_k$ is cuspidal, $\W(L_k) \subset \spn_{\mathbb{Z}_{\geq 0}} \Phi_{+, \preceq C_k}^{\text{min}} \subset \spn_{\mathbb{Z}_{\geq 0}} \Phi_{+, \preceq C}^{\text{mint}}$. 
The assertion follows. 

(2) is similar to (1). 
\end{proof}

 Let $\beta \in Q_+$. 
 We define 
 \begin{align*}
 \mathcal{P}(\beta) &= \{ (\beta_1, \ldots, \beta_n) \mid n \geq 1, \beta_k \in Q_+ \ (1 \leq k \leq n), \sum_k \beta_k = \beta \}, \\ \index{$\mathcal{P}(\beta)$}
 \mathcal{P}^{\preceq} (\beta) &= \left \{ (\beta_1, \ldots, \beta_n) \in \mathcal{P}(\beta) \mathrel{}\middle|\mathrel{} \begin{aligned}  \index{$\mathcal{P}^{\preceq}(\beta)$}
    &\text{there exists $C_1 \prec \cdots \prec C_n \in \minroot/{\sim}$ such that} \\ 
    &\text{for each $1 \leq k \leq n$, $0 \neq \beta_k \in \spn_{\mathbb{Z}_{\geq 0}}C_k$ }
 \end{aligned}
    \right \}. 
\end{align*}

 \begin{definition} \label{def:bilexicographic}
   For $\underline{\beta} = (\beta_1, \ldots, \beta_m) \in \mathcal{P}^{\preceq}(\beta), \underline{\beta'} = (\beta'_1, \ldots, \beta'_n) \in \mathcal{P}(\beta)$, we write $\underline{\beta'} < \underline{\beta}$ if the following two conditions hold. \index{$\underline{\beta'} < \underline{\beta}$}
 \begin{enumerate}
 \item there exists $1 \leq k \leq \min \{ m, n\}$ such that $\beta_1 = \beta'_1 , \ldots, \beta_{k-1} = \beta'_{k-1}$ and either (i) $\beta'_k \in (\spn_{\mathbb{Z}_{\geq 0}} \Phi_{+,\succeq C_{\beta_k}}^{\text{min}}) \setminus (\spn_{\mathbb{Z}_{\geq 0}} C_{\beta_k})$ or (ii) $\beta'_k \in \spn_{\mathbb{Z}_{\geq 0}}C_{\beta_k}, \beta_k - \beta'_k \in Q_+ \setminus \{0\}$ holds. 
 \item there exists $1 \leq k \leq \min \{ m, n\}$ such that $\beta_m = \beta'_n , \ldots, \beta_{m-k+1} = \beta'_{n-k+1}$ and either (i) $\beta'_{n-k} \in (\spn_{\mathbb{Z}_{\geq 0}} \Phi_{+, \preceq C_{\beta_{m-k}}}^{\text{min}}) \setminus (\spn_{\mathbb{Z}_{\geq 0}} C_{\beta_m-k})$ or (ii) $\beta'_{n-k} \in \spn_{\mathbb{Z}_{\geq 0}} C_{\beta_{m-k}}, \beta_{m-k} - \beta'_{n-k} \in Q_+ \setminus \{0 \}$ holds.
 \end{enumerate}
 It defines a partial order on $\mathcal{P}^{\preceq}(\beta)$. 
 \end{definition}
 
 Let $L$ be a simple $R(\beta)$-module and take its cuspidal decomposition $L = \hd (L_m \circ \cdots \circ L_1)$, where $L_k$ is a cuspidal $R(\beta_k)$-module. 
 We write $\underline{\beta}_L = (\beta_1, \ldots, \beta_m) \in \mathcal{P}^{\preceq}(\beta)$. \index{$\underline{\beta}_L$}
   
 \begin{proposition} \label{prop:properstandardcomposition}
  Let $\beta \in Q_+$.
  Let $L$ be a simple $R(\beta)$-module and take its cuspidal decomposition $L \simeq \hd (L_m \circ \cdots \circ L_1)$, where $L_k$ is a cuspidal $R(\beta_k)$-module. 
  \begin{enumerate}
   \item Let $\underline{\beta'} = (\beta'_1, \ldots, \beta'_n) \in \mathcal{P}(\beta)$. 
   If $\Res_{\beta'_n, \ldots, \beta'_1} (L_m \circ \cdots \circ L_1) \neq 0$, then $\underline{\beta'} \leq \underline{\beta}_L$. 
   \item For any composition factor $L'$ of $L_m \circ \cdots \circ L_1$ other than $\hd (L_m \circ \cdots \circ L_1)$, we have $\underline{\beta}_{L'} < \underline{\beta}_L$. 
  \end{enumerate}
 \end{proposition}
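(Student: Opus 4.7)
The plan is to establish part~(1) via the Mackey filtration combined with convexity of $\preceq$, and then deduce part~(2) as a direct corollary using unmixing. For part~(1), I apply the Mackey filtration to $\Res_{\beta'_n,\ldots,\beta'_1}(L_m \circ \cdots \circ L_1)$: its subquotients are parameterised by matrices $A = (a_{r,l})_{1 \le r \le m,\ 1 \le l \le n}$ with entries in $Q_+$ satisfying $\sum_l a_{r,l} = \beta_r$, $\sum_r a_{r,l} = \beta'_l$, and the admissibility condition $\Res_{a_{r,n},a_{r,n-1},\ldots,a_{r,1}} L_r \neq 0$ for every $r$. Fix such a matrix $A$ arising from a nonzero subquotient.

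The key claim is that $a_{r,l} \in \spn_{\mathbb{Z}_{\geq 0}} C_r$ for all $r, l$. Cuspidality of $L_r$ forces every prefix $\sum_{j \ge l}a_{r,j}$ to lie in $\W(L_r) \subset \spn_{\mathbb{Z}_{\geq 0}}\Phi_{+,\preceq C_r}^{\min}$ and every suffix $\sum_{j \le l}a_{r,j}$ to lie in $\W^*(L_r) \subset \spn_{\mathbb{Z}_{\geq 0}}\Phi_{+,\succeq C_r}^{\min}$. Convexity of $\preceq$ yields the identities $\spn_{\mathbb{R}_{\geq 0}}\Phi_{+,\preceq C}^{\min} \cap \spn_{\mathbb{R}} C = \spn_{\mathbb{R}_{\geq 0}} C$ and $\spn_{\mathbb{R}_{\geq 0}}\Phi_{+,\preceq C}^{\min} \cap \spn_{\mathbb{R}_{\geq 0}}\Phi_{+,\succeq C}^{\min} = \spn_{\mathbb{R}_{\geq 0}} C$; combining these with the prefix/suffix constraints and taking differences of consecutive prefix sums yields the key claim by induction on the columns.

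With this structural information, Definition~\ref{def:bilexicographic} is verified by tracking agreements from both ends. The equalities $\beta'_l = \beta_l$ for $l$ below the first disagreement force $a_{r,l} = \delta_{r,l}\beta_r$, because by the key claim no sum of elements from distinct cuspidal classes can lie in a single $\spn_{\mathbb{Z}_{\geq 0}} C_l$. At the first index $k$ where $\beta'_k \neq \beta_k$, either some $a_{r,k} \neq 0$ with $r > k$ (so $\beta'_k$ involves a class $\succ C_k$, giving condition~(1)(i)), or $\beta'_k = a_{k,k} \in \spn_{\mathbb{Z}_{\geq 0}} C_k$ satisfies $\beta_k - \beta'_k \in Q_+ \setminus \{0\}$, giving~(1)(ii). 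Condition~(2) follows by the symmetric argument starting from the right end.

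For part~(2), let $L'$ be a composition factor of $L_m \circ \cdots \circ L_1$ with $L' \not\simeq L$ and cuspidal decomposition $L' \simeq \hd(L'_{m'} \circ \cdots \circ L'_1)$, giving $\underline{\beta}_{L'} = (\gamma_1,\ldots,\gamma_{m'})$. Lemma~\ref{lem:unmixing}(2), applied via Lemma~\ref{lem:cuspidalseq}, gives $\Res_{\gamma_{m'},\ldots,\gamma_1} L' = L'_{m'} \otimes \cdots \otimes L'_1 \neq 0$, so the same restriction is nonzero on $L_m \circ \cdots \circ L_1$; part~(1) then yields $\underline{\beta}_{L'} \leq \underline{\beta}_L$. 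Equality would force $\Res_{\beta_m,\ldots,\beta_1} L' \neq 0$, but Lemma~\ref{lem:unmixing}(2) also gives $\Res_{\beta_m,\ldots,\beta_1}(L_m \circ \cdots \circ L_1) = L_m \otimes \cdots \otimes L_1$ of length one, so any composition factor with nonzero such restriction must be the head $L$, contradicting $L' \not\simeq L$. Hence $\underline{\beta}_{L'} < \underline{\beta}_L$. The main obstacle I anticipate is the key claim in part~(1): the convexity argument must be applied carefully, possibly invoking the integrality fact $\mathbb{Q}\alpha \cap \Phi \subset \mathbb{Z}\alpha$ for $\alpha \in \minroot$ to pass from real-span statements to integer-span ones and to handle classes with linearly dependent spans (notably in affine types).
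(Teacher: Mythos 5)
Your reduction of part (2) to part (1) via Lemma \ref{lem:unmixing} is correct and matches the paper, but part (1) rests on a false intermediate statement. The ``key claim'' that every Mackey-matrix entry satisfies $a_{r,l} \in \spn_{\mathbb{Z}_{\geq 0}} C_r$ does not hold: cuspidality constrains only the \emph{partial sums} $\sum_{j> l} a_{r,j} \in \W(L_r)$ and $\sum_{j\leq l} a_{r,j} \in \W^*(L_r)$, and the individual entry $a_{r,l}$ is a difference of two elements of a cone, which need not lie in that cone. Concretely, in type $A_2$ with the convex order $\alpha_1 \prec \alpha_1+\alpha_2 \prec \alpha_2$, the cuspidal $R(\alpha_1+\alpha_2)$-module $L = L(1)\nabla L(2)$ is one-dimensional and supported on $e(1,2)$, so $\Res_{\alpha_1,\alpha_2} L \neq 0$; here $m=1$, $n=2$, and the (unique, admissible) Mackey matrix has entries $a_{1,1}=\alpha_2$ and $a_{1,2}=\alpha_1$, neither of which lies in $\mathbb{Z}_{\geq 0}(\alpha_1+\alpha_2)$. (The conclusion $\underline{\beta'} \leq \underline{\beta}_L$ still holds in this example, via clauses (i) of Definition \ref{def:bilexicographic}, but not for the reason you give.) Consequently your derivation of $a_{r,l} = \delta_{r,l}\beta_r$ below the first disagreement, and of the dichotomy at the first disagreeing column, is unsupported as written.

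The repair is the argument the paper actually uses: one controls the entries only \emph{inductively, column by column from the left} (and separately from the right), and only up to the first disagreement. Precisely, once one knows $\gamma_{k',l}=0$ for all $l<k$ and all $k'\geq k$, the entry $\gamma_{k',k}$ equals the suffix sum $\sum_{l\leq k}\gamma_{k',l}$ and therefore lies in $\W^*(L_{k'}) \subset \spn_{\mathbb{Z}_{\geq 0}}\Phi_{+,\succeq C_{\beta_{k'}}}^{\mathrm{min}}$; since this is contained in $\spn_{\mathbb{Z}_{\geq 0}}\Phi_{+,\succ C_{\beta_k}}^{\mathrm{min}}$ for $k'>k$ and the column sum is $\beta_k \in \spn_{\mathbb{Z}_{\geq 0}} C_{\beta_k}$, convexity (condition (1) of Definition \ref{def:convexpreorder}, together with pointedness of $Q_+$) forces $\gamma_{k',k}=0$ for $k'>k$ and $\gamma_{k,k}=\beta_k$, which propagates the induction. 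At the first disagreeing column $k_0$ the same suffix-sum observation places $\beta'_{k_0}=\sum_{k'\geq k_0}\gamma_{k',k_0}$ in $\spn_{\mathbb{Z}_{\geq 0}}\Phi_{+,\succeq C_{\beta_{k_0}}}^{\mathrm{min}}$, yielding clause (i) or (ii) of Definition \ref{def:bilexicographic}(1); the mirror argument gives condition (2). Your appeal to $\mathbb{Q}\alpha\cap\Phi\subset\mathbb{Z}\alpha$ is not needed here. Part (2) of your proposal is sound once part (1) is fixed.
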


 \begin{proof}
 This is a generalization of \cite[Proposition 2.15]{MR3771147}. 

 (1) Assume that $\Res_{\beta'_n, \ldots, \beta'_1} (L_m \circ \cdots \circ L_1) \neq 0$. 

 Then, by the Mackey filtration, there exists $\gamma_{k,l} \in Q_+ \ (1\leq k \leq m, 1 \leq l \leq n)$ such that 
 \begin{align*}
 &\sum_k \gamma_{k,l} = \beta'_l \ (1 \leq l \leq n), \ \sum_l \gamma_{k,l} = \beta_k \ (1 \leq k \leq m),\\ 
 &\Res_{\gamma_{k,n}, \ldots, \gamma_{k,1}} L_k \neq 0 \ (1 \leq k \leq m). 
 \end{align*}

 We may assume that $\underline{\beta} \neq \underline{\beta'}$. 
 Take the least number $k_0$ such that $\beta_{k_0} \neq \beta'_{k_0}$. 

 First, we claim that, for $1 \leq k < k_0$, 
 \[
 \gamma_{k,k} = \beta_k, \gamma_{k,l} = 0 \ (k < l \leq n), \gamma_{l,k} = 0 \ (k < l \leq m). 
 \]
 We prove it by an induction on $k$. 
 Assume that the claim is proved for smaller numbers. 
 In particular, $\gamma_{k',k} = 0$ for $1 \leq k' <k$, hence $\beta_k = \beta'_k \sum_{k' = k}^m \gamma_{k',k}$. 
 We also have $\gamma_{k',l} = 0$ for $k \leq k' \leq m, 1 \leq l < k$. 
 Since $L_{k'}$ is cuspidal, we deduce that $\gamma_{k',k} \in \spn_{\mathbb{Z}_{\geq 0}} \Phi_{+, \succeq C_{\beta_{k'}}}^{\text{min}} \ (k \leq k' \leq m)$, 
 which is a subset of $\spn_{\mathbb{Z}_{\geq 0}} \Phi_{+, \succ C_{\beta_k}}^{\text{min}}$ if $k' > k$. 
 By the convexity of $\preceq$, we must have $\gamma_{k,k} = \beta_k$ and $\gamma_{k',k} = 0 \ (k < k' \leq m)$. 
 Since $\sum_{1 \leq l \leq n} \gamma_{k,l} = \beta_k$, we obtain $\gamma_{k,l} = 0 \ (l \neq k)$. 
 Therefore, the induction proceeds.  

 By a similar argument, we have $\gamma_{k',k_0} \in \spn_{\mathbb{Z}_{\geq 0}} \Phi_{+, \succeq C_{\beta_{k'}}}^{\text{min}}$ for $k_0 \leq k' \leq m$, 
 which is a subset of $\spn_{\mathbb{Z}_{\geq 0}} \Phi_{+, \succ C_{\beta_{k_0}}}^{\text{min}}$ if $k' > k_0$.
 If $\gamma_{k',k_0} \neq 0$ for some $k' > k_0$, we have 
 \[
 \beta'_{k_0} = \sum_{k' = k_0}^m \gamma_{k', k_0} \in (\spn_{\mathbb{Z}_{\geq 0}} \Phi_{+, \succeq C_{\beta_{k_0}}}^{\text{min}}) \setminus (\spn_{\mathbb{Z}_{\geq 0}} C_{\beta_{k_0}}). 
 \]
 Otherwise, we have $\beta'_{k_0} = \gamma_{k_0,k_0} \in \W^*(L_{k_0})$, hence 
 \[
 \beta'_{k_0} \in \spn_{\mathbb{Z}_{\geq 0}} \Phi_{+, \succeq C_{\beta_{k_0}}}^{\text{min}}, \beta_{k_0} - \beta'_{k_0} \in Q_+. 
 \]
 In any case, the condition (1) of Definition \ref{def:bilexicographic} is satisfied. 

 A similar argument shows that the condition (2) is also satisfied. 
 Therefore, $\underline{\beta'} < \underline{\beta}$. 

 (2) Put $\underline{\beta}_{L'} = (\beta'_1, \ldots, \beta'_n)$. 
 Since $\Res_{\beta'_n, \ldots, \beta'_1} L' \neq 0$ by Lemma \ref{lem:unmixing}, $\Res_{\beta'_n, \ldots, \beta'_1} (L_m \circ \cdots \circ L_1) \neq 0$. 
 Hence, (1) shows $\underline{\beta}_{L'} \leq \underline{\beta}_L$.
 The strict inequality follows from Lemma \ref{lem:unmixing} (2).   
 \end{proof}
 
 We discuss the cuspidal decompositions in the case of $\preceq$ being an order, not just a preorder. 
 Theorem \ref{thm:cuspidaldecomp} (2) says, for each $\alpha \in \minroot$,
  \[
   \sum_{n \geq 0} \lvert \{ \text{self-dual cuspidal $R(n\alpha)$-module} \}/{\simeq} \rvert t^n = \prod_{n \geq 1} \frac{1}{(1-t^n)^{\dim \mathfrak{g}_{n\alpha}}}. 
 \]
 In particular, if $\alpha$ is real, there is exactly one self-dual cuspidal $R(n\alpha)$-module for each $n \geq 0$ up to isomorphism.
 Let $L(n\alpha) = L^{\preceq}(n\alpha)$ denote this unique self-dual cuspidal $R(n\alpha)$-module. \index{$L(n\alpha) = L^{\preceq}(n\alpha)$}
 Let $q_{\alpha} = q^{(\alpha,\alpha)/2}$. \index{$q_{\alpha}$}

 \begin{proposition}[{\cite[Proposition 2.21]{MR3542489}}] \label{prop:realcuspidal}
   Assume that $\preceq$ is a total order. 
   Then, we have 
   \[
   L(n\alpha) \simeq q_{\alpha}^{n(n-1)/2} L(\alpha)^{\circ n}. 
   \]
 \end{proposition}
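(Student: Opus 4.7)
The plan is to proceed by induction on $n$, with the cases $n \in \{0,1\}$ being immediate. For the inductive step, assume the statement for $n-1$ and set $M := L(\alpha)^{\circ n}$.

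First I would show that every composition factor of $M$ is a degree shift of $L(n\alpha)$. Cuspidality of $L(\alpha)$ gives $\W(L(\alpha)) \subset \spn_{\mathbb{Z}_{\geq 0}} \Phi_{+,\preceq \alpha}^{\mathrm{min}}$ and $\W^*(L(\alpha)) \subset \spn_{\mathbb{Z}_{\geq 0}} \Phi_{+,\succeq \alpha}^{\mathrm{min}}$; the additivity $\W(X \circ Y) = \W(X) + \W(Y)$ (and analogously for $\W^*$) from the Mackey filtration propagates these cone conditions to $M$, forcing every composition factor of $M$ to be cuspidal. Since $\preceq$ is a total order, the equivalence class of $\alpha$ is $\{\alpha\}$, and the real-root assumption gives $\dim\mathfrak{g}_{k\alpha}=\delta_{k,1}$; Theorem~\ref{thm:cuspidaldecomp}(2) then yields a unique self-dual cuspidal $R(n\alpha)$-module up to degree shift, namely $L(n\alpha)$. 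Thus every composition factor of $M$ is of the form $q^d L(n\alpha)$.

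Next I would pin down the degree shift by self-duality and then attack simplicity. The induction hypothesis gives $D(L(\alpha)^{\circ (n-1)}) \simeq q_\alpha^{(n-1)(n-2)} L(\alpha)^{\circ(n-1)}$; combining with $D(X\circ Y) \simeq q^{(\wt X,\wt Y)} DY\circ DX$, with $DL(\alpha)\simeq L(\alpha)$, and the associativity of $\circ$, a short computation gives $DM\simeq q_\alpha^{n(n-1)}M$, so that $q_\alpha^{n(n-1)/2}M$ is self-dual. The main step is then to prove that $M$ itself is simple. Since $L(\alpha)^{\circ(n-1)}$ is simple by induction, Proposition~\ref{prop:commuting}(1) reduces this to the vanishing
\[
\Lambda(L(\alpha), L((n-1)\alpha)) + \Lambda(L((n-1)\alpha), L(\alpha)) = 0.
\]
Assuming that $L(\alpha)$ is real and admits an affinization—so that $\Lambda(L(\alpha),L(\alpha))=0$—I would apply Theorem~\ref{thm:rmatrix}(6),(7) iteratively: noting that $L(\alpha)\nabla L((n-2)\alpha)$ (resp.\ $L((n-2)\alpha)\nabla L(\alpha)$) is cuspidal by rerunning the composition-factor argument on $L(\alpha)\circ L((n-2)\alpha)$, it must equal $L((n-1)\alpha)$ up to a degree shift, and invariance of $\Lambda$ under degree shifts cascades
\[
\Lambda(L(\alpha),L((n-1)\alpha)) = \Lambda(L(\alpha),L((n-2)\alpha)) = \cdots = \Lambda(L(\alpha),L(\alpha)) = 0,
\]
and symmetrically for the other term. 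Once $M$ is known simple, the earlier steps force $q_\alpha^{n(n-1)/2} M \simeq L(n\alpha)$, completing the induction.

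The main obstacle is the reality of $L(\alpha)$ for an arbitrary real root $\alpha$. For $\alpha=\alpha_i$ this follows from Lemma~\ref{lem:LambdaforLi}, which yields $\Lambda(L(i),L(i)) = \varepsilon_i(L(i))(\alpha_i,\alpha_i) - (\alpha_i,\alpha_i) = 0$ using $\varepsilon_i(L(i))=1$, together with the explicit affinization of Example~\ref{ex:affinization}. For $\alpha=w\alpha_i$ with $\ell(w)>0$, this input is not supplied directly by the excerpt and must be imported by other means—for instance, via an explicit realization of $L(\alpha)$ through determinantial modules or a categorical braid symmetry carrying $L(\alpha_i)$ to $L(\alpha)$—to transport reality and the existence of an affinization from the simple-root case.
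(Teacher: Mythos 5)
The paper does not actually prove this statement: it is imported verbatim from \cite[Proposition 2.21]{MR3542489}, and the only in-paper content is the remark after the proposition pinning down the degree shift via $D(L(\alpha)^{\circ n})$ --- a computation your self-duality step reproduces correctly. Your preliminary steps are also sound: additivity of $\W$ and $\W^*$ under convolution plus Theorem \ref{thm:cuspidaldecomp}(2) (with $\dim\mathfrak{g}_{k\alpha}=\delta_{k,1}$ for $\alpha$ real and $C_\alpha=\{\alpha\}$ for a total order) does force every composition factor of $L(\alpha)^{\circ n}$ to be a shift of $L(n\alpha)$, and the cascade $\Lambda(L(\alpha),L((n-1)\alpha))=\cdots=\Lambda(L(\alpha),L(\alpha))$ via Theorem \ref{thm:rmatrix}(6),(7) is a legitimate use of those results \emph{given} their hypotheses.

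The gap you flag at the end is, however, not a peripheral technicality but the entire content of the proposition, and your proposed repair does not close it. To invoke Proposition \ref{prop:commuting} and Theorem \ref{thm:rmatrix} you need $L(\alpha)$ to be real and to admit an affinization; but ``$L(\alpha)\circ L(\alpha)$ is simple'' is precisely the $n=2$ instance of the statement being proved, so your induction has no unconditional base --- the argument establishes only the implication ``$n=2$ case $\Rightarrow$ all $n$.'' Worse, importing reality from the determinantial-module realization is circular inside this paper: Proposition \ref{prop:determsupport} and Proposition \ref{prop:realrootmodule} rest on Lemma \ref{lem:Rwvcuspidal} and Corollary \ref{cor:cuspidalcriterion}/\ref{cor:cuspidalcriterion2}, whose proofs explicitly invoke Proposition \ref{prop:realcuspidal}; and for an arbitrary convex total order on an arbitrary Kac--Moody root system it is not clear that a given $\alpha\in\minroot$ is even reachable as some $\beta_k$ of a reduced word compatible with $\preceq$. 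A self-contained argument at this point in the development must prove simplicity of $L(\alpha)^{\circ n}$ without presupposing reality or an affinization --- e.g.\ by the multiplicity computation in the cited source, comparing $\Res_{\alpha,\ldots,\alpha}L(\alpha)^{\circ n}$ (via the Mackey filtration and cuspidality) with the unique self-dual cuspidal $R(n\alpha)$-module and the self-duality of $q_{\alpha}^{n(n-1)/2}L(\alpha)^{\circ n}$ --- which is why the R-matrix route, natural as it looks given the rest of the paper, is not the one available here.
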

 
 Here, the degree shift is determined by 
 \[
 D(L(\alpha)^{\circ n}) \simeq q^{(\alpha,\alpha)n(n-1)/2} D(L(\alpha))^{\circ n} \simeq q^{(\alpha,\alpha)n(n-1)/2} L(\alpha)^{\circ n}. 
 \]
 
 We have a bijection between $\mathcal{P}^{\preceq}(\beta)$ and the set
 \[
   \{ f \colon \minroot \to \mathbb{Z}_{\geq 0} \mid \text{$f(\alpha) = 0$ for all but finitely many $\alpha \in \minroot$}, \sum_{\alpha \in \minroot} f(\alpha)\alpha = \beta \}. 
 \]
 The correspondence is given as follows. 
 Let $(\beta_1, \ldots,\beta_m) \in \mathcal{P}^{\preceq} (\beta)$. 
 For each $1 \leq k \leq m$, there uniquely exist $\alpha_k \in \minroot$ and $a_k \in \mathbb{Z}_{>0}$ such that $\beta_k = a_k\alpha_k$. 
 We define $f \colon \minroot \to \mathbb{Z}_{\geq 0}$ by 
 \[
 f(\alpha) = \begin{cases}
  a_k & \text{if $\alpha = \alpha_k \ (1 \leq k \leq m)$}, \\
  0 & \text{otherwise}. 
 \end{cases}
 \]
 
 It is convenient to identify these sets. 
 In terms of $f$, the partial order $\leq$ on $\mathcal{P}^{\preceq}(\beta)$ from Definition \ref{def:bilexicographic} is the bilexicographic order, that is, the lexicographic order where we read the values from both sides. 

Now, we drop the assumption that $\preceq$ is an order. 
It is known that we can always refine the convex preorder $\preceq$ into a convex order by choosing a convex order on each $\preceq$-equivalence class \cite[Proposition 1.21]{MR3771147}.
Let $\preceq'$ be such a refinement. 
Then, $\preceq$-cuspidal decomposition and $\preceq'$-cuspidal decomposition are related as follows. 

 Let $L$ be a simple $R(\beta)$-module. 
 Let $\sim$ be the $\preceq$-equivalence relation on $\minroot$. 
 Take the $\preceq'$-cuspidal decomposition $L \simeq \hd(L_m \circ \cdots \circ L_1)$, where $L_k$ is a $\preceq'$-cuspidal $R(a_k \alpha_k)$-module ($a_k \in \mathbb{Z}_{>0}, \alpha_k \in \minroot$). 
 There is a unique sequence $1 \leq m_1 < \cdots < m_r = m$ such that 
 \[
 \alpha_1 \sim \cdots \sim \alpha_{m_1} \prec \alpha_{m_1+1} \sim \cdots \sim \alpha_{m_2} \prec \cdots \prec \alpha_{m_{r-1}+1} \sim \cdots \sim \alpha_m. 
 \]
 We set $m_0 = 0$.
 
 \begin{proposition} \label{prop:refinedcuspidal}
  In this setup, for each $1 \leq s \leq r$, the simple module $M_s = \hd(L_{m_s} \circ \cdots \circ L_{m_{s-1}+1})$ is $\preceq$-cuspidal. 
  Moreover, $L \simeq \hd (M_r \circ \cdots \circ M_1)$ is the $\preceq$-cuspidal decomposition of $L$. 
 \end{proposition}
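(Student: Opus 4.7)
The plan is to establish the two assertions in order: first that each $M_s$ is a simple $\preceq$-cuspidal module, and then that $L \simeq \hd(M_r \circ \cdots \circ M_1)$ provides the $\preceq$-cuspidal decomposition of $L$. The guiding observation is that $\preceq'$-cuspidality upgrades automatically to $\preceq$-cuspidality with respect to the ambient $\preceq$-class, because the refinement relation gives
\[
\Phi_{+, \preceq' \alpha}^{\mathrm{min}} \subseteq \Phi_{+, \preceq C_s}^{\mathrm{min}}, \qquad \Phi_{+, \succeq' \alpha}^{\mathrm{min}} \subseteq \Phi_{+, \succeq C_s}^{\mathrm{min}}
\]
for every $\alpha \in C_s$. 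Consequently each $L_k$ with $m_{s-1} < k \leq m_s$ satisfies $\W(L_k) \subset \spn_{\mathbb{Z}_{\geq 0}} \Phi_{+, \preceq C_s}^{\mathrm{min}}$ and $\W^*(L_k) \subset \spn_{\mathbb{Z}_{\geq 0}} \Phi_{+, \succeq C_s}^{\mathrm{min}}$.

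To show that $M_s$ is simple and $\preceq$-cuspidal, I will apply Lemma \ref{lem:cuspidalseq} (with the convex order $\preceq'$) to the strictly $\preceq'$-ordered tuple of $\preceq'$-cuspidal modules $L_{m_{s-1}+1}, \ldots, L_{m_s}$, which shows that $(L_{m_s}, \ldots, L_{m_{s-1}+1})$ is unmixing; then Lemma \ref{lem:unmixing}(2) guarantees that $M_s$ is simple. Using that $\W$ and $\W^*$ are monotonic under taking subquotients, together with the Mackey identities $\W(X \circ Y) = \W(X) + \W(Y)$ and $\W^*(X \circ Y) = \W^*(X) + \W^*(Y)$ applied inductively, I then obtain
\[
\W(M_s) \subseteq \sum_{k=m_{s-1}+1}^{m_s} \W(L_k) \subset \spn_{\mathbb{Z}_{\geq 0}} \Phi_{+, \preceq C_s}^{\mathrm{min}},
\]
and the analogous inclusion for $\W^*(M_s)$, so $M_s$ is $\preceq$-cuspidal.

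For the second claim, exactness and associativity of the convolution product let me assemble the surjections $L_{m_s} \circ \cdots \circ L_{m_{s-1}+1} \twoheadrightarrow M_s$ into a single surjection $L_m \circ \cdots \circ L_1 \twoheadrightarrow M_r \circ \cdots \circ M_1$. Applying Lemma \ref{lem:cuspidalseq} once more, this time with the convex preorder $\preceq$ to the $\preceq$-cuspidal modules $M_1, \ldots, M_r$ (whose classes satisfy $C_1 \prec \cdots \prec C_r$), I obtain that $(M_r, \ldots, M_1)$ is unmixing, so $\hd(M_r \circ \cdots \circ M_1)$ is simple by Lemma \ref{lem:unmixing}(2). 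Being a simple quotient of $L_m \circ \cdots \circ L_1$, and $L$ being the unique simple quotient of the latter (by Lemma \ref{lem:unmixing}(2) applied to the original $\preceq'$-cuspidal decomposition), this simple module must coincide with $L$ up to degree shift; uniqueness in Theorem \ref{thm:cuspidaldecomp}(1) then identifies the resulting expression with the $\preceq$-cuspidal decomposition of $L$. The argument is essentially formal and has no single hard step; the main care required is to invoke the unmixing and simple-head lemmas with the correct preorder at each stage, and to note that the refinement $\preceq' \to \preceq$ gives the elementary inclusion of cones displayed above.
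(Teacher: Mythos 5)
Your argument is correct and follows essentially the same route as the paper's proof: both deduce $\preceq$-cuspidality of $M_s$ from the Mackey identity $\W(M_s)\subset \W(L_{m_s})+\cdots+\W(L_{m_{s-1}+1})$ together with the inclusion of cones coming from the refinement, and both obtain the second claim from the surjection $L_m\circ\cdots\circ L_1\twoheadrightarrow M_r\circ\cdots\circ M_1$. You merely spell out two points the paper leaves implicit (simplicity of $M_s$ via Lemma \ref{lem:cuspidalseq} and Lemma \ref{lem:unmixing}(2), and the identification of the head with $L$ as the unique simple quotient), which is fine.
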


 \begin{proof}
 Let $1 \leq s \leq r$. 
 Let $C_s$ be the $\preceq$-equivalence class containing $\alpha_{m_{s-1}+1}, \ldots, \alpha_{m_s}$. 
 Note that 
 \[   
 \W(M_s) \subset \W(L_{m_s} \circ \cdots \circ L_{m_{s-1}+1}) = \W(L_{m_s}) + \cdots + \W(L_{m_{s-1}+1}). 
 \] 
 For $m_{s-1}+1 \leq k \leq m_s$, since $L_k$ is $\preceq'$-cuspidal, $\W(L_k) \subset \spn_{\mathbb{Z}_{\geq 0}} \Phi_{+, \preceq' \alpha_k}^{\text{min}} \subset \spn_{\mathbb{Z}_{\geq 0}} \Phi_{+, \preceq C_s}^{\text{min}}$. 
 Hence, $\W(M_s) \subset \spn_{\mathbb{Z}_{\geq 0}} \Phi_{+, \preceq C_s}^{\text{min}}$. 
 Similarly, we have $\W^*(M_s) \subset \spn_{\mathbb{Z}_{\geq 0}} \Phi_{+, \succeq C_s}^{\mathrm{min}}$. 
 Hence, $M_s$ is $\preceq$-cuspidal.

 We have an obvious surjective homomorphism $L_m \circ \cdots \circ L_1 \to M_r \circ \cdots \circ M_1$, so $\hd(M_r \circ \cdots \circ M_1) \simeq L$. 
 It gives the $\preceq$-cuspidal decomposition of $L$. 
\end{proof}

This proposition leads us to a generalization of cuspidal decompositions. 

\begin{definition} \label{def:weakconvex}
 A preorder $\preceq$ on $\minroot$ is said to be weakly convex if it can be refined into a convex order. 
\end{definition}

As is explained above, every convex preorder is a weakly convex preorder. 
It would be interesting to explore an intrinsic characterization of weakly convex preorders. 

\begin{example} \label{ex:weakconvex}
 Consider the $A_2$ case. 
 Then, $\minroot = \Phi_+ = \{ \alpha_1, \alpha_2, \alpha_3 = \alpha_1 + \alpha_2 \}$. 
 If we define a preorder $\preceq$ by $\alpha_1 \prec \alpha_2 \sim \alpha_3$, then it is weakly convex, but not convex. 
 It can be refined into a convex order $\alpha_1 \prec \alpha_3 \prec \alpha_2$, so it is weakly convex. 
 On the other hand, we have $ \alpha_3 - \alpha_1 = \alpha_2 \neq 0$, so $\preceq$ is not convex.  
\end{example}

\begin{definition} \label{def:weakcuspidal}
Let $\preceq$ be a weakly convex preorder on $\minroot$. 
Let $C \in \minroot/{\sim}$ and $\beta \in \spn_{\mathbb{Z}_{\geq 0}} C$. 
Set $\Phi_{+,\preceq C}^{\text{min}} = \{ \gamma \in \minroot \mid C_{\gamma} \preceq C \}, \Phi_{+,\succeq C}^{\text{min}} = \{ \gamma \in \minroot \mid C_{\gamma} \succeq C \}$. 
A simple $R(\beta)$-module $L$ is said to be $\preceq$-cuspidal if 
\begin{align*}
\W(L) &\subset \spn_{\mathbb{Z}_{\geq 0}} \Phi_{+,\preceq C}^{\text{min}}, \\
\W^*(L) &\subset \spn_{\mathbb{Z}_{\geq 0}} \Phi_{+, \succeq C}^{\text{min}}.
\end{align*}
If the weakly convex preorder is clear from the context, we just say cuspidal. 
\end{definition}

Let $\preceq$ be a weakly convex preorder on $\minroot$, $C \in \minroot/{\sim}$ and $\beta \in \spn_{\mathbb{Z}_{\geq 0}} C$. 
As in the case of a convex preorder, we define $I^{\preceq}(\beta)$ to be the two-sided ideal of $R(\beta)$ generated by 
\[
e(\gamma,*) \ (\gamma \not \in \spn_{\mathbb{Z}_{\geq 0}} \Phi_{+, \preceq C}^{\mathrm{min}}), \ e(*,\gamma') \ (\gamma' \not \in \spn_{\mathbb{Z}_{\geq 0}} \Phi_{+, \succeq C}^{\mathrm{min}}),
\]
and put $R^{\preceq}(\beta) = R(\beta) / I^{\preceq}(\beta)$. 

\begin{lemma} \label{lem:weakcuspidal}
Let $\preceq$ be a weakly convex preorder on $\minroot$ and take a refinement into a convex order $\preceq'$. 
Let $\beta \in Q_+$ and $L$ be a simple $R(\beta)$-module.
We use the same notation in the paragraph before Proposition \ref{prop:refinedcuspidal}. 
Then, $L$ is $\preceq$-cuspidal if and only if $r = 1$. 
\end{lemma}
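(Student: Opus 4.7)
The plan is to leverage the $\preceq'$-cuspidal decomposition $L \simeq \hd(L_m \circ \cdots \circ L_1)$ together with Proposition~\ref{prop:refinedcuspidal}. Although Proposition~\ref{prop:refinedcuspidal} is formally stated for a convex preorder, its proof carries over verbatim to the weakly convex setting: the construction of the $M_s$ and the verification of the weight constraints of Definition~\ref{def:weakcuspidal} only use that the refinement $\preceq'$ is convex and agrees with $\preceq$ up to choices within $\preceq$-classes. Recall that $m_0 = 0 < m_1 < \cdots < m_r = m$ partitions the factors so that $C_s$, the common $\preceq$-equivalence class of $\alpha_{m_{s-1}+1}, \ldots, \alpha_{m_s}$, satisfies $C_1 \prec C_2 \prec \cdots \prec C_r$.

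The ``if'' direction is essentially free. When $r = 1$, the module $M_1 = \hd(L_m \circ \cdots \circ L_1)$ coincides with $L$ itself, and Proposition~\ref{prop:refinedcuspidal} already asserts that $M_1$ is $\preceq$-cuspidal.

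For the ``only if'' direction, suppose $L$ is $\preceq$-cuspidal with $\beta \in \spn_{\mathbb{Z}_{\geq 0}} C$ for some $\preceq$-equivalence class $C$. Lemma~\ref{lem:cuspidalseq} (applied to $\preceq'$) tells us $(L_m, \ldots, L_1)$ is unmixing, and then Lemma~\ref{lem:unmixing}(2) gives
\[
\Res_{a_m\alpha_m,\ldots,a_1\alpha_1}L \simeq L_m \otimes \cdots \otimes L_1 \neq 0,
\]
so that $a_m\alpha_m \in \W(L)$ and $a_1\alpha_1 \in \W^*(L)$. The $\preceq$-cuspidality of $L$ forces $a_m\alpha_m \in \spn_{\mathbb{Z}_{\geq 0}} \Phi_{+,\preceq C}^{\text{min}}$. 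Picking a $\preceq'$-maximal element $\gamma$ of $C$, the refinement inclusion $\Phi_{+,\preceq C}^{\text{min}} \subset \Phi_{+,\preceq' \gamma}^{\text{min}}$ places $a_m\alpha_m$ inside $\spn_{\mathbb{Z}_{\geq 0}} \Phi_{+,\preceq' \gamma}^{\text{min}}$. Applying condition~(2) of Definition~\ref{def:convexpreorder} to the singleton $\preceq'$-class $\{\alpha_m\}$ then forces $\alpha_m \preceq' \gamma$, and hence $\alpha_m \preceq \gamma$, i.e.\ $C_r \preceq C$. A symmetric argument using $a_1\alpha_1 \in \W^*(L)$ yields $C_1 \succeq C$, and chaining with $C_1 \prec \cdots \prec C_r$ collapses everything to $r = 1$.

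The only step that demands more than bookkeeping is the convex deduction $a_m\alpha_m \in \spn_{\mathbb{Z}_{\geq 0}} \Phi_{+,\preceq' \gamma}^{\text{min}} \Rightarrow \alpha_m \preceq' \gamma$. This is precisely where weak convexity enters essentially: one cannot apply Definition~\ref{def:convexpreorder} to $\preceq$ itself (it need not be convex, cf.\ Example~\ref{ex:weakconvex}), so the argument must be routed through the convex refinement $\preceq'$.
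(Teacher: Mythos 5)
Your overall strategy is the same as the paper's: the ``if'' direction is the first paragraph of the proof of Proposition~\ref{prop:refinedcuspidal} (which, as you note, only uses $\preceq'$-cuspidality of the $L_k$ and so does not need $\preceq$ itself to be convex), and the ``only if'' direction extracts $a_m\alpha_m \in \W(L)$ and $a_1\alpha_1 \in \W^*(L)$ from the unmixing restriction and then pushes the cuspidality constraint through the convex refinement $\preceq'$ to force $C_r \preceq C \preceq C_1$.

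There is, however, one step that does not work as written: ``Picking a $\preceq'$-maximal element $\gamma$ of $C$.'' The $\preceq$-equivalence class $C$ is in general infinite (this is precisely the situation the lemma is built for --- e.g.\ in Section~\ref{sub:Cwv} the class $\minroot \cap w\Phi_+$ is infinite whenever $\mathfrak{g}$ is infinite-dimensional), and a convex order on an infinite set need not have a maximum, so such a $\gamma$ need not exist. Two repairs are available. The paper's route avoids the element $\gamma$ entirely: suppose $C_{\alpha_m} \succ C$; then every $\gamma \in \Phi_{+,\preceq C}^{\mathrm{min}}$ satisfies $\gamma \prec' \alpha_m$, so $a_m\alpha_m \in \spn_{\mathbb{Z}_{\geq 0}}\Phi_{+,\prec'\alpha_m}^{\mathrm{min}}$ while also $a_m\alpha_m \in \spn_{\mathbb{R}}\{\alpha_m\}$, contradicting Definition~\ref{def:convexpreorder} applied to the singleton $\preceq'$-class $\{\alpha_m\}$; hence $C_{\alpha_m} = C_r \preceq C$ directly. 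Alternatively, you can keep your structure by choosing $\gamma$ to be the $\preceq'$-largest among the \emph{finitely many} roots actually appearing in an expression of $a_m\alpha_m$ as a $\mathbb{Z}_{\geq 0}$-combination of elements of $\Phi_{+,\preceq C}^{\mathrm{min}}$; then $a_m\alpha_m \in \spn_{\mathbb{Z}_{\geq 0}}\Phi_{+,\preceq'\gamma}^{\mathrm{min}}$ and your convexity argument gives $\alpha_m \preceq' \gamma$, whence $C_{\alpha_m} \preceq C_\gamma \preceq C$. With either fix the rest of your argument (the symmetric statement for $\alpha_1$ via $\W^*$, and the collapse $C \preceq C_1 \prec \cdots \prec C_r \preceq C$ forcing $r=1$) goes through exactly as in the paper.
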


\begin{proof}
If $r =1$, the argument in the first paragraph of the proof of Proposition \ref{prop:refinedcuspidal} shows that $L$ is $\preceq$-cuspidal. 

Next, assume that $L$ is $\preceq$-cuspidal. 
Let $C$ be the $\preceq$-equivalence class containing $\beta$. 
Since $L$ is $\preceq$-cuspidal, we have $\alpha_m \in \spn_{\mathbb{R}_{\geq 0}} \Phi_{+, \preceq C}^{\text{min}}$. 

We claim that $C_{\alpha_m} \preceq C$. 
Suppose that $C_{\alpha_m} \succ C$. 
Then, $\alpha_m \succ' \gamma$ for any $\gamma \in \Phi_{+, \preceq C}^{\mathrm{min}}$, hence $\alpha_m \in \spn_{\mathbb{Z}_{\geq 0}} \Phi_{+, \prec' \alpha_m}^{\mathrm{min}}$.   
It contradicts the convexity of $\preceq'$.
Hence, the claim follows.

Similarly, we have $C_{\alpha_1} \succeq C$. 
Therefore, $C \preceq C_{\alpha_1} \preceq C_{\alpha_m} \preceq C$, which implies $\alpha_1 \sim \alpha_2 \sim \cdots \sim \alpha_m$.
By the definition of $r$, we deduce $r = 1$. 
\end{proof}

\begin{proposition} \label{prop:weakcuspidaldecomp}
Let $\preceq$ be a weakly convex preorder on $\minroot$. 
  \begin{enumerate}
    \item Let $r \in \mathbb{Z}_{\geq 1}, C_1 \prec C_2 \prec \cdots \prec C_r \in \minroot/{\sim}, 0 \neq \beta_k \in \spn_{\mathbb{Z}_{\geq 0}} C_k \ (1 \leq k \leq r)$, 
    and $L_k$ be a cuspidal $R(\beta_k)$-module for each $1 \leq k \leq r$. 
    Then $(L_r, \ldots, L_1)$ is unmixing. 
    \item Let $L$ be a simple $R(\beta)$-module. 
    Then, there uniquely (up to degree shifts) exist $r \in \mathbb{Z}_{\geq 0}, C_1 \prec C_2 \prec \cdots \prec C_r \in \minroot/{\sim}, 0 \neq \beta_k \in \spn_{\mathbb{Z}_{\geq 0}} C_k \ (1 \leq k \leq r)$, 
    and a cuspidal $R(\beta_k)$-module $L_k$ $(1 \leq k \leq r)$ such that $L \simeq \hd (L_r \circ \cdots \circ L_1)$. 
    Moreover, if $L$ is self-dual, we may choose each $L_k$ to be self-dual. 
    %\item Let $\alpha \in \prroot$ and $n \in \mathbb{Z}_{>0}$. 
    %Then, there exists a unique (up to isomorphism) self-dual $\preceq$-cuspidal $R(n\alpha)$ module $L^{\preceq} (n\alpha) = L(n\alpha)$. 
    %Moreover, we have $L(n\alpha) \simeq q^{(\alpha,\alpha)n(n-1)/4} L(\alpha)^{\circ n}$. 
    \item Let $C \in \minroot/{\sim}$. 
    Then we have  
    \[
    \sum_{\beta \in \spn_{\mathbb{Z}_{\geq 0}}C} \lvert \{ \text{self-dual cuspidal $R(\beta)$-module} \}/{\simeq} \rvert e^{\beta} = \prod_{\beta \in \Phi_+ \cap \spn_{\mathbb{Z}_{\geq 0}}C} \frac{1}{(1-e^{\beta})^{\dim \mathfrak{g}_{\beta}}}. 
    \]
    \item Proposition \ref{prop:refinedcuspidal} holds replacing the convex preorder by the weakly convex preorder $\preceq$.  
  \end{enumerate}
\end{proposition}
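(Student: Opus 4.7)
The plan is to reduce each of (1)--(4) to its counterpart in Theorem \ref{thm:cuspidaldecomp} and Proposition \ref{prop:refinedcuspidal} by means of a refinement $\preceq'$ of $\preceq$ into a convex order, which exists by the definition of weak convexity. Throughout, the crucial input is Lemma \ref{lem:weakcuspidal}, which says that a simple module is $\preceq$-cuspidal if and only if its $\preceq'$-cuspidal decomposition lies entirely in a single $\preceq$-equivalence class.

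For (1), by iterating Lemma \ref{lem:unmixing} and the Mackey filtration, it suffices to show, for each $1 \leq k \leq r-1$, that $\W^*(L_r) \cap (\W(L_{r-1}) + \cdots + \W(L_1)) = \{0\}$. From the $\preceq$-cuspidality hypothesis,
\[
\W^*(L_r) \subset \spn_{\mathbb{Z}_{\geq 0}} \Phi_{+,\succeq C_r}^{\mathrm{min}}, \qquad \sum_{k < r} \W(L_k) \subset \spn_{\mathbb{Z}_{\geq 0}} \Phi_{+,\prec C_r}^{\mathrm{min}}.
\]
Letting $C'$ be the smallest $\preceq'$-equivalence class contained in $C_r$ and using that $\preceq'$ refines $\preceq$ with $C_1 \prec \cdots \prec C_r$, the two right-hand sides are contained in $\spn_{\mathbb{Z}_{\geq 0}} \Phi_{+,\succeq' C'}^{\mathrm{min}}$ and $\spn_{\mathbb{Z}_{\geq 0}} \Phi_{+,\prec' C'}^{\mathrm{min}}$ respectively, which intersect trivially by the convexity of $\preceq'$. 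Induction on $r$ then yields the unmixing of the full tuple.

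For the existence in (2), I would take a $\preceq'$-cuspidal decomposition $L \simeq \hd(L'_m \circ \cdots \circ L'_1)$ of $L$, group the factors by their $\preceq$-equivalence classes exactly as in the paragraph preceding Proposition \ref{prop:refinedcuspidal}, and set $M_s$ to be the head of each group's convolution. The argument of the first paragraph of the proof of Proposition \ref{prop:refinedcuspidal} shows that each $M_s$ is $\preceq$-cuspidal, and the obvious surjection $L'_m \circ \cdots \circ L'_1 \twoheadrightarrow M_r \circ \cdots \circ M_1$ identifies $L$ with $\hd(M_r \circ \cdots \circ M_1)$; self-duality is preserved via Lemma \ref{lem:unmixing}(3) applied to (1). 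For uniqueness, given any $\preceq$-cuspidal decomposition $L \simeq \hd(L_r \circ \cdots \circ L_1)$, Lemma \ref{lem:weakcuspidal} provides $\preceq'$-cuspidal decompositions $L_k \simeq \hd(L_{k,m_k} \circ \cdots \circ L_{k,1})$ whose factors all lie in $\spn_{\mathbb{Z}_{\geq 0}} C_k$. Concatenating produces a $\preceq'$-cuspidal decomposition of $L$, which is unique up to degree shift by Theorem \ref{thm:cuspidaldecomp}(1); hence each $L_{k,j}$, and therefore each $L_k$, is uniquely determined up to degree shift.

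For (3), the bijection from (2) identifies self-dual $\preceq$-cuspidal $R(\beta)$-modules with $\beta \in \spn_{\mathbb{Z}_{\geq 0}} C$ with tuples of self-dual $\preceq'$-cuspidal modules whose underlying elements of $\minroot$ all lie in $C$. Since $\preceq'$ is a total order, its equivalence classes inside $C$ are the singletons $\{\alpha\}$ for $\alpha \in C$, so the generating function factors as
\[
\sum_{\beta \in \spn_{\mathbb{Z}_{\geq 0}} C} \lvert\{\text{self-dual $\preceq$-cuspidal $R(\beta)$-modules}\}/{\simeq}\rvert e^{\beta} = \prod_{\alpha \in C} \sum_{n \geq 0} \lvert\{\text{self-dual $\preceq'$-cuspidal $R(n\alpha)$-modules}\}/{\simeq}\rvert e^{n\alpha}.
\]
Applying Theorem \ref{thm:cuspidaldecomp}(2) to each factor and using $\Phi_+ \cap \spn_{\mathbb{Z}_{\geq 0}} C = \bigsqcup_{\alpha \in C}(\mathbb{Z}_{\geq 1}\alpha \cap \Phi_+)$, which follows from $\mathbb{Q}\alpha \cap \Phi \subset \mathbb{Z}\alpha$ for $\alpha \in \minroot$ combined with weak convexity, yields the claimed formula. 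Finally, (4) is obtained by repeating the proof of Proposition \ref{prop:refinedcuspidal} verbatim, invoking (1) and the existence part of (2) in place of Lemma \ref{lem:cuspidalseq} and Theorem \ref{thm:cuspidaldecomp}(1). The main obstacle, which is technical rather than conceptual, lies in coherently matching $\preceq'$-cuspidal decompositions supported in a fixed $\preceq$-class with $\preceq$-cuspidal modules; once Lemma \ref{lem:weakcuspidal} is in hand all four assertions follow essentially by translation.
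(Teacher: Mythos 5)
Your proposal is correct and follows the same route as the paper: refine $\preceq$ to a convex order $\preceq'$ and transport everything through Lemma \ref{lem:weakcuspidal} back to Theorem \ref{thm:cuspidaldecomp}; the paper itself only writes out part (1) (by concatenating the $\preceq'$-cuspidal decompositions of the $L_k$ and invoking Lemma \ref{lem:cuspidalseq}) and dispatches (2)--(4) in one line, so your write-up actually supplies more detail than the original. The one point to tighten is in your argument for (1): a $\preceq'$-least element of $C_r$ need not exist when $C_r$ is infinite, but this is harmless --- given a nonzero vector in the intersection of the two spans, write it as a finite nonnegative combination on each side, take the $\preceq'$-least root among the finitely many appearing on the $\succeq C_r$ side, and apply the convexity of $\preceq'$ at that root.
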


\begin{proof}
(1) Let $\preceq'$ be a convex order that refines $\preceq$. 
Lemma \ref{lem:weakcuspidal} shows that, for each $1 \leq k \leq r$, the $\preceq'$-cuspidal decomposition of $L_k$ is of the form $L_k \simeq \hd(L_{k,m_k} \circ \cdots \circ L_{k, 1})$ for some $\preceq'$-cuspidal $R(\alpha'_{k,s})$ module $L_{k,s} \ (1 \leq s \leq m_k)$, 
where $\alpha_{k,1} \prec' \cdots \prec' \alpha_{k,m_k}$. 
Since $C_1 \prec \cdots \prec C_r$, we have 
\[
\alpha_{1,1} \prec' \cdots \prec' \alpha_{1,m_1} \prec' \alpha_{2,1} \prec' \cdots \prec' \alpha_{r, m_r}. 
\]
Hence, $(L_{r,m_r}, \ldots, L_{r,1}, \ldots, L_{2,1}, L_{1,m_1}, \ldots, L_{1,1})$ is unmixing. 
Since $\W(L_k) \subset \W(L_{k,m_k}) + \cdots + \W(L_{k,1}), \W^*(L_k) \subset \W^*(L_{k,m_k}) + \cdots + \W^*(L_{k,1})$, the assertion follows. 

Using (1), the remaining assertion follow from Theorem \ref{thm:cuspidaldecomp} and Lemma \ref{lem:weakcuspidal}. 
\end{proof}

\begin{remark} \label{rem:failure}
For a weakly convex preorder $\preceq$, we may define $\mathcal{P}^{\preceq}(\beta)$ and a partial order on it as in the case of a convex preorder. 
 However, Proposition \ref{prop:properstandardcomposition} cannot be directly generalized for weakly convex preorders. 
 We give an example in type $A_2$. 
 Let $\preceq$ be the weakly convex preorder given by $\alpha_1 \preceq \alpha_2 \sim \alpha_3 = \alpha_1 + \alpha_2$. 
 Obviously, both $L(1^2)$ and $L(2)$ are $\preceq$-cuspidal. 

 We claim that $\Res_{\alpha_3, \alpha_1} L(2) \circ L(1^2) \neq 0$. 
 In fact, ignoring degree shifts, we have $L(2) \circ L(1^2) = L(2) \circ L(1) \circ L(1)$ so the claim follows from the Mackey filtration. 
 However, we have $(\alpha_1, \alpha_3) \not \leq (2\alpha_1, \alpha_2)$, since $\alpha_2-\alpha_3 \not \in Q_+$. 
 Hence, Proposition \ref{prop:properstandardcomposition} does not hold in this case. 
\end{remark}

For a $\preceq$-equivalence class $C$, we define $\Phi_{+, \preceq C}$ (resp. $\Phi_{+, \succeq C}$) to be $\Phi_+ \cap \{a\gamma \mid a\in \mathbb{Z}_{\geq 0}, \gamma \in \Phi_{+, \preceq C}^{\text{min}} (\text{resp. $\in \Phi_{+, \succeq C}^{\text{min}}$}) \}$. 

 \begin{corollary} \label{cor:cuspidalcriterion}
  Let $\preceq$ be a weakly convex preorder on $\minroot$. 
   Let $C \in \minroot/{\sim}, \beta \in \spn_{\mathbb{Z}_{\geq 0}} C$ and $L$ a simple $R(\beta)$-module.
   Then the following three conditions are equivalent.  
   \begin{enumerate}
   \item $L$ is a cuspidal module. 
   \item $\W(L) \cap \Phi_+ \subset \Phi_{+, \preceq C}$. 
   \item $\W^*(L) \cap \Phi_+ \subset \Phi_{+, \succeq C}$. 
   \end{enumerate}
 \end{corollary}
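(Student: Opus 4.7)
The strategy is to prove $(1) \Rightarrow (2)$, $(1) \Rightarrow (3)$, and their converses. The case $\beta = 0$ is trivial; assume $\beta \neq 0$ throughout, and fix a convex refinement $\preceq'$ of $\preceq$, which exists by Definition \ref{def:weakconvex}.

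For $(1) \Rightarrow (2)$: take $\gamma \in \W(L) \cap \Phi_+$ and write $\gamma = n\alpha$ uniquely with $\alpha \in \minroot$ and $n \in \mathbb{Z}_{>0}$. Cuspidality gives $\gamma \in \spn_{\mathbb{Z}_{\geq 0}} \Phi_{+, \preceq C}^{\text{min}}$, and I aim to show $C_\alpha \preceq C$. Suppose on the contrary that $C_\alpha \succ C$ in $\preceq$. Then every $x \in \Phi_{+, \preceq C}^{\text{min}}$ satisfies $x \prec \alpha$ in $\preceq$, and hence $x \prec' \alpha$ in $\preceq'$ (since refinement preserves strict inequalities across distinct $\preceq$-classes). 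Dividing the expression of $\gamma$ by $n$ places $\alpha \in \spn_{\mathbb{R}_{\geq 0}}\{x \in \minroot : x \prec' \alpha\}$, while trivially $\alpha \in \spn_{\mathbb{R}}\{\alpha\}$; condition (2) of convexity of $\preceq'$ at the singleton $\preceq'$-class $\{\alpha\}$ then forces $\alpha = 0$, a contradiction. Thus $\alpha \in \Phi_{+, \preceq C}^{\text{min}}$ and $\gamma \in \Phi_{+, \preceq C}$. The implication $(1) \Rightarrow (3)$ is symmetric, exchanging the roles of $\prec'$ and $\succ'$ and using convexity condition (1).

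For $(2) \Rightarrow (1)$: take the $\preceq$-cuspidal decomposition $L \simeq \hd(L_r \circ \cdots \circ L_1)$ from Proposition \ref{prop:weakcuspidaldecomp}, with $L_k$ a $\preceq$-cuspidal $R(\beta_k)$-module, $\beta_k \in \spn_{\mathbb{Z}_{\geq 0}} C_k$, and $C_1 \prec \cdots \prec C_r$. The $\preceq'$-cuspidal decomposition of $L$ then reads $L \simeq \hd(L'_N \circ \cdots \circ L'_1)$ with $L'_t \simeq q^{d_t} L(\gamma_t)^{\circ b_t}$ by Proposition \ref{prop:realcuspidal}, and $\gamma_1 \prec' \cdots \prec' \gamma_N$; Proposition \ref{prop:weakcuspidaldecomp} (4) forces $\gamma_N \in C_r$. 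Lemma \ref{lem:cuspidalseq} applied to $\preceq'$ shows $(L'_N, \ldots, L'_1)$ is unmixing, so Lemma \ref{lem:unmixing} (2) yields $e(b_N \gamma_N, b_{N-1}\gamma_{N-1}, \ldots, b_1 \gamma_1) L \neq 0$. Since the image of $1^{\otimes b_N} \in L(\gamma_N)^{\otimes b_N}$ inside $L(\gamma_N)^{\circ b_N} \simeq L'_N$ (up to degree shift) sits in $e(\gamma_N, \gamma_N, \ldots, \gamma_N) L'_N$, we may refine the first slot to obtain $e(\gamma_N, \beta - \gamma_N) L \neq 0$, so $\gamma_N \in \W(L) \cap \Phi_+$. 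Hypothesis $(2)$ then forces $\gamma_N \in \Phi_{+, \preceq C}$, i.e., $C_r \preceq C$.

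It remains to deduce $r = 1$ and $C_1 = C$ using the convexity of $\preceq$. If $C_r \prec C$, every $\beta_k$ lies in $\spn_{\mathbb{R}_{\geq 0}} \{x \prec C\}$, so $\beta \in \spn_{\mathbb{R}} C \cap \spn_{\mathbb{R}_{\geq 0}}\{x \prec C\} = \{0\}$ by convexity condition (2), contradicting $\beta \neq 0$; hence $C_r = C$. If moreover $r \geq 2$, then $\sum_{k<r} \beta_k = \beta - \beta_r \in \spn_{\mathbb{R}} C$ is a nonzero element of $\spn_{\mathbb{R}_{\geq 0}}\{x \prec C\}$, again forbidden by convexity. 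Thus $r = 1$ and $L = L_1$ is $\preceq$-cuspidal. The implication $(3) \Rightarrow (1)$ is handled identically, replacing $\gamma_N$ by $\gamma_1 \in C_1$ and $\W$ by $\W^*$. The main technical point is the reduction $e(b_N \gamma_N, \ldots) L \neq 0 \Rightarrow e(\gamma_N, \beta - \gamma_N) L \neq 0$, which isolates a single positive root from the top $\preceq'$-cuspidal stratum; everything else is a direct application of the convexity axioms.
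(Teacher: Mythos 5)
Your overall architecture matches the paper's: you prove $(1)\Rightarrow(2),(3)$ by applying the convexity of the refinement $\preceq'$ at a singleton class, and $(2)\Rightarrow(1)$ by extracting the $\preceq'$-maximal root of the $\preceq'$-cuspidal decomposition as an element of $\W(L)\cap\Phi_+$ and feeding it into hypothesis (2). Two steps, however, have genuine gaps.

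First, in $(2)\Rightarrow(1)$ you write every $\preceq'$-cuspidal stratum as $L'_t\simeq q^{d_t}L(\gamma_t)^{\circ b_t}$ ``by Proposition \ref{prop:realcuspidal}.'' That proposition only applies when $\gamma_t$ is a \emph{real} minimal root; for an imaginary minimal root (e.g.\ $\delta$ in affine type) the self-dual cuspidal $R(b\delta)$-modules are numerous (Theorem \ref{thm:cuspidaldecomp}(2)) and are not convolution powers of a single $L(\delta)$, so your refinement of the first slot to get $e(\gamma_N,\beta-\gamma_N)L\neq 0$ breaks down. The paper handles this by a case split: if $\alpha_m$ is imaginary then $a_m\alpha_m$ is itself a positive root by \cite[Proposition 5.5]{MR1104219}, so $a_m\alpha_m\in\W(L)\cap\Phi_+$ directly and hypothesis (2) is applied to $a_m\alpha_m$ rather than to $\alpha_m$. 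This is an easy patch, but as written your argument silently assumes all minimal roots involved are real.

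Second, your concluding deduction of $r=1$ uses the identity $\spn_{\mathbb{R}}C\cap\spn_{\mathbb{R}_{\geq 0}}\{x\prec C\}=\{0\}$, attributed to ``convexity condition (2)'' of $\preceq$. But $\preceq$ is only \emph{weakly} convex: the conditions of Definition \ref{def:convexpreorder} are available for the refinement $\preceq'$ (whose classes are singletons), not for $\preceq$ itself, and the identity can fail for a weakly convex preorder. Indeed, in the paper's own Example \ref{ex:weakconvex} (type $A_2$, $\alpha_1\prec\alpha_2\sim\alpha_3$) one has $\alpha_1=\alpha_3-\alpha_2\in\spn_{\mathbb{R}}C\cap\spn_{\mathbb{R}_{\geq 0}}\{\alpha_1\}$ for $C=\{\alpha_2,\alpha_3\}$. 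So the span-intersection you invoke is simply not a consequence of your hypotheses, and the same objection applies to your follow-up claim that $\beta-\beta_r\in\spn_{\mathbb{R}}C$ cannot be a nonzero element of $\spn_{\mathbb{R}_{\geq 0}}\{x\prec C\}$. Note that with only hypothesis (2) in hand you cannot sandwich $C_1$ and $C_r$ between $C$ from both sides as in Lemma \ref{lem:weakcuspidal}; the paper instead attributes the final step to the convexity of $\preceq'$, and any repair of your argument must likewise be phrased entirely in terms of the convex order $\preceq'$ rather than the preorder $\preceq$.
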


 \begin{proof}
 Let $\preceq'$ be a convex order that refines a weakly convex order $\preceq$. 
 (1) immediately implies (2) and (3) by the definition of cuspidal modules and the convexity of $\preceq'$.  
 We prove that (2) implies (1). 
 The same proof applies to show that (3) implies (1). 

 We use the same notation as in Proposition \ref{prop:refinedcuspidal}. 
 Assume (2). 
 We ignore degree shifts. 
 Note that $\Res_{a_m\alpha_m, \ldots, a_1 \alpha_1} L = L_m \otimes \cdots \otimes L_1$ since $(L_m, \ldots, L_1)$ is unmixing (Lemma \ref{lem:cuspidalseq}). 
 If $\alpha_m$ is a real root, then $L_m \simeq L^{\preceq'}(\alpha_m)^{\circ a_m}$ by Proposition \ref{prop:realcuspidal}. 
 Hence, $\Res_{\alpha_m^{a_m}} L_m \neq 0$ by the Mackey filtration and $\alpha_m \in \W(L) \cap \Phi_+$. 
 If $\alpha_m$ is an imaginary root, then $a_m\alpha_m$ is also a root \cite[Proposition 5.5]{MR1104219} and $a_m\alpha_m \in \W(L) \cap \Phi_+$. 
 In any case, we obtain $a_m \alpha_m \in \spn_{\mathbb{Z}_{\geq 0}} \Phi_{+, \preceq C}^{\text{min}}$. 
 It follows that $C \succeq C_r \succ C_{r-1} \succ \cdots \succ C_1$. 
 On the other hand, we have $\beta = a_m\alpha_m + \cdots + a_1 \alpha_1$. 
 Therefore, the convexity of $\preceq'$ shows that $r = 1$. 
 It proves $L \simeq M_1$, which is $\preceq$-cuspidal. 
 \end{proof}

 \begin{corollary} \label{cor:independenceoforder}
  Let $\preceq, \preceq'$ be two weakly convex preorders. 
  Let $C$ be a $\preceq$-equivalence class and $C'$ a $\preceq'$-equivalence class.
  Let $\beta \in \spn_{\mathbb{Z}_{\geq 0}}C \cap \spn_{\mathbb{Z}_{\geq 0}}C'$. 
  Assume that $\Phi_{+, \preceq C}^{\mathrm{min}} \subset \Phi_{+, \preceq' C'}^{\mathrm{min}}$ (or $\Phi_{+, \succeq C}^{\mathrm{min}} \subset \Phi_{+, \succeq' C'}^{\mathrm{min}}$). 
  Then, every $\preceq$-cuspidal $R(\beta)$-module is $\preceq'$-cuspidal.    
 \end{corollary}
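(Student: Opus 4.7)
The plan is to deduce the statement directly from the characterization of cuspidality in Corollary~\ref{cor:cuspidalcriterion}, which replaces the two-sided condition on $\W$ and $\W^*$ in Definition~\ref{def:weakcuspidal} by a one-sided condition on $\W(L)\cap\Phi_+$ (or symmetrically on $\W^*(L)\cap \Phi_+$). Since that corollary is the engine that makes cuspidality comparable across preorders, once I have it in hand the argument becomes essentially formal.

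First I would dispose of a bookkeeping observation: by the very definition of $\Phi_{+,\preceq C}$ as $\Phi_+\cap\mathbb{Z}_{>0}\Phi_{+,\preceq C}^{\mathrm{min}}$, any inclusion of ``min'' sets extends to the full positive-root versions, so $\Phi_{+,\preceq C}^{\mathrm{min}}\subset \Phi_{+,\preceq' C'}^{\mathrm{min}}$ implies $\Phi_{+,\preceq C}\subset \Phi_{+,\preceq' C'}$. The symmetric statement holds for $\succeq$.

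Next, given a $\preceq$-cuspidal $R(\beta)$-module $L$, assume we are in the first case $\Phi_{+,\preceq C}^{\mathrm{min}}\subset \Phi_{+,\preceq' C'}^{\mathrm{min}}$. Apply Corollary~\ref{cor:cuspidalcriterion} to the preorder $\preceq$ at the class $C$ (using $\beta\in\spn_{\mathbb{Z}_{\geq 0}}C$) to conclude
\[
\W(L)\cap \Phi_+\subset \Phi_{+,\preceq C}\subset \Phi_{+,\preceq' C'}.
\]
Since by hypothesis $\beta\in \spn_{\mathbb{Z}_{\geq 0}}C'$, I can then invoke Corollary~\ref{cor:cuspidalcriterion} in the reverse direction, this time for $\preceq'$ at the class $C'$, to conclude that $L$ is $\preceq'$-cuspidal. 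In the alternative hypothesis $\Phi_{+,\succeq C}^{\mathrm{min}}\subset \Phi_{+,\succeq' C'}^{\mathrm{min}}$, the identical argument with $\W^*$ in place of $\W$ yields the same conclusion.

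I do not foresee a real obstacle here: the only non-trivial ingredient is Corollary~\ref{cor:cuspidalcriterion}, and the rest is just chaining inclusions. The proof should be two or three lines once that corollary is cited. If anything subtle is lurking, it would be the check that the full-root inclusion follows from the ``min'' inclusion; this relies on the fact (used already in Corollary~\ref{cor:cuspidalcriterion}) that positive roots are nonnegative integer multiples of elements of $\minroot$, so I would make that reminder explicit at the start.
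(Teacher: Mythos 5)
Your proposal is correct and is exactly the argument the paper intends: the paper's own proof is the one-line remark that the corollary follows from Corollary~\ref{cor:cuspidalcriterion}, and your write-up simply makes explicit the chain $\W(L)\cap\Phi_+\subset\Phi_{+,\preceq C}\subset\Phi_{+,\preceq' C'}$ (or its $\W^*$ analogue) together with the two applications of that criterion. The bookkeeping step you flag is also fine, since $\Phi_{+,\preceq C}$ is by definition $\Phi_+\cap\{a\gamma\mid a\in\mathbb{Z}_{\geq 0},\ \gamma\in\Phi_{+,\preceq C}^{\mathrm{min}}\}$, so the inclusion of min-sets immediately yields the inclusion of the full sets.
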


\begin{proof}
  It follows from Corollary \ref{cor:cuspidalcriterion}. 
\end{proof}
 
 \begin{corollary}\label{cor:cuspidalcriterion2}
  Let $\preceq$ be a weakly convex preorder on $\Phi_+^{\mathrm{min}}$, $\beta \in Q_+$ and $L$ be a simple $R(\beta)$-module. 
  Let $C$ be a $\preceq$-equivalence class.  
  Then, the following conditions are equivalent. 
  \begin{enumerate}
  \item $\beta \in \spn_{\mathbb{Z}_{\geq 0}}C$ and $L$ is a cuspidal $R(\beta)$-module. 
  \item $\W(L) \subset \spn_{\mathbb{Z}_{\geq 0}} \Phi_{+, \preceq C}^{\mathrm{min}}$ and $\W^*(L) \subset \spn_{\mathbb{Z}_{\geq 0}} \Phi_{+,\succeq C}^{\mathrm{min}}$. 
  \item $\W(L) \cap \Phi_+ \subset \Phi_{+, \preceq C}$ and $\W^*(L)\cap \Phi_+ \subset \Phi_{+,\succeq C}$. 
  \end{enumerate}
 \end{corollary}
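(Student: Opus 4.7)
The plan is to prove the cyclic chain of implications (1) $\Rightarrow$ (2) $\Rightarrow$ (3) $\Rightarrow$ (1). The first implication is immediate from Definition \ref{def:weakcuspidal}. For (2) $\Rightarrow$ (3), I use the fact that every $\alpha \in \Phi_+$ admits a unique expression $\alpha = a\gamma$ with $\gamma \in \Phi_+^{\mathrm{min}}$ and $a \in \mathbb{Z}_{\geq 1}$. Given $\alpha \in \W(L) \cap \Phi_+$, condition (2) yields $\gamma = a^{-1}\alpha \in \spn_{\mathbb{R}_{\geq 0}}\Phi_{+, \preceq C}^{\mathrm{min}}$, and it suffices to rule out $C_\gamma \succ C$. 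Choosing a convex order $\preceq'$ refining $\preceq$ (which exists since $\preceq$ is weakly convex), the set $\Phi_{+, \preceq C}^{\mathrm{min}}$ lies strictly below the singleton $\preceq'$-class $\{\gamma\}$, and the second convexity condition of Definition \ref{def:convexpreorder} applied to $\{\gamma\}$ forces $\gamma = 0$, a contradiction. The argument for $\W^*$ is entirely symmetric.

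For (3) $\Rightarrow$ (1), I plan to analyze the $\preceq'$-cuspidal decomposition for a convex refinement $\preceq'$ of $\preceq$. Theorem \ref{thm:cuspidaldecomp}(1) yields $L \simeq \hd(L_m \circ \cdots \circ L_1)$ with $L_k$ a $\preceq'$-cuspidal $R(a_k\alpha_k)$-module and $\alpha_1 \prec' \cdots \prec' \alpha_m$ in $\Phi_+^{\mathrm{min}}$. By Lemma \ref{lem:cuspidalseq}, the tuple $(L_m, \ldots, L_1)$ is unmixing, so Lemma \ref{lem:unmixing} gives $\Res_{a_m\alpha_m, \ldots, a_1\alpha_1} L \simeq L_m \otimes \cdots \otimes L_1$ up to degree shift. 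If $\alpha_m$ is a real root, Proposition \ref{prop:realcuspidal} identifies $L_m$ (up to degree shift) with $L(\alpha_m)^{\circ a_m}$, and a Mackey-filtration computation on this convolution power produces $\alpha_m \in \W(L)$. If $\alpha_m$ is imaginary, $a_m\alpha_m \in \Phi_+$ lies directly in $\W(L)$ via the same restriction. Thus a positive integer multiple of $\alpha_m$ lies in $\W(L) \cap \Phi_+$, and condition (3) forces $\alpha_m \in \Phi_{+, \preceq C}^{\mathrm{min}}$, i.e.\ $C_{\alpha_m} \preceq C$. The mirrored argument on $\W^*$ gives $C_{\alpha_1} \succeq C$. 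Because $\preceq'$ refines $\preceq$, one has $\alpha_1 \preceq \cdots \preceq \alpha_m$, and the chain of equivalence classes collapses to $C_{\alpha_1} = \cdots = C_{\alpha_m} = C$. Hence $\beta = \sum_k a_k\alpha_k \in \spn_{\mathbb{Z}_{\geq 0}} C$, and Proposition \ref{prop:weakcuspidaldecomp}(4) identifies $L$ with the single $\preceq$-cuspidal factor corresponding to $C$, yielding (1).

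The only delicate step is the small Mackey-filtration argument that lifts $\alpha_m \in \W(L_m)$ to $\alpha_m \in \W(L)$ in the real-root case; this is routine once the unmixing structure is invoked, so I expect no serious obstacle. The conceptual point worth emphasizing is that although (2) and (3) do not a priori assume $\beta \in \spn_{\mathbb{Z}_{\geq 0}} C$, each of them forces this containment through the cuspidal decomposition, reducing the remaining equivalences to Corollary \ref{cor:cuspidalcriterion}.
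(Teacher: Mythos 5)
Your proposal is correct and follows essentially the same route as the paper: (1)$\Rightarrow$(2) from the definition, (2)$\Rightarrow$(3) via the convexity of a refining order $\preceq'$, and (3)$\Rightarrow$(1) by running the argument of Corollary \ref{cor:cuspidalcriterion} on both $\W(L)$ and $\W^*(L)$ through the $\preceq'$-cuspidal decomposition to collapse the chain of classes to $r=1$. The only difference is that you spell out the (2)$\Rightarrow$(3) convexity step and the Mackey lifting $\alpha_m\in\W(L)$ in more detail than the paper does, which is harmless.
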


 \begin{proof}
  Let $\preceq'$ be a convex order that refines $\preceq$. 
  We use the same notation as in Proposition \ref{prop:refinedcuspidal}. 
  (1) implies (2) by the definition of cuspidal modules. 
  (2) implies (3) by the convexity of $\preceq'$. 
  It remains to prove that (3) implies (1). 
  By the same argument as in Corollary \ref{cor:cuspidalcriterion}, $\W(L) \cap \Phi_+ \subset \Phi_{+, \preceq C}$ implies $C \succeq C_r \succ \cdots \succ C_1$. 
  Similarly, $\W^*(L) \cap \Phi_+ \subset \Phi_{+, \succeq C}$ implies $C \preceq C_1 \prec \cdots \prec C_r$. 
  Hence, we must have $r = 1$ and $L \simeq M_1$ is $\preceq$-cuspidal. 
 \end{proof}

\subsection{Determinantial modules} \label{sec:determinantial}

We review some preliminary results on determinantial modules. 

\begin{lemma}[{\cite[Proposition 4.1]{MR3771147}}] \label{lem:determinantial}
 Let $\Lambda \in P_+$ and $w,v \in W$ with $w \geq v$. 
 Then, there exists a self-dual simple $R(v\Lambda - w\Lambda)$-module $M(w\Lambda, v \Lambda)$ such that \index{$M(w\Lambda,v\Lambda)$}
 \[
 \Psi_2 ([M(w \Lambda, v \Lambda)]) = D(w \Lambda, v\Lambda). 
 \]
 Moreover, it is unique up to isomorphism. 
\end{lemma}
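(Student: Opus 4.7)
The plan is to lift $D(w\Lambda, v\Lambda)$ across the categorification isomorphism $\Psi_2$ of Theorem \ref{thm:categorification}. Uniqueness is immediate: since $\{[L] \mid L \text{ self-dual simple}\}$ is a $\mathbb{Z}[q,q^{-1}]$-basis of $K(\gmod{R})$ (Theorem \ref{thm:categorification}(5)) and $\Psi_2$ is an isomorphism onto $\quantum{-}_{\mathbb{Z}[q,q^{-1}]}^{\mathrm{up}}$, any two self-dual simple modules whose classes map to $D(w\Lambda, v\Lambda)$ must be isomorphic; the weight grading forces $M(w\Lambda, v\Lambda)$ to live in $\gmod{R(v\Lambda - w\Lambda)}$, since $D(w\Lambda, v\Lambda) \in \quantum{-}_{-(v\Lambda - w\Lambda)}$.

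For existence, I would invoke the foundational categorification result (Varagnolo--Vasserot, Rouquier) underlying Theorem \ref{thm:categorification} and Theorem \ref{thm:categoricalcrystal}: $\Psi_2$ restricts to a bijection between self-dual simple classes in $K(\gmod{R})$ and the upper global basis $\Bup(\quantum{-})$. Since $D(w\Lambda, v\Lambda) \in \Bup(\quantum{-})$ by definition, one simply takes $M(w\Lambda, v\Lambda)$ to be the self-dual simple module corresponding to it.

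As an alternative route that stays closer to the data recalled in the excerpt, one can proceed through a cyclotomic quotient. By Lemma \ref{lem:globalbasis}(3) together with the description following Lemma \ref{lem:quantumminor}, we have $D(w\Lambda, v\Lambda) = \iota_\Lambda(\Gup_\Lambda(b))$ for a specific $b \in B(\Lambda)$. The cyclotomic analogue of the above correspondence, built from Theorem \ref{thm:cyclotomiccategorification} and the crystal isomorphism $\mathcal{B}^\Lambda \simeq B(\Lambda)$, yields a self-dual simple $L \in \gmod{R^\Lambda(v\Lambda - w\Lambda)}$ with $\Psi_2^{a_\Lambda}([L]) = \Gup_\Lambda(b)$. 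The inflation functor $I_\Lambda$ preserves simplicity (it merely changes the acting algebra via the surjection $R \twoheadrightarrow R^\Lambda$) and commutes with the duality $D$, so $I_\Lambda(L)$ is a self-dual simple $R$-module; by the relation $\iota_\Lambda \circ \Psi_2^{a_\Lambda} = \Psi_2 \circ I_\Lambda$ noted in the excerpt, $\Psi_2([I_\Lambda(L)]) = D(w\Lambda, v\Lambda)$, and we set $M(w\Lambda, v\Lambda) = I_\Lambda(L)$.

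The main obstacle, conceptually, is pinning down that $\Psi_2$ matches self-dual simple classes with the upper global basis rather than some other bar-invariant basis: this is precisely the content of the deep categorification theorem, and the properties of $\Psi_2$ explicitly listed in Theorem \ref{thm:categorification}, taken in isolation, do not visibly single out the upper global basis. Once that matching is granted (as is standard in this area), the lemma is immediate.
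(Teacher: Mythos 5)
Your uniqueness argument is fine and agrees with the paper's. The existence argument, however, has a genuine gap in both of your routes: each rests on identifying the classes of self-dual simple modules with the upper global basis (of $\quantum{-}$ in the first route, of $V(\Lambda)$ in the second). That identification is the Varagnolo--Vasserot/Rouquier theorem, which is available only for symmetric Cartan matrices over a field of characteristic zero, whereas this paper works with an arbitrary symmetrizable type over an arbitrary field, where the identification is known to fail for general basis elements. The Lauda--Vazirani isomorphism $\mathcal{B}^{\Lambda} \simeq B(\Lambda)$ (Theorem \ref{thm:categoricalcrystal} and its cyclotomic analogue) that you invoke is only an isomorphism of abstract crystals: it matches labels and crystal operators but does not assert that the class of the simple module labelled by $b$ equals $\Gup_{\Lambda}(b)$ under $\Psi_2^{a_{\Lambda}}$. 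Since the content of the lemma is precisely that this identification does hold for the particular elements $D(w\Lambda, v\Lambda)$, assuming it is circular. A secondary issue: your claim that $D(w\Lambda,v\Lambda) = \iota_{\Lambda}(\Gup_{\Lambda}(b))$ for $v \neq e$ does not follow from Lemma \ref{lem:globalbasis} (3) together with the displayed recursion, since that recursion produces $D(w\Lambda,v\Lambda)$ from $D(w\Lambda,\Lambda)$ by applying ${e'_i}^{*}$-operators, and $\iota_{\Lambda}$ is only known to intertwine $e'_i$, not ${e'_i}^{*}$.

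The paper closes this gap by an induction on $v$ that never appeals to the global-basis identification. For $v = e$ it sets $M(w\Lambda,\Lambda) = I_{\Lambda}(F_{i_1}^{\Lambda})^{(a_1)}\cdots(F_{i_m}^{\Lambda})^{(a_m)}\mathbf{k}$; the equality $\Psi_2([M(w\Lambda,\Lambda)]) = D(w\Lambda,\Lambda)$ is computed directly from Theorems \ref{thm:categorification} and \ref{thm:cyclotomiccategorification}, and simplicity follows from the Morita equivalence $\gMod{R^{\Lambda}(w\Lambda)} \simeq \gMod{R^{\Lambda}(\Lambda)}$ of Theorem \ref{thm:sl2categorification}, the extremal weight space being one-dimensional so that no global-basis input is needed. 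For the inductive step it chooses $i$ with $s_i v < v$ and sets $M(w\Lambda, v\Lambda) = E_i^{*(m)} M(w\Lambda, s_i v\Lambda)$ with $m = \varepsilon_i^{*}(M(w\Lambda, s_i v\Lambda)) = \langle h_i, s_i v\Lambda\rangle$; this is again self-dual simple by crystal theory, and its class is ${e'_i}^{*(m)} D(w\Lambda, s_i v\Lambda) = D(w\Lambda, v\Lambda)$ by Theorem \ref{thm:categorification} (3) and Lemma \ref{lem:quantumminor} (3). You would need to supply an argument of this kind, rather than citing the categorification of the dual canonical basis, to make your existence proof valid.
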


For the sake of readers' convenience, we include the proof below.

\begin{proof}
  We construct $M(w\Lambda, v\Lambda)$ inductively.
  We refer to the description of unipotent quantum minors given after Lemma \ref{lem:quantumminor}. 
  We use the notation there. 

  First, assume that $v = e$. 
  Then, $D(w\Lambda, \Lambda) = \iota_{\Lambda} (f_{i_1}^{(a_1)} \cdots f_{i_m}^{(a_m)} v_{\Lambda})$. 
  Accordingly, we define 
  \[  
  M(w\Lambda, \Lambda) = I_{\Lambda} (F_{i_1}^{\Lambda})^{(a_1)} \cdots (F_{i_m}^{\Lambda})^{(a_m)} \mathbf{k}. 
  \]
  By Theorem \ref{thm:categorification} and Theorem \ref{thm:cyclotomiccategorification}, we have $\Psi_2([M(w\Lambda,\Lambda)]) = D(w\Lambda, \Lambda)$. 
  Moreover, the Morita equivalence $\gMod{R^{\Lambda}(w\Lambda)} \simeq \gMod{R^{\Lambda}(\Lambda)}$ (Theorem \ref{thm:sl2categorification}) shows that $M(w\Lambda,\Lambda)$ is simple. 
  Hence, the lemma is complete in this case. 

  Next, we consider the case $v \neq e$. 
  Take $i \in I$ such that $s_i v < v$ and assume that $M(w\Lambda, s_iv\Lambda)$ has already been constructed. 
  By Lemma \ref{lem:quantumminor}, we have $\varepsilon_i^*(M(w\Lambda, s_i v\Lambda)) = \varepsilon_i^*(D(w\Lambda, s_i v\Lambda)) = \langle h_i, s_i v \Lambda \rangle \geq 0$. 
  Let $m$ be this integer. 
  Set, $M(w\Lambda,v\Lambda) = \tilde{e}_i^{*m} M(w\Lambda, s_i v\Lambda) = e_i^{*(m)} M(w\Lambda, s_i v\Lambda)$ (see the remark after Theorem \ref{thm:categoricalcrystal}).
  It is a self-dual simple $R(v\Lambda - w\Lambda)$-module, and we have 
  \[
  \Psi_2([L]) = e_i^{*(m)}D(w\Lambda, s_iv\Lambda) = D(w\Lambda, v\Lambda) \quad \text{by Lemma \ref{lem:quantumminor}}. 
  \]
  Therefore, it fulfills the requirements. 
  The existence is proved. 

  Since $\Psi_2$ is an isomorphism, the uniqueness follows from the simplicity of $M(w\Lambda, v\Lambda)$. 
\end{proof}

We call $M(w\Lambda, v\Lambda)$ a determinantial module.
Through the categorification (Theorem \ref{thm:categorification} and Theorem \ref{thm:cyclotomiccategorification}), the properties of unipotent quantum minors presented in Lemma \ref{lem:quantumminor} implies the following lemma. 

\begin{lemma}[{\cite[Proposition 10.2.4]{MR3758148}}]\label{lem:determinantialepsilon}
  Let $\Lambda \in P_+$. 
  \begin{enumerate}
    %\item For $\lambda, \mu \in W\Lambda$, $D(\lambda, \mu) \neq 0$ if and only if there exist $w, v \in W$ such that $\lambda = w \Lambda, \mu = v\Lambda$ and $w \geq v$ in the Bruhat order. (When this holds, we write $\lambda \preceq \mu$.) In this case, we have $\wt(D(\lambda, \mu)) = \lambda - \mu$. 
    \item For $x \in W$, we have $M(x\Lambda, x\Lambda) = 1$. 
    \item Let $x,y \in W$ and $i \in I$. Assume $s_i x > x \geq y$. Then\[
    \varepsilon_i(M(x\Lambda, y\Lambda)) = 0, \varepsilon_i(M(s_ix \Lambda,y\Lambda)) = \langle h_i, x\Lambda \rangle, {e'_i}^{(\langle h_i, x\Lambda \rangle)}M(s_ix \Lambda, y\Lambda) = M(x\Lambda,y\Lambda). 
    \]
    \item Let $x,y \in W$ and $i \in I$. Assume $x \geq s_i y > y$. Then\[
    \varepsilon_i^* (M(x\Lambda, s_i y\Lambda)) = 0, \varepsilon_i^* (M(x\Lambda,y\Lambda)) = \langle h_i, y\Lambda \rangle, {e'_i}^{*(\langle h_i, y\Lambda \rangle)}M(x\Lambda, y\Lambda) = M(x\Lambda, s_iy\Lambda). 
    \]
  \end{enumerate}
  \end{lemma}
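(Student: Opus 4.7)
The plan is to deduce this lemma directly from Lemma~\ref{lem:quantumminor} by transporting everything through the categorification isomorphism $\Psi_2$ of Theorem~\ref{thm:categorification}, together with the compatibility between the crystal $\mathcal{B}$ and $B(\infty)$ (Theorem~\ref{thm:categoricalcrystal}) and the explicit relation $\tilde{e}_i^{\varepsilon_i(L)}L \simeq E_i^{(\varepsilon_i(L))}L$ recalled just after it. Note first that, since the defining identity $\Psi_2([M(w\Lambda,v\Lambda)]) = D(w\Lambda,v\Lambda)$ together with $\Psi_2 \circ E_i = e'_i \circ \Psi_2$ (resp.\ with $E_i^*, (e'_i)^*$) implies $\varepsilon_i(M) = \varepsilon_i(\Psi_2([M]))$ and $\varepsilon_i^*(M) = \varepsilon_i^*(\Psi_2([M]))$ for any self-dual simple $M$, the statements about $\varepsilon_i$ and $\varepsilon_i^*$ in parts (2) and (3) are immediate translations of the corresponding clauses in Lemma~\ref{lem:quantumminor}.

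For part (1), since $x\Lambda - x\Lambda = 0$, the module $M(x\Lambda,x\Lambda)$ lives in $\gmod{R(0)} = \gmod{\mathbf{k}}$, whose unique self-dual simple object is $\mathbf{k}$, i.e.\ the unit object~$\mathbf 1$ of the convolution product.

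The substance of the proof is therefore the divided-power identities. For (2), set $n = \langle h_i, x\Lambda\rangle$. Having already established $\varepsilon_i(M(s_ix\Lambda,y\Lambda)) = n$, I would invoke the equality $\tilde{e}_i^{\,n} M(s_ix\Lambda,y\Lambda) \simeq E_i^{(n)} M(s_ix\Lambda,y\Lambda)$. On the crystal side, Lemma~\ref{lem:globalbasis} together with standard properties of the upper global basis (see the remark preceding Lemma~\ref{lem:quantumminor} and \cite{MR1203234}) gives $(e'_i)^{(n)}\Gup(b) = \Gup(\tilde e_i^{\,n} b)$ whenever $\varepsilon_i(b) = n$. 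Applied to $b$ corresponding to $D(s_ix\Lambda,y\Lambda)$, this together with Lemma~\ref{lem:quantumminor}~(2) identifies the $\Psi_2$-image of $E_i^{(n)}M(s_ix\Lambda,y\Lambda)$ with $D(x\Lambda,y\Lambda) = \Psi_2([M(x\Lambda,y\Lambda)])$; since both sides are self-dual simple, they must be isomorphic (the degree shift being pinned down by Lemma~\ref{lem:degreeshift} and the self-dual normalisation). Part (3) follows along the same lines using $E_i^*$ and $(e'_i)^*$.

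The entire argument is essentially a dictionary exercise, so the only real technical point is the compatibility between the divided-power functor $E_i^{(n)}$ and the iterated crystal operator $\tilde e_i^{\,n}$ in the right degree, but this is precisely the content of \cite[Lemma 3.7]{MR2525917}. No step here requires any substantially new input beyond what has already been assembled; the inductive construction of $M(w\Lambda,v\Lambda)$ inside the proof of Lemma~\ref{lem:determinantial} in fact produces these identities on the nose at each step, so an alternative route would be to simply read them off from that construction.
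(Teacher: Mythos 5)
Your proposal is correct and follows essentially the same route as the paper, which simply states that the lemma follows from Lemma~\ref{lem:quantumminor} by transporting through the categorification isomorphisms; your write-up fills in exactly the details the paper leaves implicit (the identity $\varepsilon_i(M)=\varepsilon_i(\Psi_2([M]))$ via $\Psi_2\circ E_i=e_i'\circ\Psi_2$, and the compatibility of $E_i^{(n)}$ with $(e_i')^{(n)}$ on Grothendieck groups plus simplicity of $E_i^{(\varepsilon_i)}M$). The only superfluous step is the appeal to global-basis compatibility: equality of classes of simple modules under the isomorphism $\Psi_2$ already forces the isomorphism $E_i^{(n)}M(s_ix\Lambda,y\Lambda)\simeq M(x\Lambda,y\Lambda)$ with the correct grading, so no separate degree-shift normalisation is needed.
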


It is known that $M(w\Lambda, v\Lambda) \circ M(w\Lambda', v\Lambda') \simeq q^{-(v\Lambda, v\Lambda'- w\Lambda')} M(w(\Lambda + \Lambda'), v(\Lambda + \Lambda'))$ \cite[Proposition 4.2]{MR3771147}. 
In particular, determinantial modules are real. 

\begin{lemma}[{\cite[Theorem 4.5]{MR3771147}}] \label{lem:detsupport}
  Let $\Lambda \in P_+, w,v \in W$ and assume that $w \geq v$. 
  Then, we have 
  \[
  \W(M(w\Lambda,v\Lambda)) \subset Q_+ \cap wQ_-, \W^*(M(w\Lambda,v\Lambda)) \subset Q_+ \cap vQ_+. 
   \]
\end{lemma}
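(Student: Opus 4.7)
The plan is to prove both containments by induction on $\height(v\Lambda - w\Lambda)$. The containment for $\W^*$ follows from that for $\W$ by a symmetric argument (for instance, via the involution $\psi_*$ of Section~\ref{sub:quiverHecke}), so I focus on $\W(M(w\Lambda,v\Lambda)) \subset Q_+ \cap wQ_-$. The base case is $M(v\Lambda,v\Lambda) = \mathbf{k}$, whose $\W$ is $\{0\}$.

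In the case $v = e$, the construction in the proof of Lemma~\ref{lem:determinantial} realizes $M(w\Lambda, \Lambda)$ as the inflation $I_\Lambda (F_{i_1}^\Lambda)^{(a_1)}\cdots (F_{i_m}^\Lambda)^{(a_m)}\mathbf{k}$ for a reduced expression $w = s_{i_1}\cdots s_{i_m}$. Unwinding the definitions, $\gamma \in \W(M(w\Lambda,\Lambda))$ amounts to the existence of $(j_1,\ldots,j_n)\in I^\gamma$ with $E_{j_n}\cdots E_{j_1}M(w\Lambda,\Lambda)\ne 0$; by Theorem~\ref{thm:cyclotomiccategorification} and Lemma~\ref{lem:globalbasis}(1), this lifts to $e_{j_n}\cdots e_{j_1}$ acting nontrivially on an upper global basis vector of $V(\Lambda)_{w\Lambda}$, placing $w\Lambda + \gamma$ among the weights of $V(\Lambda)$. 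Applying $w^{-1}$, the translate $\Lambda + w^{-1}\gamma$ is also a weight of $V(\Lambda)$ and therefore $\le \Lambda$, so $w^{-1}\gamma \in Q_-$ and $\gamma \in wQ_-$.

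For $v \ne e$, I would pick $i \in I$ with $s_iw < w$ and $s_iw \ge v$ (such $i$ exists by standard Bruhat combinatorics when $v < w$) and apply Lemma~\ref{lem:determinantialepsilon}(2): since $\varepsilon_i(M(w\Lambda,v\Lambda)) = \langle h_i,s_iw\Lambda\rangle =: m$ and $E_i^{(m)}M(w\Lambda,v\Lambda) \simeq M(s_iw\Lambda,v\Lambda)$, the module $M(w\Lambda,v\Lambda)$ is isomorphic to $\tilde{f}_i^m M(s_iw\Lambda,v\Lambda)$ up to a grading shift, hence a quotient of $L(i^m)\circ M(s_iw\Lambda,v\Lambda)$. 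Mackey yields $\W(M(w\Lambda,v\Lambda)) \subset \{0,\alpha_i,\ldots,m\alpha_i\} + \W(M(s_iw\Lambda,v\Lambda))$, with the second summand contained in $Q_+\cap (s_iw)Q_-$ by induction. A direct computation with $w^{-1}(k\alpha_i+\delta)$, using $w^{-1}\alpha_i \in \Phi_-$, shows that $k\alpha_i + \delta$ lies in $wQ_-$ whenever $k + \langle h_i, \delta\rangle \ge 0$. The main obstacle will be showing that the remaining Mackey contributions --- those with $k + \langle h_i,\delta\rangle < 0$ --- actually vanish in the head $M(w\Lambda,v\Lambda)$ rather than only in the ambient convolution; this likely requires invoking the finer structure of the R-matrix $\mathbf{r}_{L(i^m),M(s_iw\Lambda,v\Lambda)}$ whose image is (up to grading) $M(w\Lambda,v\Lambda)$, and may be facilitated by strengthening the inductive hypothesis to track the order of simple roots in a sequence realizing $\gamma$ rather than just their multiset.
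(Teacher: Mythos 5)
The paper offers no proof of this lemma---it is quoted from \cite[Theorem 4.5]{MR3771147}---so I am judging your argument on its own merits. Your base case $v=e$ is correct and is the right idea: the weight-space bound on $V(\Lambda)$ together with $W$-invariance of the weights does give $\W(M(w\Lambda,\Lambda))\subset Q_+\cap wQ_-$. The inductive step, however, has a fatal combinatorial gap: there is in general \emph{no} $i$ with $s_iw<w$ and $s_iw\ge v$. Already in type $A_2$ with $w=s_1s_2$ and $v=s_1$ the unique left descent of $w$ is $i=1$, and $s_1w=s_2\not\ge s_1$. (The lifting property only gives $v\le s_iw$ when $s_iv>v$; when $s_iv<v$ it gives $s_iv\le s_iw$, which is not what you need, and no other descent need exist.) So the induction cannot even be set up. On top of that, you yourself flag that the remaining Mackey contributions $k\alpha_i+\delta$ with $k+\langle h_i,\delta\rangle<0$ are not shown to vanish in the head; that is not a loose end but the entire content of the inductive step, and the appeal to the R-matrix $\mathbf{r}_{L(i^m),M(s_iw\Lambda,v\Lambda)}$ is only a hope. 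Finally, the reduction of the $\W^*$ statement to the $\W$ statement ``by symmetry via $\psi_*$'' is unjustified: $\psi_*$ corresponds to the $*$-involution on $\quantum{-}$, and $D(w\Lambda,v\Lambda)^*$ is not the quantum minor $D(v'\Lambda,w'\Lambda)$ attached to the same pair, so the two containments require separate arguments.

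The $\W$ half can be completed from your base case without any induction on $w$: by the construction in Lemma~\ref{lem:determinantial} (and as used in the paper's proof of Proposition~\ref{prop:dethead}), $M(w\Lambda,v\Lambda)\simeq E_{j_1}^{*(b_1)}\cdots E_{j_n}^{*(b_n)}M(w\Lambda,\Lambda)$ sits inside $\Res_{\beta,\gamma}M(w\Lambda,\Lambda)$ as an $R(\beta)$-module, where $\beta=v\Lambda-w\Lambda$ and $\gamma=\Lambda-v\Lambda$; since $e(\gamma',\beta-\gamma')N\neq 0$ for a submodule $N$ of $\Res_{\beta,\gamma}M$ forces $e(\gamma',\beta+\gamma-\gamma')M\neq 0$, one gets $\W(M(w\Lambda,v\Lambda))\subset\W(M(w\Lambda,\Lambda))\subset Q_+\cap wQ_-$ directly from your $v=e$ computation. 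The $\W^*$ half is genuinely the other statement and needs its own proof (e.g.\ via the matrix-coefficient description of $D(w\Lambda,v\Lambda)$, where iterated nonvanishing of $e_i'^*$-operators forces $v\Lambda-\gamma$ to be a weight of $V(\Lambda)$, hence $\gamma\in vQ_+$); as written, your proposal does not establish it.
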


Determinantial modules admit affinizations. 
We demonstrate how to construct them. 
Let $\Lambda \in P_+$ and $w,v \in W$ with $w \geq v$. 
We consider the cyclotomic quiver Hecke algebra $R_{\mathbf{k}[z]}^{a_{\Lambda}} = R_z^{a_{\Lambda}}$ over $A = \mathbf{k}[z]$, where $z$ is an indeterminate of positive degree. 
We put an additional condition on $a_{\Lambda}$ that $a_{\Lambda, i} (0) \neq 0$ for all $i \in I$. 
(Hence, $\deg z$ must divide $2(\alpha_i, \Lambda)$ for all $i \in I$.)
Using the notation after Lemma \ref{lem:quantumminor}, we define
\[
\widehat{M}(w \Lambda, v\Lambda) = E_{j_1}^{*(b_1)} \cdots E_{j_n}^{*(b_n)} (F_{i_1}^{a_{\Lambda}})^{(a_1)}\cdots (F_{i_m}^{a_{\Lambda}})^{(a_m)} \mathbf{k}[z]
\] \index{$\widehat{M}(w\Lambda,v\Lambda)$}
Recall the Morita equivalence $\gMod{R_z^{a_{\Lambda}}(w\Lambda)} \simeq \gMod{\mathbf{k}[z]}$ from Theorem \ref{thm:sl2categorification}. 
By the construction, $\widehat{M}(w \Lambda, \Lambda)$ is the projective cover of the unique self-dual simple $R_z^{a_{\Lambda}}(w\Lambda)$ module $M(w\Lambda,\Lambda)$. 

\begin{proposition} [{\cite[Theorem 3.26]{MR4359265}}] \label{prop:detaffinization}
  $(\widehat{M}(w\Lambda, v\Lambda), z)$ is an affinization of $M(w\Lambda, v\Lambda)$. 
\end{proposition}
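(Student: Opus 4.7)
The plan is to verify the three conditions of Definition~\ref{def:affinization} for the pair $(\widehat{M}(w\Lambda, v\Lambda), z)$: namely, (1) $\mathbf{k}[z]$-freeness of finite rank, (2) identification $\widehat{M}/z\widehat{M} \simeq M(w\Lambda, v\Lambda)$, and (3) nonvanishing of $\mathfrak{p}_{i,\beta}\widehat{M}$ for every $i \in I$, where $\beta = v\Lambda - w\Lambda$. Conditions (1) and (2) will be established first by induction on $\ell(v)$, proceeding in parallel with the inductive construction of $M(w\Lambda, v\Lambda)$ in Lemma~\ref{lem:determinantial}.

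In the base case $v = e$, Theorem~\ref{thm:sl2categorification} applied iteratively to the divided-power functors $(F_{i_k}^{a_{\Lambda}})^{(a_k)}$ gives a chain of graded equivalences terminating in $\gMod{\mathbf{k}[z]}$, under which $\widehat{M}(w\Lambda, \Lambda)$ corresponds to $\mathbf{k}[z]$ itself; in particular $\widehat{M}(w\Lambda,\Lambda)$ is $\mathbf{k}[z]$-free of finite rank, and reducing modulo $z$ and comparing with the proof of Lemma~\ref{lem:determinantial} yields $\widehat{M}(w\Lambda,\Lambda)/z\widehat{M}(w\Lambda,\Lambda) \simeq M(w\Lambda, \Lambda)$. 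For the inductive step, I would check that the functors $E_j^{*(b)}$ preserve $\mathbf{k}[z]$-freeness of finite rank: if $M$ is $\mathbf{k}[z]$-free, then $e(*,b\alpha_j)M$ is a $\mathbf{k}[z]$-direct summand of $M$, hence free over the PID $\mathbf{k}[z]$, and since $P(j^b)$ is a finitely generated projective $R(b\alpha_j)$-module, $\Hom_{R(b\alpha_j)}(P(j^b), -)$ of a $\mathbf{k}[z]$-free module is again a finitely generated torsion-free, hence free, $\mathbf{k}[z]$-module. The same functor commutes with the base change $-\otimes_{\mathbf{k}[z]}\mathbf{k}$ because $-\otimes_{\mathbf{k}[z]}\mathbf{k}$ is exact on $\mathbf{k}[z]$-free modules, so the inductive identification of $\widehat{M}(w\Lambda, v\Lambda)/z\widehat{M}(w\Lambda, v\Lambda)$ with $M(w\Lambda, v\Lambda)$ follows by matching the construction in Lemma~\ref{lem:determinantial} step by step.

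Once (1) and (2) are in hand, $(\widehat{M}(w\Lambda, v\Lambda), z)$ is an affine object, so Lemma~\ref{lem:endaffineobj} gives $\End_{R(\beta)[z]}(\widehat{M}(w\Lambda, v\Lambda)) \simeq \mathbf{k}[z]$, and each central element $\mathfrak{p}_{i,\beta}$ acts by multiplication by a homogeneous polynomial $p_i(z) \in \mathbf{k}[z]$. Condition (3) is the statement $p_i(z) \neq 0$ for all $i \in I$, and I expect this to be the main obstacle: typically $\mathfrak{p}_{i,\beta}$ acts as zero on the reduction $M(w\Lambda, v\Lambda)$, because $R^{\Lambda}$ imposes cyclotomic relations that can annihilate products of $x$-variables, so one cannot deduce (3) directly from (2). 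The plan to circumvent this is to localize, namely to invert $z$ and work over the flat base change $\mathbf{k}(z) \otimes_{\mathbf{k}[z]} R_z^{a_{\Lambda}}$: here the hypothesis $a_{\Lambda,i}(0) \neq 0$ ensures that the cyclotomic polynomial $a_{\Lambda,i}(x_n)$ has nonzero constant term after clearing $z$, so that the $x$-variables attached to $i$-colored strands act by operators whose product is nonzero on the generic fibre of $\widehat{M}(w\Lambda, v\Lambda)$ (tracking invertibility through the inductive application of $E_j^{*(b)}$). Thus $p_i(z)$ has nonzero image in $\mathbf{k}(z)$, which forces $p_i(z) \neq 0$ in $\mathbf{k}[z]$ and establishes (3), completing the proof.
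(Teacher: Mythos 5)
First, note that the paper itself offers no proof of this proposition: it is imported wholesale from \cite[Theorem 3.26]{MR4359265}, so you are supplying an argument where the paper supplies only a citation. Your treatment of conditions (1) and (2) of Definition~\ref{def:affinization} is essentially sound: the induction parallel to Lemma~\ref{lem:determinantial}, the compatibility of $E_j^{*(b)} = \Hom_{R(b\alpha_j)}(P(j^b), e(*,b\alpha_j)(\cdot))$ with $-\otimes_{\mathbf{k}[z]}\mathbf{k}$ (because $P(j^b)$ is finitely generated projective), and the preservation of graded $\mathbf{k}[z]$-freeness under passing to idempotent truncations and $\Hom$ from a projective all work. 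One repair is needed in the base case: a Morita equivalence does not transport the underlying $\mathbf{k}[z]$-module, so ``$\widehat{M}(w\Lambda,\Lambda)$ corresponds to $\mathbf{k}[z]$, hence is $\mathbf{k}[z]$-free'' is a non sequitur as written. What you actually need is that $R_z^{a_{\Lambda}}(w\Lambda)$ is finitely generated projective over $\mathbf{k}[z]$ --- the Kang--Kashiwara result quoted in Section~\ref{sec:cyclotomic} --- after which $\widehat{M}(w\Lambda,\Lambda)$, being a projective $R_z^{a_{\Lambda}}(w\Lambda)$-module, is graded free over $\mathbf{k}[z]$.

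The genuine gap is in condition (3). The element $\mathfrak{p}_{i,\beta}$ is the product of \emph{all} variables $x_k$ sitting on $i$-coloured strands, whereas the cyclotomic relation $a_{\Lambda,j}(x_n)e(*,j)=0$ constrains only the \emph{last} variable $x_n$. Your sentence asserting that ``the $x$-variables attached to $i$-coloured strands act by operators whose product is nonzero on the generic fibre'' is precisely the statement to be proved, and it does not follow from the relation on $x_n$ alone. One must propagate invertibility leftwards strand by strand on $\mathbf{k}[z,z^{-1}]\otimes_{\mathbf{k}[z]}R_z^{a_{\Lambda}}(\lambda)$, using the defining relations --- in particular $\tau_k^2 e(\nu)=Q_{\nu_k,\nu_{k+1}}(x_k,x_{k+1})e(\nu)$ together with the hypothesis $Q_{i,j}(u,0)\neq 0 \neq Q_{i,j}(0,v)$, and $(\tau_k x_{k+1}-x_k\tau_k)e(\nu)=\delta_{\nu_k,\nu_{k+1}}e(\nu)$ --- to show that each $x_k e(\nu)$ satisfies a monic polynomial over $\mathbf{k}[z]$ whose constant term is a nonzero scalar multiple of a power of $z$. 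This inductive propagation is the real content of \cite[Theorem 3.26]{MR4359265} (the $\mathbf{k}[z]$-analogue of Kang--Kashiwara's nilpotency lemma for $x_k$ on $R^{\Lambda}(\beta)$), and it is entirely absent from your argument. Once it is supplied, your conclusion does close: $\widehat{M}(w\Lambda,v\Lambda)$ is $z$-torsion-free by (1), hence embeds into its localization, on which $\mathfrak{p}_{i,\beta}$ acts injectively, so $\mathfrak{p}_{i,\beta}\widehat{M}(w\Lambda,v\Lambda)\neq 0$.
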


In this paper, we only need this proposition for the case where $\Lambda = \Lambda_i$ for some $i\in I$. 
Let $z = z_i$ be an indeterminate of degree $(\alpha_i, \alpha_i)$
and put $a_{\Lambda_i, j} (t_j) = (t_j - z_i)^{\delta_{i,j}}$.  
Throughout this paper, let $R_{z}^{\Lambda_i}(\lambda)$ denote the cyclotomic quiver Hecke algebra over $\mathbf{k}[z]$ associated with this datum.  

\begin{proposition} \label{prop:dethead}
  Let $\Lambda \in P_+, w,v,x\in W$ and assume that $w \geq v \geq x$. 
  Then, we have 
  \[
  M(w\Lambda,v\Lambda) \nabla M(v\Lambda,x\Lambda) \simeq M(w\Lambda,x\Lambda). 
  \]
\end{proposition}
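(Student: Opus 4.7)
The plan is to argue in two stages: first establish that the pair $(M(w\Lambda,v\Lambda), M(v\Lambda,x\Lambda))$ is unmixing, so that the head of their convolution is automatically a self-dual simple module; then identify this head with $M(w\Lambda,x\Lambda)$ by an inductive crystal-theoretic argument.

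For the unmixing step, Lemma \ref{lem:detsupport} gives $\W^*(M(w\Lambda,v\Lambda)) \subset Q_+ \cap vQ_+$ and $\W(M(v\Lambda,x\Lambda)) \subset Q_+ \cap vQ_-$; their intersection lies in $Q_+ \cap v(Q_+ \cap Q_-) = \{0\}$, so the pair is unmixing. By Lemma \ref{lem:unmixing} (2) and (3), the head $L := \hd(M(w\Lambda,v\Lambda) \circ M(v\Lambda,x\Lambda))$ is self-dual and simple. By Lemma \ref{lem:determinantial}, it then suffices to show $\Psi_2([L]) = D(w\Lambda,x\Lambda)$, equivalently $L \simeq M(w\Lambda,x\Lambda)$.

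For the identification I proceed by induction on $\ell(w) - \ell(x)$; the degenerate cases $v = w$ and $v = x$, where one factor is $\mathbf{k}$ by Lemma \ref{lem:determinantialepsilon} (1), handle the base. For the inductive step I choose a simple reflection $s_i$ reducing the configuration. In the favorable case $s_iv < v$ and $s_ix > x$, Lemma \ref{lem:determinantialepsilon} (3) (applied with $y = s_iv$) yields $\varepsilon_i^*(M(w\Lambda,v\Lambda)) = 0$, and setting $n := \langle h_i, x\Lambda\rangle$ it also gives $E_i^{*(n)}M(v\Lambda,x\Lambda) \simeq M(v\Lambda,s_ix\Lambda)$ and $E_i^{*(n)}M(w\Lambda,x\Lambda) \simeq M(w\Lambda,s_ix\Lambda)$. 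The vanishing of $\varepsilon_i^*$ on the left factor collapses the Mackey filtration to
\[
E_i^{*(n)}\bigl(M(w\Lambda,v\Lambda) \circ M(v\Lambda,x\Lambda)\bigr) \simeq M(w\Lambda,v\Lambda) \circ M(v\Lambda,s_ix\Lambda),
\]
whose head is $M(w\Lambda,s_ix\Lambda)$ by the inductive hypothesis applied to $(w,v,s_ix)$. A two-sided estimate using the same Mackey collapse gives $\varepsilon_i^*(L) = n$, so $E_i^{*(n)}L \simeq \tilde e_i^{*n}L$ is simple by the crystal identification $\mathcal{B} \simeq B(\infty)$ of Theorem \ref{thm:categoricalcrystal}; being a nonzero quotient of $M(w\Lambda,v\Lambda) \circ M(v\Lambda,s_ix\Lambda)$, it must equal $M(w\Lambda,s_ix\Lambda)$. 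Applying $\tilde f_i^{*n}$ together with Lemma \ref{lem:determinantialepsilon} (3) recovers $L \simeq M(w\Lambda,x\Lambda)$. A dual reduction using $s_iw < w$ and $s_iv > v$, Lemma \ref{lem:determinantialepsilon} (2), and $E_i^{(\cdot)}$ in place of $E_i^{*(\cdot)}$ provides the symmetric option, reducing $(w,v,x)$ to $(s_iw,v,x)$.

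The main obstacle is that neither reduction is always available: one can have $D_L(w) \subseteq D_L(v) \subseteq D_L(x)$ with all three strict in Bruhat order, for instance $w = s_1s_2s_3 > v = s_1s_2 > x = s_1$ in type $A_3$. In such blocked configurations, the induction must be routed through an auxiliary element of $W$ chosen to break the left-descent alignment, or reorganized via the associativity of convolution on an enlarged chain such as $(w,v,x,e)$ combined with the inductive hypothesis on its subchains. Verifying that some such rearrangement always terminates the induction is the subtle combinatorial point; once the correct reduction is in place, the remaining Mackey bookkeeping is routine.
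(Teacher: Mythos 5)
Your first stage is exactly the paper's: Lemma \ref{lem:detsupport} gives $\W^*(M(w\Lambda,v\Lambda)) \cap \W(M(v\Lambda,x\Lambda)) \subset Q_+ \cap v(Q_+\cap Q_-) = \{0\}$, so the pair is unmixing and the head is self-dual simple by Lemma \ref{lem:unmixing}. The gap is in the identification step, and it is the one you yourself flag: the descent-based induction does not close. Your favorable reductions require an $i \in D_L(v)\setminus D_L(x)$ or an $i \in D_L(w)\setminus D_L(v)$, and your own example $w=s_1s_2s_3 > v=s_1s_2 > x=s_1$ has $D_L(w)=D_L(v)=D_L(x)=\{1\}$, so both are unavailable while $\ell(w)-\ell(x)=2$ and no base case is in reach. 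The two remedies you sketch do not repair this. ``Routing through an auxiliary element'' is not an argument, since the statement concerns the fixed triple $(w,v,x)$. The ``enlarged chain $(w,v,x,e)$'' idea is worse than vague: it is precisely the step that Remark \ref{rem:deterrata} identifies as the error in the published proof of this proposition, with the explicit counterexample that $(M(w\Lambda,v\Lambda),M(v\Lambda,x\Lambda),M(x\Lambda,y\Lambda))$ need not be unmixing (type $A_2$, $\Lambda=\Lambda_1+\Lambda_2$). So associativity bookkeeping along a longer chain of determinantial modules cannot be taken for granted.

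What is missing is the actual content of the paper's argument, which is concentrated in the case $x=e$ with $w>v>e$ (a case your induction treats as non-basic but cannot reach). There one uses that $M(w\Lambda,v\Lambda) = E_{j_1}^{*(b_1)}\cdots E_{j_n}^{*(b_n)}M(w\Lambda,\Lambda)$ sits inside $\Res_{\beta,\gamma}M(w\Lambda,\Lambda)$, so $X=\Hom_{R(\beta)}(M(w\Lambda,v\Lambda),\Res_{\beta,\gamma}M(w\Lambda,\Lambda))$ is a nonzero $R^{\Lambda}(v\Lambda)$-module; by the $\mathfrak{sl}_2$-categorification (Theorem \ref{thm:sl2categorification}) this algebra has a unique simple module $M(v\Lambda,\Lambda)$, and adjunction produces the required nonzero map $M(w\Lambda,v\Lambda)\circ M(v\Lambda,\Lambda)\to M(w\Lambda,\Lambda)$, with the degree shift killed by self-duality. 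Only after this does one invoke the (valid) reduction of general $x$ to $x=e$. Without this input, or some substitute for it, your induction has no engine in the blocked configurations, so the proof as written is incomplete.
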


\begin{remark} \label{rem:deterrata}
This Proposition is \cite[Proposition 4.6]{MR3771147}. 
However, their proof is incorrect: 
they mistakenly claim that, if $w \geq v \geq x \geq y \in W$, then $(M(w\Lambda,v\Lambda), M(v\Lambda,x\Lambda), M(x\Lambda,y\Lambda))$ is unmixing. 
We give a counterexample for it. 
Let $C$ be the Cartan matrix of type $A_2$, $I = \{1,2\}$. 
Note that $W = \Sym_3$. 
Let $\Lambda = \Lambda_1 + \Lambda_2$ and $w = s_1s_2s_1, v=s_2s_1, x = s_1, y = e$.  
Then, $M(w\Lambda,v\Lambda) \simeq L(1), M(v\Lambda,x\Lambda) \simeq L(2^2), M(x\Lambda,y\Lambda) \simeq L(1)$. 
We have 
\[
e(1,2,2,1)(L(1) \circ L(2^2) \circ L(1)) = (L(1) \otimes L(2^2) \otimes L(1)) \oplus \tau_{(1,4)} (L(1) \otimes L(2^2) \otimes L(1)). 
\]
Hence, $(L(1), L(2^2), L(1))$ is not unmixing. 

See also \cite[Theorem 10.3.1]{MR3758148} and \cite[Theorem 11.2]{mcnamara2021clustermonomialsdualcanonical} for alternative proofs of this proposition in symmetric cases. 
McNamara's argument appears to extend to nonsymmetric cases as well. 
\end{remark}

\begin{proof}[Proof of Proposition \ref{prop:dethead}]
By Lemma \ref{lem:detsupport}, we have $\W^*(M(w\Lambda,v\Lambda)) \subset Q_+ \cap vQ_+, \W(M(v\Lambda,x\Lambda)) \subset Q_+ \cap v Q_-$, so $(M(w\Lambda,v\Lambda), M(v\Lambda,x\Lambda))$ is unmixing. 
By Lemma \ref{lem:unmixing} (3), the head $M(w\Lambda,v\Lambda) \nabla M(v\Lambda,x\Lambda)$ is self-dual simple. 

First, assume that $x = e$. 
Set $\beta = v\lambda-w\lambda, \gamma = \Lambda-v\Lambda$. 
By the description of $M(w\Lambda,v\Lambda)$ given in Lemma \ref{lem:determinantial}, we have 
\[
M(w\Lambda,v\Lambda) \simeq E_{j_1}^{*(b_1)} \cdots E_{j_n}^{*(b_n)}M(w\Lambda,\Lambda) \subset \Res_{\beta,\gamma}M(w\Lambda,\Lambda)
\]
as an $R(\beta)$-module. 
Hence, $X = \Hom_{R(\beta)} (M(w\Lambda,v\Lambda), \Res_{\beta,\gamma}M(w\Lambda,\Lambda))$ is a nonzero $R(\gamma)$-module.
Moreover, since $M(w\Lambda,\Lambda)$ is an $R^{\Lambda}(w\Lambda)$-module, $X$ is an $R^{\Lambda}(v\Lambda)$-module. 
By Theorem \ref{thm:sl2categorification}, $R^{\Lambda}(v\Lambda)$ has a unique simple module $M(v\Lambda,\Lambda)$ up to degree shift. 
Hence, there exists an injective $R^{\Lambda}(v\Lambda)$-homomorphism $q^d M(v\Lambda,\Lambda) \to X$ for some $d \in \mathbb{Z}$. 
By the $\otimes$-$\Hom$ adjunction and the induction-restriction adjunction, it induces a nonzero homomorphism $q^d M(w\Lambda, v\Lambda) \circ M(v\Lambda,\Lambda) \to M(w\Lambda,\Lambda)$, hence $M(w\Lambda,v\Lambda)\nabla M(v\Lambda,\Lambda) \simeq q^{-d} M(w\Lambda, \Lambda)$. 
Since both $M(w\Lambda,v\Lambda)\nabla M(v\Lambda,\Lambda)$ and $M(w\lambda,\Lambda)$ are self-dual, $d = 0$. 
Therefore, the proof is complete in this case.

In general, the reduction argument in \cite[Proposition 4.6]{MR3771147} to the case $x=e$ is valid. 
\end{proof}

\subsection{Full subcategories corresponding to quantum unipotent subgroups} \label{sub:Cwv}

\begin{definition}
Let $w, v \in W$ and $\beta \in Q_+$.
We define $I_{w,*}(\beta)$ as a two-sided ideal of $R(\beta)$ generated by $e(\nu) \ (\nu \in I^{\beta})$ such that \index{$I_{w,*}(\beta)$}
\[
\text{there exists $1 \leq m \leq \height \beta$ such that $\sum_{1 \leq k \leq m} \alpha_{\nu_k} \not \in Q_+ \cap wQ_-$}.  
\]
We define $I_{*,v}(\beta)$ as a two-sided ideal of $R(\beta)$ generated by $e(\nu) \ (\nu \in I^{\beta})$ such that \index{$I_{*,v}(\beta)$}
\[
\text{there exists $1 \leq m \leq \height \beta$ such that $\sum_{m \leq k \leq \height \beta} \alpha_{\nu_k} \not \in Q_+ \cap v Q_+$}. 
\]
We define $I_{w,v}(\beta) = I_{w,*}(\beta) + I_{*,v}(\beta)$.  \index{$I_{w,v}(\beta)$}
We define $R_{w,*} (\beta) = R(\beta)/I_{w,*}(\beta), R_{*,v}(\beta) = R(\beta)/I_{*,v}(\beta)$ and $R_{w,v}(\beta) = R(\beta)/I_{w,v}(\beta)$. \index{$R_{w,*}(\beta)$} \index{$R_{*,v}(\beta)$} \index{$R_{w,v}(\beta)$}
\end{definition}

These quotient rings are characterized as follows:
\begin{align*}
 \gMod{R_{w,*}(\beta)} &\simeq \{ M \in \gMod{R(\beta)} \mid W(M) \subset Q_+ \cap w Q_-\}, \\
 \gMod{R_{*,v}(\beta)} &\simeq \{ M \in \gMod{R(\beta)} \mid W^*(M) \subset Q_+ \cap v Q_+\}, \\
 \gMod{R_{w,v}(\beta)} &\simeq \{ M \in \gMod{R(\beta)} \mid W(M) \subset Q_+ \cap w Q_-, W^*(M) \subset Q_+ \cap v Q_+\}. 
\end{align*}

Put $\gMod{R_{w,v}} = \bigoplus_{\beta \in Q_+} \gMod{R_{w,v}(\beta)}$ and $\gmod{R_{w,v}} = \bigoplus_{\beta \in Q_+ } \gmod{R_{w,v}(\beta)}$.
$\gMod{R_{w,*}}$, $\gMod{R_{*,v}}$, $\gmod{R_{w,v}}$, $\gmod{R_{w,*}}$ and $\gmod{R_{*,v}}$ are defined similarly. 
They are Serre full subcategories of $\gMod{R}$. 
Moreover, since $\W(M\circ N) = \W(M) + \W(N), \W^*(M\circ N) = \W^*(M) + \W^*(N)$, they are closed under convolution products.
The category $\gmod{R_{w,v}}$ is precisely the category $\mathscr{C}_{w,v}$ introduced in \cite{MR3771147}. 
By Lemma \ref{lem:detsupport}, $M(w\Lambda,v\Lambda) \in \gmod{R_{w,v}}$. 

We demonstrate the relationship of these subcategories to cuspidal decompositions.
Let $w \in W$ and take a reduced expression $w = s_{i_1} \cdots s_{i_l}$. 
Many constructions below depend on the choice of $w$ and its reduced expression, but we usually exclude them from the notation for simplicity. 
Put $w_{\leq k} = s_{i_1} \cdots s_{i_k} \ (0 \leq k \leq l)$ and $\beta_k = w_{\leq k-1} \alpha_{i_k} \ (1 \leq k \leq l)$. \index{$w_{\leq k}$} \index{$\beta_k$}
We define a preorder $\preceq$ on $\minroot$ by 
\begin{enumerate}
\item $\beta_1 \prec \beta_2 \prec \cdots \prec \beta_l \prec \beta$ for any $\beta \in \minroot \cap w\Phi_+$, and 
\item $\minroot \cap w \Phi_+$ is a single $\preceq$-equivalence class. 
\end{enumerate}
By \cite[Proposition 1.24]{MR3771147}, it is a weakly convex preorder. 

\begin{lemma} \label{lem:Rwvcuspidal}
Let $\beta \in Q_+$ and $ 1 \leq k \leq l$. 
\begin{align*}
  R_{w_{\leq k}, w_{\leq k-1}}(\beta) &= \begin{cases}
     R^{\preceq}(\beta) & \text{if $\beta \in \mathbb{Z}_{\geq 0}\beta_k$}, \\
     0 & \text{otherwise},  
  \end{cases} \\
 R_{*,w} (\beta) &= \begin{cases}
  R^{\preceq}(\beta) & \text{if $\beta \in \spn_{\mathbb{Z}_{\geq 0}} (\minroot \cap w \Phi_+)$}, \\
  0 & \text{otherwise}. 
\end{cases}
\end{align*}
\end{lemma}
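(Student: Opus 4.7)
The plan is to match the defining ideals on both sides via explicit descriptions of certain cones inside $Q_+$, so the lemma reduces to two combinatorial identifications for the root system.

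The first ingredient is the following pair of cone identifications. From the classical formula $\Phi_+ \cap w_{\le k} \Phi_- = \{\beta_1, \ldots, \beta_k\}$, an induction on $k$ using the action of $s_{i_k}$ on $Q_+$ gives
\[
Q_+ \cap w_{\le k} Q_- = \sum_{i=1}^{k} \mathbb{Z}_{\geq 0}\beta_i.
\]
Writing $w = w_{\le k-1} (s_{i_k}\cdots s_{i_l})$ as a concatenation of reduced expressions and using the standard additivity formula for inversion sets yields
\[
\Phi_+ \cap w_{\le k-1} \Phi_+ = \{\beta_k, \ldots, \beta_l\} \sqcup (\Phi_+ \cap w\Phi_+),
\]
whence $Q_+ \cap w_{\le k-1}Q_+ = \spn_{\mathbb{Z}_{\ge 0}}(\{\beta_k, \ldots, \beta_l\} \cup (\Phi_+ \cap w\Phi_+))$. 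Moreover, every positive root is a positive integer multiple of a minimal positive root in the same ray, so $\spn_{\mathbb{Z}_{\ge 0}}(\Phi_+ \cap w\Phi_+) = \spn_{\mathbb{Z}_{\ge 0}}(\minroot \cap w\Phi_+)$.

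For the first equality with $\beta \in \mathbb{Z}_{\ge 0}\beta_k$, the $\preceq$-equivalence class of $\beta$ is $C = \{\beta_k\}$, so $\Phi_{+, \preceq C}^{\mathrm{min}} = \{\beta_1, \ldots, \beta_k\}$ and $\Phi_{+, \succeq C}^{\mathrm{min}} = \{\beta_k, \ldots, \beta_l\} \cup (\minroot \cap w\Phi_+)$. The cone identifications above then translate the two generating families for $I^{\preceq}(\beta)$ into those for $I_{w_{\le k}, w_{\le k-1}}(\beta)$, giving the claimed equality of rings. For $\beta \notin \mathbb{Z}_{\ge 0}\beta_k$, I want $R_{w_{\le k}, w_{\le k-1}}(\beta) = 0$. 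Since $R_{w,*}(\beta) = 0$ whenever $\beta \notin Q_+ \cap wQ_-$ (take $m = \height \beta$ in the definition), and analogously for $R_{*,v}$, it suffices to show: $\beta \in Q_+ \cap w_{\le k}Q_- \cap w_{\le k-1}Q_+$ forces $\beta \in \mathbb{Z}_{\ge 0}\beta_k$. Setting $\gamma = w_{\le k-1}^{-1}\beta$, the second condition yields $\gamma \in Q_+$ and the first yields $s_{i_k}\gamma = w_{\le k}^{-1}\beta \in Q_-$. Writing $\gamma = \sum_j c_j\alpha_j$ with $c_j \ge 0$, the coefficient of $\alpha_j$ in $s_{i_k}\gamma$ for $j \neq i_k$ equals $c_j$, which must be $\le 0$, hence $c_j = 0$. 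Thus $\gamma = c_{i_k}\alpha_{i_k}$ and $\beta = c_{i_k}\beta_k$.

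The second equality is analogous. When $\beta \in \spn_{\mathbb{Z}_{\ge 0}}(\minroot \cap w\Phi_+)$, the $\preceq$-equivalence class is $C = \minroot \cap w\Phi_+$, giving $\Phi_{+,\preceq C}^{\mathrm{min}} = \minroot$ (whose $\mathbb{Z}_{\ge 0}$-span is all of $Q_+$, making the $\W$-condition vacuous, consistent with the absence of a $w$-decorated subscript on the left) and $\Phi_{+,\succeq C}^{\mathrm{min}} = C$, whose span equals $Q_+ \cap wQ_+$. This matches the condition defining $R_{*,w}(\beta)$. Otherwise $\beta \notin Q_+ \cap wQ_+$ and $R_{*,w}(\beta) = 0$. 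The main obstacle is making the cone identifications precise — in particular that $Q_+ \cap wQ_-$ is exactly the nonnegative integer span of $\{\beta_1, \ldots, \beta_l\}$ — which is standard but requires a careful induction on $\ell(w)$, or an appeal to a reference on the combinatorics of reduced expressions.
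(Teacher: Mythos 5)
Your reduction of the lemma to purely combinatorial cone identities is a genuinely different strategy from the paper's, and parts of it are solid: the translation between the generators $e(\nu)$ of $I_{w_{\leq k},*}(\beta)+I_{*,w_{\leq k-1}}(\beta)$ and the generators $e(\gamma,*),e(*,\gamma')$ of $I^{\preceq}(\beta)$ does identify the two ideals once the relevant subsets of $Q_+$ are shown to coincide; the decomposition $\Phi_+\cap w_{\leq k-1}\Phi_+=\{\beta_k,\dots,\beta_l\}\sqcup(\Phi_+\cap w\Phi_+)$ is correct; and your direct argument that $\beta\in Q_+\cap w_{\leq k}Q_-\cap w_{\leq k-1}Q_+$ forces $\beta\in\mathbb{Z}_{\geq 0}\beta_k$ is clean. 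The gap is that the two inclusions carrying all the weight, namely $Q_+\cap w_{\leq k}Q_-\subseteq\sum_{i\leq k}\mathbb{Z}_{\geq 0}\beta_i$ and $Q_+\cap w_{\leq k-1}Q_+\subseteq\spn_{\mathbb{Z}_{\geq 0}}(\Phi_+\cap w_{\leq k-1}\Phi_+)$, are exactly the hard content, and the induction you sketch does not go through as stated: subtracting a nonnegative multiple of an inversion from $\beta$ \emph{adds} a nonnegative multiple of a positive root to $w_{\leq k}^{-1}\beta$, so membership in $Q_-$ on the transformed side is not preserved; dually, expanding $\beta=\sum_j d_j\,v\alpha_j$ from $v^{-1}\beta=\sum_j d_j\alpha_j\in Q_+$ produces negative terms whenever $v\alpha_j\in\Phi_-$ (already for $v=s_1$ and $\beta=\alpha_1+2\alpha_2$ in type $A_2$, where $\beta=\alpha_2+(\alpha_1+\alpha_2)$ works but the naive expansion does not). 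Note also that the ``otherwise $0$'' case of the second displayed equality silently uses the contrapositive of the second inclusion, so it too is not self-contained; and the $Q_+\cap vQ_+$ version involves infinitely many roots, including imaginary ones, making a careless induction even riskier.

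The paper sidesteps this entirely: since both defining ideals are generated by idempotents, each is determined by the set of simple modules it annihilates, so it suffices to match the simples of the two quotients. One containment uses only the easy inclusions $\spn_{\mathbb{Z}_{\geq 0}}\Phi^{\mathrm{min}}_{+,\preceq C}\subseteq Q_+\cap w_{\leq k}Q_-$ (and its analogues), while the reverse containment is Corollary~\ref{cor:cuspidalcriterion2}, which extracts the needed decomposition of $\beta$ from the cuspidal decomposition of a simple module rather than from root-system combinatorics. To salvage your route you would need to supply genuine proofs (or precise references) for the two cone identities; otherwise the module-theoretic argument via Corollary~\ref{cor:cuspidalcriterion2} is the safer path.
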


\begin{proof}
Assume that $R_{w_{\leq k}, w_{\leq k-1}}(\beta) \neq 0$. 
Take a simple $R_{w_{\leq k}, w_{\leq k-1}}(\beta)$-module $L$. 
Then, $\W(L) \cap \Phi_+ \subset \Phi_+ \cap w_{\leq k}\Phi_- = \Phi_{+, \preceq \beta_k}$ and $\W^*(L) \cap \Phi_+  \subset \Phi_+ \cap w_{\leq k-1}\Phi_+ = \Phi_{+, \succeq \beta_{k}}$. 
By Corollary \ref{cor:cuspidalcriterion2}, $\beta \in \mathbb{Z}_{\geq 0} \beta_k$ and $L$ is cuspidal. 
In particular, $R_{w_{\leq k}, w_{\leq k-1}}(\beta) = 0$ unless $\beta \in \mathbb{Z}_{\geq 0} \beta_k$. 

Now, assume that $\beta = a\beta_k$ for some $a \in \mathbb{Z}_{\geq 0}$. 
We claim that a simple $R(\beta)$-module $L$ is an $R_{w_{\leq k}, w_{\leq k-1}}(\beta)$-module if and only if it is an $R^{\preceq}(\beta)$-module. 
If $L$ is an $R_{w_{\leq k}, w_{\leq k-1}}(\beta)$-module, it is an $R^{\preceq}(\beta)$-module by the discussion in the previous paragraph. 
If $L$ is an $R^{\preceq}(\beta)$-module, we have $\W(L) \subset \spn_{\mathbb{Z}_{\geq 0}} \Phi_{+, \preceq \beta_k}^{\text{min}} \subset Q_+ \cap w_{\leq k} Q_-$. 
Similarly, $\W^*(L) \subset Q_+ \cap w_{\leq k-1}Q_+$. 
Hence, $L$ is an $R_{w_{\leq k}, w_{\leq k-1}}(\beta)$-module and the claim is proved. 
Since both of these quotient rings are obtained by ideals generated by idempotents, the former half of the lemma follows from this claim. 

To prove the latter assertion, assume that $R_{*,w}(\beta) \neq 0$. 
Set $C = \minroot \cap w \Phi_+$. 
Take a simple $R_{*,w}(\beta)$-module $L$. 
Then, $\W(L) \cap \Phi_+ \subset \Phi_{+, \preceq C}, \W^*(L) \cap \Phi_+ \subset \Phi_+ \cap w \Phi_+ = \Phi_{+, \succeq C}$. 
By Corollary \ref{cor:cuspidalcriterion2}, $\beta \in \spn_{\mathbb{Z}_{\geq 0}}C$ and $L$ is cuspidal.
In particular, $R_{*,w}(\beta) = 0$ unless $\beta \in \spn_{\mathbb{Z}_{\geq 0}} C$. 

Now, assume that $\beta \in \spn_{\mathbb{Z}_{\geq 0}}C$.
We can prove that a simple $R(\beta)$-module $L$ is an $R_{*,w}(\beta)$-module if and only if it is an $R^{\preceq}(\beta)$-module as in the second paragraph. 
It proves the latter assertion. 
\end{proof}

By Proposition \ref{prop:weakcuspidaldecomp}, there exists a unique self-dual $\preceq$-cuspidal $R(a\beta_k)$-module up to isomorphism, which is denoted by $L(a\beta_k) = L^{\preceq}(a\beta_k)$.
As in Proposition \ref{prop:realcuspidal}, we have $L(a\beta_k) \simeq q_{i_k}^{a(a-1)/2} L(\beta_k)^{\circ a}$. 
Lemma \ref{lem:Rwvcuspidal} implies the following proposition. 

\begin{proposition} [{\cite[Theorem 4.5(ii)]{MR3771147}}] \label{prop:determsupport}
For $1 \leq k \leq l$, we have 
\[
L(\beta_k) = L^{\preceq}(\beta_k) \simeq M(w_{\leq k}\Lambda_{i_k}, w_{\leq k-1}\Lambda_{i_k}). 
\]
\end{proposition}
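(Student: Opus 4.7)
The plan is to identify $M(w_{\leq k}\Lambda_{i_k}, w_{\leq k-1}\Lambda_{i_k})$ with the unique self-dual $\preceq$-cuspidal $R(\beta_k)$-module by combining the support properties of determinantial modules with the description of cuspidal categories given in Lemma \ref{lem:Rwvcuspidal}.

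First I would verify that $M(w_{\leq k}\Lambda_{i_k}, w_{\leq k-1}\Lambda_{i_k})$ is indeed an $R(\beta_k)$-module: by Lemma \ref{lem:determinantial} it is a self-dual simple $R(w_{\leq k-1}\Lambda_{i_k} - w_{\leq k}\Lambda_{i_k})$-module, and
\[
w_{\leq k-1}\Lambda_{i_k} - w_{\leq k}\Lambda_{i_k} = w_{\leq k-1}(\Lambda_{i_k} - s_{i_k}\Lambda_{i_k}) = w_{\leq k-1}\alpha_{i_k} = \beta_k.
\]
Next, Lemma \ref{lem:detsupport} yields
\[
\W(M(w_{\leq k}\Lambda_{i_k}, w_{\leq k-1}\Lambda_{i_k})) \subset Q_+ \cap w_{\leq k} Q_-, \quad \W^*(M(w_{\leq k}\Lambda_{i_k}, w_{\leq k-1}\Lambda_{i_k})) \subset Q_+ \cap w_{\leq k-1} Q_+,
\]
so that $M(w_{\leq k}\Lambda_{i_k}, w_{\leq k-1}\Lambda_{i_k})$ belongs to $\gMod{R_{w_{\leq k}, w_{\leq k-1}}(\beta_k)}$.

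Now I would invoke Lemma \ref{lem:Rwvcuspidal}: since $\beta_k \in \mathbb{Z}_{\geq 0}\beta_k$, the quotient $R_{w_{\leq k}, w_{\leq k-1}}(\beta_k)$ coincides with $R^{\preceq}(\beta_k)$. Consequently $M(w_{\leq k}\Lambda_{i_k}, w_{\leq k-1}\Lambda_{i_k})$ is a self-dual simple $\preceq$-cuspidal $R(\beta_k)$-module.

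Finally, I would conclude using uniqueness. Because $\{\beta_k\}$ is a singleton $\preceq$-equivalence class consisting of a real root and $n\beta_k \notin \Phi_+$ for $n \geq 2$, the counting formula in Proposition \ref{prop:weakcuspidaldecomp}(3) gives
\[
\sum_{n \geq 0} \lvert \{\text{self-dual cuspidal $R(n\beta_k)$-module}\}/{\simeq} \rvert \, e^{n\beta_k} = \frac{1}{1-e^{\beta_k}},
\]
so up to isomorphism there is exactly one self-dual cuspidal $R(\beta_k)$-module, namely $L(\beta_k) = L^{\preceq}(\beta_k)$. Hence $M(w_{\leq k}\Lambda_{i_k}, w_{\leq k-1}\Lambda_{i_k}) \simeq L(\beta_k)$. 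There is no serious obstacle here: the entire argument is an assembly of already-established facts, with the one modest subtlety being to recognize that one must invoke the weakly convex version (Proposition \ref{prop:weakcuspidaldecomp}) rather than Theorem \ref{thm:cuspidaldecomp}, since the preorder $\preceq$ above is only weakly convex in general.
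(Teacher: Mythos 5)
Your proposal is correct and follows essentially the same route as the paper: the paper derives this proposition directly from Lemma \ref{lem:Rwvcuspidal} (identifying $R_{w_{\leq k},w_{\leq k-1}}(\beta_k)$ with $R^{\preceq}(\beta_k)$), the support containments of Lemma \ref{lem:detsupport}, and the uniqueness of the self-dual cuspidal module over a real root supplied by Proposition \ref{prop:weakcuspidaldecomp}. Your remark that one must use the weakly convex version of the counting result is exactly the point the paper makes as well.
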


This proposition is significant because it gives a concrete realization of the cuspidal module.  

Put $S = \{ \text{self-dual simples in $\gMod{R_{*,w}}$} \} / {\simeq}$. \index{$S$}
Let $s_0$ denote the isomorphism class of $\mathbf{k} \in \gMod{R_{*,w}(0)}$. \index{$s_0$}
For each $s \in S$, we choose a representative $L(s) \in \gMod{R_{*,w}}$ and set $\beta_s = - \wt L(s)$. \index{$L(s)$} \index{$\beta_s$}
We define
\[ 
\Sigma (\beta) =  \{ (\lambda = (\lambda_1, \ldots, \lambda_l), s) \in \mathbb{Z}_{\geq 0}^l \times S \mid \sum_{1 \leq k \leq l} \lambda_k \beta_k + \beta_s = \beta \}
\] \index{$\Sigma(\beta)$}
for each $\beta \in Q_+$. 
There is a map $\rho \colon \Sigma (\beta) \to \mathcal{P}^{\preceq}(\beta)$ given by \index{$\rho$}
\[
\rho(\lambda,s) = (\lambda_1 \beta_1, \ldots, \lambda_l \beta_l, \beta_s). 
\]
For $(\lambda, s) \in \Sigma(\beta)$, we define
\begin{align*}
\overline{\Delta}(\lambda,s) &= L(s) \circ L(\lambda_l \beta_l) \circ \cdots \circ L(\lambda_1 \beta_1) \\ \index{$\overline{\Delta}(\lambda,s)$}
&\simeq q^{m_{\lambda}} L(s) \circ M(w_{\leq l}\Lambda_{i_l}, w_{\leq l-1}\Lambda_{i_l})^{\circ \lambda_l} \circ \cdots \circ M(s_{i_1}\Lambda_{i_1}, \Lambda_{i_1})^{\circ \lambda_1}, \\
L(\lambda,s) &= \hd \overline{\Delta}(\lambda,s), \index{$L(\lambda,s)$}
\end{align*}
where $m_{\lambda} = \sum_{1 \leq k \leq l} (\alpha_{i_k}, \alpha_{i_k}) \lambda_k (\lambda_k -1)/4$. 

As pointed out in Remark \ref{rem:failure}, Proposition \ref{prop:properstandardcomposition} does not hold for weakly convex preorders.
To remedy this in the current situation, we introduce the following partial order on $\mathcal{P}^{\preceq}(\beta)$, which is weaker than the one in Definition \ref{def:bilexicographic}.

\begin{definition} \label{def:modifiedlexico}
  For $\underline{\gamma} = (\gamma_1, \ldots, \gamma_m) \in \mathcal{P}^{\preceq}(\beta), \underline{\gamma'} = (\gamma'_1, \ldots, \gamma'_n) \in \mathcal{P}(\beta)$, we write $\underline{\gamma'} < \underline{\gamma}$ if the following two conditions hold: \index{$\underline{\gamma'} < \underline{\gamma}$}
\begin{enumerate}
\item there exists $1 \leq k \leq \min \{ m, n\}$ such that $\gamma_1 = \gamma'_1 , \ldots, \gamma_{k-1} = \gamma'_{k-1}$ and either (i) $\gamma'_k \in (\spn_{\mathbb{Z}_{\geq 0}} \Phi_{+,\succeq C_{\gamma_k}}^{\text{min}}) \setminus (\spn_{\mathbb{Z}_{\geq 0}} C_{\gamma_k})$ or (ii) $\gamma'_k \in \spn_{\mathbb{Z}_{\geq 0}}C_{\gamma_k}, \gamma_k - \gamma'_k \in Q_+ \setminus \{0\}$ holds. 
\item if $\gamma_m = a\beta_r$ for some $a \in \mathbb{Z}_{>0}$ and $1 \leq  r \leq l$, then there exists $1 \leq k \leq \min \{ m, n\}$ such that $\gamma_m = \gamma'_n , \ldots, \gamma_{m-k+1} = \gamma'_{n-k+1}$ and either (i) $\gamma'_{n-k} \in (\spn_{\mathbb{Z}_{\geq 0}} \Phi_{+, \preceq C_{\gamma_{m-k}}}^{\text{min}}) \setminus (\spn_{\mathbb{Z}_{\geq 0}} C_{\gamma_k})$ or (ii) $\gamma'_{n-k} \in \spn_{\mathbb{Z}_{\geq 0}} C_{\gamma_{m-k}}, \gamma_{m-k} - \gamma'_{n-k} \in Q_+ \setminus \{0 \}$ holds.
\end{enumerate}
It defines a partial order on $\mathcal{P}^{\preceq}(\beta)$. 
\end{definition}

Note that, since $\sum_{r} \gamma_r = \sum_{r} \gamma'_r$, the index $k$ in the condition (1) must satisfy $k \leq \min \{ m,n\} -1$. 
Similarly, the index $k$ in the condition (2) must satisfy $k \geq 2$. 

It is often convenient to identify $\mathcal{P}^{\preceq}(\beta)$ with the set 
\[
 \{ (\lambda_1 \beta_1, \ldots, \lambda_l \beta_l, \gamma) \in \mathcal{P}(\beta) \mid \lambda_1, \ldots, \lambda_l \in \mathbb{Z}_{\geq 0}, \gamma \in \spn_{\mathbb{Z}_{\geq 0}} (\minroot \cap w \Phi_+), \sum_{1\leq k \leq l}\lambda_k\beta_k + \gamma = \beta \}.
\] 
Then, Definition \ref{def:modifiedlexico} simply says that $(\lambda_1 \beta_1, \ldots, \lambda_l \beta_l, \gamma) > (\mu_1 \beta_1, \ldots, \mu_l \beta_l, \gamma')$ in $\mathcal{P}^{\preceq}(\beta)$ if the following two conditions hold: 
\begin{enumerate}
  \item there exists $1 \leq k \leq l$ such that $\lambda_1 = \mu_1, \ldots, \lambda_{k-1} = \mu_{k-1}, \lambda_k > \mu_k$, 
  \item if $\gamma = 0$, then $\gamma' = 0$ and there exists $1 \leq k \leq l$ such that $\lambda_l = \mu_l, \ldots, \lambda_{k+1} = \mu_{k+1}, \lambda_k > \mu_k$. 
\end{enumerate}

Note that, $\preceq$ is a convex preorder only when $\mathfrak{g}$ is finite dimensional and $w$ is the longest element $w_0$. 
In this case, Definition \ref{def:bilexicographic} and Definition \ref{def:modifiedlexico} coincide.  

The following theorem summarizes the cuspidal decomposition (Section \ref{sub:cuspidal}) within the current setup. 

  \begin{theorem} \label{thm:cuspidalCw}
  Let $\beta \in Q_+$. We have a bijection
  \begin{align*}
  \Sigma (\beta) &\to \{ \text{self-dual simple module in $\gMod{R(\beta)}$} \}/{\simeq}, \\
 (\lambda, s) & \mapsto L(\lambda, s). 
  \end{align*}
  Moreover, we have 
  \begin{enumerate}
   \item $\Res_{\beta_s, \lambda_l \beta_l, \ldots, \lambda_1\beta_1} \overline{\Delta}(\lambda, s) = \Res_{\beta_s, \lambda_l\beta_l, \ldots, \lambda_1 \beta_1} L(\lambda, s) = L(s) \otimes L(\lambda_l \beta_l) \otimes \cdots \otimes L(\lambda_1\beta_1)$, 
   \item For $\underline{\gamma} = (\gamma_1, \ldots, \gamma_n) \in \mathcal{P}(\beta)$, $\Res_{\gamma_n, \ldots, \gamma_1} \overline{\Delta}(\lambda,s) = \Res_{\gamma_n, \ldots, \gamma_1} L(\lambda, s) = 0$ if $(\gamma_1, \ldots,\gamma_n) \not \leq (\lambda_1 \beta_1, \ldots, \lambda_l\beta_l, \beta_s)$, 
   \item $[\overline{\Delta} (\lambda,s): L(\lambda,t)]_q = \delta_{s,t}$, 
   \item If $[\overline{\Delta}(\lambda, s): L(\mu, t)]_q \neq 0$, then $\rho(\mu,t) <\rho(\lambda, s)$ or $(\mu,t) = (\lambda,s)$,
   \item $L(\lambda, s) \in \gMod{R_{w,*}}$ if and only if $s = s_0$, 
   \item $L(\lambda, s) \in \gMod{R_{*,w}}$ if and only if $\lambda = 0$. 
  \end{enumerate}
\end{theorem}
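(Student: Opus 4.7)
The plan is to extract the theorem from the weakly convex cuspidal decomposition of Proposition \ref{prop:weakcuspidaldecomp}, specialized to the preorder $\preceq$ whose equivalence classes are the singletons $\{\beta_1\} \prec \cdots \prec \{\beta_l\}$ followed by $C_\infty := \minroot \cap w\Phi_+$. By Lemma \ref{lem:Rwvcuspidal} together with the analogue of Proposition \ref{prop:realcuspidal} already noted in the text, the self-dual $\preceq$-cuspidal $R(a\beta_k)$-modules are exactly $L(a\beta_k) \simeq q_{i_k}^{a(a-1)/2} L(\beta_k)^{\circ a}$ for $a \in \mathbb{Z}_{\geq 0}$, while the self-dual $\preceq$-cuspidal modules supported on $\spn_{\mathbb{Z}_{\geq 0}} C_\infty$ are precisely $\{L(s)\}_{s \in S}$. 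Proposition \ref{prop:weakcuspidaldecomp}(2) then yields the bijection $(\lambda,s) \mapsto L(\lambda,s) = \hd \overline{\Delta}(\lambda,s)$, with the identification of $L(\lambda_k\beta_k)$ as a convolution power of $M(w_{\leq k}\Lambda_{i_k}, w_{\leq k-1}\Lambda_{i_k})$ coming from Proposition \ref{prop:determsupport} (accounting for the shift $m_\lambda$).

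For (1), Lemma \ref{lem:cuspidalseq} (upgraded to the weakly convex setting via Proposition \ref{prop:weakcuspidaldecomp}(1)) shows that $(L(s), L(\lambda_l\beta_l), \ldots, L(\lambda_1\beta_1))$ is unmixing, so Lemma \ref{lem:unmixing}(1)(2) identifies both $\Res_{\beta_s, \lambda_l\beta_l, \ldots, \lambda_1\beta_1} \overline{\Delta}(\lambda,s)$ and its restriction on the simple head $L(\lambda,s)$ with the required tensor product. Properties (3) and (4) will follow from (2) combined with Lemma \ref{lem:unmixing}(2): the head appears once, and any other composition factor $L(\mu,t)$ has $\rho(\mu,t) \not\geq \rho(\lambda,s)$ forced by (2), while Theorem \ref{thm:cuspidaldecomp}(2) and the existence of the cuspidal decomposition of $L(\mu,t)$ force $\rho(\mu,t) < \rho(\lambda,s)$ in the sense of Definition \ref{def:modifiedlexico}.

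Property (2) is the main obstacle, since the argument of Proposition \ref{prop:properstandardcomposition} does not carry over verbatim in the weakly convex setting (see Remark \ref{rem:failure}); this is exactly why Definition \ref{def:modifiedlexico} drops the right-hand condition when $\gamma_m$ lies in $C_\infty$. I would imitate the Mackey-filtration argument of Proposition \ref{prop:properstandardcomposition}: given $\underline{\gamma} = (\gamma_1, \ldots, \gamma_n) \in \mathcal{P}(\beta)$ with $\Res_{\gamma_n, \ldots, \gamma_1}\overline{\Delta}(\lambda,s) \neq 0$, the Mackey filtration yields a matrix $(\eta_{k,m})$ of elements summing correctly so that each $\Res(\ldots) L(\lambda_k\beta_k)$ and $\Res(\ldots) L(s)$ is nonzero. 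Reading from the leftmost index and using $\W^*$ of each cuspidal factor together with the convexity of a refining convex order $\preceq'$ gives condition (1) of Definition \ref{def:modifiedlexico}; reading from the rightmost index with $\W$ gives condition (2), provided the last factor $L(\lambda_k\beta_k)$ (with $k$ the largest index with $\lambda_k > 0$) lies in a singleton equivalence class, which is exactly the hypothesis $\gamma_m = a\beta_r$ under which condition (2) is imposed. When the last cuspidal factor is $L(s)$ (lying in $C_\infty$), convexity breaks down on $C_\infty$ and we leave the right-hand condition unimposed; this is consistent with the definition.

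Finally, properties (5) and (6) follow from the characterizations $\gMod{R_{w,*}} = \{M \mid \W(M) \subset Q_+ \cap wQ_-\}$ and $\gMod{R_{*,w}} = \{M \mid \W^*(M) \subset Q_+ \cap wQ_+\}$: by (1), $\beta_s \in \W(L(\lambda,s)) \cap (Q_+ \cap wQ_+)$, and cuspidality of $L(s)$ ensures $\beta_s \in \W^*(L(\lambda,s))$; one of these elements lies outside $Q_+ \cap wQ_-$ unless $\beta_s = 0$, giving (5), and symmetrically $\lambda_k\beta_k$ must vanish for each $k$ unless $L(\lambda,s) \in \gMod{R_{*,w}}$, giving (6). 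The converse direction in both (5) and (6) is immediate from cuspidality plus the Mackey computation $\W(M\circ N) = \W(M) + \W(N)$, $\W^*(M\circ N) = \W^*(M) + \W^*(N)$.
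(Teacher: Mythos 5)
Your proposal is correct and follows essentially the same route as the paper: the bijection and parts (1), (3)--(6) are extracted from Proposition \ref{prop:weakcuspidaldecomp}, Lemma \ref{lem:Rwvcuspidal}, Proposition \ref{prop:determsupport}, the unmixing lemmas, and the support characterizations of $R_{w,*}$ and $R_{*,w}$, exactly as in the text. For part (2) you correctly identify the one genuine subtlety (Remark \ref{rem:failure}) and resolve it the same way the paper does, by rerunning the Mackey argument of Proposition \ref{prop:properstandardcomposition} and using that $\preceq$ restricts to a genuine order on $\{\beta_1,\dots,\beta_l\}$, which is precisely why Definition \ref{def:modifiedlexico} omits the right-hand condition when the last entry lies in $\minroot\cap w\Phi_+$.
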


\begin{proof}
  In view of Lemma \ref{lem:Rwvcuspidal} and Proposition \ref{prop:determsupport}, the bijection is Proposition \ref{prop:weakcuspidaldecomp}. 
  Since $(L_s, L(\lambda_l\beta_l), \ldots, L(\lambda_1\beta_1))$ is unmixing, (1) follows. 

  (2)
 By disregarding entries of zero, write $(\lambda_1\beta_1, \ldots, \lambda_l\beta_l, \beta_s)$ in the form $\underline{\gamma'} = (\gamma'_1, \ldots, \gamma'_m)$.
 Assume $\underline{\gamma} \not \leq \underline{\gamma'}$. 

 First, assume that $\beta_s \neq 0$. 
 By Definition \ref{def:modifiedlexico} and the remark after it, we have $\underline{\gamma} \neq \underline{\gamma'}$ and either (i) or (ii): 
 \begin{enumerate}
 \item[(i)] there exists $1 \leq k\leq \min \{m,n\} -1$ such that $\gamma_1 = \gamma'_1, \ldots, \gamma_{k-1} = \gamma'_{k-1}$ and $\gamma \not \in \spn_{\mathbb{Z}_{\geq 0}} \Phi_{+, \preceq C_{\gamma'_k}}^{\mathrm{min}}$, 
 \item[(ii)] there exists $1 \leq k\leq \min\{m,n\} -1$ such that $\gamma_1 = \gamma'_1, \ldots, \gamma_{k-1} = \gamma'_{k-1}$ and $\gamma_k \in \mathbb{Z}_{\geq 2}\gamma'_k$. 
 \end{enumerate}
Note that $k < m$, hence $\gamma'_1, \ldots, \gamma'_k \in \{ \lambda_1\beta_1, \ldots, \lambda_l\beta_l\}$. 
By the same argument as in Proposition \ref{prop:costandard}, we deduce the assertion. 
(The key is that the restriction of $\preceq$ to $\{\beta_1,\ldots,\beta_l\}$ is an order. )

Next, assume that $\beta_s = 0$. 
Note that $\{\gamma'_1, \ldots, \gamma'_m\} = \{\lambda_k\beta_k \mid 1 \leq k \leq l, \lambda_k \neq 0 \}$. 
By Definition \ref{def:modifiedlexico}, we have $\underline{\gamma} \neq \underline{\gamma'}$ and either of (i) to (iv): 
\begin{enumerate}
\item[(iii)] there exists $1 \leq k \leq \min \{m,n\}$ such that $\gamma_n = \gamma'_m, \ldots, \gamma_{n-k+1} = \gamma'_{m-k+1}$, and $\gamma_{n-k} \not \in \spn_{\mathbb{Z}_{\geq 0}} \Phi_{+, \succeq_{\gamma'_{m-k}}}^{\mathrm{min}}$, 
\item[(iv)] there exists $1 \leq k \leq \min\{m,n\}$ such that $\gamma_n = \gamma'_m, \ldots, \gamma_{n-k+1} = \gamma'_{m-k+1}$, and $\gamma_{n-k} \in \mathbb{Z}_{\geq 2} \gamma_{m-k}$. 
\end{enumerate}
In any case, the same argument as in Proposition \ref{prop:costandard} proves the assertion. 

  (4) 
  Note that, by (1), $\Res_{\beta_t, \mu_l\beta_l, \ldots, \mu_1\beta_1} L(\mu,t) = L(t) \otimes L(\mu_l\beta_l) \otimes \cdots \otimes L(\mu_1 \beta_1)$. 
  If $\rho(\mu,t) \not \leq \rho(\lambda,s)$, then (2) shows $[\overline{\Delta}(\lambda,s): L(\mu,t)]_q = 0$. 
  Assume $\rho(\mu,t) = \rho(\lambda,s)$.
  Then, $\Res_{\beta_t, \mu_l\beta_l, \ldots, \mu_1 \beta_1} \Delta(\lambda,s) = L(s) \otimes L(\lambda_l\beta_l) \otimes \cdots \otimes L(\lambda_1 \beta_1)$ by (1).
  Hence, $[\overline{\Delta}(\lambda,s): L(\mu,t)]_q = 0$ unless $(\mu,t) = (\lambda,s)$.    

(3)
It follows from (1) and (4). 

  (5) Assume $s = s_0$. 
  By Lemma \ref{lem:detsupport}, $\W(L(\beta_k)) \subset Q_+ \cap w_{\leq k} Q_- \subset Q_+ \cap w Q_-$. 
  Hence $L(\beta_k)$ is an $R_{w,*}(\beta_k)$-module. 
  Since $\gMod{R_{w,*}}$ is closed under convolution products, we deduce that $\overline{\Delta}(\lambda,s_0)$ and its head $L(\lambda,s_0)$ are $R_{w,*}(\beta)$-modules. 

  Assume $s \neq s_0$. 
  Then, $\beta_s$ is a nonzero element of $\spn_{\mathbb{Z}_{\geq 0}} \minroot \cap w \minroot \subset Q_+ \cap w Q_+$.
  Since $\beta_s \in \W(L(\lambda,s))$, $L(\lambda,s)$ is not an $R_{w,*}(\beta)$-module. 

  (6) is proved in the same manner as (5). 
\end{proof}

For each $\beta \in Q_+$, we define $\Lambda(\beta) = \{ \lambda \in \mathbb{Z}_{\geq 0}^l \mid \sum_{1 \leq k \leq l} \lambda_k \beta_k = \beta \}$. \index{$\Lambda(\beta)$}
By the Theorem above, it parametrizes the isomorphism classes of self-dual simple $R_{w,*}(\beta)$-modules. 
We write $L(\lambda) = L(\lambda, s_0), \overline{\Delta}(\lambda) = \overline{\Delta}(\lambda,s_0)$. \index{$L(\lambda)$} \index{$\overline{\Delta}(\lambda)$}

\section{Properties of affinizations of determinantial modules} \label{sec:properties}

\subsection{Homological property of $\widehat{M}(w\Lambda_i, \Lambda_i)$ } \label{sub:homologicalM1} 

Let $i \in I$ and $w \in W$. 
Assume $w \Lambda_i \neq \Lambda_i$. 
Recall $R_z^{\Lambda_i}$ and $\widehat{M}(w\Lambda_i,\Lambda_i)$ introduced after Proposition \ref{prop:detaffinization}. 
In this section, we present a homological property of $\widehat{M} (w\Lambda_i, \Lambda_i)$.   

\begin{proposition} \label{prop:cyclotomic}
 The canonical algebra morphism $R(\Lambda_i -w\Lambda_i) \hookrightarrow R(\Lambda_i - w\Lambda_i) \otimes \mathbf{k}[z] \twoheadrightarrow R_z^{a_{\Lambda_i}} (w\Lambda_i)$ induces an algebra isomorphism 
 \[
  R(\Lambda_i - w\Lambda_i) / \langle e(*, j), e(*, i, i) \mid j \in I \setminus \{i \} \rangle \simeq R_z^{a_{\Lambda_i}} (w\Lambda_i), 
 \]
 where $e(*,j)$ (resp. $e(*,i,i)$) is understood to be zero if $\Lambda_i - w\Lambda_i -\alpha_j \not \in Q_+$ (resp. $\Lambda_i - w\Lambda_i - 2\alpha_i \not \in Q_+$). 
\end{proposition}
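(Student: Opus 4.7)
My plan is to show directly that the canonical map $\phi \colon R(\beta) \to R_z^{a_{\Lambda_i}}(w\Lambda_i)$, with $\beta = \Lambda_i - w\Lambda_i$ and $n = \height\beta$, factors as an isomorphism through the claimed quotient $A := R(\beta)/\langle e(*,j), e(*,i,i) \mid j \in I \setminus \{i\}\rangle$. Both directions will be derived from short manipulations of the KLR relations together with the cyclotomic relations, and the isomorphism will ultimately rest on the identification $z = x_n$.

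For the factorization of $\phi$ through $A$: since $a_{\Lambda_i,j}(t_j) = (t_j - z)^{\delta_{i,j}}$ is the scalar $1$ when $j \neq i$, the cyclotomic relation $a_{\Lambda_i,j}(x_n)e(*,j) = 0$ forces $e(*,j) = 0$ in $R_z^{a_{\Lambda_i}}(w\Lambda_i)$ directly. Summing, $e(*,i) = 1$, and so the remaining cyclotomic relation $(x_n - z)e(*,i) = 0$ gives $x_n = z$, which is central. The KLR relation $\tau_{n-1}x_n - x_{n-1}\tau_{n-1} = \sum_\nu \delta_{\nu_{n-1},\nu_n}e(\nu)$ then collapses (only $\nu$ with $\nu_n = i$ survive) to $(x_n - x_{n-1})\tau_{n-1} = e(*,i,i)$. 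Next I would observe that $\tau_{n-1}e(\nu) = e(s_{n-1}\nu)\tau_{n-1}$ survives in the quotient only when $\nu_{n-1}=\nu_n = i$ (otherwise $e(\nu)$ or $e(s_{n-1}\nu)$ is killed by $e(*,j) = 0$), whence $\tau_{n-1} = \tau_{n-1}e(*,i,i) = e(*,i,i)\tau_{n-1}$; combined with $Q_{i,i} = 0$ this yields $\tau_{n-1}^2 = \tau_{n-1}^2 e(*,i,i) = Q_{i,i}(x_{n-1},x_n)e(*,i,i) = 0$. Right-multiplying $(x_n - x_{n-1})\tau_{n-1} = e(*,i,i)$ by $\tau_{n-1}$ gives $0 = e(*,i,i)\tau_{n-1} = \tau_{n-1}$, and substituting back yields $e(*,i,i) = 0$. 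Hence $\phi$ descends to a homomorphism $\bar\phi \colon A \to R_z^{a_{\Lambda_i}}(w\Lambda_i)$.

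For the reverse direction, I would construct $\psi \colon R_z^{a_{\Lambda_i}}(w\Lambda_i) \to A$ by extending the quotient map $R(\beta) \to A$ via $z \mapsto x_n$. The key is that $x_n$ is central in $A$: a verbatim repetition of the case analysis above --- $e(*,j) = 0$ for $j \neq i$ kills every $\tau_{n-1}e(\nu)$ with $\nu_{n-1} \neq \nu_n$, and $e(*,i,i) = 0$ kills the remaining $\nu$'s --- shows that $\tau_{n-1} = 0$ in $A$, after which $x_n$ commutes with all generators of $R(\beta)$ (it already commuted with every $\tau_k$ for $k < n-1$ and with every $x_k$). The cyclotomic relations are respected: for $j \neq i$ one has $a_{\Lambda_i,j}(x_n)e(*,j) \mapsto 1 \cdot 0 = 0$, and for $j = i$ one has $(x_n - z)e(*,i) \mapsto (x_n - x_n)\cdot 1 = 0$. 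Both compositions $\psi\bar\phi$ and $\bar\phi\psi$ act as the identity on the image of $R(\beta)$, and $\bar\phi(\psi(z)) = \bar\phi(x_n) = x_n = z$ in $R_z^{a_{\Lambda_i}}(w\Lambda_i)$, so $\bar\phi$ and $\psi$ are mutually inverse.

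The subtle step is the vanishing of $e(*,i,i)$ inside $R_z^{a_{\Lambda_i}}(w\Lambda_i)$, which passes through the auxiliary vanishing of $\tau_{n-1}$ and requires the precise interplay of the centrality of $x_n$ (a consequence of the cyclotomic relation), the vanishing $Q_{i,i} = 0$, and the collapse of the KLR braid-like formula in the presence of only last-strand-$i$ idempotents.
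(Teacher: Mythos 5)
Your proof is correct, and it differs from the paper's in one substantive place. The second half of your argument --- $e(*,j)=0$ for $j\neq i$ directly from $a_{\Lambda_i,j}=1$, the vanishing of $\tau_{n-1}$ via the case analysis on $e(\nu)$ and $e(s_{n-1}\nu)$, the resulting centrality of $x_n$, and the identification $z = x_n$ giving mutually inverse maps --- is essentially the paper's computation in $R'(w\Lambda_i)$, just phrased as an explicit pair of inverse homomorphisms rather than as $(R'\otimes\mathbf{k}[z])/\langle x_n - z\rangle \simeq R'$. Where you genuinely diverge is the key step $e(*,i,i)=0$ in $R_z^{a_{\Lambda_i}}(w\Lambda_i)$: the paper deduces it from the Morita equivalence $\gMod{R_z^{a_{\Lambda_i}}(w\Lambda_i)}\simeq\gMod{\mathbf{k}[z]}$ (Theorem \ref{thm:sl2categorification}) together with the crystal computation $\varepsilon_j^*(M(w\Lambda_i,\Lambda_i))=\delta_{i,j}$ from Lemma \ref{lem:determinantialepsilon}, and in a remark sketches an alternative via the Demazure-operator lemma of Kang--Kashiwara. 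You instead extract it directly from the defining relations: centrality of $x_n=z$ turns the mixed relation into $(x_n-x_{n-1})\tau_{n-1}=e(*,i,i)$, the idempotent analysis gives $\tau_{n-1}=\tau_{n-1}e(*,i,i)=e(*,i,i)\tau_{n-1}$, and $Q_{i,i}=0$ forces $\tau_{n-1}^2=0$, whence $\tau_{n-1}=0$ and then $e(*,i,i)=0$. This is a clean, self-contained derivation that makes the proposition independent of the categorification machinery and of the cited Demazure-operator lemma; the paper's route is shorter only because that machinery is already set up for other purposes. Both arguments are sound (the $n=1$ case is vacuous in each).
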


\begin{proof}
  Put $R'(w\Lambda_i) = R(\Lambda_i - w\Lambda_i)/\langle e(*, j), e(*, i, i) \mid j \in I \setminus \{i \} \rangle$ and $n = \height (\Lambda_i - w\Lambda_i)$. \index{$R'(w\Lambda_i)$}
  By Theorem \ref{thm:sl2categorification}, $R_z^{a_{\Lambda_i}}(w\Lambda_i)$ is Morita equivalent to $\mathbf{k}[z]$ and it has a unique self-dual simple module $M(w\Lambda_i, \Lambda_i)$.
  Since $w\Lambda_i \neq \Lambda_i$, Lemma \ref{lem:determinantialepsilon} shows $\varepsilon_j^*(M(w\Lambda_i, \Lambda_i)) = \langle h_j, \Lambda_i \rangle = \delta_{i, j}$.  
  Hence, we have $e(*,j), e(*,i,i)=0 \ (j \in I \setminus \{i\})$ in $R_z^{a_{\Lambda_i}}(w\Lambda_i)$,  
  and it follows that the morphism $R(\Lambda_i - w\Lambda_i) \to R_z^{a_{\Lambda_i}}(w\Lambda_i)$ factors through $R'(w\Lambda_i)$. 

  We compute several relations in $R'(w\Lambda_i)$. 
  By the definition, $\sum_{j \neq i} e(*, j, i) = 1$. 
  Since $\tau_{n-1}e(*,j,i) = e(*,i,j)\tau_{n-1} = 0$ for any $j \neq i$, we obtain $\tau_{n-1} = 0$. 
  It implies that $x_n$ is a central element of $R'(w\Lambda_i)$. 

  By the definition, $a_{\Lambda_i,i} (x_n) = x_n - z$. 
  Therefore, we obtain
  \[
  R_z^{a_{\Lambda_i}} (w\Lambda_i) \simeq (R'(w\Lambda_i) \otimes \mathbf{k}[z]) / \langle x_n - z \rangle \simeq R'(w\Lambda_i). 
  \]
\end{proof}

\begin{remark}
  It is also possible to derive the relation $e(*, i, i) = 0$ in $R_z^{a_{\Lambda_i}}(w\Lambda_i)$ by a direct calculation. 
  In fact, $(x_n - z) e(*, i, i) = 0$ implies $\partial_{n-1}(x_n - z) e(*, i, i)= 0$ \cite[Lemma 4.2]{MR2995184}, where $\partial_{n-1}$ is the Demazure operator. 
  Therefore, we obtain $e(*, i, i) = 0$. 
\end{remark}

\begin{remark} \label{rem:morita}
  Theorem \ref{thm:sl2categorification} shows a graded Morita equivalence from $R'(w\Lambda_i) \simeq R_z^{a_{\Lambda_i}}(w\Lambda_i)$ to $\mathbf{k}[z]$. 
  Under this equivalence, $M(w\Lambda_i, \Lambda_i)$ corresponds to the simple module $\mathbf{k}$, and $\widehat{M}(w\Lambda_i, \Lambda_i)$ to the projective module $\mathbf{k}[z]$. 
\end{remark}

\begin{corollary} \label{cor:M1ext}
 $\Ext_{R(\Lambda_i - w\Lambda_i)}^1 (\widehat{M}(w\Lambda_i, \Lambda_i), M(w\Lambda_i,\Lambda_i)) = 0$. 
\end{corollary}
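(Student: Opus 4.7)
The plan is to deduce this vanishing from Proposition \ref{prop:cyclotomic} and Remark \ref{rem:morita} by showing that every extension of $M := M(w\Lambda_i,\Lambda_i)$ by $\widehat{M} := \widehat{M}(w\Lambda_i,\Lambda_i)$ in $\gMod{R(\Lambda_i - w\Lambda_i)}$ automatically descends to an extension in $\gMod{R_z^{a_{\Lambda_i}}(w\Lambda_i)}$, where $\widehat{M}$ is projective.

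First I would set $R = R(\Lambda_i - w\Lambda_i)$ and consider the idempotent
\[
f \;=\; \sum_{j \in I \setminus \{i\}} e(*,j) \,+\, e(*,i,i) \;\in\; R,
\]
so that the two-sided ideal $J := RfR$ of $R$ is precisely the ideal appearing in Proposition \ref{prop:cyclotomic}, giving $R/J \simeq R_z^{a_{\Lambda_i}}(w\Lambda_i)$. An $R$-module $N$ is an $R/J$-module if and only if $fN = 0$, since $JN = RfRN = RfN$ and $f \cdot 1 \in J$. Both $M$ and $\widehat{M}$ are $R_z^{a_{\Lambda_i}}(w\Lambda_i)$-modules, hence killed by $f$.

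Next, for any short exact sequence
\[
0 \to M \to E \to \widehat{M} \to 0
\]
in $\gMod{R}$, I will check that $fE = 0$. Given $x \in E$, its image in $\widehat{M}$ satisfies $f\bar{x} = 0$, so $fx \in M$; applying $f$ once more and using idempotency, $fx = f(fx) \in fM = 0$. Hence $E$ is an $R_z^{a_{\Lambda_i}}(w\Lambda_i)$-module, and since this reasoning is natural, the Yoneda $\Ext^1$ groups coincide:
\[
\Ext^1_{R(\Lambda_i - w\Lambda_i)}(\widehat{M}, M) \;=\; \Ext^1_{R_z^{a_{\Lambda_i}}(w\Lambda_i)}(\widehat{M}, M).
\]

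Finally, by Remark \ref{rem:morita}, under the graded Morita equivalence $\gMod{R_z^{a_{\Lambda_i}}(w\Lambda_i)} \simeq \gMod{\mathbf{k}[z]}$, the module $\widehat{M}$ corresponds to the free rank-one module $\mathbf{k}[z]$, which is projective. Thus $\Ext^1_{R_z^{a_{\Lambda_i}}(w\Lambda_i)}(\widehat{M}, M) = 0$, completing the proof. There is no real obstacle here; the only point requiring care is the idempotent manipulation in step two, but the key Proposition \ref{prop:cyclotomic} and Remark \ref{rem:morita} have done all the heavy lifting.
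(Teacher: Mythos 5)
Your proof is correct and is essentially the paper's own argument: the paper likewise observes that the ideal defining $R_z^{a_{\Lambda_i}}(w\Lambda_i)$ (i.e.\ $R'(w\Lambda_i)$) is generated by idempotents, so that $\gMod{R'(w\Lambda_i)}$ is closed under extensions in $\gMod{R(\Lambda_i - w\Lambda_i)}$, and then invokes the Morita equivalence of Remark \ref{rem:morita} to see that $\widehat{M}(w\Lambda_i,\Lambda_i)$ is projective there. Your explicit idempotent computation just spells out the closure-under-extensions step that the paper states in one line.
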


Note that we compute the extension group over $R(\Lambda_i-w\Lambda_i)$, not over $R_z^{a_{\Lambda_i}}(w\Lambda_i)$. 

\begin{proof}
 By the remark above, $\widehat{M}(w\Lambda_i, \Lambda_i)$ is the unique projective $R'(\Lambda_i - w\Lambda_i)$-module. 
 By the definition, $\gMod{R'(\Lambda_i - w\Lambda_i)}$ is closed under extensions in $\gMod{R(\Lambda_i-w\Lambda_i)}$. 
 It proves the corollary. 
\end{proof}

\subsection{Braider structure of $\widehat{M}(w\Lambda_i, \Lambda_i)$} \label{sub:braider}

Let $i \in I, w\in W$ and assume that $w\Lambda_i \neq \Lambda_i$.
Put $M_1 = M(w\Lambda_i, \Lambda_i)$, $\widehat{M_1} = \widehat{M}(w\Lambda_i, \Lambda_i)$ and $\gamma_1 = -\wt M_1 = \Lambda_i - w\Lambda_i$. \index{$M_1$} \index{$\widehat{M_1}$} \index{$\gamma_1$}
Note that $\widehat{M_1}$ is finitely generated $R(\gamma_1)$-module by Lemma \ref{lem:affinefg}. 
%In this section, we construct an natural morphism that intertwines $\widehat{M_1} \circ X$ and $X \circ \widehat{M_1}$ for $X \in \gMod{R}$. 
Recall that, for each $j \in I$, $R(\alpha_j)$ coupled with $w_j = x_1 \in \End_{R(\alpha_j)}(R(\alpha_j))$ is an affinization of the simple $R(\alpha_j)$-module $L(j)$.
We fix $\renormalizedR{\widehat{M_1}}{R(\alpha_j)}$ for each $j \in I$.  
It is of degree $\Lambda (M_1, L(j)) = \delta_{i,j} (\alpha_i, \alpha_i) - (\gamma_1, \alpha_j)$ by Lemma \ref{lem:LambdaforLi} and Lemma \ref{lem:determinantialepsilon}. 
Let $\phi \colon Q \to \mathbb{Z}$ be a $\mathbb{Z}$-linear homomorphism defined by $\phi (\alpha_j) = \Lambda (M_1, L(j)) \ (j \in I)$. \index{$\phi(\beta)$}
The next lemma is a variation of \cite[Proposition 4.1]{MR4359265}. 

\begin{lemma} \label{lem:braider}
There exists a natural morphism 
 \[
 \braider{\widehat{M_1}}{X} \colon q^{\phi(\beta)}\widehat{M_1} \circ X \to X \circ \widehat{M_1}  \index{$\braider{\widehat{M_1}}{X}$}
 \]
 for $\beta \in Q_+, X \in \gMod{R(\beta)}$ such that
\[
  \braider{\widehat{M_1}}{R(\alpha_j)} = \renormalizedR{\widehat{M_1}}{R(\alpha_j)} 
\]
for all $j \in I$, and the diagram
\[
\begin{tikzcd}[column sep = 5cm]
q^{\phi(\beta + \gamma)} \widehat{M_1} \circ X \circ Y \arrow[r, "q^{\phi(\gamma)}\braider{\widehat{M_1}}{X}"]\arrow[rd, "\braider{\widehat{M_1}}{X \circ Y}"'] & q^{\phi(\gamma)} X \circ \widehat{M_1} \circ Y \arrow[d, "\braider{\widehat{M_1}}{Y}"] \\
 & X \circ Y \circ \widehat{M_1}
\end{tikzcd}
\]
commutes for $\beta, \gamma \in Q_+, X \in \gMod{R(\beta)}, Y \in \gMod{R(\gamma)}$. 
Moreover, $\braider{\widehat{M_1}}{X}$ is injective for any $\beta \in Q_+, X \in \gMod{R(\beta)}$. 
\end{lemma}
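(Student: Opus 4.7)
The plan is to build $\braider{\widehat{M_1}}{X}$ in three stages. First, for a sequence $\nu = (\nu_1, \ldots, \nu_n) \in I^n$ with $\beta_\nu = \sum_k \alpha_{\nu_k}$, I would define the morphism $\braider{\widehat{M_1}}{R(\alpha_{\nu_1}) \circ \cdots \circ R(\alpha_{\nu_n})}$ recursively as a composition of the renormalized R-matrices $\renormalizedR{\widehat{M_1}}{R(\alpha_{\nu_k})}$ in the order dictated by the triangle relation, so that the base case $n=1$ recovers $\renormalizedR{\widehat{M_1}}{R(\alpha_{\nu_1})}$. Via the canonical identification $R(\alpha_{\nu_1}) \circ \cdots \circ R(\alpha_{\nu_n}) \cong R(\beta_\nu) e(\nu)$ as left $R(\beta_\nu)$-modules, these assemble into a left $R(\beta + \gamma_1)$-linear morphism $\braider{\widehat{M_1}}{R(\beta)} := \bigoplus_{\nu \in I^\beta} \braider{\widehat{M_1}}{R(\beta) e(\nu)}$ of degree $\phi(\beta)$. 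The base identity $\braider{\widehat{M_1}}{R(\alpha_j)} = \renormalizedR{\widehat{M_1}}{R(\alpha_j)}$ and the triangle relation for iterated convolutions of $R(\alpha_j)$'s are then manifest.

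The main obstacle will be verifying that $\braider{\widehat{M_1}}{R(\beta)}$ is also right $R(\beta)$-linear. Compatibility with right multiplication by the idempotents $e(\nu')$ is built into the direct sum decomposition, and compatibility with the polynomial generators $x_k$ follows from the $\mathbf{k}[z,w]$-linearity of $\renormalizedR{\widehat{M_1}}{R(\alpha_j)}$ given by Proposition \ref{prop:affinermatrix}(1). The difficulty is the compatibility with the intertwiners $\tau_k$, which permute the summands $R(\beta) e(\nu)$ and $R(\beta) e(s_k \nu)$. To handle this I plan to compare with the universal R-matrix: the tower relation for $\universalR{\widehat{M_1}}{-}$ combined with Proposition \ref{prop:affinermatrix}(1) produces nonzero homogeneous polynomials $f_j(z, w)$ satisfying $\universalR{\widehat{M_1}}{R(\beta) e(\nu)} = \bigl( \prod_k f_{\nu_k}(z, x_k) \bigr) \cdot \braider{\widehat{M_1}}{R(\beta) e(\nu)}$. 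Since $\universalR{\widehat{M_1}}{R(\beta)}$ is automatically right $R(\beta)$-linear and multiplication by $\prod_k f_{\nu_k}(z, x_k)$ acts injectively on the appropriate polynomial-free part of $R(\beta) \circ \widehat{M_1}$, the $\tau_k$-compatibility of $\braider$ will follow from that of $\universalR$.

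Having established right $R(\beta)$-linearity, I would set $\braider{\widehat{M_1}}{X} := \braider{\widehat{M_1}}{R(\beta)} \otimes_{R(\beta)} \id_X$ for an arbitrary $X \in \gMod{R(\beta)}$; naturality in $X$ and the general triangle relation then follow from the case of the regular modules. For injectivity, each $\braider{\widehat{M_1}}{R(\beta) e(\nu)}$ is a composition of injections (by Proposition \ref{prop:reninj} together with exactness of convolution), so $\braider{\widehat{M_1}}{R(\beta)}$ is injective. For general $X$ I would appeal once more to the factorization by $\universalR$: the injectivity of $\universalR{\widehat{M_1}}{X}$ from Lemma \ref{lem:affinization}, combined with the fact that $X \circ \widehat{M_1}$ is $\mathbf{k}[z]$-torsion-free (since $\widehat{M_1}$ is $\mathbf{k}[z]$-free by the affinization condition), yields the injectivity of $\braider{\widehat{M_1}}{X}$.
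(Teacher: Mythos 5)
Your proposal is correct and follows essentially the same route as the paper: define $\braider{\widehat{M_1}}{R(\beta)e(\nu)}$ as a composite of renormalized R-matrices, deduce right $R(\beta)$-linearity and injectivity from the factorization $\universalR{\widehat{M_1}}{R(\beta)} = f_\beta \cdot \braider{\widehat{M_1}}{R(\beta)}$ with $f_\beta \in Z(R(\beta))[z]$ acting injectively, and extend to general $X$ by $\otimes_{R(\beta)}$. The only cosmetic difference is your appeal to $\mathbf{k}[z]$-torsion-freeness at the end, which is not needed: injectivity of $\braider{\widehat{M_1}}{X}$ already follows from injectivity of the composite $\universalR{\widehat{M_1}}{X} = f_\beta\,\braider{\widehat{M_1}}{X}$.
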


\begin{proof}
We ignore degree shifts in this proof. 
As remarked after Proposition \ref{prop:affinermatrix}, there is a polynomial $f_j(z, w_j) \in \mathbf{k}[z, w_j]$ uniquely determined by $\universalR{\widehat{M_1}}{R(\alpha_j)} = f_j(z, w_j) \renormalizedR{\widehat{M_1}}{R(\alpha_j)}$. 
It is homogeneous if we set $\deg w_j = (\alpha_j,\alpha_j)$, and its leading coefficient with respect to $z$ is in $\mathbf{k}^{\times}$. 
Let $\beta \in Q_+$ and set $n =\height (\beta)$. 
We have $R(\beta) = \bigoplus_{\nu \in I^{\beta}} R(\beta)e(\nu) = \bigoplus_{\nu \in I^{\beta}} R(\alpha_{\nu_1}) \circ \cdots \circ R(\alpha_{\nu_n})$. 
We define $\braider{\widehat{M_1}}{R(\beta)} = \bigoplus_{\nu \in I^{\beta}} \braider{\widehat{M_1}}{R(\beta)e(\nu)}$, 
where $\braider{\widehat{M_1}}{R(\beta)e(\nu)}$ is the composite
\begin{align*}
 \widehat{M_1} \circ R(\alpha_{\nu_1}) \circ \cdots \circ  R(\alpha_{\nu_n}) \xrightarrow{\renormalizedR{\widehat{M_1}}{R(\alpha_{\nu_1})}} & R(\alpha_{\nu_1}) \circ \widehat{M_1} \circ \cdots \circ R(\alpha_{\nu_n}) \\
 \xrightarrow{\renormalizedR{\widehat{M_1}}{R(\alpha_{\nu_2})}} & \cdots \\ 
 \xrightarrow{\renormalizedR{\widehat{M_1}}{R(\alpha_{\nu_n})}} & R(\alpha_{\nu_1}) \circ \cdots \circ R(\alpha_{\nu_n}) \circ \widehat{M_1}.
\end{align*}

Note that $\widehat{M_1}\circ R(\beta)$ and $R(\beta) \circ \widehat{M_1}$ are right $R(\beta)$-modules by the right multiplication.
Now, we claim that $\braider{\widehat{M_1}}{R(\beta)}$ is a morphism of right $R(\beta)$-modules. 
Let
\[
f_{\beta} = \sum_{\nu \in I^{\beta}} f(z, x_1) \cdots f(z, x_n) e(\nu) \in Z(R(\beta))[z], 
\]
where $Z(R(\beta))$ is the center of $R(\beta)$. 
Then, we have 
\[
\universalR{\widehat{M_1}}{R(\beta)} =  f_{\beta} \braider{\widehat{M_1}}{R(\beta)}
\]
Note that the leading coefficient of $f_{\beta}$ with respect to $z$ is in $Z(R(\beta))_0 \setminus \{0\} = \mathbf{k}^{\times}$
By a similar argument to the proof of Lemma \ref{lem:affinization}, $f_{\beta}$ acts on both $\widehat{M_1} \circ R(\beta)$ and $R(\beta) \circ \widehat{M_1}$ injectively. 
Since $\universalR{\widehat{M_1}}{R(\beta)}$ is right $R(\beta)$-linear, so is $\braider{\widehat{M_1}}{R(\beta)}$. 

For $X \in \gMod{R(\beta)}$, we define $\braider{\widehat{M_1}}{X}$ as 
\[
\widehat{M_1} \circ X \simeq (\widehat{M_1} \circ R(\beta)) \otimes_{R(\beta)} X \xrightarrow{\braider{\widehat{M_1}}{R(\beta)} \otimes \id_X} (R(\beta) \circ \widehat{M_1}) \otimes_{R(\beta)} X \simeq X \circ \widehat{M_1}. 
\]
It is functorial in $X$ and fulfills the first half of the requirements. 
Moreover, we have 
\[
\universalR{\widehat{M_1}}{X} = f_{\beta} \braider{\widehat{M_1}}{X}.
\]
Since $\universalR{\widehat{M_1}}{X}$ is injective for all $X \in \gMod{R(\beta)}$ by Lemma \ref{lem:affinization}, $\braider{\widehat{M_1}}{X}$ is also injective. 
\end{proof}

\begin{corollary} \label{cor:braidervsren}
Let $L$ be a simple $R(\beta)$-module. 
Then, 
\[
\Lambda(M_1, L) \in \phi(\beta) - \mathbb{Z}_{\geq 0} (\alpha_i, \alpha_i). 
\]
\end{corollary}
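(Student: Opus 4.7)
The strategy is to compare the injective braider morphism from Lemma~\ref{lem:braider} with the renormalized R-matrix, using the fact that the space of $\mathbf{k}[z]$-linear intertwiners between $\widehat{M_1} \circ L$ and $L \circ \widehat{M_1}$ is a free rank-one $\mathbf{k}[z]$-module.

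First, I will apply Lemma~\ref{lem:braider} with $X = L$ to obtain an injective morphism
\[
\braider{\widehat{M_1}}{L} \colon q^{\phi(\beta)} \widehat{M_1} \circ L \to L \circ \widehat{M_1},
\]
which, regarded as a map $\widehat{M_1} \circ L \to L \circ \widehat{M_1}$, has degree $\phi(\beta)$. The construction of $\braider{\widehat{M_1}}{R(\beta)}$ in the proof of Lemma~\ref{lem:braider} only uses the renormalized R-matrices $\renormalizedR{\widehat{M_1}}{R(\alpha_j)}$, all of which are $\mathbf{k}[z]$-linear; consequently $\braider{\widehat{M_1}}{L}$ is $R(\beta + \gamma_1)[z]$-linear.

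Next, note that $M_1$ is real simple with affinization $(\widehat{M_1}, z)$ (Proposition~\ref{prop:detaffinization}) and $L$ is simple, so Theorem~\ref{thm:rmatrix}~(3) yields
\[
\Hom_{R(\beta + \gamma_1)[z]}(\widehat{M_1} \circ L,\ L \circ \widehat{M_1}) = \mathbf{k}[z]\, \renormalizedR{\widehat{M_1}}{L},
\]
and $\renormalizedR{\widehat{M_1}}{L}$ has degree $\Lambda(M_1, L)$ by Definition~\ref{def:renormalizedR}. Therefore there exists $g(z) \in \mathbf{k}[z]$ with $\braider{\widehat{M_1}}{L} = g(z)\, \renormalizedR{\widehat{M_1}}{L}$. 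Injectivity of $\braider{\widehat{M_1}}{L}$ forces $g(z) \neq 0$, and comparing degrees gives
\[
\phi(\beta) \;=\; \deg g(z) + \Lambda(M_1, L),
\]
with $\deg g(z) \in \mathbb{Z}_{\geq 0} \cdot \deg z = \mathbb{Z}_{\geq 0} (\alpha_i, \alpha_i)$ since $z = z_i$ has degree $(\alpha_i, \alpha_i)$ in our normalization of $R_z^{\Lambda_i}$. Rearranging yields $\Lambda(M_1, L) \in \phi(\beta) - \mathbb{Z}_{\geq 0}(\alpha_i, \alpha_i)$, which is the claim.

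I do not anticipate a genuine obstacle here: both ingredients — the injectivity of the braider and the structural description of $\mathbf{k}[z]$-linear Hom spaces — are already established, so the corollary is essentially a direct juxtaposition of Lemma~\ref{lem:braider} and Theorem~\ref{thm:rmatrix}~(3). The only minor point to verify carefully is the $\mathbf{k}[z]$-linearity of $\braider{\widehat{M_1}}{L}$, which is transparent from the inductive construction via renormalized R-matrices in the proof of Lemma~\ref{lem:braider}.
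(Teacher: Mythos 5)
Your argument is correct and is essentially identical to the paper's proof: both observe that $\braider{\widehat{M_1}}{L}$ is a nonzero element of $\Hom_{R(\gamma_1+\beta)[z]}(\widehat{M_1}\circ L, L\circ \widehat{M_1})$, invoke Theorem~\ref{thm:rmatrix}~(3) to write it as $g(z)\renormalizedR{\widehat{M_1}}{L}$, and compare degrees using $\deg z = (\alpha_i,\alpha_i)$. The only difference is that you spell out the degree bookkeeping explicitly, which the paper leaves implicit.
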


\begin{proof}
By the construction, $\braider{\widehat{M}_1}{L}$ is a nonzero element of $\Hom_{R(\gamma_1 + \beta)[z]} (\widehat{M}_1 \circ L, L \circ \widehat{M}_1)$. 
Theorem \ref{thm:rmatrix} (3) shows that $\braider{\widehat{M}_1}{L} \in \mathbf{k}[z] \renormalizedR{\widehat{M}_1}{L}$, which proves the corollary. 
\end{proof}

\begin{definition} \label{def:Gamma}
  We define a functor $\Gamma \colon \gMod{R(\beta)} \to \gMod{R(\beta + \gamma_1)}$ by
  \[
  \Gamma (X) = \operatorname{Cok} (\braider{\widehat{M_1}}{X} \colon q^{\phi(\beta)}\widehat{M_1} \circ X \to X \circ \widehat{M_1}). 
  \] \index{$\Gamma(X)$}
\end{definition}

\begin{corollary}
  The functor $\Gamma$ is exact. 
\end{corollary}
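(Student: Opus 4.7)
The plan is to apply the snake lemma to the natural transformation $\braider{\widehat{M_1}}{-}$. Given a short exact sequence $0 \to X' \to X \to X'' \to 0$ in $\gMod{R(\beta)}$, the exactness of the convolution functors $\widehat{M_1} \circ (-)$ and $(-) \circ \widehat{M_1}$ (which follows from $R(\beta + \gamma_1) e(\gamma_1, \beta)$ and $R(\beta + \gamma_1) e(\beta, \gamma_1)$ being right projective over $R(\gamma_1) \otimes R(\beta)$ and $R(\beta) \otimes R(\gamma_1)$ respectively) yields the commutative diagram with exact rows
\[
\begin{tikzcd}[column sep = 1cm]
0 \arrow[r] & q^{\phi(\beta)}\widehat{M_1} \circ X' \arrow[r]\arrow[d, "\braider{\widehat{M_1}}{X'}"'] & q^{\phi(\beta)}\widehat{M_1} \circ X \arrow[r]\arrow[d, "\braider{\widehat{M_1}}{X}"'] & q^{\phi(\beta)}\widehat{M_1} \circ X'' \arrow[r]\arrow[d, "\braider{\widehat{M_1}}{X''}"'] & 0 \\
0 \arrow[r] & X' \circ \widehat{M_1} \arrow[r] & X \circ \widehat{M_1} \arrow[r] & X'' \circ \widehat{M_1} \arrow[r] & 0,
\end{tikzcd}
\]
where the vertical arrows are natural by Lemma~\ref{lem:braider}.

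Applying the snake lemma produces the exact sequence
\[
0 \to \Ker \braider{\widehat{M_1}}{X'} \to \Ker \braider{\widehat{M_1}}{X} \to \Ker \braider{\widehat{M_1}}{X''} \to \Gamma(X') \to \Gamma(X) \to \Gamma(X'') \to 0.
\]
By the injectivity statement in Lemma~\ref{lem:braider}, all three kernels vanish, so the sequence reduces to $0 \to \Gamma(X') \to \Gamma(X) \to \Gamma(X'') \to 0$, which is the desired exactness.

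There is essentially no obstacle here: both ingredients, namely naturality and injectivity of $\braider{\widehat{M_1}}{-}$, have already been established in Lemma~\ref{lem:braider}, and the exactness of convolution with a fixed module on either side is standard. The only thing to note is that the degree shift $q^{\phi(\beta)}$ is constant on the short exact sequence (since $X'$, $X$, $X''$ all lie in $\gMod{R(\beta)}$), so it plays no role in the argument.
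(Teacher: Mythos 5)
Your argument is correct and is exactly the paper's proof, just written out in full: the paper likewise invokes the injectivity and naturality of $\braider{\widehat{M_1}}{X}$ from Lemma~\ref{lem:braider}, the exactness of convolution products, and the snake lemma. Nothing is missing.
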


\begin{proof}
 By Lemma \ref{lem:braider}, $\braider{\widehat{M_1}}{X}$ is injective for all $X \in \gMod{R(\beta)}$. 
 The assertion follows from the exactness of convolution products and the snake lemma. 
\end{proof}

\subsection{Foundational short exact sequence} \label{sub:SES}

In this section, we provide a key short exact sequence. 
It allows us to relate $\widehat{M}(ws_i \Lambda_i, w\Lambda_i)$ to already analyzed modules $\widehat{M}(w\Lambda_i, \Lambda_i)$ and $\widehat{M}(ws_i\Lambda_i, \Lambda_i)$. 
Let $w \in W, i \in I$ and assume that $ws_i > w, w\Lambda_i \neq \Lambda_i$. 
Set $M_1 = M(w\Lambda_i , \Lambda_i), \widehat{M_1} = \widehat{M}(w\Lambda_i, \Lambda_i), M_2 = M(ws_i\Lambda_i, w\Lambda_i), \widehat{M_2} = \widehat{M}(ws_i \Lambda_i, w \Lambda_i), \index{$M_2$} \index{$\widehat{M_2}$}
M_3 = M(ws_i \Lambda_i, \Lambda_i), \widehat{M_3} = \widehat{M}(ws_i \Lambda_i, \Lambda_i), \index{$M_3$} \index{$\widehat{M_3}$}
\gamma_1 = -\wt M_1 = \Lambda_i - w \Lambda_i, \gamma_2 = - \wt M_2 = w \alpha_i, \gamma_3 = - \wt M_3 = \Lambda_i - ws_i \Lambda_i$. \index{$\gamma_2$} \index{$\gamma_3$}
Here, we use the affinizations specified after Proposition \ref{prop:detaffinization}. 
Note that Lemma \ref{lem:affinefg} ensures that all the modules are finitely generated. 
For each $k \in \{ 1,2,3 \}$, let $z_k$ denote the endomorphism of the affinization $\widehat{M_k}$. 
They all have degree $(\alpha_i, \alpha_i)$.  

\begin{lemma} \label{lem:lambda}
 We have 
 \begin{enumerate}
  \item $\Lambda (M_1, M_2) = \phi(\gamma_2) =  (\alpha_i, \alpha_i) + (\gamma_1, \gamma_2)$, $(M_2,M_1)$ is unmixing and $\Lambda (M_2, M_1) =  -(\gamma_1, \gamma_2)$. 
  \item Let $L = q^{(\alpha_i, \alpha_i)/2 + (\gamma_1, \gamma_2)} M_1 \nabla M_2$. 
  It is a self-dual simple $R(\gamma_3)$-module not isomorphic to $M_3$, and we have non-split short exact sequences
  \begin{align*}
    0 \to q^{(\alpha_i, \alpha_i)/2} L \to &M_2 \circ M_1 \to M_3 \to 0, \\
    0 \to q^{-(\gamma_1, \gamma_2)} M_3 \to &M_1 \circ M_2 \to q^{-(\alpha_i, \alpha_i)/2 - (\gamma_1, \gamma_2)} L \to 0. 
  \end{align*}
  \item $\renormalizedR{\widehat{M_1}}{\widehat{M_2}} = \braider{\widehat{M_1}}{\widehat{M_2}}$ up to a nonzero scalar multiple. 
 \end{enumerate}
\end{lemma}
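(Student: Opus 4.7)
My strategy is to handle (1) and (3) by squeezing $\Lambda(M_1,M_2)$ between upper bounds coming from the braider of Lemma~\ref{lem:braider} and the sum constraint of Corollary~\ref{cor:lambdadiscrete}, and to establish (2) by proving that $M_2\circ M_1$ has length $2$.

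For the easy parts of (1), Lemma~\ref{lem:detsupport} gives $\W(M_1)\subset Q_+\cap wQ_-$ and $\W^*(M_2)\subset Q_+\cap wQ_+$, whose intersection inside $Q_+$ is $Q_+\cap w(Q_+\cap Q_-)=\{0\}$; hence $(M_2,M_1)$ is unmixing and Lemma~\ref{lem:unmixingr} gives $\Lambda(M_2,M_1)=-(\gamma_1,\gamma_2)$. To evaluate $\phi(\gamma_2)$, I would first show $\varepsilon_j^*(M_1)=\delta_{ij}$ for every $j\in I$: when $s_j\le w$, Lemma~\ref{lem:determinantialepsilon}(3) (with $y=e$) gives $\varepsilon_j^*(M_1)=\langle h_j,\Lambda_i\rangle=\delta_{ij}$; when $s_j\not\le w$ (so $j\neq i$) the $\alpha_j$-coefficient of $\gamma_1$ vanishes and hence $\alpha_j\notin\W^*(M_1)$. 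Lemma~\ref{lem:LambdaforLi} then yields $\phi(\alpha_j)=\delta_{ij}(\alpha_i,\alpha_i)-(\gamma_1,\alpha_j)$. Expanding $w\alpha_i=\sum_j n_j\alpha_j$ and pairing with $\Lambda_i$ gives $n_i(\alpha_i,\alpha_i)=2(\Lambda_i,w\alpha_i)$, which combined with $2(\Lambda_i,w\alpha_i)=2(\gamma_1,\gamma_2)+2(w\Lambda_i,w\alpha_i)=2(\gamma_1,\gamma_2)+(\alpha_i,\alpha_i)$ (using $W$-invariance of the form and $2(\Lambda_i,\alpha_i)=(\alpha_i,\alpha_i)$) produces $\phi(\gamma_2)=n_i(\alpha_i,\alpha_i)-(\gamma_1,\gamma_2)=(\alpha_i,\alpha_i)+(\gamma_1,\gamma_2)$.

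Corollary~\ref{cor:braidervsren} combined with Corollary~\ref{cor:lambdadiscrete} and $\Lambda(M_2,M_1)=-(\gamma_1,\gamma_2)$ confines $\Lambda(M_1,M_2)$ to $\{(\gamma_1,\gamma_2),\,(\alpha_i,\alpha_i)+(\gamma_1,\gamma_2)\}$; by Proposition~\ref{prop:commuting}(1) the former value holds iff $M_1\circ M_2$ is simple, equivalently iff $M_2\circ M_1\simeq q^aM_3$ for some $a\in\mathbb{Z}$. The main obstacle is to rule this out. My plan is to invoke the quantum T-system/Pl\"ucker-type identity for unipotent quantum minors in $A_q(\n)$, valid in any symmetrizable Kac--Moody type (cf.\,\cite{MR3758148}), which expresses the product $D(ws_i\Lambda_i,w\Lambda_i)\cdot D(w\Lambda_i,\Lambda_i)$ as $q^{a_1}D(ws_i\Lambda_i,\Lambda_i)$ plus a nonzero correction in the dual canonical basis distinct from $D(ws_i\Lambda_i,\Lambda_i)$; via $\Psi_2$ this forces $[M_2\circ M_1]\neq q^{a_1}[M_3]$ in $K(\gmod R)$, so $M_2\circ M_1$ is not simple. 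Proposition~\ref{prop:commuting}(2) then gives $\Lambda(M_1,M_2)+\Lambda(M_2,M_1)=(\alpha_i,\alpha_i)$, whence $\Lambda(M_1,M_2)=\phi(\gamma_2)$, completing (1). For (2), Proposition~\ref{prop:dethead} identifies the head of $M_2\circ M_1$ as $M_3=M_2\nabla M_1$, and Theorem~\ref{thm:rmatrix}(5) identifies its socle as $M_2\Delta M_1\simeq q^{\Lambda(M_1,M_2)}(M_1\nabla M_2)$; setting $L=q^{(\alpha_i,\alpha_i)/2+(\gamma_1,\gamma_2)}(M_1\nabla M_2)$, the shift $(\Lambda(M_1,M_2)+(\gamma_1,\gamma_2))/2=(\alpha_i,\alpha_i)/2+(\gamma_1,\gamma_2)$ matches Lemma~\ref{lem:degreeshift}, so $L$ is self-dual simple, and direct degree bookkeeping yields the first SES. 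The second SES follows symmetrically using $M_1\Delta M_2\simeq q^{-(\gamma_1,\gamma_2)}M_3$, the socle of $M_1\circ M_2$; non-splitness is automatic since both convolution products have simple heads, and $L\not\simeq M_3$ is a consequence of the non-simplicity of $M_2\circ M_1$ (if they coincided, socle and head of $M_2\circ M_1$ would be isomorphic).

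For (3), Proposition~\ref{prop:affinermatrix}(1) gives $\Hom_{R(\gamma_1+\gamma_2)[z_1,z_2]}(\widehat{M_1}\circ\widehat{M_2},\widehat{M_2}\circ\widehat{M_1})=\mathbf{k}[z_1,z_2]\cdot\renormalizedR{\widehat{M_1}}{\widehat{M_2}}$ with generator of degree $\Lambda(M_1,M_2)=\phi(\gamma_2)$. By Lemma~\ref{lem:braider}, $\braider{\widehat{M_1}}{\widehat{M_2}}$ is a nonzero element of the same Hom of the same degree $\phi(\gamma_2)$; writing $\braider{\widehat{M_1}}{\widehat{M_2}}=f(z_1,z_2)\,\renormalizedR{\widehat{M_1}}{\widehat{M_2}}$ with $f\in\mathbf{k}[z_1,z_2]$, degree matching forces $\deg f=0$, so $f\in\mathbf{k}^\times$.
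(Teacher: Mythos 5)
Most of your proposal tracks the paper's own argument: the computation of $\phi(\gamma_2)$ via $\varepsilon_j^*(M_1)=\delta_{ij}$ and Lemma~\ref{lem:LambdaforLi}, the unmixing of $(M_2,M_1)$ from Lemma~\ref{lem:detsupport}, the confinement of $\Lambda(M_1,M_2)$ to two values via Corollary~\ref{cor:braidervsren} and Corollary~\ref{cor:lambdadiscrete}, and the treatment of (2) and (3) by Proposition~\ref{prop:commuting}(2), Theorem~\ref{thm:rmatrix}, Lemma~\ref{lem:degreeshift}, and degree matching in $\Hom_{R(\gamma_1+\gamma_2)[z_1,z_2]}$. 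All of that is correct.

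The gap is at the one step that actually carries the weight of the lemma: ruling out $\Lambda(M_1,M_2)=(\gamma_1,\gamma_2)$, i.e.\ the simplicity of $M_2\circ M_1$. You propose to invoke a quantum T-system/Pl\"ucker identity expressing $D(ws_i\Lambda_i,w\Lambda_i)\cdot D(w\Lambda_i,\Lambda_i)$ as $q^{a_1}D(ws_i\Lambda_i,\Lambda_i)$ plus a nonzero correction, citing \cite{MR3758148}. This does not work as stated: the relevant statement in \cite{MR3758148} (Proposition 10.3.3 and the results it depends on) is established there only for symmetric Cartan matrices and characteristic zero, using geometric input --- which is exactly the restriction the present paper is written to remove, as its own remark following the lemma explains. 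Moreover, the assertion that the correction term is nonzero and distinct from $D(ws_i\Lambda_i,\Lambda_i)$ is, under the categorification, equivalent to the non-simplicity of $M_2\circ M_1$; invoking it without an independent proof in general symmetrizable type is essentially circular. The paper instead closes this step with a short self-contained argument: if $M_2\circ M_1$ were simple it would equal $M_2\nabla M_1\simeq M_3$ by Proposition~\ref{prop:dethead}, the Mackey filtration would give $\varepsilon_j^*(M_3)=\varepsilon_j^*(M_2)+\varepsilon_j^*(M_1)$ for all $j$, and since $\varepsilon_j^*(M_3)=\delta_{ij}=\varepsilon_j^*(M_1)$ this forces $\varepsilon_j^*(M_2)=0$ for every $j$, contradicting $\wt M_2=-w\alpha_i\neq 0$ via Theorem~\ref{thm:categoricalcrystal}. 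You should replace the T-system appeal with this (or some other characteristic-free, type-free) argument.

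A minor point: your justification that $L\not\simeq M_3$ (``otherwise head and socle of $M_2\circ M_1$ would be isomorphic'') needs the multiplicity statement $[M_2\circ M_1:M_2\nabla M_1]_q=1$ from Theorem~\ref{thm:rmatrix}(6)--(7) to exclude a length-two module with isomorphic head and socle up to degree shift; make that explicit.
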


\begin{remark}  
  This lemma was proved in \cite[Proposition 10.3.3]{MR3758148} by a computation of global basis, under the assumption that $R$ is symmetric and $\mathbf{k}$ is of characteristic zero. 
  A part of their proof relied on \cite[Theorem 10.3.1]{MR3758148}, which was proved using geometric results. 
  This is why they put these assumptions. 
  Subsequently, \cite[Theorem 10.3.1]{MR3758148} is generalized to arbitrary quiver Hecke algebras without assuming $\chara \mathbf{k} = 0$ in \cite[Proposition 4.6]{MR3771147}. 
  Therefore, it seems feasible to prove the lemma above based on a computation of global basis as in \cite[Proposition 10.3.3]{MR3758148}. 
  With that being said, we adopt yet another approach using R-matrices. 

  See also \cite[Corollary 11.5]{mcnamara2021clustermonomialsdualcanonical}, where the lemma in the symmetric case is proved for arbitrary characteristics. 
\end{remark}

\begin{proof}
  (1) We first show that $\phi(\gamma_2) = (\alpha_i, \alpha_i) + (\gamma_1, \gamma_2)$. 
  Since $1 = \langle h_i, \Lambda_i \rangle = 2(\alpha_i, \Lambda_i) / (\alpha_i, \alpha_i)$, we have $(\alpha_i, \Lambda_i) = (\alpha_i, \alpha_i)/2$.   
  Writing $\gamma_2 = w\alpha_i = \sum_{j \in I} n_j \alpha_j \ (n_j \in \mathbb{Z}_{\geq 0})$, we compute 
  \[
  (\gamma_1, \gamma_2) = (\Lambda_i - w\Lambda_i, \gamma_2) = n_i(\Lambda_i, \alpha_i) - (w\Lambda_i, w\alpha_i) = \frac{n_i -1}{2} (\alpha_i, \alpha_i).  
  \]
  Hence, we obtain 
  \[
  \phi(\gamma_2) = n_i (\alpha_i, \alpha_i) - (\gamma_1, \gamma_2) = \frac{n_i + 1}{2} (\alpha_i, \alpha_i), 
  \]
  which proves $\phi(\gamma_2) = (\alpha_i, \alpha_i) + (\gamma_1, \gamma_2)$.
  
  By Proposition \ref{lem:detsupport}, $(M_2, M_1)$ is unmixing. 
  Hence, Lemma \ref{lem:unmixingr} shows that $\Lambda(M_2, M_1) = -(\gamma_1, \gamma_2)$. 
  By Corollary \ref{cor:braidervsren}, $\Lambda (M_1, M_2) \in \phi(\gamma_2) - \mathbb{Z}_{\geq 0} (\alpha_i, \alpha_i) = (\alpha_i,\alpha_i) + (\gamma_1,\gamma_2) - \mathbb{Z}_{\geq 0} (\alpha_i,\alpha_i)$.
  On the other hand, Corollary \ref{cor:lambdadiscrete} shows $\Lambda (M_1, M_2) + \Lambda (M_2, M_1) \geq 0$.
  Therefore, there are only two possibilities: 
  $\Lambda(M_1, M_2) = (\gamma_1, \gamma_2) + (\alpha_i, \alpha_i)$ or $\Lambda(M_1, M_2) = (\gamma_1, \gamma_2)$. 

  We claim that the second case cannot occur. 
  We repeat the argument from \cite[Proposition 10.3.3]{MR3758148} verbatim.  
  Suppose that $ \Lambda (M_1, M_2) = (\gamma_1, \gamma_2)$.
  Then, we have $\Lambda(M_1, M_2) + \Lambda(M_2,M_1) = 0$, which implies that $M_2 \circ M_1$ is simple by Proposition \ref{prop:commuting} (1).
  Hence, $M_2 \circ M_1 \simeq M_2 \nabla M_1$. 
  Moreover, Proposition \ref{prop:dethead} shows that $M_2 \nabla M_1 \simeq M_3$. 
  Using the Mackey filtration, we obtain $\varepsilon_j^* (M_3) = \varepsilon_j^* (M_2 \circ M_1) = \varepsilon_j^* (M_2) + \varepsilon_j^* (M_1)$ for all $j \in I$. 
  Here, we have $\varepsilon_j^* (M_3) = \delta_{i, j} = \varepsilon_j^* (M_1) \ (j \in I)$ by Lemma \ref{lem:determinantialepsilon}, hence $\varepsilon_j^*(M_2) = 0$ for all $j \in I$. 
  It contradicts $\wt M_2 = - w\alpha_i \neq 0$ by Theorem \ref{thm:categoricalcrystal}. 

  (2) By (1), $\Lambda (M_1, M_2) + \Lambda (M_2, M_1) = (\alpha_i, \alpha_i)$, which is equal to $\deg z_1$. 
  Thus, the length of $M_1 \circ M_2$ is exactly 2 by Proposition \ref{prop:commuting} (2). 
  The degree shifts are determined by Lemma \ref{lem:degreeshift} and the fact that $D(M_1 \circ M_2) \simeq q^{(\gamma_1,\gamma_2)}M_2 \circ M_1$. 
  By Theorem \ref{thm:rmatrix} (6), $L$ is not isomorphic to $M_3$. 
 
  (3) By (1), we have $\deg (\renormalizedR{\widehat{M_1}}{\widehat{M_2}}) = \Lambda (M_1, M_2) = \phi (\gamma_2) = \deg (\braider{\widehat{M_1}}{\widehat{M_2}})$. 
  The assertion follows from Theorem \ref{thm:rmatrix} (3). 
\end{proof}

\begin{theorem} \label{thm:SES}
 Let $i \in I$ and $w \in W$ with $ws_i > w$. 
 Assume that $w\Lambda_i \neq \Lambda_i$. 
 Then, there exists a short exact sequence
\[
0 \to q^{(\alpha_i, \alpha_i) + (\gamma_1, \gamma_2)}\widehat{M_1} \circ \widehat{M_2} \to \widehat{M_2} \circ \widehat{M_1} \to \widehat{M_3} \to 0, 
\]
where the injection is $\renormalizedR{\widehat{M_1}}{\widehat{M_2}}$. 
\end{theorem}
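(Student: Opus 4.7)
The plan is to identify the cokernel of $\renormalizedR{\widehat{M_1}}{\widehat{M_2}}$ with $\widehat{M_3}$. Injectivity of this renormalized R-matrix is Proposition \ref{prop:reninj}, and its degree is exactly $\Lambda(M_1,M_2) = (\alpha_i,\alpha_i)+(\gamma_1,\gamma_2)$ by Lemma \ref{lem:lambda}(1), so the shape of the proposed exact sequence is already correct; the whole content is identifying the cokernel. Denote this cokernel by $C$.

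I will first verify a graded character match. Since each $\widehat{M_k}$ is a free $\bfk[z_k]$-module with fiber $M_k$ at $z_k=0$, one has $\qdim \widehat{M_k} = \qdim M_k/(1-q^{(\alpha_i,\alpha_i)})$; similarly $\qdim(\widehat{M_2}\circ\widehat{M_1})$ and $\qdim(\widehat{M_1}\circ\widehat{M_2})$ are the corresponding non-affine products divided by $(1-q^{(\alpha_i,\alpha_i)})^2$. Plugging the short exact sequences of Lemma \ref{lem:lambda}(2) into
\[
\qdim C \;=\; \qdim(\widehat{M_2}\circ\widehat{M_1}) - q^{(\alpha_i,\alpha_i)+(\gamma_1,\gamma_2)}\qdim(\widehat{M_1}\circ\widehat{M_2}),
\]
the $\qdim L$ contributions cancel, leaving $\qdim C = \qdim M_3/(1-q^{(\alpha_i,\alpha_i)}) = \qdim \widehat{M_3}$.

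The isomorphism $C\simeq \widehat{M_3}$ itself I will obtain by equipping $C$ with an $R_z^{\Lambda_i}(ws_i\Lambda_i)$-module structure and invoking the Morita equivalence of Theorem \ref{thm:sl2categorification}, under which $\widehat{M_3}$ corresponds to the free rank-one $\bfk[z]$-module. Both $\widehat{M_1}$ and $\widehat{M_2}$ are $R_z^{\Lambda_i}$-modules by construction, and Lemma \ref{lem:determinantialepsilon} gives $\varepsilon_j^*(M_k)=\delta_{i,j}$ for $k=1,2$. Hence $e(*,j)\widehat{M_1} = e(*,j)\widehat{M_2} = 0$ for $j\neq i$, and the Mackey filtration applied to $e(*,j)(\widehat{M_2}\circ\widehat{M_1})$ vanishes, which gives $e(*,j)C=0$. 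For $j=i$, the two-step Mackey filtration on $e(*,i)(\widehat{M_2}\circ\widehat{M_1})$ has bottom subobject $\widehat{M_2}\circ e(*,i)\widehat{M_1}$, on which $x_n$ ($n=\height \gamma_3$) acts as $z_1$, and top quotient a shift of $e(*,i)\widehat{M_2}\circ\widehat{M_1}$, on which $x_n$ acts as $z_2$. Combining this with the $\bfk[z_1,z_2]$-linearity of $\renormalizedR{\widehat{M_1}}{\widehat{M_2}}$ from Proposition \ref{prop:affinermatrix}, I conclude that $(z_1-z_2)$ annihilates $C$, so both $z_k$ descend to a single endomorphism $z$; the Mackey data then yields $(x_n-z)e(*,i)C=0$, proving $C\in\gMod{R_z^{\Lambda_i}(ws_i\Lambda_i)}$. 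The character match forces $C\simeq \widehat{M_3}$ via the Morita equivalence.

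The main obstacle is establishing that $z_1$ and $z_2$ act identically on $C$, i.e., that $(z_1-z_2)\cdot(\widehat{M_2}\circ\widehat{M_1})\subseteq \Image\renormalizedR{\widehat{M_1}}{\widehat{M_2}}$. This is the sole nonformal input: vanishing of $e(*,j)$ for $j\neq i$ and the $(x_n-z)$-annihilation on the $e(*,i)$-component reduce to the Mackey computations indicated above, while the $\qdim$ match and the Morita equivalence then immediately promote the cyclotomic module structure to the desired isomorphism.
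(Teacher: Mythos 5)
Your high-level strategy (identify the cokernel $C$ of $\renormalizedR{\widehat{M_1}}{\widehat{M_2}}$ by giving it a cyclotomic module structure and invoking the Morita equivalence with $\mathbf{k}[z]$) is genuinely different from the paper's proof, which instead builds the sequence inside a type $A_2$ quiver Hecke algebra and transports it through a generalized Schur--Weyl duality functor. However, as written your argument has a fatal gap in the middle. You claim that Lemma \ref{lem:determinantialepsilon} gives $\varepsilon_j^*(M_k)=\delta_{i,j}$ for \emph{both} $k=1,2$, hence $e(*,j)\widehat{M_2}=0$ for $j\neq i$ and, via Mackey, $e(*,j)(\widehat{M_2}\circ\widehat{M_1})=0$. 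This is false for $M_2=M(ws_i\Lambda_i,w\Lambda_i)$: the lemma (when applicable) gives $\varepsilon_j^*(M_2)=\langle h_j,w\Lambda_i\rangle$, which can be positive for $j\neq i$; only the determinantial modules $M_1, M_3$ with lower parameter $e$ satisfy $\varepsilon_j^*=\delta_{i,j}$. Indeed the paper's own Lemma \ref{lem:headM2} shows that the ideal $J=\langle e(*,j),e(*,i,i)\rangle$ satisfies $J(\widehat{M_2}\circ\widehat{M_1})=\Image\renormalizedR{\widehat{M_1}}{\widehat{M_2}}\neq 0$, so the idempotents do \emph{not} kill $\widehat{M_2}\circ\widehat{M_1}$; what must be proved is precisely that they kill the quotient $C$, and your Mackey computation does not establish that. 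The subsequent discussion of $z_1-z_2$ and $(x_n-z)$ inherits this problem.

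The gap is repairable, but by a different mechanism than the one you propose: your character computation (which is correct) shows that $[C]_q=(1-q^{(\alpha_i,\alpha_i)})^{-1}[M_3]$, so \emph{every composition factor of $C$ is a degree shift of $M_3$}; since $\gMod{R'(ws_i\Lambda_i)}$ is a Serre subcategory closed under extensions (it is cut out by idempotents) and $M_3$ lies in it, a locally-finite Hausdorff filtration argument gives $JC=0$ directly, with no Mackey analysis and no separate treatment of $z_1,z_2$ (Proposition \ref{prop:cyclotomic} then supplies the $\mathbf{k}[z]$-structure for free). A second, smaller gap remains at the end: a graded $\mathbf{k}[z]$-module with character $(1-q^d)^{-1}$ need not be free --- $q^{md}\mathbf{k}[z]\oplus\mathbf{k}[z]/(z^m)$ has the same character --- so ``the character match forces $C\simeq\widehat{M_3}$'' is not yet a proof. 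You need either torsion-freeness (this is what the paper extracts from its explicit $A_2$ computation) or cyclicity, which you can get for free from the fact that $(\widehat{M_2},\widehat{M_1})$ is unmixing (Proposition \ref{prop:unmixingren} and Lemma \ref{lem:unmixing}(2)), so $\widehat{M_2}\circ\widehat{M_1}$ and hence $C$ has simple head; a cyclic graded $\mathbf{k}[z]$-module with that character must be $\mathbf{k}[z]$. With these two repairs your route closes and is arguably shorter than the paper's; without them it does not go through.
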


\begin{remark}
  Using the exact functor $\Gamma$ from Definition \ref{def:Gamma}, the proposition is rephrased as 
  \[
  \Gamma (\widehat{M_2}) \simeq \widehat{M_3}. 
  \]
\end{remark}

\begin{proof}
 We prove the proposition in three steps. 
 First, we directly construct a short exact sequence in the case of type $A_2$ quiver Hecke algebra $R^{A_2}$. 
 Second, we introduce an exact monoidal functor $F$ from $\gMod{R^{A_2}}$ to $\gMod{R}$. 
 Finally, we show that $F$ sends the short exact sequence obtained in Step 1 to the desired short exact sequence in $\gMod{R}$.  

 Step 1. We consider the root datum of type $A_2$: $I = \{ 1,2\}$ and $(\alpha_1, \alpha_1) = (\alpha_2, \alpha_2) = 2, (\alpha_1,\alpha_2) = -1$. 
 Set $\alpha_3 = \alpha_1 + \alpha_2$, the highest root.  
 Let $Q_{1,2} (u,v) = Q_{2,1} (v,u) = au + bv$ for some $a, b \in \mathbf{k}^{\times}$. 
 Let $R^{A_2}$ denote the quiver Hecke algebra associated with these datum.
 We need to introduce a slightly generalized grading on $R^{A_2}$ following \cite[Section 7.1]{MR4717658}. 
 Let $d \in \mathbb{Z}_{> 0}$ and 
 $(\lambda_{1,2}, \lambda_{2,1})$ be a pair of integers satisfying $\lambda_{1,2} + \lambda_{2,1} = d$.  
 Then, $R^{A_2}$ is graded by setting 
 \[
 \deg x_1 = \deg x_2 = d, \deg e(1,2) \tau_1 = \lambda_{1,2}, \deg e(2,1) \tau_1 = \lambda_{2,1}.   
 \]

Note that $R^{A_2}(\alpha_1) \circ R^{A_2}(\alpha_2) = R^{A_2}(\alpha_3)e(1,2)$ and $R^{A_2}(\alpha_2) \circ R^{A_2}(\alpha_1) = R^{A_2}(\alpha_3) e(2,1)$. 
Put $L^{A_2}(\alpha_3) = L^{A_2}(2) \nabla L^{A_2}(1)$. 
It is a self-dual one dimensional module on which $e(1,2)$ acts by zero. 
Note that $(L^{A_2}(1), L^{A_1}(2))$ is unmixing. 
Set $\operatorname{R} = \Rmatrix{R^{A_2}(\alpha_1)}{R^{A_2}(\alpha_2)} \colon q^{\lambda_{1,2}} R^{A_2}(\alpha_1) \circ R^{A_2}(\alpha_2) \to R^{A_2}(\alpha_2) \circ R^{A_2}(\alpha_1)$.
By the definition, it is given by the right multiplication by $e(1,2)\tau_1$. 
As left $\mathbf{k}[x_1,x_2]$-modules, 
\begin{align*}
R^{A_2}(\alpha_1) \circ R^{A_2}(\alpha_2) = \mathbf{k}[x_1,x_2] e(1,2) \oplus \mathbf{k}[x_1,x_2] \tau_1 e(1,2), \\
R^{A_2}(\alpha_2) \circ R^{A_2}(\alpha_1) = \mathbf{k}[x_1,x_2] e(2,1) \oplus \mathbf{k}[x_2,x_1] \tau_1 e(2,1). 
\end{align*}
Since $\tau_1 e(1,2) \tau_1 = Q_{2,1} (x_1, x_2) e(2,1) = (ax_2 + bx_1) e(2,1)$, we obtain
\[
\operatorname{Cok} (\operatorname{R}) \simeq \mathbf{k}[x_1, x_2]e(2,1) /(ax_2 + bx_1)\mathbf{k}[x_1, x_2] e(2,1). 
\]  
Hence, $\operatorname{R}$ is injective and  $\operatorname{Cok} (\operatorname{R})$ is an affine object of $L^{A_2}(\alpha_3)$. 
Setting $\widehat{L}^{A_2}(\alpha_3) = \Cok (\operatorname{R})$, we obtain a short exact sequence 
\[
 0 \to q^{\lambda_{1,2}} R^{A_2}(\alpha_1) \circ R^{A_2}(\alpha_2) \xrightarrow{R} R^{A_2}(\alpha_2) \circ R^{A_2}(\alpha_1) \to \widehat{L}^{A_2}(\alpha_3) \to 0
\]

 Step 2. 
 Based on Lemma \ref{lem:lambda}, we apply a general method developed in \cite[Section 7.4]{MR4717658} to define a monoidal functor $F \colon \gMod{R^{A_2}} \to \gMod{R}$. 
 For the sake of completeness, we outline how it works in the current situation. 
 We need a duality datum \cite[Section 7.4]{MR4717658}, which consists of a family of affinizations and R-matrices between them. 
 In our setup, the affinizations are $\widehat{M_1}$ and $\widehat{M_2}$.
 Renormalized R-matrices are unique up to nonzero scalar multiple and we make the following choice: 
 let $\renormalizedR{\widehat{M_1}}{\widehat{M_2}} = \braider{\widehat{M_1}}{\widehat{M_2}}$,  
 and fix $\renormalizedR{\widehat{M_2}}{\widehat{M_1}}$ in any way. 
 For $r \in \{1,2\}$, we choose $\renormalizedR{\widehat{M_r}}{\widehat{M_r}}$ so that it yields $\id_{M_r \circ M_r}$ when specialized to $z_r = 0$. 
 Note that $\renormalizedR{\widehat{M_r}}{\widehat{M_r}} \neq \id_{\widehat{M_r} \circ \widehat{M_r}}$, as explained in Remark \ref{rem:rennotid}. 
 There uniquely exists $T_r \in \End_{R(2\gamma_r)}(\widehat{M_r} \circ \widehat{M_r})$ such that $\renormalizedR{\widehat{M_r}}{\widehat{M_r}} - \id = (z_r \otimes 1 - 1 \otimes z_r) \circ T_r$ by \cite[Proposition 6.13]{MR4717658}. 
 We put $T_{1,2} = \renormalizedR{\widehat{M_1}}{\widehat{M_2}}, T_{2,1} = \renormalizedR{\widehat{M_2}}{\widehat{M_1}}, T_{1,1} = T_1, T_{2,2} = T_2$. 
 By Proposition \ref{prop:affinermatrix}, there exists $Q_{1,2}(u,v), Q_{2,1}(u,v) \in \mathbf{k}[u,v]$ uniquely determined by 
 \[
 \renormalizedR{\widehat{M}_2}{\widehat{M}_1} \renormalizedR{\widehat{M}_1}{\widehat{M}_2} = Q_{1,2}(z_1, z_2) \id_{\widehat{M}_1 \circ \widehat{M}_2}, \renormalizedR{\widehat{M}_1}{\widehat{M}_2} \renormalizedR{\widehat{M}_2}{\widehat{M}_1} = Q_{2,1}(z_2,z_1) \id_{\widehat{M}_2 \circ \widehat{M}_1}. 
 \]
 %Let $Q_{1,2} (u,v) = \mathscr{D} (\widehat{M_1}, \widehat{M_2}) (u,v)$ and $Q_{2,1} (u,v) = \mathscr{D} (\widehat{M_2}, \widehat{M_1})$. 
 They satisfy $Q_{1,2}(u,v) = Q_{2,1} (v,u)$ and $Q_{1,2}(u,0), Q_{1,2}(0,v) \neq 0$.  
 Moreover, $Q_{1,2}(z_1, z_2)$ is a homogeneous polynomial of degree $(\alpha_i, \alpha_i)$ by Lemma \ref{lem:lambda}. 
 Hence $Q_{1,2}(u,v)$ is of the form $au + bv$ for some $a,b \in \mathbf{k}^{\times}$. 
 We consider the quiver Hecke algebra $R^{A_2}$ associated with this $(Q_{1,2}, Q_{2,1})$.

 Using the duality datum, we define $(R, R^{A_2})$-modules.
 For $\alpha = m_1 \alpha_1 + m_2 \alpha_2 \in Q_+^{A_2} = \mathbb{Z}_{\geq 0} \alpha_1 + \mathbb{Z}_{\geq 0} \alpha_2$ of height $n = m_1 + m_2$, we put
 \[
 K(\alpha) = \bigoplus_{\nu \in \{1,2\}^{\alpha}} K(\nu)\in \gMod{R(\beta)}, \ K(\nu) = \widehat{M_{\nu_1}} \circ \widehat{M_{\nu_2}} \circ \cdots \circ \widehat{M_{\nu_n}}, 
 \]
 where $\beta = m_1 \gamma_1 + m_2 \gamma_2$. 
 The right $R^{A_2} (\alpha)$-action is given as follows: 
 $e(\nu)$ is the projection $K(\alpha) \to K(\nu) \hookrightarrow K(\alpha)$, 
 $e(\nu) x_k \colon K(\nu) \to K(\nu)$ is $\id_{K_{\nu_1}} \otimes \cdots \otimes z_{\nu_k} \otimes \cdots \otimes \id_{K_{\nu_n}}$ and 
 $e(\nu) \tau_k \colon K(\nu) \to K(s_k \nu)$ is $\id_{K_{\nu_1}} \otimes \cdots \otimes T_{\nu_k, \nu_{k+1}} \otimes \cdots \otimes \id_{K_{\nu_n}}$. 
 We put a $\mathbb{Z}$-grading on $R^{A_2}$ by setting $d = (\alpha_i, \alpha_i), \lambda_{1,2} = \Lambda (M_1, M_2), \lambda_{2,1} = \Lambda (M_2, M_1)$. 
 Note that the condition $\lambda_{1,2} + \lambda_{2,1} = d$ is satisfied by Lemma \ref{lem:lambda}. 
 Then, $K(\alpha)$ is a graded $(R(\beta), R^{A_2}(\alpha))$-module: in fact, it is similar to the polynomial representation \cite[Proposition 2.3]{MR2525917}. 
 Moreover, Lemma \ref{lem:affinefg} shows that $K(\alpha)$ is finitely generated as a left $R(\beta)$-module. 
 Hence, we obtain a right exact functor $F \colon \gMod{R^{A_2} (\alpha)} \to \gMod{R(\beta)}$ defined by $F(X) = K(\alpha) \otimes_{R^{A_2}(\alpha)} X$. 

 The functor $F$ is exact. 
 In fact, by \cite[Proposition  3.7]{MR3748315} \cite[Proposition 7.6]{MR4717658}, it follows from the fact that the global dimension of $\gMod{R^{A_2}}(\alpha)$ is finite \cite[Theorem 4.7]{MR3205728}.  
 (See also Section \ref{sub:interlude}, where we offer a generalization of this argument. )
 Actually, we only need the exactness of $F$ for $\alpha \in \{0, \alpha_1, \alpha_2, \alpha_3\}$. 
 For these $\alpha$, it is possible to directly verify that $\gldim \gMod{R^{A_2}(\alpha)}$ is finite without referring to \cite[Theorem 4.7]{MR3205728}. 

Step 3. Applying the exact functor $F$ to the short exact sequence in Step 1, we obtain a short exact sequence in $\gMod{R(\gamma_3)}$
\[
0 \to q^{\lambda_{1,2}} \widehat{M_1} \circ \widehat{M_2} \xrightarrow{\renormalizedR{\widehat{M_1}}{\widehat{M_2}}} \widehat{M_2} \circ \widehat{M_1} \to F(\widehat{L}^{A_2}(\alpha_3)) \to 0. 
\]
We claim that $F(\widehat{L}^{A_2}(\alpha_3))$ is isomorphic to $\widehat{M_3}$. 
Since $F(x_1\vert_{R^{A_2}(\alpha_1)}) = z_1 \vert_{\widehat{M_1}}$, we have 
\[
F(L^{A_2}(1)) \simeq F(R^{A_2}(\alpha_1)/x_1R^{A_2}(\alpha_1)) \simeq \widehat{M_1}/z_1 \widehat{M_1} \simeq M_1. 
\]
Similarly, we have $F(L^{A_2}(2)) \simeq M_2$. 
Through the surjections $R^{A_2}(\alpha_1) \to L^{A_2}(1)$ and $R^{A_2}(\alpha_2) \to L^{A_2}(2)$, the homomorphism $\Rmatrix{R^{A_2}(\alpha_2)}{R^{A_2}(\alpha_1)} \colon q^{\lambda_{2,1}} R^{A_2}(\alpha_2) \circ R^{A_2}(\alpha_1) \to R^{A_2}(\alpha_1) \circ R^{A_2}(\alpha_2)$ induces 
\[
\Rmatrix{L^{A_2}(2)}{L^{A_2}(1)} \colon q^{\lambda_{2,1}} L^{A_2}(2) \circ L^{A_2}(1) \to L^{A_2} (1) \circ L^{A_2} (2). 
\] 
Hence, we obtain $F(\Rmatrix{L^{A_2}(2)}{L^{A_2}(1)}) = \renormalizedR{\widehat{M_2}}{\widehat{M_1}} \vert_{z_1 =  z_2 = 0} = \mathbf{r}_{M_2, M_1}$, which implies that 
\[
F(L^{A_2}(\alpha_3)) = F(\Image (\Rmatrix{L^{A_2}(2)}{L^{A_2}(1)})) \simeq \Image (\mathbf{r}_{M_2, M_1}) = M_2 \nabla M_1 = M_3.  
\]
Since $\widehat{L}^{A_2}(\alpha_3) \in \gMod{R^{A_2}(\alpha_3)}$ is an affine object of $L^{A_2}(\alpha_3)$, we have $[\widehat{L}^{A_2}(\alpha_3)]_q = \sum_{k=0}^{\infty} q^{dk} [L^{A_2}(\alpha_3)]$.
Hence, $[F(\widehat{L}^{A_2}(\alpha_3))]_q = \sum_{k=0}^{\infty} q^{dk} [M_3]$. 
It implies that $F(\widehat{L}^{A_2}(\alpha_3))$ belongs to $\gMod{R'(ws_i\Lambda_i)}$, where $R'(ws_i \Lambda_i)$ is the algebra defined in the proof of Proposition \ref{prop:cyclotomic}. 
By Remark ~\ref{rem:morita}, $R'(ws_i \Lambda_i)$ is Morita equivalent to $\mathbf{k}[z]$ as a graded ring, where $\deg z = d = (\alpha_i, \alpha_i)$. 
Let $S$ denote the graded $\mathbf{k}[z]$-module corresponding to $F(\widehat{L}^{A_2}(\alpha_3))$ through this Morita equivalence. 
It is enough to show that $S \simeq \mathbf{k}[z]$ as graded $\mathbf{k}[z]$-modules.

Since $\widehat{L}^{A_1}(\alpha_3)$ is an affine object of $L^{A_2}(\alpha_3)$, we have an endomorphism $f$ of $\widehat{L}^{A_2}(\alpha_3)$ such that
\[
0 \to q^d \widehat{L}^{A_2}(\alpha_3) \xrightarrow{f} \widehat{L}^{A_2}(\alpha_3) \to L^{A_2}(\alpha_3) \to 0
\]
is an exact sequence. 
Applying $F$ and the Morita equivalence to this short exact sequence, we obtain an endomorphism $g$ of $S$ such that
\[
0 \to q^d S \xrightarrow{g} S \to \mathbf{k} \to 0
\]
is a short exact sequence. 
Since we are working in the category of finitely generated graded modules, $S$ is a finite direct sum of modules of the form $q^l \mathbf{k}[z] \ (l \in \mathbb{Z})$ and $q^l \mathbf{k}[z]/(z^m) \ (l, m\in \mathbb{Z}, m \geq 1)$. 
Let $S_{\text{tor}}$ be the torsion part of $S$.
Then, we have $g(S_{\text{tor}}) \subset S_{\text{tor}}$. 
On the other hand, $S_{\text{tor}}$ is bounded above graded module and $g$ is an injective morphism of positive degree.
Therefore, we must have $S_{\text{tor}}=0$. 
It follows that $S$ is torsion free, that is, a finite direct sum of $q^{l} \mathbf{k}[z] \ (l \in \mathbb{Z})$. 
Using $[S]_q = \sum_{k=0}^{\infty} q^{dk} [\mathbf{k}] = [\mathbf{k}[z]]_q$, we obtain $S \simeq \mathbf{k}[z]$ and the Proposition is proved. 
\end{proof}

\begin{remark}
 The short exact sequence constructed in Step 1 is a special case of this theorem. 
 Assume that our quiver Hecke algebra $R$ is $R^{A_2}$ with the usual grading: $d = 2, \lambda_{1,2} = \lambda_{2,1} = 1$.  
 Additionally, assume that $i = 1, w = s_1 s_2$. 
 Then, $M_1 \simeq L^{A_2}(1), M_2 \simeq L^{A_2}(2), M_3 \simeq L^{A_2}(\alpha_3), \widehat{M_1} \simeq R^{A_2}(\alpha_1)$ and $\widehat{M_2} \simeq R^{A_2}(\alpha_2)$.
 It follows that the functor $F \colon \gMod{R^{A_2}} \to \gMod{R} = \gMod{R^{A_2}}$ is the identity functor. 
 Hence, Step 3 shows that $\widehat{L}^{A_2}(\alpha_3) \simeq F(\widehat{L}^{A_2}(\alpha_3)) \simeq \widehat{M_3}$ and the short exact sequence constructed in Step 1 is of the required form. 
\end{remark}

\subsection{$\Ext^1$-vanishing for $\widehat{M}(ws_i \Lambda_i, w \Lambda_i)$} \label{sub:Ext1vanishing}

We maintain the setting from the previous section. 
In this section, we establish a homological property of $\widehat{M_2} = \widehat{M}(ws_i \Lambda_i, w\Lambda_i)$.  
We begin with examining the head of $\widehat{M_2}$. 

\begin{proposition} \label{prop:headM2}
We have $\hd (\widehat{M_2} \circ \widehat{M_1}) = M_2 \nabla M_1$ and $\hd \widehat{M_2} \simeq M_2$, 
where $\hd$ stands for the head in $\gMod{R}$. 
\end{proposition}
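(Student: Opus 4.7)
The plan is to handle both statements by exploiting the degree positivity of $z_1,z_2$ together with absolute irreducibility of simple $R$-modules (Theorem \ref{thm:categorification} (6)).

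First I would prove $\hd \widehat{M_2} \simeq M_2$. Since $\widehat{M_2}$ is finitely generated (Lemma \ref{lem:affinefg}), it has a well-defined head in $\gMod{R}$. For any simple quotient $L$ of $\widehat{M_2}$, the endomorphism $z_2 \in \End_{R(\gamma_2)}(\widehat{M_2})$ descends to a degree $(\alpha_i,\alpha_i) > 0$ element of $\End_{R(\gamma_2)}(L) = \mathbf{k}$, which must vanish. Hence every simple quotient of $\widehat{M_2}$ factors through $\widehat{M_2}/z_2\widehat{M_2} \simeq M_2$, so the only simple quotient (up to degree shift) is $M_2$. Moreover, for each $d \in \mathbb{Z}$ we have
\[
\hom_R(\widehat{M_2}, q^d M_2) = \hom_R(M_2, q^d M_2) = \delta_{d,0}\mathbf{k},
\]
which forces the head to be exactly $M_2$ in degree zero.

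For the head of $\widehat{M_2} \circ \widehat{M_1}$, the same mechanism applies to both scalar endomorphisms $z_2 \otimes 1$ and $1 \otimes z_1$ acting on the convolution product: any simple quotient $L$ of $\widehat{M_2}\circ \widehat{M_1}$ must be annihilated by both, and so factors through
\[
(\widehat{M_2}/z_2\widehat{M_2}) \circ (\widehat{M_1}/z_1\widehat{M_1}) \simeq M_2 \circ M_1,
\]
whose head is the simple module $M_2 \nabla M_1$. Thus every simple quotient is isomorphic to $M_2 \nabla M_1$ up to degree shift, and the surjection $\widehat{M_2}\circ\widehat{M_1} \to M_2 \circ M_1 \to M_2\nabla M_1$ is of degree zero.

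To pin down the multiplicity and rule out extra degree shifts in the head, I would compute, for each $d \in \mathbb{Z}$,
\[
\hom_R\bigl(\widehat{M_2}\circ\widehat{M_1},\, q^d (M_2 \nabla M_1)\bigr)
\]
via the induction-restriction adjunction. Since $(M_2,M_1)$ is unmixing by Lemma \ref{lem:lambda} (1) combined with Lemma \ref{lem:detsupport}, Lemma \ref{lem:unmixing} (2) gives $\Res_{\gamma_2,\gamma_1}(M_2 \nabla M_1) \simeq M_2 \otimes M_1$, so adjunction yields
\[
\hom_R\bigl(\widehat{M_2}\circ\widehat{M_1},\, q^d(M_2\nabla M_1)\bigr)
\simeq \bigoplus_{a+b=d} \hom_R(\widehat{M_2}, q^a M_2)\otimes \hom_R(\widehat{M_1}, q^b M_1),
\]
which is $\mathbf{k}$ if $d=0$ and vanishes otherwise by the first paragraph. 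Hence $\hd(\widehat{M_2}\circ\widehat{M_1}) \simeq M_2 \nabla M_1$ as required. The only delicate step is verifying the adjunction-based $\Hom$ computation with the correct degree conventions; once the unmixing restriction is in place, the result drops out cleanly.
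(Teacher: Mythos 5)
Your central step is not justified, and it is in fact the whole content of the proposition. You assert that for a simple quotient $L$ of $\widehat{M_2}$ the endomorphism $z_2$ ``descends'' to a positive-degree element of $\End_{R(\gamma_2)}(L)=\mathbf{k}$ and hence vanishes, so that every simple quotient factors through $\widehat{M_2}/z_2\widehat{M_2}\simeq M_2$ and $\hom_{R(\gamma_2)}(\widehat{M_2},q^dM_2)=\hom_{R(\gamma_2)}(M_2,q^dM_2)$. But the kernel of a surjection onto a simple module need not be stable under $z_2$, so $z_2$ does not descend. What is true is that $z_2$ induces a nilpotent endomorphism of the semisimple module $\hd\widehat{M_2}$; a positive-degree endomorphism can, however, carry one summand isomorphically onto a degree-shifted copy of another, so this does not force the head to be simple. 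The trivial affine object $M\otimes\mathbf{k}[z]$ of the remark after Definition \ref{def:affinization} makes the failure explicit: projection onto the component $M\otimes z$ is a surjection onto $q^{\deg z}M$ whose kernel is not $z$-stable, and $\hom(M\otimes\mathbf{k}[z],q^{\deg z}M)=\mathbf{k}\neq\hom(M,q^{\deg z}M)=0$. Your argument uses nothing about $(\widehat{M_2},z_2)$ beyond finite generation, freeness over $\mathbf{k}[z_2]$ and $\widehat{M_2}/z_2\widehat{M_2}\simeq M_2$, so it cannot distinguish $\widehat{M_2}$ from such examples. Concretely, $\hd\widehat{M_2}\simeq M_2$ amounts to the non-splitting of $0\to q^{(\alpha_i,\alpha_i)}M_2\to\widehat{M_2}/z_2^2\widehat{M_2}\to M_2\to 0$, i.e.\ to an $\Ext^1$-statement of exactly the kind that Theorem \ref{thm:ext1} exists to prove --- and whose proof in the paper takes Proposition \ref{prop:headM2} as input, so your route is circular.

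The paper argues in the opposite order: it first computes $\hd(\widehat{M_2}\circ\widehat{M_1})$ and then deduces $\hd\widehat{M_2}\simeq M_2$ from it. That computation needs two nontrivial inputs your proposal bypasses entirely: the short exact sequence of Theorem \ref{thm:SES}, which via Lemma \ref{lem:headM2} identifies $(\widehat{M_2}\circ\widehat{M_1})/J(\widehat{M_2}\circ\widehat{M_1})$ with $\widehat{M_3}$, and the Morita equivalence of Remark \ref{rem:morita}, which gives $\qdim\Hom_{R(\gamma_3)}(\widehat{M_3},M_3)=1$; the other composition factor $M_1\nabla M_2$ is excluded from the head because $\Res_{\gamma_2,\gamma_1}(M_1\nabla M_2)=0$. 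Your final adjunction computation is sound in form, but it feeds on the unproved equality $\hom(\widehat{M_2},q^aM_2)=\delta_{a,0}\mathbf{k}$ (the corresponding fact for $\widehat{M_1}$ is genuinely available from Remark \ref{rem:morita}), so the gap in the first paragraph propagates through the whole argument.
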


Note that if we consider the head of $\widehat{M_2}$ as an $R(\gamma_2) \otimes \mathbf{k}[z_2]$-module not as an $R(\gamma_2)$-module, the assertion is obvious. 
We need the following lemma. 

\begin{lemma} \label{lem:headM2}
Let $J$ be the two-sided ideal of $R(\gamma_3)$ generated by $e(*, j) \ (j \in I \setminus \{i\})$ and $e(*, i, i)$. 
Then 
\begin{enumerate}
  \item $\widehat{M_1} \circ \widehat{M_2} = J(\widehat{M_1} \circ \widehat{M_2})$. 
  \item $\Image (\renormalizedR{\widehat{M_1}}{\widehat{M_2}}) = J(\widehat{M_2}\circ \widehat{M_1}), (\widehat{M_2}\circ \widehat{M_1}) / J(\widehat{M_2}\circ \widehat{M_1}) \simeq \widehat{M_3}$. 
\end{enumerate}
\end{lemma}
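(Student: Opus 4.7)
The plan is to reduce (2) to (1) using the short exact sequence of Theorem \ref{thm:SES}, and to prove (1) by comparing two descriptions of the simple quotients of $\widehat{M_1}\circ\widehat{M_2}$ in $\gMod{R(\gamma_3)}$. The starting observation is that, applying Proposition \ref{prop:cyclotomic} with $w$ replaced by $ws_i$, the quotient $R(\gamma_3)/J$ is isomorphic to $R_z^{a_{\Lambda_i}}(ws_i\Lambda_i)$. By Remark \ref{rem:morita}, this algebra is graded Morita equivalent to $\mathbf{k}[z]$, so every nonzero finitely generated graded $R(\gamma_3)/J$-module has a simple quotient in $\gMod{R(\gamma_3)}$, and every such simple is a shift of $M_3$; moreover $\widehat{M_3}$ itself is naturally an $R(\gamma_3)/J$-module and hence annihilated by $J$.

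For (1), I would set $Q = (\widehat{M_1}\circ\widehat{M_2})/J(\widehat{M_1}\circ\widehat{M_2})$. By Lemma \ref{lem:affinefg}, $\widehat{M_1}\circ\widehat{M_2}$ is finitely generated over $R(\gamma_3)$, so $Q$ is a finitely generated graded $R(\gamma_3)/J$-module. The strategy is to show $Q = 0$ by forbidding simple quotients. On the one hand, by the previous paragraph any simple quotient of $Q$ is a shift of $M_3$. On the other hand, any simple quotient $L'$ of $\widehat{M_1}\circ\widehat{M_2}$ in $\gMod{R(\gamma_3)}$ is finite-dimensional; since $z_1$ and $z_2$ induce graded endomorphisms of $L'$ of positive degree $(\alpha_i,\alpha_i)$, a top-graded-component argument forces them to act as zero on $L'$, so $L'$ factors through $M_1\circ M_2$. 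By Lemma \ref{lem:lambda}(2), $\hd(M_1\circ M_2)$ is (a shift of) the simple module $L = q^{(\alpha_i,\alpha_i)/2+(\gamma_1,\gamma_2)} M_1\nabla M_2$, which is distinct from $M_3$. If $Q$ were nonzero, then some simple module would be simultaneously a shift of $M_3$ and of $L$, a contradiction; hence $Q = 0$, which is exactly (1).

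For (2), the short exact sequence of Theorem \ref{thm:SES} reads
\[
0 \to q^{(\alpha_i,\alpha_i)+(\gamma_1,\gamma_2)}\widehat{M_1}\circ\widehat{M_2} \xrightarrow{\renormalizedR{\widehat{M_1}}{\widehat{M_2}}} \widehat{M_2}\circ\widehat{M_1} \to \widehat{M_3} \to 0.
\]
Since $J\widehat{M_3} = 0$, we immediately obtain $J(\widehat{M_2}\circ\widehat{M_1}) \subset \Image(\renormalizedR{\widehat{M_1}}{\widehat{M_2}})$. Conversely, since the image is isomorphic up to shift to $\widehat{M_1}\circ\widehat{M_2}$, part (1) gives $\Image(\renormalizedR{\widehat{M_1}}{\widehat{M_2}}) = J\cdot\Image(\renormalizedR{\widehat{M_1}}{\widehat{M_2}}) \subset J(\widehat{M_2}\circ\widehat{M_1})$. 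Together these inclusions yield the first assertion in (2), and the quotient identification is then read off from the short exact sequence. The one nontrivial step is the head analysis in the middle paragraph; once finite-dimensionality is used to force $z_1$ and $z_2$ to vanish on any simple quotient of $\widehat{M_1}\circ\widehat{M_2}$, the rest is bookkeeping with results already established.
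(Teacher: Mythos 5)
Your argument for (2) is verbatim the paper's (both inclusions obtained from Theorem \ref{thm:SES} together with $J\widehat{M_3}=0$ and part (1)), and your strategy for (1) is the same idea as the paper's: the paper first shows $(M_1\circ M_2)/J(M_1\circ M_2)=0$ because the unique simple quotient $M_1\nabla M_2$ of $M_1\circ M_2$ is not killed by $J$, and then lifts this through $z_1$ and $z_2$ by graded Nakayama, whereas you run the same comparison of simple quotients directly on $(\widehat{M_1}\circ\widehat{M_2})/J(\widehat{M_1}\circ\widehat{M_2})$.

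The one step you must tighten is the claim that $z_1,z_2$ ``induce graded endomorphisms of $L'$'' and are ``forced to act as zero'': an endomorphism of $\widehat{M_1}\circ\widehat{M_2}$ need not preserve the kernel of an arbitrary surjection onto a simple module, so it does not literally descend to $L'$ (it descends to the head, but there a positive-degree endomorphism is only nilpotent, not zero, since the head may contain several degree shifts of the same simple). The correct form of your top-degree argument is applied to $\Hom_{R(\gamma_3)}(\widehat{M_1}\circ\widehat{M_2},L')$: precomposition with $z_k$ is an operator of nonzero degree on this finite-dimensional graded space, hence nilpotent, hence has nonzero kernel whenever the space is nonzero, and any element of its kernel factors through the cokernel of $z_k$. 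Iterating for $z_1$ and then $z_2$ shows that every simple quotient of $\widehat{M_1}\circ\widehat{M_2}$ is, up to degree shift, a quotient of $M_1\circ M_2$, which is what your contradiction needs (together with the fact from Lemma \ref{lem:lambda} that $M_1\nabla M_2$ is not isomorphic to $M_3$ even up to shift, both being self-dual). With that repair the proof is complete and essentially coincides with the paper's.
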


\begin{proof}
  Note that $R(\gamma_3)/J$ is $R'(ws_i \Lambda_i)$ from Section \ref{sub:homologicalM1}.
  
(1) Recall that $M_1 \circ M_2$ is of length two with two different composition factors $M_3$ and $M_1 \nabla M_2$ by Lemma \ref{lem:lambda}.
By Remark \ref{rem:morita}, $M_3$ is the unique simple graded $R'(ws_i\Lambda_i)$-module up to isomorphism and degree shift. 
It follows that $M_1 \nabla M_2$ is not an $R'(ws_i \Lambda_i)$-module, that is, $J (M_1 \nabla M_2) \neq 0$. 
Since $M_1 \nabla M_2$ is the head of $M_1 \circ M_2$ and is simple, we have $J(M_1 \circ M_2) = M_1 \circ M_2$. 
  
Next, consider the short exact sequence 
\[
0 \to q^d \widehat{M_1} \circ M_2 \xrightarrow{z_1} \widehat{M_1} \circ M_2 \to M_1 \circ M_2 \to 0. 
\]
Applying the functor $R'(ws_i\Lambda_i) \otimes_{R(\gamma_3)} (\cdot)$ to it, we obtain an exact sequence
\[
q^d \widehat{M_1} \circ M_2 / J(\widehat{M_1} \circ M_2) \xrightarrow{z_1} \widehat{M_1} \circ M_2 / J(\widehat{M_1} \circ M_2) \to M_1 \circ M_2 / J(M_1 \circ M_2) = 0. 
\]
Since $\widehat{M_1} \circ M_2 / J(\widehat{M_1} \circ M_2)$ is bounded below graded module, it must be zero. 
To complete the proof, we repeat the same argument for the short exact sequence
\[
0 \to q^d \widehat{M_1} \circ \widehat{M_2} \xrightarrow{z_2} \widehat{M_1} \circ \widehat{M_2} \to \widehat{M_1} \circ M_2 \to 0. 
\]

(2) By (1), we have $\Image {\renormalizedR{\widehat{M_1}}{\widehat{M_2}}} \subset J(\widehat{M_2} \circ \widehat{M_1})$. 
On the other hand, we have $\Cok \renormalizedR{\widehat{M_1}}{\widehat{M_2}} \simeq \widehat{M_3}$ by Proposition \ref{thm:SES}. 
Since $J \widehat{M_3} = 0$, it follows that $J (\widehat{M_2} \circ \widehat{M_1}) \subset \Image \renormalizedR{\widehat{M_1}}{\widehat{M_2}}$. 
\end{proof}

\begin{proof}[Proof of Proposition \ref{prop:headM2}]
 We prove the first assertion. The second one follows from it. 

 We claim that $\Hom_{R(\gamma_3)} (\widehat{M_2} \circ \widehat{M_1}, M_1 \nabla M_2) = 0$. 
 In fact, 
 \begin{align*}
 &\Hom_{R(\gamma_3)} (\widehat{M_2}\circ \widehat{M_1}, M_1 \nabla M_2) \\
 &= \Hom_{R(\gamma_2) \otimes R(\gamma_1)} (\widehat{M_2} \otimes \widehat{M_1}, \Res_{\gamma_2, \gamma_1} M_1 \nabla M_2) \quad \text{by the induction-restriction adjunction} \\ 
 &= \Hom_{R(\gamma_2) \otimes R(\gamma_1)} (\widehat{M_2} \otimes \widehat{M_1}, 0) \quad \text{by Lemma \ref{lem:unmixing} (2) and Lemma \ref{lem:lambda}} \\
 &= 0. 
\end{align*}

On the other hand, we compute using Lemma \ref{lem:headM2}
\[
\Hom_{R(\gamma_3)} (\widehat{M_2}\circ \widehat{M_1}, M_3) = \Hom_{R(\gamma_3)} (\widehat{M_2}\circ \widehat{M_1}/J(\widehat{M_2}\circ \widehat{M_1}), M_3) \simeq \Hom_{R(\gamma_3)} ( \widehat{M_3}, M_3), 
\] 
which is one dimensional by Remark \ref{rem:morita}. 

Since the composition factors of $\widehat{M_2} \circ \widehat{M_1}$ are $M_1 \nabla M_2$ and $M_3$, the first assertion follows. 
\end{proof}

Now, we prove the following homological property of $\widehat{M_2}$.

\begin{theorem} \label{thm:ext1}
 $\widehat{M_2}$ is the projective cover of $M_2$ in $\gMod{R_{ws_i, w}(\gamma_2)}$. 
 In particular, we have 
 \[
\Ext_{R(\gamma_2)}^1(\widehat{M_2}, M_2) = 0
 \]
\end{theorem}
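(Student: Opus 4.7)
The plan is to prove the theorem in three stages.

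\textbf{Stage 1 (membership and head).} First, I verify $\widehat{M_2} \in \gMod{R_{ws_i, w}(\gamma_2)}$ by showing $\W(\widehat{M_2}) = \W(M_2)$ and $\W^*(\widehat{M_2}) = \W^*(M_2)$: the inverse-limit argument used in the proof of Proposition \ref{prop:unmixingren}(1) applies verbatim because each quotient $z_2^p \widehat{M_2}/z_2^{p+1}\widehat{M_2}$ is isomorphic to $M_2$. These supports lie in $Q_+ \cap ws_i Q_-$ and $Q_+ \cap wQ_+$ respectively by Lemma \ref{lem:detsupport}. Proposition \ref{prop:headM2} already gives $\hd \widehat{M_2} \simeq M_2$.

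\textbf{Stage 2 (reduction to $\Ext^1$ vanishing).} By Lemma \ref{lem:Rwvcuspidal} applied to the reduced expression $ws_i = w \cdot s_i$, the module $M_2 = L(\gamma_2)$ is the unique self-dual simple object of $\gMod{R_{ws_i, w}(\gamma_2)}$ up to grading shift. Hence every finite-length object in this subcategory has all composition factors of the form $q^d M_2$. A dimension-shifting argument in the long exact sequence of $\Ext^\bullet_{R(\gamma_2)}(\widehat{M_2}, -)$, together with Lemma \ref{lem:limext}(2) for general objects, reduces the projectivity of $\widehat{M_2}$ in $\gMod{R_{ws_i, w}(\gamma_2)}$ to the single vanishing
\[
\Ext^1_{R(\gamma_2)}(\widehat{M_2}, M_2) = 0.
\]
Since $\gMod{R_{ws_i, w}(\gamma_2)}$ is closed under extensions in $\gMod{R(\gamma_2)}$, Ext in the subcategory coincides with Ext in the ambient category.

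\textbf{Stage 3 (proving the $\Ext^1$ vanishing).} Apply $\Hom_{R(\gamma_3)}(-, M_3)$ to the short exact sequence of Theorem \ref{thm:SES}:
\[
0 \to q^d \widehat{M_1} \circ \widehat{M_2} \to \widehat{M_2} \circ \widehat{M_1} \to \widehat{M_3} \to 0, \qquad d := (\alpha_i, \alpha_i) + (\gamma_1, \gamma_2).
\]
The analogue of Corollary \ref{cor:M1ext} with $ws_i$ in place of $w$ gives $\Ext^k_{R(\gamma_3)}(\widehat{M_3}, M_3) = 0$ for $k \geq 1$. Since $(M_2, M_1)$ is unmixing (Lemma \ref{lem:detsupport}), $M_3 \simeq M_2 \nabla M_1$ (Proposition \ref{prop:dethead}), and $\Res_{\gamma_2, \gamma_1} M_3 \simeq M_2 \boxtimes M_1$ by Lemma \ref{lem:unmixing}(2). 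Combining the induction-restriction adjunction with K\"unneth for external tensor products --- noting $\Ext^k_{R(\gamma_1)}(\widehat{M_1}, M_1) = 0$ for $k \geq 1$ by Corollary \ref{cor:M1ext} --- yields
\[
\Ext^k_{R(\gamma_3)}(\widehat{M_2} \circ \widehat{M_1}, M_3) \simeq \Ext^k_{R(\gamma_2)}(\widehat{M_2}, M_2).
\]
The long exact sequence then produces the identification
\[
\Ext^1_{R(\gamma_2)}(\widehat{M_2}, M_2) \simeq q^{-d}\Ext^1_{R(\gamma_3)}(\widehat{M_1} \circ \widehat{M_2}, M_3).
\]

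The main obstacle is to show that the right-hand side vanishes. My intended approach is to rewrite this $\Ext^1$ via the induction-restriction adjunction as $\Ext^1_{R(\gamma_1) \otimes R(\gamma_2)}(\widehat{M_1} \boxtimes \widehat{M_2}, \Res_{\gamma_1, \gamma_2} M_3)$, and then apply a Grothendieck spectral sequence for the composition $\Hom_{R(\gamma_2)}(\widehat{M_2}, -) \circ \Hom_{R(\gamma_1)}(\widehat{M_1}, -)$. Since $\widehat{M_1}$ is projective in $\gMod{R'(w\Lambda_i)}$, it suffices to show that the $R(\gamma_1)$-action on $\Res_{\gamma_1, \gamma_2} M_3$ factors through $R'(w\Lambda_i)$, which collapses the spectral sequence to its $q=0$ row and reduces the question to an $\Ext^1$ over $R(\gamma_2)$ of $\widehat{M_2}$ into a module whose composition factors are controlled shifts of $M_2$. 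This factorization should follow from the concrete realization of $M_3$ as an iterated application of the $E^*$ and $F^{a_{\Lambda_i}}$ functors in the cyclotomic category along the reduced expression $ws_i = w \cdot s_i$: the rightmost factor $F_i^{a_{\Lambda_i}}$ imposes the cyclotomic constraint at the $(\height \gamma_3)$-th position, which then propagates through the specific slicing $\gamma_3 = \gamma_1 + \gamma_2$ to force the appropriate vanishing of $e(*, j)$-idempotents on $\Res_{\gamma_1, \gamma_2} M_3$.
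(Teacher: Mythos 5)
Your Stages 1 and 2 are fine, and the first half of Stage 3 is essentially sound: applying $\Hom_{R(\gamma_3)}(-,M_3)$ to the sequence of Theorem \ref{thm:SES}, using $\Ext^1_{R(\gamma_3)}(\widehat{M_3},M_3)=0$ and the adjunction/K\"unneth computation with Corollary \ref{cor:M1ext}, does give an embedding $\Ext^1_{R(\gamma_2)}(\widehat{M_2},M_2)\hookrightarrow q^{-d}\Ext^1_{R(\gamma_3)}(\widehat{M_1}\circ\widehat{M_2},M_3)$ (note it is only an injection, not an isomorphism, unless you also know $\Ext^2_{R(\gamma_3)}(\widehat{M_3},M_3)=0$, which is not available at this stage; an injection suffices for your purpose). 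The genuine gap is in the last step. First, the claim that the $R(\gamma_1)$-action on $\Res_{\gamma_1,\gamma_2}M_3$ factors through $R'(w\Lambda_i)$ is unjustified: the cyclotomic relations $e(*,j)=0\ (j\neq i)$ and $e(*,i,i)=0$ defining $R'(ws_i\Lambda_i)$ constrain the idempotents at the \emph{last} position $\height\gamma_3$ of $M_3$, whereas your factorization requires the vanishing of $e(\gamma_1-\alpha_j,\alpha_j,\gamma_2)M_3$, i.e.\ a constraint at the interior position $\height\gamma_1$; nothing in the cyclotomic construction propagates the boundary relation to interior strands, and Lemma \ref{lem:detsupport} only controls $\W$ and $\W^*$, not interior idempotents. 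Second, and more seriously, even if the factorization held, your reduction is circular: collapsing the spectral sequence lands you on $\Ext^1_{R(\gamma_2)}(\widehat{M_2},N)$ for a module $N$ whose composition factors are degree shifts of $M_2$ (or a priori arbitrary simples of $R(\gamma_2)$), and by the long exact sequence the vanishing of such groups is equivalent to (or strictly harder than) the statement $\Ext^1_{R(\gamma_2)}(\widehat{M_2},M_2)=0$ you set out to prove. The one situation where your route closes is when $\Res_{\gamma_1,\gamma_2}M_3=0$ outright (as happens in type $A_2$ with $w=s_1s_2$, $i=1$), but this is not true in general.

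For contrast, the paper sidesteps the circularity entirely: it introduces the exact functor $\Gamma(X)=\Cok\bigl(\braider{\widehat{M_1}}{X}\bigr)$ of Definition \ref{def:Gamma}, shows $\Gamma(\widehat{M_2})\simeq\widehat{M_3}$ (Theorem \ref{thm:SES}) and $\Gamma(M_2)\simeq M_3$, and then compares $\Gamma(P)$ with $\widehat{M_3}$, where $P$ is the projective cover of $M_2$ in $\gMod{R_{ws_i,w}(\gamma_2)}$. Since $\hd\Gamma(P)\simeq M_3$ and $\End_{R(\gamma_3)}(\widehat{M_3})\simeq\mathbf{k}[z]$, the natural surjections $\widehat{M_3}\to\Gamma(P)\to\widehat{M_3}$ compose to an isomorphism, and the exactness of $\Gamma$ together with $\Gamma(M_2)\neq 0$ forces $P\to\widehat{M_2}$ to be injective, hence an isomorphism. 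If you want to salvage your strategy, you would need an independent handle on $\Ext^1_{R(\gamma_3)}(\widehat{M_1}\circ\widehat{M_2},M_3)$ that does not route back through $\Ext^1_{R(\gamma_2)}(\widehat{M_2},-)$; the paper's $\Gamma$-functor argument is precisely such a device.
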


\begin{proof}
We make use of the exact functor $\Gamma \colon \gMod{R(\gamma_2)} \to \gMod{R(\gamma_3)}$ from Definition \ref{def:Gamma}. 
Let $P$ be the projective cover of $M_2$ in $\gMod{R_{ws_i,w}(\gamma_2)}$. 
It is enough to show that $\widehat{M_2}$ is isomorphic to $P$.

By Theorem \ref{thm:SES}, we have $\Gamma (\widehat{M_2}) \simeq \widehat{M_3}$. 
Applying $\Gamma$ to the short exact sequence
\[
0 \to q^d \widehat{M_2} \xrightarrow{z_2} \widehat{M_2} \to M_2 \to 0, 
\]
we obtain a short exact sequence
\[
0 \to q^d \widehat{M_3} \xrightarrow{\Gamma (z_2)} \widehat{M_3} \to \Gamma (M_2) \to 0. 
\]
By Remark \ref{rem:morita}, $\End_{R(\gamma_3)} (\widehat{M_3}) \simeq \mathbf{k}[z_3]$. 
Thus, $\Gamma (z_2)$ coincide with $z_3$ up to a nonzero scalar multiple, which implies $\Gamma (M_2) \simeq M_3$. 

Next, we examine $\Gamma (P)$. 
Since $\Gamma(M_2) \simeq M_3$ and $M_2$ is the unique composition factor of $P$, 
the composition factors of $\Gamma(P)$ consist of only $M_3$. 
In other words, we have $\Gamma(P) \in \gMod{R'(ws_i \Lambda_i)}$. 

We claim that $\hd \Gamma(P) \simeq M_3$. 
Since there are surjective homomorphisms 
\[
P \circ \widehat{M_1} \twoheadrightarrow \Gamma(P) \twoheadrightarrow \Gamma (M_2) \simeq M_3, 
\]
it is enough to show that $\hd (P \circ \widehat{M_1}) \simeq M_3$. 
To see this, we compute
\begin{align*}
  &\Hom_{R(\gamma_3)} (P \circ \widehat{M_1}, M_3) \\
  &\simeq \Hom_{R(\gamma_2) \otimes R(\gamma_1)} (P \otimes \widehat{M_1}, \Res_{\gamma_2, \gamma_1} M_3)  \quad \text{by the induction-restriction adjunction} \\
  &\simeq \Hom_{R(\gamma_2)} (P, M_2) \otimes \Hom_{R(\gamma_1)} (\widehat{M_1}, M_1)  \quad \text{by Lemma \ref{lem:unmixing} and Lemma \ref{lem:lambda}} \\
  &\simeq \mathbf{k}, \\
  & \Hom_{R(\gamma_3)} (P \circ \widehat{M_1}, M_1 \nabla M_2)  \\
  &\simeq \Hom_{R(\gamma_2) \otimes R(\gamma_1)} (P \otimes \widehat{M_1}, \Res_{\gamma_2, \gamma_1} M_1 \nabla M_2)  \quad  \text{by the induction-restriction adjunction} \\
  &\simeq \hom_{R(\gamma_2) \otimes R(\gamma_1)} (P \otimes \widehat{M_1}, 0) \quad \text{by Lemma \ref{lem:unmixing} and Lemma \ref{lem:lambda}} \\
  &= 0. 
\end{align*}
Since the composition factors of $P \circ \widehat{M_1}$ is $M_3$ and $M_1 \nabla M_2$, it proves the claim. 

Hence, we have a surjective homomorphism $f \colon \widehat{M_3} \to \Gamma(P)$ by Remark \ref{rem:morita}. 
By the definition of $P$, there exists a surjective homomorphism $g \colon P \to \widehat{M_2}$. 
Applying $\Gamma$, we obtain a surjective homomorphism $\Gamma (g) \colon \Gamma (P) \to \widehat{M_3}$. 
Then, $\Gamma (g) \circ f \in \End_{R(\gamma_3)} (\widehat{M_3})$ is a surjective homomorphism of degree zero.
Since $\End_{R(\gamma_3)} (\widehat{M_3}) \simeq \mathbf{k}[z]$ by Remark \ref{rem:morita}, it must be an isomorphism. 
It implies that both $f$ and $\Gamma(g)$ are isomorphisms.
In particular, $\Ker \Gamma(g) \simeq \Gamma (\Ker (g))$ is zero. 
Since $\Ker (g)$ is a finitely generated module whose composition factors are all $M_2$, 
the results $\Gamma(M_2) \simeq M_3 \neq 0$ and $\Gamma(\Ker (g)) = 0$ imply that $\Ker (g) = 0$.
Therefore, $g \colon P \to \widehat{M_2}$ is an isomorphism and the assertion follows. 
\end{proof}

\subsection{Finiteness of projective dimensions} \label{sub:finiteness}

In this section, we demonstrate that every object in $\gMod{R_{w,*}}$ has finite projective dimension as an object of $\gMod{R}$.  

\begin{lemma} \label{lem:projdimind}
  Let $\beta, \gamma \in Q_+$, $X$ an $R(\beta)$-module and $Y$ an $R(\gamma)$-module. 
  If both $\prdim_{R(\beta)} X$ and $\prdim_{R(\gamma)} Y$ are finite, then $\prdim_{R(\beta+\gamma)} (X\circ Y)$ is also finite. 
\end{lemma}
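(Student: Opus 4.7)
The plan is to build a finite projective resolution of $X \circ Y$ by combining finite projective resolutions of $X$ and $Y$. Set $m = \prdim_{R(\beta)} X$ and $n = \prdim_{R(\gamma)} Y$, and choose projective resolutions $P^\bullet \to X$ of length $m$ in $\gMod{R(\beta)}$ and $Q^\bullet \to Y$ of length $n$ in $\gMod{R(\gamma)}$.

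Since $\mathbf{k}$ is a field, $P^k \otimes Q^l$ is a projective $R(\beta) \otimes R(\gamma)$-module for every $k, l$, and the total complex of the double complex $P^\bullet \otimes Q^\bullet$ is a projective resolution of $X \otimes Y$ in $\gMod{R(\beta) \otimes R(\gamma)}$ of length $m + n$ (this is the standard Künneth-type computation using that the relevant Tor groups vanish over the field $\mathbf{k}$). I would verify this by the usual spectral sequence or by a direct induction on the length of $Q^\bullet$.

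Next I would apply the induction functor $\Ind_{\beta, \gamma}$. Recall from Section \ref{sub:quiverHecke} that $\Ind_{\beta,\gamma}$ is exact and, being left adjoint to the exact functor $\Res_{\beta,\gamma}$, sends projective modules to projective modules (concretely: $R(\beta+\gamma) e(\beta,\gamma)$ is right $(R(\beta) \otimes R(\gamma))$-projective and left $R(\beta+\gamma)$-projective). Therefore the image of the finite projective resolution of $X \otimes Y$ under $\Ind_{\beta,\gamma}$ is a finite projective resolution of $\Ind_{\beta,\gamma}(X \otimes Y) = X \circ Y$ in $\gMod{R(\beta+\gamma)}$, of length at most $m + n$. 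This gives $\prdim_{R(\beta+\gamma)}(X \circ Y) \leq \prdim_{R(\beta)} X + \prdim_{R(\gamma)} Y < \infty$.

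No serious obstacle is anticipated; the only mildly technical point is justifying that the tensor-product resolution is indeed a resolution, but this is standard over a field. The construction is entirely formal once one knows $\Ind_{\beta,\gamma}$ is exact and preserves projectivity, both of which are recalled earlier in the paper.
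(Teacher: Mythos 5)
Your proof is correct and is essentially the paper's argument made explicit: the paper's one-line appeal to the induction--restriction adjunction amounts to exactly the facts you spell out, namely that the external tensor product of finite projective resolutions resolves $X \otimes Y$ over $R(\beta) \otimes R(\gamma)$ (Künneth over the field $\mathbf{k}$) and that $\Ind_{\beta,\gamma}$ is exact and preserves projectives. The only implicit point worth noting is that the paper defines $\prdim$ via vanishing of $\Ext$, so choosing resolutions of length exactly $m$ and $n$ uses the standard syzygy argument identifying this with the resolution-length definition; this is harmless for finitely generated modules over a Noetherian Laurentian algebra.
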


\begin{proof}
It follows from the induction-restriction adjunction. 
\end{proof}

\begin{lemma} \label{lem:prdim1}
 Let $A = \oplus_{d \geq 0} A_d$ be a Noetherian graded ring with $A_0 = \mathbf{k}$, and assume that $A$ is Laurentian. 
 Then, every cyclotomic quiver Hecke algebra $R_A^{a_{\Lambda}}(\lambda)$ from Definition \ref{def:cyclotomic} has finite projective dimension as a graded $R(\Lambda - \lambda)$-module. 
\end{lemma}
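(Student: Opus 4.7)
I will induct on $n = \height(\Lambda - \lambda)$. The base case $n = 0$ is immediate: $R_A^{a_{\Lambda}}(\Lambda) \cong A$ as a graded $R(0) = \mathbf{k}$-module (by the discussion preceding Theorem~\ref{thm:cyclotomiccategorification}), and any graded $\mathbf{k}$-module is free, so $\prdim_{R(0)} A = 0$.

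For the inductive step write $\beta = \Lambda - \lambda$ and decompose $R_A^{a_{\Lambda}}(\lambda) = \bigoplus_{i \in I} R_A^{a_{\Lambda}}(\lambda) e(*, i)$ as left $R(\beta)$-modules; since finite projective dimension is preserved by finite direct sums, it is enough to bound $\prdim_{R(\beta)} R_A^{a_{\Lambda}}(\lambda) e(*, i)$ for each $i \in I$. I will realize this summand as the right-hand term of a short exact sequence
\[
0 \to K_i \to N_i \to R_A^{a_{\Lambda}}(\lambda) e(*, i) \to 0,
\]
where $N_i = R(\beta) e(*, i) \otimes_{R(\beta - \alpha_i) \otimes R(\alpha_i)} \bigl(R_A^{a_{\Lambda}}(\lambda + \alpha_i) \otimes \mathbf{k}[x_n]/(a_{\Lambda, i}(x_n))\bigr)$. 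Under the identification of this tensor-induction with a convolution product, the factor $\mathbf{k}[x_n]/(a_{\Lambda, i}(x_n))$ has projective dimension $\leq 1$ over $R(\alpha_i) = \mathbf{k}[x_n]$ via the Koszul resolution $0 \to \mathbf{k}[x_n] \xrightarrow{a_{\Lambda, i}(x_n)} \mathbf{k}[x_n] \to \mathbf{k}[x_n]/(a_{\Lambda, i}(x_n)) \to 0$, while the inductive hypothesis (applied at height $n-1$) gives $\prdim_{R(\beta - \alpha_i)} R_A^{a_{\Lambda}}(\lambda + \alpha_i) < \infty$. Lemma~\ref{lem:projdimind} then yields $\prdim_{R(\beta)} N_i < \infty$.

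It remains to control $K_i$. The only additional relations imposed in $R_A^{a_{\Lambda}}(\lambda) e(*, i)$ compared with $N_i$ are those obtained by propagating the cyclotomic relations $a_{\Lambda, j}(x_m) e(*, j) = 0$ for $j \neq i$ (and further instances for $j = i$ at the inner strands) into configurations whose rightmost strand is $i$, using the quiver Hecke braid relations of Definition~\ref{def:KLR}. A Mackey-filtration argument produces a finite filtration of $K_i$ whose successive quotients are convolutions of lower-height cyclotomic quotients $R_A^{a_{\Lambda}}(\lambda')$ with $\height(\Lambda - \lambda') < n$ against truncated polynomial factors of the form $\mathbf{k}[x]/(a_{\Lambda, j}(x))$. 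Applying the inductive hypothesis and Lemma~\ref{lem:projdimind} termwise, $\prdim_{R(\beta)} K_i < \infty$, and the short exact sequence yields $\prdim_{R(\beta)} R_A^{a_{\Lambda}}(\lambda) e(*, i) < \infty$, completing the induction.

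The main obstacle is the precise Mackey identification of $K_i$ and the verification that each of its successive quotients genuinely takes the inductive form; this requires careful bookkeeping with the braid relations and with how the idempotents $e(*, j)$ for $j \neq i$ interact with the idempotent $e(*, i)$ upon pulling the $j$-strand past the terminal $i$-strand. Once this Mackey step is established the induction proceeds smoothly, and in the two cases of interest ($A = \mathbf{k}$ and $A = \mathbf{k}[z]$) the same argument applies uniformly.
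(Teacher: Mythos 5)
Your induction skeleton (on height), the reduction to a single idempotent summand, and the use of Lemma~\ref{lem:projdimind} plus a Koszul resolution of $\mathbf{k}[x_n]/(a_{\Lambda,i}(x_n))$ all match the shape of the paper's argument. But the step you defer as ``the main obstacle'' --- identifying the kernel $K_i$ of $N_i \twoheadrightarrow R_A^{a_{\Lambda}}(\lambda)e(*,i)$ and filtering it by lower-height cyclotomic pieces --- is not a bookkeeping issue; it is the entire content of the lemma, and it cannot be obtained from the Mackey filtration. The Mackey filtration in Section~\ref{sub:quiverHecke} describes $\Res\circ\Ind$ for the full quiver Hecke algebra; it says nothing about how the two-sided ideal $\langle a_{\Lambda,j}(x_n)e(*,j)\rangle$ sits inside $R(\beta)$, i.e.\ about the structure of the cyclotomic quotient as an $(R(\beta), R(\beta-\alpha_i)\otimes R(\alpha_i))$-bimodule. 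Even the well-definedness of your surjection $N_i \to R_A^{a_{\Lambda}}(\lambda)e(*,i)$ already requires knowing that the cyclotomic relations, which are imposed only on the $n$-th strand, propagate to relations $a_{\Lambda,j}(x_{n-1})e(*,j,i)=0$ on the $(n-1)$-st strand --- a nontrivial fact (compare the Demazure-operator argument in the remark after Proposition~\ref{prop:cyclotomic}).

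The paper sidesteps all of this by invoking Kang--Kashiwara's theorem (\cite[Theorem 4.7]{MR2995184}), which supplies the exact short exact sequence
\[
0 \to q^{(\alpha_j, \Lambda + \lambda + \alpha_j)} R_A^{a_{\Lambda}}(\lambda +\alpha_j) \circ_A R_A(\alpha_j) \to R_A(\alpha_j) \circ_A R_A^{a_{\Lambda}}(\lambda+\alpha_j) \to R_A^{a_{\Lambda}}(\lambda)e(j,*) \to 0,
\]
with \emph{both} outer terms already in the inductive form, so that Lemma~\ref{lem:projdimind} and the induction hypothesis finish the proof immediately. (Note also that this sequence uses the idempotent $e(j,*)$ on the \emph{first} strand, away from where the cyclotomic relation lives, whereas your decomposition by $e(*,i)$ on the last strand is exactly the harder side to analyze.) To repair your argument you would either need to prove a statement equivalent to that theorem, or cite it; as written, the proposal assumes the hard part.
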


Put $R_A(\beta) = R(\beta) \otimes A$. 
In the proof, we consider convolution products over $A$. 
Namely, for an $R_A(\beta)$-module $X$ and an $R_A(\gamma)$-module $Y$, we define
\[
X \circ_A Y = R_A(\beta + \gamma) e(\beta, \gamma) \otimes_{R_A(\beta) \otimes_A R_A(\gamma)} (X \otimes_A Y).  
\]

\begin{proof}
 We proceed by an induction on the weight $\lambda$. 
 If $\lambda = \Lambda$, $R(\Lambda - \lambda) = \mathbf{k}$, so the assertion is trivial. 
 Let $\lambda \neq \Lambda$. 
 By \cite[Theorem 4.7]{MR2995184}, we have a short exact sequence
 \[
 0 \to q^{(\alpha_j, \Lambda + \lambda + \alpha_j)} R_A^{a_{\Lambda}}(\lambda +\alpha_j) \circ_A R_A(\alpha_j) \to R_A(\alpha_j) \circ_A R_A^{a_{\Lambda}}(\lambda+\alpha_j) \to R_A^{a_{\Lambda}}(\lambda)e(j,*) \to 0, 
 \]
for any $j \in I$ that satisfies $\lambda + \alpha_j \in \Lambda - Q_+$.  
Since $R_A^{a_{\Lambda}}(\lambda + \alpha_j) \circ_A R_A(\alpha_j) \simeq R_A^{a_{\Lambda}} (\lambda + \alpha_j) \circ R(\alpha_j)$ and 
$R_A(\alpha_j) \circ_A R_A^{a_{\Lambda}}(\lambda + \alpha_j) \simeq R(\alpha_j) \circ R_A^{a_{\Lambda}}(\lambda + \alpha_j)$, 
they have finite projective dimension by the induction hypothesis and Lemma \ref{lem:projdimind}. 
Hence, the short exact sequence implies that the projective dimension of $R_A^{a_{\Lambda}} (\lambda)e(j,*)$ is also finite. 
Since it holds for arbitrary $j \in I$ that satisfies $\lambda + \alpha_j \in \Lambda - Q_+$, we see that the  projective dimension of $R_A^{a_{\Lambda}} (\lambda)$ is finite. 
\end{proof}

\begin{lemma} \label{lem:prdim2}
  Let $w \in W, i\in I$ and assume that $ws_i > w$. 
  Then, $\prdim_{R(w\alpha_i)} \widehat{M}(ws_i \Lambda_i, w\Lambda_i)$ and $\prdim_{R(w\alpha_i)} M(ws_i \Lambda_i, w\Lambda_i)$ are finite. 
\end{lemma}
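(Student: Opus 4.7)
The plan is to exhibit $\widehat{M}(ws_i\Lambda_i, w\Lambda_i)$ as a finitely generated projective object in a cyclotomic module category, and then invoke Lemma \ref{lem:prdim1} to deduce finite projective dimension over $R(w\alpha_i)$; the statement for $M(ws_i\Lambda_i, w\Lambda_i)$ will then follow from the affinization short exact sequence.

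By the construction given just before Proposition \ref{prop:detaffinization}, $\widehat{M}(ws_i\Lambda_i, w\Lambda_i)$ is obtained from $\widehat{M}(ws_i\Lambda_i, \Lambda_i)$ by applying $E_{j_1}^{*(b_1)}\cdots E_{j_n}^{*(b_n)}$, where $w = s_{j_1}\cdots s_{j_n}$ is a reduced expression. First I note that $\widehat{M}(ws_i\Lambda_i, \Lambda_i)$ is, by construction, the projective cover of $M(ws_i\Lambda_i, \Lambda_i)$ in the cyclotomic category $\gMod{R_z^{a_{\Lambda_i}}(ws_i\Lambda_i)}$. A cyclotomic weight computation ($F_i^{(a)}$ decreases the cyclotomic weight by $a\alpha_i$, $E_j^{*(b)}$ increases it by $b\alpha_j$, and $\sum_k b_k\alpha_{j_k} = \Lambda_i - w\Lambda_i$) then shows that $\widehat{M}(ws_i\Lambda_i, w\Lambda_i)$ lies in $\gMod{R_z^{a_{\Lambda_i}}(\Lambda_i - w\alpha_i)}$, whose underlying algebra is a cyclotomic quotient of $R(w\alpha_i) \otimes \mathbf{k}[z]$. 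Each starred cyclotomic divided power $(E_j^{a_{\Lambda_i}})^{*(b)}$ is the left adjoint, up to a grading shift, of the exact functor $(F_j^{a_{\Lambda_i}})^{*(b)}$ by the biadjointness built into the cyclotomic 2-representation of $U_q(\mathfrak{g})$ \cite{rouquier20082kacmoodyalgebras, MR3134909}; hence each preserves projective objects. Iteration shows that $\widehat{M}(ws_i\Lambda_i, w\Lambda_i)$ is a finitely generated projective $R_z^{a_{\Lambda_i}}(\Lambda_i - w\alpha_i)$-module.

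Applying Lemma \ref{lem:prdim1}, the cyclotomic algebra $R_z^{a_{\Lambda_i}}(\Lambda_i - w\alpha_i)$ has finite projective dimension as a graded $R(w\alpha_i)$-module. Being a summand, up to graded shifts, of a finite direct sum of copies of this algebra, $\widehat{M}(ws_i\Lambda_i, w\Lambda_i)$ also has finite projective dimension over $R(w\alpha_i)$. The statement for $M(ws_i\Lambda_i, w\Lambda_i)$ then follows from the short exact sequence
\[
0 \to q^{(\alpha_i,\alpha_i)}\widehat{M}(ws_i\Lambda_i, w\Lambda_i) \xrightarrow{z} \widehat{M}(ws_i\Lambda_i, w\Lambda_i) \to M(ws_i\Lambda_i, w\Lambda_i) \to 0
\]
coming from the affinization, which yields $\prdim_{R(w\alpha_i)} M(ws_i\Lambda_i, w\Lambda_i) \leq \prdim_{R(w\alpha_i)}\widehat{M}(ws_i\Lambda_i, w\Lambda_i) + 1$. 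The main subtlety lies in justifying projectivity-preservation of the starred cyclotomic functors at intermediate cyclotomic weights that need not lie in $W\Lambda_i$; this relies on biadjointness of cyclotomic $E$'s and $F$'s at arbitrary weights, a foundational fact in the categorification framework that is not independently developed in the excerpt.
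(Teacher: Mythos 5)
Your reduction to cyclotomic projectivity fails at its first step: $\widehat{M}(ws_i\Lambda_i, w\Lambda_i)$ is in general not even an object of $\gMod{R_z^{a_{\Lambda_i}}(\Lambda_i - w\alpha_i)}$, so its projectivity there is moot. The cyclotomic relation $a_{\Lambda_i,j}(x_n)e(*,j)=0$ is imposed on the \emph{rightmost} strand, which is exactly the strand the starred functors $E_j^{*(b)}$ strip off; hence $E_j^{*}$ does not carry $\gMod{R_z^{a_{\Lambda_i}}(\lambda)}$ into $\gMod{R_z^{a_{\Lambda_i}}(\lambda+\alpha_j)}$, and there is no ``starred cyclotomic biadjointness'' to invoke --- the biadjoint pair in the cyclotomic $2$-representation is the unstarred $(E_j^{a_{\Lambda}}, F_j^{a_{\Lambda}})$ acting on the left. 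Concretely, in type $A_2$ take $w=s_1s_2$ and $i=1$, so $ws_i=w_0>w$ and $w\alpha_i=\alpha_2$. Then $M(ws_i\Lambda_i,w\Lambda_i)\simeq L(2)$ and $\widehat{M}(ws_i\Lambda_i,w\Lambda_i)\simeq R(\alpha_2)$, whereas $\Lambda_1-\alpha_2$ is not a weight of $V(\Lambda_1)$: the polynomial $a_{\Lambda_1,2}$ is a nonzero constant, so $R_z^{a_{\Lambda_1}}(\Lambda_1-\alpha_2)=0$ and the relation $e(*,2)=0$ annihilates the module. The correct projectivity statement is Theorem \ref{thm:ext1} ($\widehat{M}(ws_i\Lambda_i,w\Lambda_i)$ is the projective cover of $M(ws_i\Lambda_i,w\Lambda_i)$ in $\gMod{R_{ws_i,w}(w\alpha_i)}$), but that is proved by a genuinely harder argument via Theorem \ref{thm:SES}; and even granting it, one would still need $R_{ws_i,w}(w\alpha_i)$ itself to have finite projective dimension over $R(w\alpha_i)$, which in this paper is only established downstream of the present lemma (Proposition \ref{prop:projresol}, Proposition \ref{prop:prdimR*w}), so that route is circular.

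For comparison, the paper's proof applies $\Ext^k_{R(\gamma_3)}(\cdot,\, M_1\circ X)$ to the short exact sequence of Theorem \ref{thm:SES}: for $k>\prdim_{R(\gamma_3)}\widehat{M_3}$ (finite by Remark \ref{rem:morita} and Lemma \ref{lem:prdim1}) this identifies $\Ext^k_{R(\gamma_3)}(\widehat{M_2}\circ\widehat{M_1}, M_1\circ X)$ with its own shift by $q^{(\alpha_i,\alpha_i)}$ via adjunction and unmixing, forcing it to vanish since it is bounded below, and $\Ext^k_{R(\gamma_2)}(\widehat{M_2},X)$ is a direct summand of it. Your final step --- deducing the claim for $M(ws_i\Lambda_i,w\Lambda_i)$ from the claim for $\widehat{M}(ws_i\Lambda_i,w\Lambda_i)$ via the affinization sequence --- agrees with the paper and is fine. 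If you want to keep a cyclotomic flavour, the part of your argument that does survive is that $\widehat{M}(ws_i\Lambda_i,\Lambda_i)$ is a finitely generated projective $R_z^{a_{\Lambda_i}}(ws_i\Lambda_i)$-module and hence has finite projective dimension over $R(\gamma_3)$ by Lemma \ref{lem:prdim1}; one could then try to argue that each $E_j^{*(b)}$, being a direct summand of a restriction functor, preserves finite projective dimension (restriction preserves it by the restriction--coinduction adjunction, as in Section \ref{sub:Extform}, and one must then pass from the tensor-product algebra to one factor). That would be a different, and shorter, proof than the paper's --- but it is a claim you would have to prove, not a consequence of the cyclotomic projectivity you asserted.
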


\begin{proof}
If $w \Lambda_i = \Lambda_i$, it is reduced to Lemma \ref{lem:prdim1}. 
 We assume $w\Lambda_i \neq \Lambda_i$. 
 We use the notation in Section \ref{sub:SES}. 
 Let $X \in \gMod{R(\gamma_2)}$. 
 The short exact sequence in Theorem \ref{thm:SES} yields the following exact sequence for $k \in \mathbb{Z}_{\geq 0}$: 
 \begin{align*}
 &\Ext_{R(\gamma_3)}^k (\widehat{M_3}, M_1 \circ X) \to \Ext_{R(\gamma_3)}^k (\widehat{M_2}\circ \widehat{M_1}, M_1 \circ X) \to \Ext_{R(\gamma_3)}^k (q^{(\alpha_i, \alpha_i) + (\gamma_1, \gamma_2)}\widehat{M_1}\circ \widehat{M_2}, M_1 \circ X) \\
 &\to \Ext_{R(\gamma_3)}^{k+1} (\widehat{M_3}, M_1 \circ X).  
 \end{align*}
 Note that by Remark \ref{rem:morita} and Lemma \ref{lem:prdim1}, $\prdim_{R(\gamma_3)} (\widehat{M_3})$ is finite. 
 Assume that $k > \prdim_{R(\gamma_3)} (\widehat{M_3})$. 
 Then, both the leftmost term and the rightmost term vanish.
 Hence, we obtain an isomorphism
 \[
 \Ext_{R(\gamma_3)}^k (\widehat{M_2}\circ \widehat{M_1}, M_1 \circ X) \simeq \Ext_{R(\gamma_3)}^k (q^{(\alpha_i, \alpha_i) + (\gamma_1, \gamma_2)}\widehat{M_1}\circ \widehat{M_2}, M_1 \circ X)
 \]  
 We compute these extension groups using $M_1 \circ X \simeq q^{-(\gamma_1, \gamma_2)}\Coind (X \otimes M_1)$.
 The left-hand side is 
 \begin{align*}
 &q^{-(\gamma_1,\gamma_2)}\Ext_{R(\gamma_2) \otimes R(\gamma_1)}^k (\Res_{\gamma_2,\gamma_1} (\widehat{M}_2 \circ \widehat{M}_1), X \otimes M_1) \quad \text{by the restriction-coinduction adjunction} \\ 
 &\simeq q^{-(\gamma_1, \gamma_2)} \Ext_{R(\gamma_2)\otimes R(\gamma_1)}^k (\widehat{M_2} \otimes \widehat{M_1}, X \otimes M_1) \quad \text{by Lemma \ref{lem:unmixing} and Lemma \ref{lem:lambda}}. 
 \end{align*}
 Similarly, the right-hand side is 
\begin{align*}
  &q^{-(\gamma_1,\gamma_2)}\Ext_{R(\gamma_2) \otimes R(\gamma_1)}^k (\Res_{\gamma_2,\gamma_1} q^{(\alpha_i,\alpha_i)+(\gamma_1,\gamma_2)}(\widehat{M}_1 \circ \widehat{M}_2), X \otimes M_1) \\
  & \quad \text{by the restriction-coinduction adjunction} \\ 
  &\simeq q^{-(\gamma_1, \gamma_2)} \Ext_{R(\gamma_2)\otimes R(\gamma_1)}^k (q^{(\alpha_i,\alpha_i)+(\gamma_1,\gamma_2)-(\gamma_1,\gamma_2)} \widehat{M_2} \otimes \widehat{M_1}, X \otimes M_1) \\
  & \quad \text{by Lemma \ref{lem:unmixing} and Lemma \ref{lem:lambda}} \\
  &\simeq q^{-(\alpha_i,\alpha_i)-(\gamma_1, \gamma_2)} \Ext_{R(\gamma_2)\otimes R(\gamma_1)}^k (\widehat{M_2} \otimes \widehat{M_1}, X \otimes M_1). 
\end{align*}

Hence, the left hand side and the right hand side only differ by a degree shift $q^{(\alpha_i, \alpha_i)}$. 
However, both of them are bounded below graded modules, it implies that they vanish. 

Since $\Ext_{R(\gamma_2)}^k (\widehat{M_2}, X) \simeq \Ext_{R(\gamma_2)}^k(\widehat{M_2}, X) \otimes \Hom_{R(\gamma_1)} (\widehat{M_1}, M_1)$ is canonically a direct summand of $\Ext_{R(\gamma_2)\otimes R(\gamma_1)}^k (\widehat{M_2}~\otimes \widehat{M_1}, X \otimes M_1) = 0$,
it follows that $\Ext_{R(\gamma_2)}^k (\widehat{M_2}, X) = 0$. 
Therefore, we obtain $\prdim_{R(\gamma_2)} \widehat{M_2} \leq \prdim_{R(\gamma_3)} \widehat{M_3} < \infty$. 

Using the short exact sequence
\[
0 \to q^d \widehat{M_2} \to \widehat{M_2} \to M_2 \to 0, 
\]
we deduce that $\prdim_{R(\gamma_2)} M_2$ is also finite. 
\end{proof}

\begin{proposition} \label{prop:prdim}
  Let $w \in W, \beta \in Q_+$. 
  Then, every $R_w(\beta)$-module has finite projective dimension as a graded $R(\beta)$-module. 
\end{proposition}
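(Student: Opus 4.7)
The plan is to reduce to the self-dual simple modules in $\gMod{R_{w,*}(\beta)}$ and then induct along the partial order on $\Lambda(\beta)$. First, by Corollary \ref{cor:projlim2}, the projective dimension of any $M \in \gMod{R_{w,*}(\beta)}$ over $R(\beta)$ is bounded above by the supremum of the projective dimensions of the composition factors of $M$. Since $\gMod{R_{w,*}(\beta)}$ is a Serre subcategory, every such composition factor is, up to degree shift, of the form $L(\lambda) = L(\lambda, s_0)$ for some $\lambda \in \Lambda(\beta)$, by Theorem \ref{thm:cuspidalCw} (5). The task thus reduces to showing $\prdim_{R(\beta)} L(\lambda) < \infty$ for every $\lambda \in \Lambda(\beta)$.

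For each such $\lambda$, I would study the standard module $\overline{\Delta}(\lambda)$, which by its construction is isomorphic, up to a degree shift, to the convolution
\[
M(w_{\leq l}\Lambda_{i_l}, w_{\leq l-1}\Lambda_{i_l})^{\circ \lambda_l} \circ \cdots \circ M(s_{i_1}\Lambda_{i_1}, \Lambda_{i_1})^{\circ \lambda_1}.
\]
For each $1 \leq k \leq l$, Lemma \ref{lem:prdim2} applied with $w = w_{\leq k-1}$ and $i = i_k$ (so that $ws_i = w_{\leq k} > w$) shows that $M(w_{\leq k}\Lambda_{i_k}, w_{\leq k-1}\Lambda_{i_k})$ has finite projective dimension in $\gMod{R(\beta_k)}$. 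Repeated application of Lemma \ref{lem:projdimind} then yields $\prdim_{R(\beta)} \overline{\Delta}(\lambda) < \infty$.

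The conclusion will come from an induction on the well-founded partial order on the finite set $\Lambda(\beta)$ pulled back via $\rho$ from the order on $\mathcal{P}^{\preceq}(\beta)$. Because $\overline{\Delta}(\lambda)$ lies in the Serre subcategory $\gMod{R_{w,*}(\beta)}$, all its composition factors are of the form $L(\mu)$ with $\mu \in \Lambda(\beta)$; by Theorem \ref{thm:cuspidalCw} (3), (4), the head $L(\lambda)$ occurs with multiplicity one, and every other composition factor has $\mu < \lambda$. For a minimal $\lambda$ the radical $\rad\,\overline{\Delta}(\lambda)$ is already zero, so $L(\lambda) \simeq \overline{\Delta}(\lambda)$ and the statement is immediate. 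For the inductive step, the hypothesis combined with Corollary \ref{cor:projlim2} gives $\prdim_{R(\beta)}\,\rad\,\overline{\Delta}(\lambda) < \infty$, and the short exact sequence
\[
0 \to \rad\,\overline{\Delta}(\lambda) \to \overline{\Delta}(\lambda) \to L(\lambda) \to 0
\]
then forces $\prdim_{R(\beta)} L(\lambda) < \infty$.

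I do not anticipate a substantial obstacle: the ingredients (finiteness of projective dimension for each determinantial building block, stability of finite projective dimension under convolution, the cuspidal decomposition of simples, and the projective-dimension estimate for possibly infinite-length modules via their composition factors) are all in place from the preceding sections. The only delicate point is to confirm that $\overline{\Delta}(\lambda)$ indeed stays inside $\gMod{R_{w,*}(\beta)}$, which is Theorem \ref{thm:cuspidalCw} (5) combined with the closure of $\gMod{R_{w,*}}$ under convolution, and that Corollary \ref{cor:projlim2} applies to $\rad\,\overline{\Delta}(\lambda)$, which is immediate from its statement.
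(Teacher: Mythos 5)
Your proposal is correct and follows essentially the same route as the paper: finite projective dimension of $\overline{\Delta}(\lambda)$ via Lemma \ref{lem:prdim2} and Lemma \ref{lem:projdimind}, then an induction along the order using Theorem \ref{thm:cuspidalCw} (3), (4) to transfer this to $L(\lambda)$, and finally Corollary \ref{cor:projlim2} to pass to arbitrary objects. You have merely spelled out the induction that the paper leaves implicit.
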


\begin{proof}
 We adopt the notation in Section \ref{sub:Cwv}, depending on the choice of a reduced expression of $w$. 
 By Lemma \ref{lem:projdimind} and Lemma \ref{lem:prdim2}, $\prdim_{R(\beta)} \overline{\Delta} (\lambda)$ is finite for every $\lambda \in \Lambda(\beta)$. 
 Then using Theorem \ref{thm:cuspidalCw} (3), (4), we may inductively show that $\prdim_{R(\beta)} L(\lambda)$ is finite for every $\lambda \in \Lambda (\beta)$. 
 It follows from Corollary \ref{cor:projlim2} that every object in $\gMod{R_w(\beta)}$ has finite projective dimension. 
\end{proof}

\subsection{The Ext bilinear form} \label{sub:Extform}

Let $\fpdgMod{R(\beta)}$ denote the full subcategory of $\gMod{R(\beta)}$ consisting of modules whose projective dimension is finite. \index{$\fpdgMod{R(\beta)}$}
It is closed under extensions, so it is an exact category for the exact sequences of $\gMod{R(\beta)}$. 
Moreover, $\fpdgMod{R} = \bigoplus_{\beta \in Q_+} \fpdgMod{R(\beta)}$ is a monoidal subcategory of $\gMod{R}$, by Lemma \ref{lem:projdimind}. 
Let $K(\fpdgMod{R})$ denote the Grothendieck group of the exact category $\fpdgMod{R}$. 
Taking projective resolutions, we obtain a canonical isomorphism of $\mathbb{Z}[q,q^{-1}]$-algebras $K(\fpdgMod{R}) \simeq K_{\oplus} (\gproj{R})$. 

\begin{definition} \label{def:extform}
  For $M \in \fpdgMod{R(\beta)}$ and $N \in \gMod{R(\beta)}$, we define
  \[
  \Extform{M}{N} = \sum_{k \geq 0} (-1)^k \qdim \Ext_{R(\beta)}^k (M, N). \index{$\Extform{M}{N}$}
  \]
Since $\prdim M < \infty$, the sum is finite and defines an element of $\mathbb{Z}((q))$. 
\end{definition}

It induces a $\mathbb{Z}$-bilinear form 
\[
K (\fpdgMod{R(\beta)}) \times K(\gMod{R(\beta)}) \to \mathbb{Z}((q)), 
\]
which is also denoted by $\Extform{\cdot}{\cdot}$. 
Since projective modules span $K(\fpdgMod{R(\beta)})$, we have 
\[
\Extform{M}{N} = \sum_{[L]: \text{$L$ is a self-dual simple $R(\beta)$-module}} [N:L]_q \Extform{M}{L}. 
\]

Recall three morphisms introduced in Section \ref{sub:quiverHecke}: 
\[
\Psi_1 \colon K_{\oplus}(\gproj{R}) \xrightarrow{\sim} \quantum{-}_{\mathbb{Z}[q,q^{-1}]}, \Psi_2 \colon K(\gmod{R}) \xrightarrow{\sim} \quantum{-}_{\mathbb{Z}[q,q^{-1}]}^{\mathrm{up}}, \Psi \colon K(\gMod{R}) \to \quantum{-}_{\mathbb{Z}((q))}^{\mathrm{up}}. 
\]
Through the isomorphism $K_{\oplus}(\gproj{R}) \simeq K(\fpdgMod{R})$, we have an isomorphism $K(\fpdgMod{R}) \xrightarrow{\sim} \quantum{-}_{\mathbb{Z}[q,q^{-1}]}$, which is also denoted by $\Psi_1$. \index{$\Psi_1$}

Let $\theta$ be the $\mathbb{Q}(q)$-algebra automorphism of $\quantum{-}$ defined by \index{$\theta$}
\[
\theta (f_i) = (1-q_i^2) f_i \ (i \in I). 
\]
We extend it to $\quantum{-}_{\mathbb{Q}((q))}$ by base change. 

Following \cite[Proposition 1.2.5.]{MR2759715}, we define $\Lusform{\cdot}{\cdot}$ to be the $\mathbb{Q}(q)$-bilinear form on $\quantum{-}_{\mathbb{Q}(q)}$ given by \index{$\Lusform{\cdot}{\cdot}$}
\[
\text{$\Lusform{1}{1} = 1, \Lusform{f_i x}{y} = \frac{1}{1-q_i^2} \Lusform{x}{e'_i y}$ for $i \in I$ and $x, y \in \quantum{-}$.}
\]
It is related to $(\cdot,\cdot)$ in section \ref{sub:quantumgroups}: 
using $\theta e'_i \theta^{-1} = e'_i/(1-q_i^2) \ (i \in I)$, we have 
\[
(x,y) = \Lusform{x}{\theta(y)} 
\]
for any $x, y \in \quantum{-}$. 
We may extend these bilinear forms to $\mathbb{Q}((q))$-bilinear forms on $\quantum{-}_{\mathbb{Q}((q))}$. 
We use the same symbols for them.

\begin{lemma} \label{lem:bilinearforms}
  Let $\beta \in Q_+$. 
  We have 
  \[
  \Extform{M}{N} = (\overline{\Psi_1([M])}, \Psi([N])) = \Lusform{\overline{\Psi_1([M])}}{\theta \Psi([N])}
  \]
  for any $M \in \fpdgMod{R(\beta)}$ and $N \in \gMod{R(\beta)}$. 
\end{lemma}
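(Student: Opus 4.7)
The second equality is immediate from the identity $(x,y) = \Lusform{x}{\theta(y)}$ stated just before the lemma, extended by $\mathbb{Q}((q))$-bilinearity. I therefore focus on the first equality. The plan is to reduce to the case where $M$ is projective and then to interpret everything term by term using Theorem~\ref{thm:categorification}~(7).

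First, observe that both sides of the claimed equality are $\mathbb{Z}$-bilinear in $([M],[N]) \in K(\fpdgMod{R(\beta)}) \times K(\gMod{R(\beta)})$. On the left, this is because $\Extform{\cdot}{\cdot}$ was defined via an alternating sum of graded dimensions, which is additive on short exact sequences of the exact category $\fpdgMod{R(\beta)}$ (using that one can lift short exact sequences to projective resolutions). On the right, $\Psi_1$ is a group homomorphism and $\Psi$ is too, so both $(\overline{\Psi_1(\cdot)}, \Psi(\cdot))$ is bilinear. Since $K(\fpdgMod{R(\beta)}) \simeq K_{\oplus}(\gproj{R(\beta)})$ is generated by classes of finitely generated projective modules, it suffices to prove the equality when $M = P$ is projective.

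For $M = P$ projective, $\Ext^{k}_{R(\beta)}(P, N) = 0$ for $k \geq 1$, so $\Extform{P}{N} = \qdim \Hom_{R(\beta)}(P, N)$. On the other side, expand
\[
(\overline{\Psi_1([P])}, \Psi([N])) = \sum_{[L]} [N:L]_q \, (\overline{\Psi_1([P])}, \Psi_2([L])),
\]
where $L$ ranges over self-dual simple $R(\beta)$-modules. The sum is finite (there are only finitely many such $L$ up to isomorphism and degree shift, and $(\cdot,\cdot)$ preserves weight grading), and is a well-defined element of $\mathbb{Z}((q))$ after extending bilinearity naturally to coefficients in $\mathbb{Z}((q))$. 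By Theorem~\ref{thm:categorification}~(7), each term $(\overline{\Psi_1([P])}, \Psi_2([L]))$ equals $\qdim \Hom_{R(\beta)}(P, L)$. Hence the right-hand side equals
\[
\sum_{[L]} [N:L]_q \, \qdim \Hom_{R(\beta)}(P, L).
\]

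It remains to check that $\qdim \Hom_{R(\beta)}(P, N) = \sum_{[L]} [N:L]_q \qdim \Hom_{R(\beta)}(P, L)$ for every projective $P$ and every $N \in \gMod{R(\beta)}$. By bilinearity it is enough to verify this when $P = P(\mu)$ is an indecomposable projective cover of a self-dual simple $L(\mu)$. In that case $\Hom_{R(\beta)}(P(\mu), L(\lambda))$ is one-dimensional concentrated in degree zero if $[L(\lambda)] = [L(\mu)]$ and zero otherwise, so the right-hand side reduces to $[N:L(\mu)]_q$. By Definition~\ref{def:composition}, the left-hand side is also $\qdim \Hom_{R(\beta)}(P(\mu), N) = [N:L(\mu)]_q$. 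The only subtlety here is that $N$ may be infinite-dimensional, so one must confirm that $\Hom_{R(\beta)}(P(\mu), N)$ is Laurentian, which follows because $P(\mu)$ is finitely generated and $N \in \gMod{R(\beta)}$ is Laurentian by the discussion in Section~\ref{sub:Laurentian}. This is the only mildly delicate point in the argument; everything else is formal.
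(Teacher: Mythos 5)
Your proof is correct, but it takes a different route from the paper's. The paper first reduces to the case of finite-dimensional $N$ (via the expansion of $\Extform{M}{N}$ over composition factors given after Definition~\ref{def:extform}), and then proves the identity by an axiomatic characterization: Kashiwara's form $(\overline{\cdot},\cdot)$ on $\quantum{-}_{\mathbb{Z}[q,q^{-1}]} \times \quantum{-}_{\mathbb{Z}[q,q^{-1}]}^{\mathrm{up}}$ is pinned down by weight-space orthogonality, its behaviour under $q$-shift, the normalization $(\overline{1},1)=1$, and the adjunction $(\overline{f_i x},y)=(\overline{x},e'_i y)$; the paper checks that $\Extform{\Psi_1^{-1}(\cdot)}{\Psi_2^{-1}(\cdot)}$ satisfies the same four conditions, the last one via the induction--restriction adjunction $\Extform{R(\alpha_i)\circ M}{N}=\Extform{M}{E_iN}$. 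You instead reduce in the \emph{first} variable to projective $M$ (using $K(\fpdgMod{R(\beta)})\simeq K_{\oplus}(\gproj{R(\beta)})$), quote Theorem~\ref{thm:categorification}~(7) for the pairing of a projective with each composition factor, and then check the elementary identity $\qdim\Hom(P,N)=\sum_{[L]}[N:L]_q\,\qdim\Hom(P,L)$ on indecomposable projectives. Your argument is shorter, but it outsources the substantive content to Theorem~\ref{thm:categorification}~(7); since the paper explicitly says it gives ``a more detailed account of (7)'' in this very section, its own proof is deliberately self-contained and in effect re-derives (7), whereas yours is legitimate only insofar as (7) is taken as an externally established result of Khovanov--Lauda. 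The individual steps you give (bilinearity of both sides, finiteness of the sum, Laurentian-ness of $\Hom(P(\mu),N)$, and the degree-zero concentration of $\Hom(P(\mu),L(\lambda))$ via absolute irreducibility) are all sound.
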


\begin{proof}
  The second equality follows from the discussion preceding the lemma. 
  We have to prove the first equality. 
  By the discussion after Definition \ref{def:extform}, we may assume that $N$ is finite dimensional. 
  The $\mathbb{Z}$-bilinear form $(\overline{\cdot}, \cdot) \colon \quantum{-}_{\mathbb{Z}[q,q^{-1}]} \times \quantum{-}_{\mathbb{Z}[q,q^{-1}]}^{\mathrm{up}} \to \mathbb{Z}[q,q^{-1}]$ is characterized by
  \begin{enumerate}
  \item $(\overline{x},y) = 0$ unless $\beta = \gamma$, 
  \item $(\overline{qx}, y) = (\overline{x}, q^{-1}y) = q^{-1}(\overline{x}, y)$, 
  \item $(\overline{1}, 1) = 1$,
  \item $(\overline{f_ix}, y) = (\overline{x}, e'_i y)$, 
  \end{enumerate}
  for $\beta, \gamma \in Q_+, x \in \quantum{-}_{-\beta}, y \in \quantum{-}_{-\gamma}$. 
  It suffices to verify that the $\mathbb{Z}$-bilinear form $\Extform{{\Psi_1}^{-1}(\cdot)}{{\Psi_2}^{-1}(\cdot)}$ also satisfies these conditions.
  
  (1) and (2) are immediate. 
  (3) follows from $\Extform{P(i)}{L(i)} = 1$. 
  It remains to prove (4). 
  Let $i \in I$. 
  By the induction-restriction adjunction, we have 
  \[
  \Extform{R(\alpha_i)\circ M}{N} = \Extform{R(\alpha_i) \otimes M}{\Res_{\alpha_i, *}N} = \Extform{M}{E_i N}
  \]
  for all $M \in \fpdgMod{R(\beta)}$ and $N \in \gmod{R(\beta+\alpha_i)} \ (\beta \in Q_+)$. 
  By Theorem \ref{thm:categorification}, we deduce that 
  \[
  \Extform{\Psi_1^{-1}(f_ix)}{\Psi_2^{-1}(y)} = \Extform{\Psi_1^{-1}(x)}{\Psi_2^{-1}(e'_i y)} 
  \]
  for $x \in \quantum{-}_{-\beta}, y \in \quantum{-}_{-\beta-\alpha_i}, \beta\in Q_+$. 
 Hence, the lemma follows.
 \end{proof}

\begin{lemma} \label{lem:character}
  We have the following commutative diagram: 
  \[
  \begin{tikzcd}
  K(\fpdgMod{R}) \arrow[r, "\Psi_1"]\arrow[d] & \quantum{-} \arrow[d] \\
  K(\gMod{R}) \arrow[r,"\theta \Psi"] & \quantum{-}_{\mathbb{Q}((q))}, 
  \end{tikzcd}
  \]
  where the left vertical arrow is induced from embedding of abelian categories $\fpdgMod{R} \to \gMod{R}$ and the right one is the canonical inclusion. 
  \end{lemma}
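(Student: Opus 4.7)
The plan is to reduce the identity $\Psi_1([P]) = \theta\Psi([P])$ to the special case where $P$ is a ``polynomial representation'' $R(\beta)e(\nu)$ with $\nu = (i_1,\ldots,i_n) \in I^{\beta}$, and to verify this special case by exhibiting an explicit locally finite Hausdorff filtration of $R(\beta)e(\nu)$. By Lemma~\ref{lem:projdimind}, $\fpdgMod R$ is closed under convolution, so via the identification $K(\fpdgMod R) \simeq K_{\oplus}(\gproj R)$ of Section~\ref{sub:Laurentian}, the map $\Psi_1$ is a $\mathbb{Z}[q,q^{-1}]$-algebra isomorphism onto $\quantum{-}_{\mathbb{Z}[q,q^{-1}]}$ by Theorem~\ref{thm:categorification}. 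Both $\Psi_1$ and $\theta\Psi$ are $\mathbb{Z}[q,q^{-1}]$-linear; after base change along $\mathbb{Z}[q,q^{-1}]\hookrightarrow\mathbb{Q}((q))$ they become $\mathbb{Q}((q))$-linear maps from $K(\fpdgMod R)\otimes_{\mathbb{Z}[q,q^{-1}]}\mathbb{Q}((q)) \simeq \quantum{-}_{\mathbb{Q}((q))}$ to $\quantum{-}_{\mathbb{Q}((q))}$. Since $\Psi_1([R(\beta)e(\nu)]) = f_{i_1}\cdots f_{i_n}$ and such monomials span every weight space $\quantum{-}_{-\beta}$ over $\mathbb{Q}(q)$ (hence over $\mathbb{Q}((q))$), the classes $\{[R(\beta)e(\nu)]\}$ span $K(\fpdgMod R)\otimes\mathbb{Q}((q))$, and it is enough to check agreement on these classes.

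For the main computation I will write $R(\beta)e(\nu) \simeq R(\alpha_{i_1})\circ\cdots\circ R(\alpha_{i_n})$ and exploit the $x_1$-adic filtration of each factor $R(\alpha_{i_k}) \simeq \mathbf{k}[x_1]$, whose $a$-th successive quotient is $q^{a(\alpha_{i_k},\alpha_{i_k})}L(i_k)$. Exactness of convolution lets me combine these into a $\mathbb{Z}_{\geq 0}^n$-indexed multi-filtration of $R(\beta)e(\nu)$ by left $R(\beta)$-submodules; summing by total index $p = a_1+\cdots+a_n$ produces a descending filtration $(F^p)_{p\geq 0}$ whose $p$-th successive quotient is the finite-dimensional module
\[
\bigoplus_{a_1+\cdots+a_n = p} q^{\sum_k a_k(\alpha_{i_k},\alpha_{i_k})} L(i_1)\circ\cdots\circ L(i_n).
\]
This filtration is locally finite Hausdorff, so the formula for $[\cdot]_q$ preceding Lemma~\ref{lem:limext} yields
\[
[R(\beta)e(\nu)]_q = \prod_{k=1}^{n}(1-q_{i_k}^2)^{-1}\cdot [L(i_1)\circ\cdots\circ L(i_n)]
\]
in $K(\gmod R)_{\mathbb{Z}((q))}$. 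Applying $\Psi$ and using that $\Psi_2$ is a ring homomorphism (Theorem~\ref{thm:categorification}), then applying $\theta$ and using $\theta(f_{i_1}\cdots f_{i_n}) = \prod_k(1-q_{i_k}^2)\cdot f_{i_1}\cdots f_{i_n}$, the factors $\prod_k(1-q_{i_k}^2)^{-1}$ and $\prod_k(1-q_{i_k}^2)$ cancel, yielding
\[
\theta\Psi([R(\beta)e(\nu)]) = f_{i_1}\cdots f_{i_n} = \Psi_1([R(\beta)e(\nu)]).
\]

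The main technical point will be verifying that the consolidated filtration $(F^p)$ is locally finite Hausdorff in the sense of Section~\ref{sub:Laurentian}: finite-dimensionality of each quotient $F^p/F^{p+1}$ follows from the description above, while the Hausdorff property $\bigcap_p F^p = 0$ reduces to noting that an element of $F^p$ has total grading degree bounded below by $p\cdot\min_k(\alpha_{i_k},\alpha_{i_k})$, which tends to infinity with $p$ since each $(\alpha_{i_k},\alpha_{i_k})$ is positive. Once this bookkeeping of degrees is in place, the remainder of the argument is a bare manipulation of power series.
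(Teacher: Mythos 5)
Your proof is correct, and the endgame is the same computation as the paper's: the geometric series $[\mathbf{k}[x_1]]_q = (1-q_i^2)^{-1}[L(i)]$ cancelling against the factor $(1-q_i^2)$ contributed by $\theta$. Where you differ is in the reduction. The paper observes that $\{[P(i)]\}_{i\in I}$ generates $K(\fpdgMod{R})_{\mathbb{Q}((q))}$ as an algebra and checks the identity only on these generators; this is a one-line argument but tacitly uses that $\theta\Psi$ is multiplicative, i.e.\ that $[M\circ N]_q = [M]_q[N]_q$ on $K(\gMod{R})$, a fact the paper never isolates. You instead check the identity on the spanning set $\{[R(\beta)e(\nu)]\}$ using only $\mathbb{Q}((q))$-linearity of both maps, paying for this with the explicit multi-filtration of $R(\alpha_{i_1})\circ\cdots\circ R(\alpha_{i_n})$ by the images of $x_1^{a_1}\mathbf{k}[x_1]\circ\cdots\circ x_1^{a_n}\mathbf{k}[x_1]$ (well-defined as submodules by exactness of convolution) and the verification that the consolidated filtration is locally finite Hausdorff. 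Both routes are sound; yours is longer but more self-contained, since multiplicativity of $\Psi$ is only ever used through $\Psi_2$ on the finite-dimensional quotients, where it is part of Theorem~\ref{thm:categorification}. One small point worth making explicit in your write-up: the spanning claim for $\{[R(\beta)e(\nu)]\}$ follows because $\Psi_1$ is an isomorphism sending these classes to the monomials $f_{i_1}\cdots f_{i_n}$, which span each weight space $\quantum{-}_{-\beta}$ over $\mathbb{Q}(q)$ and hence over $\mathbb{Q}((q))$ — you state this, and it is exactly what is needed.
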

  
\begin{proof}
    It is enough to prove the lemma after extending scalars of all terms to $\mathbb{Q}((q))$. 
    Since $\{ [P(i)] \mid i \in I\}$ generates the algebra $K(\fpdgMod{R})_{\mathbb{Q}((q))} \simeq K_{\oplus}(\gproj{R})_{\mathbb{Q}((q))} \simeq \quantum{-}_{\mathbb{Q}((q))}$, 
    it suffices to verify that $\Psi_1([P(i)]) = \theta\Psi([P(i)]_q)$ for each $i \in I$. 
    By Theorem \ref{thm:categorification}, $\Psi_1([P(i)]) = f_i$. 
    On the other hand, 
    \[  
    \theta \Psi([P(i)]_q) = \theta \Psi_2 \left( \sum_{k=0}^{\infty} q_i^{2k}[L(i)] \right) = \sum_{k=0}^{\infty} q_i^{2k} \theta(f_i) = \frac{1}{1-q_i^2} (1-q_i^2) f_i = f_i. 
    \]
    The lemma is proved. 
\end{proof}

Following \cite[Chapter 1]{MR2759715}, endow $\quantum{-} \otimes \quantum{-}$ with an algebra structure given by
\[
(x_1 \otimes x_2) (y_1 \otimes y_2) = q^{-(\wt x_2, \wt y_1)} x_1y_1 \otimes x_2 y_2, 
\]
and define an algebra homomorphism $r \colon \quantum{-} \to \quantum{-} \otimes \quantum{-}$ by \index{$r$}
\[
r(f_i) = f_i \otimes 1 + 1 \otimes f_i \ (i \in I).
\]
It satisfies $\Lusform{xx'}{y} = \Lusform{x \otimes x'}{r(y)}$ for any $x,x', y \in \quantum{-}$. 
Note that, for $x \in \quantum{-}_{-\beta} \ (\beta \in Q_+)$, we have 
\[
r(x) \in \bigoplus_{\beta_1, \beta_2 \in Q_+, \beta_1 + \beta_2 = \beta} \quantum{-}_{-\beta_1}\otimes \quantum{-}_{-\beta_2}. 
\]
Write $r(x)$ along this direct sum decomposition and let $r_{\beta_1, \beta_2}(x)$ be the term in $\quantum{-}_{-\beta_1} \otimes \quantum{-}_{-\beta_2}$. \index{$r_{\beta_1,\beta_2}$}
We have $r(x) = \sum_{\beta_1, \beta_2 \in Q_+, \beta_1 + \beta_2 = \beta} r_{\beta_1, \beta_2} (x)$.  

\begin{lemma}
  Let $\beta, \beta_1, \beta_2 \in Q_+$ and assume $\beta = \beta_1 + \beta_2$. 
\begin{enumerate}
  \item If $M \in \fpdgMod{R(\beta)}$, $\Res_{\beta_1,\beta_2} M \in \fpdgMod{(R(\beta_1) \otimes R(\beta_2))}$. 
  \item If $P \in \gproj{R(\beta)}$, $\Res_{\beta_1,\beta_2} P \in \gproj{(R(\beta_1)\otimes R(\beta_2))}$. 
\end{enumerate}
\end{lemma}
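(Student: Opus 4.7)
The plan is to first establish (2) and then deduce (1) by a projective-resolution argument. The key observation, already recorded in Section~\ref{sub:quiverHecke}, is that $e(\beta_1,\beta_2)R(\beta)$ is finitely generated and projective as a left $(R(\beta_1)\otimes R(\beta_2))$-module, and that the restriction functor admits the description
\[
\Res_{\beta_1,\beta_2}(X) \;\simeq\; e(\beta_1,\beta_2)R(\beta)\otimes_{R(\beta)} X
\]
for $X\in \gMod{R(\beta)}$.

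For (2), I would argue as follows. A finitely generated graded projective $R(\beta)$-module $P$ is, by definition, a direct summand of a finite direct sum of degree shifts of $R(\beta)$. Applying the additive functor $\Res_{\beta_1,\beta_2}$, we see that $\Res_{\beta_1,\beta_2}P$ is a direct summand of a finite direct sum of degree shifts of $\Res_{\beta_1,\beta_2}R(\beta) \simeq e(\beta_1,\beta_2)R(\beta)$. Since the latter is finitely generated projective over $R(\beta_1)\otimes R(\beta_2)$, so is $\Res_{\beta_1,\beta_2}P$. This proves $\Res_{\beta_1,\beta_2}P\in \gproj{(R(\beta_1)\otimes R(\beta_2))}$.

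For (1), let $M\in \fpdgMod{R(\beta)}$ with $\prdim_{R(\beta)}M=n<\infty$. Choose a finite resolution
\[
0\to P^n \to P^{n-1}\to \cdots \to P^0 \to M \to 0
\]
in $\gproj{R(\beta)}$; such a resolution exists because $\gMod{R(\beta)}$ has enough finitely generated projectives and $M$ is finitely generated (so one can truncate). Since $\Res_{\beta_1,\beta_2}$ is exact (as recalled in Section~\ref{sub:quiverHecke}), we obtain an exact sequence
\[
0\to \Res_{\beta_1,\beta_2}P^n \to \cdots \to \Res_{\beta_1,\beta_2}P^0 \to \Res_{\beta_1,\beta_2}M \to 0
\]
in $\gMod{(R(\beta_1)\otimes R(\beta_2))}$, and by (2) each $\Res_{\beta_1,\beta_2}P^k$ lies in $\gproj{(R(\beta_1)\otimes R(\beta_2))}$. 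Hence $\Res_{\beta_1,\beta_2}M$ has projective dimension at most $n$, i.e.\ it belongs to $\fpdgMod{(R(\beta_1)\otimes R(\beta_2))}$.

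There is no real obstacle here: the argument is entirely formal once one recalls the projectivity of $e(\beta_1,\beta_2)R(\beta)$ over $R(\beta_1)\otimes R(\beta_2)$, which is a standard consequence of the $P(\beta)$-basis description of $R(\beta)$ via $\{\tau_w\mid w\in \Sym_{\height\beta}\}$ together with the fact that the coset representatives $\Sym_{n}^{n_1,n_2}$ give a free basis of $e(\beta_1,\beta_2)R(\beta)$ over $R(\beta_1)\otimes R(\beta_2)$. The only mild subtlety worth flagging is that one must use finitely generated projective resolutions in $\gproj{R(\beta)}$ (as opposed to arbitrary graded projectives) so that the resulting resolution of $\Res_{\beta_1,\beta_2}M$ is again by objects of $\gproj{(R(\beta_1)\otimes R(\beta_2))}$.
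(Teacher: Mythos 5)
Your proof is correct, but it is not the route the paper takes: the paper disposes of both parts in one line via the restriction--coinduction adjunction. Concretely, since $\Coind_{\beta_1,\beta_2}$ is exact, the isomorphism $\Hom_{R(\beta_1)\otimes R(\beta_2)}(\Res_{\beta_1,\beta_2}P, -)\simeq \Hom_{R(\beta)}(P,\Coind_{\beta_1,\beta_2}(-))$ shows $\Res_{\beta_1,\beta_2}P$ is projective, and the induced isomorphism $\Ext^k_{R(\beta_1)\otimes R(\beta_2)}(\Res_{\beta_1,\beta_2}M,N)\simeq \Ext^k_{R(\beta)}(M,\Coind_{\beta_1,\beta_2}N)$ kills all $\Ext^k$ for $k>\prdim_{R(\beta)}M$, which is exactly the paper's definition of finite projective dimension --- no resolutions need to be built. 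You instead use the other face of the same structure, namely that $\Res_{\beta_1,\beta_2}\simeq e(\beta_1,\beta_2)R(\beta)\otimes_{R(\beta)}(-)$ with $e(\beta_1,\beta_2)R(\beta)$ finitely generated projective on the left over $R(\beta_1)\otimes R(\beta_2)$, and then push a finite projective resolution through the exact functor. This is perfectly valid and arguably more explicit (it produces an actual resolution of $\Res_{\beta_1,\beta_2}M$ and the bound $\prdim\Res_{\beta_1,\beta_2}M\le\prdim_{R(\beta)}M$), at the cost of one extra standard step you gloss over slightly: passing from the paper's $\Ext$-vanishing definition of $\prdim_{R(\beta)}M<\infty$ to the existence of a finite resolution by finitely generated projectives requires the usual syzygy argument (the $n$-th syzygy $K^n$ of a finitely generated module over the Noetherian ring $R(\beta)$ is finitely generated, and $\Ext^1(K^n,N)\simeq\Ext^{n+1}(M,N)=0$ applied to $N=$ the next syzygy splits the presentation, so $K^n$ is projective). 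With that remark supplied, your argument is complete.
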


\begin{proof}
  It follows from the restriction-coinduction adjunction. 
\end{proof}

\begin{lemma} \label{lem:rescompatible}
  Let $\beta, \beta_1, \beta_2 \in Q_+$ and assume $\beta = \beta_1 + \beta_2$. 
 \begin{enumerate}
  \item For $M \in \gMod{R(\beta)}$, $(\theta \Psi \otimes \theta \Psi) (\Res_{\beta_1, \beta_2} [M]) = r_{\beta_1,\beta_2} \theta \Psi [M]$. 
  \item For $M \in \fpdgMod{R(\beta)}$ (or $M \in \gproj{R(\beta)}$), $(\Psi_1 \otimes \Psi_1) (\Res_{\beta_1, \beta_2} [M]) = r_{\beta_1, \beta_2} \Psi_1 [M]$.  
 \end{enumerate}
\end{lemma}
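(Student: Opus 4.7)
The strategy is to translate restriction of quiver Hecke modules into the twisted multiplication formula defining $r$ via the Mackey filtration, then verify the identity on algebra generators, and finally pass from finite-dimensional modules to arbitrary objects of $\gMod R$ using a continuity argument.

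\textbf{Step 1 (Mackey as multiplicative compatibility).} For $M \in \gMod{R(\gamma)}$ and $N \in \gMod{R(\delta)}$ with $\gamma + \delta = \beta_1 + \beta_2$, the Mackey filtration (recalled in Section~\ref{sub:quiverHecke}) yields, in the appropriate Grothendieck group,
\[
[\Res_{\beta_1,\beta_2}(M\circ N)]_q = \sum_{\substack{\gamma_1+\gamma_2=\gamma \\ \delta_1+\delta_2=\delta \\ \gamma_l+\delta_l=\beta_l}} q^{-(\gamma_2,\delta_1)}\bigl[(\Res_{\gamma_1,\gamma_2}M)\star(\Res_{\delta_1,\delta_2}N)\bigr]_q,
\]
where $\star$ denotes convolution of the first (resp.\ second) tensor factors. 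This is exactly the formula for multiplication in $\quantum{-}\otimes\quantum{-}$ with the twisted product $(x_1\otimes x_2)(y_1\otimes y_2)=q^{-(\wt x_2,\wt y_1)}x_1y_1\otimes x_2y_2$. Consequently, $\bigoplus_{\beta_1+\beta_2=\beta}\Res_{\beta_1,\beta_2}$ becomes, under $\Psi_1\otimes\Psi_1$ (resp.\ $\theta\Psi\otimes\theta\Psi$), an algebra homomorphism intertwining convolution with the twisted product. Since $r$ is also an algebra homomorphism, it suffices to verify the desired identities on a set of algebra generators.

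\textbf{Step 2 (verification on generators).} For (2), $\quantum{-}_{\mathbb{Z}[q,q^{-1}]}$ is generated by the classes $\Psi_1[P(i^n)]=f_i^{(n)}$, and by a standard divided-power argument it suffices to check $i\in I$, $n=1$. For $P(i)\in\gproj{R(\alpha_i)}$ the only nonzero restrictions are $\Res_{\alpha_i,0}P(i)=P(i)\otimes\mathbf{k}$ and $\Res_{0,\alpha_i}P(i)=\mathbf{k}\otimes P(i)$, giving
\[
(\Psi_1\otimes\Psi_1)\!\!\sum_{\beta_1+\beta_2=\alpha_i}[\Res_{\beta_1,\beta_2}P(i)] = f_i\otimes 1+1\otimes f_i = r(f_i),
\]
which is (2). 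For (1) applied to $M\in\fpdgMod{R}$, Lemma~\ref{lem:character} identifies $\theta\Psi$ with $\Psi_1$, so (2) immediately yields (1) in this case. For a general fd simple $[L(i)]\in K(\gmod R)$, one has $\theta\Psi[L(i)]=(1-q_i^2)f_i$ and a direct computation of $(\theta\Psi\otimes\theta\Psi)\Res[L(i)]=(1-q_i^2)(f_i\otimes 1+1\otimes f_i)=r((1-q_i^2)f_i)$ together with Step~1 gives (1) for all $[M]\in K(\gmod R)_{\mathbb{Q}(q)}$, hence for all $[M]\in K(\gmod R)$.

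\textbf{Step 3 (passage to $\gMod R$ for (1)).} For general $M\in\gMod{R(\beta)}$, choose a locally-finite Hausdorff filtration $(M^p)_{p\geq 0}$. Since $\Res_{\beta_1,\beta_2}$ is exact and sends such a filtration to a locally-finite Hausdorff filtration of $\Res_{\beta_1,\beta_2}M$, Lemma~\ref{lem:limext} gives
\[
[M]_q=\lim_{p\to\infty}[M/M^p],\qquad [\Res_{\beta_1,\beta_2}M]_q=\lim_{p\to\infty}[\Res_{\beta_1,\beta_2}(M/M^p)]
\]
in the $\mathbb{Z}((q))$-Grothendieck groups. Each $M/M^p$ is finite dimensional, so the identity from Step~2 holds for it, and both sides of the target equation pass to the limit because $r$ and $\theta\Psi$ are $\mathbb{Q}((q))$-continuous with respect to these topologies.

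\textbf{Main obstacle.} The algebraic heart of the argument is cleanly reduced to a check on generators via Step~1, and is essentially Khovanov-Lauda's original bialgebra compatibility. The genuine technical point is Step~3: verifying that the completed Grothendieck group $K(\gmod R)_{\mathbb{Z}((q))}$ carries a topology in which $r$ and $\Res$ are continuous, so that the identity established on finite-dimensional quotients propagates to $[M]_q$. This is precisely what Lemma~\ref{lem:limext} provides; the rest is bookkeeping.
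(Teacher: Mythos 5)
Your proof is correct, but it takes a genuinely different route from the paper. The paper's argument is a short duality computation: by Lemma \ref{lem:bilinearforms} the Ext-pairing $\Extform{\cdot}{\cdot}$ matches the Lusztig form, the induction--restriction adjunction gives $\Extform{M_1\circ M_2}{M}=\Extform{M_1\otimes M_2}{\Res_{\beta_1,\beta_2}M}$, the identity $\Lusform{x_1x_2}{y}=\Lusform{x_1\otimes x_2}{r(y)}$ is the defining property of $r$, and nondegeneracy of $\Lusform{\cdot}{\cdot}$ then forces $(\theta\Psi\otimes\theta\Psi)\Res_{\beta_1,\beta_2}[M]=r_{\beta_1,\beta_2}\theta\Psi[M]$; part (2) is deduced from (1) via Lemma \ref{lem:character} exactly as you note. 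You instead prove the coproduct compatibility directly: Mackey gives that $\bigoplus\Res_{\beta_1,\beta_2}$ is multiplicative for the twisted product (your degree shift $q^{-(\gamma_2,\delta_1)}$ correctly matches $q^{-(\wt x_2,\wt y_1)}$), a check on the generators $[L(i)]$, $[P(i)]$ pins down the map, and Lemma \ref{lem:limext} propagates the identity from finite-dimensional quotients to all of $\gMod{R}$. This is essentially the Khovanov--Lauda bialgebra argument; it is more self-contained (it does not invoke nondegeneracy of the form or Lemma \ref{lem:bilinearforms}) but requires the Mackey bookkeeping and the limit/continuity step, which you should state slightly more carefully: the point is that $r_{\beta_1,\beta_2}$ and $\theta$ act on each weight space by fixed matrices with Laurent-polynomial entries, so they commute with the coefficientwise-convergent sums $[M]_q=\sum_p[M^p/M^{p+1}]$. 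The paper's route trades all of this for the single input that the two bilinear forms agree under categorification. Both are valid proofs of the statement.
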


\begin{proof}
(1) For $M \in \gMod{R(\beta)}, M_k \in \gproj{R(\beta_k)} \ (k=1,2)$, we have the induction-restriction adjunction
\[
\Extform{M_1 \circ M_2}{M} = \Extform{M_1 \otimes M_2}{\Res_{\beta_1, \beta_2}M}. 
\]
On the other hand, for $x \in \quantum{-}_{-\beta}, x_k \in \quantum{-}_{-\beta_k} \ (k=1,2)$, we have
\[
\Lusform{x_1x_2}{x} = \Lusform{x_1 \otimes x_2}{r_{\beta_1,\beta_2}x}. 
\]
Hence, the assertion follows from the nondegeneracy of $\Lusform{\cdot}{\cdot}$ and Lemma \ref{lem:bilinearforms}. 

(2) follows from (1) using Lemma \ref{lem:character}. 
\end{proof}

\begin{comment}
\begin{lemma} 
 If $M \in \fpdgMod{R}$, we have 
 \[
 \Psi'_1 (M) = \Psi'_2 (M)
 \] 
 in $\quantum{-}$. 
\end{lemma}

\begin{proof}
 It defines a $\mathbb{Z}[q,q^{-1}]$-module homomorphism $K(\gMod{R}) \to K(\gmod{R})_{\mathbb{Z}((q))} \xrightarrow{\Psi'_2} \quantum{-}_{\mathbb{Q}((q))}$, which is denoted by $\Psi_3$. 
 Then we have the following diagram:

\[
\begin{tikzcd}
 & K(\fpdgMod{R})_{\mathbb{Q}(q)} \arrow[rd]\arrow[rrd, bend left,"\Psi'_1"] & & \\
 K(\fpdgMod{R} \cap \gmod{R})_{\mathbb{Q}(q)} \arrow[ru]\arrow[rd] & & K(\gMod{R})_{\mathbb{Q}(q)} \arrow[r, "\Psi_3"] & \quantum{-}_{\mathbb{Q}((q))} \\
 & K(\gmod{R})_{\mathbb{Q}(q)}\arrow[ru]\arrow[rru, bend right, "\Psi'_2"']  & & 
\end{tikzcd}
\]
where the square is obtained from full embeddings of abelian categories. 

It follows directly from the definition of the morphisms that the square and the lower triangle are commutative.
In order to complete the proof, it suffices to show that the remaining upper triangle is also commutative. 
We have 
\[
\Psi'_1 (P(i)) = f_i, \ \Psi_3(P(i)) = \frac{1}{1-q_i^2} \Psi'_2(L(i)) = f_i. 
\]
Since $\{ [P(i)] \mid i \in I \}$ generates $K(\fpdgMod{R})_{\mathbb{Q}(q)} \simeq \quantum{-}$ as a $\mathbb{Q}(q)$ algebra, the commutativity follows. 

\end{proof}
\end{comment}

\subsection{Projective resolution of $R_{*,w}$} \label{sub:projresol}

Let $w \in W$ and write $w = s_{i_1} \cdots s_{i_l}$ in a reduced expression. 
We adopt the notation in Section \ref{sub:Cwv}.
In particular, $\beta_k = w_{\leq k-1}\alpha_{i_k}$ and $\preceq$ is a convex preorder such that $\beta_1 \prec \beta_2 \prec \cdots \prec \beta_l \prec \minroot \cap w\Phi_+$.
By Proposition \ref{prop:determsupport}, $L(\beta_k) = L^{\preceq}(\beta_k) \simeq M(w_{\leq k}\Lambda_{i_k}, w_{\leq k-1} \Lambda_{i_k})$. 
Recall that, for $n \in \mathbb{Z}_{\geq 0}$, $L(n\beta_k) = L^{\preceq}(n\beta_k) \simeq q_{i_k}^{n(n-1)/2} L(\beta_k)^{\circ n}$ by Proposition \ref{prop:realcuspidal}. 
Lemma \ref{lem:Rwvcuspidal} and Proposition \ref{prop:determsupport} shows that it is the unique self-dual simple $R_{w_{\leq k}, w_{\leq k-1}}(n\beta_k)$-module up to isomorphism.
Let $\widehat{L}(\beta_k) = \widehat{M}(w_{\leq k}\Lambda_{i_k}, w_{\leq k-1}\Lambda_{i_k})$, the affinization defined after Proposition \ref{prop:detaffinization}. 

\begin{proposition} \label{prop:endoring}
 For $n \in \mathbb{Z}_{\geq 0}$, we have an isomorphism of graded rings $\End_{R(n\beta_k)} (\widehat{L}(\beta_k)^{\circ n}) \simeq R(n\alpha_{i_k})$. 
\end{proposition}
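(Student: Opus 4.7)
The plan is to construct a graded algebra homomorphism $\phi \colon R(n\alpha_{i_k}) \to \End_{R(n\beta_k)}(\widehat{L}(\beta_k)^{\circ n})$ and prove it is an isomorphism. First observe that $R(n\alpha_{i_k})$ is the nilHecke algebra on $n$ strands (scaled so that $\deg x_a = (\alpha_{i_k}, \alpha_{i_k})$ and $\deg \tau_a = -(\alpha_{i_k}, \alpha_{i_k})$), since $I^{n\alpha_{i_k}}$ has a single element and $Q_{i_k, i_k} = 0$ forces $\tau_a^2 = 0$. On generators, I would set $\phi(x_a) = z_a$, the $R(n\beta_k)$-endomorphism acting by the affinization parameter $z$ on the $a$-th tensor factor. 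To define $\phi(\tau_a)$, consider $R := \renormalizedR{\widehat{L}(\beta_k)}{\widehat{L}(\beta_k)} \in \End_{R(2\beta_k)}(\widehat{L}(\beta_k) \circ \widehat{L}(\beta_k))$, which has degree $\Lambda(L(\beta_k), L(\beta_k)) = 0$ because $L(\beta_k)$ is real. After normalizing so that $\mathbf{r}_{L(\beta_k), L(\beta_k)} = \mathrm{id}$, one checks that $R \equiv \mathrm{id} \pmod{z_1 - z_2}$: modulo $z_1 - z_2$ the relation $Rz_1 = z_2 R$ makes $R$ commute with the remaining parameter, placing it in the endomorphism ring of a simple affine object, where the normalization pins it to the identity. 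Since $z_1 - z_2$ acts injectively on $\widehat{L}(\beta_k) \circ \widehat{L}(\beta_k)$ (which is free over $\mathbf{k}[z_1, z_2]$), there is a unique $T$ of degree $-(\alpha_{i_k}, \alpha_{i_k})$ with $(z_1 - z_2) T = R - \mathrm{id}$, and I set $\phi(\tau_a) = T_a$, the operator acting as $T$ on factors $a$ and $a+1$.

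Next I would verify the nilHecke relations. Polynomial and disjoint-strand commutations are immediate. For the mixed Demazure relation $\tau_a x_{a+1} - x_a \tau_a = 1$, from $Rz_1 = z_2 R$ one computes $[T, z_1] = -R$ and $[T, z_2] = R$, yielding $T z_2 - z_1 T = R + (z_2 - z_1) T = R + (\mathrm{id} - R) = \mathrm{id}$ (and similarly $z_2 T - T z_1 = \mathrm{id}$). For $\tau_a^2 = 0$, observe that $R^2 \in \End_{R(2\beta_k)[z_1, z_2]}(\widehat{L}(\beta_k) \circ \widehat{L}(\beta_k)) = \mathbf{k}[z_1, z_2]$ by Proposition \ref{prop:affinermatrix}(2), is of degree zero, hence a scalar, which equals $1$ after normalization; multiplying $(z_1 - z_2) T = R - \mathrm{id}$ by $R$ on the left and right yields $RT = T$ and $TR = -T$, so $(z_1 - z_2) T^2 = (R - \mathrm{id}) T = RT - T = 0$, whence $T^2 = 0$ by injectivity. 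The braid relation $T_a T_{a+1} T_a = T_{a+1} T_a T_{a+1}$ descends from the Yang-Baxter coherence of universal R-matrices in Lemma \ref{lem:universalR}, using the divisibility relation defining $T$.

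For bijectivity, I would compare graded ranks over the polynomial subalgebra $\mathbf{k}[z_1, \ldots, z_n]$. The source $R(n\alpha_{i_k})$ is free over $\mathbf{k}[x_1, \ldots, x_n]$ with basis $\{\tau_w\}_{w \in \Sym_n}$, of graded rank $\sum_{w \in \Sym_n} q^{-(\alpha_{i_k}, \alpha_{i_k}) \ell(w)}$. On the target, the induction-restriction adjunction gives $\End_{R(n\beta_k)}(\widehat{L}(\beta_k)^{\circ n}) \simeq \Hom_{R(\beta_k)^{\otimes n}}(\widehat{L}(\beta_k)^{\otimes n}, \Res_{\beta_k, \ldots, \beta_k} \widehat{L}(\beta_k)^{\circ n})$. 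By the Mackey filtration combined with the support bound $\W(\widehat{L}(\beta_k)) \cap \W^*(\widehat{L}(\beta_k)) \cap \Phi_+ \subset \mathbb{Z}_{\geq 0} \beta_k$ (following from Lemma \ref{lem:detsupport} and the convexity computation $Q_+ \cap w_{\leq k-1} Q_+ \cap w_{\leq k} Q_- = \mathbb{Z}_{\geq 0}\beta_k$), the restriction admits a filtration indexed by $\Sym_n$, each subquotient isomorphic to $\widehat{L}(\beta_k)^{\otimes n}$ with the expected shift $q^{-(\alpha_{i_k}, \alpha_{i_k}) \ell(\sigma)}$. Combined with $\End_{R(\beta_k)[z]}(\widehat{L}(\beta_k)) = \mathbf{k}[z]$ (Lemma \ref{lem:endaffineobj}) componentwise, this gives the matching graded-rank upper bound, and the images $\phi(\tau_w)$ realize distinct filtration layers by construction.

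The main obstacle will be the bijectivity step, specifically showing that the Mackey filtration on $\Res_{\beta_k, \ldots, \beta_k} \widehat{L}(\beta_k)^{\circ n}$ has exactly $n!$ nontrivial layers of the expected graded type and that the operators $\phi(\tau_w)$ for distinct $w \in \Sym_n$ genuinely occupy distinct layers. This requires an $n$-fold generalization of Proposition \ref{prop:affinermatrix}(2) together with a careful tracking of how $T$ interacts with the Mackey layers; the cleanest approach is an induction on $n$ using the two-factor case as the base and an insertion argument for the inductive step.
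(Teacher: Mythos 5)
Your proposal is correct and follows the second route that the paper's proof explicitly points to — introducing the $R(n\alpha_{i_k})$-action via renormalized R-matrices as in \cite[Section 7.4]{MR4717658} — whereas the paper itself gives no details beyond citing that reference and \cite[Section 3.7]{MR3205728}. Your derivations of $T^2=0$ and of the Demazure relation from $(z_1-z_2)T = \renormalizedR{\widehat{L}(\beta_k)}{\widehat{L}(\beta_k)} - \id$ are exactly the computations underlying that construction, and the two steps you leave at sketch level (the braid relation via Yang--Baxter coherence, and the $\Sym_n$-indexed Mackey filtration count giving surjectivity) are precisely the ones carried out in the cited references, so the argument goes through.
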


\begin{proof}
The same argument as in \cite[Section 3.7]{MR3205728} applies to this proposition. 
It is also possible to introduce the action of $R(n\alpha_{i_k})$ using renormalized R-matrices as in \cite[Section 7.4]{MR4717658}. 
\end{proof}

\begin{definition}
 For $n \in \mathbb{Z}_{\geq 0}$, we define 
 \[
 \widehat{L}(\beta_k)^{\circ (n)} = \Hom_{R(n\alpha_{i_k})} (P(i_k^n), \widehat{L}(\beta_k)^{\circ n}). 
 \]  \index{$\widehat{L}(\beta_k)^{\circ (n)}$}
\end{definition}

Since $R(n\alpha_{i_k}) \simeq P(i_k^n)^{\oplus [n]_{i_k}!}$, we have 
\[
\widehat{L}(\beta_k)^{\circ n} \simeq (\widehat{L}(\beta_k)^{\circ (n)})^{\oplus [n]_{i_k}!}. 
\]

\begin{theorem} \label{thm:dividedpower}
For $n \in \mathbb{Z}_{\geq 0}$, $\widehat{L}(\beta_k)^{\circ (n)}$ is the projective cover of $L(n\beta_k)$ in $\gMod{R_{w_{\leq k}, w_{\leq k-1}}(n\beta_k)}$. 
\end{theorem}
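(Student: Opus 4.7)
The case $n = 0$ is trivial, and the case $n = 1$ is immediate from Theorem \ref{thm:ext1} applied with $v = w_{\leq k-1}$ and $i = i_k$: the condition $vs_i > v$ is part of the reduced expression hypothesis, and the conclusion is exactly that $\widehat{L}(\beta_k) = \widehat{M}(w_{\leq k}\Lambda_{i_k}, w_{\leq k-1}\Lambda_{i_k})$ is the projective cover of $L(\beta_k)$ in the relevant subcategory. Note $\widehat{L}(\beta_k)^{\circ (1)} = \widehat{L}(\beta_k)$, so this handles the base step. The plan for $n \geq 2$ is to first show that the full convolution power $\widehat{L}(\beta_k)^{\circ n}$ is projective in $\gMod{R_{w_{\leq k}, w_{\leq k-1}}(n\beta_k)}$, and then extract $\widehat{L}(\beta_k)^{\circ (n)}$ as a direct summand using the decomposition $\widehat{L}(\beta_k)^{\circ n} \simeq (\widehat{L}(\beta_k)^{\circ (n)})^{\oplus [n]_{i_k}!}$ (with appropriate shifts) provided by Proposition \ref{prop:endoring}.

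For projectivity of $\widehat{L}(\beta_k)^{\circ n}$: by Lemma \ref{lem:Rwvcuspidal}, $R_{w_{\leq k}, w_{\leq k-1}}(n\beta_k) = R^{\preceq}(n\beta_k)$ has a unique self-dual simple $L(n\beta_k)$ up to degree shift, so by Corollary \ref{cor:projlim1} it suffices to prove the single vanishing
\[
\Ext_{R(n\beta_k)}^1(\widehat{L}(\beta_k)^{\circ n}, L(n\beta_k)) = 0.
\]
I would establish this via the induction-restriction adjunction
\[
\Ext_{R(n\beta_k)}^1(\widehat{L}(\beta_k)^{\circ n}, L(n\beta_k)) \simeq \Ext_{R(\beta_k)^{\otimes n}}^1(\widehat{L}(\beta_k)^{\otimes n}, \Res_{\beta_k, \ldots, \beta_k} L(n\beta_k)),
\]
combined with the $n=1$ projectivity and a Künneth-type splitting across the $n$ tensor factors. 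The identification of the head with $L(n\beta_k)$ then follows from Proposition \ref{prop:endoring}: under the identification $\End_{R(n\beta_k)}(\widehat{L}(\beta_k)^{\circ n}) \simeq R(n\alpha_{i_k})$, the idempotent cutting out $P(i_k^n)$ cuts out $\widehat{L}(\beta_k)^{\circ (n)}$, and the surjection $\widehat{L}(\beta_k)^{\circ n} \twoheadrightarrow L(\beta_k)^{\circ n} \simeq q_{i_k}^{-n(n-1)/2} L(n\beta_k)$ from Proposition \ref{prop:realcuspidal} pinpoints the head on that summand.

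The main obstacle will be controlling $\Res_{\beta_k, \ldots, \beta_k} L(n\beta_k)$ in a way compatible with the subcategory structure: the $\W$-condition on intermediate restrictions follows easily from $\W(L(n\beta_k)) \subset Q_+ \cap w_{\leq k} Q_-$, but verifying the $\W^*$-condition on each tensor factor (so that the inductive $\Ext^1$-vanishing on each $\widehat{L}(\beta_k)$ really applies) requires a more delicate analysis using the cuspidal structure from Section \ref{sub:cuspidal}. A cleaner alternative, following \cite[Section 3.7]{MR3205728}, is to promote Proposition \ref{prop:endoring} into a full graded Morita equivalence between $R^{\preceq}(n\beta_k)$ and $R(n\alpha_{i_k})$ via the bimodule $\widehat{L}(\beta_k)^{\circ n}$; under such an equivalence, $\widehat{L}(\beta_k)^{\circ (n)}$ corresponds to $P(i_k^n)$ and $L(n\beta_k)$ to the self-dual simple $L(i_k^n)$, reducing the theorem to the known statement for nilHecke-type algebras $R(n\alpha_{i_k})$.
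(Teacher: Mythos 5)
Your overall strategy coincides with the paper's: its proof is exactly the reduction, via the induction--restriction adjunction, to
\[
\Ext_{R(\beta_k)^{\otimes n}}^1\bigl(\widehat{L}(\beta_k)^{\otimes n}, \Res_{\beta_k,\ldots,\beta_k}L(n\beta_k)\bigr),
\]
followed by Theorem \ref{thm:ext1} applied factor by factor. The one step you leave open, however, is precisely the step the paper's proof consists of, and your diagnosis of what is needed there is off. You do not need any $\W^*$-condition on the individual tensor factors of the restriction: the vanishing in Theorem \ref{thm:ext1} is $\Ext_{R(\gamma_2)}^1(\widehat{M_2},M_2)=0$ over the \emph{full} algebra $R(\beta_k)$, so all that is required is that every composition factor of $\Res_{\beta_k,\ldots,\beta_k}L(n\beta_k)$ as an $(R(\beta_k)^{\otimes n})$-module is a degree shift of $L(\beta_k)^{\otimes n}$; then Corollary \ref{cor:projlim1} and the K\"unneth isomorphism finish the vanishing. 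That composition-factor statement is what the paper establishes: since $L(\beta_k)$ is $\preceq$-cuspidal, the Mackey filtration of $\Res_{\beta_k,\ldots,\beta_k}(L(\beta_k)^{\circ n})$ collapses (convexity kills every term in which some $\gamma_{a,b}\notin\{0,\beta_k\}$), giving $[\Res_{\beta_k,\ldots,\beta_k}L(n\beta_k)]=[n]_{i_k}!\,[L(\beta_k)\otimes\cdots\otimes L(\beta_k)]$ as in \cite[Lemma 2.11]{MR3205728}. The same count gives $\qdim\Hom_{R(n\beta_k)}(\widehat{L}(\beta_k)^{\circ n},L(n\beta_k))=[n]_{i_k}!$, which after cutting by the idempotent yields $\qdim\Hom(\widehat{L}(\beta_k)^{\circ(n)},L(n\beta_k))=1$ and hence the projective-cover statement; this is cleaner than your identification of the head via the surjection onto $L(\beta_k)^{\circ n}$.

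Your proposed ``cleaner alternative'' via a Morita equivalence between $R^{\preceq}(n\beta_k)$ and $R(n\alpha_{i_k})$ is circular as stated: Proposition \ref{prop:endoring} only identifies the endomorphism ring of $\widehat{L}(\beta_k)^{\circ n}$, and upgrading that to a graded Morita equivalence requires knowing that $\widehat{L}(\beta_k)^{\circ n}$ is a projective generator of $\gMod{R_{w_{\leq k},w_{\leq k-1}}(n\beta_k)}$ --- which is the content of the theorem you are trying to prove. Discard that fallback and carry out the Mackey computation above instead.
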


\begin{proof}
Since $L(\beta_k)$ is $\preceq$-cuspidal, the Mackey filtration shows 
\[
[\Res_{\beta_k^n} L(n\beta_k)] = [n]_{i_k}! [L(\beta_k)\otimes \cdots \otimes L(\beta_k)].
\]
See the proof of \cite[Lemma 2.11]{MR3205728} for detail. 
Hence, the theorem follows from Theorem \ref{thm:ext1} and the induction-restriction adjunction. 
\end{proof}

%For each $1 \leq k \leq l$, we put $m_k = \max \{ m\in \mathbb{Z}_{\geq 0} \mid \beta - m \beta_k \in Q_+ \}$. 
Let $\beta \in Q_+$. 
For each $1 \leq k \leq l$ and $m \geq 0$, we define 
\[
R(\beta)_{k,m} = R(\beta)/\langle e(*,\beta_{k'}) \ (1 \leq k' \leq k-1), e(*,(m+1)\beta_k) \rangle, 
\] \index{$R(\beta)_{k,m}$}
where we regard $e(*, \gamma)$ as $0$ if $\beta - \gamma \not \in Q_+$. 
Theorem \ref{thm:cuspidalCw} shows that, for each $(\lambda,s) \in \Sigma (\beta)$, $L (\lambda,s) \in \gMod{R(\beta)_{k,m}}$ if and only if 
\[
\lambda_{k'} = 0 \ (1 \leq k' \leq k-1), \lambda_k \leq m. 
\] 
In particular, $R(\beta)_{k,0} = R(\beta)_{*,w_{\leq k}}$. 

Note that $R(\beta)_{k,m}$ is a quotient ring of $R(\beta)_{k,m+1}$.
For sufficiently large $m$, $\height m\beta_1 > \height \beta$ and $R(\beta)_{1,m}$ coincide with $R(\beta)$. 
Similarly, for each $2 \leq k \leq l$ and sufficiently large $m$, we have $R(\beta)_{k,m} = R(\beta)_{k-1,0}$. 
Moreover, $R(\beta)_{l,0} = R_{*,w}(\beta)$. 

\begin{proposition} \label{prop:projresol}
Let $1 \leq k \leq l$ and $m \geq 1$. Set $\beta' = \beta - m \beta_k$. 
  \begin{enumerate} 
    \item For an $R(\beta)_{k,m}$-module $M$, $\Res_{\beta', m\beta_k}M$ is an $(R(\beta')_{k,0} \otimes R(m\beta_k)_{w_{\leq k}, w_{\leq k-1}})$-module. 
    \item For $X \in \gMod{R(\beta')_{k,0}}$ and $Y \in \gMod{R(m\beta_k)_{w_{\leq k}, w_{\leq k-1}}}$, both $X \circ Y$ and $Y \circ X$ belong to $\gMod{R(\beta)_{k,m}}$. 
    \item For a projective $R(\beta')_{k,0}$-module $P$, $P \circ \widehat{L}(\beta_k)^{\circ (m)}$ is a projective $R(\beta)_{k,m}$-module. 
    \item The kernel of the canonical surjection $R(\beta)_{k,m} \to R(\beta)_{k,m-1}$ is isomorphic to $P \circ \widehat{L}(\beta_k)^{\circ (m)}$ for some projective $R(\beta')_{k,0}$-module $P$. 
  \end{enumerate}
\end{proposition}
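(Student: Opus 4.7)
The plan is to verify parts (1)--(4) sequentially; the first three are structural statements while (4) is the main substantive claim.

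For (1), I would reduce to a simple composition factor $M = L(\lambda,s) \in \gMod{R(\beta)_{k,m}}$, so $\lambda_{k'}=0$ for $k'<k$ and $\lambda_k \leq m$. Via the embedding $L(\lambda,s) \hookrightarrow \overline{\Delta}(\lambda,s) = L(s) \circ L(\lambda_l\beta_l) \circ \cdots \circ L(\lambda_k\beta_k)$ and the Mackey filtration (using Lemma~\ref{lem:cuspidalseq}), the supports $\W^*(L(\lambda_{k'}\beta_{k'})) \subset \mathbb{Z}_{\geq 0}\beta_{k'}$ (by cuspidality and Lemma~\ref{lem:detsupport}) together with the inclusions $\minroot \cap w\Phi_+ \subset w_{\leq k}Q_+$ and $\beta_{k'} \in w_{\leq k}Q_+$ for $k'>k$ (both short inversion-set arguments on the reduced expression) force the right $m\beta_k$ weight to come entirely from $L(\lambda_k\beta_k)$, hence $\lambda_k=m$; this yields an embedding $\Res_{\beta',m\beta_k}L(\lambda,s) \hookrightarrow \overline{\Delta}(\lambda',s) \otimes L(m\beta_k)$ (with $\lambda'_k=0$, $\lambda'_{k'} = \lambda_{k'}$ for $k'>k$) whose target lies in the desired subcategory by Lemma~\ref{lem:detsupport}. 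For (2), I would reduce to simples and use the embedding $L(\lambda',t) \circ L(m\beta_k) \hookrightarrow \overline{\Delta}(\lambda'',t)$ with $\lambda''_k=m$; Theorem~\ref{thm:cuspidalCw}(4) combined with Definition~\ref{def:modifiedlexico} then forces every composition factor $L(\mu,u)$ to satisfy $\mu_{k'}=0$ for $k'<k$ and $\mu_k\leq m$. The case $Y\circ X$ follows by the duality $D(Y\circ X) \simeq X\circ Y$ up to degree shift. Part (3) is immediate from the induction-restriction adjunction combined with (1), Theorem~\ref{thm:dividedpower}, and exactness of $\Res$.

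For (4), the head of $K = \ker(R(\beta)_{k,m} \twoheadrightarrow R(\beta)_{k,m-1})$ consists precisely of the simples $L(\lambda,s)$ with $\lambda_k=m$ and $\lambda_{k'}=0$ for $k'<k$ (those surviving the quotient). For each such simple, its projective cover in $\gMod{R(\beta)_{k,m}}$ has the form $\widetilde{P}(\lambda',s) \circ \widehat{L}(\beta_k)^{\circ(m)}$, where $\widetilde{P}(\lambda',s)$ denotes the projective cover of $L(\lambda',s)$ in $\gMod{R(\beta')_{k,0}}$: projectivity comes from (3), and the head calculation is $\hd(L(\lambda',s)\circ L(m\beta_k))=L(\lambda,s)$ by cuspidal decomposition. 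Assembling yields a projective cover $\phi \colon P \circ \widehat{L}(\beta_k)^{\circ(m)} \twoheadrightarrow K$ for some projective $R(\beta')_{k,0}$-module $P$.

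The main obstacle is proving $\phi$ is an isomorphism. I plan to realize $K$ as the trace ideal $R(\beta)_{k,m}e(\beta',m\beta_k) \otimes_{e(\beta',m\beta_k)R(\beta)_{k,m}e(\beta',m\beta_k)} e(\beta',m\beta_k)R(\beta)_{k,m}$, then identify its factors. Comparing representable functors on $\gMod{R(\beta)_{k,m}}$ using (1) gives $R(\beta)_{k,m}e(\beta',m\beta_k) \simeq R(\beta')_{k,0} \circ R(m\beta_k)_{w_{\leq k}, w_{\leq k-1}}$ as left modules, and restricting yields $e(\beta',m\beta_k)R(\beta)_{k,m}e(\beta',m\beta_k) \simeq R(\beta')_{k,0}\otimes R(m\beta_k)_{w_{\leq k}, w_{\leq k-1}}$. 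Combining this with the Morita equivalence between $\gMod{R(m\beta_k)_{w_{\leq k}, w_{\leq k-1}}}$ and $\gMod{R(m\alpha_{i_k})}$ realized via $\widehat{L}(\beta_k)^{\circ m}$ (Proposition~\ref{prop:endoring} and Theorem~\ref{thm:dividedpower}) produces the desired factorization $K \simeq P \circ \widehat{L}(\beta_k)^{\circ(m)}$.
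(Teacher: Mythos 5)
Parts (1)--(3) of your proposal follow essentially the paper's route: reduce to simple modules, control restrictions via the cuspidal decomposition (Theorem~\ref{thm:cuspidalCw}) and the partial order, use duality for $Y\circ X$, and get (3) from the induction--restriction adjunction together with Theorem~\ref{thm:dividedpower}. One small slip: $L(\lambda,s)$ is the \emph{head} of $\overline{\Delta}(\lambda,s)$, not a submodule, so you should say its restriction is a \emph{quotient} of $\Res_{\beta',m\beta_k}\overline{\Delta}(\lambda,s)$; since $\Res$ is exact this does not affect the argument.

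In (4) there is a genuine gap. Your surjection $\phi\colon P\circ\widehat{L}(\beta_k)^{\circ(m)}\twoheadrightarrow K$ is fine (the paper obtains the same map by showing $\Res_{\beta',m\beta_k}R(\beta)_{k,m}$ is projective over $R(\beta')_{k,0}\otimes R(m\beta_k)_{w_{\leq k},w_{\leq k-1}}$ and adjoining back). The problem is injectivity. Writing $A=R(\beta)_{k,m}$ and $e=e(\beta',m\beta_k)$, your plan identifies $Ae$, $eAe$ and then asserts $K=AeA\simeq Ae\otimes_{eAe}eA$. But the multiplication map $Ae\otimes_{eAe}eA\to AeA$ is only \emph{surjective} in general; its injectivity is precisely the content of (4) and does not follow from knowing the three factors individually. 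Nor can you detect the kernel by applying $e(\cdot)$: composition factors $L(\mu,t)$ of $P\circ\widehat{L}(\beta_k)^{\circ(m)}$ with $\mu_k<m$ are killed by $e$, so $e\cdot\Ker=0$ gives nothing. The paper closes this gap by composing the adjunction map $P\circ\widehat{L}(\beta_k)^{\circ(m)}\to R(\beta)_{k,m}$ with a second adjunction map $R(\beta)_{k,m}\to\widehat{L}(\beta_k)^{\circ(m)}\circ P$ and observing that the composite is $u\otimes v\mapsto\tau_{w[mn,n']}v\otimes u$, i.e.\ an $m$-fold iterate of the R-matrix $\Rmatrix{P}{\widehat{L}(\beta_k)}$ for the unmixing pair $(P,\widehat{L}(\beta_k))$, which is injective by Proposition~\ref{prop:unmixingreninj}. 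Some substitute for this R-matrix injectivity (or an independent computation of $[K]_q$, which would be circular here) is needed before your Morita-equivalence step can deliver the isomorphism.
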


\begin{proof} 
  In the proof, we repeatedly use Theorem \ref{thm:cuspidalCw} without explicitly referring to it. 
  %For each $1 \leq k' \leq l$, let $\mathbf{e}_{k'} \in \mathbb{Z}_{\geq 0}^l$ be the vector whose $k'$-th entry is one and all the other entries are zero. 
  We suppress degree shifts. 

  %It is immediate from the definition that $\Res_{\beta', m\beta_k} M$ is an $R(\beta') \otimes R(m\beta_k)_{k,m}$-module.
  %We claim that $\Res_{\beta' - \beta_{k'}, \beta_{k'}, m\beta_k} M = 0$ for every $1 \leq k' \leq k$. 
  %When $k' = k$, we have $\Res_{\beta'-\beta_k, (m+1) \beta_k} M = 0$ since $M$ is an $R(\beta)_{k,m}$-module. 
  %Assume $k' \leq k-1$. 
  %We compute 
  %\[
  %w_{\leq k-1}^{-1} (\beta_{k'} + m\beta_k) = -w_{\leq k-1}^{-1}\beta_{k'} + m \alpha_{i_k}. 
  %\]
  %Now $w_{\leq k-1}^{-1} \beta_{k'} = s_{i_{k-1}} s_{i_{k-2}} \cdots s_{i_{k'+1}}\alpha_{i_{k'}}$ and $s_{i_k} w_{k-1}^{-1} \beta_{k'}$ both belong to $\Phi_-$. 
  %Hence $w_{\leq k-1}^{-1} \beta_{k'} \in \Phi_- \setminus \{ -\alpha_{i_k} \}$, which implies that $w_{\leq k-1}^{-1}(\beta_{k'} + m\beta_k) \not \in Q_+$. 
  %Since $M \in \gMod{R(\beta)_{k-1, 0}} = \gMod{R(\beta)_{e,w_{\leq k-1}}}$, it follows that $\Res_{\beta'-\beta_{k'}, \beta_{k'} + m\beta_k} M = 0$. 
  %It proves the claim. 

  %Therefore, we see that $\Res_{\beta', m\beta_k} M$ is an $R(\beta')_{k-1,0} \otimes R(m\beta_k)_{k,m}$-module. 
  %Moreover, we have $R(m\beta_k)_{k,m} = R(m\beta_k)_{k-1,0} = R(m\beta_k)_{e, w_{\leq k-1}}$. 

  (1)
  It suffices to show the assertion when $M$ is simple. 
  By the discussion preceding this proposition, we may assume $M = L(\lambda, s)$ for some $(\lambda, s) \in \Sigma(\beta)$ with 
  \[
  \lambda_{k'} = 0 \ (1 \leq k' \leq k-1), \lambda_k \leq m. 
  \]
  Recall $\beta_s = -\wt L(s)$. 
  If $\lambda_k \leq m-1$, we have $(m \beta_k, \beta') \not \leq (\lambda_k \beta_k, \ldots, \lambda_l\beta_l, \beta_s)$, hence $\Res_{\beta', m\beta_k} L(\lambda,s) = 0$. 
  Assume $\lambda_k = m$. 
  We have $\Res_{\beta',m\beta_k} \overline{\Delta}(\lambda,s) = \overline{\Delta}(\lambda_{k+1}\beta_{k+1}, \ldots, \lambda_l\beta_l,\beta_s)$ by Lemma \ref{lem:unmixing}. 
  Since $\Res_{\beta',m\beta_k} L(\lambda,s)$ is a quotient of this module and $\overline{\Delta}(\lambda_{k+1}\beta_{k+1}, \ldots, \lambda_l\beta_l,\beta_s)$ is an $R(\beta')_{k,0}$-module, (1) follows. 
 
  (2) 
  It suffices to prove the assertion when $X$ and $Y$ are simple. 
  Hence, we may assume $X = L(\lambda,s)$ for some $(\lambda, s) \in \Sigma(\beta')$ with $\lambda_{k'} = 0 \ (1 \leq k' \leq k)$ and $Y = L(\beta_k)^{\circ m}$. 
  For each $1 \leq k' \leq k-1$, we have $(\beta_{k'},\beta-\beta_{k'}) \not \leq (m\beta_k, \lambda_{k+1}\beta_{k+1}, \ldots, \lambda_l\beta_l, \beta_s)$, hence $\Res_{\beta-\beta_{k'}, \beta_{k'}} X \circ Y = 0$. 
  We also have $((m+1)\beta_k, \beta- (m+1) \beta_k) \not \leq (m\beta_k, \lambda_{k+1}\beta_{k+1},\ldots, \lambda_l\beta_l, \beta_s)$, hence $\Res_{\beta-(m+1) \beta_k, (m+1)\beta_k} X \circ Y = 0$. 
  It follows that $X \circ Y$ is an $R(\beta)_{k,m}$-module. 

  Since $Y \circ X$ is isomorphic to $D(X \circ Y)$, its composition factors coincide with those of $X \circ Y$. 
  Therefore, $Y \circ X$ is also an $R(\beta)_{k,m}$-module.  

  (3) By (2), $P \circ \widehat{L}(\beta_k)^{\circ (m)}$ is an $R(\beta)_{k,m}$-module.  
  It remains to show that $\Ext_{R(\beta)}^1 (P \circ \widehat{L}(\beta_k)^{\circ (m)}, M) = 0$ for all $M \in \gMod{R(\beta)_{k,m}}$. 
  By the induction-restriction adjunction, we have $\Ext_{R(\beta)}^1 (P \circ \widehat{L}(\beta_k)^{\circ (m)}, M) \simeq \Ext_{R(\beta') \otimes R(m\beta_k)}^1 (P \otimes \widehat{L}(\beta_k)^{\circ (m)}, \Res_{\beta', m\beta_k} M)$ 
  and it vanishes by (1) and Theorem \ref{thm:dividedpower}. 

  (4) First, we claim that $\Res_{\beta', m\beta_k} R(\beta)_{k,m}$ is a projective $R(\beta')_{k, 0} \otimes R(m\beta_k)_{w_{\leq k}, w_{\leq k-1}}$-module. 
  It suffices to show that $\Ext_{R(\beta')\otimes R(m\beta_k)}^1 (\Res_{\beta', m\beta_k} R(\beta)_{k,m}, X \otimes Y) = 0$ for all $X \in \gMod{R(\beta')_{k,0}}$ and $Y \in \gMod{R(m\beta_k)_{w_{\leq k}, w_{\leq k-1}}}$. 
  By the restriction-coinduction adjunction, it is isomorphic to $\Ext_{R(\beta)}^1 (R(\beta)_{k,m}, Y \circ X)$. 
  (2) ensures that $Y \circ X$ is an $R(\beta)_{k,m}$-module, hence the $\Ext^1$ vanishes and the claim follows. 

  Since $R(m\beta_k)_{w_{\leq k}, w_{\leq k-1}}$ has a unique indecomposable projective module $\widehat{L}(\beta_k)^{\circ (m)}$ by Theorem \ref{thm:dividedpower}, 
  the projective module $\Res_{\beta', m\beta_k} R(\beta)_{k,m}$ is isomorphic to $P \otimes \widehat{L}(\beta_k)^{\circ (m)}$ for some projective $R(\beta')_{k, 0}$-module $P$. 
  By the induction-restriction adjunction, it induces 
  \[
  P \circ \widehat{L}(\beta_k)^{\circ (n)} \to R(\beta)_{k,m}. 
  \]
  The cokernel of this homomorphism is $R(\beta)_{k,m}/R(\beta)_{k,m} e(\beta', m\beta_k) R(\beta)_{k,m} = R(\beta)_{k,m-1}$. 
  Hence, it is enough to show that this homomorphism is injective. 

  The isomorphism $P \otimes \widehat{L}(\beta_k)^{\circ (n)} \simeq \Res_{\beta', m\beta_k} R(\beta)_{k,m}$ also induces, by the restriction-coinduction adjunction, a homomorphism $R(\beta)_{k,m} \to \widehat{L}(\beta_k)^{\circ (m)} \circ P$. 
  We claim that the composition 
  \[
    P \circ \widehat{L}(\beta_k)^{\circ (m)} \to R(\beta)_{k,m} \to \widehat{L}(\beta_k)^{\circ (m)} \circ P
  \]
  is injective.

  Observe that the composition is given by 
  \[
  u \otimes v \mapsto \tau_{w[mn, n']} v \otimes u \ (u \in P, v \in \widehat{L}(\beta_k)^{\circ (m)}), 
  \]
  where $n = \height \beta_k$ and $n' = \height \beta'$. 
  By taking a direct sum, we obtain a homomorphism $P \circ \widehat{L}(\beta_k)^{\circ m} \to \widehat{L}(\beta_k)^{\circ m} \circ P$ . 
  It coincide with a composition of $m$ homomorphisms 
  \[
  P \circ \widehat{L}(\beta_k)^{\circ m} \to \widehat{L}(\beta_k) \circ P \circ \widehat{L}(\beta_k)^{\circ m-1} \to \cdots \to \widehat{L}(\beta_k)^{\circ m} \circ P, 
  \]
  where each homomorphism comes from $P \circ \widehat{L}(\beta_k) \to \widehat{L}(\beta_k) \circ P, u \otimes v \mapsto \tau_{w[n,n']} v \otimes u$. 
  This homomorphism is nothing but $\Rmatrix{P}{\widehat{L}(\beta_k)}$ for the unmixing pair $(P, \widehat{L}(\beta_k))$, which is injective by Proposition \ref{prop:unmixingreninj}. 
  It proves the assertion.
 \end{proof}

\begin{theorem} \label{thm:projresol}
  Let $1 \leq k \leq l$ and $m \geq 0$. 
  For any $M \in \gMod{R(\beta)_{k,m}}$ and $n \geq 1$, we have 
  \[
  \Ext_{R(\beta)}^n (R(\beta)_{k,m}, M) = 0. 
  \]
  In particular, for any $M \in \gMod{R(\beta)_{*,w}}$ and $n \geq 1$, we have 
  \[
  \Ext_{R(\beta)}^n (R(\beta)_{*,w}, M) = 0. 
  \]
\end{theorem}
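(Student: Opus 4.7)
The plan is to prove the vanishing by induction along the tower of quotients
\[
R(\beta) = R(\beta)_{1, M_0} \twoheadrightarrow R(\beta)_{1, M_0 - 1} \twoheadrightarrow \cdots \twoheadrightarrow R(\beta)_{1, 0} = R(\beta)_{2, M_1} \twoheadrightarrow \cdots \twoheadrightarrow R(\beta)_{l, 0} = R_{*, w}(\beta),
\]
where each $M_k$ is chosen large enough that $R(\beta)_{k, M_k}$ coincides with $R(\beta)_{k-1, 0}$ (or with $R(\beta)$ itself when $k = 1$). The base case $R(\beta) = R(\beta)_{1, M_0}$ is immediate since $R(\beta)$ is free over itself, and the transitions between consecutive values of $k$ are tautological identifications. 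Only the inner step---passing from $R(\beta)_{k, m}$ to $R(\beta)_{k, m-1}$ for $m \geq 1$---requires argument.

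For this inner step, I will feed Proposition \ref{prop:projresol}(4) into the long exact sequence of $\Ext$. That proposition supplies a short exact sequence
\[
0 \to P \circ \widehat{L}(\beta_k)^{\circ (m)} \to R(\beta)_{k, m} \to R(\beta)_{k, m-1} \to 0
\]
in $\gMod{R(\beta)}$, with $P$ a projective $R(\beta - m\beta_k)_{k, 0}$-module. For any $M \in \gMod{R(\beta)_{k, m-1}}$, applying $\Hom_{R(\beta)}(-, M)$ produces the long exact sequence of $\Ext$; since $M$ is also tautologically an $R(\beta)_{k, m}$-module, the inductive hypothesis gives $\Ext^n_{R(\beta)}(R(\beta)_{k, m}, M) = 0$ for every $n \geq 1$. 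The theorem therefore reduces to showing that
\[
\Ext^n_{R(\beta)}(P \circ \widehat{L}(\beta_k)^{\circ (m)}, M) = 0 \quad \text{for every } n \geq 0.
\]

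This last vanishing is the heart of the argument, and it falls straight out of the induction-restriction adjunction. Writing $\beta' = \beta - m\beta_k$, the functor $\Ind_{\beta', m\beta_k}$ is exact with exact right adjoint $\Res_{\beta', m\beta_k}$, hence preserves projectives, so applying it to a projective resolution of $P \otimes \widehat{L}(\beta_k)^{\circ (m)}$ yields the standard identification
\[
\Ext^n_{R(\beta)}(P \circ \widehat{L}(\beta_k)^{\circ (m)}, M) \simeq \Ext^n_{R(\beta') \otimes R(m\beta_k)}(P \otimes \widehat{L}(\beta_k)^{\circ (m)}, \Res_{\beta', m\beta_k} M).
\]
But $M$ is by hypothesis annihilated by the idempotent $e(*, m\beta_k) = e(\beta', m\beta_k)$, which is one of the defining relations of $R(\beta)_{k, m-1}$; hence $\Res_{\beta', m\beta_k} M = e(\beta', m\beta_k) M = 0$ and the right-hand side vanishes identically.

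The substantive work has already been absorbed into Proposition \ref{prop:projresol}; once the short exact sequence there is in hand, the present theorem is a formal consequence, with no genuine obstacle beyond keeping the bookkeeping straight. The \emph{in particular} assertion follows by specializing to $(k, m) = (l, 0)$, since $R_{*, w}(\beta) = R(\beta)_{l, 0}$ by definition.
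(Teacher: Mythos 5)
Your proof is correct and follows essentially the same route as the paper: induction on $(k,m)$ down the tower of quotients, the short exact sequence from Proposition \ref{prop:projresol}(4), and the induction-restriction adjunction combined with $\Res_{\beta-m\beta_k,\,m\beta_k}M=0$ to kill the $\Ext$ groups of $P\circ\widehat{L}(\beta_k)^{\circ(m)}$ in all degrees. The only difference is cosmetic (you spell out the base case and the tautological transitions between consecutive values of $k$, which the paper leaves implicit).
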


Note that we compute the extension groups over $R(\beta)$, not over $R(\beta)_{k,m}$. 

\begin{proof}
 We proceed by an induction on $(k, m)$: 
 by the remark before the previous lemma, it suffices to show the assertion for $(k,m-1)$ assuming that for $(k, m)\ (m \geq 1)$.  
 Let $M \in \gMod{R(\beta)_{k,m-1}}$. 
 By Proposition \ref{prop:projresol} (4), we have a short exact sequence 
 \[
 0 \to P \circ \widehat{L}(\beta_k)^{\circ (m)} \to R(\beta)_{k,m} \to R(\beta)_{k,m-1} \to 0
 \]
for some projective $R(\beta - m \beta_k)_{k,0}$-module $P$.  
Using the induction-restriction adjunction and the fact that $\Res_{\beta-m\beta_k, m\beta_k}M = 0$, we obtain
\[
\Ext_{R(\beta)}^n (P \circ \widehat{L}(\beta_k)^{\circ (m)}, M) = 0 \ (n \geq 0). 
\]
Hence, the long exact sequence induced from the short exact sequence shows 
\[
\Ext_{R(\beta)}^n (R(\beta)_{k,m-1}, M ) \simeq \Ext_{R(\beta)}^n (R(\beta)_{k,m}, M)
\]
for all $n \geq 0$. 
Since $M$ is an $R(\beta)_{k,m}$-module, the assertion follows from the induction hypothesis. 
\end{proof}

\begin{proposition} \label{prop:prdimR*w}
Let $1 \leq k \leq l$ and $m \geq 0$. 
Then, the projective dimension of $R(\beta)_{k,m}$ as an $R(\beta)$-module is finite.   
In particular, the projective dimension of $R_{*,w}(\beta)$ as an $R(\beta)$-module is finite. 
\end{proposition}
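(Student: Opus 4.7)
The plan is to use strong induction on $\height(\beta)$, combined with an inner induction on $(k, m)$, built around the short exact sequence of Proposition~\ref{prop:projresol}(4):
\[
0 \to P \circ \widehat{L}(\beta_k)^{\circ (m)} \to R(\beta)_{k, m} \to R(\beta)_{k, m-1} \to 0,
\]
in which $P$ is a projective $R(\beta')_{k, 0} = R_{*, w_{\leq k}}(\beta')$-module and $\beta' = \beta - m\beta_k$. The base case $\beta = 0$ is trivial, as each $R(0)_{k, m}$ is either $\mathbf{k}$ or $0$, hence has projective dimension zero.

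For the inductive step on $\beta$, I will run the inner induction starting from $R(\beta)_{1, M} = R(\beta)$ for $M$ sufficiently large (so that $(M{+}1)\beta_1$ exceeds $\beta$ in height), then successively pass from $(k, m)$ to $(k, m-1)$ via the sequence above, using the identification $R(\beta)_{k+1, M'} = R(\beta)_{k, 0}$ for $M'$ large in order to move between levels of $k$. At each step the middle term has finite projective dimension by the inner induction hypothesis, so it suffices to bound $\prdim_{R(\beta)}(P \circ \widehat{L}(\beta_k)^{\circ (m)})$. First, $\widehat{L}(\beta_k) = \widehat{M}(w_{\leq k}\Lambda_{i_k}, w_{\leq k-1}\Lambda_{i_k})$ has finite projective dimension over $R(\beta_k)$ by Lemma~\ref{lem:prdim2}; hence $\widehat{L}(\beta_k)^{\circ m}$, and its summand $\widehat{L}(\beta_k)^{\circ (m)}$, have finite projective dimension over $R(m\beta_k)$ by Lemma~\ref{lem:projdimind}. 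Second, $P$ is a summand of a finite direct sum of degree shifts of $R_{*, w_{\leq k}}(\beta')$; since $\beta' < \beta$, the outer induction hypothesis supplies $\prdim_{R(\beta')} R_{*, w_{\leq k}}(\beta') < \infty$, so $\prdim_{R(\beta')} P < \infty$. Another application of Lemma~\ref{lem:projdimind} then yields $\prdim_{R(\beta)}(P \circ \widehat{L}(\beta_k)^{\circ (m)}) < \infty$, and the short exact sequence gives the same bound for $R(\beta)_{k, m-1}$. The final assertion follows from $R(\beta)_{l, 0} = R_{*, w}(\beta)$.

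The main subtle point is the necessity of the outer induction: the module $P$ is only known a priori to be projective over the restricted quotient $R_{*, w_{\leq k}}(\beta')$ rather than over $R(\beta')$, so controlling its projective dimension as an $R(\beta')$-module requires knowing finite projective dimension for $R_{*, w_{\leq k}}(\beta')$, which is itself a strictly smaller instance of the proposition under proof. The nested induction resolves this because the kernel's weight $\beta - m\beta_k$ is strictly smaller than $\beta$ whenever $m \geq 1$, so the outer induction hypothesis is available at exactly the moment one needs it.
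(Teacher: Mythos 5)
Your proof is correct and is essentially the paper's argument: the paper's one-line proof ("It follows from Lemma \ref{lem:projdimind}, Lemma \ref{lem:prdim2} and Proposition \ref{prop:projresol} (4) by an induction") unpacks to exactly the nested induction you describe, with the descent on $(k,m)$ through the short exact sequences of Proposition \ref{prop:projresol}(4) and the outer induction on $\height(\beta)$ supplying finiteness of $\prdim_{R(\beta')}P$ for the projective $R(\beta')_{k,0}$-module $P$ appearing in the kernel. The subtle point you flag — that $P$ is only projective over the quotient $R(\beta')_{k,0}$, so one genuinely needs the proposition at the strictly smaller weight $\beta'=\beta-m\beta_k$ — is indeed the reason the induction must also run over $\beta$.
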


\begin{proof}
It follows from Lemma \ref{lem:projdimind}, Lemma \ref{lem:prdim2} and Proposition \ref{prop:projresol} (4) by an induction. 
\end{proof}

\subsection{Interlude: Exactness of generalized Schur-Weyl duality functors} \label{sub:interlude}

Let $F \colon \gMod{R} \to \Pro(\mathcal{C})$ be a functor constructed by the procedure in \cite[7.4]{MR4717658}, 
where $\mathcal{C}$ is a monoidal category that satisfies certain requirements \cite[(6.2)]{MR4717658}. 
They proved that if $R$ is of finite type, $F$ is exact using the fact that $R(\beta)$ has finite global dimension \cite[Proposition 7.6]{MR4717658}.
In this section, we make a slight modification to their argument to show the following generalization. 

\begin{proposition}
For a quiver Hecke algebra of arbitrary type, the functor $F$ is exact when restricted to $\fpdgMod{R}$.
In particular, it is exact on $\gMod{R_{w,*}}$ for any $w \in W$. 
\end{proposition}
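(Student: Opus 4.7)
The plan is to adapt the proof of \cite[Proposition 7.6]{MR4717658} essentially verbatim, noting that the finite-type hypothesis there is used only to guarantee the existence of bounded projective resolutions. Recall that for each $\beta \in Q_+$, the restriction $F|_{\gMod{R(\beta)}}$ is given by $K(\beta) \otimes_{R(\beta)} (-)$ for a suitable right $R(\beta)$-module $K(\beta)$ in $\Pro(\mathcal{C})$ constructed from the affinizations in the duality datum. As a tensor functor, $F$ is automatically right exact, so the assertion reduces to the vanishing of $\Tor_n^{R(\beta)}(K(\beta), M)$ for all $n \geq 1$ and all $M \in \fpdgMod{R(\beta)}$.

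For the first step I would, given $M \in \fpdgMod{R(\beta)}$, choose a finite projective resolution $0 \to P^d \to \cdots \to P^0 \to M \to 0$ by finitely generated graded projective $R(\beta)$-modules; such a resolution exists because $R(\beta)$ is Laurentian Noetherian and $M$ has finite projective dimension by assumption. The $\Tor$ groups in question are then the cohomology in positive degrees of the bounded complex $K(\beta) \otimes_{R(\beta)} P^\bullet$. The second step is to run the KKOP vanishing argument on this complex: their proof of \cite[Proposition 7.6]{MR4717658} proceeds module by module, using only a bounded projective resolution of the particular module $M$, and the finite-type hypothesis intervenes solely to provide such a resolution. Since we have supplied the resolution directly from the assumption $M \in \fpdgMod{R(\beta)}$, the remainder of their argument goes through without modification.

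For the second assertion, Proposition \ref{prop:prdim} asserts that every object of $\gMod{R_{w,*}(\beta)}$ has finite projective dimension as a graded $R(\beta)$-module, so $\gMod{R_{w,*}} \subseteq \fpdgMod{R}$, and exactness of $F|_{\gMod{R_{w,*}}}$ is an immediate specialization of the first part. The main point to verify is that the KKOP vanishing argument depends only on a bounded resolution of the individual module and not on some uniform bound over all of $R(\beta)$; this appears to be the case on inspection, so the generalization reduces to careful bookkeeping rather than any new ingredient.
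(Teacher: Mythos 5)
Your proposal is correct and matches the paper's argument: the paper likewise observes that the only role of the finite-type hypothesis in \cite[Proposition 7.6]{MR4717658} is to start the descending $\Tor$-vanishing induction, and it formalizes your "inspection" step by restating the underlying lemma (a variation of \cite[Proposition 3.7]{MR3748315}) with the hypothesis $\prdim_B M < \infty$ for the individual module $M$ in place of finite global dimension, the second assertion then following from Proposition \ref{prop:prdim} exactly as you say. The only difference is one of rigor, not of route: the paper writes out the descending induction on $(P_n)$ using the embedding $K \hookrightarrow \Hom_{A^{\mathrm{op}}}(B,K)$, whereas you defer to the original argument.
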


Note that the latter assertion follows from the former assertion using Proposition \ref{prop:prdim}. 

Similarly to the proof of \cite[Proposition 7.6]{MR4717658}, it is a consequence of the following lemma, which is a variation of \cite[Proposition 3.7]{MR3748315}. 
For a graded ring $A$, let $A^{\text{op}}$ be the opposite ring.
For an $A^{\text{op}}$-module $M$, we define 
\[
\flatdim_{A^{op}} M = \sup \{ d \geq 0 \mid \text{There exists an $A$-module $N$ such that $\Tor_d^A (M,N) \neq 0$} \}.  
\]

\begin{lemma}
  Let $A, B$ be graded rings, and $A \to B$ a graded ring homomorphism. 
  Let $M$ be a $B$-module, and assume the following conditions:
  \begin{enumerate}
  \item $B$ is a finitely generated projective $A^{\mathrm{op}}$-module, where the $A^{\mathrm{op}}$-module structure on $B$ is given by the right multiplication,  
  \item $\Hom_{A^{\mathrm{op}}} (B, A)$ is a projective $B^{\mathrm{op}}$-module, 
  \item $\prdim_B M$ is finite.  
  \end{enumerate}
  Then, for any $B^{\mathrm{op}}$-module $K$, we have 
  \[
  \text{$\Tor_d^B (K,M) = 0$ for $d > \flatdim_{A^{\mathrm{op}}} K$.}
  \]
\end{lemma}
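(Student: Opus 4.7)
The plan is to argue by induction on $e := \flatdim_{A^{\mathrm{op}}} K$, following the overall strategy of \cite[Proposition 3.7]{MR3748315} but exploiting the finite projective dimension of the specific module $M$ in place of a global bound on $\gldim B$. For the inductive step with $e \geq 1$, I would fix a short exact sequence $0 \to K' \to F \to K \to 0$ of right $B$-modules with $F$ free. Condition (1) makes $F$ projective (hence flat) over $A^{\mathrm{op}}$, which forces $\flatdim_{A^{\mathrm{op}}} K' \leq e - 1$. The long exact sequence
\[
\Tor_d^B(F, M) \to \Tor_d^B(K, M) \to \Tor_{d-1}^B(K', M)
\]
for $d > e$ then has its outer terms vanishing -- the left by projectivity of $F$ over $B^{\mathrm{op}}$, the right by the inductive hypothesis -- giving $\Tor_d^B(K, M) = 0$ as desired.

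The essential content will lie in the base case $e = 0$, where $K$ is flat over $A^{\mathrm{op}}$ and one must show $\Tor_d^B(K, M) = 0$ for all $d \geq 1$. Here the plan is to invoke hypothesis (3) to pick a projective $B$-resolution $P^\bullet \to M$ of finite length $n = \prdim_B M$, so that $\Tor_d^B(K, M) = H_d(K \otimes_B P^\bullet)$; it then suffices to show $K \otimes_B P^\bullet$ is exact in positive degrees. At this point I would bring in both hypotheses (1) and (2): writing $\omega = \Hom_{A^{\mathrm{op}}}(B, A)$, condition (2) makes $\omega$ flat as a right $B$-module, so that $\omega \otimes_B P^\bullet$ is a resolution of the left $A$-module $\omega \otimes_B M$, while condition (1) supplies the bimodule identifications (notably $\omega \otimes_A N \simeq \Hom_{A^{\mathrm{op}}}(B, N)$ for right $A$-modules $N$) that allow $K \otimes_B P^i$ to be re-expressed through tensor products over $A$. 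The aim is to combine exactness of $\omega \otimes_B P^\bullet$ with flatness of $K$ over $A^{\mathrm{op}}$ to yield exactness of $K \otimes_B P^\bullet$ in positive degrees.

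The hard part will be executing this last step -- the precise bimodule manipulation linking $K \otimes_B P$ to tensor products over $A$. The naive identification $K \otimes_B P \simeq K \otimes_A (\omega \otimes_B P)$ is not valid at the level of individual projective modules (already for $P = B$ it would demand $K \simeq K \otimes_A \omega$), so the quasi-isomorphism must instead be produced at the level of the finite complex $P^\bullet$, where the discrepancies cancel in homology. This is exactly the technique of \cite[Proposition 3.7]{MR3748315}; the key observation driving our variant is that their manipulation only uses finite projective dimension of the module being tested, not the global dimension of $B$, so the conclusion extends to any $M$ satisfying (3) -- in particular to all objects of $\gMod{R_{w,*}}$ via Proposition \ref{prop:prdim}, which is what the application to exactness of $F$ will require.
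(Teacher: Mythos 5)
Your ascending induction on $e=\flatdim_{A^{\mathrm{op}}}K$ has a correct and routine inductive step, but it concentrates the entire content of the lemma into the base case $e=0$, and that case you do not actually prove. The plan you sketch for it --- producing a quasi-isomorphism relating $K\otimes_B P^\bullet$ to a complex built from tensor products over $A$, with the term-by-term discrepancies ``cancelling in homology'' --- is an assertion, not an argument: as you yourself observe, the comparison already fails for $P=B$, and no mechanism is offered for why it should hold at the level of the complex. I do not see how to make this step work. Moreover, the natural tool for handling a $K$ that is flat over $A^{\mathrm{op}}$ is the canonical $B^{\mathrm{op}}$-linear embedding $K\hookrightarrow\Hom_{A^{\mathrm{op}}}(B,K)\simeq K\otimes_A\Hom_{A^{\mathrm{op}}}(B,A)$, whose target is flat over $B^{\mathrm{op}}$ by conditions (1) and (2); but its cokernel then has $A^{\mathrm{op}}$-flat dimension $\leq 1$, so controlling $\Tor^B_{d}(K,M)$ requires the statement for flat dimension one --- exactly what your ascending induction was supposed to deliver \emph{later}. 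This circularity is a sign that the induction is being run in the wrong direction.

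The paper resolves this by a \emph{descending} induction on $n$ in the statement ``$\Tor^B_d(K,M)=0$ for all $d>n$ and all $K$ with $\flatdim_{A^{\mathrm{op}}}K\leq n$,'' which is vacuously true once $n\geq\prdim_B M$ by hypothesis (3). For the step from $n+1$ to $n$, one applies $\Tor^B_\bullet(-,M)$ to $0\to K\to\Hom_{A^{\mathrm{op}}}(B,K)\to\Cok(f)\to 0$: conditions (1) and (2) give $\flatdim_{B^{\mathrm{op}}}\Hom_{A^{\mathrm{op}}}(B,K)\leq\flatdim_{A^{\mathrm{op}}}K\leq n$, which kills the right-hand term $\Tor^B_{n+1}(\Hom_{A^{\mathrm{op}}}(B,K),M)$ directly, while $\flatdim_{A^{\mathrm{op}}}\Cok(f)\leq n+1$ lets the inductive hypothesis kill $\Tor^B_{n+2}(\Cok(f),M)$; the exact sequence then forces $\Tor^B_{n+1}(K,M)=0$. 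If you want to keep your two-stage structure, you would at minimum have to replace your base-case sketch by an iteration of this coinduction embedding (repeatedly passing to cokernels until the degree exceeds $\prdim_B M$) --- at which point the ascending induction becomes superfluous.
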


\begin{proof}
Fix $M$ and consider the following assertion for each $n \geq 0$:  
\begin{enumerate}
 \item[($P_n$)] for any $B^{\text{op}}$-module $K$ with $\flatdim_{A^{\text{op}}} K \leq n$, we have 
 \[
 \Tor_d^B(K,M) = 0 
 \]
 for any $d > n$. 
\end{enumerate}
We prove ($P_n$) by a descending induction on $n$. 
By the third assumption, ($P_n$) holds if $n \geq \prdim_B M$. 
Now, assuming ($P_{n+1}$) we prove ($P_n$).
Take a $B^{\text{op}}$-module $K$ with $\flatdim_{A^{\text{op}}} K \leq n$. 
Since $\flatdim_{A^{\text{op}}} \leq n+1$, ($P_{n+1}$) shows that $\Tor_d^B(K,M) = 0$ for $d > n+1$. 
It remains to verify that $\Tor_{n+1}^B(K,M) = 0$. 

There exists a canonical injective $B^{\text{op}}$-linear map
\[
f \colon K \to \Hom_{A^{\text{op}}} (B,K). 
\]
We apply $\Tor_{\bullet}^B(\cdot, M)$ to the short exact sequence 
\[
0 \to K \xrightarrow{f} \Hom_{A^{\text{op}}}(B,K) \to \Cok(f) \to 0 
\]
to obtain an exact sequence
\[
\Tor_{n+2}^B(\Cok (f), M) \to \Tor_{n+1}^B (K, M) \to \Tor_{n+1}^B (\Hom_{A^{\mathrm{op}}} (B,K), M).  
\]
We use it later in the proof. 

We claim that 
\[
\flatdim_{A^{\text{op}}}\Hom_{A^{\text{op}}}(B,K) \leq \flatdim_{B^{\text{op}}} \Hom_{A^{\text{op}}} (B,K) \leq \flatdim_{A^{\text{op}}} K \leq n. 
\]
The first inequality follows from assumption (1), and the third one follows from our assumption on $K$. 
We prove the second one. 
By assumption (1), $\Hom_{A^{\text{op}}}(B, \cdot) \colon \Mod{A^{\text{op}}} \to \Mod{B^{\text{op}}}$ is a exact functor,
and it is naturally isomorphic to $\Hom_{A^{\text{op}}}(B,A)\otimes_{A^{\text{op}}} (\cdot)$. 
Then, assumption (2) shows that this functor sends flat $A^{\text{op}}$-modules to flat $B^{\text{op}}$-modules. 
It proves the claim. 

By the claim, we have $\flatdim_{A^{\text{op}}} \Hom_{A^{\text{op}}}(B,K), \flatdim_{A^{\text{op}}} K \leq n$.
Hence, the short exact sequence shows $\flatdim_{A^{\text{op}}} \Cok (f) \leq n+1$. 
By the induction hypothesis ($P_{n+1}$), we deduce $\flatdim_{B^{\text{op}}} \Cok(f) \leq n+1$. 
We also have $\flatdim_{B^{\text{op}}}\Hom_{A^{\text{op}}}(B,K) \leq n$ by the claim. 
These inequalities combined with the exact sequence indicates $\Tor_{n+1}^B(K,M) = 0$, which completes the induction step. 
\end{proof}

\section{Stratifications} \label{sec:stratifications}

\subsection{Definition of stratified categories} \label{sub:stratified}

Let $H$ be a left Noetherian Laurentian algebra. 
We consider the category $\gMod{H}$ of finitely generated graded left $H$-modules. 
Take a complete set of representatives of simple modules $\{ L(\lambda) \mid \lambda \in \Lambda \}$ up to isomorphism and degree shift, where $\Lambda$ is a finite set. 
For each $\lambda \in \Lambda$, let $P(\lambda)$ be the projective cover of $L(\lambda)$. 

Let $(\Xi, \leq)$ be a partially ordered set and $\rho \colon \Lambda \to \Xi$ be a map. 
For each $\lambda \in \Lambda$, we define the standard object
\[
\Delta(\lambda) = P(\lambda) / K(\lambda), K(\lambda) = \sum_{\mu: \rho(\mu) \not \leq \rho(\lambda), f \in \Hom_H(P(\mu),P(\lambda))} \Image f. 
\]

We say $M \in \gMod{H}$ has a $\Delta$-filtration if there exists a decreasing filtration $M = M^0 \supset M^1 \supset M^2 \supset \cdots $ such that 
each nontrivial successive quotient is of the form $q^n \Delta(\lambda)$  for some $n \in \mathbb{Z}, \lambda \in \Lambda$, and $\bigcap_p M^p =0$.

\begin{definition}[{\cite{MR3335289}}] \label{def:stratified}
The category $\gMod{H}$ is said to be stratified if 
\begin{enumerate}
\item for any $\lambda \in \Lambda$, $K(\lambda)$ has an $\Delta$-filtration whose successive quotients are of the form $q^n \Delta(\mu)$ for some $n \in \mathbb{Z}, \rho(\mu) > \rho(\lambda)$. 
\end{enumerate}
Moreover, $\gMod{H}$ is said to be an affine highest weight category if it satisfies the following additional conditions:
\begin{enumerate} 
  \setcounter{enumi}{1}
  \item for any $\lambda \in \Lambda$, $\End_H(\Delta(\lambda))$ is isomorphic to some polynomial algebra $\mathbf{k}[x_1,\ldots,x_n] \ (n \in \mathbb{Z}_{\geq 0})$ with each $x_k$ being homogeneous of positive degree, 
  \item for any $\lambda \in \Lambda$, the right $\End_H(\Delta(\lambda))$-module $\Delta(\lambda)$ is free of finite rank. 
\end{enumerate}
\end{definition}

Note that our definition of affine highest weight category is slightly stronger than Kleshchev's original definition:
in (2), he only assumed that each $\End_H(\Delta(\lambda))$ is a quotient of a polynomial algebra. 

\begin{example}
  Let $\Xi$ be a singleton. 
  Then for all $\lambda \in \Lambda$, we have $P(\lambda) = \Delta(\lambda)$. 
  Hence with respect to this trivial $\rho$, $\gMod{H}$ is always stratified. 
\end{example}

\begin{remark}
 If $\gMod{H}$ is stratified and $M \in \gMod{H}$ has a $\Delta$-filtration $M = M^0 \supset M^1 \supset M^2 \supset \cdots$, then $M^p = 0$ for sufficiently large $p$ \cite[Lemma 5.12.]{MR3335289}.  
 We define 
 \[
 (M:\Delta(\mu))_q = \sum_{d \in \mathbb{Z}} \lvert \{p \geq 1 \mid M^{p-1}/M^p \simeq q^d \Delta(\mu) \} \rvert q^d.  
 \]
 This is independent of the choice of the $\Delta$-filtration by Proposition \ref{prop:costandard} (2) below. 
\end{remark}

\begin{definition}[{cf.\cite[Definition 1.7]{feigin2023peterweyltheoremiwahorigroups}}] \label{def:propercostd}
  Let $\lambda \in \Lambda$. 
  A finite dimensional module $\overline{\nabla}(\lambda) \in \gmod{H}$ is called a proper costandard module (with respect to $\rho$) if 
  \begin{enumerate}
    \item $\soc (\overline{\nabla}(\lambda)) = L(\lambda)$, 
    \item all the composition factors of $\overline{\nabla}(\lambda)/\soc (\overline{\nabla}(\lambda))$ is of the form $L(\mu)$ with $\rho (\mu) < \rho (\lambda)$, 
    \item $\Ext_H^1 (L(\mu), \overline{\nabla}(\lambda)) = 0$ if $\rho(\mu) \not \geq \rho(\lambda)$. 
  \end{enumerate}
\end{definition}

A routine argument shows that a proper costandard module is unique up to isomorphism if it exists.  
Let $\overline{(\cdot)}$ be a $\mathbb{Z}$-algebra automorphism of $\mathbb{Z}[q,q^{-1}]$ defined by $\overline{q} = q^{-1}$. 

\begin{proposition}[{\cite{MR3335289}}] \label{prop:costandard}
 Assume that $\gMod{H}$ is stratified. 
 Then for each $\lambda \in \Lambda$, there exists a proper costandard module $\overline{\nabla}(\lambda)$ and it satisfies the following properties: 
  \begin{enumerate}
 \item $\qdim \Hom_H (\Delta(\mu), \overline{\nabla}(\lambda)) = \delta_{\mu, \lambda}, \Ext_H^n (\Delta(\mu), \overline{\nabla}(\lambda)) = 0, \ (\mu \in \Lambda, n \geq 1)$, 
 \item $M \in \gMod{H}$ has a $\Delta$-filtration if and only if $\Ext_H^1 (M, \overline{\nabla}(\lambda)) = 0$ for all $\lambda \in \Lambda$. 
 In this case, we have $(M:\Delta(\lambda))_q = \overline{\qdim \Hom_H (M, \overline{\nabla}(\lambda))}$. 
 \item $(P(\lambda):\Delta(\mu))_q = \overline{[\overline{\nabla}(\mu):L(\lambda)]_q}$ (generalized BGG reciprocity). 
 \end{enumerate}
\end{proposition}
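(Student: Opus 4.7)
The plan is to follow the standard BGG-type strategy for stratified categories: first construct proper costandard modules $\overline{\nabla}(\lambda)$, establish the pairing (1), derive the filtration criterion (2), and deduce the reciprocity (3) by a double count of $\qdim \Hom_H(P(\lambda), \overline{\nabla}(\mu))$. For the construction of $\overline{\nabla}(\lambda)$, I would work inside the Serre subcategory $\mathcal{A}_{\leq \rho(\lambda)} \subset \gmod{H}$ generated by the simples $L(\mu)$ with $\rho(\mu) \leq \rho(\lambda)$. Since $\Lambda$ is finite, this is a finite-length abelian category in which $L(\lambda)$ admits an injective envelope $I_\lambda$. Define $\overline{\nabla}(\lambda)$ to be the largest submodule of $I_\lambda$ whose composition factors other than the socle $L(\lambda)$ have $\rho$-value strictly less than $\rho(\lambda)$. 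Conditions (1) and (2) of Definition~\ref{def:propercostd} then hold by construction, while condition (3) follows from the injectivity of $I_\lambda$ inside $\mathcal{A}_{\leq \rho(\lambda)}$ together with Corollary~\ref{cor:projlim1}.

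Property (1) is proved by applying $\Hom_H(\cdot, \overline{\nabla}(\lambda))$ to the short exact sequence $0 \to K(\mu) \to P(\mu) \to \Delta(\mu) \to 0$: the $\Delta$-filtration of $K(\mu)$ with factors $\Delta(\nu)$ for $\rho(\nu) > \rho(\mu)$ (Definition~\ref{def:stratified}), combined with a descending induction on $\rho(\mu)$ and the Ext-vanishing in Definition~\ref{def:propercostd}(3), yields $\Ext_H^n(\Delta(\mu), \overline{\nabla}(\lambda)) = 0$ for all $n \geq 1$; the Hom-dimension count reduces to $\mu = \lambda$, where $\qdim \Hom_H(\Delta(\lambda), \overline{\nabla}(\lambda)) = 1$ by inspection of the head of $\Delta(\lambda)$ and the socle of $\overline{\nabla}(\lambda)$. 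For (2), the forward direction is an induction over the $\Delta$-filtration using (1), with Lemma~\ref{lem:limext} handling the infinite case. For the converse, I build a $\Delta$-filtration from the top: pick $\lambda$ maximal (up to degree shift) such that $\Hom_H(M, \overline{\nabla}(\lambda)) \neq 0$, extract a surjection $M \twoheadrightarrow q^d \Delta(\lambda)$ using the Ext-vanishing hypothesis, verify that the kernel still satisfies the hypothesis, and iterate. The multiplicity formula $(M : \Delta(\lambda))_q = \overline{\qdim \Hom_H(M, \overline{\nabla}(\lambda))}$ then follows by summing the Hom-contribution of each successive quotient using (1). Finally, (3) is obtained by specializing (2) to $M = P(\lambda)$, since $\qdim \Hom_H(P(\lambda), \overline{\nabla}(\mu)) = [\overline{\nabla}(\mu) : L(\lambda)]_q$ by Definition~\ref{def:composition}.

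The main obstacle is the construction and characterization of $\overline{\nabla}(\lambda)$. The ambient category $\gMod{H}$ is not of finite length, so injective hulls do not exist directly; the workaround of restricting to the finite-length Serre subcategory $\mathcal{A}_{\leq \rho(\lambda)}$ resolves this but requires a careful verification that the Ext-vanishing computed inside this subcategory is inherited by all of $\gMod{H}$. The stratification hypothesis enters essentially at this stage, since without the $\Delta$-filtration of $K(\lambda)$ one cannot propagate Ext-vanishing from degree one to all higher degrees.
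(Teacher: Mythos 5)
The paper does not prove this proposition itself: it is quoted from Kleshchev \cite{MR3335289}, where $\overline{\nabla}(\lambda)$ is obtained as the graded dual of a proper standard module over the opposite algebra, after translating the stratification of $\gMod{H}$ into a chain of idempotent ideals of $H$ and showing that $H^{\mathrm{op}}$ is then stratified as well. Your derivation of properties (1)--(3) \emph{from} the existence of $\overline{\nabla}(\lambda)$ is the standard argument and is essentially sound: the descending induction on $\rho(\mu)$ through the finite $\Delta$-filtration of $K(\mu)$ gives (1), the top-down extraction of $\Delta$-quotients gives (2), and (3) is (2) applied to $M = P(\lambda)$.

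The genuine gap is in the construction of $\overline{\nabla}(\lambda)$, which you correctly identify as the main obstacle but do not actually resolve. First, a finite-length abelian category need not contain injective envelopes, and $\mathcal{A}_{\leq \rho(\lambda)}$ generally does not: already for $H = \mathbf{k}[x]$ with $\deg x > 0$ and $\Lambda$, $\Xi$ singletons (a stratified case by the example after Definition \ref{def:stratified}), the modules $\mathbf{k}[x]/(x^n)$, shifted so that their socles all equal $\mathbf{k}$, form an unbounded tower of essential extensions of $L = \mathbf{k}$ inside $\gmod{H} = \mathcal{A}_{\leq \rho(\lambda)}$, so no injective envelope exists there (the correct $\overline{\nabla}$ in this example is just $\mathbf{k}$); you would have to pass to a completion of the category to form $I_\lambda$. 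Second, and more seriously, Definition \ref{def:propercostd} requires $\overline{\nabla}(\lambda)$ to be \emph{finite dimensional}, and nothing in your argument shows that the largest submodule of $I_\lambda$ with the prescribed composition factors is finite dimensional; this is precisely where the stratification hypothesis must enter in a non-formal way (in the cited proof it comes from the structure of the strata of the idempotent ideal chain), and without it the existence claim is not established. Third, a smaller mismatch: condition (3) of Definition \ref{def:propercostd} concerns all $\mu$ with $\rho(\mu) \not\geq \rho(\lambda)$, including $\mu$ with $\rho(\mu)$ incomparable to $\rho(\lambda)$; for such $\mu$ the simple $L(\mu)$ does not lie in $\mathcal{A}_{\leq \rho(\lambda)}$, so injectivity of $I_\lambda$ in that subcategory says nothing about the relevant extension groups, and you would need to work instead in the subcategory generated by all $L(\mu)$ with $\rho(\mu) \not> \rho(\lambda)$, or supply a separate argument for the incomparable case.
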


\subsection{Criterion to be a stratified category} \label{sub:criterion}

We keep the notation of the previous section. 
Write $\Xi = \{ \xi_1, \xi_2, \ldots, \xi_n \}$ so that $a < b$ implies $\xi_a \not \leq \xi_b$. 

\begin{lemma} \label{lem:filtration}
 Assume that there exists a proper costandard module $\overline{\nabla}(\lambda) \in \gmod{H}$ for each $\lambda \in \Lambda$. 
 Let $M \in \gMod{H}$ and put $m_{\lambda} = \qdim \Hom_H (M, \overline{\nabla}(\lambda)) \in \mathbb{Z}[q,q^{-1}]$. 
 Then, there exists a finite filtration $0 = M^0 \subset M^1 \subset \cdots \subset M^n = M$
 such that, for every $1 \leq a \leq n$, $M^a/M^{a-1}$ is a quotient of $\bigoplus_{\lambda \in \rho^{-1}(\xi_a)} \Delta(\lambda)^{\oplus \overline{m_{\lambda}}}$. 
\end{lemma}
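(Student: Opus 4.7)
I plan to build the filtration $0=M^0\subset M^1\subset\cdots\subset M^n=M$ by induction on $a$, maintaining the auxiliary vanishing invariant $\Hom_H(M^{a-1},\overline{\nabla}(\lambda))=0$ for every $\lambda$ with $\rho(\lambda)\notin\{\xi_1,\ldots,\xi_{a-1}\}$. The invariant propagates via the following observation: for $\mu\in\rho^{-1}(\xi_b)$ and $\lambda\in\rho^{-1}(\xi_c)$ with $b<c$, the enumeration rule gives $\xi_b\not\leq\xi_c$; then the simple head $L(\mu)$ of $\Delta(\mu)$ cannot be a composition factor of $\overline{\nabla}(\lambda)$ (whose composition factors $L(\nu)$ satisfy $\rho(\nu)\leq\rho(\lambda)$ by Definition~\ref{def:propercostd}(2)), so $\Hom_H(\Delta(\mu),\overline{\nabla}(\lambda))=0$. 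A long exact sequence argument along the filtration of $M^{a-1}$ yields the vanishing.

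Granted the invariant, the sequence $0\to M^{a-1}\to M\to M/M^{a-1}\to 0$ gives an isomorphism $\Hom_H(M/M^{a-1},\overline{\nabla}(\lambda))\xrightarrow{\sim}\Hom_H(M,\overline{\nabla}(\lambda))$ of graded dimension $m_\lambda$ for every $\lambda\in\rho^{-1}(\xi_a)$. A refined application of Definition~\ref{def:propercostd}(2)---using the \emph{strict} inequality $\rho(\nu)<\rho(\lambda)$ on non-socle composition factors---further yields $\Hom_H(\Delta(\mu),\overline{\nabla}(\lambda))=0$ for distinct $\mu,\lambda\in\rho^{-1}(\xi_a)$, while $\Hom_H(\Delta(\lambda),\overline{\nabla}(\lambda))\cong\mathbf{k}$ is concentrated in degree $0$ (bounded above by $\qdim\Hom_H(P(\lambda),\overline{\nabla}(\lambda))=[\overline{\nabla}(\lambda):L(\lambda)]_q=1$ and realized by the head-to-socle factorization $\Delta(\lambda)\twoheadrightarrow L(\lambda)\hookrightarrow\overline{\nabla}(\lambda)$). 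Consequently
\[
\qdim\Hom_H\Bigl(\bigoplus_{\lambda\in\rho^{-1}(\xi_a)}\Delta(\lambda)^{\oplus\overline{m_\lambda}},\overline{\nabla}(\mu)\Bigr)=m_\mu
\]
for $\mu\in\rho^{-1}(\xi_a)$, matching the source of the natural restriction map.

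I then define $M^a$ by selecting a map $\Phi_a\colon\bigoplus_{\lambda\in\rho^{-1}(\xi_a)}\Delta(\lambda)^{\oplus\overline{m_\lambda}}\to M/M^{a-1}$ for which the restriction $\Hom_H(M/M^{a-1},\overline{\nabla}(\mu))\to\Hom_H(\bigoplus_\lambda\Delta(\lambda)^{\oplus\overline{m_\lambda}},\overline{\nabla}(\mu))$ is an isomorphism for every $\mu\in\rho^{-1}(\xi_a)$, and setting $M^a\subseteq M$ to be the preimage of $\Image\Phi_a$ under $M\twoheadrightarrow M/M^{a-1}$. The main obstacle lies in constructing $\Phi_a$: matching graded dimensions alone is not enough, so specific maps must be produced, say by lifting a graded basis of $\Hom_H(M/M^{a-1},\overline{\nabla}(\lambda))$ along the projective cover $P(\lambda)\twoheadrightarrow\Delta(\lambda)$ and verifying factorization through $\Delta(\lambda)$ on each generator $P(\mu)\to P(\lambda)$ of $K(\lambda)$; this factorization is the most delicate point, to be handled by exploiting the head-socle pairing and the strict inequality in Definition~\ref{def:propercostd}(2). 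Termination $M^n=M$ then follows: otherwise $M/M^n$ would admit a simple quotient $L(\lambda)$ and hence a nonzero element of $\Hom_H(M/M^{a-1},\overline{\nabla}(\lambda))$ (for $\rho(\lambda)=\xi_a$) that vanishes on $M^a/M^{a-1}$, contradicting the isomorphism secured above.
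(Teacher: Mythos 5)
Your scaffolding is sound and matches computations the paper also makes: the vanishing $\Hom_H(\Delta(\mu),\overline{\nabla}(\lambda))=0$ across distinct strata with $b<c$, the long-exact-sequence bookkeeping giving $\Hom_H(M/M^{a-1},\overline{\nabla}(\lambda))\xrightarrow{\sim}\Hom_H(M,\overline{\nabla}(\lambda))$, and $\qdim\Hom_H(\Delta(\lambda),\overline{\nabla}(\lambda))=1$ are all correct. The problem is that the central step --- actually producing $\Phi_a\colon\bigoplus_{\lambda\in\rho^{-1}(\xi_a)}\Delta(\lambda)^{\oplus\overline{m_\lambda}}\to M/M^{a-1}$ --- is precisely what you defer (``the most delicate point, to be handled by\ldots''), and as sketched it does not go through. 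A lift $P(\lambda)\to M/M^{a-1}$ of an element dual to $\Hom_H(M/M^{a-1},\overline{\nabla}(\lambda))$ under the head--socle pairing has no reason to annihilate $K(\lambda)$: the module $M/M^{a-1}$ can contain composition factors $L(\mu)$ with $\rho(\mu)=\xi_b$, $b>a$, and $\xi_b$ incomparable to $\xi_a$, so $\rho(\mu)\not\leq\rho(\lambda)$ while $\Hom_H(P(\mu),M/M^{a-1})\neq 0$; then the generators $P(\mu)\to P(\lambda)$ of $K(\lambda)$ can map nontrivially. The factorization through $\Delta(\lambda)$ is the whole content of the lemma, and it is left unproven.

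The paper avoids constructing maps with prescribed multiplicities altogether. It takes $a$ least with $[M:L(\lambda)]_q\neq 0$ for some $\lambda\in\rho^{-1}(\xi_a)$, sets $M^0=\cdots=M^{a-1}=0$, and defines $M^a$ as the trace $\sum_{\lambda\in\rho^{-1}(\xi_a),\,f\in\Hom_H(\Delta(\lambda),M)}\Image f$. This is tautologically a quotient of some $\bigoplus_{\lambda}\Delta(\lambda)^{\oplus h_\lambda}$, so the only thing left to prove is $\hd M^a\simeq\bigoplus_{\lambda\in\rho^{-1}(\xi_a)}L(\lambda)^{\oplus\overline{m_\lambda}}$, which pins down the multiplicities. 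That follows from the chain
\[
\Hom_H(M^a,L(\lambda))\;\simeq\;\Hom_H(M^a,\overline{\nabla}(\lambda))\;\simeq\;\Hom_H(M,\overline{\nabla}(\lambda)),
\]
where the first isomorphism holds because the composition factors of $\overline{\nabla}(\lambda)/L(\lambda)$ lie in strata $\xi_{a+1},\ldots,\xi_n$, which do not occur in the head of $M^a$, and the second because $\Hom_H(M/M^a,\overline{\nabla}(\lambda))=\Ext_H^1(M/M^a,\overline{\nabla}(\lambda))=0$ by Definition~\ref{def:propercostd} and Corollary~\ref{cor:projlim1}. The induction then continues on $M/M^a$ after checking that $\qdim\Hom_H(M/M^a,\overline{\nabla}(\mu))$ equals $m_\mu$ for $\mu\notin\rho^{-1}(\xi_a)$ and $0$ for $\mu\in\rho^{-1}(\xi_a)$. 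If you want to keep your structure, replace the construction of $\Phi_a$ by this trace definition; the rest of your argument, including termination, then goes through.
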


\begin{proof}
We may assume that $M \neq 0$. 
Let $1 \leq a \leq n$ be the least number such that $[M:L(\lambda)]_q \neq 0$ for some $\lambda \in \rho^{-1}(\xi_a)$. 
Set $M^0 = M^1 = \cdots = M^{a-1} = 0$ and 
\[
M^a = \sum_{\lambda \in \rho^{-1}(\xi_a), f \in \Hom_H (\Delta(\lambda), M)} \Image f.  
\]
Then, by the definition of $a$, every composition factor $L(\mu)$ of $M/M^a$ satisfies $\mu \in \rho^{-1} (\{ \xi_{a+1}, \cdots, \xi_n \})$. 
Moreover, $M^a$ is a quotient of $\bigoplus_{\lambda \in \rho^{-1}(\xi_a)} \Delta(\lambda)^{\oplus h_{\lambda}}$ for some $h_{\lambda} \in \mathbb{Z}[q, q^{-1}] \ (\lambda \in \rho^{-1}(\xi_a))$. 

We claim that $\hd M^a = \bigoplus_{\lambda \in \rho^{-1}(\xi_a)} L(\lambda)^{\oplus \overline{m_{\lambda}}}$, 
which implies that $M^a$ is a quotient of $\bigoplus_{\lambda \in \rho^{-1}(\xi_a)} \Delta(\lambda)^{\oplus \overline{m_{\lambda}}}$. 
Let $\lambda \in \rho^{-1}(\xi_a)$. 
By the definition of proper costandard modules, we have $\Hom_H(L(\mu), \overline{\nabla}(\lambda)) = \Ext_H^1(L(\mu),\overline{\nabla}(\lambda)) = 0$ for any $\mu \in \rho^{-1}(\{ \xi_{a+1}, \ldots, \xi_{n}\})$. 
Hence, Corollary \ref{cor:projlim1} shows $\Hom_H (M/M^a, \overline{\nabla}(\lambda)) = \Ext_H^1 (M/M^a, \overline{\nabla}(\lambda)) = 0$. 
Applying $\Ext_H^{\bullet} (\cdot, \overline{\nabla}(\lambda))$ to the short exact sequence $0 \to M^a \to M \to M/M^a \to 0$, 
we see that $\Hom_H (M, \overline{\nabla}(\lambda)) \to \Hom_H (M^a, \overline{\nabla}(\lambda))$ is an isomorphism. 
On the other hand, since every composition factor $L(\mu)$ of $\overline{\nabla}(\lambda)/L(\lambda)$ satisfies $\mu \in \rho^{-1}(\{ \xi_{a+1}, \cdots, \xi_n \})$, 
we have $\Hom_H (M^a, \overline{\nabla}(\lambda)/L(\lambda)) = 0$. 
Hence, $\Hom_H (M^a, L(\lambda)) \to \Hom_H (M^a, \overline{\nabla}(\lambda))$ is an isomorphism.
We deduce from these two isomorphisms 
\[
\qdim \Hom_H (M^a, L(\lambda)) = \qdim \Hom_H (M, \overline{\nabla}(\lambda)) = m_{\lambda}, 
\]
which proves the claim. 

Next, we verify that for any $\lambda \in \Lambda$, we have 
\[
\qdim \Hom_H (M/M^a, \overline{\nabla}(\lambda)) = \begin{cases}
m_{\lambda} & \lambda \not \in \rho^{-1}(\xi_a), \\
0 & \lambda \in \rho^{-1}(\xi_a). 
\end{cases}
\]
If $\lambda \in \rho^{-1} (\{ \xi_1, \ldots, \xi_{a-1}\})$, 
$L(\lambda)$ is not a composition factor of $M$, so both $\qdim \Hom_H (M/M^a, \overline{\nabla}(\lambda))$ and $m_{\lambda}$ are $0$. 
If $\lambda \in \rho^{-1} (\xi_a)$, 
$L(\lambda)$ is not a composition factor of $M/M^a$, so $\qdim \Hom_H (M/M^a, \overline{\nabla}(\lambda)) = 0$. 
It remains to consider the case $\lambda \in \rho^{-1} (\{ \xi_{a+1}, \ldots, \xi_n\})$. 
In this case, for any $\mu \in \rho^{-1}(\xi_a)$, $L(\mu)$ is not a composition factor of $\overline{\nabla}(\lambda)$. 
Hence, $\Hom_H (M^a, \overline{\nabla}(\lambda)) = 0$ and it follows that $\Hom_H (M/M^a, \overline{\nabla}(\lambda)) \simeq \Hom_H (M, \overline{\nabla}(\lambda))$. 

Therefore, the lemma is proved by an induction on $a$. 
\end{proof}

\begin{proposition} \label{prop:criterion}
 The category $\gMod{H}$ is stratified (with respect to $\rho$) if 
 there exists a proper costandard module $\overline{\nabla}(\lambda)$ for each $\lambda \in \Lambda$ and the following equalities in $K(\gmod{H})_{\mathbb{Z}((q))}$ hold: 
 \[
 [P(\lambda)]_q = \sum_{\mu \in \Lambda} \overline{[\overline{\nabla}(\mu):L(\lambda)]_q} [\Delta(\mu)]_q \ (\lambda \in \Lambda). 
 \]
\end{proposition}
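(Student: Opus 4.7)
The plan is to apply Lemma~\ref{lem:filtration} to $P(\lambda)$ and then promote the resulting surjections onto successive quotients to isomorphisms by exploiting the assumed character identity. Fix $\lambda \in \Lambda$, write $\xi_c = \rho(\lambda)$, and set $m_\mu := \qdim \Hom_H(P(\lambda), \overline{\nabla}(\mu)) = [\overline{\nabla}(\mu):L(\lambda)]_q$. Lemma~\ref{lem:filtration} produces a finite filtration $0 = M^0 \subseteq M^1 \subseteq \cdots \subseteq M^n = P(\lambda)$ together with surjections $N^a \twoheadrightarrow M^a/M^{a-1}$, where $N^a := \bigoplus_{\mu \in \rho^{-1}(\xi_a)} \Delta(\mu)^{\oplus \overline{m_\mu}}$.

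The first key step is to upgrade every $N^a \twoheadrightarrow M^a/M^{a-1}$ to an isomorphism. Let $K^a$ denote the kernel; then in $K(\gmod{H})_{\mathbb{Z}((q))}$ we have
\[
\textstyle\sum_a [K^a]_q = \sum_a [N^a]_q - \sum_a [M^a/M^{a-1}]_q = \sum_\mu \overline{m_\mu}\,[\Delta(\mu)]_q - [P(\lambda)]_q,
\]
and this last expression vanishes by the hypothesis of the proposition. Since each $[K^a]_q = \sum_\nu [K^a:L(\nu)]_q [L(\nu)]$ has coefficients in $\mathbb{Z}_{\geq 0}((q))$ (they are $\qdim$'s of Laurentian graded vector spaces), each individual $[K^a]_q$ must vanish; equivalently $\Hom_H(P(\nu), K^a) = 0$ for every $\nu$. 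But $K^a$ is a finitely generated submodule of the finitely generated $N^a$ by Noetherianity, so if nonzero it would admit a simple quotient, giving a nonzero element in some $\Hom_H(P(\nu), K^a)$. Thus $K^a = 0$ and $M^a/M^{a-1} \cong N^a$.

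Next, Definition~\ref{def:propercostd} forces $m_\mu = 0$ unless $\rho(\mu) \geq \rho(\lambda)$, and $m_\mu = \delta_{\mu,\lambda}$ when $\rho(\mu) = \rho(\lambda)$ (the only composition factor of $\overline{\nabla}(\mu)$ with $\rho$-value $\rho(\mu)$ is $L(\mu)$, with multiplicity one). Combined with the ordering convention $a < b \Rightarrow \xi_a \not\leq \xi_b$, this yields $N^a = 0$ for $a > c$, $N^c \cong \Delta(\lambda)$, and $\rho(\mu) > \rho(\lambda)$ strictly for every $\mu$ with $m_\mu \neq 0$ and $\rho(\mu) = \xi_a$ where $a < c$. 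Hence $M^c = \cdots = M^n = P(\lambda)$ and $P(\lambda)/M^{c-1} \cong \Delta(\lambda)$.

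It remains to identify $M^{c-1} = K(\lambda)$. For $M^{c-1} \subseteq K(\lambda)$, induct on $a$: lifting via projectivity each surjection $P(\mu) \twoheadrightarrow \Delta(\mu) \hookrightarrow M^a/M^{a-1}$ to $P(\mu) \to M^a \subseteq P(\lambda)$, and noting that $\rho(\mu) > \rho(\lambda)$, these images lie in $K(\lambda)$ by definition, so $M^a \subseteq K(\lambda)$ for all $a \leq c-1$. For the reverse inclusion, $[K(\lambda)/M^{c-1}]_q = [K(\lambda)]_q - [M^{c-1}]_q = ([P(\lambda)]_q - [\Delta(\lambda)]_q) - [M^{c-1}]_q = 0$ (using $P(\lambda)/M^{c-1} \cong \Delta(\lambda)$), so $K(\lambda)/M^{c-1}$ has no simple quotient and vanishes. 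Refining the filtration $0 = M^0 \subseteq \cdots \subseteq M^{c-1} = K(\lambda)$ by splitting each $M^a/M^{a-1} \cong N^a$ into its $q^d\Delta(\mu)$-summands produces the required $\Delta$-filtration of $K(\lambda)$ with every successive quotient of the form $q^d\Delta(\mu)$, $\rho(\mu) > \rho(\lambda)$. The main obstacle is the numerical-to-structural bootstrap in the second paragraph: converting the assumed Grothendieck-group identity into genuine isomorphisms of successive quotients, which relies crucially on the nonnegativity of composition multiplicities together with the absence of simple quotients in the putative kernels.
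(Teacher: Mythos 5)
Your proof is correct and follows essentially the same route as the paper: apply Lemma~\ref{lem:filtration} to $P(\lambda)$, use the assumed character identity together with nonnegativity of graded multiplicities to upgrade the surjections onto successive quotients to isomorphisms, and then read off from Definition~\ref{def:propercostd} that $m_\mu$ vanishes unless $\rho(\mu)>\rho(\lambda)$ or $\mu=\lambda$. Your explicit identification of $M^{c-1}$ with $K(\lambda)$ and the kernel-vanishing argument are details the paper leaves implicit, but they fill in exactly what its one-line assertions require.
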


\begin{proof}
  Let $\lambda \in \Lambda$. 
  Put $P = P(\lambda)$ and $m_{\mu} = \qdim \Hom_H (P(\lambda),\overline{\nabla}(\mu)) = [\overline{\nabla}(\mu) : L(\lambda)]_q$ for each $\mu \in \Lambda$. 
  By Lemma \ref{lem:filtration}, we have a filtration $0 = P^0 \subset P^1 \subset \cdots \subset P^n = P$ 
  such that $P^a/P^{a-1}$ is a quotient of $\bigoplus_{\mu \in \rho^{-1}(\xi_a)} \Delta(\mu)^{\oplus \overline{m_{\mu}}}$.
  The assumption means that the surjection $\bigoplus_{\mu \in \rho^{-1}(\xi_a)} \Delta(\mu)^{\oplus \overline{m_{\mu}}} \to P^a/P^{a-1}$ must be an isomorphism. 
  Hence, we obtain a $\Delta$-filtration of $P = P(\lambda)$.

  Assume that $\rho(\lambda) = \xi_k$. 
  Then for any $a > k$ and $\mu \in \rho^{-1}(\xi_a)$, we have $m_{\mu} = 0$, hence $P^k = P$. 
  For $\mu \in \rho^{-1}(\xi_k)$, we have $m_{\mu} = \delta_{\mu, \lambda}$, hence $P^k/P^{k-1} \simeq \Delta(\lambda)$. 
  For $a < k$ and $\mu \in \rho^{-1}(\xi_a)$, we have $m_{\mu} = 0$ unless $\xi_a > \xi_k$. 
  It follows that $P^{k-1}$ has a $\Delta$-filtration whose successive quotients are $q^n\Delta(\mu) \ (n \in \mathbb{Z}, \rho(\mu) > \rho(\lambda))$. 
  Hence, $\gMod{H}$ is stratified. 
\end{proof}

\begin{remark}
 This criterion is used in \cite{MR3177927, MR3371494} for current algebras. 
\end{remark}

\subsection{Root vectors and determinantial modules} \label{sub:PBW}

First, we briefly review the construction of the PBW basis \cite[Part VI.]{MR2759715}. 
See Appendix A, where we demonstrate how to translate different conventions in other literatures into our setup. 

For each $i \in I$, we define $T_i$ to be the $\mathbb{Q}(q)$-algebra automorphism of $\quantum{}$ defined by \index{$T_i$}
\begin{align*}
  T_i(q^h) = q^{s_i h}, T_i (e_i) = - q_i^{-h_i}f_i, T_i(f_i) = - e_i q_i^{h_i}, \\
  T_i(e_j) = \sum_{r+s = -\langle h_i \alpha_j \rangle}(-1)^r q_i^{-r} e_i^{(r)}e_je_i^{(s)} \ (j \neq i), \\
  T_i(f_j) = \sum_{r+s = -\langle h_i, \alpha_j \rangle}(-1)^r q_i^{r}f_i^{(s)}f_jf_i^{(r)} \ (j \neq i).  
\end{align*}
They satisfy the braid relation, hence we can define an automorphism $T_w$ for each $w \in W$. 

Let $w \in W$ and fix its reduced expression. 
We use the notation in Section \ref{sub:Cwv} and Section \ref{sub:Extform}.  
Let $c \colon Q \to \mathbb{Q}(q)^{\times}$ be a group homomorphism defined by  \index{$c(\beta)$}
\[
c(\alpha_i) = 1 - q_i^2 \ (i \in I). 
\]
Then, we have $\theta (x) = c(\beta) x$ for $x \in \quantum{-}_{-\beta} \ (\beta \in Q_+)$. 

\begin{definition} \label{def:rootvec}
For each $1 \leq k \leq l$, we define the root vector and the dual root vector 
\[
f_{\beta_k} = T_{i_1} \cdots T_{i_{k-1}} f_{i_k}, \ f_{\beta_k}^{\mathrm{up}} = \frac{1-q_{i_k}^2}{c(\beta_k)} f_{\beta_k} \in \quantum{-}_{-\beta_k}, 
\] \index{$f_{\beta_k}$} \index{$f_{\beta_k}^{\mathrm{up}}$}
respectively. 
For $\lambda \in \mathbb{Z}_{\geq 0}^l$, we define 
\[
f_{\lambda} = f_{\beta_l}^{(\lambda_l)} \cdots f_{\beta_1}^{(\lambda_1)}, \ f_{\lambda}^{\mathrm{up}} = q^{m_{\lambda}} f_{\beta_l}^{\mathrm{up} \ \lambda_l} \cdots f_{\beta_1}^{\mathrm{up} \ \lambda_1}, 
\] \index{$f_{\lambda}$} \index{$f_{\lambda}^{\mathrm{up}}$}
where $f_{\beta_k}^{(n)} = f_{\beta_k}^n/[n]_{i_k}!, m_{\lambda} = \sum_{1 \leq k \leq l} (\alpha_{i_k}, \alpha_{i_k}) \lambda_k (\lambda_k-1)/4$. 
\end{definition}

Let $\quantum{\geq 0}$ be a $\mathbb{Q}(q)$-subalgebra of $\quantum{}$ generated by $\{q^h \mid h \in P^{\lor} \} \cup \{ e_i \mid i \in I\}$.
It is known that the sets $\{ f_{\lambda} \}_{\lambda}, \{ f_{\lambda}^{\mathrm{up}} \}_{\lambda}$ form $\mathbb{Q}(q)$-bases of $\quantum{-}\cap T_w \quantum{\geq 0}$, 
which are called the PBW basis and the dual PBW basis respectively: see Proposition \ref{prop:subspaceUw} and the references therein. 
We offer a more detailed discussion later in Section \ref{sub:quantumunipotent}. 

\begin{lemma} \label{lem:rootvec}
  For any $1 \leq k \leq l$, we have
\begin{enumerate}
\item $f_{\beta_k}^{\mathrm{up}} = D(w_{\leq k}\Lambda_{i_k}, w_{\leq k-1}\Lambda_{i_k})$, 
\item $(f_{\beta_k}, f_{\beta_k}^{\mathrm{up}}) = 1, (\overline{f_{\beta_k}}, f_{\beta_k}^{\mathrm{up}}) = 1$. 
\end{enumerate}
\end{lemma}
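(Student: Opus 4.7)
The plan is to prove (1) by induction on $k$, and then to derive (2) as a short computation from (1) using the passage between Kashiwara's form $(\cdot,\cdot)$ and Lusztig's form $\Lusform{\cdot}{\cdot}$.

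For the base case $k=1$ of (1), we have $\beta_1 = \alpha_{i_1}$ and $c(\beta_1) = 1-q_{i_1}^2$, so $f_{\beta_1}^{\mathrm{up}} = f_{i_1}$ by Definition~\ref{def:rootvec}. On the other hand, the description of unipotent quantum minors after Lemma~\ref{lem:quantumminor} gives $D(s_{i_1}\Lambda_{i_1},\Lambda_{i_1}) = \iota_{\Lambda_{i_1}}(f_{i_1}u_{\Lambda_{i_1}})$, and the characterization $(\iota_{\Lambda_{i_1}}(v),x) = (v, xu_{\Lambda_{i_1}})$ for $x \in \quantum{-}_{-\alpha_{i_1}}$, together with $(f_{i_1},f_{i_1})=1$, forces this to equal $f_{i_1}$. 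For the inductive step, I apply Lemma~\ref{lem:quantumminor}(3) to strip ${e'_{i_1}}^{*(b_1)}$ off the right-hand side, with $b_1 = \langle h_{i_1}, s_{i_2}\cdots s_{i_{k-1}}\Lambda_{i_k}\rangle$, reducing the computation to a determinantial element indexed by the shorter reduced expression $s_{i_2}\cdots s_{i_l}$ of $s_{i_1}w$. The dual PBW root vector satisfies a parallel recursion $f_{\beta_k}^{\mathrm{up}} = T_{i_1}^{\mathrm{up}}(f_{\beta'_k}^{\mathrm{up}})$ where $\beta'_k = s_{i_1}\beta_k$ and $T_{i_1}^{\mathrm{up}}$ denotes the braid symmetry on $\Bup$, whose action on dual canonical basis elements is governed by divided powers of ${e'_{i_1}}^*$ via the Saito--Lusztig reflection. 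Matching the two recursions with the inductive hypothesis at the $(k{-}1)$-st level closes the induction.

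For (2), I convert Kashiwara's form to Lusztig's form via $(x,y) = \Lusform{x}{\theta y}$ and $\theta(y) = c(\wt y)\cdot y$ for $y \in \quantum{-}$. Substituting the normalization $f_{\beta_k}^{\mathrm{up}} = \tfrac{1-q_{i_k}^2}{c(\beta_k)}f_{\beta_k}$ yields
\[
(f_{\beta_k}, f_{\beta_k}^{\mathrm{up}}) \;=\; c(\beta_k)\,\Lusform{f_{\beta_k}}{f_{\beta_k}^{\mathrm{up}}} \;=\; (1-q_{i_k}^2)\,\Lusform{f_{\beta_k}}{f_{\beta_k}}.
\]
The classical braid invariance $\Lusform{T_i x}{T_i y} = \Lusform{x}{y}$ on the subspace $\quantum{-}\cap T_w\quantum{\geq 0}$ (Lusztig, 38.2.3) reduces the right-hand side to $\Lusform{f_{i_k}}{f_{i_k}} = 1/(1-q_{i_k}^2)$, immediate from $\Lusform{f_i x}{y} = \tfrac{1}{1-q_i^2}\Lusform{x}{e'_iy}$; this gives the first equality. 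The same transformation yields $(\overline{f_{\beta_k}}, f_{\beta_k}^{\mathrm{up}}) = (1-q_{i_k}^2)\Lusform{\overline{f_{\beta_k}}}{f_{\beta_k}}$, and writing $\overline{f_{\beta_k}} = T'_{i_1}\cdots T'_{i_{k-1}}(f_{i_k})$ with $T'_i := \overline{(\cdot)}\circ T_i\circ \overline{(\cdot)}$ the bar-conjugate Lusztig symmetry, an analogous braid invariance reduces the computation to the same rank-one identity. Alternatively, once (1) is in hand, one can appeal to the categorification: the short exact sequence $0 \to q^{(\alpha_{i_k},\alpha_{i_k})}\widehat{L}(\beta_k) \xrightarrow{z} \widehat{L}(\beta_k) \to L(\beta_k) \to 0$ combined with Lemma~\ref{lem:character} identifies $\Psi_1[\widehat{L}(\beta_k)] = f_{\beta_k}$ in $K(\fpdgMod{R})$, and Lemma~\ref{lem:bilinearforms} with Theorem~\ref{thm:ext1} then computes $(\overline{f_{\beta_k}}, f_{\beta_k}^{\mathrm{up}}) = \Extform{\widehat{L}(\beta_k)}{L(\beta_k)} = 1$.

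The principal obstacle is executing the inductive step of (1) cleanly: it requires aligning the divided-power action of ${e'_{i_1}}^{*}$ on unipotent quantum minors (Lemma~\ref{lem:quantumminor}) with the Lusztig braid action on $\Bup$, which is the content of the Kimura--Saito--Lusztig identification of dual PBW vectors with quantum minors. Once this structural match is made, part (2) is a formal manipulation of bilinear forms.
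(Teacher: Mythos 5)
Your first equality in (2) is exactly the paper's computation, and your conversion between $(\cdot,\cdot)$ and $\Lusform{\cdot}{\cdot}$ is correct. The rest of the proposal has problems.

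For (1), the paper simply cites Kimura \cite[Proposition 7.4]{MR3090232}. Your induction reduces the statement to precisely the nontrivial content of that result --- the compatibility of the recursion $f_{\beta_k}^{\mathrm{up}} = T_{i_1}^{\mathrm{up}}(f_{s_{i_1}\beta_k}^{\mathrm{up}})$ with the divided-power ${e'_{i_1}}^{*}$ recursion for quantum minors from Lemma~\ref{lem:quantumminor}(3) --- and you yourself flag this alignment as the ``principal obstacle'' without executing it. As written, part (1) is not proved; it is restated.

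For the second equality of (2), both of your routes have genuine gaps. The braid-symmetry route asks for an invariance of the form $\Lusform{T'_{i}x}{T_{i}y} = \Lusform{x}{y}$ with \emph{different} symmetries in the two slots ($\overline{f_{\beta_k}}$ is built from the bar-conjugate symmetries, $f_{\beta_k}$ from the $T_i$ themselves); Lusztig's invariance only applies when the same operator acts on both arguments, so you cannot strip the words simultaneously, and no justification is offered for the mixed statement. The categorical route is circular at this point in the paper: $\Extform{\widehat{L}(\beta_k)}{L(\beta_k)}$ is an alternating sum over \emph{all} $\Ext^n$, Theorem~\ref{thm:ext1} only kills $\Ext^1$, and the vanishing of higher $\Ext$ (Corollary~\ref{cor:higherextvanishing}) is a consequence of the stratification theorem, which depends on the present lemma through Lemma~\ref{lem:dualbasis}. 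Indeed the paper runs this implication in the opposite direction: the identity $(\overline{f_{\beta_k}}, f_{\beta_k}^{\mathrm{up}})=1$ is the \emph{input} used to evaluate $\Extform{\widehat{L}(\beta_k)}{L(\beta_k)}$. The paper's actual argument is different and short: it invokes the $\mathbb{Q}$-linear involution $\sigma$ characterized by $(x,\sigma(y)) = \overline{(\overline{x},y)}$, which fixes every upper global basis element; since part (1) shows $f_{\beta_k}^{\mathrm{up}} = D(w_{\leq k}\Lambda_{i_k}, w_{\leq k-1}\Lambda_{i_k}) \in \Bup(\quantum{-})$, one gets $(\overline{f_{\beta_k}}, f_{\beta_k}^{\mathrm{up}}) = \overline{(f_{\beta_k},\sigma(f_{\beta_k}^{\mathrm{up}}))} = \overline{1} = 1$. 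This use of $\sigma$-invariance of the dual canonical basis is the idea missing from your proposal.
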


\begin{proof}
(1) This is \cite[Proposition 7.4]{MR3090232}. 

(2) Since $\Lusform{T_i(x)}{T_i(y)} = \Lusform{x}{y}$ for any $x, y \in \quantum{-} \cap T_i^{-1}\quantum{-}$ \cite[Proposition 38.2.1]{MR2759715}, we have 
\[
\Lusform{f_{\beta_k}}{f_{\beta_k}} = \Lusform{f_{i_k}}{f_{i_k}} = \frac{1}{1-q_{i_k}^2}. 
\]
Hence, we obtain
\[
(f_{\beta_k}, f_{\beta_k}^{\mathrm{up}}) = c(\beta_k) \Lusform{f_{\beta_k}}{f_{\beta_k}^{\mathrm{up}}} = (1-q_{i_k}^2) \Lusform{f_{\beta_k}}{f_{\beta_k}} = 1. 
\] 

To prove the second equality, we use a $\mathbb{Q}$-linear involution $\sigma$ on $\quantum{-}$ characterized by \index{$\sigma$}
\[
(x,\sigma(y)) = \overline{(\overline{x}, y)} \ \text{for $x,y \in \quantum{-}$}. 
\]
It is known that $\sigma (\Gup(b)) = \Gup(b)$ for any $b \in B(\infty)$ \cite[Proposition 16]{MR1985725}, \cite[Corollary 3.4]{MR2914878}. 
In particular, (1) shows $f_{\beta_k}^{\mathrm{up}} \in \Bup(\quantum{-})$, hence $\sigma(f_{\beta_k}^{\mathrm{up}}) = f_{\beta_k}^{\mathrm{up}}$. 
We obtain 
\[
(\overline{f_{\beta_k}}, f_{\beta_k}^{\mathrm{up}}) = \overline{(f_{\beta_k}, \sigma(f_{\beta_k}^{\mathrm{up}}))} = \overline{1} = 1. 
\]
\end{proof}

Now, we establish a relation between root vectors and determinantial modules.
Recall that $L(\beta_k) = M(w_{\leq k}\Lambda_{i_k},w_{\leq k-1}\Lambda_{i_k})$ 
and $\widehat{L}(\beta_k) = \widehat{M}(w_{\leq k}\Lambda_{i_k}, w_{\leq k-1}\Lambda_{i_k})$.  
The following proposition is well-known, but we prove it for accuracy. 

\begin{proposition} \label{prop:rootmodule}
For any $1 \leq k \leq l$, we have 
\[
\Psi_1([\widehat{L}(\beta_k)]) = f_{\beta_k}, \ \Psi_2([L(\beta_k)]) = f_{\beta_k}^{\mathrm{up}}. 
\]
\end{proposition}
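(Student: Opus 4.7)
The plan is to derive both equalities from already-established compatibilities between the categorification and the algebraic quantum group structure, without any new homological input.

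For the second equality, I would observe that $L(\beta_k) = M(w_{\leq k}\Lambda_{i_k}, w_{\leq k-1}\Lambda_{i_k})$ by Proposition \ref{prop:determsupport}. Then Lemma \ref{lem:determinantial} gives $\Psi_2([L(\beta_k)]) = D(w_{\leq k}\Lambda_{i_k}, w_{\leq k-1}\Lambda_{i_k})$, and Lemma \ref{lem:rootvec}(1) identifies the right-hand side with $f_{\beta_k}^{\mathrm{up}}$. That settles it in one line.

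For the first equality, the key point is that $\widehat{L}(\beta_k) \in \fpdgMod{R(\beta_k)}$ by Lemma \ref{lem:prdim2}, so $\Psi_1([\widehat{L}(\beta_k)])$ makes sense and by Lemma \ref{lem:character} equals $\theta\Psi([\widehat{L}(\beta_k)]_q)$. Because $(\widehat{L}(\beta_k),z)$ is an affinization with $z$ injective and $\widehat{L}(\beta_k)/z\widehat{L}(\beta_k) \simeq L(\beta_k)$, the filtration $(z^p \widehat{L}(\beta_k))_{p \geq 0}$ is locally-finite and Hausdorff, and its successive subquotients are isomorphic to $L(\beta_k)$ shifted by $p \deg z = p(\alpha_{i_k},\alpha_{i_k})$. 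Hence in $K(\gmod{R})_{\mathbb{Z}((q))}$ we have
\[
[\widehat{L}(\beta_k)]_q = \sum_{p \geq 0} q_{i_k}^{2p} [L(\beta_k)] = \frac{1}{1-q_{i_k}^2}[L(\beta_k)].
\]
Applying $\Psi$ and using the second equality (already proved) gives $\Psi([\widehat{L}(\beta_k)]_q) = \frac{1}{1-q_{i_k}^2} f_{\beta_k}^{\mathrm{up}}$.

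Finally, unwinding the definitions, $\theta$ acts on $\quantum{-}_{-\beta_k}$ by the scalar $c(\beta_k)$, and by Definition \ref{def:rootvec} we have $f_{\beta_k}^{\mathrm{up}} = \frac{1-q_{i_k}^2}{c(\beta_k)} f_{\beta_k}$. Hence
\[
\Psi_1([\widehat{L}(\beta_k)]) = \theta\!\left( \frac{1}{1-q_{i_k}^2} f_{\beta_k}^{\mathrm{up}}\right) = \frac{c(\beta_k)}{1-q_{i_k}^2}\cdot \frac{1-q_{i_k}^2}{c(\beta_k)} f_{\beta_k} = f_{\beta_k},
\]
which closes the argument. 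There is no real obstacle in this proof; the only point requiring a small verification is the computation of $[\widehat{L}(\beta_k)]_q$ via the $z$-adic filtration, which is routine given that Lemma \ref{lem:prdim2} places $\widehat{L}(\beta_k)$ inside $\fpdgMod{R}$ so that Lemma \ref{lem:character} is applicable.
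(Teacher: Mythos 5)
Your proof is correct and follows essentially the same route as the paper: the second equality via Lemma \ref{lem:determinantial} and Lemma \ref{lem:rootvec}, and the first via Lemma \ref{lem:character} together with the identity $[\widehat{L}(\beta_k)]_q = \frac{1}{1-q_{i_k}^2}[L(\beta_k)]$ and the scalar action of $\theta$ on weight $-\beta_k$. The only difference is that you make explicit the $z$-adic filtration computation that the paper leaves implicit, which is a harmless elaboration.
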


Note that $\widehat{L}(\beta_k)$ has finite projective dimension by Lemma \ref{lem:prdim2}, so $\Psi_1([\widehat{L}(\beta_k)])$ is well-defined.  

\begin{proof}
The second assertion follows from Lemma \ref{lem:determinantial} and Lemma \ref{lem:rootvec}. 
By Lemma \ref{lem:character}, we have  
\[
\Psi_1([\widehat{L}(\beta_k)]) = \theta \Psi ([\widehat{L}(\beta_k)]) = \frac{1}{1-q_{i_k}^2} \theta \Psi_2([L(\beta_k)]) = \frac{c(\beta_k)}{1-q_{i_k}^2} \Psi_2([L(\beta_k)]). 
\]
The second assertion implies that it coincide with $f_{\beta_k}$, which proves the first assertion. 
\end{proof}

\begin{comment}
The following lemma is proved by the same argument as that of \cite[Section 3.7]{MR3205728}

\begin{lemma} \label{prop:endoring}
 We have an isomorphism of graded rings $\End_{R(n\beta_k)} (\widehat{L}(\beta_k)^{\circ n}) \simeq R(n\alpha_{i_k})$. 
\end{lemma}

The action of generators $x_m, \tau_m \in R(n\alpha_{i_k})$ is described as follows: 
$x_m$ acts as a multiplication by $z$ (up to scalar multiple) on the $m$-th $\widehat{L}(\beta_k)$ and 
$\tau_m$ as a renormalized $R$-matrix between $m-1$-th and $m$-th factor. 
%%%%incorrect!!!!!!

Using the primitive idempotent of $R(n\alpha_{i_k})$, 
we may define its ``divided power'' $\widehat{L}(\gamma_k)^{\circ (n)}$ so that   
\[
\widehat{L}(\gamma_k)^{\circ n} \simeq (\widehat{L}(\gamma_k)^{\circ (n)})^{\oplus [n]_{i_k} !}. 
\]
Moreover, we have 
\[
\End_{R(n\beta_k)} (\widehat{L}(\beta_k)^{\circ (n)}) \simeq \mathbf{k}[x_1, \ldots, x_n]^{\Sym_n}.  
\]
Since $\widehat{L}(\beta_k)^{\circ n}$ is free of finite rank over $\mathbf{k}[x_1, \cdots, x_n]$, 
it is still free of finite rank over $\mathbf{k}[x_1, \cdots, x_n]^{\Sym_n}$. 
Hence, its direct summand $\widehat{L}(\beta_k)^{\circ (n)}$ is also free of finite rank over $\mathbf{k}[x_1, \cdots, x_n]^{\circ (n)}$. 
\end{comment}

\begin{definition} \label{def:standardw}
For $\beta \in Q_+$ and $\lambda \in \Lambda(\beta)$, we define graded $R_{w,*}(\beta)$-modules
\begin{align*}
\Delta(\lambda) &= \widehat{L}(\beta_l)^{\circ (\lambda_l)} \circ \cdots \circ \widehat{L}(\beta_1)^{\circ (\lambda_1)}, \\ \index{$\Delta(\lambda)$}
\overline{\nabla}(\lambda) &= D(\overline{\Delta}(\lambda)). \index{$\overline{\nabla}(\lambda)$}
\end{align*}
\end{definition}

\begin{corollary} \label{cor:PBWmodule}
For $\lambda \in \mathbb{Z}_{\geq 0}^l$, we have 
\[
\Psi_1([\Delta(\lambda)]) = f_{\lambda}, \Psi_2([\overline{\Delta}(\lambda)]) = f_{\lambda}^{\mathrm{up}}. 
\]
\end{corollary}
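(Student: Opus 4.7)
The plan is to deduce this corollary directly from Proposition \ref{prop:rootmodule} together with the multiplicativity of the categorification maps, with a small extra argument for the divided-power factors.

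The second equality is almost formal. By Theorem \ref{thm:categorification}, $\Psi_2$ is a $\mathbb{Z}[q,q^{-1}]$-algebra homomorphism with respect to convolution and degree shift, so I would simply apply it to the definition $\overline{\Delta}(\lambda)= q^{m_\lambda} L(\beta_l)^{\circ \lambda_l}\circ\cdots\circ L(\beta_1)^{\circ \lambda_1}$ and use $\Psi_2([L(\beta_k)])=f_{\beta_k}^{\mathrm{up}}$ from Proposition \ref{prop:rootmodule}. The resulting product $q^{m_\lambda}(f_{\beta_l}^{\mathrm{up}})^{\lambda_l}\cdots(f_{\beta_1}^{\mathrm{up}})^{\lambda_1}$ is precisely $f_\lambda^{\mathrm{up}}$ by Definition \ref{def:rootvec}.

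For the first equality, a preliminary remark is that $\Psi_1$ is defined on $K(\fpdgMod R)\simeq K_{\oplus}(\gproj R)$ via the isomorphism of Section \ref{sub:Extform}, and is an algebra homomorphism for the convolution product. I would check that $\Delta(\lambda)\in\fpdgMod R$ using Lemma \ref{lem:prdim2} (applied to each $\widehat{L}(\beta_k)$) together with Lemma \ref{lem:projdimind} (so that convolutions and direct summands stay in $\fpdgMod R$); in particular $\widehat{L}(\beta_k)^{\circ(n)}$, being a direct summand of $\widehat{L}(\beta_k)^{\circ n}$, has finite projective dimension. Then from the decomposition $\widehat{L}(\beta_k)^{\circ n}\simeq(\widehat{L}(\beta_k)^{\circ(n)})^{\oplus [n]_{i_k}!}$ recalled just before Theorem \ref{thm:dividedpower} and $\Psi_1([\widehat{L}(\beta_k)])=f_{\beta_k}$ (Proposition \ref{prop:rootmodule}), I would solve for the divided-power class: $\Psi_1([\widehat{L}(\beta_k)^{\circ(n)}])=f_{\beta_k}^n/[n]_{i_k}!=f_{\beta_k}^{(n)}$. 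Multiplicativity of $\Psi_1$ on convolution then yields $\Psi_1([\Delta(\lambda)])=f_{\beta_l}^{(\lambda_l)}\cdots f_{\beta_1}^{(\lambda_1)}=f_\lambda$.

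There is essentially no serious obstacle; the only delicate point is confirming that the algebra structure on $K(\fpdgMod R)$ is compatible with that on $K_{\oplus}(\gproj R)$ under the canonical isomorphism (so that the $\widehat{L}(\beta_k)^{\circ(n)}$-calculation really takes place in a ring where $\Psi_1$ is multiplicative), but this is immediate from the construction and from Lemma \ref{lem:character}.
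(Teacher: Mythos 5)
Your argument is correct and is exactly the route the paper takes: the paper's proof is the one-line observation that the claim follows from Definition \ref{def:rootvec} and Proposition \ref{prop:rootmodule}, and your write-up just supplies the implicit details (multiplicativity of $\Psi_1,\Psi_2$, finiteness of projective dimension via Lemmas \ref{lem:projdimind} and \ref{lem:prdim2}, and dividing by $[n]_{i_k}!$ using $\widehat{L}(\beta_k)^{\circ n}\simeq(\widehat{L}(\beta_k)^{\circ(n)})^{\oplus[n]_{i_k}!}$). No gaps.
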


\begin{proof}
It follows from Definition \ref{def:rootvec} and Proposition \ref{prop:rootmodule}. 
\end{proof}

\subsection{Evidence of the BGG reciprocity} \label{sub:BGG}

We maintain the setting in the previous section. 
We prove an equation in $\quantum{-}$, which can be regarded as a shadow of the $\Delta$-filtration on projective modules. 
For each $s \in S$, let $\Delta(s)$ be the projective cover of $L(s)$ in $\gMod{R_{*,w}(\beta_s)}$. \index{$\Delta(s)$}

\begin{definition} \label{def:standard}
For $\beta \in Q_+$ and $(\lambda, s) \in \Sigma (\beta)$, we define graded $R(\beta)$-modules
\[
  \Delta(\lambda, s) = \Delta(s) \circ \Delta(\lambda), \ \overline{\nabla} (\lambda, s) = D(\overline{\Delta}(\lambda,s)). 
\] \index{$\Delta(\lambda,s)$} \index{$\overline{\nabla}(\lambda,s)$}
Let $P(\lambda,s)$ be the projective cover of $L(\lambda,s)$ in $\gMod{R(\beta)}$. 
\end{definition}

Note that, by Lemma \ref{lem:projdimind}, Proposition \ref{prop:prdim} and Proposition \ref{prop:prdimR*w}, the projective dimension of $\Delta(\lambda,s)$ is finite. 
At the moment, we do not know whether $\Delta(\lambda,s)$ is the standard module defined in Section \ref{sub:stratified} and $\overline{\nabla}(\lambda,s)$ is the proper costandard module of Definition \ref{def:propercostd}. 
We shall verify that they are in Theorem \ref{thm:standard}. 

When $s = s_0$, we have
\[
\Delta(\lambda,s_0) = \Delta(\lambda), \overline{\Delta}(\lambda, s_0) = \overline{\Delta}(\lambda), \overline{\nabla}(\lambda, s_0) = \overline{\nabla}(\lambda). 
\]

\begin{lemma} \label{lem:dualbasis}
  Let $\beta \in Q_+$. 
  For $(\lambda,s), (\mu,t) \in \Sigma(\beta)$, we have 
\[
  \Extform{\Delta(\lambda,s)}{\overline{\nabla}(\mu, t)} = \delta_{(\lambda,s),(\mu,t)}. 
 \]
\end{lemma}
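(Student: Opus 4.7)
The strategy is to translate the Ext-form into a bilinear pairing on $\quantum{-}$ via Lemma \ref{lem:bilinearforms}, and then exploit the multiplicativity of $\Psi_1,\Psi_2$ together with the compatibility between restriction and the coproduct to factor the pairing into two pieces, each implementing orthogonality in either the $s$-variable or the $\lambda$-variable.

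Concretely, Lemma \ref{lem:bilinearforms} gives
\[
\Extform{\Delta(\lambda,s)}{\overline{\nabla}(\mu,t)} = (\overline{\Psi_1([\Delta(\lambda,s)])},\, \Psi_2([\overline{\nabla}(\mu,t)])),
\]
and Corollary \ref{cor:PBWmodule} combined with the algebra-homomorphism property of $\Psi_1$ yields $\Psi_1([\Delta(\lambda,s)]) = \Psi_1([\Delta(s)]) \cdot f_{\lambda}$. The support inclusions $\W^*(\Delta(s)) \subset Q_+ \cap wQ_+$ and $\W(\Delta(\lambda)) \subset Q_+ \cap wQ_-$ intersect trivially, so $(\Delta(s),\Delta(\lambda))$ is unmixing and $\Delta(\lambda,s) \simeq \Ind_{\beta_s,\beta-\beta_s}(\Delta(s)\otimes\Delta(\lambda))$. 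Lifting the $(\Ind,\Res)$-adjunction to $\Ext$ (valid since $\Ind$ preserves projective resolutions) then gives
\[
\Extform{\Delta(\lambda,s)}{\overline{\nabla}(\mu,t)} = \Extform{\Delta(s)\otimes\Delta(\lambda)}{\Res_{\beta_s,\beta-\beta_s}\overline{\nabla}(\mu,t)}.
\]

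To analyze the right-hand side, I would use that $\Res_{\beta_s,\beta-\beta_s}$ commutes with $D$ (since $\varphi$ fixes the idempotent $e(\beta_s,\beta-\beta_s)$), reducing to $\Res_{\beta_s,\beta-\beta_s}\overline{\Delta}(\mu,t)$. Applying the Mackey filtration to $\overline{\Delta}(\mu,t) = L(t)\circ \overline{\Delta}(\mu)$, only a single decomposition survives: cuspidality of $\overline{\Delta}(\mu)$ together with $\beta_s \in Q_+ \cap wQ_+$ forces all of the $\beta_s$-mass onto the $L(t)$-factor. After applying $D$ and invoking Künneth (valid by the finite projective dimensions supplied by Propositions \ref{prop:prdimR*w} and \ref{prop:prdim}), the Ext-pairing splits into a $\beta_s$-factor and a $(\beta-\beta_s)$-factor. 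The former evaluates to $\delta_{s,t}$ directly from the defining property of $\Delta(s)$ as the projective cover of $L(s)$ in $\gMod{R_{*,w}(\beta_s)}$ (all higher Exts into $L(t)$ vanish in $\gMod{R_{*,w}}$, and the Hom pairing picks out $\delta_{s,t}$). The latter reduces to the identity $\Extform{\Delta(\lambda)}{\overline{\nabla}(\mu)} = \delta_{\lambda,\mu}$, the categorical shadow of the classical orthogonality $(f_\lambda, f_\mu^{\mathrm{up}}) = \delta_{\lambda,\mu}$ between PBW and dual PBW bases.

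This last identity will be proved by iterating the same scheme along the sequence $\beta_l \succ \beta_{l-1} \succ \cdots \succ \beta_1$: at each step the unmixing of $(\widehat{L}(\beta_k)^{\circ(\lambda_k)},\text{rest})$ plus cuspidal support degeneracy in the Mackey filtration peels off one factor, and Theorem \ref{thm:dividedpower} supplies the one-variable input that $\widehat{L}(\beta_k)^{\circ(\lambda_k)}$ is the projective cover of $L(\lambda_k\beta_k)$ in $\gMod{R_{w_{\leq k},w_{\leq k-1}}(\lambda_k\beta_k)}$, giving $\Extform{\widehat{L}(\beta_k)^{\circ(\lambda_k)}}{L(\mu_k\beta_k)} = \delta_{\lambda_k,\mu_k}$. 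The main technical obstacle will be the careful bookkeeping of degree shifts arising from $D(M\circ N) \simeq q^{(\wt M,\wt N)}DN\circ DM$, from the Mackey weight $m(\alpha)$, and from the self-duality normalization $L(n\beta_k) \simeq q_{i_k}^{n(n-1)/2} L(\beta_k)^{\circ n}$; these must cancel precisely to deliver an untwisted Kronecker delta rather than a nontrivial $q$-multiple.
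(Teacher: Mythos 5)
Your proposal has a genuine gap in the off-diagonal cases. You reduce everything to the single restriction $\Res_{\beta_s,\beta-\beta_s}\overline{\nabla}(\mu,t)$ and claim the Mackey filtration concentrates so that all of the $\beta_s$-mass lands on the $L(t)$-factor, whence the pairing splits as $\delta_{s,t}\cdot\delta_{\lambda,\mu}$. This concentration only holds when $\beta_s=\beta_t$ and $\lambda=\mu$ (it is then exactly Theorem \ref{thm:cuspidalCw}~(1)); in general the cuspidality constraints ($\W^*(L(t))\subset Q_+\cap wQ_+$ and $\W(L(\mu_k\beta_k))\subset Q_+\cap w_{\leq k}Q_-$) do not force the decomposition $\beta_s=\gamma_0+\sum_k\gamma_k$ to be trivial. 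Worse, even granting a layer-by-layer Künneth factorization, the resulting first factors are of the form $\Extform{\Delta(s)}{L'}$ where $L'$ need not lie in $\gmod{R_{*,w}(\beta_s)}$, and for such $L'$ neither the Hom computation nor Theorem \ref{thm:projresol} applies, so these terms are not Kronecker deltas. A concrete failure: type $A_2$, $w=s_1$, $\beta=\alpha_1+\alpha_2$, $(\lambda,s)=(0,[L(1)\nabla L(2)])$, $(\mu,t)=((1),[L(2)])$. Here $\Res_{\beta_s,\beta-\beta_s}=\mathrm{id}$ and your formula becomes $\Extform{\Delta(s)}{q^{-1}L(1)\circ L(2)}$, whose evaluation requires $\Extform{\Delta(s)}{L(2)\nabla L(1)}$ with $L(2)\nabla L(1)\notin\gmod{R_{*,s_1}}$ --- this term is in fact nonzero and must cancel the diagonal contribution. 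The paper avoids this by a trichotomy: when $\rho(\lambda,s)\not\leq\rho(\mu,t)$ it restricts $\overline{\nabla}(\mu,t)$ along the full pattern $(\beta_s,\lambda_l\beta_l,\ldots,\lambda_1\beta_1)$ and gets zero from Theorem \ref{thm:cuspidalCw}~(2) via induction-restriction; when $\rho(\lambda,s)\not\geq\rho(\mu,t)$ it instead restricts $\Delta(\lambda,s)$ along $(\beta_t,\mu_l\beta_l,\ldots,\mu_1\beta_1)$ and uses restriction-\emph{coinduction}; only when $\rho(\lambda,s)=\rho(\mu,t)$ does the product formula appear. You need both adjunctions; one restriction pattern cannot see both vanishing directions.

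There is also a secondary gap in your evaluation of the diagonal factors. You deduce $\Extform{\widehat{L}(\beta_k)^{\circ(\lambda_k)}}{L(\lambda_k\beta_k)}=1$ from Theorem \ref{thm:dividedpower} alone, but being a projective cover in the Serre subcategory $\gMod{R_{w_{\leq k},w_{\leq k-1}}(\lambda_k\beta_k)}$ only controls $\Hom$ and $\Ext^1$ over $R(\lambda_k\beta_k)$ (the ideal is idempotent, so extensions by objects of the subcategory stay in it); the vanishing of $\Ext^{\geq 2}_{R(\lambda_k\beta_k)}$ is \emph{not} known at this stage --- it is Corollary \ref{cor:higherextvanishing}, a consequence of the stratification theorem that this lemma feeds into. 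The paper sidesteps this by computing the Euler characteristic directly: $\Extform{\widehat{L}(\beta_k)}{L(\beta_k)}=(\overline{f_{\beta_k}},f_{\beta_k}^{\mathrm{up}})=1$ via Lemma \ref{lem:bilinearforms}, Proposition \ref{prop:rootmodule} and the normalization Lemma \ref{lem:rootvec}~(2), together with the Mackey identity $[\Res_{\beta_k^{\lambda_k}}L(\lambda_k\beta_k)]=[\lambda_k]_{i_k}![L(\beta_k)^{\otimes\lambda_k}]$ to reduce to the $\lambda_k=1$ case. Your degree-shift bookkeeping concern, by contrast, is largely dissolved by this route, since the bilinear-form identity already carries the correct normalization.
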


\begin{proof}
 In the proof, we repeatedly use Theorem \ref{thm:cuspidalCw} without explicitly referring to it. 

 First, consider the case $\rho(\lambda, s) \not \leq \rho(\mu, t)$.
 Then, we have $\Res_{\beta_s, \lambda_l \beta_l, \ldots, \lambda_1\beta_1} \overline{\Delta}(\mu, t) = 0$. 
 Since $\overline{\nabla}(\mu,t) = D(\overline{\Delta}(\mu, t)) $, we also have $\Res_{\beta_s, \lambda_l \beta_l, \ldots, \lambda_1\beta_1} \overline{\nabla}(\mu, t) = 0$. 
 By the induction-restriction adjunction, we obtain $\Extform{\Delta(\lambda,s)}{\overline{\nabla}(\mu,t)} = 0$. 

 Next, assume $\rho(\lambda,s) \not \geq \rho(\mu, t)$. 
 We have $\Res_{\beta_t, \mu_l\beta_l, \ldots, \mu_1\beta_1} \Delta(\lambda,s) = 0$. 
 Since $\overline{\nabla}(\mu,t) = L(\mu_1\beta_1) \circ \cdots \circ L(\mu_l\beta_l) \circ L(t)$ up to degree shift, using the restriction-coinduction adjunction we deduce $\Extform{\Delta(\lambda,s)}{\overline{\nabla}(\mu,t)} = 0$. 

 It remains to address the case $\rho(\lambda,s) = \rho(\mu,t)$. 
 Since $\Res_{\beta_s, \lambda_l\beta_l, \ldots, \lambda_1\beta_1} \overline{\Delta}(\lambda,t) = L(t) \otimes L(\lambda_l\beta_l) \otimes \cdots \otimes L(\lambda_1\beta_1)$,  
 we have $\Res_{\beta_s, \lambda_l\beta_l, \ldots, \lambda_1\beta_1} \overline{\nabla}(\lambda,t) = L(t) \otimes L(\lambda_l\beta_l) \otimes \cdots \otimes L(\lambda_1\beta_1)$. 
 Using the induction-restriction adjunction, we obtain
 \[
 \Extform{\Delta(\lambda,s)}{\overline{\nabla}(\lambda,t)} = \Extform{\Delta(s)}{L(t)} \Extform{\widehat{L}(\beta_l)^{\circ (\lambda_l)}}{L(\lambda_l \beta_l)} \cdots \Extform{\widehat{L}(\beta_1)^{\circ (\lambda_1)}}{L(\lambda_1 \beta_1)}. 
 \]
 By Theorem \ref{thm:projresol}, $\Extform{\Delta(s)}{L(t)} = \delta_{s,t}$. 
 It suffices to show that $\Extform{\widehat{L}(\beta_k)^{\circ (\lambda_k)}}{L(\lambda_k \beta_k)} = 1$ for each $1 \leq k \leq l$. 
 Using the Mackey filtration as in \cite[Lemma 2.11.]{MR3205728}, we obtain 
 \[ 
 [\Res_{\beta_k^{\lambda_k}} L(\lambda_k \beta_k)] = q_{i_k}^{\lambda_k(\lambda_k - 1)/2} [\Res_{\beta_k^{\lambda_k}} L(\beta_k)^{\circ \lambda_k}] = [\lambda_k]_{i_k}! [L(\beta_k)^{\otimes \lambda_k}]. 
 \]
 Hence, $\Extform{\widehat{L}(\beta_k)^{\circ (\lambda_k)}}{L(\lambda_k\beta_k)} = \Extform{\widehat{L}(\beta_k)}{L(\beta_k)}^{\lambda_k}$. 
 By Lemma \ref{lem:bilinearforms}, Lemma \ref{lem:rootvec} and Proposition \ref{prop:rootmodule}, 
 \[
 \Extform{\widehat{L}(\beta_k)}{L(\beta_k)} = (\overline{f_{\beta_k}}, f_{\beta_k}^{\mathrm{up}}) = 1, 
 \]
 which proves the assertion. 
 \end{proof}

 Now, we demonstrate that, at the level of Grothendieck group, we have an equation corresponding to the BGG reciprocity. 
 
 \begin{proposition} \label{prop:BGGformula}
 For $(\lambda,s) \in \Sigma(\beta)$, we have the following equation in $K(\gmod{R})_{\mathbb{Q}((q))}$: 
 \[
 [P(\lambda,s)]_q = \sum_{(\mu, t) \in \Sigma(\beta)} \overline{[\overline{\nabla}(\mu,t):L(\lambda,s)]_q}  [\Delta(\mu,t)]_q. 
 \]
 \end{proposition}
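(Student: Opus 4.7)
The approach is to expand $[P(\lambda,s)] \in K(\fpdgMod{R(\beta)})$ in the family $\{[\Delta(\mu,t)]\}_{(\mu,t) \in \Sigma(\beta)}$ and extract coefficients using the Ext bilinear form via the duality of Lemma~\ref{lem:dualbasis}.

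First I verify that $\{[\Delta(\mu,t)]\}_{(\mu,t) \in \Sigma(\beta)}$ is a $\mathbb{Z}[q,q^{-1}]$-basis of $K(\fpdgMod{R(\beta)})$. The group is free of rank $|\Sigma(\beta)|$ via $\Psi_1$, with basis $\{[P(\mu,t)]\}$ dual to $\{[L(\mu,t)]\} \subset K(\gmod{R(\beta)})$ under $\Extform{\cdot}{\cdot}$, so the Ext form is a perfect $\mathbb{Z}[q,q^{-1}]$-valued pairing of these rank-$|\Sigma(\beta)|$ modules. Lemma~\ref{lem:dualbasis} gives the orthogonality $\Extform{\Delta(\mu,t)}{\overline{\nabla}(\nu,u)} = \delta_{(\mu,t),(\nu,u)}$, so by nondegeneracy and cardinality both $\{[\Delta(\mu,t)]\}$ and $\{[\overline{\nabla}(\mu,t)]\}$ are dual bases of the respective Grothendieck groups.

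Writing $[P(\lambda,s)] = \sum_{(\mu,t)} c_{(\mu,t)}[\Delta(\mu,t)]$ in $K(\fpdgMod{R(\beta)})$ and pairing with $[\overline{\nabla}(\mu,t)]$ yields, since $P(\lambda,s)$ is projective,
\[
c_{(\mu,t)} = \Extform{P(\lambda,s)}{\overline{\nabla}(\mu,t)} = \qdim\Hom_R(P(\lambda,s),\overline{\nabla}(\mu,t)) = [\overline{\nabla}(\mu,t):L(\lambda,s)]_q.
\]
Applying the $\mathbb{Z}[q,q^{-1}]$-linear map $[M] \mapsto [M]_q$, which is well-defined on $K(\fpdgMod{R(\beta)})$ since $\Hom_R(P,\cdot)$ is exact on projectives, transports the identity to $K(\gmod{R(\beta)})_{\mathbb{Z}((q))}$. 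The stated formula with coefficient $\overline{[\overline{\nabla}(\mu,t):L(\lambda,s)]_q}$ then follows either by verifying bar-invariance of this multiplicity (using $\overline{\nabla}(\mu,t) = D\overline{\Delta}(\mu,t)$ and $[DM:L]_q = \overline{[M:L]_q}$ for self-dual simple $L$) or, more conceptually, by establishing the dual analogue $\Extform{\Delta(\mu,t)}{\overline{\Delta}(\nu,u)} = \delta_{(\mu,t),(\nu,u)}$ via the same adjunction argument as in Lemma~\ref{lem:dualbasis} and pairing $[P(\lambda,s)]$ directly with $[\overline{\Delta}(\mu,t)]$, giving $c_{(\mu,t)} = [\overline{\Delta}(\mu,t):L(\lambda,s)]_q = \overline{[\overline{\nabla}(\mu,t):L(\lambda,s)]_q}$.

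The main obstacle is Lemma~\ref{lem:dualbasis} — that is, the duality of $\{[\Delta(\mu,t)]\}$ and $\{[\overline{\nabla}(\mu,t)]\}$ under $\Extform{\cdot}{\cdot}$. This rests on three technical inputs already in hand: the $\Ext^1$-vanishing of Theorem~\ref{thm:introext1} (used to secure $\Extform{\widehat{L}(\beta_k)}{L(\beta_k)} = 1$ through the global-basis identity of Lemma~\ref{lem:rootvec}(2)), the projective resolution of $R_{*,w}(\beta)$ from Section~\ref{sub:projresol} (used to secure $\Extform{\Delta(s)}{L(t)} = \delta_{s,t}$), and the identification of the Ext form with Kashiwara's pairing on $\quantum{-}$ (Lemma~\ref{lem:bilinearforms}) underlying the rank count and the PBW-basis compatibility. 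Once these are in place the Grothendieck-group manipulation above is routine linear algebra.
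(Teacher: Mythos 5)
Your overall strategy --- expand $[P(\lambda,s)]$ in the family $\{[\Delta(\mu,t)]\}$ and extract the coefficients by pairing against the $\overline{\nabla}(\nu,u)$ via Lemma~\ref{lem:dualbasis} --- is exactly the paper's. The gap is in how the bar involution enters. The pairing $\Extform{\cdot}{\cdot}$ is not $\mathbb{Z}[q,q^{-1}]$-bilinear: since $\Hom_H(M,N)_d = \hom_H(q^dM,N)$, one has $\Extform{q^dM}{N} = q^{-d}\Extform{M}{N}$, i.e.\ the form is $q\mapsto q^{-1}$-semilinear in its first argument (this is the bar in $\Extform{M}{N} = (\overline{\Psi_1([M])},\Psi([N]))$ of Lemma~\ref{lem:bilinearforms}). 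Consequently, writing $[P(\lambda,s)]_q = \sum_{(\mu,t)} c_{(\mu,t)}[\Delta(\mu,t)]_q$ and pairing with $\overline{\nabla}(\nu,u)$ gives $[\overline{\nabla}(\nu,u):L(\lambda,s)]_q = \Extform{P(\lambda,s)}{\overline{\nabla}(\nu,u)} = \overline{c_{(\nu,u)}}$, hence $c_{(\nu,u)} = \overline{[\overline{\nabla}(\nu,u):L(\lambda,s)]_q}$ immediately, with no further patch required. Your line $c_{(\mu,t)} = \Extform{P(\lambda,s)}{\overline{\nabla}(\mu,t)}$ drops this bar, and the two repairs you then propose do not close the resulting discrepancy.

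Concretely: bar-invariance of $[\overline{\nabla}(\mu,t):L(\lambda,s)]_q$ is false in general, since by $\overline{\nabla}(\mu,t)=D\overline{\Delta}(\mu,t)$ it would force $[\overline{\Delta}(\mu,t):L(\lambda,s)]_q$ to be bar-symmetric, and the off-diagonal graded multiplicities of the $\overline{\Delta}$'s are genuinely asymmetric (e.g.\ the length-two module of Lemma~\ref{lem:lambda}(2) contributes a multiplicity $q^{(\alpha_i,\alpha_i)/2}$). The second repair fares no better: even granting $\Extform{\Delta(\mu,t)}{\overline{\Delta}(\nu,u)}=\delta_{(\mu,t),(\nu,u)}$, the semilinearity gives $\overline{c_{(\mu,t)}} = [\overline{\Delta}(\mu,t):L(\lambda,s)]_q$, i.e.\ $c_{(\mu,t)} = [\overline{\nabla}(\mu,t):L(\lambda,s)]_q$ --- still missing the bar; it only appears to work in your write-up because the dropped bar cancels against a second one. (It is also not clear that this orthogonality follows by ``the same adjunction argument'': the coinduction step for $\overline{\Delta}(\nu,u)$ produces the restriction in the reversed block order, which is not what Theorem~\ref{thm:cuspidalCw}(2) controls.) Everything else in your outline --- the reduction to Lemma~\ref{lem:dualbasis} and its three inputs, and the basis claim (which the paper only needs over $\mathbb{Q}((q))$, where it follows from unitriangularity of the decomposition matrix) --- matches the paper; the fix is simply to carry the semilinearity through the computation.
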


 \begin{proof}
  Recall the isomorphism $\Psi_2 \colon K(\gmod{R})_{\mathbb{Q}((q))} \simeq \quantum{-}_{\mathbb{Q}((q))}$. 
  Note that $\{ [L(\lambda,s)] \mid (\lambda,s ) \in \Sigma(\beta) \}$ is a basis of $(\quantum{-}_{\mathbb{Q}((q))})_{-\beta}$. 
  Since the decomposition matrix $([\overline{\nabla}(\lambda,s):L(\mu,t)]_q)_{(\lambda,s),(\mu,t) \in \Sigma(\beta)}$ is unitriangular by Theorem \ref{thm:cuspidalCw}, 
  we see that $\{ [\overline{\nabla}(\lambda,s)]_q \mid (\lambda,s) \in \Sigma(\beta) \}$ is also a basis. 
  %Moreover, we have $\Psi'_2([\Delta(\lambda,s)]) \in (1 + q \mathbb{Z}[[q]]) \Psi'_2 ([\overline{\Delta(\lambda,s)}])$ by the definition, 
  Lemma \ref{lem:dualbasis} shows that even $\{ [\Delta(\lambda,s)]_q \mid (\lambda,s) \in \Sigma (\beta) \}$ is a basis. 

 Hence, we may expand $[P(\lambda,s)]_q = \sum_{(\mu,t) \in \Sigma(\beta)} a_{(\mu,t)} [\Delta(\mu,t)]_q$, where $a_{(\mu,t)} \in \mathbb{Q}((q))$.
  By Lemma \ref{lem:bilinearforms}, we have for any $(\lambda',s') \in \Sigma(\beta)$, 
  \[
  [\overline{\nabla}(\lambda',s'):L(\lambda,s)]_q = \Extform{P(\lambda,s)}{\overline{\nabla}(\lambda',s')} = \sum_{(\mu,t)} \overline{a_{(\mu,t)}} \Extform{\Delta(\mu,t)}{\overline{\nabla}(\lambda',s')}. 
  \]
  By Lemma \ref{lem:dualbasis}, it is $\overline{a_{(\lambda',s')}}$, which proves the proposition. 
 \end{proof}

\subsection{Main results} \label{sub:mainresults}

Let $\beta \in Q_+, w \in W$, and take a reduced expression of $w$. 
We adopt the notation from section \ref{sub:Cwv}. 
Recall that simple objects of $\gMod{R(\beta)}$ is parametrized by $\Sigma(\beta)$. 
We have a map $\rho \colon \Sigma(\beta) \to \mathcal{P}^{\preceq}(\beta)$ given by $\rho(\lambda,s) = (\lambda_1\beta_1, \ldots, \lambda_l\beta_l, \beta_s)$. 
We also have a partial order $\leq$ on $\mathcal{P}^{\preceq}(\beta)$ from Definition \ref{def:modifiedlexico}. 

First, we determine the standard modules and the proper costandard modules in $\gMod{R(\beta)}$. 

\begin{theorem} \label{thm:standard}
 With respect to the map $\rho \colon \Sigma(\beta) \to \mathcal{P}^{\preceq}(\beta)$ and the partial order $\leq$ on $\mathcal{P}^{\preceq}(\beta)$, 
 $\Delta(\lambda,s)$ and $\overline{\nabla}(\lambda,s) \ ((\lambda,s) \in \Sigma(\beta))$ as defined in Definition \ref{def:standard}, are the standard modules and the proper costandard modules of $\gMod{R(\beta)}$, respectively. 
\end{theorem}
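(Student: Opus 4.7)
The plan is to apply Proposition \ref{prop:criterion} with the BGG-type formula of Proposition \ref{prop:BGGformula}. The first order of business is to verify that $\overline{\nabla}(\lambda,s) = D\overline{\Delta}(\lambda,s)$ is the proper costandard module. The socle condition $\soc \overline{\nabla}(\lambda,s) = L(\lambda,s)$ follows from $\hd \overline{\Delta}(\lambda,s) = L(\lambda,s)$ (Theorem \ref{thm:cuspidalCw}) together with the self-duality of $L(\lambda,s)$. The composition factor condition is deduced from parts (3) and (4) of the same theorem: note that if $\rho(\mu,t) = \rho(\lambda,s)$ in $\mathcal{P}^{\preceq}(\beta)$, then $\mu = \lambda$ by the linear independence of $\beta_1,\ldots,\beta_l$, and then part (3) forces $t = s$. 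For the $\Ext^1$-vanishing, I would use the isomorphism
\[
\overline{\nabla}(\lambda,s) \simeq q^{\star} \Coind_{\beta_s, \lambda_l \beta_l, \ldots, \lambda_1 \beta_1}(L(s) \otimes L(\lambda_l \beta_l) \otimes \cdots \otimes L(\lambda_1 \beta_1))
\]
arising from self-duality of the constituents and the general formula $\Coind(M \otimes N) \simeq q^{(\beta,\gamma)} N \circ M$. Then Res–Coind adjunction (exactness of Res implies $\Coind$ preserves injectives) reduces $\Ext^k(L(\mu,t), \overline{\nabla}(\lambda,s))$ to an Ext computed over the tensor product algebra with input $\Res_{\beta_s, \lambda_l\beta_l, \ldots, \lambda_1\beta_1} L(\mu,t)$, which vanishes whenever $\rho(\lambda,s) \not\leq \rho(\mu,t)$ by Theorem \ref{thm:cuspidalCw} (2). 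In particular the hypothesis $\rho(\mu,t) \not\geq \rho(\lambda,s)$ yields all higher Ext vanishing, not just $\Ext^1$.

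Next I must identify our $\Delta(\lambda,s)$ from Definition \ref{def:standard} with the tautological standard $\widetilde{\Delta}(\lambda,s) := P(\lambda,s)/K(\lambda,s)$ appearing in Proposition \ref{prop:criterion}. For this I establish two properties: (i) $\hd \Delta(\lambda,s) = L(\lambda,s)$, and (ii) every composition factor $L(\nu,u)$ of $\Delta(\lambda,s)$ satisfies $\rho(\nu,u) \leq \rho(\lambda,s)$. Property (i) is obtained by first noting the surjection $\Delta(\lambda,s) \twoheadrightarrow \overline{\Delta}(\lambda,s) \twoheadrightarrow L(\lambda,s)$, coming from $\Delta(s)\twoheadrightarrow L(s)$ and $\widehat{L}(\beta_k)^{\circ(\lambda_k)}\twoheadrightarrow L(\lambda_k\beta_k)$ (Theorem \ref{thm:dividedpower}), and then showing uniqueness of the simple quotient: any surjection $\Delta(\lambda,s)\twoheadrightarrow L(\nu,u)$ corresponds via Ind–Res adjunction to a nonzero map out of $\Delta(s)\otimes \widehat{L}(\beta_l)^{\circ(\lambda_l)}\otimes\cdots \otimes \widehat{L}(\beta_1)^{\circ(\lambda_1)}$ into $\Res_{\beta_s, \lambda_l\beta_l, \ldots, \lambda_1\beta_1} L(\nu,u)$, which by Theorem \ref{thm:cuspidalCw} (2) forces $\rho(\lambda,s) \leq \rho(\nu,u)$, and combining with weight $-\beta$ and the explicit restriction computation for $\rho(\nu,u) = \rho(\lambda,s)$ singles out $(\nu,u) = (\lambda,s)$. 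From (i) we get a surjection $\pi \colon P(\lambda,s) \twoheadrightarrow \Delta(\lambda,s)$. Using (ii), the composition $P(\nu,u) \to P(\lambda,s) \xrightarrow{\pi} \Delta(\lambda,s)$ vanishes for $\rho(\nu,u) \not\leq \rho(\lambda,s)$, because $\Hom(P(\nu,u),\Delta(\lambda,s)) = [\Delta(\lambda,s)\colon L(\nu,u)]_q = 0$; hence $\pi$ factors through $\widetilde{\Delta}(\lambda,s)$.

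To upgrade the resulting surjection $\widetilde{\Delta}(\lambda,s) \twoheadrightarrow \Delta(\lambda,s)$ to an isomorphism, I would argue inductively on $\rho(\lambda,s)$ using the nondegeneracy of the Ext-bilinear form. Applying Lemma \ref{lem:filtration} to $P(\lambda,s)$ gives a filtration with $\dim P(\lambda,s) \leq \sum \overline{m_{\mu,t}} \dim \widetilde{\Delta}(\mu,t)$ where $m_{\mu,t} = [\overline{\nabla}(\mu,t) : L(\lambda,s)]_q$, while Proposition \ref{prop:BGGformula} gives $\dim P(\lambda,s) = \sum \overline{m_{\mu,t}} \dim \Delta(\mu,t)$. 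Starting from the $\rho$-maximal labels (where $\overline{m_{\mu,t}} \neq 0$ forces $\rho(\mu,t) = \rho(\lambda,s)$, hence $\mu = \lambda$ and $\beta_t = \beta_s$) and proceeding downward, the inequalities pinch together, forcing $\dim \widetilde{\Delta} = \dim \Delta$ at each label, and therefore the surjection $\widetilde{\Delta}(\lambda,s) \twoheadrightarrow \Delta(\lambda,s)$ is an isomorphism. With this identification in hand, Proposition \ref{prop:criterion} applies and yields the stratification together with $\overline{\nabla}(\lambda,s)$ and $\Delta(\lambda,s)$ as the proper costandards and standards.

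The main obstacle is the identification step, in particular property (ii) and the inductive dimension-matching. Property (ii) cannot be deduced from head considerations alone and requires either a Mackey-type analysis of $\Res_{\gamma_n, \ldots, \gamma_1}\Delta(\lambda,s)$ for $\underline{\gamma} \not\leq \rho(\lambda,s)$, or equivalently a bootstrap from (i) combined with the BGG formula; the bookkeeping through the partial order $\leq$ on $\mathcal{P}^{\preceq}(\beta)$ (which is only a preorder-refinement, not the bilexicographic order of Definition \ref{def:bilexicographic}) needs to be done carefully to ensure that both the Hom-vanishing and dimension estimates are compatible at each stratum.
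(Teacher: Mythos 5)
Your treatment of the proper costandard modules is essentially the paper's: socle from self-duality of $L(\lambda,s)$, composition factors from Theorem \ref{thm:cuspidalCw}, and the $\Ext$-vanishing from the restriction--coinduction adjunction together with $\Res_{\beta_s,\lambda_l\beta_l,\ldots,\lambda_1\beta_1}L(\mu,t)=0$ when $\rho(\lambda,s)\not\leq\rho(\mu,t)$. Your analysis of the head of $\Delta(\lambda,s)$ and the resulting surjection $\pi\colon P(\lambda,s)/K(\lambda,s)\twoheadrightarrow\Delta(\lambda,s)$ is also sound.

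The gap is in the last step, where you try to show $\pi$ is injective by ``pinching'' graded dimensions. All three of your inequalities point the same way: Lemma \ref{lem:filtration} gives $[P(\lambda,s)]_q\leq\sum\overline{m_{\mu,t}}\,[\widetilde{\Delta}(\mu,t)]_q$ (the filtration subquotients are \emph{quotients} of sums of tautological standards), Proposition \ref{prop:BGGformula} gives $[P(\lambda,s)]_q=\sum\overline{m_{\mu,t}}\,[\Delta(\mu,t)]_q$, and the surjection gives $[\widetilde{\Delta}(\mu,t)]_q\geq[\Delta(\mu,t)]_q$. Combining them yields only $\sum\overline{m_{\mu,t}}\bigl([\widetilde{\Delta}(\mu,t)]_q-[\Delta(\mu,t)]_q\bigr)\geq 0$, which is automatic; there is no pinch. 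The descending induction does not repair this: at a non-maximal label one would need a \emph{lower} bound $[K(\lambda,s)]_q\geq\sum_{\rho(\mu,t)>\rho(\lambda,s)}\overline{m_{\mu,t}}[\Delta(\mu,t)]_q$, i.e.\ precisely the $\Delta$-filtration of $K(\lambda,s)$ that one is trying to prove. (The argument does close up for $\rho$-maximal labels, where $K(\lambda,s)=0$, but not below.)

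The missing ingredient is the homological one: the paper proves $\Ext^1_{R(\beta)}(\Delta(\lambda,s),L(\mu,t))=0$ for all $(\mu,t)$ with $\rho(\mu,t)\leq\rho(\lambda,s)$, by the induction--restriction adjunction as in Lemma \ref{lem:dualbasis}, reducing to $\Ext^1(\widehat{L}(\beta_k)^{\circ(\lambda_k)},L(\lambda_k\beta_k))=0$ (Theorem \ref{thm:ext1} and Theorem \ref{thm:dividedpower}) and $\Ext^1(\Delta(s),L(t))=0$ (Theorem \ref{thm:projresol}). With this vanishing, the kernel $N$ of $\widetilde{\Delta}(\lambda,s)\to\Delta(\lambda,s)$ satisfies $\Ext^1(\Delta(\lambda,s),N)=0$ by Corollary \ref{cor:projlim1}, so the extension splits; since $\widetilde{\Delta}(\lambda,s)$ has simple head, $N=0$. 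Your proposal never invokes Theorem \ref{thm:ext1} at this point, which is the central input of the whole construction, and without it the identification of $\Delta(\lambda,s)$ with $P(\lambda,s)/K(\lambda,s)$ does not go through.
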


\begin{proof}
 By Theorem \ref{thm:cuspidalCw}, all the composition factors of $\Delta(\lambda,s)$ is of the form $L(\mu,t) \ (\rho(\mu,t) \leq \rho(\lambda,s))$. 
 Using the induction-restriction adjunction as in the proof of Lemma \ref{lem:dualbasis}, we deduce from Theorem \ref{thm:ext1} and Proposition \ref{prop:headM2} that
 \[
 \Ext_{R(\beta)}^1(\Delta(\lambda,s), L(\mu,t)) = 0, \qdim \Hom_{R(\beta)}(\Delta(\lambda,s), L(\mu,t)) = \delta_{(\lambda,s), (\mu,t)}. 
 \]
 Hence, $\Delta(\lambda,s)$ is the standard module. 
 On the other hand, we obtain the assertion for proper costandard modules by the restriction-coinduction adjunction and Theorem \ref{thm:cuspidalCw}. 
\end{proof}

Now, we arrive at the main theorem of this paper.   

\begin{theorem} \label{thm:stratification}
 With respect to the map $\rho \colon \Sigma(\beta) \to \mathcal{P}^{\preceq}(\beta)$ and the partial order $\leq$ on $\mathcal{P}^{\preceq}(\beta)$, 
 the category $\gMod{R(\beta)}$ is a stratified category. 
\end{theorem}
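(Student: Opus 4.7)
The plan is to deduce Theorem \ref{thm:stratification} as a direct application of the abstract criterion in Proposition \ref{prop:criterion}. That criterion requires two inputs in the setting of the Noetherian Laurentian algebra $R(\beta)$, with the labelling set $\Sigma(\beta)$ indexing self-dual simples (via Theorem \ref{thm:cuspidalCw}), the preordered set $\mathcal{P}^{\preceq}(\beta)$ equipped with the partial order of Definition \ref{def:modifiedlexico}, and the map $\rho$ defined just before that: first, the existence of a proper costandard module $\overline{\nabla}(\lambda,s)$ attached to each $(\lambda,s)\in\Sigma(\beta)$; and second, the numerical identity
\[
[P(\lambda,s)]_q \;=\; \sum_{(\mu,t)\in\Sigma(\beta)} \overline{[\overline{\nabla}(\mu,t):L(\lambda,s)]_q}\,[\Delta(\mu,t)]_q
\]
in $K(\gmod{R(\beta)})_{\mathbb{Z}((q))}$.

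Both inputs are already available. Theorem \ref{thm:standard} states precisely that the modules $\overline{\nabla}(\lambda,s) = D(\overline{\Delta}(\lambda,s))$ from Definition \ref{def:standard} are the proper costandard modules in the sense of Definition \ref{def:propercostd}, relative to $(\rho,\leq)$; and Proposition \ref{prop:BGGformula} supplies the numerical identity above. Therefore the hypotheses of Proposition \ref{prop:criterion} are satisfied verbatim, and the stratification of $\gMod{R(\beta)}$ with standard objects $\Delta(\lambda,s)$ follows. I would write the proof as one short paragraph citing these three results in order.

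In this sense, Theorem \ref{thm:stratification} itself has no new obstacle: all of the real work has been front-loaded. The genuinely substantive steps lie upstream — the identification of $\overline{\nabla}(\lambda,s)$ as the proper costandard module via the unmixing/adjunction computations in the proof of Theorem \ref{thm:standard} (which in turn rests on Theorem \ref{thm:ext1} and Proposition \ref{prop:headM2}), and the BGG-type identity of Proposition \ref{prop:BGGformula}, whose proof requires Lemma \ref{lem:dualbasis} (the $\Extform{\Delta(\lambda,s)}{\overline{\nabla}(\mu,t)}=\delta$ computation), the projective resolution of $R_{*,w}$ from Section \ref{sub:projresol}, and the compatibility of the $\Ext$-form with the Lusztig bilinear form (Lemma \ref{lem:bilinearforms}). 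Once these are in place, Theorem \ref{thm:stratification} is a formal consequence.
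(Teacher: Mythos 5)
Your proposal is correct and matches the paper's own proof exactly: the paper deduces Theorem \ref{thm:stratification} in one line by combining Proposition \ref{prop:BGGformula} and Theorem \ref{thm:standard} and invoking the criterion of Proposition \ref{prop:criterion}. Your assessment of where the substantive work lies (upstream, in Theorem \ref{thm:standard} and Proposition \ref{prop:BGGformula}) is also accurate.
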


\begin{proof}
By Proposition \ref{prop:BGGformula} and Theorem \ref{thm:standard}, we can apply the criterion given in Proposition \ref{prop:criterion}. 
\end{proof}

\begin{corollary} \label{cor:generalhigherextvanishing}
 In the setting above, we have 
 \[
 \Ext_{R(\beta)}^k (\Delta(\sigma), L(\sigma')) = 0
 \]
 for any $k \geq 1, \sigma, \sigma' \in \Sigma(\beta)$ with $\rho(\sigma) \not < \rho(\sigma')$. 
\end{corollary}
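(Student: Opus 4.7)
The plan is to deduce this corollary from the foundational fact that, in any stratified category, one has $\Ext^n_H(\Delta(\mu),\overline{\nabla}(\lambda))=0$ for every $n\geq 1$ (Proposition \ref{prop:costandard}(1)). Applying this to our situation, Theorem \ref{thm:stratification} together with Theorem \ref{thm:standard} tells us that $\Ext^k_{R(\beta)}(\Delta(\sigma),\overline{\nabla}(\sigma'))=0$ for all $k\geq 1$ and all $\sigma,\sigma'\in \Sigma(\beta)$. The content of the corollary is that $\overline{\nabla}(\sigma')$ can be replaced by $L(\sigma')$ as long as $\rho(\sigma)\not< \rho(\sigma')$.

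Note first that $\Sigma(\beta)$ is finite (the conditions $\sum_k \lambda_k\beta_k+\beta_s=\beta$ and $\beta_s\in Q_+$ bound both $\lambda$ and $\beta_s$, while for each fixed weight there are only finitely many self-dual simples), hence so is $\rho(\Sigma(\beta))\subset \mathcal{P}^{\preceq}(\beta)$. I proceed by induction on $\rho(\sigma')$ with respect to the partial order $\leq$ restricted to this finite set. If $\rho(\sigma')$ is minimal, then every composition factor $L(\tau)$ of $\overline{\nabla}(\sigma')/L(\sigma')$ satisfies $\rho(\tau)<\rho(\sigma')$, which is impossible, so $\overline{\nabla}(\sigma')=L(\sigma')$ and the claim is immediate.

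For the inductive step, I use the short exact sequence
\[
0\to L(\sigma')\to \overline{\nabla}(\sigma')\to Q\to 0,
\]
where $Q=\overline{\nabla}(\sigma')/L(\sigma')$ has composition factors $L(\tau)$ with $\rho(\tau)<\rho(\sigma')$ (Definition \ref{def:propercostd}(2) / Theorem \ref{thm:standard}). The crucial combinatorial observation is that, under the hypothesis $\rho(\sigma)\not<\rho(\sigma')$, every such $\tau$ also satisfies $\rho(\sigma)\not<\rho(\tau)$: indeed $\rho(\sigma)<\rho(\tau)<\rho(\sigma')$ would contradict our assumption by transitivity. Hence the induction hypothesis applies to each $\tau$, giving $\Ext^j_{R(\beta)}(\Delta(\sigma),L(\tau))=0$ for $j\geq 1$, and by a dévissage on the finite-dimensional module $Q$ we get $\Ext^j_{R(\beta)}(\Delta(\sigma),Q)=0$ for all $j\geq 1$.

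It remains to handle $\Hom_{R(\beta)}(\Delta(\sigma),Q)$, which appears in the long exact sequence when $k=1$. Since $\Delta(\sigma)$ has simple head $L(\sigma)$, any nonzero map $\Delta(\sigma)\to Q$ would force $L(\sigma)$ to be a composition factor of $Q$; but then $\rho(\sigma)<\rho(\sigma')$, contradicting our hypothesis. Thus $\Hom_{R(\beta)}(\Delta(\sigma),Q)=0$. Feeding everything into the long exact sequence
\[
\Ext^{k-1}_{R(\beta)}(\Delta(\sigma),Q)\to \Ext^k_{R(\beta)}(\Delta(\sigma),L(\sigma'))\to \Ext^k_{R(\beta)}(\Delta(\sigma),\overline{\nabla}(\sigma'))
\]
yields the vanishing of the middle term for all $k\geq 1$, completing the induction. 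The argument is purely formal once Theorem \ref{thm:stratification} is in hand; no further analytic obstacle arises, the only subtle point being the transitivity observation that propagates the hypothesis from $\sigma'$ to the composition factors of $Q$.
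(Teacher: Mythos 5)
Your proof is correct, but it runs along a different track from the paper's. The paper's argument is a two-line resolution argument: Theorem \ref{thm:stratification} gives a projective resolution $\cdots \to P^1 \to P^0 = P(\sigma)$ of $\Delta(\sigma)$ in which every $P^k$ with $k\geq 1$ is a direct sum of shifts of $P(\tau)$ with $\rho(\tau)>\rho(\sigma)$ (cover $K(\sigma)$, which is $\Delta$-filtered by such $\Delta(\tau)$, by the corresponding projectives and iterate); since $\Hom(P(\tau),L(\sigma'))\neq 0$ forces $\tau=\sigma'$, the hypothesis $\rho(\sigma)\not<\rho(\sigma')$ kills $\Hom(P^k,L(\sigma'))$ for $k\geq 1$ and hence the $\Ext$ groups. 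You instead work on the costandard side: you take the orthogonality $\Ext^{\geq 1}(\Delta(\sigma),\overline{\nabla}(\sigma'))=0$ from Proposition \ref{prop:costandard}(1) as the input and strip $\overline{\nabla}(\sigma')$ down to its socle $L(\sigma')$ by induction on $\rho(\sigma')$ in the finite poset $\rho(\Sigma(\beta))$. All the steps check out: the composition factors $L(\tau)$ of $Q=\overline{\nabla}(\sigma')/L(\sigma')$ satisfy $\rho(\tau)<\rho(\sigma')$ by Definition \ref{def:propercostd}(2), your transitivity remark correctly propagates the hypothesis $\rho(\sigma)\not<\rho(\tau)$ to them, the d\'evissage on the finite-length module $Q$ is legitimate, and the $\Hom(\Delta(\sigma),Q)=0$ step (needed for $k=1$) is right because the head of $\Delta(\sigma)$ is $L(\sigma)$ by Theorem \ref{thm:standard}. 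The paper's route is shorter and yields the slightly more flexible statement that $\Ext^{\geq 1}(\Delta(\sigma),M)$ vanishes for any $M$ whose composition factors avoid $\{\tau : \rho(\tau)>\rho(\sigma)\}$; your route avoids constructing the resolution and relies only on the formal $\Delta$--$\overline{\nabla}$ orthogonality, which is arguably the more axiomatic argument. Both rest equally on Theorem \ref{thm:stratification}.
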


\begin{proof}
By Theorem \ref{thm:stratification}, we have a projective resolution $\cdots \to P^2 \to P^1 \to P^0$ of $\Delta(\sigma)$ such that 
\begin{enumerate}
  \item $P^0 = P(\sigma)$, and
  \item for $k \geq 1$, $P^k$ is a direct sum of $P(\sigma')$ with $\rho(\sigma') > \rho(\sigma)$. 
\end{enumerate} 
It proves the assertion. 
\end{proof}

\begin{corollary} \label{cor:higherextvanishing}
  Let $v \in W$ and $i \in I$.
  Assume that $vs_i \geq v$. 
  Then, 
  \[ 
  \Ext_{R(v\alpha_i)}^k (\widehat{M}(vs_i \Lambda_i, v\Lambda_i), M(vs_i \Lambda_i, v\Lambda_i)) = 0
  \] 
  for any $k \geq 1$.   
\end{corollary}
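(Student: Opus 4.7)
The plan is to derive this higher $\Ext$-vanishing statement as a direct specialization of Corollary \ref{cor:generalhigherextvanishing}, applied to a carefully chosen Weyl group element $w$ and reduced expression. The key observation is that $\widehat{M}(vs_i\Lambda_i, v\Lambda_i)$ and $M(vs_i\Lambda_i, v\Lambda_i)$ arise as the last standard module $\Delta(\lambda)$ and the corresponding simple module $L(\lambda)$ for an appropriate one-term choice of $\lambda$, so the desired vanishing becomes the self-$\Ext$ vanishing for a standard and its simple head in the stratified category.

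Concretely, I would take $w = vs_i$ (which is of length $\ell(v)+1$ since $vs_i > v$) and pick any reduced expression $v = s_{i_1}\cdots s_{i_{l-1}}$, extended by $i_l = i$, giving a reduced expression of $w$ with $w_{\leq l-1} = v$ and therefore $\beta_l = w_{\leq l-1}\alpha_{i_l} = v\alpha_i$. With this choice of reduced expression, by Proposition~\ref{prop:determsupport} and the construction just after Proposition~\ref{prop:detaffinization}, one has
\[
L(\beta_l) = M(vs_i\Lambda_i, v\Lambda_i), \qquad \widehat{L}(\beta_l) = \widehat{M}(vs_i\Lambda_i, v\Lambda_i).
\]

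Set $\beta = v\alpha_i = \beta_l$ and consider $\sigma = (\lambda, s_0) \in \Sigma(\beta)$ with $\lambda = (0,\ldots,0,1)$. By Definition~\ref{def:standardw} and Definition~\ref{def:standard}, together with $\Delta(s_0) = \mathbf{k}$, we obtain
\[
\Delta(\sigma) = \Delta(s_0)\circ \widehat{L}(\beta_l)^{\circ(1)} = \widehat{M}(vs_i\Lambda_i, v\Lambda_i),
\]
while Theorem~\ref{thm:cuspidalCw} together with $\overline{\Delta}(\lambda, s_0) \simeq L(\beta_l)$ yields
\[
L(\sigma) = \hd \overline{\Delta}(\lambda, s_0) = L(\beta_l) = M(vs_i\Lambda_i, v\Lambda_i).
\]

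Now apply Corollary~\ref{cor:generalhigherextvanishing} with $\sigma' = \sigma$. Since the order $<$ on $\mathcal{P}^{\preceq}(\beta)$ is strict, $\rho(\sigma) \not< \rho(\sigma)$ holds trivially, and the corollary gives
\[
\Ext^k_{R(v\alpha_i)}(\widehat{M}(vs_i\Lambda_i, v\Lambda_i),\, M(vs_i\Lambda_i, v\Lambda_i)) = \Ext^k_{R(\beta)}(\Delta(\sigma), L(\sigma)) = 0
\]
for all $k \geq 1$. There is no substantive obstacle here, because all the work has already been done in establishing Theorem~\ref{thm:stratification}; the only thing to verify is the compatibility of notation, namely that the $\widehat{L}(\beta_l)$ appearing in Definition~\ref{def:standardw} for the chosen reduced expression of $w = vs_i$ literally coincides with $\widehat{M}(vs_i\Lambda_i, v\Lambda_i)$, which is immediate from the construction.
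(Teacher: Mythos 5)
Your proposal is correct and is exactly the intended argument: the paper's proof of this corollary consists of the single sentence that it is a special case of Corollary \ref{cor:generalhigherextvanishing}, and your choice of $w = vs_i$ with a reduced expression ending in $s_i$, the identification $\Delta(\sigma) = \widehat{M}(vs_i\Lambda_i, v\Lambda_i)$, $L(\sigma) = M(vs_i\Lambda_i, v\Lambda_i)$ for $\sigma = ((0,\ldots,0,1), s_0)$, and the observation $\rho(\sigma) \not< \rho(\sigma)$ are precisely the details being suppressed there.
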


\begin{proof}
This is a special case of Corollary \ref{cor:generalhigherextvanishing}
\end{proof}

Next, we consider the truncation from $\gMod{R(\beta)}$ to $\gMod{R_w(\beta)}$. 
Recall the set $\Lambda(\beta)$ that can be identified with $\{ (\lambda,s_0) \in \Sigma(\beta) \}$.
It parametrizes the simple modules of $\gMod{R_{w,*}(\beta)}$. 
Note that it has the following property: for $\sigma, \tau \in \Sigma(\beta)$ with $\rho(\sigma) \leq \rho(\tau)$, $\tau \in \Lambda(\beta)$ implies $\sigma \in \Lambda(\beta)$. 
The map $\rho$ is injective on $\Lambda(\beta)$ and the partial order on $\Lambda(\beta)$ induced from $\leq$ is the bilexicographic, namely the lexicographic order where we read the values from both sides. 

\begin{theorem} \label{thm:affinehighest}
  With respect to the bilexicographic order on $\Lambda(\beta)$, the category $\gMod{R_{w,*}(\beta)}$ is affine highest weight.
  Its standard modules and proper costandard modules are, respectively, $\Delta(\lambda)$ and $\overline{\nabla}(\lambda)$ from Definition \ref{def:standardw}. 
  We also have 
  \[
 \End_{R_{w,*}(\beta)} (\Delta(\lambda)) \simeq \bigotimes_{1 \leq k \leq l} \mathbf{k}[z_{k,1}, \ldots, z_{k,\lambda_k}]^{\Sym_{\lambda_k}}, 
 \]
 where $\deg (z_{k,r}) = (\alpha_{i_k}, \alpha_{i_k})$. 
\end{theorem}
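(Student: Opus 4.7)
The first part, that $\gMod{R_{w,*}(\beta)}$ is stratified with standard modules $\Delta(\lambda)$ and proper costandard modules $\overline{\nabla}(\lambda)$, will follow from Theorem \ref{thm:stratification} by a routine truncation argument. The paragraph preceding the theorem observes that $\Lambda(\beta)$ is an order ideal in $\Sigma(\beta)$ with respect to $\rho$: if $\rho(\sigma) \leq \rho(\tau)$ and $\tau \in \Lambda(\beta)$ then $\sigma \in \Lambda(\beta)$. By Theorem \ref{thm:cuspidalCw}(5), $\gMod{R_{w,*}(\beta)}$ is precisely the Serre subcategory of $\gMod{R(\beta)}$ with labels $\Lambda(\beta)$, so the general theory of stratified categories (cf.\ \cite[Lemma 6.4]{MR3335289}) produces the projective cover of $L(\lambda)$ in $\gMod{R_{w,*}(\beta)}$ as the largest quotient of $P(\lambda, s_0)$ supported on $\Lambda(\beta)$, inheriting a $\Delta$-filtration by $\Delta(\mu) = \Delta(\mu, s_0)$ for $\mu \geq \lambda$ in the bilexicographic order. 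The proper costandard modules $\overline{\nabla}(\lambda, s_0) = \overline{\nabla}(\lambda)$ already lie in $\gMod{R_{w,*}(\beta)}$ by Theorem \ref{thm:cuspidalCw}(5).

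For the affine highest weight property I must compute $\End_{R_{w,*}(\beta)}(\Delta(\lambda)) = \End_{R(\beta)}(\Delta(\lambda))$ and establish freeness of $\Delta(\lambda)$ over it. By Proposition \ref{prop:determsupport} and Lemma \ref{lem:cuspidalseq}, $(L(\beta_l), \ldots, L(\beta_1))$ is unmixing; since $\W$ and $\W^*$ are preserved on passing to affinizations and their divided powers (as in the argument of Proposition \ref{prop:unmixingren}(1)), the tuple $(\widehat{L}(\beta_l)^{\circ(\lambda_l)}, \ldots, \widehat{L}(\beta_1)^{\circ(\lambda_1)})$ is unmixing as well. The induction-restriction adjunction combined with Lemma \ref{lem:unmixing}(1) then yields $\End(\Delta(\lambda)) \simeq \bigotimes_k \End(\widehat{L}(\beta_k)^{\circ(\lambda_k)})$. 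Writing $\widehat{L}(\beta_k)^{\circ(\lambda_k)} \simeq \widehat{L}(\beta_k)^{\circ \lambda_k} \cdot e$ for the primitive idempotent $e$ of $R(\lambda_k \alpha_{i_k})$ defining $P(i_k^{\lambda_k})$, Proposition \ref{prop:endoring} gives $\End(\widehat{L}(\beta_k)^{\circ(\lambda_k)}) \simeq e R(\lambda_k \alpha_{i_k}) e \simeq \mathbf{k}[z_{k,1}, \ldots, z_{k,\lambda_k}]^{\Sym_{\lambda_k}}$, which is a standard corner-ring identification in the nil-Hecke algebra, with the generators carrying the stated degree $(\alpha_{i_k}, \alpha_{i_k})$.

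For the freeness, each $\widehat{L}(\beta_k)$ is free of finite rank over $\mathbf{k}[z_k]$ by Definition \ref{def:affinization}, so $\widehat{L}(\beta_k)^{\circ \lambda_k}$ is free of finite rank over $\mathbf{k}[z_{k,1}, \ldots, z_{k,\lambda_k}]$, and hence (since the polynomial ring is free of rank $\lambda_k!$ over its symmetric subring) free of finite rank over the $\Sym_{\lambda_k}$-invariants. The divided power $\widehat{L}(\beta_k)^{\circ(\lambda_k)}$ is a direct summand of this as a right module over the invariant ring, hence projective, and by graded Nakayama over a graded connected polynomial algebra it is free of finite rank. Since $R(\beta+\gamma) e(\beta, \gamma)$ is free of finite rank as a right $R(\beta) \otimes R(\gamma)$-module, convolution preserves freeness of finite rank over tensor products of endomorphism rings, and iteration yields the freeness of $\Delta(\lambda)$ over $\bigotimes_k \mathbf{k}[z_{k,1}, \ldots, z_{k,\lambda_k}]^{\Sym_{\lambda_k}}$. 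The main bookkeeping obstacle will be checking that the individual right actions on $\Delta(\lambda)$ assemble correctly into the claimed tensor product action; this relies on the unmixing property through Lemma \ref{lem:unmixing}(1), which forces the restriction of $\Delta(\lambda)$ to $R(\lambda_l \beta_l) \otimes \cdots \otimes R(\lambda_1 \beta_1)$ to equal, up to a degree shift, the tensor product $\bigotimes_k \widehat{L}(\beta_k)^{\circ(\lambda_k)}$.
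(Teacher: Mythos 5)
Your proposal is correct and follows essentially the same route as the paper: truncation from Theorem \ref{thm:stratification} for the stratification, the unmixing of the tuple of divided-power affinizations together with the induction-restriction adjunction and Proposition \ref{prop:endoring} for the endomorphism ring, and the chain of freeness arguments (free over $\mathbf{k}[z_{k,1},\ldots,z_{k,\lambda_k}]$, hence over the $\Sym_{\lambda_k}$-invariants, hence the divided-power summand is free) for the last condition. You spell out a few steps the paper leaves implicit (the corner-ring identification and the fact that $\W$, $\W^*$ are unchanged under affinization, which is needed to apply Lemma \ref{lem:cuspidalseq} to the hatted modules), but the argument is the same.
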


\begin{proof}
  By a truncation argument (\cite[Proposition 5.16]{MR3335289}), we deduce from Theorem \ref{thm:standard} and Theorem \ref{thm:stratification} that $\gMod{R_{w,*}(\beta)}$ is stratified with standard modules $\Delta(\lambda)$ and proper costandard modules $\overline{\nabla}(\lambda)$. 

  We need to determine $\End_{R_{w,*}(\beta)} (\Delta(\lambda))$. 
  Since $(\widehat{L}(\beta_l)^{\circ (\lambda_l)}, \ldots, \widehat{L}(\beta_1)^{\circ (\lambda_1)})$ is unmixing by Lemma \ref{lem:cuspidalseq}, Lemma \ref{lem:unmixing} shows 
  \[ 
  \Res_{\lambda_l\beta_l, \ldots, \lambda_1 \beta_1}\Delta(\lambda) = \widehat{L}(\beta_l)^{\circ (\lambda_l)} \otimes \cdots \otimes \widehat{L}(\beta_1)^{\circ (\lambda_1)}.
  \] 
  Hence, we can compute the endomorphism ring by the induction-restriction adjunction and Proposition \ref{prop:endoring}. 
  
  Since $\widehat{L}(\beta_k)$ is free of finite rank over $\mathbf{k}[z]$, 
  $\widehat{L}(\beta_k)^{\circ \lambda_k}$ is free of finite rank over $\mathbf{k}[z_1, \ldots, z_{\lambda_k}]$.  
  Hence, $\widehat{L}(\beta_k)^{\circ \lambda_k}$ is free of finite rank over $\mathbf{k}[z_1, \ldots, z_{\lambda_k}]^{\Sym_{\lambda_k}}$, so is $\widehat{L}(\beta_k)^{\circ (\lambda_k)}$. 
  It follows that $\Delta(\lambda)$ is free of finite rank over $\End_{R_{w,*}(\beta)} (\Delta(\lambda))$. 

  Therefore, the three requirements in the definition of affine highest weight categories (Definition \ref{def:stratified}) are fulfilled.  
\end{proof}

\begin{remark}
  Our results reproduce the affine highest weight structure for finite-type quiver Hecke algebras \cite{MR3205728}. 
  In fact, our standard modules are the same as theirs, 
  since $\widehat{L}(\beta_k) = \Delta(\beta_k)$ is characterized as a projective cover of $L(\beta_k)$ in the full subcategory 
  \[
  \{M \in \gMod{R(\beta_k)} \mid \text{all the composition factors of $M$ are $L(\beta_k)$} \}.
  \] 
  Here, we used the fact that $(\beta_k)$ is the least element of $\Sigma(\beta_k)$.
\end{remark}

\begin{corollary} \label{cor:gldim}
The global dimension of $\gMod{R_{w,*}(\beta)}$ is finite. 
\end{corollary}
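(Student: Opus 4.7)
The plan is to derive the finite global dimension from the affine highest weight structure established in Theorem \ref{thm:affinehighest}, via a two-step induction: first showing that each standard module $\Delta(\lambda)$ has finite projective dimension in $\gMod{R_{w,*}(\beta)}$, and then that each simple module $L(\lambda)$ does. Since $\Lambda(\beta)$ is a finite set, taking the supremum will yield $\gldim \gMod{R_{w,*}(\beta)} < \infty$.

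First I would bound $\prdim_{R_{w,*}(\beta)} \Delta(\lambda)$ by descending induction on the bilexicographic order on $\Lambda(\beta)$, which is a partial order on a finite set. If $\lambda$ is maximal, then the definition of standard modules forces $K(\lambda) = 0$, so $\Delta(\lambda) = P(\lambda)$ is projective. For non-maximal $\lambda$, Theorem \ref{thm:affinehighest} (via Theorem \ref{thm:stratification} and the truncation argument) gives a short exact sequence
\[
0 \to K(\lambda) \to P(\lambda) \to \Delta(\lambda) \to 0,
\]
where $K(\lambda)$ carries a finite $\Delta$-filtration whose successive quotients are of the form $q^d \Delta(\mu)$ with $\mu > \lambda$. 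By the induction hypothesis each such $\Delta(\mu)$ has finite projective dimension, hence so do $K(\lambda)$ and $\Delta(\lambda)$.

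Next I would pass from standard modules to simples. By Theorem \ref{thm:affinehighest} the endomorphism ring $B_\lambda = \End_{R_{w,*}(\beta)}(\Delta(\lambda)) \simeq \bigotimes_k \mathbf{k}[z_{k,1},\ldots,z_{k,\lambda_k}]^{\Sym_{\lambda_k}}$ is a graded polynomial algebra in finitely many variables (using the classical fact that $\mathbf{k}[z_1,\ldots,z_n]^{\Sym_n}$ is itself polynomial in the elementary symmetric functions), and $\Delta(\lambda)$ is a free right $B_\lambda$-module of finite rank. The graded residue field $\mathbf{k}$ of $B_\lambda$ admits a finite Koszul resolution $K^\bullet \to \mathbf{k}$ by free $B_\lambda$-modules. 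Tensoring with $\Delta(\lambda)$ over $B_\lambda$ preserves exactness (by freeness), producing a finite resolution $\Delta(\lambda) \otimes_{B_\lambda} K^\bullet \to \Delta(\lambda) \otimes_{B_\lambda} \mathbf{k}$ whose terms are finite direct sums of degree shifts of $\Delta(\lambda)$. One must then identify $\Delta(\lambda) \otimes_{B_\lambda} \mathbf{k}$ with $L(\lambda)$: this follows because $L(\lambda)$ is the unique simple quotient of $\Delta(\lambda)$ and the homogeneous maximal ideal of $B_\lambda$ acts by endomorphisms of positive degree, so its image in $\Delta(\lambda)$ lies in the radical; a dimension count using the freeness of $\Delta(\lambda)$ over $B_\lambda$ and the graded Nakayama lemma shows the quotient is exactly $L(\lambda)$. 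Combined with the previous step, $\prdim_{R_{w,*}(\beta)} L(\lambda)$ is finite for every $\lambda \in \Lambda(\beta)$.

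Finally, Corollary \ref{cor:projlim2} (the analogue of which applies to $R_{w,*}(\beta)$ since it is Noetherian and Laurentian) allows us to bound the projective dimension of an arbitrary finitely generated module by the supremum of those of its composition factors. Since $\Lambda(\beta)$ is finite, this supremum is finite, and hence $\gldim \gMod{R_{w,*}(\beta)} < \infty$. I expect the only subtle point to be the identification $\Delta(\lambda) \otimes_{B_\lambda} \mathbf{k} \simeq L(\lambda)$ together with the verification that the Koszul resolution of $\mathbf{k}$ over $B_\lambda$ remains a resolution after tensoring, both of which are standard in affine highest weight theory but deserve explicit care in the graded Laurentian setting.
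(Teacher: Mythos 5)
Your overall architecture (finite projective dimension of standards by descending induction on the order, then of simples, then Corollary \ref{cor:projlim2}) is sound and is essentially a self-contained reproof of the result the paper simply cites, namely \cite[Corollary 5.25]{MR3335289} applied to Theorem \ref{thm:affinehighest}. However, there is a genuine gap in the second step: the identification $\Delta(\lambda)\otimes_{B_\lambda}\mathbf{k}\simeq L(\lambda)$ is false in general. The quotient $\Delta(\lambda)/\Delta(\lambda)\mathfrak{m}_{B_\lambda}$ is the \emph{proper standard module} $\overline{\Delta}(\lambda)$, which has simple head $L(\lambda)$ but is usually not simple. Concretely, for $\lambda$ with $\lambda_j=\lambda_k=1$ for two indices $j<k$ and all other entries zero, $\Delta(\lambda)=\widehat{L}(\beta_k)\circ\widehat{L}(\beta_j)$, $B_\lambda=\mathbf{k}[z_j,z_k]$, and $\Delta(\lambda)\otimes_{B_\lambda}\mathbf{k}\simeq L(\beta_k)\circ L(\beta_j)=\overline{\Delta}(\lambda)$, whose composition factors include $L(\mu)$ with $\rho(\mu)<\rho(\lambda)$ by Theorem \ref{thm:cuspidalCw} (3),(4) (e.g.\ in type $A_2$, $L(2)\circ L(1)$ is two-dimensional with composition factors $L((1,0,1))$ and $L((0,1,0))$). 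The graded Nakayama lemma only tells you the quotient is nonzero and generates; freeness tells you its dimension is the rank of $\Delta(\lambda)$ over $B_\lambda$, which exceeds $\dim L(\lambda)$ whenever $\overline{\Delta}(\lambda)$ is not simple.

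The repair is standard and keeps your Koszul argument intact: the finite resolution by shifts of $\Delta(\lambda)$ shows that $\overline{\Delta}(\lambda)=\Delta(\lambda)\otimes_{B_\lambda}\mathbf{k}$ has finite projective dimension. You then need a \emph{second} induction, this time ascending in the partial order, using the short exact sequence
\[
0\to \operatorname{rad}\overline{\Delta}(\lambda)\to\overline{\Delta}(\lambda)\to L(\lambda)\to 0,
\]
where all composition factors of $\operatorname{rad}\overline{\Delta}(\lambda)$ are of the form $L(\mu)$ with $\rho(\mu)<\rho(\lambda)$ by Theorem \ref{thm:cuspidalCw}; by the inductive hypothesis and Corollary \ref{cor:projlim2} the radical has finite projective dimension, hence so does $L(\lambda)$. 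With that inserted, your final appeal to Corollary \ref{cor:projlim2} and the finiteness of $\Lambda(\beta)$ goes through. (Minor point: in your first step, $K(\lambda)=0$ for maximal $\lambda$ follows from condition (1) of the stratification, not from the definition of $K(\lambda)$ alone, since $\rho(\mu)\not\leq\rho(\lambda)$ also allows incomparable $\mu$.)
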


\begin{proof}
  It follows from Theorem \ref{thm:affinehighest} and \cite[Corollary 5.25]{MR3335289}. 
\end{proof}

The following proposition is proved by the same argument as in \cite[Theorem A]{MR3705237}. 

\begin{proposition}
  For different $\lambda, \mu \in \Lambda(\beta)$, we have 
  \[
  \Hom_{R(\beta)} (\Delta(\lambda), \Delta(\mu)) = 0. 
  \]
\end{proposition}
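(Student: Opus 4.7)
The strategy follows Kleshchev's approach in \cite[Theorem A]{MR3705237}, exploiting the affine highest weight structure already established in Theorem \ref{thm:affinehighest}. Suppose for contradiction that $f \colon \Delta(\lambda) \to \Delta(\mu)$ is a nonzero homomorphism with $\lambda \neq \mu$ in $\Lambda(\beta)$. Since $\Delta(\lambda)$ has simple head $L(\lambda)$, the image $\Image f$ also has head $L(\lambda)$, so $L(\lambda)$ is a composition factor of $\Delta(\mu)$. Composing $f$ with the canonical chain $\Delta(\mu) \twoheadrightarrow L(\mu) \hookrightarrow \overline{\nabla}(\mu)$ and invoking Proposition \ref{prop:costandard}(1), which yields $\qdim \Hom(\Delta(\lambda), \overline{\nabla}(\mu)) = \delta_{\lambda,\mu}$, the composition must vanish when $\lambda \neq \mu$. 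Hence $\Image f \subseteq \rad \Delta(\mu)$.

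The crux is a restriction computation. Viewing $\Delta(\mu)$ as a coinduction (up to a degree shift) of $\widehat{L}(\beta_1)^{\circ (\mu_1)} \otimes \cdots \otimes \widehat{L}(\beta_l)^{\circ (\mu_l)}$, the restriction-coinduction adjunction gives
\[
\Hom_{R(\beta)}(\Delta(\lambda), \Delta(\mu)) \simeq q^{?} \Hom\bigl(\Res_{\mu_1\beta_1, \ldots, \mu_l\beta_l} \Delta(\lambda),\ \widehat{L}(\beta_1)^{\circ (\mu_1)} \otimes \cdots \otimes \widehat{L}(\beta_l)^{\circ (\mu_l)}\bigr).
\]
Apply the Mackey filtration to $\Delta(\lambda) = \widehat{L}(\beta_l)^{\circ (\lambda_l)} \circ \cdots \circ \widehat{L}(\beta_1)^{\circ (\lambda_1)}$. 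Since each $\widehat{L}(\beta_k)^{\circ (\lambda_k)}$ has weight support in $\mathbb{Z}_{\geq 0} \beta_k$ by Proposition \ref{prop:projresol}(1), the nonzero Mackey sections are parametrized by nonnegative integer matrices $(d_{j,l'})$ with row sums $\sum_{l'} d_{j,l'} = \lambda_j$ and column-sum weight conditions $\sum_{j} d_{j,l'} \beta_j = \mu_{l'} \beta_{l'}$. The diagonal matrix $d_{j,l'} = \mu_{l'}\delta_{j,l'}$ would force $\lambda = \mu$, so it suffices to show that off-diagonal sections contribute nothing to the target.

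The main obstacle is establishing this vanishing in generality: for infinite-type root systems the $\beta_k$ can be linearly dependent (e.g.\ $\beta_1 + \beta_3 = \beta_2$ in type $A_2$ with $w = w_0$), so the column-sum equations can admit off-diagonal integer solutions. To rule out nonzero homomorphisms from such off-diagonal sections to the target, one exploits the cyclotomic relations defining $\widehat{L}(\beta_k) = \widehat{M}(w_{\leq k}\Lambda_{i_k}, w_{\leq k-1}\Lambda_{i_k})$ via the algebra $R_z^{\Lambda_{i_k}}$ (see Proposition \ref{prop:cyclotomic}): on the relevant idempotent-weight pieces, the polynomial generators $x_k$ act by specific powers of the affinization parameters, and these actions are incompatible with any attempted homomorphism into $\widehat{L}(\beta_{l'})^{\circ (\mu_{l'})}$ for $l' \neq j$. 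R-matrix rigidity results such as Theorem \ref{thm:rmatrix} and Proposition \ref{prop:commuting} provide the tools to translate this into a rigorous contradiction, following the argument of \cite[Theorem A]{MR3705237} adapted to the general symmetrizable Kac-Moody setting.
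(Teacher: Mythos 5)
The paper does not actually supply an argument here: it disposes of the proposition with the single sentence that it is ``proved by the same argument as in \cite[Theorem A]{MR3705237}'', and your proposal ends the same way, so on the decisive point you and the paper are both deferring to that reference. The material you add on the way, however, contains a concrete error. The claim that the nonzero Mackey sections of $\Res_{\mu_1\beta_1,\ldots,\mu_l\beta_l}\Delta(\lambda)$ are parametrized by integer matrices $(d_{j,l'})$ whose pieces are the multiples $d_{j,l'}\beta_j$ is false: the restriction of $\widehat{L}(\beta_j)$ to a composition can be nonzero on pieces that are not multiples of $\beta_j$. Take type $A_2$ with $w=w_0=s_1s_2s_1$, so $\beta_1=\alpha_1$, $\beta_2=\alpha_1+\alpha_2$, $\beta_3=\alpha_2$, and take $\lambda=(0,1,0)$, $\mu=(1,0,1)$. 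Then $\Res_{\mu_1\beta_1,\mu_2\beta_2,\mu_3\beta_3}\Delta(\lambda)=e(1,2)\widehat{L}(\alpha_1+\alpha_2)\neq 0$, whereas your parametrization would require $d_{2,1}(\alpha_1+\alpha_2)=\alpha_1$ and would therefore (wrongly) predict that this restriction vanishes. So the dichotomy ``the diagonal section forces $\lambda=\mu$, and off-diagonal sections contribute nothing'' cannot be settled by weight combinatorics. In this example the Hom space does vanish, but only because $e(1,2)\widehat{L}(\alpha_1+\alpha_2)\cong\mathbf{k}[x_1,x_2]/(ax_1+bx_2)$ is a torsion $\mathbf{k}[x_1,x_2]$-module while the target $\widehat{L}(\beta_1)^{\circ(1)}\otimes\widehat{L}(\beta_3)^{\circ(1)}=R(\alpha_1)\otimes R(\alpha_2)\cong\mathbf{k}[x_1,x_2]$ is free.

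That torsion-versus-freeness phenomenon is the entire content of the proof, and it is exactly the step your third paragraph leaves unproved: asserting that the actions ``are incompatible with any attempted homomorphism'' and that R-matrix rigidity ``provides the tools to translate this into a rigorous contradiction, following the argument of \cite[Theorem A]{MR3705237}'' is a statement of intent, not an argument. To close the gap one must show, for every Mackey layer of $\Res_{\mu_1\beta_1,\ldots,\mu_l\beta_l}\Delta(\lambda)$ that survives the support constraints when $\lambda\neq\mu$, that the layer is annihilated by a nonzero element of the polynomial algebra over which the corresponding tensor factor $\widehat{L}(\beta_{l'})^{\circ(\mu_{l'})}$ of the target is free (for instance by tracking how the central elements $\mathfrak{p}_{i,\beta}$, equivalently powers of the affinization parameters, act on that layer), and then run an induction along the Mackey filtration. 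Finally, the conclusion of your first paragraph, $\Image f\subseteq\rad\Delta(\mu)$, is never used and can be deleted.
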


\subsection{Quiver Hecke algebra and quantum unipotent subgroups} \label{sub:quantumunipotent}

Let $w \in W$. 
We define subalgebras of $\quantum{-}$: 
\begin{align*}
\quantum{-}^{w,*} &= \quantum{-} \cap T_w(\quantum{\geq 0}), \\ \index{$\quantum{-}^{w,*}$}
\quantum{-}^{*,w} &= \quantum{-} \cap T_w \quantum{-},  \index{$\quantum{-}^{*,w}$}
\end{align*}
where $\quantum{\geq 0}$ is a $\mathbb{Q}(q)$-subalgebra of $\quantum{}$ generated by $\{q^h \mid h \in P^{\lor} \} \cup \{ e_i \mid i \in I\}$. 
We also define 
\begin{align*}
\quantum{-}_{\mathbb{Z}[q,q^{-1}]}^{w,*} &= \quantum{-}^{w,*} \cap \quantum{-}_{\mathbb{Z}[q,q^{-1}]}, \\ \index{$\quantum{-}_{\mathbb{Z}[q,q^{-1}]}^{w,*}$}
\quantum{-}_{\mathbb{Z}[q,q^{-1}]}^{*,w} &= \quantum{-}^{*,w} \cap \quantum{-}_{\mathbb{Z}[q,q^{-1}]}. \index{$\quantum{-}_{\mathbb{Z}[q,q^{-1}]}^{*,w}$}
\end{align*}
We identify  $\qcoordinate$ with $\quantum{-}$, and $\qcoordinate_{\mathbb{Z}[q,q^{-1}]}$ with the subspace $\quantum{-}_{\mathbb{Z}[q,q^{-1}]}^{\text{up}}$ of $\quantum{-}$.  
We define
\begin{align*}
\qcoordinate_{\mathbb{Z}[q,q^{-1}]}^{w,*} &= \quantum{-}^{w,*} \cap A_q(\mathfrak{n})_{\mathbb{Z}[q,q^{-1}]}, \\ \index{$\qcoordinate_{\mathbb{Z}[q,q^{-1}]}^{w,*}$}
\qcoordinate_{\mathbb{Z}[q,q^{-1}]}^{*,w} &= \quantum{-}^{*,w} \cap A_q(\mathfrak{n})_{\mathbb{Z}[q,q^{-1}]}. \\ \index{$\qcoordinate_{\mathbb{Z}[q,q^{-1}]}^{*,w}$}
\end{align*}

First, we consider $\quantum{-}^{w,*}$. 

\begin{proposition} \label{prop:subspaceUw}
Choose a reduced expression $w = s_{i_1} \cdots s_{i_l}$ to introduce PBW basis $\{f_{\lambda}\}$ and dual PBW basis $\{f_{\lambda}^{\mathrm{up}}\}$ as in Definition \ref{def:rootvec}. 
Then, we have 
\begin{enumerate}
\item $\quantum{-}^{w,*} = \bigoplus_{\lambda \in \mathbb{Z}_{\geq 0}^l} \mathbb{Q}(q) f_{\lambda}, \ \quantum{-}_{\mathbb{Z}[q,q^{-1}]}^{w,*} = \bigoplus_{\lambda \in \mathbb{Z}_{\geq 0}^l} \mathbb{Z}[q,q^{-1}] f_{\lambda}$, \\
\item $A_q(\mathfrak{n})_{\mathbb{Z}[q,q^{-1}]}^{w,*} = \bigoplus_{\lambda \in \mathbb{Z}_{\geq 0}^l} \mathbb{Z}[q,q^{-1}] f_{\lambda}^{\mathrm{up}}$. 
\end{enumerate}
\end{proposition}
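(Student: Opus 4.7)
The statement is essentially a classical description of PBW bases for quantum unipotent subgroups, going back to Lusztig \cite[Chapters 40, 41]{MR2759715}, with integral refinements in the work of Beck, Kimura \cite{MR3090232} and others. The plan is to prove (1) first at the $\mathbb{Q}(q)$-linear level, upgrade it to the $\mathbb{Z}[q,q^{-1}]$-integral level, and then deduce (2) by duality with respect to the form $(\cdot,\cdot)$.

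For the $\mathbb{Q}(q)$-linear part of (1), I would verify that each $f_{\beta_k}$ lies in $\quantum{-}^{w,*}$. Membership in $\quantum{-}$ follows because $s_{i_1}\cdots s_{i_k}$ is reduced, so the standard compatibility of the braid group action gives $T_{i_1}\cdots T_{i_{k-1}}(f_{i_k})\in \quantum{-}$. To see that $f_{\beta_k}\in T_w\quantum{\geq 0}$, compute
\[
T_w^{-1}(f_{\beta_k}) = T_{i_l}^{-1}\cdots T_{i_k}^{-1}(f_{i_k});
\]
since $T_{i_k}^{-1}(f_{i_k}) = -q_{i_k}^{h_{i_k}}e_{i_k}\in \quantum{\geq 0}$ and the subsequent $T_{i_j}^{-1}$ ($j>k$) preserve $\quantum{\geq 0}$ on the relevant weight spaces because the intermediate roots $s_{i_{k+1}}\cdots s_{i_{j-1}}\alpha_{i_k}$ stay positive, the inclusion follows. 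The $\mathbb{Q}(q)$-linear independence and spanning of $\{f_\lambda\}$ in $\quantum{-}^{w,*}$ are then \cite[Proposition 40.2.1]{MR2759715}. For the integral statement in (1), I would induct on $l=\ell(w)$: the operator $T_{i_1}$ and multiplication by divided powers $f_{i_1}^{(m)}$ induce a $\mathbb{Z}[q,q^{-1}]$-linear isomorphism relating $\quantum{-}_{\mathbb{Z}[q,q^{-1}]}^{w,*}$ to $\bigoplus_{m\geq 0}\mathbb{Z}[q,q^{-1}]f_{i_1}^{(m)}\otimes T_{i_1}\bigl(\quantum{-}_{\mathbb{Z}[q,q^{-1}]}^{s_{i_1}w,*}\bigr)$, reducing to the inductive hypothesis. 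Alternatively, one may invoke Kimura \cite[Theorem 4.27]{MR3090232} directly.

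For (2), I would use the non-degeneracy of $(\cdot,\cdot)$ and its compatibility with the twisted coproduct $r$. The key intermediate statement is the orthogonality
\[
(f_\lambda, f_\mu^{\mathrm{up}}) = \delta_{\lambda, \mu},
\]
which I would prove by induction on the bilexicographic order of multi-indices, combining Lemma \ref{lem:rootvec}, the multiplicative formula $(xy, z) = (x \otimes y, r(z))$, and the Levendorskii-Soibelman quasi-commutation relations among $\{f_{\beta_k}\}$ which ensure that $r(f_\mu^{\mathrm{up}})$ expands in tensors of strictly earlier root vectors. Once this orthogonality is in hand, part (2) follows from part (1) by duality, since $\qcoordinate_{\mathbb{Z}[q,q^{-1}]}^{w,*}$ is, by definition, the intersection of $\quantum{-}^{w,*}$ with the dual lattice $\quantum{-}_{\mathbb{Z}[q,q^{-1}]}^{\mathrm{up}}$ of $\quantum{-}_{\mathbb{Z}[q,q^{-1}]}$. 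The main obstacle is making the orthogonality statement fully rigorous in arbitrary symmetrizable Kac-Moody type: weight considerations alone do not suffice when $\lambda\neq \mu$ but $\sum \lambda_k\beta_k=\sum \mu_k\beta_k$, so the inductive step genuinely requires the Levendorskii-Soibelman relations with integral coefficients, which is the delicate bookkeeping.
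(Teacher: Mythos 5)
The paper does not prove this proposition from scratch: part (1) is cited to \cite{MR1712630} and \cite[Theorem 2.18]{MR3582403}, and part (2) to \cite[Theorems 4.25, 4.29]{MR2914878}. Your reconstruction of (1) is essentially the standard Lusztig argument (membership of $f_{\beta_k}$ in $\quantum{-}\cap T_w\quantum{\geq 0}$, linear independence from \cite[40.2.1]{MR2759715}, integrality by induction on $\ell(w)$ or by citing Kimura), and I have no objection there beyond the looseness of "the subsequent $T_{i_j}^{-1}$ preserve $\quantum{\geq 0}$ on the relevant weight spaces," which is fixable by the usual reduced-word argument.

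The deduction of (2) from (1) "by duality," however, has a genuine gap. Orthonormality $(f_\lambda, f_\mu^{\mathrm{up}})=\delta_{\lambda,\mu}$ together with $f_\lambda\in\quantum{-}_{\mathbb{Z}[q,q^{-1}]}$ gives only the inclusion $\qcoordinate_{\mathbb{Z}[q,q^{-1}]}^{w,*}\subseteq\bigoplus_\lambda\mathbb{Z}[q,q^{-1}]f_\lambda^{\mathrm{up}}$: if $x\in\quantum{-}^{w,*}\cap\quantum{-}_{\mathbb{Z}[q,q^{-1}]}^{\mathrm{up}}$ then its coefficients $(f_\lambda,x)$ are integral. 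For the reverse inclusion you must show $f_\lambda^{\mathrm{up}}\in\quantum{-}_{\mathbb{Z}[q,q^{-1}]}^{\mathrm{up}}$, i.e.\ that $f_\lambda^{\mathrm{up}}$ pairs into $\mathbb{Z}[q,q^{-1}]$ with \emph{all} of $\quantum{-}_{\mathbb{Z}[q,q^{-1}]}$ — a lattice spanned by monomials in the divided powers $f_i^{(n)}$, not by the $f_\mu$. Pairing against these monomials amounts to controlling the divided powers of $e_i'$ applied to $f_\lambda^{\mathrm{up}}$, and this is precisely the hard content of Kimura's Theorems 4.25/4.29 (compatibility of the dual PBW basis with the upper global basis, or equivalently with the integral dual form); it does not follow from the orthogonality you propose to establish. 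The alternative fix — splitting $\quantum{-}_{\mathbb{Z}[q,q^{-1}]}$ as $\quantum{-}_{\mathbb{Z}[q,q^{-1}]}^{w,*}$ plus an integral complement orthogonal to $\quantum{-}^{w,*}$ — is essentially the integral triangular decomposition of Corollary \ref{cor:tensordecomp} and the orthogonality characterization of Theorem \ref{thm:categorifyU*w}, both of which are proved in this paper \emph{after}, and using, the present proposition, so invoking them here would be circular. You have also slightly mislocated the difficulty: the orthonormality itself is not the delicate point (it follows from $(f_{\beta_k},f_{\beta_k}^{\mathrm{up}})=1$ and the multiplicativity of $\Lusform{\cdot}{\cdot}$ along the PBW factorization, \cite[38.2.3]{MR2759715}); the integrality of the dual vectors against the whole lattice is.
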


\begin{proof}
  (1) is \cite[Proposition 2.3]{MR1712630}, \cite[Theorem 2.18]{MR3582403}. 
  (2) follows from \cite[Theorem 4.25, Theorem 4.29]{MR2914878}. 
\end{proof}

Since the global dimension of $\gMod{R_{w,*}(\beta)}$ is finite for each $\beta \in Q_+$ by Corollary \ref{cor:gldim}, we have an isomorphism of $\mathbb{Z}[q,q^{-1}]$-modules
\[
K_{\oplus}(\gproj{R_{w,*}}) \simeq K(\gMod{R_{w,*}}). 
\]
While $\gproj{R_{w,*}}$ is not closed under convolution products, $\gMod{R_{w,*}}$ is closed under convolution products and $K(\gMod{R_{w,*}})$ has an algebra structure. 
Through the above isomorphism, $K_{\oplus} (\gproj{R_{w,*}})$ inherits an algebra structure. 

\begin{theorem} \label{thm:categorifyUw*}
The isomorphism $\Psi_2 \colon K(\gmod{R}) \xrightarrow{\sim} A_q(\mathfrak{n})_{\mathbb{Z}[q,q^{-1}]}$ restricts to an isomorphism of algebras
\[
K(\gmod{R_{w,*}}) \xrightarrow{\sim} \qcoordinate_{\mathbb{Z}[q,q^{-1}]}^{w,*}. 
\]
The isomorphism $\Psi_1 \colon K(\fpdgMod{R}) \to \quantum{-}_{\mathbb{Z}[q,q^{-1}]}$ restricts to an isomorphism of algebras
\[
K_{\oplus}(\gproj{R_{w,*}}) \simeq K(\gMod{R_{w,*}}) \to \quantum{-}_{\mathbb{Z}[q,q^{-1}]}^{w,*}. 
\]
Moreover, for $L \in \gmod{R_{w,*}(\beta)}$ and $P \in \gproj{R_{w,*}(\beta)}$, we have 
\[
\qdim \Hom_{R_{w,*}(\beta)} (P,L) = (\overline{\Psi_1(P)}, \Psi_2(L)). 
\]
\end{theorem}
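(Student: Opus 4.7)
The plan is to deduce all three assertions from the affine highest weight structure of $\gMod{R_{w,*}(\beta)}$ (Theorem~\ref{thm:affinehighest}) combined with the PBW identifications $\Psi_1([\Delta(\lambda)]) = f_\lambda$ and $\Psi_2([\overline{\Delta}(\lambda)]) = f_\lambda^{\mathrm{up}}$ from Corollary~\ref{cor:PBWmodule}. The strategy is that the standards $\Delta(\lambda)$ and the simples $L(\lambda)$ (respectively the modules $\overline{\Delta}(\lambda)$) supply $\mathbb{Z}[q,q^{-1}]$-bases of $K(\gMod{R_{w,*}})$ and $K(\gmod{R_{w,*}})$ that get matched under $\Psi_1$ and $\Psi_2$ with the PBW and dual PBW bases of $\quantum{-}_{\mathbb{Z}[q,q^{-1}]}^{w,*}$ and $\qcoordinate_{\mathbb{Z}[q,q^{-1}]}^{w,*}$ described in Proposition~\ref{prop:subspaceUw}.

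For the first isomorphism I would first observe, using Theorem~\ref{thm:cuspidalCw}(5), that the self-dual simples of $\gmod{R_{w,*}}$ are exactly $\{L(\lambda) \mid \lambda \in \bigsqcup_{\beta} \Lambda(\beta)\}$, so $\{[L(\lambda)]\}$ is a $\mathbb{Z}[q,q^{-1}]$-basis of $K(\gmod{R_{w,*}})$. The expansion of $[\overline{\Delta}(\lambda)]$ in terms of $\{[L(\mu)]\}$ is unitriangular by Theorem~\ref{thm:cuspidalCw}(3),(4), so $\{[\overline{\Delta}(\lambda)]\}$ is again a $\mathbb{Z}[q,q^{-1}]$-basis. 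Since $\Psi_2$ sends it to $\{f_\lambda^{\mathrm{up}}\}$, which is a basis of $\qcoordinate_{\mathbb{Z}[q,q^{-1}]}^{w,*}$ by Proposition~\ref{prop:subspaceUw}(2), the restriction of $\Psi_2$ realizes the required identification; multiplicativity follows because $\gmod{R_{w,*}}$ is closed under convolution. For the second isomorphism I would identify $K_\oplus(\gproj{R_{w,*}}) \simeq K(\gMod{R_{w,*}})$ by projective resolutions (which terminate thanks to Corollary~\ref{cor:gldim}), then use Proposition~\ref{prop:prdim} to get a ring homomorphism $K(\gMod{R_{w,*}}) \to K(\fpdgMod{R})$ (monoidal since $\gMod{R_{w,*}}$ is closed under convolution), and apply the same dévissage: by Theorem~\ref{thm:affinehighest} the classes $\{[\Delta(\lambda)]\}$ form a $\mathbb{Z}[q,q^{-1}]$-basis of $K(\gMod{R_{w,*}})$, $\Psi_1$ sends them to $\{f_\lambda\}$, and the latter is a basis of $\quantum{-}_{\mathbb{Z}[q,q^{-1}]}^{w,*}$ by Proposition~\ref{prop:subspaceUw}(1).

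For the bilinear form identity, Lemma~\ref{lem:bilinearforms} applied in $\gMod{R}$ (legitimate because $P$ has finite projective dimension by Proposition~\ref{prop:prdim}) already yields
\[
\Extform{P}{L}_R = (\overline{\Psi_1([P])}, \Psi_2([L])),
\]
so the remaining task is to show $\Extform{P}{L}_R = \qdim \Hom_{R_{w,*}(\beta)}(P,L)$. I would expand $[P] = \sum_\lambda (P:\Delta(\lambda))_q [\Delta(\lambda)]$ via the finite $\Delta$-filtration of $P$ provided by Theorem~\ref{thm:affinehighest}, and $[L] = \sum_\mu c_\mu [\overline{\nabla}(\mu)]$ via the unitriangular change of basis in $K(\gmod{R_{w,*}})$. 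Then Lemma~\ref{lem:dualbasis} (giving $\Extform{\Delta(\lambda)}{\overline{\nabla}(\mu)} = \delta_{\lambda,\mu}$) together with the parallel relation $\qdim \Hom_{R_{w,*}}(\Delta(\lambda), \overline{\nabla}(\mu)) = \delta_{\lambda,\mu}$ from Proposition~\ref{prop:costandard}(1) reduces both sides to the same pairing $\sum_\lambda (P:\Delta(\lambda))_q c_\lambda$ of the expansion coefficients. The step I expect to require the most care is verifying compatibility with the ring structures: since $\gproj{R_{w,*}}$ is not closed under convolution, the multiplication on $K_\oplus(\gproj{R_{w,*}})$ must be transported through its identification with $K(\gMod{R_{w,*}})$, and one must check that the composite $K(\gMod{R_{w,*}}) \to K(\fpdgMod{R}) \xrightarrow{\Psi_1} \quantum{-}_{\mathbb{Z}[q,q^{-1}]}$ is a ring map landing in $\quantum{-}_{\mathbb{Z}[q,q^{-1}]}^{w,*}$—a bookkeeping matter, but the only nontrivial compatibility to track in the argument.
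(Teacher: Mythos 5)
Your argument is correct. The first two assertions are handled exactly as in the paper: identify $K(\gmod{R_{w,*}})$ and $K(\gMod{R_{w,*}})$ with the free $\mathbb{Z}[q,q^{-1}]$-modules on $\{[\overline{\Delta}(\lambda)]\}$ and $\{[\Delta(\lambda)]\}$ via unitriangularity, and match these with the dual PBW and PBW bases of Proposition \ref{prop:subspaceUw} using Corollary \ref{cor:PBWmodule}. Where you diverge is the bilinear form identity. The paper gets it in one line by observing that $\Lambda(\beta)$ is an order ideal in $\Sigma(\beta)$, so the truncation theory of stratified categories (via Theorem \ref{thm:stratification}) gives the vanishing $\Ext_{R(\beta)}^k(P,L)=0$ for all $k\geq 1$; hence $\Extform{P}{L}=\qdim\Hom_{R(\beta)}(P,L)=\qdim\Hom_{R_{w,*}(\beta)}(P,L)$ on the nose, and Lemma \ref{lem:bilinearforms} finishes. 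You instead prove only the equality of Euler characteristics: expanding $[P]$ in the $\Delta(\lambda)$ and $[L]$ in the $\overline{\nabla}(\mu)$, the orthogonality $\Extform{\Delta(\lambda)}{\overline{\nabla}(\mu)}=\delta_{\lambda\mu}$ from Lemma \ref{lem:dualbasis} and the parallel orthogonality inside $\gMod{R_{w,*}(\beta)}$ from Proposition \ref{prop:costandard}(1) reduce both sides to $\sum_\lambda \overline{(P:\Delta(\lambda))_q}\,c_\lambda$. This is a valid alternative (one must keep track of the bar twist in the first variable, which you implicitly do correctly, and note that additivity of $\qdim\Hom_{R_{w,*}}(\cdot,\overline{\nabla}(\mu))$ along the $\Delta$-filtration of $P$ is justified by the $\Ext^1$-vanishing in Proposition \ref{prop:costandard}(1)). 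The trade-off: the paper's route yields the stronger statement that the higher $\Ext$ groups over $R(\beta)$ actually vanish, while yours avoids invoking the truncation compatibility between the stratifications of $\gMod{R(\beta)}$ and $\gMod{R_{w,*}(\beta)}$ and works purely inside the affine highest weight structure of the subcategory.
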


\begin{proof}
We take a reduced expression $w = s_{i_1} \cdots s_{i_l}$, and use the notation in Section \ref{sub:mainresults}. 
We prove the theorem using the stratification on $\gMod{R_{w,*}}$ given in Theorem \ref{thm:affinehighest}. 
The first assertion follows from Proposition \ref{prop:subspaceUw} (2) and 
\[
K(\gmod{R_{w,*}}) = \bigoplus_{\lambda \in \mathbb{Z}_{\geq 0}^l} \mathbb{Z}[q,q^{-1}][\overline{\Delta}(\lambda)], \ \Psi_2 ([\overline{\Delta}(\lambda)]) = f_{\lambda}^* \ \text{(Corollary \ref{cor:PBWmodule})}. 
\]
The second assertion follows from Proposition \ref{prop:subspaceUw} (1) and 
\[
K(\gMod{R_{w,*}}) = \bigoplus_{\lambda \in \mathbb{Z}_{\geq 0}^l} \mathbb{Z}[q,q^{-1}][\Delta(\lambda)], \ \Psi_1([\Delta(\lambda)]) = f_{\lambda} \ \text{(Corollary \ref{cor:PBWmodule})}.  
\]
To prove the last equality, recall the discussion before Theorem \ref{thm:affinehighest}. 
By Theorem \ref{thm:stratification}, it implies $\Ext_R^k (P, L) = 0$ for $k \geq 1$. 
Combined with Lemma \ref{lem:bilinearforms}, it yields the assertion.  
\end{proof}

Next, we consider $\quantum{-}^{*,w}$. 

\begin{theorem} \label{thm:categorifyU*w}
Let $w \in W$ and choose a reduced expression $w = s_{i_1} \cdots s_{i_l}$, which specifies the root vectors $f_{\beta} \ (\beta \in \Phi_+ \cap w \Phi_-)$. 
Then, the following four subspaces of $\qcoordinate_{\mathbb{Z}[q,q^{-1}]}$ coincide: 
\begin{enumerate}
 \item $\qcoordinate_{\mathbb{Z}[q,q^{-1}]}^{*,w}$,  
 \item $\Psi_2(K(\gmod{R_{*,w}}))$, 
 \item $\{ x \in \qcoordinate_{\mathbb{Z}[q,q^{-1}]} \mid \text{for any $\beta \in \Phi_+ \cap w \Phi_-$ and $\nu \in I^{\beta}$, ${e'}_{\nu_1}^* \cdots {e'}_{\nu_{\height \beta}}^* x = 0$} \}$, and 
 \item $\{ x \in \qcoordinate_{\mathbb{Z}[q,q^{-1}]} \mid \text{for any $\beta \in \Phi_+ \cap w \Phi_-$, $(\quantum{-} f_{\beta}, x) = 0$} \}$. 
\end{enumerate}

Similarly, the following four subspaces of $\quantum{-}$ coincide: 
\begin{enumerate}
  \item $\quantum{-}^{*,w}$,  
  \item $\{ x \in \quantum{-} \mid \text{for any $\beta \in \Phi_+ \cap w \Phi_-$ and $\nu \in I^{\beta}$, ${e'}_{\nu_1}^* \cdots {e'}_{\nu_{\height \beta}}^* x = 0$} \}$, and 
  \item $\{ x \in \quantum{-} \mid \text{for any $\beta \in \Phi_+ \cap w \Phi_-$, $(\quantum{-} f_{\beta}, x) = 0$} \}$. 
 \end{enumerate}
\end{theorem}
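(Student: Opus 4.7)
The theorem asserts two parallel sets of equivalences: four subspaces in $\qcoordinate_{\mathbb{Z}[q,q^{-1}]}$ (the upper version) and three in $\quantum{-}$ (the lower version). My plan is to establish the upper equivalences $(2) \Leftrightarrow (3) \Leftrightarrow (4) \Leftrightarrow (1)$, then deduce the lower ones by extending scalars to $\mathbb{Q}(q)$, since the defining conditions are all $\mathbb{Q}(q)$-linear and the upper subspaces are integral forms of the lower.

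For $(2) \Leftrightarrow (3)$ in the upper version, I will iterate the intertwining $\Psi_2 \circ E_i^* = {e'_i}^* \circ \Psi_2$ from Theorem \ref{thm:categorification}(3): for $x = \Psi_2([M])$, condition $(3)$ translates to $e(\ast,\nu) M = 0$ for every ordering $\nu \in I^\beta$ with $\beta \in \Phi_+ \cap w\Phi_-$, hence to $e(\ast,\beta) M = 0$ for such $\beta$. The implication ``$M \in \gmod{R_{\ast,w}}$ implies $(3)$'' is immediate since $\Phi_+ \cap w\Phi_- \subset Q_+ \setminus w Q_+$. For the converse, I would reduce to a simple composition factor $L(\lambda,s)$ and argue that if $\lambda \neq 0$, then for the smallest index $k_0$ with $\lambda_{k_0} > 0$ the restriction formula in Theorem \ref{thm:cuspidalCw}(1) together with $L(\lambda_{k_0}\beta_{k_0}) \simeq L(\beta_{k_0})^{\circ \lambda_{k_0}}$ (Proposition \ref{prop:realcuspidal}) forces $e(\ast,\beta_{k_0}) L \neq 0$, contradicting the hypothesis. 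Since $\gmod{R_{\ast,w}}$ is a Serre subcategory, the conclusion extends to arbitrary $M$.

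For $(3) \Leftrightarrow (4)$ in both versions, the $\ast$-invariance of $(\cdot,\cdot)$ combined with $(f_i y, z) = (y, e'_i z)$ yields the dual adjunction $(y f_i, z) = (y, {e'_i}^* z)$; iterating gives $(y f_{\nu_1} \cdots f_{\nu_n}, x) = (y, {e'_{\nu_1}}^* \cdots {e'_{\nu_n}}^* x)$. Since the monomials $f_{\nu_1}\cdots f_{\nu_n}$ (for $\nu \in I^\beta$) span $\quantum{-}_{-\beta}$, condition $(3)$ reads $x \perp \sum_\beta \quantum{-} \cdot \quantum{-}_{-\beta}$ and $(4)$ reads $x \perp \sum_\beta \quantum{-} \cdot f_\beta$. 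The inclusion $(3) \subseteq (4)$ is then trivial, while the reverse requires the left-ideal identity
\[
\sum_{\beta \in \Phi_+ \cap w\Phi_-} \quantum{-} \cdot f_\beta = \sum_{\beta \in \Phi_+ \cap w\Phi_-} \quantum{-} \cdot \quantum{-}_{-\beta},
\]
which reduces, weight-space by weight-space, to $\quantum{-}_{-\gamma} = \sum \quantum{-}_{-\gamma+\beta'} \cdot f_{\beta'}$ (summed over $\beta' \in \Phi_+ \cap w\Phi_-$ with $\beta' \leq \gamma$) for $\gamma \in Q_+ \setminus wQ_+$. I expect to prove the latter by induction on $\height \gamma$: any such $\gamma$ admits a decomposition $\gamma = \beta + \gamma'$ with $\beta \in \Phi_+ \cap w\Phi_-$ and $\gamma' \in Q_+$, and Lusztig's braid-type commutation relations together with the inductive hypothesis allow any monomial of weight $-\gamma$ to be rewritten in the form $y f_{\beta'} + (\text{lower terms})$.

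Finally, $(1) \Leftrightarrow$ the rest follows by comparing PBW-type bases. Proposition \ref{prop:subspaceUw} gives the decomposition $\quantum{-}^{w,*} = \bigoplus_\lambda \mathbb{Q}(q) f_\lambda$; its orthogonal complement in $\quantum{-}$ under $(\cdot,\cdot)$ is precisely the subspace described by $(3)$ or $(4)$ and coincides with $\quantum{-}^{*,w}$ via Lusztig's triangular decomposition and the braid action. Intersecting with $\qcoordinate_{\mathbb{Z}[q,q^{-1}]}$ then yields the integral upper version, while taking $\mathbb{Q}(q)$-spans transfers the upper equivalences to the lower setting. The principal obstacle in the plan is the left-ideal identity of the preceding paragraph: although both sides are manifestly left $\quantum{-}$-submodules generated in weights $-\beta$ for $\beta \in \Phi_+ \cap w\Phi_-$, equating them requires a delicate \emph{straightening} argument that invokes the PBW basis of $\quantum{-}^{w,*}$ together with the quantum Serre relations, and is where the combinatorics of $\Phi_+ \cap w\Phi_-$ must be matched against the algebra structure of $\quantum{-}$.
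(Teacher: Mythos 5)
Your skeleton (chaining $(2)\Leftrightarrow(3)\Leftrightarrow(4)$ and then attaching $(1)$) matches the paper's, and your sketch of $(2)\Leftrightarrow(3)$ via the cuspidal decomposition is plausible (the paper simply cites \cite[Theorem 2.20]{MR3771147} for that step). But the two steps that carry the real content are not actually proved. For $(3)\Leftrightarrow(4)$ you reduce everything to the left-ideal identity $\sum_{\beta}\quantum{-}f_{\beta}=\sum_{\beta}\quantum{-}\quantum{-}_{-\beta}$ and then defer its proof to an unspecified ``straightening argument'' that you yourself flag as the principal obstacle; as stated, the induction on $\height\gamma$ is not carried out, and the most natural way to justify the identity uses the tensor decomposition $\quantum{-}\simeq\quantum{-}^{*,w}\otimes\quantum{-}^{w,*}$, which in this paper is Corollary \ref{cor:tensordecomp} and is \emph{deduced from} the theorem you are proving --- so you would either need to import it from \cite{MR3582403} explicitly or find another route, and you do neither. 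The paper avoids this entirely by transporting both conditions $(3)$ and $(4)$ through $\Psi_2$ into $K(\gmod{R})$, expanding $x$ in the basis $\{[\overline{\Delta}(\lambda,s)]\}$, and using the explicit formula for $[\Res_{\beta-\beta_k,\beta_k}\overline{\Delta}(\lambda,s)]$ together with the adjunction $\Extform{P\circ\widehat{M}(\beta_k)}{x}=\Extform{P\otimes\widehat{M}(\beta_k)}{\Res_{\beta-\beta_k,\beta_k}x}$ to show that each condition is equivalent to $\Res_{\beta-\beta_k,\beta_k}x=0$ for all $k$; no straightening is needed.

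The identification of $(1)$ with the other spaces is likewise asserted rather than proved: saying the orthogonal complement ``coincides with $\quantum{-}^{*,w}$ via Lusztig's triangular decomposition and the braid action'' is exactly the statement $(1)\Leftrightarrow(4)$, not an argument for it (and the orthogonal complement of all of $\bigoplus_{\lambda}\mathbb{Q}(q)f_{\lambda}$ is not the right object --- condition $(4)$ is orthogonality to the left ideals $\quantum{-}f_{\beta_k}$, not to the PBW span). The paper's actual argument here is an induction on $\ell(w)$: it uses $\quantum{-}^{*,w}=T_i\quantum{-}^{*,s_iw}\cap\quantum{-}$ from \cite{MR3582403} together with the invariance $\Lusform{T_i^{-1}x}{T_i^{-1}y}=\Lusform{x}{y}$ on the relevant subspaces to move the orthogonality condition across one simple reflection at a time. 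You would need to supply this induction (or an equivalent) for the step to stand.
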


\begin{proof}
We prove the former assertion. 
The latter assertion is proved in the same manner. 

By \cite[Theorem 2.20]{MR3771147}, the second and the third spaces are the same. 

We prove that the third and the forth spaces are the same using Theorem \ref{thm:categorification}. 
Through the isomorphism $\Psi_2 \colon K(\gmod{R}) \simeq \qcoordinate_{\mathbb{Z}[q,q^{-1}]}$, 
the third space coincide with 
\[
\bigoplus_{\beta \in Q_+} \{ x\in K(\gmod{R(\beta)}) \mid \text{for any $1\leq k\leq l$ and $\nu \in I^{\beta_k}$, ${E'}_{\nu_1}^* \cdots {E'}_{\nu_{\height \beta_k}}^* x = 0$} \},  
\]
while the forth space coincide with 
\[
\bigoplus_{\beta \in Q_+} \{ x \in K(\gmod{R(\beta)}) \mid \text{for any $1 \leq k \leq l$ and $P \in \gproj{R(\beta-\beta_k)}$, $\Extform{P \circ \widehat{M}(\beta_k)}{x} =0$ } \}. 
\]
Let $x \in K(\gmod{R(\beta)})$ and expand it in $[\overline{\Delta}(\lambda,s)]$ (Theorem \ref{thm:cuspidalCw}): 
\[
x = \sum_{(\lambda,s) \in \Sigma(\beta)} a_{(\lambda,s)} [\overline{\Delta}(\lambda,s)]. 
\]
By the induction-restriction adjunction, for $1 \leq k \leq l$ and $P \in \gproj{R(\beta-\beta_k)}$, we have $\Extform{P \circ \widehat{M}(\beta_k)}{x} = \Extform{P \otimes \widehat{M}(\beta_k)}{\Res_{\beta-\beta_k,\beta_k}x}$. 
By Theorem \ref{thm:cuspidalCw}, for $(\lambda,s) \in \Sigma(\beta)$, we have 
\[
[\Res_{\beta-\beta_k,\beta_k}\overline{\Delta}(\lambda,s)] = \begin{cases}
  0 & \text{if $\lambda_1 = \cdots = \lambda_k = 0$}, \\
  [\lambda_k]_{i_k} [\overline{\Delta}(0, \ldots, 0, \lambda_k-1, \lambda_{k+1}, \ldots, \lambda_l, s) \otimes L(\beta_k)] & \text{if $\lambda_1 = \cdots = \lambda_{k-1} = 0$}.
\end{cases}
\]
Hence, $x \in K(\gmod{R(\beta)})$ belongs to the forth space if and only if $\Res_{\beta-\beta_k, \beta_k}x = 0$ for all $1 \leq k\leq l$, that is, $x$ belongs to the third one. 

Finally, we prove that the first one coincide with others by an induction on $l$. 
If $l=0$, it is trivial. 
If $l=1$, it is \cite[Proposition 38.1.6]{MR2759715}. 
Assume $l \geq 2$. 
Put $i = i_1$. 
We prove that the first space is contained in the forth one. 
Let $x \in \qcoordinate_{\mathbb{Z}[q,q^{-1}]}^{*,w}$. 
By \cite[Proposition 3.4, Lemma 3.6]{MR3582403}, we have $\quantum{-}^{*,w} = T_i \quantum{-}^{*, s_i w} \cap \quantum{-}$.
In particular, $x \in \qcoordinate_{\mathbb{Z}[q,q^{-1}]}^{*,s_i}$, so the induction hypothesis shows $(\quantum{-} f_i, x) = 0$. 
On the other hand, for any $2 \leq k \leq l$ and $y \in \quantum{-}$, we compute up to a nonzero scalar multiple
\begin{align*}
(y f_{\beta_k}, x) &\equiv (T_i^{-1}(y) T_i^{-1}(f_{\beta_k}), T_i^{-1}(x)) \quad \text{by \cite[Proposition 38.2.1]{MR2759715}}\\
 &= (T_i^{-1}(y) T_{s_iw}(f_{i_k}), T_i^{-1}(x)) \\
 &= 0 \quad \text{by applying the induction hypothesis to $T_i^{-1}(x) \in \qcoordinate_{\mathbb{Z}[q,q^{-1}]}^{*,s_iw}$}. 
\end{align*}
Hence, $x$ belongs to the forth space.
The opposite inclusion is proved similarly, which completes the induction step. 
\end{proof}

The following corollary is one of the main theorems of \cite{MR3582403}. 
We give an alternative proof in terms of the categorification. 

\begin{corollary} \label{cor:tensordecomp}
 The multiplication induces isomorphisms
 \begin{align*}
 \quantum{-}^{*,w} \otimes_{\mathbb{Q}(q)} \quantum{-}^{w,*} &\to \quantum{-}, \\
 \quantum{-}_{\mathbb{Z}[q,q^{-1}]}^{*,w} \otimes_{\mathbb{Z}[q,q^{-1}]} \quantum{-}_{\mathbb{Z}[q,q^{-1}]}^{w,*} &\to \quantum{-}_{\mathbb{Z}[q,q^{-1}]}, \\
 \qcoordinate_{\mathbb{Z}[q,q^{-1}]}^{*,w} \otimes_{\mathbb{Z}[q,q^{-1}]} \qcoordinate_{\mathbb{Z}[q,q^{-1}]}^{w,*} &\to \qcoordinate_{\mathbb{Z}[q,q^{-1}]}. 
 \end{align*}
\end{corollary}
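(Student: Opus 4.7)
The approach will be to deduce the corollary from the stratification of $\gMod{R(\beta)}$ (Theorem~\ref{thm:stratification}) together with the categorifications $\Psi_1$ and $\Psi_2$. The crucial feature is that the standard modules of Definition~\ref{def:standard} factor as $\Delta(\lambda,s) = \Delta(s) \circ \Delta(\lambda)$, with $\Delta(s) \in \gproj{R_{*,w}(\beta_s)}$ and $\Delta(\lambda) \in \gMod{R_{w,*}(\sum_k \lambda_k\beta_k)}$, and analogously the proper costandards factor as $\overline{\Delta}(\lambda,s) = L(s) \circ \overline{\Delta}(\lambda)$. Since $\Psi_1$ and $\Psi_2$ send convolution to multiplication, these factorizations in the module categories become factorizations in $\quantum{-}$.

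For the first two isomorphisms, Theorem~\ref{thm:stratification} supplies the $\mathbb{Z}[q,q^{-1}]$-basis $\{[\Delta(\lambda,s)] \mid (\lambda,s) \in \Sigma(\beta)\}$ of $K_\oplus(\gproj{R(\beta)})$; under $\Psi_1$ this becomes $\{\Psi_1[\Delta(s)] \cdot f_\lambda\}$, with $f_\lambda = \Psi_1[\Delta(\lambda)]$ by Corollary~\ref{cor:PBWmodule}, and $\{f_\lambda\}$ is already a $\mathbb{Z}[q,q^{-1}]$-basis of $\quantum{-}^{w,*}_{\mathbb{Z}[q,q^{-1}]}$ by Proposition~\ref{prop:subspaceUw}(1). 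Surjectivity of the multiplication map thus reduces to the inclusion $\Psi_1[\Delta(s)] \in \quantum{-}^{*,w}_{\mathbb{Z}[q,q^{-1}]}$ for each $s \in S$. Injectivity will follow by a dimension count over $\mathbb{Q}(q)$: Theorem~\ref{thm:categorifyU*w} gives $\dim_{\mathbb{Q}(q)} \quantum{-}^{*,w}_{-\eta} = |\{s \in S : \beta_s = \eta\}|$ and Proposition~\ref{prop:subspaceUw} gives $\dim_{\mathbb{Q}(q)} \quantum{-}^{w,*}_{-\gamma} = |\Lambda(\gamma)|$, whose sum over $\eta+\gamma = \beta$ equals $|\Sigma(\beta)| = \dim_{\mathbb{Q}(q)} \quantum{-}_{-\beta}$. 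The rational bijectivity forces integral bijectivity because the source of the multiplication map is torsion-free over $\mathbb{Z}[q,q^{-1}]$.

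The main obstacle is verifying $\Psi_1[\Delta(s)] \in \quantum{-}^{*,w}_{\mathbb{Z}[q,q^{-1}]}$, since the categorification of $\quantum{-}^{*,w}$ from Theorem~\ref{thm:categorifyU*w} is phrased on the $\Psi_2$ side rather than the $\Psi_1$ side. I will bridge this via the scalar rescaling $\theta$: first, $\Delta(s) \in \fpdgMod{R(\beta_s)}$ because $\Delta(s)$ is projective over $R_{*,w}(\beta_s)$ and $\prdim_{R(\beta_s)} R_{*,w}(\beta_s) < \infty$ by Proposition~\ref{prop:prdimR*w}; Lemma~\ref{lem:character} then gives $\Psi_1[\Delta(s)] = \theta \Psi[\Delta(s)]$; the composition factors of $\Delta(s)$ all lie in $\gmod{R_{*,w}}$, so $\Psi[\Delta(s)]$ is a $\mathbb{Z}((q))$-linear combination of classes $\Psi_2[L(s')]$, each in $\qcoordinate^{*,w}_{\mathbb{Z}[q,q^{-1}]} \subset \quantum{-}^{*,w}$ by Theorem~\ref{thm:categorifyU*w}; and $\theta$ acts on $\quantum{-}_{-\beta_s}$ as multiplication by the scalar $c(\beta_s)$, hence preserves the $\mathbb{Q}(q)$-subspace $\quantum{-}^{*,w}$. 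Consequently $\Psi_1[\Delta(s)]$ lies in $\quantum{-}^{*,w}_{\mathbb{Q}((q))} \cap \quantum{-}_{\mathbb{Z}[q,q^{-1}]} = \quantum{-}^{*,w}_{\mathbb{Z}[q,q^{-1}]}$. For the third ($\qcoordinate$) isomorphism, the same template on the $\Psi_2$ side applies with standard modules replaced by proper costandards $\overline{\Delta}(\lambda,s)$ (which give a $\mathbb{Z}[q,q^{-1}]$-basis of $K(\gmod{R(\beta)})$ by the unitriangularity of Theorem~\ref{thm:cuspidalCw}(4)); here $\Psi_2[L(s)] \in \qcoordinate^{*,w}_{\mathbb{Z}[q,q^{-1}]}$ holds directly by Theorem~\ref{thm:categorifyU*w} and $f^{\mathrm{up}}_\lambda \in \qcoordinate^{w,*}_{\mathbb{Z}[q,q^{-1}]}$ by Proposition~\ref{prop:subspaceUw}(2), so no $\theta$-rescaling is needed and the same surjectivity-plus-dimension-count argument concludes.
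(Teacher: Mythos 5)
Your proposal is correct and takes essentially the same route as the paper: both rest on the factorizations $\Delta(\lambda,s)=\Delta(s)\circ\Delta(\lambda)$ and $\overline{\Delta}(\lambda,s)=L(s)\circ\overline{\Delta}(\lambda)$ together with the bases supplied by Theorem \ref{thm:stratification} and Theorem \ref{thm:cuspidalCw}, and the key step $\Psi_1([\Delta(s)])\in\quantum{-}_{\mathbb{Z}[q,q^{-1}]}^{*,w}$ is obtained exactly as in the paper from Proposition \ref{prop:prdimR*w}, Lemma \ref{lem:character} and Theorem \ref{thm:categorifyU*w}. The only (cosmetic) divergence is that you get injectivity of the integral map by a $\mathbb{Q}(q)$-dimension count, whereas the paper deduces it from the first isomorphism, which it establishes on the $\Psi_2$ side.
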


\begin{proof}
We use Theorem \ref{thm:stratification}, \ref{thm:categorifyUw*} and \ref{thm:categorifyU*w}. 
The first and the third isomorphisms are deduced from
\[
\overline{\Delta}(\lambda,s) = L(s) \circ \overline{\Delta}(\lambda). 
\]

It remains to prove the second isomorphism. 
By the first isomorphism, it is injective. 
Let $s \in S$. 
Since $\Delta(s)$ is an projective $R_{*,w}(\beta_s)$-module, it has finite projective dimension over $R(\beta_s)$ by Proposition \ref{prop:prdimR*w}. 
By considering the composition factors of $\Delta(s)$, we deduce from Lemma \ref{lem:character} and Theorem \ref{thm:categorifyU*w} that $\Psi_1([\Delta(s)]) \in \quantum{-}_{\mathbb{Z}[q,q^{-1}]}^{*,w}$. 
Hence the surjectivity follows from 
\[
\Delta(\lambda,s) = \Delta(s) \circ \Delta(\lambda). 
\]
\end{proof}

\begin{corollary}
  Let $w \in W$ and choose a reduced expression $w = s_{i_1} \cdots s_{i_l}$, which specifies the root vectors $f_{\beta} \ (\beta \in \Phi_+ \cap w \Phi_-)$. 
  Then, the following four subspaces of $\quantum{-}_{\mathbb{Z}[q,q^{-1}]}$ coincide: 
  \begin{enumerate}
   \item $\quantum{-}_{\mathbb{Z}[q,q^{-1}]}^{*,w}$,  
   \item $\Psi_1(K_{\oplus}(\gproj{R_{*,w}}))$, 
   \item $\{ x \in \quantum{-}_{\mathbb{Z}[q,q^{-1}]} \mid \text{for any $\beta \in \Phi_+ \cap w \Phi_-$ and $\nu \in I^{\beta}$, ${e'}_{\nu_1}^* \cdots {e'}_{\nu_{\height \beta}}^* x = 0$} \}$, and 
   \item $\{ x \in \quantum{-}_{\mathbb{Z}[q,q^{-1}]} \mid \text{for any $\beta \in \Phi_+ \cap w \Phi_-$, $(\quantum{-} f_{\beta}, x) = 0$} \}$. 
  \end{enumerate}
\end{corollary}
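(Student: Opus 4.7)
The equivalence of subspaces (1), (3), and (4) is immediate: applying the second half of Theorem \ref{thm:categorifyU*w} to $\quantum{-}$ and intersecting each of the three subspaces there with $\quantum{-}_{\mathbb{Z}[q,q^{-1}]}$ yields the three subspaces (1), (3), (4) of the corollary. So the substantive content is to identify (1) with (2), i.e. $\Psi_1(K_{\oplus}(\gproj{R_{*,w}})) = \quantum{-}_{\mathbb{Z}[q,q^{-1}]}^{*,w}$.

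For the inclusion $(2) \subseteq (1)$, note that the indecomposable objects of $\gproj{R_{*,w}}$ are the projective covers $\Delta(s)$ of the self-dual simples $L(s) \ (s \in S)$, so $K_{\oplus}(\gproj{R_{*,w}})$ is free on $\{[\Delta(s)]\}_{s \in S}$ over $\mathbb{Z}[q,q^{-1}]$. By Proposition \ref{prop:prdimR*w}, $\Delta(s)$ has finite projective dimension over $R(\beta_s)$, so $\Psi_1([\Delta(s)])$ is defined, and Lemma \ref{lem:character} gives $\Psi_1([\Delta(s)]) = \theta \Psi([\Delta(s)])$. Every composition factor $L$ of $\Delta(s)$ lies in $\gmod{R_{*,w}}$, hence $\Psi_2([L]) \in \qcoordinate_{\mathbb{Z}[q,q^{-1}]}^{*,w}$ by Theorem \ref{thm:categorifyU*w}; since the $\qcoordinate^{*,w}$-condition is a condition on each (finite-dimensional) weight space, $\Psi([\Delta(s)])$ lies in $\qcoordinate^{*,w}\otimes \mathbb{Q}((q))$, and the same holds after applying $\theta$. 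Combined with $\Psi_1([\Delta(s)]) \in \quantum{-}_{\mathbb{Z}[q,q^{-1}]}$, this forces $\Psi_1([\Delta(s)]) \in \quantum{-}^{*,w} \cap \quantum{-}_{\mathbb{Z}[q,q^{-1}]} = \quantum{-}_{\mathbb{Z}[q,q^{-1}]}^{*,w}$.

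For the reverse inclusion $(1) \subseteq (2)$, I will use the stratification established in Theorem \ref{thm:stratification} together with the tensor decomposition of Corollary \ref{cor:tensordecomp}. The stratification implies that $\{[\Delta(\lambda,s)] \mid (\lambda,s) \in \Sigma(\beta)\}$ is a $\mathbb{Z}[q,q^{-1}]$-basis of $K(\fpdgMod{R(\beta)})$ (the change-of-basis matrix to the projective basis $\{[P(\sigma)]\}$ being unitriangular with respect to $\rho$). Under $\Psi_1$ this maps to a $\mathbb{Z}[q,q^{-1}]$-basis of $\quantum{-}_{\mathbb{Z}[q,q^{-1}], -\beta}$, and by Definition \ref{def:standard} together with Corollary \ref{cor:PBWmodule},
\[
\Psi_1([\Delta(\lambda,s)]) = \Psi_1([\Delta(s)]) \cdot \Psi_1([\Delta(\lambda)]) = \Psi_1([\Delta(s)]) \cdot f_\lambda.
\]
Since $\{f_\lambda \mid \lambda \in \mathbb{Z}_{\geq 0}^l\}$ is a $\mathbb{Z}[q,q^{-1}]$-basis of $\quantum{-}_{\mathbb{Z}[q,q^{-1}]}^{w,*}$ by Proposition \ref{prop:subspaceUw}, and the multiplication $\quantum{-}_{\mathbb{Z}[q,q^{-1}]}^{*,w} \otimes \quantum{-}_{\mathbb{Z}[q,q^{-1}]}^{w,*} \to \quantum{-}_{\mathbb{Z}[q,q^{-1}]}$ is an isomorphism by Corollary \ref{cor:tensordecomp}, it follows that $\{\Psi_1([\Delta(s)]) \mid s \in S\}$ is a $\mathbb{Z}[q,q^{-1}]$-basis of $\quantum{-}_{\mathbb{Z}[q,q^{-1}]}^{*,w}$. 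This gives both inclusions and completes the proof.

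The only delicate point is the passage in $(2) \subseteq (1)$ from information about $\Psi([\Delta(s)])$, which a priori lives in the $\mathbb{Z}((q))$-completion, to the statement that the integral class $\Psi_1([\Delta(s)])$ lies in $\quantum{-}_{\mathbb{Z}[q,q^{-1}]}^{*,w}$; but since the $*,w$-condition is linear and each weight component of $\Psi_1([\Delta(s)])$ is a finite $\mathbb{Z}[q,q^{-1}]$-linear combination landing in $\quantum{-}^{*,w} \otimes \mathbb{Q}((q)) \cap \quantum{-}_{\mathbb{Z}[q,q^{-1}]}$, this is a formal check.
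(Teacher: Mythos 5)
Your proposal is correct and follows essentially the same route as the paper: the coincidence of (1), (3), (4) is read off from Theorem \ref{thm:categorifyU*w} by intersecting with the integral form, and the identification of (2) with (1) comes down to the fact that $\{\Psi_1([\Delta(s)])\mid s\in S\}$ is a $\mathbb{Z}[q,q^{-1}]$-basis of $\quantum{-}_{\mathbb{Z}[q,q^{-1}]}^{*,w}$, which is exactly what is extracted from (the proof of) Corollary \ref{cor:tensordecomp}. The paper's proof is just a terser version of yours, deferring the inclusion $\Psi_1([\Delta(s)])\in\quantum{-}_{\mathbb{Z}[q,q^{-1}]}^{*,w}$ and the surjectivity argument to the proof of Corollary \ref{cor:tensordecomp}, where they appear verbatim as the steps you spell out.
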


\begin{proof}
The coincidence of the first, third and fourth spaces is obtained by Theorem \ref{thm:categorifyU*w}. 
By Corollary \ref{cor:tensordecomp}, $\{ \Psi_1([\Delta(s)]) \mid s \in S\}$ is a $\mathbb{Z}[q,q^{-1}]$-basis of $\quantum{-}_{\mathbb{Z}[q,q^{-1}]}^{w,*}$. 
The corollary is proved. 
\end{proof}

\section{Affine Lie types} \label{sec:affinePBW}

Let $C = (c_{i,j})_{i,j\in I}$ be a generalized Cartan matrix of affine type $X_N^{(r)}$ and take a root datum $(C,P,\Pi,\Pi^{\lor},(\cdot,\cdot))$. 
Let $0 \in I$ be the exceptional Dynkin node as in \cite{MR2066942} and put $\mathring{I} = I \setminus \{ 0\}$. \index{$\mathring{I}$}
It is a natural choice when realizing $\mathfrak{g}$ as a loop algebra. 
Let $\mathring{\mathfrak{g}}$ be the simple Lie algebra corresponding to $\mathring{I}$, $\mathring{\Phi}$ be its root system, $\mathring{Q}_+ = \sum_{i \in \mathring{I}} \mathbb{Z}_{\geq 0} \alpha_i$, $\mathring{Q}_- = - \mathring{Q}_+$, and $\mathring{W}$ its Weyl group. \index{$\mathring{\mathfrak{g}}$} \index{$\mathring{\Phi}$} \index{$\mathring{W}$} \index{$\mathring{Q}_+$} \index{$\mathring{Q}_-$}
Let $\prroot$ be the set of positive real roots and $\delta$ be the minimal imaginary root. \index{$\delta$}
Then we have $\minroot = \prroot \sqcup \{ \delta \}$. 
Note that our normalization of $(\cdot, \cdot)$ can be different from \cite{MR2066942}. 
For each $\alpha \in \mathring{\Phi}$ or $\alpha \in \Phi^{\text{re}}$, we define 
\[
d_{\alpha} = \begin{cases}
  r & \text{if $r \geq 2, X_N^{(r)} \neq A_{2l}^{(2)}$ and $\alpha$ is a long root}, \\
  2 & \text{if $X_N^{(r)} = A_{2l}^{(2)}, \alpha \in W\alpha_0$ }, \\
  1 & \text{otherwise}. 
\end{cases}
\] \index{$d_{\alpha}$}
Then, $(\alpha + \mathbb{Z}\delta) \cap \Phi = \alpha + d_{\alpha} \mathbb{Z}\delta$. 
For $i \in \mathring{I}$, we write $d_i = d_{\alpha_i}$. \index{$d_i$}

We fix a family of polynomials $(Q_{i,j})_{i,j \in I}$ to define the quiver Hecke algebra.

\subsection{Convex orders and cuspidal decomposition} \label{sub:affinecuspdecomp}

First, we demonstrate some useful properties of convex orders for the affine root system.

\begin{lemma} [{\cite[Proposition 2.3]{MR3874704}}] \label{lem:onerow} 
 Let $\preceq$ be a convex order on $\minroot$. 
 Assume that every real root $\alpha$ is finitely far from one end of the order, that is, either $\Phi_{+, \preceq \alpha}^{\mathrm{min}}$ or $\Phi_{+, \succeq \alpha}^{\mathrm{min}}$ is a finite set. 
 Then, there exists an infinite word $\cdots, i_{-2}, i_{-1}, i_0, i_1, i_2, \cdots$ such that every finite successive subword is reduced and, by setting 
 \[
 \beta_k = \begin{cases}
  s_{i_1} s_{i_2} \cdots s_{i_{k-1}} \alpha_{i_k} & k \geq 1, \\
  s_{i_0} s_{i_{-1}} \cdots s_{i_{k+1}} \alpha_{i_k} & k \leq 0,
 \end{cases}
 \]
 for each $k \in \mathbb{Z}$, we have $\prroot = \{\beta_k\}_{k \in \mathbb{Z}}$ and 
 \[
 \beta_1 \prec \beta_2 \prec \cdots \prec \delta \prec \cdots \prec \beta_{-1} \prec \beta_0. 
 \] 
\end{lemma}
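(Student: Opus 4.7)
The strategy is to first use the hypothesis and convexity to locate every real root relative to $\delta$, and then to inductively peel off the smallest element of each side of $\delta$ to build the bi-infinite reduced word. Define
\[
A_{\pm} = \{\alpha \in \prroot : \Phi_{+, \preceq \alpha}^{\mathrm{min}} \ (\text{resp.\ } \Phi_{+, \succeq \alpha}^{\mathrm{min}})\ \text{is finite}\}.
\]
The hypothesis gives $A_+ \cup A_- = \prroot$, and these are disjoint because $\minroot$ is infinite. Using the convexity condition for the $\{\delta\}$-equivalence class together with the affine fact that $\alpha + k d_\alpha \delta \in \prroot$ for every $k \geq 0$, I would show that whenever $\alpha \prec \delta$ the roots $\alpha + k d_\alpha \delta$ form an infinite sequence of real roots strictly between $\alpha$ and $\delta$ in the order; this identifies $A_+$ with $\{\alpha \in \prroot : \alpha \prec \delta\}$ and symmetrically $A_-$ with $\{\alpha \in \prroot : \alpha \succ \delta\}$. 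In particular both sets are countably infinite, and the one-endedness makes them well-ordered (resp.\ reverse-well-ordered) of order types $\omega$ and $\omega^*$, enumerated as $\beta_1 \prec \beta_2 \prec \cdots$ and $\cdots \prec \beta_{-1} \prec \beta_0$.

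The inductive extraction of the positive-index letters runs as follows. Suppose $i_1,\ldots,i_{k-1}$ have been chosen so that $w_{k-1} := s_{i_1}\cdots s_{i_{k-1}}$ is reduced with inversion set $\{\beta_1,\ldots,\beta_{k-1}\}$. I claim $w_{k-1}^{-1}\beta_k$ is a simple root. Positivity is immediate since $\beta_k$ is distinct from the previously listed inversions. For simplicity, assume $w_{k-1}^{-1}\beta_k = \gamma + \gamma'$ with $\gamma,\gamma' \in \prroot$; applying $w_{k-1}$ gives $\beta_k = w_{k-1}\gamma + w_{k-1}\gamma'$, and a case split on the signs of the two summands, combined with convexity, forces one summand to equal some earlier $\beta_j$ (or its negative), which in turn forces $\gamma$ or $\gamma'$ to be negative through the known inversions of $w_{k-1}$---a contradiction. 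Setting $\alpha_{i_k} := w_{k-1}^{-1}\beta_k$, the reducedness of $w_k = w_{k-1}s_{i_k}$ follows from $w_{k-1}\alpha_{i_k} = \beta_k \succ 0$. The mirror argument applied to the reversed enumeration of $A_-$ produces $i_0,i_{-1},i_{-2},\ldots$ satisfying the corresponding formula.

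Finally, every finite contiguous subword $s_{i_a}\cdots s_{i_b}$ must yield a reduced expression. The same-sign cases are built into the construction. For the mixed case $a \leq 0 < b$, factor $s_{i_a}\cdots s_{i_b} = u \cdot v$ with $u = s_{i_a}\cdots s_{i_0}$ and $v = s_{i_1}\cdots s_{i_b}$, both reduced with inversion sets $\{\beta_0,\ldots,\beta_a\}$ and $\{\beta_1,\ldots,\beta_b\}$ respectively. Reducedness of $uv$ amounts to disjointness of inversion sets together with $u$ sending the second set into $\Phi^+$; both follow from the separation $A_+ \prec \delta \prec A_-$ and convexity, since any failure would express some $\beta_j$ with $j \leq 0$ (i.e.\ $\beta_j \succ \delta$) as a positive integer combination of roots all lying strictly below $\delta$, violating the convexity condition for the $\{\delta\}$-class. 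The main obstacle is precisely this cross-sign verification: the two halves of the word were constructed independently from opposite ends of the order, so showing that they assemble into a single globally reduced expression requires careful bookkeeping of how the inversions interact across $\delta$, and this is where the global shape of the convex order around the imaginary root enters in an essential way.
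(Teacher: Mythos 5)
Your proposal is correct and follows essentially the same route as the paper, whose ``proof'' is just a one-line citation to \cite[Lemma 3.8]{MR3694676}: partition $\prroot$ by which side of $\delta$ a root lies on via the convexity of sums $\alpha + kd_\alpha\delta$, enumerate each side in order type $\omega$ (resp.\ $\omega^*$), and peel off $w_{k-1}^{-1}\beta_k$ as a simple root by the standard ``sum of two positive roots'' contradiction. Two cosmetic slips that do not affect the argument: in the simplicity step the summands $\gamma,\gamma'$ should be allowed to range over all of $\Phi_+$ (an imaginary summand $m\delta$ is handled by the same convexity contradiction), and $\{\beta_a,\ldots,\beta_0\}$ is the inversion set of $(s_{i_a}\cdots s_{i_0})^{-1}$ rather than of $u=s_{i_a}\cdots s_{i_0}$ itself --- but the reducedness criterion $\ell(uv)=\ell(u)+\ell(v)$ is precisely the disjointness $\{\beta_1,\ldots,\beta_b\}\cap\{\beta_a,\ldots,\beta_0\}=\emptyset$ that you invoke, so the mixed-sign case closes as you describe.
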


\begin{proof}
  It is proved by the same argument as in \cite[Lemma 3.8]{MR3694676}. 
\end{proof}

We call the convex order in this proposition a one-row order. 

\begin{lemma} [{\cite[Lemma 2.8]{MR3874704}}]\label{lem:convexextension} 
 Let $A \subset \minroot$ be a finite subset and $\preceq$ be a convex order on $A$. 
 Then, it extends to a one-row order on $\minroot$. 
\end{lemma}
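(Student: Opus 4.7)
The plan is to reduce to the case $\delta \in A$, split $A$ at $\delta$ into two finite sequences of real roots, extend each to a one-sided infinite reduced word enumerating a ``half'' of $\prroot$, and combine the two halves into the desired one-row order.

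First I would dispose of the case $\delta \notin A$. Since $A$ is finite, any convex order on it is represented by a linear functional on $\mathrm{span}_{\mathbb{R}}(A)$ after normalizing roots by height: there exists $\varphi$ such that $\alpha \prec \beta$ iff $\varphi(\alpha)/\height(\alpha) < \varphi(\beta)/\height(\beta)$. Extending $\varphi$ arbitrarily to $\mathfrak{h}^{*}$ and evaluating on $\delta$ determines an insertion point for $\delta$ that preserves convexity. So I may assume $\delta \in A$ and set $A_- = \{\alpha \in A : \alpha \prec \delta\}$, $A_+ = \{\alpha \in A : \alpha \succ \delta\}$, both finite subsets of $\prroot$.

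The main construction is to produce a bi-infinite sequence $\ldots, i_{-1}, i_0, i_1, i_2, \ldots$ in $I$ whose every finite successive subword is reduced, and whose associated roots $\beta_k$ (in the notation of Lemma~\ref{lem:onerow}) give a bijection $\mathbb{Z} \to \prroot$ refining the $\preceq$-order on $A_-$ (for $k \geq 1$) and on $A_+$ (for $k \leq 0$). For the positive-indexed half, I would start from the observation that $A_-$, being a finite convexly ordered subset of $\prroot$, embeds as a subsequence of the inversion sequence of some element of the Weyl group via the standard correspondence between reduced expressions and convex enumerations of inversion sets. I would then extend that reduced expression to an infinite reduced word by appending, at each step, a simple reflection producing the $\preceq$-smallest real root not yet enumerated that remains placeable below $\delta$; an analogous construction handles $A_+$ on the negatively-indexed side.

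The technical heart, and the main obstacle, is verifying that this greedy extension can be arranged so that the union $\{\beta_k\}_{k \in \mathbb{Z}}$ exhausts $\prroot$. Here the affine-type structure is crucial: each real root orbit $\mathring{\alpha} + d_{\mathring{\alpha}} \mathbb{Z}\delta$ is bi-infinite, but in any convex order on $\minroot$ exactly one half of each such orbit lies below $\delta$ and the other half above, matching the one-row picture of Lemma~\ref{lem:onerow}. I would carry out the extension by a zigzag argument that alternates between extending on the left and on the right while processing a fixed enumeration of $\prroot \setminus (A_- \cup A_+)$, checking at each stage that compatibility with $\preceq$ on $A$ is preserved and that every real root eventually appears. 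Once both halves of the bi-infinite word are built, convexity of the resulting order on $\minroot$ follows from the defining property of reduced words combined with the convexity of the original order on $A$.
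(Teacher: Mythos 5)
The paper does not prove this lemma at all --- it is imported verbatim from \cite[Lemma 2.8]{MR3874704} --- so there is no internal argument to compare against; your overall plan (split $A$ at $\delta$, realize each half inside an infinite reduced word, glue) is indeed the standard route taken in the literature. However, two of your steps contain genuine gaps. The first is the reduction to $\delta \in A$: you assert that any convex order on a finite set is induced by a linear functional after normalizing by height, and this is false. Already for the positive roots of an $A_3$ subsystem, the convex order
\[
\alpha_1 \prec \alpha_3 \prec \alpha_1+\alpha_2+\alpha_3 \prec \alpha_2+\alpha_3 \prec \alpha_1+\alpha_2 \prec \alpha_2
\]
arising from the reduced word $s_1s_3s_2s_1s_3s_2$ of $w_0$ admits no such functional: writing $a=\varphi(\alpha_1)$, $b=\varphi(\alpha_2)$, $c=\varphi(\alpha_3)$, the comparison of $\alpha_1$ with $\alpha_3$ forces $a<c$, while the comparison of $(\alpha_2+\alpha_3)/2$ with $(\alpha_1+\alpha_2)/2$ forces $c<a$. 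So the insertion point for $\delta$ cannot be read off a functional; extending $\preceq$ convexly from $A$ to $A\cup\{\delta\}$ requires its own argument and is essentially a special case of the lemma.

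The second and more serious gap is that the technical heart is asserted rather than proved. The ``standard correspondence'' identifies convex orders on an inversion set $\Phi_+\cap w\Phi_-$ with reduced words of $w$; but $A_-$ is an arbitrary finite convexly ordered set of real roots, not an inversion set, and the claim that its order embeds into the inversion order of some Weyl group element (and then into an infinite reduced word exhausting the correct half of $\prroot$) is precisely the content of the lemma. Moreover, before any greedy or zigzag extension can exhaust $\prroot$, one must decide which real roots outside $A$ go below $\delta$: in a one-row order this set is forced to equal $\{\alpha\in\prroot \mid p(\alpha)\in w\mathring{Q}_-\}$ for a single $w\in\mathring{W}$ (the coarse type), and the existence of a $w$ compatible with the given positions of $A_-$ and $A_+$ must be deduced from the convexity of $\preceq$ on $A$; your proposal never addresses this. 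Relatedly, the statement that ``exactly one half of each orbit $\mathring{\alpha}+d_{\mathring{\alpha}}\mathbb{Z}\delta$ lies below $\delta$'' is incorrect: all positive roots in a fixed such orbit project to the same element of $\mathring{\Phi}$ and therefore lie on the same side of $\delta$ in any convex order. As written, the proposal reformulates the lemma in the language of infinite reduced words without supplying the extension argument that makes it true.
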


Let $\preceq$ be a convex order on $\minroot$. 
Let $p \colon \mathfrak{h}^* \to \mathring{\mathfrak{h}}^*$ denote the projection, where $\mathfrak{h}$ is the Cartan subalgebra of $\mathfrak{g}$, and $\mathring{\mathfrak{h}}$ is the Cartan subalgebra of $\mathring{\mathfrak{g}}$. \index{$p$} \index{$\mathfrak{h}$} \index{$\mathring{\mathfrak{h}}$}
Then, there exists a unique $w  \in \mathring{W}$ such that $p(\Phi_{+, \succ \delta}^{\text{min}}) \subset w \mathring{Q}_+, p(\Phi_{+, \prec \delta}^{\text{min}}) \subset w \mathring{Q}_-$ \cite[Lemma 3.7]{MR3694676}. 

\begin{definition} [{\cite[Definition 2.9]{MR3874704}}]
  We call the element $w$ defined above a coarse type of $\preceq$. 
\end{definition}

\begin{remark}
\cite[Definition 2.9]{MR3874704} contains a typographical error: 
their definition should be modified by a right multiplication by $w_0$.
Then it is compatible with our definition under the correspondence given in the appendix. 
\end{remark}

By the definition, the algebra $R^{\preceq}(n\delta)$ depends only on the coarse type of $\preceq$, so we write $R^w(n\delta) = R^{\preceq}(n\delta)$. \index{$R^w(n\delta)$}

For $\alpha \in \mathring{\Phi}$, let $\widetilde{\alpha} \in \Phi_+$ be the positive root such that \index{$\widetilde{\alpha}$}
\[
p(\widetilde{\alpha}) = \alpha, \text{$\widetilde{\alpha} - a\delta \not \in \Phi_+$ for any $a > 0$. }
\]

\begin{lemma} \label{lem:onerowexist}
  For any $w \in \mathring{W}$, there exists a one-row order of coarse type $w$. 
\end{lemma}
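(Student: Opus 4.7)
The plan is to construct, for each $w \in \mathring{W}$, an explicit bi-infinite reduced word in the affine Weyl group and then invoke Lemma \ref{lem:onerow}. The natural tool is translation elements in the extended affine Weyl group $\widetilde{W} = \mathring{W} \ltimes P^{\vee}(\mathring{\mathfrak{g}})$, whose reduced expressions enumerate, by iteration, all positive real roots in a controlled way.

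More precisely, I will fix a coweight $\nu \in P^{\vee}(\mathring{\mathfrak{g}})$ such that $w\nu$ is antidominant regular (for instance $\nu = -w^{-1}\nu_0$ with $\nu_0$ dominant regular), and consider the translation $t_{w\nu} = w t_\nu w^{-1}$. This element has length $\ell(t_{w\nu}) = \sum_{\alpha \in \mathring{\Phi}_+} |\langle w\nu, \alpha^\vee\rangle|$, and since translations by coweights in a common closed Weyl chamber are additive in length, the iterated concatenation of a fixed reduced expression $t_{w\nu} = s_{k_1}\cdots s_{k_M}$ remains reduced. Combining this positive iteration with the iterated reduced expression of $t_{w\nu}^{-1}$ for the left half, I obtain a bi-infinite reduced word in the affine Weyl group as required by Lemma \ref{lem:onerow}.

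To identify the coarse type of the resulting one-row order $\preceq$, I will compute the projections under $p$ of the inversions of $t_{w\nu}^n$. For $w\nu$ antidominant regular, a direct calculation with the formula $t_\lambda(\alpha + m\delta) = \alpha + (m - \langle \lambda, \alpha^\vee\rangle)\delta$ shows that the inversion set of $t_{w\nu}^n$ consists exactly of positive real roots $\alpha + m\delta$ with $\alpha \in w\mathring{\Phi}_-$ and $m$ in a bounded range determined by $n$; as $n \to \infty$, these exhaust $\prroot \cap p^{-1}(w\mathring{\Phi}_-)$. Consequently the $\beta_k$ for $k \geq 1$ (the right tail of the bi-infinite word) are precisely those positive real roots projecting to $w\mathring{\Phi}_-$, and the $\beta_k$ for $k \leq 0$ (the left tail) project to $w\mathring{\Phi}_+$. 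Each fixed real root $\alpha + m\delta$ appears at a bounded distance from the corresponding end of the order (finitely many iterations of $t_{w\nu}$ suffice to pick it up), verifying the one-row property. Matching against the paper's definition then shows that the coarse type of $\preceq$ is $w$.

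The main technical obstacle is maintaining correct conventions: the definition of coarse type involves $w\mathring{Q}_{\pm}$ rather than the inversions of $w$ itself, and depending on sign choices the above construction could produce coarse type $w$, $w_0 w$, or $ww_0$. The bulk of the careful work therefore lies in verifying the direction of the asymptotic projection (and adjusting $\nu \mapsto -\nu$ or reindexing the bi-infinite word if needed), together with the length-additivity of iterated translations, which is where the hypothesis that $w\nu$ is antidominant regular (not merely nonzero) is crucial. Once these conventions are settled, the construction is a direct application of the Beck–Damiani parametrization of convex orders on affine root systems.
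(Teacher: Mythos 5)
Your route is genuinely different from the paper's. The paper does not construct the bi-infinite reduced word at all: it takes any convex order $\beta_1 \prec_0 \cdots \prec_0 \beta_m$ on the finite root system $\mathring{\Phi}_+$, forms the convex order $\delta \prec_1 \widetilde{w\beta_1} \prec_1 \cdots \prec_1 \widetilde{w\beta_m}$ on the \emph{finite} set $\{\widetilde{w\alpha} \mid \alpha \in \mathring{\Phi}_+\} \cup \{\delta\}$, and invokes Lemma \ref{lem:convexextension} to extend it to a one-row order on $\minroot$; since then $w\mathring{\Phi}_+ \subset p(\Phi_{+,\succ\delta}^{\mathrm{min}})$ and the coarse type is unique, it must equal $w$. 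That is a three-line soft argument which delegates all combinatorics to Lemma \ref{lem:convexextension}, already quoted from \cite{MR3874704}. Your translation-element construction is the classical Beck--Damiani route; executed correctly it yields more (an explicit periodic word and explicit inversion sets), but it is substantially more work than is needed given the tools the paper already has on the table.

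As a proof, your sketch still has concrete gaps. First, a translation $t_{w\nu}$ for $\nu$ a coweight generally lies only in the \emph{extended} affine Weyl group, whereas Lemma \ref{lem:onerow} requires a word in the simple reflections of $W$ itself; you must either take $\nu$ in the relevant translation lattice of $W$ or account for the length-zero elements and the diagram automorphism they induce when concatenating reduced expressions. Second, the formula $t_\lambda(\alpha+m\delta)=\alpha+(m-\langle\lambda,\alpha^\vee\rangle)\delta$ and the claim that the inversions of $t_{w\nu}^n$ exhaust $\prroot \cap p^{-1}(w\mathring{Q}_-)$ are untwisted statements; the paper's setting includes all twisted types, in particular $A_{2l}^{(2)}$, where the real roots above a given $\alpha$ are $\alpha + d_\alpha\mathbb{Z}\delta$ and the translation lattice differs, so this computation must be redone there. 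Third, Lemma \ref{lem:onerow} needs \emph{every} finite successive subword to be reduced, including those straddling the junction of your left and right halves; the clean fix is to take a single bi-infinite periodic repetition of one reduced expression of the translation (so every finite subword embeds in a reduced expression of a power), rather than gluing separately chosen words for $t_{w\nu}$ and $t_{w\nu}^{-1}$. None of these is fatal, but each requires real work, and the paper's argument avoids all of them.
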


\begin{proof}
 Let $\preceq_0$ be a convex order on $\mathring{\Phi}_+$, and write $\mathring{\Phi}_+ = \{ \beta_1 \prec_0 \beta_2 \prec_0 \cdots \prec_0 \beta_m \}$. 
 For instance, take a reduced expression of the longest element of $\mathring{W}$, $w_0 = s_{i_1} \cdots s_{i_m}$, and define $\alpha_{i_1} \prec_0 s_{i_1}\alpha_{i_2} \prec_0 \cdots \prec_0 s_{i_1}\cdots s_{i_{m-1}}\alpha_{i_m}$. 
  We define a convex order $\preceq_1$ on $\{ \widetilde{w\alpha} \mid \alpha \in \mathring{\Phi}_+\} \cup \{\delta\}$ by
 \[
 \delta \prec_1 \widetilde{w\beta_1} \prec_1 \cdots \prec_1 \widetilde{w\beta_m}. 
 \]
 By Lemma \ref{lem:convexextension}, we can extend $\preceq_1$ to a one-row order $\preceq_2$ on $\minroot$. 
 Since $w\mathring{\Phi}_+ \subset p(\Phi_{+, \succ_2 \delta}^{\text{min}})$, the coarse type of $\preceq_2$ must be $w$. 
 The lemma is proved. 
\end{proof}

Next, we explain how simple modules decompose into cuspidal modules in affine cases. 
Let $w \in \mathring{W}$. 
For a multipartition $\underline{\lambda} = (\lambda^{(i)})_{i \in \mathring{I}}$, we define \index{$\| \underline{\lambda} \|$}
\[
\| \underline{\lambda} \| = \sum_{i \in \mathring{I}} d_i \lvert \lambda^{(i)}\rvert = \sum_{i \in \mathring{I}, k \geq 1} d_i \lambda_k^{(i)}. 
\]

\begin{proposition} \label{prop:imaginarycuspnumber}
 Let $n \in \mathbb{Z}_{\geq 0}$. 
 The number of isomorphism classes of self-dual simple $R^w(n\delta)$-modules is equal to the number of multipartitions $\underline{\lambda} = (\lambda^{(i)})_{i \in I_0}$ with $\| \underline{\lambda}\| = n$. 
\end{proposition}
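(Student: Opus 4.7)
The plan is to reduce the counting to the general generating-function identity of Theorem~\ref{thm:cuspidaldecomp}~(2), and then compare with the well-known formula for imaginary root multiplicities of affine Kac-Moody algebras.

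First, by Lemma~\ref{lem:onerowexist} I would choose a one-row convex order $\preceq$ on $\minroot$ with coarse type $w$. Since $\preceq$ is a total order, the singleton $\{\delta\}$ is its own $\preceq$-equivalence class, and by the definition of $R^w(n\delta)$ preceding the proposition one has $R^w(n\delta) = R^{\preceq}(n\delta)$. Hence the self-dual simple $R^w(n\delta)$-modules are exactly the self-dual $\preceq$-cuspidal $R(n\delta)$-modules, and it suffices to count the latter.

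Now apply Theorem~\ref{thm:cuspidaldecomp}~(2) to the equivalence class $C = \{\delta\}$: writing $t = e^{\delta}$ and letting $c_n$ denote the number of self-dual cuspidal $R^w(n\delta)$-modules up to isomorphism,
\[
\sum_{n \geq 0} c_n\, t^n \;=\; \prod_{n \geq 1} (1-t^n)^{-\dim \mathfrak{g}_{n\delta}}.
\]
On the combinatorial side, the generating function for multipartitions $\underline{\lambda} = (\lambda^{(i)})_{i \in \mathring{I}}$ weighted by $\|\underline{\lambda}\|$ factors as
\[
\sum_{\underline{\lambda}} t^{\|\underline{\lambda}\|} \;=\; \prod_{i \in \mathring{I}} \prod_{k \geq 1} \frac{1}{1 - t^{d_i k}} \;=\; \prod_{n \geq 1} (1-t^n)^{-\#\{i \in \mathring{I} \,:\, d_i \mid n\}},
\]
the last equality obtained by collecting the factors $(1-t^n)^{-1}$ according to which pairs $(i,k)$ satisfy $d_i k = n$. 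Comparing the two products term by term, the proposition will follow from the classical identity
\[
\dim \mathfrak{g}_{n\delta} \;=\; \#\{i \in \mathring{I} \,:\, d_i \mid n\}, \qquad n \geq 1.
\]

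This last identity is the main (mild) obstacle: it must be checked uniformly across all affine types. For the untwisted types $X_N^{(1)}$ all $d_i = 1$ and $\dim\mathfrak{g}_{n\delta} = |\mathring{I}|$, so both sides equal $|\mathring{I}|$. For the twisted types $X_N^{(r)}$ with $r \in \{2,3\}$ and $X_N^{(r)} \neq A_{2l}^{(2)}$, the simple roots of $\mathring{\mathfrak{g}}$ lying in $W\alpha_0$ are precisely the long ones, so $d_i = r$ for long and $d_i = 1$ for short, matching Kac's formula for $\dim \mathfrak{g}_{n\delta}$ in this case. The most delicate case is $A_{2l}^{(2)}$: here one checks directly from the root system that for every $\alpha \in \mathring{\Phi}$ one has $(\alpha + \mathbb{Z}\delta) \cap \Phi = \alpha + \mathbb{Z}\delta$, so $d_i = 1$ for all $i \in \mathring{I}$, and accordingly $\dim \mathfrak{g}_{n\delta} = l$ for every $n$. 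In each case the identity follows from the explicit description of affine Kac-Moody algebras as (twisted) loop algebras together with Kac's tables for imaginary root multiplicities in \cite{MR1104219}; this completes the proof.
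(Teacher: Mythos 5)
Your proof is correct and follows essentially the same route as the paper: both reduce the count to the generating-function identity of Theorem~\ref{thm:cuspidaldecomp}~(2) applied to the class of $\delta$ and then match it against the imaginary root multiplicities, your identity $\dim\mathfrak{g}_{n\delta}=\#\{i\in\mathring{I} : d_i\mid n\}$ being exactly the case-by-case formula the paper quotes. The only difference is that you spell out the reduction $R^w(n\delta)=R^{\preceq}(n\delta)$ for a one-row order and the rearrangement of the multipartition generating function, which the paper leaves implicit.
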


\begin{proof}
It follows from Theorem \ref{thm:cuspidaldecomp} and the fact that 
\[ 
\dim \mathfrak{g}_{m\delta}^+ = \begin{cases}
  \lvert \mathring{I} \rvert & \text{if $\mathfrak{g} = A_{2l}^{(2)}$ or $m \equiv 0 \mod r$}, \\
  \lvert \mathring{I}^{\text{short}} \rvert & \text{if $\mathfrak{g} \neq A_{2l}^{(2)}$ and $m \not \equiv 0 \mod r$},  
\end{cases}
\]
where $\mathring{I}^{\text{short}} = \{ i \in \mathring{I} \mid \text{$\alpha_i$ is a short root of $\mathring{\Phi}$} \}$.
\end{proof}

Put $T_n^w = \{ \text{self-dual simple $R^w(n\delta)$-module}\}/{\simeq}$ for each $n \geq 0$. \index{$T_n^w$}
For each $t \in T_n^w$, let $L(t)$ denote a representative of $t$. \index{$L(t)$}
We set $T^w = \bigsqcup_{n \geq 0} T_n^w$ and define $\beta_t = n\delta$ for each $t \in T_n^w$. \index{$T^w$} \index{$\beta_t$}

Let $\preceq$ be a convex order on $\minroot$ of coarse type $w$. 
For $\beta \in Q_+$, we define $\Omega^{\preceq} (\beta)$ as the set \index{$\Omega^{\preceq}(\beta)$}
\[
\left\{ (\boldsymbol{c}_-, t, \boldsymbol{c}_+) \mathrel{}\middle|\mathrel{} \begin{gathered} 
  \boldsymbol{c}_- \colon \Phi_{+, \prec \delta}^{\text{min}} \to \mathbb{Z}_{\geq 0}, t \in T^w, \boldsymbol{c}_+ \colon \Phi_{+, \succ \delta}^{\text{min}} \to \mathbb{Z}_{\geq 0}, \\
  \text{$\boldsymbol{c}_{+} (\alpha) = 0$ for all but finitely many $\alpha \in \Phi_{+, \prec \delta}^{\mathrm{min}}$}, \\
  \text{$\boldsymbol{c}_{-} (\alpha) = 0$ for all but finitely many $\alpha \in \Phi_{+, \succ \delta}^{\mathrm{min}}$}, \\
  \left(\sum_{\alpha \in \Phi_{+, \prec \delta}^{\text{min}}} \boldsymbol{c}_-(\alpha)\alpha \right) + \beta_t + \left( \sum_{\alpha \in \Phi_{+, \succ \delta}^{\text{min}}} \boldsymbol{c}_+(\alpha)\alpha \right) = \beta.    
\end{gathered} \right\}. 
\]

Let $\omega = (\boldsymbol{c}_-, t, \boldsymbol{c}_+) \in \Omega^{\preceq}(\beta)$.
We write 
\begin{align*}
\{\alpha \in \Phi_{+, \prec \delta}^{\text{min}} \mid \boldsymbol{c}_-(\alpha) \neq 0 \} &= \{ \alpha_{-,1} \prec \alpha_{-,2} \prec \cdots \prec \alpha_{-,n_-}\}, \\
\{\alpha \in \Phi_{+, \succ \delta}^{\text{min}} \mid \boldsymbol{c}_+(\alpha) \neq 0 \} &= \{ \alpha_{+,1} \prec \alpha_{+,2} \prec \cdots \prec \alpha_{+,n_+}\}. 
\end{align*}
We define
\[
\overline{\Delta}(\omega) = q^{m_{\omega}}L(\alpha_{+,n_+})^{\circ \boldsymbol{c}_+(\alpha_{+,n_+})} \circ \cdots \circ L(\alpha_{+,1})^{\circ \boldsymbol{c}_+(\alpha_{+,1})} \circ L(t) \circ L(\alpha_{-,n_-})^{\circ \boldsymbol{c}_-(\alpha_{-,n_-})} \circ \cdots \circ L(\alpha_{-,1})^{\circ \boldsymbol{c}_-(\alpha_{-,1})}, 
\] \index{$\overline{\Delta}(\omega)$}
where 
\[
m_{\omega} = \sum_{1 \leq k\leq n_+} \frac{\boldsymbol{c}_+(\alpha_{+,k})(\boldsymbol{c}_+(\alpha_{+,k})-1) (\alpha_{+,k},\alpha_{+,k})}{4} + \sum_{1 \leq k\leq n_-} \frac{\boldsymbol{c}_-(\alpha_{-,k})(\boldsymbol{c}_-(\alpha_{-,k})-1) (\alpha_{-,k},\alpha_{-,k})}{4}.  
\]
Let $L(\omega)$ denote the head of $\overline{\Delta}(\omega)$. \index{$L(\omega)$}
Applying Theorem \ref{thm:cuspidaldecomp} to this situation, we obtain the following proposition. 

\begin{proposition} \label{prop:affinecuspdecomp}
 Let $\beta \in Q_+$.
 Then, we have a bijection
 \[
 \Omega^{\preceq}(\beta) \to \{ \text{self-dual simple $R(\beta)$-module} \} / {\simeq} ; \omega \mapsto L(\omega). 
 \]
\end{proposition}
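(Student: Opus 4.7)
The plan is to view this as a direct translation of the general cuspidal decomposition (Theorem~\ref{thm:cuspidaldecomp}) into the bookkeeping provided by $\Omega^{\preceq}(\beta)$. Since $\preceq$ is a convex order (not merely a preorder) on $\minroot = \prroot \sqcup \{\delta\}$, every $\sim$-equivalence class is a singleton, so the classes $C$ appearing in Theorem~\ref{thm:cuspidaldecomp}(1) correspond bijectively to elements of $\minroot$.

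First, I would recall the classification of self-dual cuspidal modules class by class. For a real root $\alpha \in \prroot$, Proposition~\ref{prop:realcuspidal} gives a unique self-dual cuspidal $R(n\alpha)$-module $L(n\alpha) \simeq q_\alpha^{n(n-1)/2} L(\alpha)^{\circ n}$ for each $n \geq 0$. For the imaginary direction, the set of self-dual cuspidal $R(n\delta)$-modules is by definition the set of self-dual simple modules of $R^{\preceq}(n\delta)$; and as noted after the definition of coarse type, $R^{\preceq}(n\delta) = R^w(n\delta)$ depends only on $w$, so this set is precisely $T_n^w$. This matches the three pieces of data in $\Omega^{\preceq}(\beta)$.

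Next, I would apply Theorem~\ref{thm:cuspidaldecomp}(1) to an arbitrary self-dual simple $R(\beta)$-module $L$: there is a unique (up to isomorphism and degree shift) presentation
\[
 L \simeq \hd\bigl(L_m \circ L_{m-1} \circ \cdots \circ L_1\bigr)
\]
with $C_1 \prec \cdots \prec C_m$, each $L_k$ a self-dual cuspidal $R(n_k \alpha_k)$-module (where $C_k = \{\alpha_k\}$). Since $\delta$ is a single $\preceq$-class, at most one index $k_0$ satisfies $\alpha_{k_0} = \delta$; the remaining factors are indexed by real roots, which split according to whether $\alpha_k \prec \delta$ or $\alpha_k \succ \delta$. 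Setting $\boldsymbol{c}_-(\alpha_k) = n_k$ for those $\alpha_k \prec \delta$ (and $0$ otherwise), $\boldsymbol{c}_+(\alpha_k) = n_k$ for those $\alpha_k \succ \delta$ (and $0$ otherwise), and $t \in T_{n_{k_0}}^w$ equal to the class of $L_{k_0}$ (or letting $t$ be the trivial element of $T_0^w$ if no imaginary factor appears) yields an element $\omega \in \Omega^{\preceq}(\beta)$. Substituting the explicit formula $L(n\alpha) \simeq q_\alpha^{n(n-1)/2} L(\alpha)^{\circ n}$ accounts exactly for the normalization $q^{m_\omega}$ in the definition of $\overline{\Delta}(\omega)$, so $L \simeq L(\omega) = \hd \overline{\Delta}(\omega)$.

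The uniqueness in Theorem~\ref{thm:cuspidaldecomp}(1) gives injectivity of $\omega \mapsto L(\omega)$, and the construction above gives surjectivity onto isomorphism classes of self-dual simples. The one mild subtlety, which I would spell out carefully, is the self-duality statement: Theorem~\ref{thm:cuspidaldecomp}(1) guarantees that a self-dual $L$ admits a cuspidal decomposition with each $L_k$ self-dual, while conversely Lemma~\ref{lem:unmixing}(3), applied to the unmixing tuple furnished by Lemma~\ref{lem:cuspidalseq}, ensures that the head $L(\omega)$ of a convolution of self-dual cuspidals (in the correct order $C_1 \prec \cdots \prec C_m$) is itself self-dual once the $q^{m_\omega}$-shift is inserted. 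There is no serious obstacle here; the only thing to watch is consistent bookkeeping of the degree shifts so that both sides of the bijection refer to self-dual representatives, which is exactly what the $q^{m_\omega}$ factor is designed to absorb.
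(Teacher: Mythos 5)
Your argument is correct and is exactly the route the paper takes: the paper's entire proof is the remark ``Applying Theorem~\ref{thm:cuspidaldecomp} to this situation, we obtain the following proposition,'' and your write-up simply makes explicit the bookkeeping (singleton $\preceq$-classes, uniqueness of real-root cuspidals via Proposition~\ref{prop:realcuspidal}, identification of imaginary cuspidals with $T_n^w$, and the $q^{m_\omega}$ shift together with Lemma~\ref{lem:unmixing}(3) for self-duality). No gaps.
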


\subsection{Real root modules} \label{sub:realrootmodules}

Let $\preceq$ be a convex order on $\minroot$. 
For each $\alpha \in \prroot$, we shall describe the cuspidal module $L(\alpha) = L^{\preceq}(\alpha)$ using a determinantial module.  
Let $A = \{ \gamma \in \Phi_+ \mid \alpha -\gamma \in Q_+ \} \cup \{\delta\}$, which is a finite set. 
By Lemma \ref{lem:convexextension}, there exists a one-row order $\preceq'$ on $\minroot$ that coincide with $\preceq$ when restricted to $A$. 
Let $\cdots, i_{-1},i_0, i_1, \cdots$ be the infinite word that yields $\preceq'$ and define $\beta_k$ as in Lemma \ref{lem:onerow}. 
Then, there exists $k \in \mathbb{Z}$ such that $\beta_k = \alpha$. 
Since both $\beta_k = \alpha$ and $\delta$ belong $A$, it follows that 
\begin{enumerate}
  \item if $\alpha \prec \delta$, we have $k \geq 1$, 
  \item if $\alpha \succ \delta$, we have $k \leq 0$. 
\end{enumerate}

Recall the automorphism $\psi$ of $R(\alpha)$ from Section \ref{sub:quiverHecke}. 

\begin{proposition}[{cf. \cite[Theorem 9.1]{MR3694676}}] \label{prop:realrootmodule}
 We have 
 \[ 
 L(\alpha) \simeq \begin{cases}
 M(s_{i_1} s_{i_2} \cdots s_{i_k}\Lambda_{i_k}, s_{i_1} s_{i_2}\cdots s_{i_{k-1}}\Lambda_{i_k}) & \text{if $\alpha \prec \delta$}, \\
 \psi_* M(s_{i_0} s_{i_{-1}} \cdots s_{i_{k}}\Lambda_{i_k}, s_{i_0} s_{i_{-1}} \cdots s_{i_{k+1}}\Lambda_{i_k}) & \text{if $\alpha \succ \delta$}. 
 \end{cases}
 \]
\end{proposition}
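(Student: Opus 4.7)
The plan is to reduce the assertion to Proposition~\ref{prop:determsupport}, which realizes the cuspidal module for the weakly convex preorder attached to a reduced expression of a finite Weyl group element as a determinantial module. The key observation is that, since $\preceq$ and the one-row order $\preceq'$ agree on the finite set $A$ and since every $\gamma$ in $\W(L) \cap \Phi_+$ or $\W^*(L) \cap \Phi_+$ for a simple module $L$ of weight $-\alpha$ lies in $A \setminus \{\delta\}$ by the weight constraint $\alpha - \gamma \in Q_+$, the notion of $\preceq$-cuspidality at $\alpha$ depends only on the restriction of $\preceq$ to $A$. This links the infinite combinatorics of $\preceq$ to a finite reduced expression for which Proposition~\ref{prop:determsupport} applies.

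For the case $\alpha = \beta_k \prec \delta$ with $k \geq 1$, I set $w := s_{i_1} s_{i_2} \cdots s_{i_k}$, which is a reduced expression because every finite successive subword of the infinite word is reduced. Let $\preceq_w$ be the weakly convex preorder from Section~\ref{sub:Cwv} associated with this reduced expression, so that by Proposition~\ref{prop:determsupport}, $L^{\preceq_w}(\beta_k) \simeq M(w \Lambda_{i_k}, s_{i_1} \cdots s_{i_{k-1}} \Lambda_{i_k})$. Comparing with $\preceq$, a direct inspection using that both $\preceq$ and $\preceq_w$ agree with $\preceq'$ on $\{\beta_1, \ldots, \beta_k\}$ yields
\[
\Phi_{+,\preceq \alpha}^{\mathrm{min}} \cap A = \Phi_{+,\preceq_w \alpha}^{\mathrm{min}} \cap A, \qquad \Phi_{+,\succeq \alpha}^{\mathrm{min}} \cap A = \Phi_{+,\succeq_w \alpha}^{\mathrm{min}} \cap A,
\]
so Corollary~\ref{cor:cuspidalcriterion2} identifies $\preceq$- and $\preceq_w$-cuspidality at $\alpha$, giving $L^{\preceq}(\alpha) \simeq L^{\preceq_w}(\alpha) \simeq M(w\Lambda_{i_k}, s_{i_1}\cdots s_{i_{k-1}}\Lambda_{i_k})$.

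For the case $\alpha = \beta_k \succ \delta$ with $k \leq 0$, I invoke the involution $\psi_*$, which interchanges $\W$ and $\W^*$. Set $v := s_{i_0} s_{i_{-1}} \cdots s_{i_k}$; rewriting $v = s_{j_1} \cdots s_{j_{1-k}}$ with $j_r = i_{1-r}$, one checks that $\alpha$ is the last root in the enumeration attached to this reduced expression, so that by Proposition~\ref{prop:determsupport} the associated weakly convex preorder $\preceq_v$ satisfies
\[
L^{\preceq_v}(\alpha) \simeq M(s_{i_0} \cdots s_{i_k} \Lambda_{i_k}, s_{i_0} \cdots s_{i_{k+1}} \Lambda_{i_k}).
\]
The analogous bookkeeping, comparing $\Phi_{+, \succeq \alpha}^{\mathrm{min}} \cap A$ with $\Phi_{+, \preceq_v \alpha}^{\mathrm{min}} \cap A$ and $\Phi_{+, \preceq \alpha}^{\mathrm{min}} \cap A$ with $\Phi_{+, \succeq_v \alpha}^{\mathrm{min}} \cap A$, shows that $L$ is $\preceq$-cuspidal at $\alpha$ if and only if $\psi_* L$ is $\preceq_v$-cuspidal at $\alpha$. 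Since $\psi_*$ preserves self-duality and the self-dual cuspidal module at a real root is unique, this forces $L^{\preceq}(\alpha) \simeq \psi_* L^{\preceq_v}(\alpha)$, as claimed.

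The main technical obstacle is the combinatorial verification of the four set equalities above, in particular tracking the position of $\delta$ in the one-row order relative to $\alpha$ and accounting for possible imaginary roots in $A$ when $\alpha \succ \delta$. Once these inclusions are nailed down, the argument is a clean combination of Proposition~\ref{prop:determsupport}, Corollary~\ref{cor:cuspidalcriterion2}, and the $\W \leftrightarrow \W^*$ symmetry induced by $\psi_*$.
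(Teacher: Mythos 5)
Your proof is correct and follows essentially the same route as the paper: both arguments rest on extending $\preceq|_A$ to a one-row order, observing that cuspidality at $\alpha$ only depends on $\preceq$ restricted to the finite set $A$ (since $\W(L)\cap\Phi_+\subset A$), invoking the cuspidality criterion of Corollary \ref{cor:cuspidalcriterion}, and using $\psi_*$ together with $\W^*(\psi_*X)=\W(X)$ for the case $\alpha\succ\delta$. The only cosmetic difference is that you transfer cuspidality to the weakly convex preorder of Proposition \ref{prop:determsupport} and quote that result, whereas the paper verifies directly that the determinantial module is $\preceq$-cuspidal via the support bound $\W(M)\subset Q_+\cap s_{i_1}\cdots s_{i_k}Q_-$ of Lemma \ref{lem:detsupport}; the combinatorial content (that $\Phi_{+,\preceq\alpha}\cap A=\{\beta_1,\ldots,\beta_k\}\cap A$) is the same in both versions.
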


\begin{proof}
 First, consider the case $\alpha \prec \delta$. 
 Put $M = M(s_{i_1} \cdots s_{i_k}\Lambda_{i_k}, s_{i_1} \cdots s_{i_{k-1}}\Lambda_{i_k})$. 
 By Corollary \ref{cor:cuspidalcriterion}, it is enough to show that $\W(M) \cap \Phi_+ \subset \Phi_{+, \preceq \alpha}$. 
 Take $\gamma \in \W(M) \cap \Phi_+$. 
 By Lemma \ref{lem:detsupport}, we have $\W(M) \subset Q_+ \cap s_{i_1} \cdots s_{i_k} Q_-$. 
 Hence, we deduce $\gamma \in \{ \beta_1, \ldots, \beta_k \}$, that is, $\gamma \preceq' \alpha$. 
 We also have $\gamma \in \W(M) \subset A$ and $\alpha \in A$. 
 Since $\preceq$ and $\preceq'$ coincide on $A$, we deduce $\gamma \preceq \alpha$. 

 The case $\alpha \succ \delta$ is proved in a similar manner, noting that $\W^* (\psi_* (X)) = \W(X)$ for an $R(\alpha)$-module $X$. 
\end{proof}

We define $\widehat{L}(\alpha)$ as the projective cover of $L(\alpha)$ in \index{$\widehat{L}(\alpha)$}
\[ 
\{ M \in \gMod{R(\alpha)} \mid \text{every composition factor of $M$ is $L(\alpha)$}\}.
\] 
By Theorem \ref{thm:ext1} and the proposition above, we have 
\[
\widehat{L}(\alpha) \simeq \begin{cases}
\widehat{M}(s_{i_1} \cdots s_{i_k} \Lambda_{i_k}, s_{i_1}\cdots s_{i_{k-1}}\Lambda_{i_k}) & \text{if $\alpha \prec \delta$}, \\
\psi_* \widehat{M}(s_{i_0} s_{i_{-1}} \cdots s_{i_{k}}\Lambda_{i_k}, s_{i_0} s_{i_{-1}} \cdots s_{i_{k+1}}\Lambda_{i_k}) & \text{if $\alpha \succ \delta$}. 
\end{cases} 
\]

\begin{remark}
For $\alpha \in \prroot$, $\Psi_1([\widehat{L}(\alpha)])$ coincide with the real root vector with respect to the convex order $\preceq$ defined in \cite[Definition 4.5]{MR3874704}. 
\end{remark}

\subsection{Stratification for affine Lie types} \label{sub:affinestratification}

Let $\preceq$ be a convex order on $\minroot$ of coarse type $w$, and $\beta \in Q_+$. 
Recall $\Omega^{\preceq}(\beta)$ from Section \ref{sub:affinecuspdecomp}. 
The set $\mathcal{P}^{\preceq}(\beta)$ is identified with the set consisting of maps from $\minroot$ to $\mathbb{Z}_{\geq 0}$, as explained in Section \ref{sub:cuspidal}. 
There is a map $\rho \colon \Omega^{\preceq}(\beta) \to \mathcal{P}^{\preceq}(\beta)$ given by
\[
\rho(\omega) \rvert_{\Phi_{+, \prec \delta}^{\text{min}}} = \boldsymbol{c}_-, \rho(\omega) \rvert_{\Phi_{+, \succ \delta}^{\text{min}}} = \boldsymbol{c}_+, \rho (\omega)(\delta) = \| \boldsymbol{c}_{\delta}\|, 
\]
for $\omega = (\boldsymbol{c}_-, t, \boldsymbol{c}_+) \in \Omega^{\preceq} (\beta)$. 

For $t \in T_n^w$, let $\Delta(t)$ denote the projective cover of $L(t)$ in $\gMod{R^w(n\delta)}$. 
Using the notation in Section \ref{sub:affinecuspdecomp}, we define, for $\omega \in \Omega^{\preceq}(\beta)$,
\begin{align*}
\Delta(\omega) &= \widehat{L}(\alpha_{+,n_+})^{\circ (\boldsymbol{c}_+(\alpha_{+,n_+}))} \circ \cdots \circ \widehat{L}(\alpha_{+,1})^{\circ (\boldsymbol{c}_+(\alpha_{+,1}))} \circ \Delta(t) \circ \widehat{L}(\alpha_{-,n_-})^{\circ (\boldsymbol{c}_-(\alpha_{-,n_-}))} \circ \cdots \circ \widehat{L}(\alpha_{-,1})^{\circ (\boldsymbol{c}_-(\alpha_{-,1}))}, \\ \index{$\Delta(\omega)$}
\overline{\nabla}(\omega) &= D(\overline{\Delta}(\omega)). \index{$\overline{\nabla}(\omega)$}
\end{align*} 

\begin{theorem} \label{thm:finerstratification}
With respect to the map $\rho \colon \Omega^{\preceq}(\beta) \to \mathcal{P}^{\preceq}(\beta)$ and the partial order $\leq$ on $\mathcal{P}^{\preceq}(\beta)$, 
the category $\gMod{R(\beta)}$ is stratified. 
Its standard modules are $\Delta(\omega)$ and its proper costandard modules are $\overline{\nabla}(\omega)$. 
\end{theorem}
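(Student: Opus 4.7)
The plan is to apply the stratification criterion of Proposition~\ref{prop:criterion} exactly as in the proof of Theorem~\ref{thm:stratification}, for which I must verify (i) that each $\overline{\nabla}(\omega)$ is a proper costandard module with respect to $\rho$ and the partial order on $\mathcal{P}^{\preceq}(\beta)$, and (ii) the BGG-type identity
\[
[P(\omega)]_q = \sum_{\omega' \in \Omega^{\preceq}(\beta)} \overline{[\overline{\nabla}(\omega'):L(\omega)]_q}\,[\Delta(\omega')]_q
\]
in $K(\gmod{R(\beta)})_{\mathbb{Z}((q))}$. The key advantage over Theorem~\ref{thm:stratification} is that $\preceq$ is a genuine convex order (every equivalence class of $\minroot$ being a singleton), so Proposition~\ref{prop:properstandardcomposition} is directly available; accordingly the whole argument of Section~\ref{sub:mainresults} should transfer once a suitable analogue of Lemma~\ref{lem:dualbasis} is in place.

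For (i), $\soc\overline{\nabla}(\omega) = L(\omega)$ follows by dualising $L(\omega) = \hd\overline{\Delta}(\omega)$. Any other composition factor of $\overline{\Delta}(\omega)$ is of the form $L(\omega')$ with $\rho(\omega')$ strictly below $\rho(\omega)$ in $\mathcal{P}^{\preceq}(\beta)$ by Proposition~\ref{prop:properstandardcomposition} applied to the cuspidal convolution defining $\overline{\Delta}(\omega)$, and dualising preserves this. The $\Ext^1$-vanishing $\Ext^1_R(L(\omega''),\overline{\nabla}(\omega)) = 0$ for $\rho(\omega'') \not\geq \rho(\omega)$ follows from the restriction-coinduction adjunction together with the vanishing of $\Res$ of $L(\omega'')$ against the partition $\rho(\omega)$, which is guaranteed by Corollary~\ref{cor:cuspidaldecomp}. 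For (ii), I would establish the orthogonality
\[
\Extform{\Delta(\omega)}{\overline{\nabla}(\omega')} = \delta_{\omega,\omega'}
\]
and then repeat the Grothendieck-group argument of Proposition~\ref{prop:BGGformula} verbatim. The induction-restriction adjunction combined with the unmixing Lemma~\ref{lem:cuspidalseq} reduces the $\Ext$-form to a product of real-root contributions $\Extform{\widehat{L}(\alpha)^{\circ(c)}}{L(c\alpha)}$ together with a single imaginary-root contribution $\Extform{\Delta(t)}{L(t')}$. The real-root factors all evaluate to $1$ exactly as in the proof of Lemma~\ref{lem:dualbasis}: Proposition~\ref{prop:realrootmodule} identifies $\widehat{L}(\alpha)$ with an affinisation of a determinantial module (up to $\psi_\ast$ when $\alpha \succ \delta$), and Lemma~\ref{lem:rootvec} together with Lemma~\ref{lem:bilinearforms} yields $(\overline{f_\alpha},f_\alpha^{\mathrm{up}}) = 1$.

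The hard part is the imaginary contribution $\Extform{\Delta(t)}{L(t')} = \delta_{t,t'}$ for $t,t' \in T_n^w$. The degree-zero statement $\qdim\Hom_{R(n\delta)}(\Delta(t),L(t')) = \delta_{t,t'}$ is immediate from $\Delta(t)$ being the projective cover of $L(t)$ in $\gMod{R^w(n\delta)}$, but the higher $\Ext^k_{R(n\delta)}(\Delta(t),L(t'))$ are not obviously controlled, and even finiteness of $\prdim_{R(n\delta)}\Delta(t)$ requires a separate argument. My strategy is to refine $\preceq$ on a sufficiently large finite subset of $\minroot$ into a one-row order of coarse type $w$ via Lemma~\ref{lem:convexextension} and Lemma~\ref{lem:onerowexist}, obtaining a reduced (infinite) word whose finite truncations $w_{\leq l}$ exhaust the real roots $\alpha \prec \delta$ (or $\alpha \succ \delta$) relevant for height $\leq\beta$; one then identifies $\gMod{R^w(n\delta)}$ with an appropriate truncation of the stratified category $\gMod{R_{w',\ast}}$ of Theorem~\ref{thm:affinehighest} and transports the orthogonality of Lemma~\ref{lem:dualbasis} across the truncation. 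The $A_{2l}^{(2)}$ type, where $d_{\alpha_0}=2$ forces a different treatment of the affine node, will almost certainly need a separate case analysis, as already flagged in the introduction, and this is where I expect the technical difficulty to concentrate.
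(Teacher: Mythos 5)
Your overall strategy coincides with the paper's: the proof of Theorem~\ref{thm:finerstratification} is explicitly reduced to the argument of Theorem~\ref{thm:stratification} (the criterion of Proposition~\ref{prop:criterion}, proper costandardness via unmixing and Proposition~\ref{prop:properstandardcomposition}, the orthogonality $\Extform{\Delta(\omega)}{\overline{\nabla}(\omega')}=\delta_{\omega,\omega'}$, and then the BGG identity as in Proposition~\ref{prop:BGGformula}), with the real-root factors handled exactly as you describe via Proposition~\ref{prop:realrootmodule}, Lemma~\ref{lem:rootvec} and Lemma~\ref{lem:bilinearforms}. You also correctly isolate the single genuinely new ingredient: $\Extform{\Delta(t)}{L(t')}=\delta_{t,t'}$ together with $\prdim_{R(n\delta)}\Delta(t)<\infty$ for the imaginary cuspidal part, which is the paper's Proposition~\ref{prop:affineprojresol}.

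Your mechanism for that ingredient, however, does not work as stated. The category $\gMod{R^w(n\delta)}$ is not a truncation of any $\gMod{R_{w',*}}$ with $w'$ a finite truncation of the one-row word: by Lemma~\ref{lem:onerow} and Corollary~\ref{cor:cuspidalcriterion} one has $R^w(n\delta)=R(n\delta)/\langle e(*,\beta_k)\ (1\le k\le m),\ e(\beta_{k'},*)\ (-m\le k'\le 0)\rangle$, a quotient by idempotents attached to real roots on \emph{both} sides of $\delta$, whereas $R_{*,w'}(n\delta)$ only kills the idempotents coming from $\beta_1,\ldots,\beta_l\prec\delta$. Consequently the projective cover of $L(t)$ in $\gMod{R_{*,w'}(n\delta)}$ --- the module that actually occurs as a standard module in Theorem~\ref{thm:stratification} for the preorder attached to $w'$ --- is in general strictly larger than $\Delta(t)$, since $\gMod{R_{*,w'}(n\delta)}$ still contains simples whose cuspidal support meets the real roots $\succ\delta$; transporting Lemma~\ref{lem:dualbasis} across that truncation therefore establishes the orthogonality for the wrong module. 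The correct argument (Proposition~\ref{prop:affineprojresol}) reruns the induction of Theorem~\ref{thm:projresol} and Proposition~\ref{prop:prdimR*w} on the two-sided quotient: one peels off the $e(*,\beta_k)$ with $\beta_k\prec\delta$ exactly as in Section~\ref{sub:projresol}, and then the $e(\beta_{k'},*)$ with $\beta_{k'}\succ\delta$ using the $\psi_*$-twisted determinantial modules of Proposition~\ref{prop:realrootmodule}. Once this is repaired the rest of your argument goes through; note also that no separate $A_{2l}^{(2)}$ analysis is needed for this theorem --- that case only intervenes for the minimal imaginary root modules of Section~\ref{sub:minimalimaginary}.
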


The proof is similar to Theorem \ref{thm:stratification}, so we omit the details. 
In the proof, we need the following analogue to Theorem \ref{thm:projresol} and Proposition \ref{prop:prdimR*w}.

\begin{proposition} \label{prop:affineprojresol}
 Let $w \in \mathring{W}, n \geq 0$. 
 \begin{enumerate}
\item For $s \in T_n^w$, $\Delta(s)$ has finite projective dimension as an $R(n\delta)$-module.
\item For $s,t \in T_n^w$, we have $\Ext_{R(n\delta)}^k (\Delta(s), L(t)) = 0$ for any $k \geq 1$. 
 \end{enumerate}
\end{proposition}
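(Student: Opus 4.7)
The plan is to deduce Proposition \ref{prop:affineprojresol} from the main stratification theorem (Theorem \ref{thm:stratification}), applied to a suitable affine Weyl group element $w_N$, after identifying $\Delta(s)$ with a standard module of the resulting stratification. Since $R^w(n\delta)$ depends only on the coarse type $w$, by Lemma \ref{lem:onerowexist} we may replace $\preceq$ by a one-row order of coarse type $w$. Let $\cdots, i_{-1}, i_0, i_1, \cdots$ be the associated infinite word (Lemma \ref{lem:onerow}) and $w_N = s_{i_1}\cdots s_{i_N}$ for $N \geq 1$. Using that $\delta$ is $W$-fixed and each $\beta_{-k}$ ($k \geq 0$) lies outside $\{\beta_1, \ldots, \beta_N\}$ (so $w_N^{-1}\beta_{-k} \in Q_+$), one checks that $\gMod{R^w(n\delta)} \subset \gMod{R_{*, w_N}(n\delta)}$; in particular $\Delta(s) \in \gMod{R_{*, w_N}(n\delta)}$.

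Next I plan to verify that $(n\delta)$ is a minimum of $\mathcal{P}^{\preceq_{w_N}}(n\delta)$ in the partial order of Definition \ref{def:modifiedlexico}: any $\underline{\gamma'} \neq (n\delta)$ has its first entry of the form $a\beta_k$ in some class $\{\beta_k\}$ with $1 \leq k \leq N$, and neither condition (i) nor (ii) of Definition \ref{def:modifiedlexico} is satisfied because $a\beta_k \notin \spn_{\mathbb{Z}_{\geq 0}} C_{w_N}$ (a positivity check via the coarse projection $p: \mathfrak{h}^* \to \mathring{\mathfrak{h}}^*$, which sends $\beta_k$ and the contributing elements of $C_{w_N}$ to $\mathring{Q}_\pm$-cones that forbid such a decomposition). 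A parallel projection argument, combined with the generating function of Proposition \ref{prop:weakcuspidaldecomp}(3), shows that every self-dual $\preceq_{w_N}$-cuspidal module of weight $n\delta$ is in fact imaginary cuspidal: any decomposition of $n\delta$ as a nonnegative integer combination of roots in $\Phi_+ \cap \spn_{\mathbb{Z}_{\geq 0}} C_{w_N}$ with a nonzero real-root contribution is ruled out by comparing $p$-coefficients, so the generating-function counts on both sides agree and the $\preceq_{w_N}$-cuspidal simples at weight $n\delta$ are precisely the elements of $T_n^w$. Consequently, the standard module $\Delta((0, s))$ of Theorem \ref{thm:stratification} — whose composition factors all carry the label $\rho = (n\delta)$ by minimality — lies in $\gMod{R^w(n\delta)}$, and hence coincides with the $\Delta(s)$ of Proposition \ref{prop:affineprojresol}.

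With this identification in hand, part (1) is immediate: $\Delta((0, s)) = \Delta(s) \circ \Delta(0)$ (in the notation of Definition \ref{def:standard}) is a projective $R_{*, w_N}(n\delta)$-module, hence has finite projective dimension over $R(n\delta)$ by Proposition \ref{prop:prdimR*w}. Part (2) then follows from Corollary \ref{cor:generalhigherextvanishing}: for $\sigma = (0, s)$ and $\sigma' = (0, t)$ we have $\rho(\sigma) = \rho(\sigma') = (n\delta)$, so $\rho(\sigma) \not< \rho(\sigma')$, giving $\Ext^k_{R(n\delta)}(\Delta(s), L(t)) = 0$ for all $k \geq 1$. The hardest part of the plan will be the identification $\Delta((0, s)) = \Delta(s)$ — specifically the claim that $\preceq_{w_N}$-cuspidals at weight $n\delta$ are imaginary cuspidals, which requires careful bookkeeping with the cuspidal generating functions of Proposition \ref{prop:weakcuspidaldecomp}(3) and affine-root-system arithmetic encoded in the coarse projection $p$.
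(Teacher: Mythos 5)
Your overall strategy---embedding $\gMod{R^w(n\delta)}$ into $\gMod{R_{*,w_N}(n\delta)}$ and then reading off (1) from Proposition \ref{prop:prdimR*w} and (2) from Corollary \ref{cor:generalhigherextvanishing} applied to $w_N$---can be made to work, but the step you yourself flag as the hardest is broken as written. The claim that every $\preceq_{w_N}$-cuspidal simple of weight $n\delta$ lies in $T_n^w$, justified ``by comparing $p$-coefficients,'' is false for a general $N$. The real roots of $C_{w_N}=\minroot\cap w_N\Phi_+$ are $\{\beta_k\mid k>N\}\cup\{\beta_{k'}\mid k'\le 0\}$, and their $p$-images lie in $w\mathring{Q}_-\setminus\{0\}$ and $w\mathring{Q}_+\setminus\{0\}$ respectively, so a real-root contribution to $n\delta$ is not forbidden by positivity: the $p$-images can cancel between the two cones. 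Concretely, in type $A_1^{(1)}$ with coarse type $e$ and $N=1$ (so $\beta_1=\alpha_0$), both $\alpha_1$ and $\alpha_0+\delta$ lie in $C_{w_1}$ and $\alpha_1+(\alpha_0+\delta)=2\delta$; the generating function of Proposition \ref{prop:weakcuspidaldecomp} (3) then gives three self-dual $\preceq_{w_1}$-cuspidal simples at weight $2\delta$ while $\lvert T_2^e\rvert=2$, so not every simple of $R_{*,w_1}(2\delta)$ is imaginary cuspidal, $\Delta((0,s))$ need not lie in $\gMod{R^w(2\delta)}$, and the identification $\Delta((0,s))\simeq\Delta(s)$ fails.

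The repair is to let $N$ depend on $n$. Any decomposition of $n\delta$ into roots of $\Phi_+\cap\spn_{\mathbb{Z}_{\geq 0}}C_{w_N}$ with a nonzero real part must involve at least one root lying $\prec\delta$ (the $p$-images of the roots $\succ\delta$ alone cannot sum to zero), and those roots are exactly the $\beta_k$ with $k>N$, whose heights tend to infinity; choosing $N$ so that $\height\beta_k>\height(n\delta)$ for all $k>N$ kills all such contributions, identifies the simples of $R_{*,w_N}(n\delta)$ with $T_n^w$, and hence identifies the two Serre subcategories and the two projective covers. With that fix your deductions of (1) and (2) are correct. This height bound is precisely the one the paper uses, though it then proceeds differently: it writes $R^w(n\delta)=R(n\delta)/\langle e(*,\beta_k),\,e(\beta_{k'},*)\mid 1\le k\le m,\ -m\le k'\le 0\rangle$ for such an $m$ and reruns the inductive idempotent-peeling argument of Theorem \ref{thm:projresol} and Proposition \ref{prop:prdimR*w} (a two-sided version, via the short exact sequences of Proposition \ref{prop:projresol}), rather than invoking Theorem \ref{thm:stratification} for $w_N$. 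Once the choice of $N$ is made explicit, your route is a legitimate and somewhat slicker alternative that reuses the stratification machinery instead of redoing the induction.
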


\begin{proof}
By Lemma \ref{lem:onerowexist}, there exists a one-row convex order $\preceq$ of coarse type $w$.
Then, $R^w(n\delta) = R^{\preceq}(n\delta)$. 
Define $\beta_k \ (k \in \mathbb{Z})$ as in Lemma \ref{lem:onerow}. 
Note that for sufficiently large $m \in \mathbb{Z}$, we have $\height \beta_k > \height (n\delta)$ for any $\lvert k \rvert > m$. 
Hence, Corollary \ref{cor:cuspidalcriterion} shows that 
\[
R^w (n\delta) = R^{\preceq}(n\delta) = R(n\delta)/\langle e(*, \beta_k), e(\beta_{k'},*) \mid 1 \leq k\leq m, -m \leq k' \leq 0 \rangle. 
\]
Now, the proposition is proved by an induction similar to Theorem \ref{thm:projresol} and Proposition \ref{prop:prdimR*w}. 
\end{proof}

\subsection{Minimal imaginary root modules} \label{sub:minimalimaginary}

Let $w \in \mathring{W}$ and $i \in \mathring{I}$.
In this section, we introduce certain cuspidal $R(d_i\delta)$-module $L_i^w(d_i\delta)$ and prove that the corresponding standard module categorifies the imaginary root vector of \cite{MR3874704}. 

\begin{proposition} [{cf.\cite[Theorem 13.1]{MR3694676} \cite[Corollary 4.6]{MR3874704}}] \label{prop:chambercoweight}
 Let $\preceq, \preceq'$ be two convex orders with the same coarse type $w$. 
 For each $\epsilon \in \{+,-\}$ and $n \in \mathbb{Z}_{\geq 0}$, we have 
 \[
 L^{\preceq}( \widetilde{\epsilon w\alpha_i} + d_i n \delta) = L^{\preceq'}(\widetilde{\epsilon w\alpha_i} + d_i n \delta). 
\]
\end{proposition}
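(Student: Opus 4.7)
The plan is to reduce the comparison to one-row convex orders and then verify cuspidality directly using the explicit determinantial model.

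First, I would reduce to the case of one-row orders. Given $\preceq$ of coarse type $w$, apply Lemma~\ref{lem:convexextension} to a finite subset $A \subset \minroot$ large enough to contain $\alpha = \widetilde{\epsilon w\alpha_i} + d_i n\delta$ together with every positive root that enters the cuspidality conditions for $L^{\preceq}(\alpha)$. This yields a one-row convex order $\preceq_0$ of the same coarse type $w$ that agrees with $\preceq$ on $A$. Since $\Phi_{+,\preceq\alpha}^{\mathrm{min}}$ and $\Phi_{+,\preceq_0\alpha}^{\mathrm{min}}$ agree on the part relevant to cuspidality (the rest being harmless by Corollary~\ref{cor:independenceoforder}), we obtain $L^{\preceq}(\alpha) \simeq L^{\preceq_0}(\alpha)$. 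Doing the same with $\preceq'$, we reduce to the case of two one-row orders $\preceq_0, \preceq_0'$ of coarse type $w$.

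Second, I would invoke the explicit description of cuspidal real-root modules from Proposition~\ref{prop:realrootmodule}. Writing the one-row order $\preceq_0$ via the infinite word $\ldots, i_{-1}, i_0, i_1, \ldots$ and $\alpha = \beta_k$, we have
\[
L^{\preceq_0}(\alpha) \simeq
\begin{cases}
M(s_{i_1}\cdots s_{i_k}\Lambda_{i_k},\; s_{i_1}\cdots s_{i_{k-1}}\Lambda_{i_k}) & \text{if } \epsilon = -,\\[2pt]
\psi_* M(s_{i_0}\cdots s_{i_k}\Lambda_{i_k},\; s_{i_0}\cdots s_{i_{k+1}}\Lambda_{i_k}) & \text{if } \epsilon = +,
\end{cases}
\]
and the same kind of formula for $\preceq_0'$. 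The Dynkin index $i_k \in \mathring{I}$ is forced to equal $i$, because the coset of $\alpha$ modulo $\mathbb{Z}\delta$ depends only on $\alpha$.

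Third, using Lemma~\ref{lem:detsupport} one reads off the supports $\W$ and $\W^*$ of these modules in terms of the Weyl group element $u_k = s_{i_0}\cdots s_{i_{k+1}}$ (respectively $s_{i_1}\cdots s_{i_{k-1}}$) that appears. I would then verify the cuspidality criterion (Corollary~\ref{cor:cuspidalcriterion}) for $L^{\preceq_0}(\alpha)$ with respect to $\preceq_0'$: the support inclusions $\W(L^{\preceq_0}(\alpha))\cap\Phi_+ \subset \Phi_{+,\preceq_0'\alpha}$ and $\W^*(L^{\preceq_0}(\alpha))\cap\Phi_+ \subset \Phi_{+,\succeq_0'\alpha}$ follow once one checks that the inversion set of the underlying Weyl group element lies in $\Phi_{+,\succeq_0'\alpha}$. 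Combined with the uniqueness part of Theorem~\ref{thm:cuspidaldecomp}, this gives $L^{\preceq_0}(\alpha) \simeq L^{\preceq_0'}(\alpha)$.

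The main obstacle is the last verification: showing that the inversion set of $u_k$ (which lives in the coset structure determined by the one-row order $\preceq_0$) consists entirely of roots that are also $\succ_{\preceq_0'}\alpha$. This is where the coarse-type hypothesis enters essentially. The point is that for the extremal real root $\alpha = \widetilde{\epsilon w\alpha_i}+d_in\delta$, the inversion data decomposes according to the cosets modulo $\mathbb{Z}\delta$, and in each coset the order is forced to be monotone by convexity plus the coarse-type condition. Comparing two one-row orders of coarse type $w$ coset by coset, using the fact that within the coset of $\alpha$ the first $n+1$ elements (counted from the extremal end $\widetilde{\epsilon w\alpha_i}$) are uniquely determined, should yield the required inclusion. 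Treating the $A_{2l}^{(2)}$ case will likely require a separate tweak because of the non-standard root spacings encoded by the constants $d_\alpha$.
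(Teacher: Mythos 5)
Your steps 1--2 (reducing to one-row orders via Lemma~\ref{lem:convexextension} and invoking the determinantial description) are fine, but they essentially reproduce Proposition~\ref{prop:realrootmodule}, which is already available; the real content of the proposition is your step 3, and there the argument has a genuine gap. You propose to bound $\W(L^{\preceq_0}(\alpha))\cap\Phi_+$ (resp.\ $\W^*$) by the full inversion set $\{\beta_1,\dots,\beta_k\}$ of the Weyl group element coming from the one-row order $\preceq_0$, and then to show that this inversion set lies in $\Phi_{+,\preceq_0'\alpha}$. That containment is false in general: two one-row orders of the same coarse type may interleave the cosets $w\alpha_j+\mathbb{Z}\delta$ ($j\neq i$) completely differently, so a root $\gamma$ in another coset can satisfy $\gamma\prec_0\alpha$ but $\gamma\succ_0'\alpha$. (Already in affine $A_2$ with $w=e$ and $\alpha=\widetilde{w\alpha_1}$, the sets of real roots $\succeq\alpha$ for the two one-row orders ending $\cdots\prec\alpha_2\prec\alpha_1+\alpha_2\prec\alpha_1$ and $\cdots\prec\alpha_1\prec\alpha_1+\alpha_2\prec\alpha_2$ are $\{\alpha_1\}$ and a much larger set, respectively.) Your "coset by coset" monotonicity observation only controls the coset of $\alpha$ itself, not the others, which is exactly where the comparison breaks down.

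The missing idea — and the decisive point in the paper's proof — is that $p(\alpha)=\epsilon w\alpha_i$ is ($\epsilon$ times) a \emph{simple} root of $\mathring{\Phi}$ with respect to the positive system $w\mathring{\Phi}_+$. For $\gamma\in\W(L^{\preceq}(\alpha))\cap\Phi_+$, cuspidality with respect to $\preceq$ gives $\alpha-\gamma\in\spn_{\mathbb{Z}_{\geq 0}}\Phi^{\mathrm{min}}_{+,\succeq\alpha}$, hence (by the coarse-type condition) $p(\alpha-\gamma)\in w\mathring{Q}_+$; the decomposition $w\alpha_i=p(\gamma)+p(\alpha-\gamma)$ of a simple root then forces $p(\gamma)\in w\mathring{Q}_-$, and the coarse-type condition for $\preceq'$ immediately yields $\gamma\preceq'\delta\preceq'\alpha$, after which Corollary~\ref{cor:cuspidalcriterion} finishes. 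Note that your proposed argument nowhere uses this simplicity and would apply verbatim to an arbitrary real root; but the statement is false for roots whose finite part is not $\pm$ a simple root of $w\mathring{\Phi}_+$ (e.g.\ $\alpha_1+\alpha_2$ in affine $A_2$, where the two choices of convex order on $\{\alpha_1,\alpha_1+\alpha_2,\alpha_2\}$ give non-isomorphic one-dimensional cuspidal modules $L(1)\nabla L(2)$ and $L(2)\nabla L(1)$). This is a structural sign that the approach, as described, cannot close. The $A_{2l}^{(2)}$ case, by contrast, needs no separate treatment here; the spacing $d_i\delta$ is already built into the statement.
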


\begin{proof}
  We only prove the assertion for $\widetilde{w\alpha_i} + d_i n\delta$. 
  The other assertion is proved in the same manner. 

  Put $\beta = \widetilde{w\alpha_i} + d_i n \delta$. 
  Then by the definition of coarse type, we have $\beta \succ \delta$ and $\beta \succ' \delta$. 
  Let $\gamma \in \W(L^{\preceq}(\beta)) \cap \Phi_+$. 
  Since $L^{\preceq}(\beta)$ is $\preceq$-cuspidal, we have $\gamma \preceq \beta$ and $\beta-\gamma \in \spn_{\mathbb{Z}_{\geq 0}} \Phi_{+, \succeq \beta}^{\text{min}}$. 
  Note that if $\alpha \in \Phi_{+, \succeq \beta}^{\text{min}}$, then $\alpha \succeq \beta \succ \delta$ and we obtain $p(\alpha) \in w \mathring{Q}_+$. 
  Since $\beta-\gamma$ is a $\mathbb{Z}_{\geq 0}$ linear combination of such $\alpha$, we have $p(\beta-\gamma) \in w \mathring{Q}_+$. 

  Assume $p(\beta-\gamma) = 0$, that is, $\beta - \gamma \in \mathbb{Z}_{\geq 0}\delta$. 
  Since $\beta - \gamma \in \spn_{\mathbb{Z}_{\geq 0}} \Phi_{+, \succeq \beta}^{\mathrm{min}}$ and $\beta \succ \delta$, we must have $\beta- \gamma = 0$ by convexity. 
  Hence, $\gamma = \beta$ in this case. 

  Next, assume that $p(\beta-\gamma) \neq 0$.  
  Since $w\alpha_i$ is a simple root of $\mathring{\Phi}$ with respect to the positive system $w\mathring{\Phi}_+$ and we have a decomposition $w\alpha_i = p(\gamma) + p(\beta - \gamma)$, 
  we deduce that $p(\gamma) \in w \mathring{Q}_-$. 
  Since the coarse type of $\preceq'$ is $w$, it follows that $\gamma \preceq' \delta \preceq' \beta$.

  We have proved that $\W(L^{\preceq}(\beta)) \cap \Phi_+ \subset \Phi_{+, \preceq' \beta}$. 
  Hence, the assertion follows from Corollary \ref{cor:cuspidalcriterion}. 
\end{proof}

Let $\beta_{\epsilon} = \widetilde{\epsilon w\alpha_i} \ (\epsilon \in \{+, -\})$. \index{$\beta_+, \beta_-$} 
Although they depend on $w$ and $i$, we do not make it explicit as it should be clear from the context. 
For a convex order $\preceq$ of coarse type $w$, the cuspidal module $L^{\preceq}(\beta_{\epsilon})$ only depends on $w$ by Proposition \ref{prop:chambercoweight}. 
Thus, let $L^{w}(\beta_{\epsilon})$ denote this module.   \index{$L^w(\beta_+)$} \index{$L^w(\beta_-)$}
Similarly, $\widehat{L}^{\preceq}(\beta_{\epsilon})$ only depend on the coarse type $w$, so let $\widehat{L}^w(\beta_{\epsilon})$ denote this module. \index{$\widehat{L}^w(\beta_+)$} \index{$\widehat{L}^w(\beta_-)$}

\begin{lemma}[{\cite[Section 3.4]{MR3542489}}] \label{lem:adaptedconvex} 
  Put $F = \spn_{\mathbb{R}} \{\beta_+, \beta_-\}$. \index{$F$}
  Then, there exists a convex preorder $\preceq$ on $\minroot$ such that 
  \begin{enumerate}
  \item all the roots in $F \cap \minroot$ belong to the same $\preceq$-equivalence class, 
  \item for any $\alpha_{\epsilon} \in (p^{-1}(w\mathring{Q}_{\epsilon}) \setminus F) \cap \minroot \ (\epsilon \in \{+, -\})$ and $\beta \in F \cap \minroot$, we have $\alpha_- \prec \beta \prec \alpha_+$. 
  \end{enumerate}
\end{lemma}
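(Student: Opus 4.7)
The plan is to construct the preorder explicitly using a single linear functional that collapses precisely the face $F$. Choose a linear functional $\ell_0$ on $\mathring{\mathfrak{h}}^*_{\mathbb{R}}$ with $\ell_0(w\alpha_i)=0$ and $\ell_0(w\alpha_j)>0$ for all $j \in \mathring{I}\setminus\{i\}$; this is possible because $w\alpha_i$ spans a one-dimensional face of the cone $w\mathring{Q}_+$. Pull $\ell_0$ back through the projection $p$ to obtain a functional $\ell$ on $\mathfrak{h}^*_{\mathbb{R}}$ satisfying $\ell(\delta)=0$. Then $\ell$ vanishes on $F=\mathbb{R}\beta_+ + \mathbb{R}\delta$, takes strictly positive values on $p^{-1}(w\mathring{Q}_+)\setminus F$, and strictly negative values on $p^{-1}(w\mathring{Q}_-)\setminus F$.

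Next I would define the preorder in two tiers. For the primary tier use $\ell$: declare $\alpha \prec \beta$ whenever $\ell(\alpha)<\ell(\beta)$, and collapse the whole set $F\cap\minroot$ (on which $\ell\equiv 0$) into a single equivalence class, which immediately gives condition (1) and also the strict inequalities in condition (2). For the secondary tier, i.e.\ inside each of the two half-spaces $\{\ell>0\}\cap\minroot$ and $\{\ell<0\}\cap\minroot$, refine by means of a convex order produced by Lemma~\ref{lem:convexextension}: each half-space is a finitely-generated cone of real roots together with possibly some rescalings of them, and the lemma allows us to extend any convex order on the minimal roots in it to a full convex order. Combining the tiers gives a total preorder on $\minroot$ satisfying (1) and (2).

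Finally I would verify the convexity of this preorder in the sense of Definition~\ref{def:convexpreorder}. At the central equivalence class $C=F\cap\minroot$ the verification is immediate from $\ell$: any element of $\spn_{\mathbb{R}_{\geq 0}}\{\prec C\}+\spn_{\mathbb{R}}C$ has $\ell\leq 0$, while every nonzero element of $\spn_{\mathbb{R}_{\geq 0}}\{\succ C\}$ has $\ell>0$ (because $\ell$ is strictly positive on a set of generators), so the two cones meet only at $0$; the symmetric case is identical. At an equivalence class $C$ lying wholly inside $\{\ell>0\}$ or $\{\ell<0\}$, the cones involved split as a sum of a part strictly separated by $\ell$ and a part coming from the secondary convex order, so convexity reduces to convexity of the chosen secondary order plus a strict sign separation coming from $\ell$.

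The main obstacle I anticipate is the bookkeeping in the last step: one has to check convexity uniformly, showing that the combination of the $\ell$-separation (primary) with the secondary convex order on each half-space indeed produces a globally convex preorder, rather than merely a preorder satisfying (1) and (2). The key estimate is that any root projecting to $\pm\mathbb{R}w\alpha_i$ automatically lies in $F$, so mixing elements across $\ell$-levels cannot accidentally produce a root lying in the opposite half-space, and this is precisely what the choice of $\ell$ guarantees.
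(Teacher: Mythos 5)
Your functional $\ell$ is exactly (the negative of) the real part of the charge the paper uses, and your verification of convexity at the central class $F\cap\minroot$ is fine. The gap is everything after that. First, the primary tier as literally stated is not convex inside a half-space: if $\alpha,\beta$ are roots with $\ell(\alpha),\ell(\beta)>0$ and $\alpha+\beta\in\Phi$, then $\ell(\alpha+\beta)>\max(\ell(\alpha),\ell(\beta))$, so ordering by $\ell$-value puts $\alpha+\beta$ above both summands instead of between them. If you instead mean the three-class split by the sign of $\ell$, that preorder is also not convex in the sense of Definition~\ref{def:convexpreorder}: for the top class $C=\{\ell>0\}\cap\minroot$ one has $\delta=(\widetilde{w\alpha_j}+d\delta)-\widetilde{w\alpha_j}\in\spn_{\mathbb{R}}C$ while also $\delta\in\spn_{\mathbb{R}_{\geq0}}\{x\prec C\}$, so condition (2) fails. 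Hence the refinement is not optional bookkeeping; it carries all the content, and your sketch does not establish it. Worse, it is false for an arbitrary choice of secondary convex orders: take a secondary order on $\{\ell>0\}\cap\minroot$ in which $\widetilde{w\alpha_j}\prec\alpha\prec\widetilde{w\alpha_j}+d\delta$ for some root $\alpha$ (such convex orders on that subset exist, e.g.\ restrictions of one-row orders with $\widetilde{w\alpha_j}$ on the ``$\prec\delta$'' side). Then for $C=\{\alpha\}$ the vector $v=\widetilde{w\alpha_j}+d\delta$ lies in $\spn_{\mathbb{R}_{\geq0}}\{x\succ C\}$ and also equals $d\delta+\widetilde{w\alpha_j}\in\spn_{\mathbb{R}_{\geq0}}\{x\prec C\}$ (since $\delta$ sits in the middle class), violating convexity. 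Your claimed reduction to ``secondary convexity plus sign separation'' cannot see this, because the secondary order on $\{\ell>0\}\cap\minroot$ carries no information about $\delta$. Separately, Lemma~\ref{lem:convexextension} only extends convex orders on \emph{finite} subsets to one-row orders on $\minroot$; it does not produce convex orders on the infinite half-spaces, nor does it address gluing.

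The missing ingredient is a second, strictly positive linear form. The paper sets $c(\beta)=-\sum_{j\in\mathring{I}\setminus\{i\}}\langle w\omega_j^{\lor},\beta\rangle+\sqrt{-1}\,\langle\rho^{\lor},\beta\rangle$ (real part $=-\ell$, imaginary part strictly positive on all positive roots) and orders $\minroot$ by $\arg c$, citing the general construction of charge-type convex preorders in \cite[Section 1.2]{MR3542489}. This accomplishes two things your construction lacks: every equivalence class is the set of minimal roots on a ray in the charge plane, so all the cone conditions of Definition~\ref{def:convexpreorder} become two-dimensional statements about arguments; and ``$c(v)=0$ with $v$ a nonnegative combination of positive roots'' forces $v=0$ because the imaginary part is strictly positive. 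In particular the induced ``secondary'' order automatically satisfies $\gamma+d\delta\prec\gamma$ on the $\{\ell>0\}$ side, which is precisely what rules out the counterexample above. You should replace your two-tier scheme by this charge construction (or otherwise prove that your secondary orders can be chosen with the required compatibility, which amounts to the same thing).
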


\begin{proof}
  Let $\{\omega_j^{\lor}\}_{j \in \mathring{I}} \subset \mathring{\mathfrak{h}}$ be the set of fundamental coweights for the simple system $\{\alpha_j\}_{j \in \mathring{I}}$. 
  Let $\rho^{\lor} \in \mathfrak{h}$ be a coweight such that $\langle \rho^{\lor}, \alpha_j \rangle = 1$ for all $j \in I$.  
  We introduce an $\mathbb{R}$-linear map $c: \spn_{\mathbb{R}} \Phi \to \mathbb{C}$ given by  
  \[
  c(\beta) = - \sum_{j \in \mathring{I}\setminus \{i \}}  \langle  w \omega_j^{\lor}, \beta \rangle + \sqrt{-1} \langle \rho^{\lor}, \beta \rangle. 
  \]
  As discussed in \cite[Section 1.2]{MR3542489}, we can define a convex preorder $\preceq^c$ on $\minroot$ by 
  \[
  \alpha \preceq^c \beta \Leftrightarrow \arg c(\alpha) \leq \arg c(\beta),
  \]
where the arguments take value in $(0, \pi)$. 
Then, $\preceq^c$ satisfies the requirements. 
\end{proof}

%In what follows, we consider the case $(X_N^{(r)}, i) \neq (A_{2l}^{(2)}, 2l)$. 
%The $(X_N^{(r)}, i)= (A_{2l}^{(2)}, 2l)$ requires separate discussion, which is addressed in the appendix. 

Note that $F \cap \Phi_+$ is an affine root system of rank two as indicated in Figure \ref{fig:positiveroots}.  
We specify its simple roots $\{ \alpha'_-, \alpha'_+ \}$ as follows.   \index{$\alpha'_+, \alpha'_-$}
If $(X_N^{(r)}, i) \neq (A_{2l}^{(2)}, l)$, then it is of type $A_1^{(1)}$ with simple system $\{\alpha'_- = \beta_-, \alpha'_+ = \beta_+ \}$ as in the left figure.
If $(X_N^{(r)}, i) = (A_{2l}^{(2)}, l)$ and $w\alpha_i \in \mathring{\Phi}_+$, then it is of type $A_2^{(2)}$ with simple system $\{\alpha'_- = \widetilde{-2w\alpha_i} = -2w\alpha_i + \delta, \alpha'_+ = \beta_+ = w\alpha_i \}$ as in the middle figure. 
If $(X_N^{(r)}, i) = (A_{2l}^{(2)},l)$ and $w\alpha_i \in \mathring{\Phi}_-$, then it is of type $A_2^{(2)}$ with simple system $\{\alpha'_- = \beta_- = -w\alpha_i, \alpha'_+ = \widetilde{2w\alpha_i} = 2w\alpha_i + \delta \}$ as in the right figure. 

\begin{figure} 
  \centering
  \begin{tikzpicture}
   \begin{scope}
   \draw[->] (0,0) -- (0,1) node[above] {$d_i\delta$} ; 
   \draw[->] (0,1) -- (0,2) node[above] {$2d_i \delta$} ; 
   \draw[->] (0,2) -- (0,3) node[above] {$3d_i\delta$}; 
   \draw[dotted] (0, 3.5) -- (0, 4) ; 
   \draw[->] (0,0) -- (1,0.5) node[right] {$\alpha'_+ = \beta_+$} ;
   \draw[->] (0,0) -- (1,1.5) ;
   \draw[->] (0,0) -- (1,2.5) ;
   \draw[->] (0,0) -- (1,3.5) ;
   \draw[->] (0,0) -- (-1,0.5) node[left] {$\alpha'_- = \beta_-$} ;
   \draw[->] (0,0) -- (-1,1.5) ;
   \draw[->] (0,0) -- (-1,2.5) ;
   \draw[->] (0,0) -- (-1,3.5) ; 
   \draw (0,0) node[below] {$0$} ; 
   \draw[dotted] (1,4) -- (1,4.5) ;
   \draw[dotted] (-1,4) -- (-1,4.5) ; 
   \end{scope}
  
   \begin{scope}[xshift = 5.5cm]
   \draw (0,0) node[below] {$0$}; 
   \draw[->] (0,0) -- (0,2) node[above] {$\delta$} ; 
   \draw[->] (0,2) -- (0,4) node[above] {$2\delta$}; 
   \draw[dotted] (0,4.5) -- (0,5) ;
   \draw[->] (0,0) -- (1, 0.5) node[right] {$\alpha'_+ = \beta_+$} ; 
   \draw[->] (0,0) -- (1,2.5) ;
   \draw[->] (0,0) -- (1,4.5) ; 
   \draw[->] (0,0) -- (2,3) ;
   \draw[->] (0,0) -- (2,7) ;
   \draw[->] (0,0) -- (-1,1.5) node[left] {$\beta_-$} ;
   \draw[->] (0,0) -- (-1,3.5) ;
   \draw[->] (0,0) -- (-1,5.5) ;
   \draw[->] (0,0) -- (-2,1) node[left] {$\alpha'_-$} ;
   \draw[->] (0,0) -- (-2,5) ;
   \draw[dotted] (1.5,6) -- (1.5,6.5) ;
   \draw[dotted] (-1.5,5.5) -- (-1.5,6) ;
   \end{scope}
  
   \begin{scope}[xshift = 11cm]
      \draw (0,0) node[below] {$0$}; 
      \draw[->] (0,0) -- (0,2) node[above] {$\delta$} ; 
      \draw[->] (0,2) -- (0,4) node[above] {$2\delta$}; 
      \draw[dotted] (0,4.5) -- (0,5) ;
      \draw[->] (0,0) -- (-1, 0.5) node[left] {$\alpha'_- = \beta_-$} ; 
      \draw[->] (0,0) -- (-1,2.5) ;
      \draw[->] (0,0) -- (-1,4.5) ; 
      \draw[->] (0,0) -- (-2,3) ;
      \draw[->] (0,0) -- (-2,7) ;
      \draw[->] (0,0) -- (1,1.5) node[right] {$\beta_+$} ;
      \draw[->] (0,0) -- (1,3.5) ;
      \draw[->] (0,0) -- (1,5.5) ;
      \draw[->] (0,0) -- (2,1) node[right] {$\alpha'_+$} ;
      \draw[->] (0,0) -- (2,5) ;
      \draw[dotted] (1.5,5.5) -- (1.5,6) ;
      \draw[dotted] (-1.5,6) -- (-1.5,6.5) ;
   \end{scope}
  \end{tikzpicture}
  %\alt{Images of the positive roots in rank 2 affine root systems}
  \caption{Affine rank 2 positive roots} \label{fig:positiveroots}
  \end{figure}
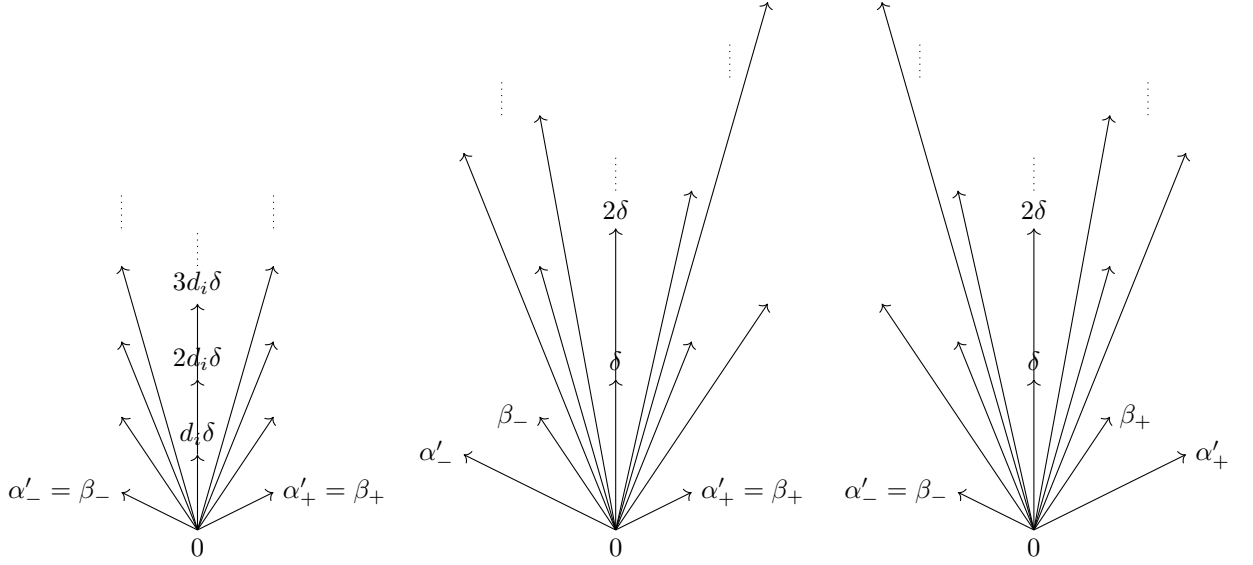 

We have two convex orders $\preceq_0^+, \preceq_0^-$ on $F \cap \minroot$ determined by 
\[
\alpha'_- \prec_0^+ \alpha'_+, \alpha'_+ \prec_0^- \alpha'_-.  
\]
In Figure \ref{fig:positiveroots}, as we proceed clockwise, the vectors get bigger with respect to $\preceq_0^+$. 
Let $\preceq$ be a convex preorder on $\minroot$ as in Lemma \ref{lem:adaptedconvex}. 
The convex orders $\preceq_0^+$ and $\preceq_0^-$ above allow us to refine $\preceq$ on $F \cap \minroot$, as in \cite[Lemma 1.20]{MR3771147}. 
Making them even finer, we obtain convex orders $\preceq^+$ and $\preceq^-$ on $\minroot$ \cite[Proposition 1.21]{MR3771147}: \index{$\preceq^+, \preceq^-$}
they satisfy the second condition of Lemma \ref{lem:adaptedconvex} and 
\[ 
\alpha'_- \prec^+ \delta \prec^+ \alpha'_+, \alpha'_+ \prec^- \delta \prec^- \alpha'_-. 
\]
The coarse type of $\preceq^+$ is $w$ and that of $\preceq^-$ is $ws_i$. 
Note that the convex order introduced in \cite[Example 3.6]{MR3694676} is an example of $\preceq^+$.

\begin{lemma} \label{lem:chambercoweightmodule}
  Consider convex orders $\preceq^+$ and $\preceq^-$ on $\minroot$ as above.  
  Then, $L^{\preceq^+}(\alpha'_{\epsilon}) \simeq L^{\preceq^-} (\alpha'_{\epsilon})$ for $\epsilon \in \{+,-\}$.  
\end{lemma}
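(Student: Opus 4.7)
The plan is to deduce the isomorphism from Corollary~\ref{cor:independenceoforder}, which says that an inclusion of either the ``$\preceq$'' or ``$\succeq$'' cone of minimal roots lets us transfer cuspidality between two weakly convex preorders. Because $\alpha'_\epsilon$ is a real root, its cuspidal module (with respect to any convex order making it cuspidal) is unique up to isomorphism (cf.\ Proposition~\ref{prop:weakcuspidaldecomp}(3) or the discussion after Proposition~\ref{prop:realcuspidal}), so once we show $L^{\preceq^+}(\alpha'_\epsilon)$ is also $\preceq^-$-cuspidal we are done.

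First I would compare the minimal-root cones explicitly. Both $\preceq^+$ and $\preceq^-$ refine the preorder $\preceq$ of Lemma~\ref{lem:adaptedconvex}, so they agree on $\minroot \setminus F$: every root of $p^{-1}(w\mathring{Q}_-) \setminus F$ is smaller than every root of $F \cap \minroot$, and every root of $p^{-1}(w\mathring{Q}_+)\setminus F$ is larger. The orders differ only on $F \cap \minroot$, where $\alpha'_- \prec^+ \delta \prec^+ \alpha'_+$ while $\alpha'_+ \prec^- \delta \prec^- \alpha'_-$.

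For the case $\epsilon = +$, I would apply Corollary~\ref{cor:independenceoforder} via the $\succeq$-cone. In $\preceq^+$, the root $\alpha'_+$ is the maximum element of $F\cap\minroot$, so
\[
\Phi_{+,\succeq^+ \alpha'_+}^{\mathrm{min}} = \{\alpha'_+\} \cup \bigl( (p^{-1}(w\mathring{Q}_+) \setminus F) \cap \minroot \bigr).
\]
In $\preceq^-$, the root $\alpha'_+$ is the minimum of $F\cap\minroot$, so
\[
\Phi_{+,\succeq^- \alpha'_+}^{\mathrm{min}} = (F \cap \minroot) \cup \bigl( (p^{-1}(w\mathring{Q}_+) \setminus F) \cap \minroot \bigr).
\]
The inclusion $\Phi_{+,\succeq^+ \alpha'_+}^{\mathrm{min}} \subset \Phi_{+,\succeq^- \alpha'_+}^{\mathrm{min}}$ is then immediate, so Corollary~\ref{cor:independenceoforder} implies that $L^{\preceq^+}(\alpha'_+)$ is $\preceq^-$-cuspidal, and uniqueness for real roots yields $L^{\preceq^+}(\alpha'_+) \simeq L^{\preceq^-}(\alpha'_+)$.

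The case $\epsilon = -$ is symmetric, this time via the $\preceq$-cone: $\alpha'_-$ is the minimum of $F\cap\minroot$ in $\preceq^+$ and the maximum in $\preceq^-$, which yields $\Phi_{+,\preceq^+ \alpha'_-}^{\mathrm{min}} \subset \Phi_{+,\preceq^- \alpha'_-}^{\mathrm{min}}$, and the same reasoning gives $L^{\preceq^+}(\alpha'_-) \simeq L^{\preceq^-}(\alpha'_-)$. There is really no serious obstacle; the only point to be careful about is distinguishing the cases $(X_N^{(r)},i) \ne (A_{2l}^{(2)},l)$ and $(X_N^{(r)},i) = (A_{2l}^{(2)},l)$ when identifying $\alpha'_\pm$, but this affects only which real root we are looking at, not the combinatorial argument.
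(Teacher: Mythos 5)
Your proposal is correct and follows essentially the same route as the paper: both deduce that a cuspidal module for one order remains cuspidal for the other via Corollary~\ref{cor:independenceoforder} (you use the $\succeq$-cone for $\alpha'_+$ and the $\preceq$-cone for $\alpha'_-$, while the paper transfers $\alpha'_+$ in the opposite direction, but this is the same corollary applied symmetrically), and both conclude by uniqueness of the self-dual cuspidal module over a real root. The explicit descriptions of the cones $\Phi^{\mathrm{min}}_{+,\succeq^{\pm}\alpha'_+}$ and $\Phi^{\mathrm{min}}_{+,\preceq^{\pm}\alpha'_-}$ that you give are valid consequences of Lemma~\ref{lem:adaptedconvex}, so the required inclusions hold.
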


\begin{proof}
  Observe that 
  \[
  \Phi_{+, \preceq^+ \alpha'_-} \subset \Phi_{+, \preceq^- \alpha'_-}, \Phi_{+, \preceq^- \alpha'_+} \subset \Phi_{+, \preceq^+ \alpha'_+}. 
  \]
  Hence, Corollary \ref{cor:independenceoforder} shows that $L^{\preceq^+}(\alpha'_-)$ is $\preceq^-$-cuspidal and $L^{\preceq^-}(\alpha'_+)$ is $\preceq^+$-cuspidal. 
  For a real root, self-dual cuspidal module is unique by Proposition \ref{prop:realrootmodule}, hence the lemma follows.  
\end{proof}

\begin{lemma} \label{lem:minimalpair}
  Consider convex orders $\preceq^+$ and $\preceq^-$ on $\minroot$ as above.  
  Then, 
  \begin{enumerate}
  \item $(\beta_-, \beta_+)$ is a minimal element of $\{ \underline{\gamma} \in \mathcal{P}^{\preceq^+}(d_i \delta) \mid \underline{\gamma} > (d_i\delta) \}$. 
  \item $(\beta_+, \beta_-)$ is a minimal element of $\{ \underline{\gamma} \in \mathcal{P}^{\preceq^-}(d_i \delta) \mid \underline{\gamma} > (d_i\delta) \}$. 
  \end{enumerate}
\end{lemma}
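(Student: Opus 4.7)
We prove (1); part (2) follows by an entirely analogous argument swapping the roles of $\beta_-$ and $\beta_+$. First I verify $(\beta_-, \beta_+) \in \mathcal{P}^{\preceq^+}(d_i\delta)$: a direct computation using the explicit formulas for $\alpha'_\pm$ and $\beta_\pm$ in each of the three cases of Figure~\ref{fig:positiveroots} shows $\beta_- + \beta_+ = d_i\delta$. Combined with $\beta_- \prec^+ \delta \prec^+ \beta_+$, which is built into the construction of $\preceq^+$ via Lemma~\ref{lem:adaptedconvex}, and the fact that $\preceq^+$ is a convex order (so $\{\beta_\pm\}$ are distinct singleton equivalence classes), this gives $(\beta_-, \beta_+) \in \mathcal{P}^{\preceq^+}(d_i\delta)$. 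The inequality $(d_i\delta) < (\beta_-, \beta_+)$ is a direct verification of Definition~\ref{def:bilexicographic}: condition~(1) with $k=1$ holds because $d_i\delta \in \spn_{\mathbb{Z}_{\geq 0}}\{\delta\} \subset \spn_{\mathbb{Z}_{\geq 0}} \Phi_{+, \succeq^+ \beta_-}^{\mathrm{min}}$ while $d_i\delta \notin \mathbb{Z}_{\geq 0}\beta_-$, and condition~(2) is analogous.

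For minimality, suppose $\underline{\gamma} = (\gamma_1, \ldots, \gamma_n) \in \mathcal{P}^{\preceq^+}(d_i\delta)$ satisfies $(d_i\delta) < \underline{\gamma} < (\beta_-, \beta_+)$. From $\underline{\gamma} > (d_i\delta)$, condition~(1) of Definition~\ref{def:bilexicographic} forces $\gamma_1$ to be a real root with $\gamma_1 \prec^+ \delta$ (the imaginary alternative $\gamma_1 = a\delta$ with $a > d_i$ is excluded because $\gamma_1$ is only one summand of $d_i\delta$), and symmetrically $\gamma_n$ is real with $\gamma_n \succ^+ \delta$. From $\underline{\gamma} < (\beta_-, \beta_+)$, condition~(1) splits into two cases: either (a) $\beta_- \prec^+ \gamma_1 \prec^+ \delta$ (at $k=1$), or (b) $\gamma_1 = \beta_-$ and $\gamma_2 \succ^+ \beta_+$ (at $k=2$). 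Case~(b) is excluded directly by the convexity of $\preceq^+$: if $\gamma_1 = \beta_-$ then $\sum_{j \geq 2} \gamma_j = \beta_+$, yet every $\gamma_j$ with $j \geq 2$ satisfies $\gamma_j \succeq^+ \gamma_2 \succ^+ \beta_+$, placing $\beta_+$ in the intersection of $\spn_{\mathbb{R}} \{\beta_+\}$ and $\spn_{\mathbb{R}_{\geq 0}} \{\alpha \in \minroot \mid \alpha \succ^+ \beta_+\}$, which is $\{0\}$ by Definition~\ref{def:convexpreorder}, contradicting $\beta_+ \neq 0$. Case~(a) is excluded by a case analysis of $F \cap \Phi_+$: by Lemma~\ref{lem:adaptedconvex} such $\gamma_1$ must lie in $F \cap \Phi_+$, and a direct check of the affine rank-two subsystem shows that $d_i\delta - \gamma_1 \notin Q_+$ (the $\delta$-coefficient becomes strictly negative), so no completion $\gamma_2, \ldots, \gamma_n \in \Phi_+$ summing to it can exist. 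Hence no such $\underline{\gamma}$ exists, proving minimality.

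The main obstacle lies in the case analysis of case~(a) for the $A_{2l}^{(2)}$ configuration at $i = l$, where $F \cap \Phi_+$ is of affine type $A_2^{(2)}$ rather than $A_1^{(1)}$. In this setting the real roots in the interval $(\beta_-, \delta)$ under $\preceq^+$ include not only the short roots $\beta_- + k\delta$ but also the long roots of the form $\alpha'_- + 2k\delta$, with positions governed by the refinement $\preceq_0^+$, and the verification that $d_i\delta - \gamma_1 \notin Q_+$ must be carried out for each family separately, using the specific $A_2^{(2)}$ root structure shown in the middle and right panels of Figure~\ref{fig:positiveroots}. In contrast, the generic $A_1^{(1)}$ case is immediate: the only candidates are $\beta_- + k\delta$ with $k \geq 1$, and $d_i\delta - (\beta_- + k\delta) = \beta_+ - k d_i\delta$ manifestly fails to lie in $Q_+$ once $k \geq 1$ since its $\delta$-coefficient is then negative.
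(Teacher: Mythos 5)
Your argument is correct and is essentially a detailed write-out of what the paper dismisses as ``immediate from the form of $\preceq^+$ and $\preceq^-$'': you unwind the bilexicographic order, force the first part into $F \cap \Phi_+$ strictly between $\beta_-$ and $\delta$, and eliminate it by inspecting the rank-two subsystem. One small inaccuracy in a justification: in the $A_{2l}^{(2)}$ configuration with $w\alpha_l \in \mathring{\Phi}_-$, taking $\gamma_1 \in \{-w\alpha_l + \delta,\ -2w\alpha_l + \delta\}$ gives $\delta - \gamma_1 \in \{w\alpha_l, 2w\alpha_l\}$, whose $\delta$-coefficient is zero rather than strictly negative; the conclusion $\delta - \gamma_1 \notin Q_+$ still holds because the finite part is a nonzero element of $\mathring{Q}_-$ (and you should also allow the parts $\gamma_k$ to be positive integer multiples $a\alpha$ of minimal roots, which only strengthens the same estimates).
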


\begin{proof}
  It is immediate from the form of $\preceq^+$ and $\preceq^-$.   
\end{proof}

\begin{proposition}[{cf. \cite[Lemma 13.3]{MR3694676}}] \label{prop:imaginarycuspidal}
  Assume that $(X_N^{(r)}, i) \neq (A_{2l}^{(2)}, l)$. 
  Then, 
  \begin{enumerate}
  \item $(L^w(\beta_-), L^w(\beta_+))$ is unmixing, and
  \item the head of $L^w(\beta_-) \circ L^w(\beta_+)$ is a self-dual simple $R^w(d_i \delta)$-module, not just an $R(d_i\delta)$-module. 
  \end{enumerate}\end{proposition}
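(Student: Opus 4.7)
The plan for (1) is to leverage the convex order $\preceq^-$ of coarse type $ws_i$, in which $\beta_+ \prec^- \beta_-$ by Lemma \ref{lem:minimalpair}. By Lemma \ref{lem:chambercoweightmodule} the modules $L^w(\beta_\pm)$ coincide with $L^{\preceq^-}(\beta_\pm)$ and are therefore $\preceq^-$-cuspidal. Since $\{\beta_+\} \prec^- \{\beta_-\}$ are distinct singleton equivalence classes under the total order $\preceq^-$, Lemma \ref{lem:cuspidalseq} applied with $C_1 = \{\beta_+\}$, $C_2 = \{\beta_-\}$ and $L_1 = L^w(\beta_+)$, $L_2 = L^w(\beta_-)$ yields immediately that $(L^w(\beta_-), L^w(\beta_+))$ is unmixing.

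For (2), once unmixing is in hand, Lemma \ref{lem:unmixing}(2),(3) gives at once that $L := \hd(L^w(\beta_-) \circ L^w(\beta_+))$ is a self-dual simple $R(d_i\delta)$-module. What remains is to show $L$ factors through $R^w(d_i\delta) = R^{\preceq^+}(d_i\delta)$, i.e.\ that $L$ is $\preceq^+$-cuspidal. The strategy is in two steps: first show $L$ is $\preceq$-cuspidal for the weakly convex preorder $\preceq$ from Lemma \ref{lem:adaptedconvex}, and then upgrade to $\preceq^+$-cuspidality via the refinement relation together with a weight-count.

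For the $\preceq$-cuspidality I would apply the Mackey identity $\W(L^w(\beta_-) \circ L^w(\beta_+)) = \W(L^w(\beta_-)) + \W(L^w(\beta_+))$ together with the $\preceq^+$-cuspidality of each factor: since $\beta_-$ is the minimum and $\beta_+$ the maximum of $F \cap \minroot$ under $\preceq^+$, both $\Phi^{\mathrm{min}}_{+, \preceq^+ \beta_-}$ and $\Phi^{\mathrm{min}}_{+, \preceq^+ \beta_+}$ lie inside $\Phi^{\mathrm{min}}_{+, \preceq C}$ for $C = F \cap \minroot$, giving $\W(L) \subset \spn_{\mathbb{Z}_{\geq 0}} \Phi^{\mathrm{min}}_{+, \preceq C}$; the bound on $\W^*(L)$ follows symmetrically, so $L$ is $\preceq$-cuspidal by Definition \ref{def:weakcuspidal}. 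To upgrade, I would invoke Proposition \ref{prop:weakcuspidaldecomp}(4) applied to the refinement $\preceq^+$ of $\preceq$: writing the $\preceq^+$-cuspidal decomposition $L \simeq \hd(L_m^+ \circ \cdots \circ L_1^+)$ with each $L_k^+$ cuspidal of weight $a_k \alpha_k$, the $\preceq$-cuspidality of $L$ forces all $\alpha_k$ into the single class $C = F \cap \minroot$. The hypothesis $(X_N^{(r)}, i) \neq (A_{2l}^{(2)}, l)$ enters here: $F \cap \Phi_+$ is of type $A_1^{(1)}$ with simple roots $\beta_\pm$, so $F \cap \minroot = \{\beta_- + m d_i \delta : m \geq 0\} \cup \{\delta\} \cup \{\beta_+ + m d_i \delta : m \geq 0\}$. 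From $\sum_k a_k \alpha_k = d_i \delta$, projecting to $\mathring{\mathfrak{h}}^*$ forces the $\beta_+$- and $\beta_-$-type contributions to cancel, after which comparing $\delta$-coefficients (each $\beta_\pm + m d_i \delta$ contributes an integer multiple of $d_i$ per unit of $a_k$ while $\delta$ contributes $1$) forces every $\alpha_k$ to equal $\delta$. Hence $L$ is $\preceq^+$-cuspidal.

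The main obstacle is this concluding weight/combinatorial step: it depends essentially on the rank-two $A_1^{(1)}$ structure of $F$. In the excluded case $(A_{2l}^{(2)}, l)$, $F$ is of type $A_2^{(2)}$, and the decomposition $2\delta = 2\alpha'_- + 4\alpha'_+$ admits $\preceq^+$-cuspidal components other than $\delta$, so the argument genuinely breaks down and that case must be addressed separately as alluded to in the introduction.
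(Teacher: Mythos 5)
Part (1) of your proposal is the paper's argument verbatim ($\preceq^-$-cuspidality via Lemma \ref{lem:chambercoweightmodule} plus Lemma \ref{lem:cuspidalseq}), and the reduction of (2) to showing that $L:=\hd(L^w(\beta_-)\circ L^w(\beta_+))$ is $\preceq^+$-cuspidal is also fine, as is your intermediate observation that $L$ is cuspidal for the coarse preorder $\preceq$ of Lemma \ref{lem:adaptedconvex}, so that its $\preceq^+$-cuspidal decomposition is supported on $F\cap\minroot$.

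The gap is in the concluding weight count. The equation $\sum_k a_k\alpha_k=d_i\delta$ with $\alpha_k\in F\cap\minroot$ does \emph{not} force every $\alpha_k$ to equal $\delta$: since $\beta_-+\beta_+=d_i\delta$, the decomposition $(\beta_-,\beta_+)$ is a second solution (e.g.\ in untwisted type with $w\alpha_i\in\mathring{\Phi}_+$ one has $\beta_+=w\alpha_i$ with $\delta$-coefficient $0$ and $\beta_-=-w\alpha_i+\delta$, so a single $(\beta_-,\beta_+)$ pair contributes exactly $d_i\delta$ with no $\delta$-parts at all). This solution corresponds to the genuine a priori possibility $L\simeq L^w(\beta_+)\nabla L^w(\beta_-)$, i.e.\ to $L^w(\beta_+)\circ L^w(\beta_-)$ being simple, in which case $L$ would \emph{not} be $\preceq^+$-cuspidal. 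Excluding it is the actual content of the proof and cannot be done by weight bookkeeping: the paper shows $\Lambda(L^w(\beta_-),L^w(\beta_+))+\Lambda(L^w(\beta_+),L^w(\beta_-))=-2(\beta_-,\beta_+)=2(\alpha_i,\alpha_i)\neq 0$ using Lemma \ref{lem:unmixingr} for both unmixing orders, concludes non-simplicity of $L^w(\beta_+)\circ L^w(\beta_-)$ from Proposition \ref{prop:commuting}~(1), and then uses Proposition \ref{prop:properstandardcomposition}~(2) together with Lemma \ref{lem:minimalpair}~(1) and Theorem \ref{thm:rmatrix}~(5) to identify $L$ with the socle $L^w(\beta_+)\Delta L^w(\beta_-)$, a composition factor other than the head and hence cuspidal. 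You need to add this R-matrix step (or an equivalent argument ruling out simplicity of $L^w(\beta_+)\circ L^w(\beta_-)$) to close the proof; your diagnosis of why the $(A_{2l}^{(2)},l)$ case is excluded is correct but applies on top of this missing ingredient, not in place of it.
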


\begin{proof}
(1) 
Note that $\alpha'_+ = \beta_+$ and $\alpha'_- = \beta_-$. 
Since $\preceq^+$ is of coarse type $w$, $L^w(\beta_+) \simeq L^{\preceq^+}(\beta_+), L^w(\beta_-) \simeq L^{\preceq^+}(\beta_-)$. 
By Lemma \ref{lem:chambercoweightmodule}, $L^w(\beta_-)$ and $L^w(\beta_+)$ are $\preceq^-$-cuspidal. 
Since $\beta_- \succ^- \beta_+$, we deduce that $(L^w(\beta_-), L^w(\beta_+))$ is unmixing by Lemma \ref{lem:cuspidalseq}. 

(2) By (1) and Lemma \ref{lem:unmixing}, the head of $L^w(\beta_-) \circ L^w(\beta_+)$ is self-dual simple. 
It remains to prove that it is an $R^w(d_i\delta)$-module. 
$L^w(\beta_-)$ and $L^w(\beta_+)$ are also $\preceq^+$-cuspidal and $\beta_- \prec^+ \beta_+$, so $(L^w(\beta_+), L^w(\beta_-))$ is unmixing. 
Hence, Lemma \ref{lem:unmixingr} shows 
\[ 
\Lambda(L^w(\beta_-), L^w(\beta_+)) + \Lambda(L^w(\beta_+), L^w(\beta_-)) = -2(\beta_-, \beta_+) = 2(\alpha_i,\alpha_i) \neq 0. 
\]
Proposition \ref{prop:commuting} (1) shows that $L^w(\beta_+) \circ L^w(\beta_-)$ is not simple. 
On the other hand, Proposition \ref{prop:properstandardcomposition} (2) and Lemma \ref{lem:minimalpair} (1) show that 
every composition factor of $L^w(\beta_+) \circ L^w(\beta_-)$ other than $L^w(\beta_+) \nabla L^w(\beta_-)$ is a $\preceq^+$-cuspidal $R(d_i \delta)$-module. 
Hence, $L^w(\beta_-) \nabla L^w(\beta_+) \simeq L^w(\beta_+) \Delta L^w(\beta_-)$ (up to degree shift) is a $\preceq^+$-cuspidal module. 
Since $\preceq^+$ is of coarse type $w$, (2) follows. 
\end{proof}

\begin{definition} \label{def:minimalimaginary}
When $(X_N^{(r)},i) \neq (A_{2l}^{(2)},l)$, we define $L_i^w(d_i \delta) = L^w(\beta_-) \nabla L^w(\beta_+)$. \index{$L_i^w(d_i\delta)$}
\end{definition}

Next, we address the case $(X_N^{(r)},i) = (A_{2l}^{(2)},l)$. 

\begin{lemma} \label{lem:independence}
  Let $\epsilon \in \{+,-\}$. 
  Assume that $(X_N^{(r)},i) = (A_{2l}^{(2)},l)$. 
  For any cuspidal orders $\preceq, \preceq'$ of coarse type $w$, we have $L^{\preceq}(\alpha'_{\epsilon}) \simeq L^{\preceq'}(\alpha'_{\epsilon})$. 
\end{lemma}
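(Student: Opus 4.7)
The plan is to mimic the proof of Proposition \ref{prop:chambercoweight}, with extra care taken for the exceptional feature of the pair $(A_{2l}^{(2)}, l)$. By the symmetry between $\alpha'_+$ and $\alpha'_-$ (for instance, via an application of $\psi_*$ which swaps the two sides of cuspidality), it suffices to treat $\alpha'_+$. Since for each convex order the self-dual cuspidal module on a real root is unique (Proposition \ref{prop:realrootmodule} combined with Corollary \ref{cor:cuspidalcriterion}), we reduce to showing that $L^{\preceq}(\alpha'_+)$ is already $\preceq'$-cuspidal; swapping the roles of $\preceq$ and $\preceq'$ then yields the isomorphism. By Corollary \ref{cor:cuspidalcriterion} this amounts to verifying $\W(L^{\preceq}(\alpha'_+)) \cap \Phi_+ \subset \Phi_{+, \preceq' \alpha'_+}$.

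Fix $\gamma \in \W(L^{\preceq}(\alpha'_+)) \cap \Phi_+$. As in the proof of Proposition \ref{prop:chambercoweight}, $\preceq$-cuspidality together with $\alpha'_+ \succ \delta$ gives a decomposition
\[
\alpha'_+ - \gamma = \sum_{\alpha \in \Phi_{+, \succeq \alpha'_+}^{\min}} b_\alpha \alpha, \qquad b_\alpha \in \mathbb{Z}_{\geq 0},
\]
in which every summand is a real root (since $\delta \prec \alpha'_+$) whose $p$-image lies in $w\mathring{Q}_+$. Hence $p(\gamma) = p(\alpha'_+) - c$ for some $c \in w\mathring{Q}_+$, and $\gamma$ itself must be a real root (the imaginary alternatives $k\delta$ are excluded by checking $\alpha'_+ - k\delta \notin Q_+$). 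When $w\alpha_l \in \mathring{\Phi}_+$, we have $\alpha'_+ = \beta_+ = w\alpha_l$, and Proposition \ref{prop:chambercoweight} applies verbatim. The substantive work lies in the complementary case $w\alpha_l \in \mathring{\Phi}_-$, where $\alpha'_+ = 2w\alpha_l + \delta$ and $p(\alpha'_+) = 2w\alpha_l$.

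Writing $c = \sum_j a_j w\alpha_j$ with $a_j \in \mathbb{Z}_{\geq 0}$, if some $a_j > 0$ for $j \neq l$ then $p(\gamma)$ acquires a negative coefficient on $w\alpha_j$ in the simple system $\{w\alpha_k\}_k$; the sign-rigidity of root coefficients of $\mathring{\Phi}$ forces $p(\gamma) \in w\mathring{Q}_-$, so $\gamma \prec' \delta \prec' \alpha'_+$ by the coarse-type condition. Otherwise $c$ is supported on $w\alpha_l$, leaving $p(\gamma) \in \{2w\alpha_l, w\alpha_l\}$. The subcase $p(\gamma) = 2w\alpha_l$ is closed by noting that real roots with this $p$-image are of the form $\alpha'_+ + 2m\delta$ ($m \geq 0$), and matching $\alpha'_+ - \gamma = -2m\delta$ against the purely-real-root decomposition forces $m = 0$, so $\gamma = \alpha'_+$. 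The subcase $p(\gamma) = w\alpha_l$ is handled by applying $p$ to the decomposition: since $w\alpha_l$ is a simple root of $\mathring{\Phi}$, exactly one summand $\alpha$ has $b_\alpha \neq 0$, with $b_\alpha = 1$ and $p(\alpha) = w\alpha_l$, i.e. $\alpha = w\alpha_l + 2j\delta$ for some $j \geq 1$ (using $d_l = 2$). Comparing with the direct expression $\alpha'_+ - \gamma = w\alpha_l + (1 - 2k)\delta$ (where $\gamma = w\alpha_l + 2k\delta$) yields $2(j + k) = 1$, impossible in integers, so this subcase does not arise.

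The main obstacle is this last subcase: coarse-type information alone does not separate $\gamma$ from $\alpha'_+$ when both project into $w\mathring{Q}_+$, so the argument must exploit the specific $A_{2l}^{(2)}$-arithmetic of the $w\alpha_l$-axis. Here $d_l = 2$ places short-type roots at even $\delta$-offset from $w\alpha_l$ while the long-type root $\alpha'_+$ sits at odd $\delta$-offset from $2w\alpha_l$; the resulting parity mismatch is exactly what makes the argument close, and is the feature of $A_{2l}^{(2)}$ that necessitates separate treatment from Proposition \ref{prop:chambercoweight}.
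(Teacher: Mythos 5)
Your proof follows essentially the same route as the paper's: the paper fixes the sign case $w\alpha_l \in \mathring{\Phi}_+$, disposes of $\alpha'_+ = \beta_+$ by citing Proposition \ref{prop:chambercoweight}, and runs the direct cuspidality computation on the genuinely new root $\widetilde{-2w\alpha_l}$ (using $\W^*$ for $\alpha'_-$ where you use $\W$ for $\alpha'_+$; these are mirror images, and your $\psi_*$/order-reversal reduction is a legitimate way to pass between them, provided you record that the opposite of a convex order of coarse type $w$ has coarse type $ww_0$ and that $w_0 = -1$ for the finite type underlying $A_{2l}^{(2)}$, so this really does exchange $\alpha'_+$ and $\alpha'_-$).

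There is, however, a factual error in your final subcase. For $(A_{2l}^{(2)},l)$ one has $d_l = d_{\alpha_l} = 1$, not $2$: the positive roots projecting to $w\alpha_l$ (with $w\alpha_l \in \mathring{\Phi}_-$) are $w\alpha_l + j\delta$ for \emph{every} $j \geq 1$; it is the roots projecting to $\pm 2w\alpha_l$, i.e.\ the $W\alpha_0$-orbit, that are spaced by $2\delta$ (see Figure \ref{fig:positiveroots}, and note $\beta_+ + \beta_- = \delta$, not $2\delta$). So the identities $\gamma = w\alpha_l + 2k\delta$, $\alpha = w\alpha_l + 2j\delta$ and the resulting parity equation $2(j+k) = 1$ are not the right bookkeeping. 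The subcase still dies, in fact more easily: with $\gamma = w\alpha_l + k\delta$ ($k \geq 1$) and $\alpha = \alpha'_+ - \gamma = w\alpha_l + (1-k)\delta$ forced to be a positive root, one needs $1 - k \geq 1$, contradicting $k \geq 1$. Equivalently, exactly as in the paper's computation for $\alpha'_-$: two positive roots over $\pm w\alpha_l$ sum to something in $\pm 2w\alpha_l + 2\delta + \mathbb{Z}_{\geq 0}\delta$, whereas $\alpha'_{\pm} = \pm 2w\alpha_l + \delta$. Consequently your closing diagnosis is off: the feature of $A_{2l}^{(2)}$ forcing separate treatment is not a parity mismatch along the $w\alpha_l$-axis but the non-reducedness of the projected root system — $2w\alpha_l$ is the projection of an actual root sitting at $\delta$-offset $1$, strictly below twice the minimal positive root over $w\alpha_l$. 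Please correct that subcase and the final paragraph accordingly.
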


\begin{proof}
  The proof is almost identical to that of Proposition \ref{prop:chambercoweight}. 
  We prove the case $w\alpha_i \in \mathring{\Phi}_+$: the other case is similar.  
  Since $\alpha'_+ = \beta_+$, we have $L^{\preceq}(\alpha'_+) = L^w(\alpha'_+) = L^{\preceq'}(\alpha'_+)$ by Proposition \ref{prop:chambercoweight}. 
  It remains to prove the assertion for $\alpha'_-$. 
  Take $\gamma \in \W^*(L^{\preceq}(\alpha'_-)) \cap \Phi_+$. 
  Then, we have $\gamma \succeq \alpha'_-$ and $\alpha'_-  - \gamma \in \spn_{\mathbb{Z}_{\geq 0}} \Phi_{+, \preceq \alpha'_-}^{\text{min}}$. 
  It follows that $p(\alpha'_- - \gamma) \in w \mathring{Q}_-$. 
  
 If $p(\alpha'_- - \gamma) =0$, then $\alpha'_- - \gamma \in \mathbb{Z}_{\geq 0}\delta$. 
 Since $\alpha'_- -\gamma \in \spn_{\mathbb{Z}_{\geq 0}} \Phi_{+, \preceq \alpha'_-}^{\mathrm{min}}$ and $\alpha'_- \prec \delta$, we must have $\alpha'_- -\gamma = 0$ by the convexity of $\preceq$. 

 Suppose $p(\gamma) = 0$, then $\gamma \in \mathbb{Z}_{> 0}\delta$. 
 Since $\alpha'_- - \delta = -2 \alpha'_+ \in Q_-$, it contradicts $\alpha'_- -\gamma \in \spn_{\mathbb{Z}_{\geq 0}} \Phi_{+, \preceq \alpha'_-}^{\mathrm{min}} \subset Q_+$. 
 
  Assume $p(\alpha'_- - \gamma), p(\gamma) \neq 0$ and suppose $p(\gamma) \in w \mathring{Q}_-$. 
  Since $-2w\alpha_i = p(\gamma) + p(\alpha'_- -\gamma)$, the only possible option is that $p(\gamma) = p(\alpha'_- -\gamma) = -w\alpha_i$. 
  Hence, we have $\gamma, \alpha'_- -\gamma \in \widetilde{-w\alpha_i} + \mathbb{Z}_{\geq 0}\delta$. 
  It follows that $\widetilde{-2w\alpha_i} = \alpha'_- = \gamma + (\alpha'_- - \gamma) \in 2(\widetilde{-w\alpha_i}) + \mathbb{Z}_{\geq 0}\delta$. 
  However, $\widetilde{-2w\alpha_i} = -2w\alpha_i + \delta, \widetilde{-w\alpha_i} = -w\alpha_i + \delta$, so it is impossible. 
  Therefore, we deduce that $p(\gamma) \in w \mathring{Q}_+$ and $\gamma \succeq' \delta \succeq' \alpha'_-$. 

We have shown that $\W^*(L^{\preceq}(\alpha'_-)) \cap \Phi_+ \subset \Phi_{+, \succeq' \alpha'_-}$ and the Lemma follows from Corollary \ref{cor:cuspidalcriterion}. 
\end{proof}

Let $L^w(\alpha'_{\epsilon}) \ (\epsilon \in \{+,-\})$ denote the module in this lemma.  \index{$L^w(\alpha'_+), L^w(\alpha'_-)$}
We define $L^w(n\alpha'_{\epsilon}) = q_l^{n(n-1)/2} L^w(\alpha'_{\epsilon})^{\circ n}$, which is self-dual simple by Proposition \ref{prop:realcuspidal}. \index{$L^w(n\alpha'_+), L^w(n\alpha'_-)$}
By Lemma \ref{lem:chambercoweightmodule}, we have 
\[
L^w(\alpha'_+) \simeq L^{\preceq^+}(\alpha'_+) \simeq L^{\preceq^-}(\alpha'_+),\  L^w(\alpha'_-) \simeq L^{\preceq^+}(\alpha'_-) \simeq L^{\preceq^-}(\alpha'_-). 
\]

\begin{proposition} \label{prop:imaginarycuspidalA22}
  Assume that $(X_N^{(r)},i) = (A_{2l}^{(2)},l)$. 
  If $w\alpha_l \in \mathring{\Phi}_+$, we have 
  \begin{enumerate}
    \item $(L^w(\alpha'_-), L^w(\alpha'_+))$ is unmixing and $L^w(\alpha'_-) \nabla L^w(\alpha'_+) \simeq L^w(\beta_-)$. 
    Moreover, $L^w(\alpha'_+) ~\circ ~L^w(\alpha'_-)$ is of length two with head $L^w(\alpha'_+)\nabla L^w(\alpha'_-)$ and socle $q_l^4 L^w(\beta_-)$.   
    \item $\Lambda (L^w(\beta_-), L^w(\beta_+)) = 2(\alpha_l, \alpha_l)$ and $L^w(\beta_-) \nabla L^w(\beta_+)$ is an $R^w(\delta)$-module. 
    \item $(L^w(\alpha'_-), L^w(2\alpha'_+))$ is unmixing and $L^w(\alpha'_-) \nabla L^w(2\alpha'_+) \simeq q_l L^w(\beta_-) \nabla L^w(\beta_+)$. 
  \end{enumerate}
 If $w\alpha_l \in \mathring{\Phi}_-$, we have 
 \begin{enumerate}
  \item $(L^w(\alpha'_-), L^w(\alpha'_+))$ is unmixing and $L^w(\alpha'_-) \nabla L^w(\alpha'_+) \simeq L^w(\beta_+)$. 
  Moreover, $L^w(\alpha'_+) ~\circ ~L^w(\alpha'_-)$ is of length two with head $L^w(\alpha'_+)\nabla L^w(\alpha'_-)$ and socle $q_l^4 L^w(\beta_+)$. 
  \item $\Lambda (L^w(\beta_-), L^w(\beta_+)) = 2(\alpha_l, \alpha_l)$ and $L^w(\beta_-) \nabla L^w(\beta_+)$ is an $R^w(\delta)$-module. 
  \item $(L^w(2\alpha'_-), L^w(\alpha'_+))$ is unmixing and $L^w(2\alpha'_-) \nabla L^w(\alpha'_+) \simeq q_l L^w(\beta_-) \nabla L^w(\beta_+)$. 
\end{enumerate}
\end{proposition}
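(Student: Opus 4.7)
The two cases $w\alpha_l\in\mathring\Phi_\pm$ are related by the involution $\psi_*$ (which exchanges $\W$ and $\W^*$), so I focus on $w\alpha_l\in\mathring\Phi_+$; then $\alpha'_+=\beta_+=w\alpha_l$, $\alpha'_-=-2w\alpha_l+\delta$, and the rank-two subsystem $F\cap\Phi$ is of affine type $A_2^{(2)}$, with the key identities $\beta_-=\alpha'_++\alpha'_-$ and $\delta=2\alpha'_++\alpha'_-$. The plan is to mimic the three-step strategy of the proof of Theorem \ref{thm:SES}, but with $A_2$ replaced by $A_2^{(2)}$, combined with the two convex orders $\preceq^\pm$ constructed after Lemma \ref{lem:adaptedconvex}.

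For part (1), the unmixing of $(L^w(\alpha'_-),L^w(\alpha'_+))$ follows at once from Lemma \ref{lem:chambercoweightmodule} combined with Lemma \ref{lem:independence} (both modules are $\preceq^-$-cuspidal) and from $\alpha'_+\prec^-\alpha'_-$ via Lemma \ref{lem:cuspidalseq}. To identify the head with $L^w(\beta_-)$ I would first compute $\Lambda(L^w(\alpha'_-),L^w(\alpha'_+))=-(\alpha'_-,\alpha'_+)=2(\alpha_l,\alpha_l)$ via Lemma \ref{lem:unmixingr}; then, after fixing affinizations $\widehat L^w(\alpha'_\pm)$ (supplied by Proposition \ref{prop:realrootmodule} and Proposition \ref{prop:detaffinization}) and examining the resulting renormalised R-matrices through the braider framework of Section \ref{sub:braider}, I would produce a short exact sequence
\[
0\to q^{2(\alpha_l,\alpha_l)}\widehat L^w(\alpha'_+)\circ\widehat L^w(\alpha'_-)\xrightarrow{\renormalizedR{\widehat L^w(\alpha'_-)}{\widehat L^w(\alpha'_+)}}\widehat L^w(\alpha'_-)\circ\widehat L^w(\alpha'_+)\to\widehat L^w(\beta_-)\to 0
\]
by transporting the analogous rank-two sequence through a monoidal functor built as in Step 2 of the proof of Theorem \ref{thm:SES}, but using an $A_2^{(2)}$-type duality datum. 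Specialising the affinization parameters to zero yields a surjection $L^w(\alpha'_-)\circ L^w(\alpha'_+)\twoheadrightarrow L^w(\beta_-)$, so $L^w(\alpha'_-)\nabla L^w(\alpha'_+)\simeq L^w(\beta_-)$. The length and socle of $L^w(\alpha'_+)\circ L^w(\alpha'_-)$ then fall out of Theorem \ref{thm:rmatrix}(6)(7) together with the duality $D(L^w(\alpha'_-)\circ L^w(\alpha'_+))\simeq q^{(\alpha'_-,\alpha'_+)}L^w(\alpha'_+)\circ L^w(\alpha'_-)$ and the self-duality of $L^w(\beta_-)$, which gives the socle degree shift $q^{-(\alpha'_-,\alpha'_+)}=q^{2(\alpha_l,\alpha_l)}=q_l^4$.

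Part (2) follows along the lines of Lemma \ref{lem:lambda}. The pair $(L^w(\beta_+),L^w(\beta_-))$ is unmixing with respect to $\preceq^+$, so Lemma \ref{lem:unmixingr} gives $\Lambda(L^w(\beta_+),L^w(\beta_-))=-(\beta_+,\beta_-)=(\alpha_l,\alpha_l)$; Corollary \ref{cor:lambdadiscrete} and the braider bound Corollary \ref{cor:braidervsren} then leave only two candidates for $\Lambda(L^w(\beta_-),L^w(\beta_+))$, and the smaller one is discarded by an $\varepsilon_l^*$-computation (using Proposition \ref{prop:realrootmodule} and Lemma \ref{lem:determinantialepsilon}) exactly as in the proof of Lemma \ref{lem:lambda}(1). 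That $L^w(\beta_-)\nabla L^w(\beta_+)$ is an $R^w(\delta)$-module then follows from Proposition \ref{prop:commuting}(2), Proposition \ref{prop:properstandardcomposition}, and the minimality assertion in Lemma \ref{lem:minimalpair}(1), exactly as in Proposition \ref{prop:imaginarycuspidal}(2). For part (3), the unmixing of $(L^w(\alpha'_-),L^w(2\alpha'_+))$ follows from $L^w(2\alpha'_+)\simeq q_l L^w(\alpha'_+)^{\circ 2}$ being a power of a $\preceq^-$-cuspidal module, and the head identification $L^w(\alpha'_-)\nabla L^w(2\alpha'_+)\simeq q_l L^w(\beta_-)\nabla L^w(\beta_+)$ is obtained by iterating the rank-two sequence from part (1), carefully tracking the $q_l$ coming from the symmetrisation in the divided-power module.

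The decisive obstacle is the rank-two short exact sequence appearing in part (1). In contrast with the $A_2$ case of Theorem \ref{thm:SES}, where the tautological modules $R(\alpha_1),R(\alpha_2)$ produce the required sequence directly via the relations on $\tau_1$, the $A_2^{(2)}$ setting involves inequivalent affinizations (corresponding to a short and a long simple root of the rank-two subsystem) and a non-trivial denominator polynomial $Q_{1,2}(u,v)$ built out of the renormalised R-matrices. Constructing this sequence amounts to carrying out a full direct analysis of the rank-two quiver Hecke algebra of type $A_2^{(2)}$ and verifying that the cokernel of the R-matrix is indeed an affinization of the cuspidal module $L^w(\beta_-)$; this is the single technical step that does not immediately inherit from the finite-type machinery already set up.
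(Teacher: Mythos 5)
Your overall skeleton (unmixing via $\preceq^{\pm}$-cuspidality, the computation $\Lambda(L^w(\alpha'_-),L^w(\alpha'_+))=-(\alpha'_-,\alpha'_+)=2(\alpha_l,\alpha_l)$, the socle shift $q_l^4$, and the surjection chain for (3)) matches the paper, but the two steps you lean on most heavily are not the ones the paper uses, and both have problems. For part (1) you propose to identify $L^w(\alpha'_-)\nabla L^w(\alpha'_+)$ with $L^w(\beta_-)$ by first building a short exact sequence of affinizations through an $A_2^{(2)}$-type duality datum, and you yourself flag this construction as an unresolved ``decisive obstacle.'' That step is a genuine gap as written, and it is also unnecessary: the paper's argument is purely at the level of simple modules. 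Since $\Lambda(L^w(\alpha'_-),L^w(\alpha'_+))+\Lambda(L^w(\alpha'_+),L^w(\alpha'_-))=4(\alpha_l,\alpha_l)\neq 0$, Proposition \ref{prop:commuting} shows $L^w(\alpha'_+)\circ L^w(\alpha'_-)$ is not simple; because $(\alpha'_-,\alpha'_+)$ is minimal in $\{\underline{\gamma}\in\mathcal{P}^{\preceq^+}(\beta_-)\mid \underline{\gamma}>(\beta_-)\}$, Proposition \ref{prop:properstandardcomposition} forces every composition factor other than the head to be $\preceq^+$-cuspidal, so the image of $\mathbf{r}_{L^w(\alpha'_-),L^w(\alpha'_+)}$ is a self-dual $\preceq^+$-cuspidal $R(\beta_-)$-module, hence $L^w(\beta_-)$ by uniqueness (Proposition \ref{prop:realrootmodule}); length two then follows from the multiplicity-one statement in Lemma \ref{lem:unmixing}. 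No affinizations are needed here (the sequence you want does appear later, in Theorem \ref{thm:psimodule}, but it is \emph{deduced from} this proposition, so building your proof on it would be circular unless you really carry out the rank-two analysis from scratch).

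For part (2), your plan to bound $\Lambda(L^w(\beta_-),L^w(\beta_+))$ from above via Corollary \ref{cor:braidervsren} does not go through: that corollary is only available when the first argument is a determinantial module of the special form $M(x\Lambda_i,\Lambda_i)$, whereas $L^w(\beta_-)$ is of the form $M(vs_{i_k}\Lambda_{i_k},v\Lambda_{i_k})$ with $v\neq e$ (or a $\psi_*$-twist thereof), so the braider of Section \ref{sub:braider} gives you no two-candidate dichotomy and the Lemma \ref{lem:lambda}-style exclusion has nothing to exclude against. The paper instead gets the exact value for free from part (1): writing $L^w(\beta_-)\simeq L^w(\alpha'_-)\nabla L^w(\alpha'_+)$ with $\alpha'_+=\beta_+$, Theorem \ref{thm:rmatrix}(7) gives $\Lambda(L^w(\beta_-),L^w(\beta_+))=\Lambda(L^w(\alpha'_-),L^w(\beta_+))=2(\alpha_l,\alpha_l)$, and then the nonvanishing of $\Lambda+\Lambda'=3(\alpha_l,\alpha_l)$ plus the argument of Proposition \ref{prop:imaginarycuspidal} finishes the claim. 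I would also caution that your opening reduction of the case $w\alpha_l\in\mathring{\Phi}_-$ to the case $w\alpha_l\in\mathring{\Phi}_+$ via $\psi_*$ changes the coarse type and is not justified in the paper, which simply runs the same argument twice.
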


\begin{proof}
  Since both cases are proved in the same manner, we only consider the case $w\alpha_l \in \mathring{\Phi}_+$.
  Note that $\alpha'_+ = \beta_+, \alpha'_+ + \alpha'_- = \beta_-, 2\alpha'_+ + \alpha'_- = \delta$. 
  
  (1) 
  The proof is almost identical to that of Proposition \ref{prop:imaginarycuspidal}.  
  By Lemma \ref{lem:chambercoweightmodule}, both $(L^w(\alpha'_+), L^w(\alpha'_-))$ and $(L^w(\alpha'_-), L^w(\alpha'_+))$ are unmixing, and Lemma \ref{lem:unmixingr} shows
  \begin{align*}
  &\Lambda (L^w(\alpha'_-), L^w(\alpha'_+)) = \Lambda (L^w(\alpha'_+), L^w(\alpha'_-)) = -(\alpha'_-, \alpha'_+) = 2(\alpha_l, \alpha_l), \\
  &\Lambda (L^w(\alpha'_-), L^w(\alpha'_+))+ \Lambda(L^w(\alpha'_+), L^w(\alpha'_-)) = 4(\alpha_l, \alpha_l) \neq 0. 
  \end{align*}
  Note that $(\alpha'_-, \alpha'_+)$ is a minimal element of $\{ \underline{\gamma} \in \mathcal{P}^{\preceq^+}(\beta_-) \mid \underline{\gamma} > (\beta_-)\}$. 
  Therefore, $L^w(\alpha'_+) \circ L^w(\alpha'_-)$ is not simple by Proposition \ref{prop:commuting}, and every composition factor other than $L^w(\alpha'_+) \nabla L^w(\alpha'_-)$ is $\preceq^+$-cuspidal by Proposition \ref{prop:properstandardcomposition}.
  It follows that 
  \begin{align*}
  L^w(\alpha'_-) \nabla L^w(\alpha'_+) &\simeq \Image (r_{L^w(\alpha'_-), L^w(\alpha'_+)}\colon L^w(\alpha'_-) \circ L^w(\alpha'_+) \to q_l^{-4} L^w(\alpha'_+) \circ L^w(\alpha'_-)) \\
  &\simeq q_l^{-4} L^w(\alpha'_+) \Delta L^w(\alpha'_-) 
  \end{align*}
  is a $\preceq^+$-cuspidal $R(\beta_-)$-module. 
  By Lemma \ref{lem:unmixing} (3), it is self-dual. 
  Since there is only one self-dual $\preceq^+$-cuspidal $R(\beta_-)$-module $L^w(\beta_-)$, $L^w(\alpha'_-) \nabla L^w(\alpha'_+) \simeq L^w(\beta_-)$. 
  Moreover, $[L^w(\alpha'_+)\circ L^w(\alpha'_-) : L^w(\alpha'_+) \Delta L^w(\alpha'_-)]_q = 1$ by Lemma \ref{lem:unmixing}, so $L^w(\alpha'_+) \circ L^w(\alpha'_-)$ must be of length two. 

  (2) 
  We compute 
  \begin{align*}
  \Lambda (L^w(\beta_-), L^w(\beta_+)) &= \Lambda (L^w(\alpha'_-) \nabla L^w(\beta_+), L^w(\beta_+)) \quad \text{by (1)} \\
  &= \Lambda (L^w(\alpha'_-), L^w(\beta_+)) \quad \text{by Theorem \ref{thm:rmatrix} (7)} \\
  &= 2(\alpha_l, \alpha_l) \quad \text{by the proof of (1)}. 
  \end{align*}
  Since $(L^w(\beta_+), L^w(\beta_-))$ is unmixing by $\preceq^+$-cuspidality and Lemma \ref{lem:cuspidalseq}, we have 
  \[
  \Lambda (L^w(\beta_+), L^w(\beta_-)) = -(\beta_+, \beta_-) = (\alpha_l, \alpha_l) \ \text{(Lemma \ref{lem:unmixingr})}. 
  \]
  Hence, $\Lambda (L^w(\beta_-), L^w(\beta_+)) + \Lambda (L^w(\beta_+), L^w(\beta_-)) = 3(\alpha_l, \alpha_l) \neq 0$, which implies that $L^w(\beta_-) \circ L^w(\beta_+)$ is not simple by Proposition \ref{prop:commuting}.
  The same argument as in Proposition \ref{prop:imaginarycuspidal} proves the assertion. 

  (3) By $\preceq^-$-cuspidality and Lemma \ref{lem:cuspidalseq}, $(L^w(\alpha'_-), L^w(2\alpha'_+))$ is unmixing, and the head of $L^w(\alpha'_-) \circ L^w(2\alpha'_+)$ is self-dual simple by Lemma \ref{lem:unmixing}. 
  On the other hand, we have a surjection 
  \begin{align*}
  L^w(\alpha'_-) \circ L^w(2\alpha'_+) &\simeq q_l L^w(\alpha'_-) \circ L^w(\alpha'_+) \circ L^w(\alpha'_+) \quad \text{by Proposition \ref{prop:realcuspidal}} \\
  &\twoheadrightarrow q_l L^w(\beta_-) \circ L^w(\beta_+) \quad \text{by (1)} \\
  &\twoheadrightarrow q_l L^w(\beta_-) \nabla L^w(\beta_+). 
  \end{align*}
  It proves the assertion. 
\end{proof}

\begin{definition} \label{def:minimalimaginaryA22}
  When $(X_N^{(r)},i) = (A_{2l}^{(2)}, l)$, we define $L_l^w(\delta) = q_l L^w(\beta_-) \nabla L^w(\beta_+)$. \index{$L_i^w(d_i\delta)$}
\end{definition}

\begin{proposition} \label{prop:minimalres}
  \begin{enumerate}
  \item If $(X_N^{(r)},i) \neq (A_{2l}^{(2)}, l)$, then $\Res_{\beta_-, \beta_+} L_i^w(d_i\delta) = L^w(\beta_-) \otimes L^w(\beta_+)$. 
  \item If $(X_N^{(r)},i)= (A_{2l}^{(2)}, l)$, then $\Res_{\beta-, \beta_+} L_l^w(\delta)$ is of length two with head $q_l^{-1} L^w(\beta_-)\otimes L^w(\beta_+)$ and socle $q_l L^w(\beta_-)\otimes L^w(\beta_+)$. 
  \item If $L$ is a self-dual simple $R^w(d_i \delta)$-module that is not isomorphic to $L_i^w(d_i \delta)$, 
  then $\Res_{\beta_-, \beta_+} L = 0$. 
  \end{enumerate}
\end{proposition}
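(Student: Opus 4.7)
My plan is to handle the three parts in order, using the unmixing pairs supplied by Propositions~\ref{prop:imaginarycuspidal} and \ref{prop:imaginarycuspidalA22} as the main engine.

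For part (1), since $L_i^w(d_i\delta) = L^w(\beta_-)\nabla L^w(\beta_+)$ by Definition~\ref{def:minimalimaginary} and $(L^w(\beta_-), L^w(\beta_+))$ is unmixing by Proposition~\ref{prop:imaginarycuspidal}(1), Lemma~\ref{lem:unmixing}(2) gives $\Res_{\beta_-,\beta_+} L_i^w(d_i\delta) = L^w(\beta_-)\otimes L^w(\beta_+)$ directly. For part (2) I focus on the subcase $w\alpha_l\in\mathring{\Phi}_+$ (the other is symmetric). Using $L_l^w(\delta)\simeq L^w(\alpha'_-)\nabla L^w(2\alpha'_+)$ from Proposition~\ref{prop:imaginarycuspidalA22}(3) together with the unmixing of $(L^w(\alpha'_-),L^w(2\alpha'_+))$, Lemma~\ref{lem:unmixing}(2) yields $\Res_{\alpha'_-,2\alpha'_+}L_l^w(\delta) = L^w(\alpha'_-)\otimes L^w(2\alpha'_+)$. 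Transitivity of restriction combined with the Mackey computation $[\Res_{\alpha'_+,\alpha'_+}L^w(2\alpha'_+)]=[2]_l [L^w(\alpha'_+)^{\otimes 2}]$ (using Proposition~\ref{prop:realcuspidal}), together with Proposition~\ref{prop:imaginarycuspidalA22}(1) to identify $\Res_{\alpha'_-,\alpha'_+}L^w(\beta_-)=L^w(\alpha'_-)\otimes L^w(\alpha'_+)$, then pins down the character of $\Res_{\beta_-,\beta_+}L_l^w(\delta)$ as $(q_l+q_l^{-1})[L^w(\beta_-)\otimes L^w(\beta_+)]$.

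To extract the module structure in (2), the surjection $q_l L^w(\beta_-)\circ L^w(\beta_+)\twoheadrightarrow L_l^w(\delta)$ from Definition~\ref{def:minimalimaginaryA22} gives by adjunction a submodule $q_l L^w(\beta_-)\otimes L^w(\beta_+)\hookrightarrow\Res_{\beta_-,\beta_+}L_l^w(\delta)$; dualizing via self-duality of $L_l^w(\delta)$ produces $q_l^{-1}L^w(\beta_-)\otimes L^w(\beta_+)$ as a quotient. Non-splitness is then forced by comparing $\qdim$'s: since the simple head of $L^w(\beta_-)\circ L^w(\beta_+)$ is $q_l^{-1}L_l^w(\delta)$, every graded homomorphism $L^w(\beta_-)\circ L^w(\beta_+)\to L_l^w(\delta)$ factors through this head, so $\qdim\Hom_{R(\delta)}(L^w(\beta_-)\circ L^w(\beta_+),L_l^w(\delta))=q_l$; adjunction transports this to $\qdim\Hom_{R(\beta_-)\otimes R(\beta_+)}(L^w(\beta_-)\otimes L^w(\beta_+),\Res L_l^w(\delta))=q_l$, whereas a semisimple structure $q_lL\oplus q_l^{-1}L$ would give $q_l+q_l^{-1}$.

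For part (3), I argue by contradiction: suppose $L\not\simeq L_i^w(d_i\delta)$ is a self-dual simple $R^w(d_i\delta)$-module with $\Res_{\beta_-,\beta_+}L\neq 0$. The key claim is that every composition factor of $\Res_{\beta_-,\beta_+}L$ has the form $L^w(\beta_-)\otimes L^w(\beta_+)$ up to degree shift. Given a composition factor $X\otimes Y$, take the $\preceq^+$-cuspidal decompositions $X=\hd(X_m\circ\cdots\circ X_1)$ with weights $a_k\alpha_k$ ($\alpha_1\prec^+\cdots\prec^+\alpha_m$) and $Y=\hd(Y_n\circ\cdots\circ Y_1)$ with weights $b_k\gamma_k$. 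Since $L$ is $\preceq^+$-cuspidal its cuspidal sequence is $(d_i\delta)$, so the nonvanishing $\Res_{a_m\alpha_m,\ldots,a_1\alpha_1,b_n\gamma_n,\ldots,b_1\gamma_1}L$ combined with Proposition~\ref{prop:properstandardcomposition}(1) and Definition~\ref{def:bilexicographic} forces $\gamma_1\succeq^+\delta$ and $\alpha_m\preceq^+\delta$. Projecting the weight equation $\sum a_k\alpha_k=\beta_-$ under $p$, with each $p(\alpha_k)\in w\mathring{Q}_-\cup\{0\}$, and using that $p(\beta_-)=-w\alpha_i$ is a ``simple vector'' in $w\mathring{Q}_-$ (since $\{-w\alpha_j\}_{j\in\mathring{I}}$ is a basis), the decomposition of $-w\alpha_i$ must be trivial, i.e., exactly one $p(\alpha_{k_0})=-w\alpha_i$ with $a_{k_0}=1$ and the rest zero; then $\alpha_{k_0}\in\widetilde{-w\alpha_i}+d_i\mathbb{Z}\delta$ together with $\sum a_k\alpha_k=\beta_-$ rules out any $\alpha_k=\delta$ summand, yielding $m=1,\alpha_1=\beta_-,X\simeq L^w(\beta_-)$. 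A symmetric argument gives $Y\simeq L^w(\beta_+)$. Adjunction applied to a socle factor then produces a surjection $L^w(\beta_-)\circ L^w(\beta_+)\twoheadrightarrow L$ up to degree shift; since the simple head of this convolution is $L^w(\beta_-)\nabla L^w(\beta_+)$, which equals $L_i^w(d_i\delta)$ (Case 1) or $q_l^{-1}L_l^w(d_l\delta)$ (Case 2), self-duality of both simples forces $L\simeq L_i^w(d_i\delta)$, contradicting the assumption.

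The main obstacle will be the projection argument in (3): I must carefully verify in each affine case (including the subcases of $A_{2l}^{(2)}$ with $i=l$, where the affine rank-two subsystem on $F$ becomes $A_2^{(2)}$ rather than $A_1^{(1)}$) that $p(\beta_\pm)$ is genuinely a simple vector in $\pm w\mathring{Q}_{\pm}$ admitting only the trivial decomposition, and that contributions from $\alpha_k=\delta$ (allowed a priori by the bilexicographic conditions) are killed by the weight equation.
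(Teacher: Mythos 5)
Your proposal is correct, and for parts (1) and (2) it follows the paper's route almost exactly: (1) is immediate from unmixing, and in (2) you determine the graded multiplicity $[2]_l$ by computing $\Res_{\alpha'_-,\alpha'_+,\alpha'_+}L_l^w(\delta)$ in two ways and then use the adjunction $\Hom(L^w(\beta_-)\circ L^w(\beta_+),L_l^w(\delta))\simeq q_l\mathbf{k}$ to exclude splitting, which is precisely the paper's computation. One organizational caveat: your part-(2) ``transitivity pins down the character'' step silently assumes that every composition factor of $\Res_{\beta_-,\beta_+}L_l^w(\delta)$ is a degree shift of $L^w(\beta_-)\otimes L^w(\beta_+)$ (otherwise a factor killed by $\Res_{\alpha'_-,\alpha'_+}\otimes\id$ could go undetected). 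You do prove this, but only as the ``key claim'' inside part (3); since that argument uses nothing beyond $L$ being a simple $R^w(d_i\delta)$-module, you should state and prove the claim first and then cite it in (2). The paper does exactly this, opening its proof with that claim.

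Where you genuinely diverge is in how the claim is proved. The paper argues directly: if a factor $L'\otimes L''$ had $L'$ not $\preceq^+$-cuspidal, there would exist $\alpha\in\W(L')$ with $\beta_-\prec^+\alpha\preceq^+\delta$ and $\beta_--\alpha\in Q_+$, and it checks against the three explicit rank-two root pictures (Figure \ref{fig:positiveroots}) that no such $\alpha$ exists. You instead take the $\preceq^+$-cuspidal decomposition of $X\otimes Y$, use Proposition \ref{prop:properstandardcomposition}(1) and the bilexicographic order to force all roots in $X$'s decomposition to be $\preceq^+\delta$ and all in $Y$'s to be $\succeq^+\delta$, and then project to $\mathring{\mathfrak{h}}^*$: since $p(\beta_-)=-w\alpha_i$ is a simple vector of $w\mathring{Q}_-$ and $\mathring{\Phi}$ is reduced, the decomposition collapses to the single entry $\beta_-$. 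This is more uniform (no case analysis on the $A_1^{(1)}$ versus $A_2^{(2)}$ configurations) at the cost of invoking the heavier cuspidal-decomposition machinery; both routes are valid. A tiny slip: the period of the $\delta$-translates of $\widetilde{-w\alpha_i}$ is $d_{-w\alpha_i}$, not $d_i$, in the $A_{2l}^{(2)}$ case, but your weight equation kills all $\delta$-contributions regardless, so the conclusion stands.
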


\begin{proof}
  We claim that, for any simple $R^w(d_i\delta)$-module $L$, $\Res_{\beta_-, \beta_+} L$ is an $(R^{\preceq^+}(\beta_-) \otimes R^{\preceq^+}(\beta_+))$-module.
  Suppose to the contrary that $\Res_{\beta_-, \beta_+}L$ has a composition factor $L' \otimes L''$ with $L'$ not being $\preceq^+$-cuspidal. 
  Then, there exists $\alpha \in \Phi_{+, \succ^+ \beta_-} \cap \W(L')$ by Corollary \ref{cor:cuspidalcriterion}. 
  Note that $\beta_- - \alpha \in Q_+$. 
  Since $\W(L') \subset \W(L)$ and $L$ is $\preceq^+$-cuspidal, $\alpha \preceq^+ \delta$. 
  We verify that $\alpha \succ^+ \beta_-$ contradicts $\beta_- - \alpha \in Q_+$ using Figure \ref{fig:positiveroots}.  
  If $(X_N^{(r)}, i) \neq (A_{2l}^{(2)}, l)$, we have either $\alpha \in \beta_- + \mathbb{Z}_{>0} d_i \delta$ or $\alpha \in \mathbb{Z}_{>0} \delta$. 
  If $(X_N^{(r)}, i) = (A_{2l}^{(2)}, l)$ and $w\alpha_l \in \mathring{\Phi}_+$, 
  then we have $\alpha \in \beta_- + \mathbb{Z}_{> 0}\delta$ or $\alpha \in \alpha'_- + 2\mathbb{Z}_{> 0}\delta$ or $\alpha \in \mathbb{Z}_{>0}\delta$. 
  If $(X_N^{(r)}, i) = (A_{2l}^{(2)}, l)$ and $w\alpha_l \in \mathring{\Phi}_-$,
  then we have $\alpha \in \beta_- + \mathbb{Z}_{> 0}\delta$ or $\alpha \in \widetilde{-2w\alpha_l} + 2\mathbb{Z}_{\geq 0}\delta$ or $\alpha \in \mathbb{Z}_{>0}\delta$. 
  In any case, it contradicts $\beta_- - \alpha \in Q_+$. 
  When $L''$ is not cuspidal, by considering $\W^*(L'')$ we obtain a contradiction in the same manner. 
  The claim is proved. 

  (1) follows from $(L^w(\beta_-),L^w(\beta_+))$ being unmixing (Proposition \ref{prop:imaginarycuspidal}). 

  (2) 
  Assume that $w\alpha_l \in \mathring{\Phi}_+$:
  the other case is similar. 

  First, we compute $[\Res_{\beta_-, \beta_+}L_l^w(\delta)]$. 
  By the discussion above, it is $a [L^w(\beta_-) \otimes L^w(\beta_+)]$ for some $a \in \mathbb{Z}[q,q^{-1}]$. 
  We apply $\Res_{\alpha'_-, \beta_+,\beta_+}$ to determine $a$. 
  We have 
  \begin{align*}
  &[\Res_{\alpha'_-, \beta_+, \beta_+} L_l^w(\delta)] \\
  &= [L^w(\alpha'_-) \otimes \Res_{\beta_+, \beta_+} L^w(2\beta_+)] \quad \text{by Proposition \ref{prop:imaginarycuspidalA22} (3)} \\
  &= [2]_l [L^w(\alpha'_-) \otimes L^w(\beta_+)\otimes L^w(\beta_+)] \\
  &\text{by the Mackey filtration and $\preceq^+$-cuspidality (cf. \cite[Lemma 2.11]{MR3205728} )}, 
  \end{align*}
  while
  \begin{align*}
  &[(\Res_{\alpha'_-, \beta_+} L^w(\beta_-)) \otimes L^w(\beta_+)] \\ 
  &= [L^w(\alpha'_-) \otimes L^w(\beta_+) \otimes L^w(\beta_+)] \quad \text{by Proposition \ref{prop:imaginarycuspidalA22} (1)}. 
  \end{align*}
  We deduce $a = [2]_l$, that is, $[\Res_{\beta_-,\beta_+} L_l^w(\delta)] = [2]_l [L^w(\beta_-) \otimes L^w(\beta_+)]$. 

  On the other hand, the induction-restriction adjunction shows
  \[
  \Hom_{R(\beta_-) \otimes R(\beta_+)} (L^w(\beta_-) \otimes L^w(\beta_+), \Res_{\beta_-, \beta_+} L_l^w(\delta)) \simeq \Hom_{R(\delta)} (L^w(\beta_-) \circ L^w(\beta_+), L_l^w(\delta)), 
  \]
 which is isomorphic to $q_l \mathbf{k}$ by Proposition \ref{prop:imaginarycuspidalA22}. 
 Therefore, the assertion follows. 

  (3) 
  We ignore degree shifts. 
  Suppose that $\Res_{\beta_-, \beta_+} L \neq 0$.
  It is a nonzero $(R^{\preceq^+}(\beta_-) \otimes R^{\preceq^+}(\beta_+))$-module by the claim, so it has a simple submodule isomorphic to $L^w(\beta_-) \otimes L^w(\beta_+)$. 
  By the induction-restriction adjunction, we have a nonzero homomorphism $L^w(\beta_-) \circ L^w(\beta_+) \to L$. 
  However, the head of $L^w(\beta_-) \circ L^w(\beta_+)$ is $L_i^w(d_i\delta)$, so $L \simeq L_i^w(d_i \delta)$.
  It contradicts the assumption. 
\end{proof}

Let $\Delta_i^w(d_i \delta)$ be the projective cover of $L_i^w(d_i \delta)$ in $\gMod{R^w(d_i \delta)}$. \index{$\Delta_i^w(d_i\delta)$}

\begin{theorem}[{cf. \cite[Theorem 17.1]{MR3694676}}] \label{thm:psimodule}

 If $(X_N^{(r)}, i) \neq (A_{2l}^{(2)}, l)$, then there exists a short exact sequence
 \[
 0 \to q_i^2 \widehat{L}^w(\beta_+) \circ \widehat{L}^w(\beta_-) \to \widehat{L}^w (\beta_-) \circ \widehat{L}^w(\beta_+) \to \Delta_i^w(d_i\delta) \to 0, 
 \]
 where the injection is the renormalized R-matrix. 
 If $(X_N^{(r)}, i) = (A_{2l}^{(2)}, l)$, then there exist short exact sequences
 \begin{align*}
 0 \to q_l^2 \widehat{L}^w(\beta_+) \circ \widehat{L}^w(\beta_-) \to \widehat{L}^w (\beta_-) \circ \widehat{L}^w(\beta_+) \to q_l \Delta_l^w(\delta) \oplus q_l^{-1} \Delta_l^w(\delta) \to 0, \\
 0 \to q_l^4 \widehat{L}^w(\alpha'_+) \circ \widehat{L}^w(\alpha'_-) \to \widehat{L}^w(\alpha'_-) \circ \widehat{L}^w(\alpha'_+) \to \widehat{L}^w(\alpha'_+ + \alpha'_-) \to 0. 
 \end{align*}
 where the injections are the renormalized R-matrices. 
\end{theorem}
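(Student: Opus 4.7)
The plan is to mirror the strategy of Theorem~\ref{thm:SES}. Write $M_\pm = L^w(\beta_\pm)$ and $\widehat{M}_\pm = \widehat{L}^w(\beta_\pm)$. The first step is to check the degree and injectivity of the renormalized R-matrix. In Case 1 (not $A_{2l}^{(2)}$), the $\preceq^+$-cuspidality of $M_\pm$ makes $(M_+, M_-)$ unmixing via Lemma~\ref{lem:cuspidalseq}, and Lemma~\ref{lem:unmixingr} gives $\Lambda(M_+, M_-) = -(\beta_+, \beta_-) = (\alpha_i, \alpha_i)$, matching the asserted shift $q_i^2$; the reverse value $\Lambda(M_-, M_+) = (\alpha_i, \alpha_i)$ is computed inside the proof of Proposition~\ref{prop:imaginarycuspidal}. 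For Case 2 the $\Lambda$-values are supplied by Proposition~\ref{prop:imaginarycuspidalA22}. In all three sequences, Proposition~\ref{prop:reninj} gives injectivity of the renormalized R-matrix; let $K$ denote its cokernel.

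For Case 1 and the first sequence of Case 2, the identification of $K$ proceeds in three substeps. (a) Every composition factor $L = L(\omega)$ of $K$ lies in $\gMod{R^w(d_i\delta)}$: writing $\omega = (\boldsymbol{c}_-, t, \boldsymbol{c}_+) \in \Omega^{\preceq^+}(d_i\delta)$ and applying $p \colon \mathfrak{h}^* \to \mathring{\mathfrak{h}}^*$, the condition $\wt L = -d_i\delta \in \ker p$ together with $p(\Phi_{+,\prec^+\delta}^{\mathrm{min}}) \subset w\mathring{Q}_-$ and $p(\Phi_{+,\succ^+\delta}^{\mathrm{min}}) \subset w\mathring{Q}_+$ forces $\boldsymbol{c}_\pm = 0$. (b) By the induction-restriction adjunction and Proposition~\ref{prop:minimalres}, the space $\Hom_{R(d_i\delta)}(\widehat{M}_- \circ \widehat{M}_+, L)$ for self-dual simple $L \in \gMod{R^w(d_i\delta)}$ vanishes unless $L \simeq L_i^w(d_i\delta)$, in which case it is one-dimensional (Case 1) or equals $q_l \mathbf{k} \oplus q_l^{-1} \mathbf{k}$ in the first sequence of Case 2 (by applying $\Hom(\widehat{M}_- \otimes \widehat{M}_+, -)$ to the length-two socle-head filtration of $\Res_{\beta_-,\beta_+} L_l^w(\delta)$ and using $\Ext^1(\widehat{M}_\pm, M_\pm) = 0$ from Theorem~\ref{thm:ext1}). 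This determines $\hd K$. (c) $\Ext^1_{R(d_i\delta)}(K, L) = 0$ for every simple $L \in \gMod{R^w(d_i\delta)}$: the long exact sequence of $\Hom(\cdot, L)$ applied to the short exact sequence reduces this to vanishing of $\Hom$ and $\Ext^1$ from $\widehat{M}_\pm \circ \widehat{M}_\mp$ into $L$, which follows from Theorem~\ref{thm:ext1} and Proposition~\ref{prop:minimalres} once one checks that $\Res_{\beta_+,\beta_-} L_i^w(d_i\delta) = \Res_{\beta_+,\beta_-} L_l^w(\delta) = 0$; these vanishings hold because, by Theorem~\ref{thm:rmatrix}(5) and the non-simplicity of $M_+ \circ M_-$ forced by the $\Lambda$-sum exceeding $\deg z$, the modules in question are proper submodules of $M_+ \circ M_-$ and hence annihilated by $e(\beta_+, \beta_-)$ via Lemma~\ref{lem:unmixing}(2). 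Substeps (a)-(c) identify $K$ as projective in $\gMod{R^w(d_i\delta)}$ with the prescribed head, whereupon uniqueness of projective covers together with Krull-Schmidt produce $K \simeq \Delta_i^w(d_i\delta)$ in Case 1 and $K \simeq q_l \Delta_l^w(\delta) \oplus q_l^{-1} \Delta_l^w(\delta)$ in the first sequence of Case 2.

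For the second sequence of Case 2, $\alpha'_+ + \alpha'_- = \beta_\mp$ is a real root, so the argument runs essentially verbatim to Theorem~\ref{thm:SES}: the length-two structure of $L^w(\alpha'_+) \circ L^w(\alpha'_-)$ in Proposition~\ref{prop:imaginarycuspidalA22}(1) provides the setup, and the cokernel is identified with $\widehat{L}^w(\beta_\mp)$ via its characterization (Section~\ref{sub:realrootmodules}) as the projective cover of $L^w(\beta_\mp)$ in the appropriate Serre subcategory.

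The main obstacle is the $A_{2l}^{(2)}$ first sequence: extracting the direct-sum decomposition $K \simeq q_l \Delta_l^w(\delta) \oplus q_l^{-1} \Delta_l^w(\delta)$ from the length-two socle-head structure of $\Res_{\beta_-, \beta_+} L_l^w(\delta)$ requires careful tracking of degree shifts to match both copies with the predicted $q_l^{\pm 1}$ — precisely the separate treatment promised in point (4) of the introduction. Everything else — injectivity, support analysis, and Ext-vanishing — proceeds uniformly, but the bookkeeping for this one decomposition is the delicate step.
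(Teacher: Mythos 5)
Your overall architecture matches the paper's: establish injectivity and the degree shift of $\renormalizedR{\widehat{L}^w(\beta_+)}{\widehat{L}^w(\beta_-)}$ via unmixing (Lemma \ref{lem:unmixingr}, Proposition \ref{prop:reninj}), then identify the cokernel $K$ as a projective object of $\gMod{R^w(d_i\delta)}$ by an $\Ext^1$-vanishing argument resting on Theorem \ref{thm:ext1} and Proposition \ref{prop:minimalres}, and finally read off $K$ from its head. Your substeps (b) and (c) and the handling of the $A_{2l}^{(2)}$ direct sum are essentially the paper's argument (modulo the fact that the $\Ext^1$-vanishing must be tested against \emph{every} simple $R^w(d_i\delta)$-module $L$, for which $\Res_{\beta_+,\beta_-}L=0$ follows directly from $\preceq^+$-cuspidality and $\beta_+\succ^+\delta$, not only for $L_i^w(d_i\delta)$).

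However, substep (a) contains a genuine error. You claim that for any composition factor $L(\omega)$ of $K$, with $\omega=(\boldsymbol{c}_-,t,\boldsymbol{c}_+)\in\Omega^{\preceq^+}(d_i\delta)$, the condition $p\bigl(\sum\boldsymbol{c}_-(\alpha)\alpha\bigr)+p\bigl(\sum\boldsymbol{c}_+(\alpha)\alpha\bigr)=0$ with the two summands lying in $w\mathring{Q}_-$ and $w\mathring{Q}_+$ forces $\boldsymbol{c}_\pm=0$. It does not: an element $x\in w\mathring{Q}_-$ and $y\in w\mathring{Q}_+$ with $x+y=0$ merely satisfy $y=-x$, which is perfectly consistent with both being nonzero. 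Concretely, $\omega$ with $\boldsymbol{c}_-(\beta_-)=\boldsymbol{c}_+(\beta_+)=1$ and $t$ trivial is a valid element of $\Omega^{\preceq^+}(d_i\delta)$ whose simple module $L^w(\beta_+)\nabla L^w(\beta_-)$ is \emph{not} cuspidal, and it genuinely occurs as a composition factor of $\widehat{L}^w(\beta_-)\circ\widehat{L}^w(\beta_+)$. The point of the proof is precisely that this factor is cancelled by the submodule $q_i^2\,\widehat{L}^w(\beta_+)\circ\widehat{L}^w(\beta_-)$: the paper computes
\[
[K]_q=(1-q_i^2)^{-2}\bigl([L^w(\beta_-)][L^w(\beta_+)]-q_i^2[L^w(\beta_+)][L^w(\beta_-)]\bigr)
\]
and invokes the length-two structure and degree shifts of $L^w(\beta_\pm)\circ L^w(\beta_\mp)$ established in Propositions \ref{prop:imaginarycuspidal} and \ref{prop:imaginarycuspidalA22} (namely that $L^w(\beta_+)\nabla L^w(\beta_-)$ appears with multiplicity $1$ in one product and $q_i^2$ in the other, all remaining factors being cuspidal) to conclude that the non-cuspidal contribution cancels. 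Without this Grothendieck-group cancellation your step (a) is unsupported, and everything downstream (projectivity in $\gMod{R^w(d_i\delta)}$, identification with $\Delta_i^w(d_i\delta)$) collapses. Replace (a) with the cancellation computation and the rest of your argument goes through.
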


\begin{proof}
 We provide a proof to the first and the second short exact sequence.
 The third one is proved in the same manner. 
 Since $(L^w(\beta_+), L^w(\beta_-))$ is unmixing by the $\preceq^+$-cuspidality, we have $\Lambda (L^w(\beta_+), L^w(\beta_-)) = -(\beta_+, \beta_-) = (\alpha_i, \alpha_i)$ by Lemma \ref{lem:unmixingr}. 
 Hence, $\renormalizedR{\widehat{L}^w(\beta_+)}{\widehat{L}^w(\beta_-)}$ is of degree $(\alpha_i, \alpha_i)$. 
 It is injective by Proposition \ref{prop:reninj}. 

 We prove that $C = \Cok (\renormalizedR{\widehat{L}^w(\beta_+)}{\widehat{L}^w(\beta_-)})$ is a projective object of $\gMod{R^w(d_i\delta)}$. 
 First, we verify that $C$ is an $R^w(d_i \delta)$-module. 
 We compute in $K(\gmod{R})_{\mathbb{Z}((q))}$, 
 \begin{align*}
 [C]_q &= [\widehat{L}^w(\beta_-) \circ \widehat{L}^w(\beta_+)]_q - q_i^2 [\widehat{L}^w(\beta_+) \circ \widehat{L}^w(\beta_-)]_q \\
  & = (1-q_i^2)^{-2} ([L^w(\beta_-)] [L^w(\beta_+)] - q_i^2 [L^w(\beta_+)][L^w(\beta_-)]). 
\end{align*}
On the other hand, as explained in the proof of Proposition \ref{prop:imaginarycuspidal} and Proposition \ref{prop:imaginarycuspidalA22}, we have 
\begin{align*}
[L^w(\beta_+)][L^w(\beta_-)] &= [L^w(\beta_+)\nabla L^w(\beta_-)] + \text{($R^w(d_i\delta)$-modules)}, \\
[L^w(\beta_-)][L^w(\beta_+)] &= q_i^2[L^w(\beta_+)\nabla L^w(\beta_-)] + \text{($R^w(d_i \delta)$-modules)}. 
\end{align*}
We conclude that every composition factor of $C$ is an $R^w(d_i \delta)$-module.

Next, we prove that $C$ is a projective $R^w(d_i \delta)$-module.
It suffices to  show that $\Ext_{R(d_i \delta)}^1 (C, L) = 0$ for any simple $R^w(d_i \delta)$-module $L$.
Applying $\Ext_{R(d_i \delta)}^{\bullet} (\cdot, L)$ to the short exact sequence 
\[
0 \to q_i^2 \widehat{L}^w (\beta_+) \circ \widehat{L}^w (\beta_-) \to \widehat{L}^w (\beta_-) \circ \widehat{L}^w (\beta_+) \to C \to 0, 
\]
we obtain a long exact sequence. 
We have $\Res_{\beta_+, \beta_-} L = 0$ since $L \in \gMod{R^w(d_i\delta)}$.  
By the induction-restriction adjunction, we deduce $\Ext_{R(d_i \delta)}^{\bullet} (\widehat{L}^w(\beta_+)\circ \widehat{L}^w(\beta_-), L) = 0$. 
Hence, we have an isomorphism 
\[
\Ext_{R(d_i\delta)}^{\bullet} (C, L) \simeq \Ext_{R(d_i\delta)}^{\bullet} (\widehat{L}^w(\beta_-) \circ \widehat{L}^w(\beta_+), L), 
\]
which is isomorphic to $\Ext_{R(d_i\delta)}^{\bullet} (\widehat{L}^w(\beta_-)\otimes \widehat{L}^w(\beta_+), \Res_{\beta_-, \beta_+}L)$ by the induction-restriction adjunction. 
By Proposition \ref{prop:minimalres}, the restriction is an $R^{\preceq^+}(\beta_-) \otimes R^{\preceq^+}(\beta_+)$-module. 
Hence, the assertion follows from Theorem \ref{thm:ext1}.

It remains to determine $\hd C$.  
All we have to do is take an arbitrary simple $R^w(d_i\delta)$-module $L$ and compute $\Hom_{R(d_i\delta)}(C,L)$ as in the previous paragraph using Proposition \ref{prop:minimalres}. 
\end{proof}

\begin{corollary} \label{cor:imaginarymodule}
 For $i \in \mathring{I}$, $\Psi_1(\Delta_i^w(d_i\delta))$ coincide with the imaginary root vector $S_i^{w, (1)}$ introduced in \cite{MR3874704}. 
\end{corollary}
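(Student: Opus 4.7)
The plan is to apply the categorification isomorphism $\Psi_1$ to the short exact sequences of Theorem~\ref{thm:psimodule}, express the resulting class $\Psi_1([\Delta_i^w(d_i\delta)])$ as an explicit polynomial in real root vectors, and then match it against the definition of $S_i^{w,(1)}$ given in \cite{MR3874704}. The key preparation is to identify $\Psi_1([\widehat{L}^w(\beta_\pm)])$ with the real root vectors associated with $\beta_\pm = \widetilde{\pm w\alpha_i}$. Using Lemma~\ref{lem:convexextension} I would extend $\preceq$ to a one-row order containing $\beta_\pm$; then Proposition~\ref{prop:realrootmodule} realizes $L^w(\beta_\pm)$ as a determinantial module (for $\beta_+$, twisted by $\psi_*$), and the corresponding $\widehat{L}^w(\beta_\pm)$ is the affinization introduced after Proposition~\ref{prop:detaffinization}. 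Proposition~\ref{prop:rootmodule} then gives $\Psi_1([\widehat{L}^w(\beta_\pm)]) = f_{\beta_\pm}$ in the PBW basis attached to that one-row order, and by Proposition~\ref{prop:chambercoweight} this depends only on the coarse type $w$.

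With this identification in place, I would take the class of the short exact sequence of Theorem~\ref{thm:psimodule} in $K_\oplus(\gproj{R})$ and apply $\Psi_1$. In the case $(X_N^{(r)},i) \neq (A_{2l}^{(2)},l)$ this immediately yields
\[
\Psi_1([\Delta_i^w(d_i\delta)]) = f_{\beta_-} f_{\beta_+} - q_i^2 f_{\beta_+} f_{\beta_-},
\]
which is exactly the formula defining the minimal imaginary root vector $S_i^{w,(1)}$ in \cite{MR3874704} (translated to our conventions via Appendix A). Thus the corollary reduces to a direct comparison of formulas in this case.

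For the exceptional case $(X_N^{(r)},i) = (A_{2l}^{(2)},l)$ the first short exact sequence of Theorem~\ref{thm:psimodule} gives
\[
(q_l + q_l^{-1})\,\Psi_1([\Delta_l^w(\delta)]) = f_{\beta_-} f_{\beta_+} - q_l^2 f_{\beta_+} f_{\beta_-},
\]
so I would combine this with the second short exact sequence there, which relates $\widehat{L}^w(\alpha'_+ + \alpha'_-)$ to the products $\widehat{L}^w(\alpha'_\mp) \circ \widehat{L}^w(\alpha'_\pm)$, together with Proposition~\ref{prop:imaginarycuspidalA22}~(3) that links the $\alpha'_\pm$-products to $L^w(\beta_-) \nabla L^w(\beta_+)$. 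This should rewrite $(q_l+q_l^{-1})\Psi_1([\Delta_l^w(\delta)])$ in a form that matches Tingley--Webster's $S_l^{w,(1)}$ in type $A_{2l}^{(2)}$, whose normalization is precisely the one that absorbs the factor $[2]_l$ via the short root vs.\ long root asymmetry.

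The main obstacle is purely bookkeeping: ensuring that the PBW root vectors produced by Proposition~\ref{prop:rootmodule} (indexed by a one-row extension of $\preceq$) agree, as elements of $U_q^-(\mathfrak{g})_{\mathbb{Z}[q,q^{-1}]}^{*,w}$, with the Tingley--Webster real root vectors attached directly to $\preceq$, and that the quantum commutator $f_{\beta_-} f_{\beta_+} - q_i^2 f_{\beta_+}f_{\beta_-}$ coincides with their $S_i^{w,(1)}$ after the convention translations of Appendix~A; in the $A_{2l}^{(2)}$ case the further subtlety is isolating $\Psi_1([\Delta_l^w(\delta)])$ from the direct sum $q_l \Delta_l^w(\delta) \oplus q_l^{-1}\Delta_l^w(\delta)$ and verifying integrality of the resulting expression using the second short exact sequence. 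No additional categorical input beyond Theorem~\ref{thm:psimodule} and Propositions~\ref{prop:realrootmodule}, \ref{prop:rootmodule}, \ref{prop:imaginarycuspidalA22} should be needed.
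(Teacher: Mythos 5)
Your proposal is correct and follows essentially the same route as the paper: the paper's proof simply invokes Theorem \ref{thm:psimodule} together with the identification of $\Psi_1([\widehat{L}^w(\beta_\pm)])$ with the real root vectors from Section \ref{sub:realrootmodules}, and declares the comparison with the definition of $S_i^{w,(1)}$ immediate. Your write-up is a faithful (and more detailed) expansion of that one-line argument, including the correct handling of the $[2]_l$ factor in the $A_{2l}^{(2)}$ case via the two short exact sequences of Theorem \ref{thm:psimodule}.
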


\begin{proof}
By Theorem \ref{thm:psimodule} and Section \ref{sub:realrootmodules},
it is immediate from the definitions in \cite{MR3874704, MR2066942}. 
\end{proof}

\appendix

\section{Comparison with other references}

Different references use various conventions for quantum groups. 
We summarize in a table below how to translate other references into our setup. 
To translate one reference into another, simply replace each symbol appearing in the table with the corresponding symbol in the same row. 

\begin{table}[h]
\centering
\begin{tabular}{ccccc} \toprule
This paper & \cite{MR2066942, MR3874704} & \cite{MR3090232} & \cite{MR2914878} & \cite{MR2759715} \\ \midrule 
$q$ & $q^{-1}$ & $q^{-1}$ & $q$ & $v^{-1}$ \\
$e_i$ & $F_i$ & $f_i$ & $e_i$ & $F_i$ \\
$f_i$ & $E_i$ & $e_i$ & $f_i$ & $E_i$ \\
$q^h$ & $q^h$ & $q^{-h}$ & $q^h$ & $K_h$ \\
$\quantum{}$ & $\quantum{}$ & $\quantum{}^{\text{op}}$ & $\quantum{}$ & $\mathbf{U}$ \\
$T_i$ & $T_i$ & $T_i$ & $T_i^{-1} = T'_{i,-1} $ & $T''_{i,1}$\\
$T_i^{-1}$ & $T_i^{-1}$ & $T_i^{-1}$ & $T_i = T''_{i,1}$ & $T'_{i,-1}$ \\
$f_{\beta_k}$ && $E(\beta_k)$ & $F_{-1}(\beta_k)$ & \\ 
$f_{\lambda}$ && $E(\mathbf{a})$ & $F_{-1}(\mathbf{c}, \tilde{w})$ & \\ 
convex order $\preceq$ & $\preceq^{\mathrm{op}}$ &&& \\ \bottomrule
\end{tabular}
%\alt{A table that indicates how to translate otherreferences into our setup}
\caption{Comparison with other references}
\end{table}

\printindex

\bibliographystyle{amsalpha}
\bibliography{library.bib}

\end{document}